\numberwithin{equation}{section}
\newtheorem{thm}{Theorem}[section]
\newtheorem{prop}[thm]{Proposition}
\newtheorem{lem}[thm]{Lemma}
\newtheorem{cor}[thm]{Corollary}
\newtheorem{conj}[thm]{Conjecture}
\newtheorem*{conj*}{Conjecture}
\newtheorem*{lem*}{Lemma}
\newtheorem*{recipe*}{Recipe}
\theoremstyle{remark}
\newtheorem{rem}[thm]{Remark}
\newtheorem{defn}[thm]{Definition}
\newtheorem{ex}[thm]{Example}
\newtheorem*{rem*}{Remark}
\newtheorem*{defn*}{Definition}
\newtheorem*{ex*}{Example}
\newcommand{\R}{\mathbb{R}}
\newcommand{\C}{\mathbb{C}}
\newcommand{\Q}{\mathbb{Q}}
\newcommand{\Z}{\mathbb{Z}}
\newcommand{\calA}{\mathcal{A}}
\newcommand{\calB}{\mathcal{B}}
\newcommand{\calC}{\mathcal{C}}
\newcommand{\calD}{\mathcal{D}}
\newcommand{\calE}{\mathcal{E}}
\newcommand{\calF}{\mathcal{F}}
\newcommand{\calP}{\mathcal{P}}
\newcommand{\calR}{\mathcal{R}}
\newcommand{\calS}{\mathcal{S}}
\newcommand{\calT}{\mathcal{T}}
\newcommand{\calU}{\mathcal{U}}
\newcommand{\calV}{\mathcal{V}}
\newcommand{\calX}{\mathcal{X}}
\newcommand{\calY}{\mathcal{Y}}
\newcommand{\power}{\mathfrak{p}}
\newcommand{\complete}{\mathfrak{h}}
\newcommand{\schur}{\mathfrak{s}}
\newcommand{\elementary}{\mathfrak{e}}
\newcommand{\monomial}{\mathfrak{m}}
\DeclareMathOperator{\sort}{sort}
\newcommand{\sorteq}{\overset{\sort}{=}}
\newcommand{\defeq}{\overset{\triangle}{=}}
\let\Re=\undefined
\DeclareMathOperator{\Re}{Re}
\let\Im=\undefined
\DeclareMathOperator{\Im}{Im}
\DeclareMathOperator{\height}{ht}
\DeclareMathOperator{\Sym}{Sym}
\DeclareMathOperator{\error}{error}
\DeclareMathOperator{\main}{main}
\DeclareMathOperator{\LHS}{LHS}
\DeclareMathOperator{\RHS}{RHS}
\DeclareMathOperator{\abs}{abs}
\DeclareMathOperator{\ribbon}{ribbon}
\DeclareMathOperator{\sub}{sub}
\DeclareMathOperator{\imaginary}{i}
\DeclareMathOperator{\Cor}{Cor}
\begin{document}

\begin{titlepage}
\phantom{.}
\vspace{1cm}
    \begin{center}
        \large  
        \begingroup
            \LARGE{\textbf{Combinatorial Structures in Random Matrix Theory Predictions for $\boldsymbol{L}$-Functions}} \\ \bigskip
        \endgroup
        
\rule{12cm}{0.4pt}
\vspace{2cm}

	Dissertation\\
\medskip
	zur\\
\medskip
        Erlangung der naturwissenschaftlichen Doktorw\"urde \\ \medskip (Dr.\ sc.\ nat.) \\
\medskip
	vorgelegt der\\
\medskip
        Mathematisch-naturwissenschaftlichen Fakult\"at \\
\medskip
	der\\
\medskip
        Universit\"at Z\"urich \\ \bigskip\bigskip\bigskip
\medskip
	von\\
\medskip
	Helen Riedtmann \\
\medskip
von \\
\medskip
Basel BS

\vspace{2cm}

Promotionskommission\\
\medskip
Prof.\ Dr.\ Valentin F\'eray (Leitung der Dissertation)\\
\medskip
Prof.\ Dr.\ Ashkan Nikeghbali \\
\medskip
Prof.\ Dr.\ Joachim Rosenthal \\
\vspace{1cm}
        Z\"urich, 2018

        \vfill                      

    \end{center}        
\end{titlepage}   

\begingroup
\pagestyle{plain}

\chapter*{Zusammenfassung}
Unsere Resultate k\"onnen als eine Anwendung algebraischer Kombinatorik auf Zufallsmatrizen betrachtet werden. Die Motivation f\"ur diese Anwendung ist, dass die Theorie der Zufallsmatrizen uns erm\"oglicht, die statistische Verhaltensweise der ber\"uhmten Riemann $\zeta$-Funktion (und $L$-Funk\-tio\-nen im Allgemeinen) vorherzusagen. Diese Vorhersagekraft von Zufallsmatrizen wurde von Montgomery (bez\"uglich der Nullstellen von $L$-Funktionen) und von Keating und Snaith (bez\"uglich der Werte von $L$-Funktionen) entdeckt. Unsere Resultate lassen sich in drei Teilbereiche untergliedern.

Die ersten Resultate behandeln eine neue Operation auf Partitionen, die wir \emph{overlap} (oder zu Deutsch \"Uber\-schnei\-dung) nennen. Wir beweisen zwei \"Uber\-schnei\-dungs\-gleichungen f\"ur sogenannte Littlewood-Schurfunktionen. Littlewood-Schurfunktionen sind eine Verallgemeinerung von Schurfunktionen, die von Littlewood eingef\"uhrt wurde: Die zur Partition $\lambda$ geh\"orige Littlewood-Schurfunktion $LS_\lambda(\calX; \calY)$ ist ein Polynom in den Variabeln $\calX \cup \calY$, welches sowohl in $\calX$ als auch in $\calY$ symmetrisch ist. Die erste \"Uber\-schnei\-dungs\-gleich\-ung stellt $LS_\lambda(\calX; \calY)$ als eine Summe \"uber Teilmengen von $\calX$ dar; die zweite \"Uber\-schnei\-dungsgleichung stellt $LS_\lambda(\calX; \calY)$ als eine Summe \"uber Paare von Partitionen dar, deren \"Uber\-schnei\-dung gleich der Partition $\lambda$ ist. Beide \"Uber\-schnei\-dungs\-gleich\-ungen folgen aus einer Anwendung des Laplaceschen Entwicklungssatzes, dank einer Formel von Moens und Van der Jeugt, die Littlewood-Schurfunktionen im Wesentlichen als Determinanten darstellt. Des Weiteren zeigen wir zwei graphische Beschreibungen aller Paare von Partitionen, deren \"Uber\-schnei\-dung eine gegebene Partition $\lambda$ ist.

Das zweite Resultat ist eine asymptotische Formel f\"ur Integrale von gemischten Br\"uchen charakteristischer Polynome \"uber die unit\"are Gruppe, wobei ein gemischter Bruch ein Produkt von Br\"uchen und/oder logarithmischen Ableitungen ist. Unser Beweis dieser Formel verallgemeinert Bump and Gamburds elegante kombinatorische Herleitung von Conrey, Forrester und Snaiths Formel f\"ur Integrale von Br\"uchen charakteristischer Polynome \"uber die unit\"are Gruppe. Diese Verallgemeinerung st\"utzt sich auf die folgenden drei kombinatorischen Restultate: die erste \"Uber\-schnei\-dungs\-gleich\-ung, eine neue Variante der Murnaghan-Nakayama-Regel und eine Idee, die aus dem Knotenoperatorformalismus entlehnt ist.

Das dritte und letzte Resultat nennen wir eine explizite Formel f\"ur Eigenwerte einer zuf\"alligen unit\"aren Matrix. In der Zahlentheorie nennt man eine Formel, die einen Zusammenhang zwischen einer Summe \"uber die Nullstellen einer $L$-Funktion und einer Summe \"uber die Primzahlen herstellt, eine explizite Formel. Unsere explizite Formel f\"ur Eigenwerte ist ein asymptotischer Ausdruck f\"ur eine Summe \"uber die Nullstellen eines zuf\"alligen charakteristischen Polynoms der unit\"aren Gruppe, deren Beweis den Beweis von Rudnick und Sarnaks expliziter Formel f\"ur eine recht allgemeine Klasse von $L$-Funktionen widerspiegelt. Wir beschliessen diese Arbeit mit einem Ausblick, wie dieser Ansatz zu neuen zahlentheoretischen Beweisen f\"uhren k\"onnte.

\chapter*{Abstract}
Our results can be viewed as applications of algebraic combinatorics in random matrix theory. These applications are motivated by the predictive power of random matrix theory for the statistical behavior of the celebrated Riemann $\zeta$-function (and $L$-functions in general), which was discovered by Montgomery (with regard to zeros of $L$-functions) and by Keating and Snaith (with regard to values of $L$-functions). Our results can be divided into three parts.

The first results revolve around a new operation on partitions, which we call overlap. We prove two overlap identities for so-called Littlewood-Schur functions. Littlewood-Schur functions are a generalization of Schur functions, whose study was  introduced by Littlewood. More concretely, the Littlewood-Schur function $LS_\lambda(\calX; \calY)$ indexed by the partition $\lambda$ is a polynomial in the variables $\calX \cup \calY$ that is symmetric in both $\calX$ and $\calY$ separately. The first overlap identity represents $LS _\lambda(\calX; \calY)$ as a sum over subsets of $\calX$, while the second overlap identity essentially represents $LS_\lambda(\calX; \calY)$ as a sum over pairs of partitions whose overlap equals $\lambda$. Both identities are derived by applying Laplace expansion to a determinantal formula for Littlewood-Schur functions due to Moens and Van der Jeugt. In addition, we give two visual characterizations for the set of all pairs of partitions whose overlap is equal to a partition $\lambda$.

The second result is an asymptotic formula for averages of mixed ratios of characteristic polynomials over the unitary group, where mixed ratios are products of ratios and/or logarithmic derivatives. Our proof of this formula is a generalization of Bump and Gamburd's elegant combinatorial proof of Conrey, Forrester and Snaith's formula for averages of ratios of characteristic polynomials over the unitary group. The generalization relies on three combinatorial results, namely the first overlap identity, a new variant of the Murnaghan-Nakayama rule and an idea from vertex operator formalism.

The third and last result is what we call an explicit formula for eigenvalues of a random unitary matrix. In number theory, a formula that establishes a relation between a sum over the zeros of some $L$-function and a sum over the primes is called an explicit formula. Our explicit formula for eigenvalues is an asymptotic expression for a sum over zeros of a random characteristic polynomial from the unitary group, whose proof mirrors Rudnick and Sarnak's proof of an explicit formula for a fairly general class of $L$-functions. We conclude this thesis by explaining how this approach might lead to new number theoretic proofs.

\chapter*{Acknowledgements}
I would like to thank Paul-Olivier Dehaye, who was my advisor during the first years of my PhD. I could not have wished for a better advisor. Paul generously shared his vision that combinatorics might be the key to a better understanding of the mysterious behavior of $L$-functions, and then let me find my own path in this fascinating field. I am particularly grateful to him for not losing patience with me, although it took me a while to find a path I wanted to follow. He was always available when I was stuck -- be that in my research or (in one memorable instance) in real life. His advice, which he continued to give even after having left academia, frequently nudged me into directions that proved fruitful. I am honored to have been one of Paul's numerous strange hobbies.

\medskip

I would also like to thank Valentin F\'eray, who was my advisor during the last years of my PhD, for taking me under his wing when I was suddenly in need of a new advisor. The monthly meetings with Valentin and Paul ensured that I did not stray from my path (for too long). In addition, I am obliged to Valentin for reading and commenting on my first efforts to write down my results. He taught me how to present mathematical research, while always respecting my own style. Without his help, this thesis would be hard(er) to read.

\medskip

I am grateful to Pierre-Lo\"ic M\'eliot and Jonathan Novak for having accepted to review this thesis. It was a great pleasure to read their positive reports.

\medskip

My thanks to my research group, Benedikt, Dario, Jacopo, Jehanne, Marko, Ma\-thil\-de (who feels like a part of the group although she technically is not), Nahid (who used to belong to my first research group), Ra\'ul and Valentin, for countless interesting discussions during our lunch break in a multitude of different languages. Hence, it is not only `thank you', but also `herzlichen Dank', `merci a tous' and `much\'isimas gracias'. My special thanks to Jehanne for inviting me to dinner after I had supervised an exam in her stead, and for everything this invitation led to.

\medskip

I would also like to thank other great people I have met during my time at the institute (including those I might have forgotten to mention): Silvia for confessing to me that she was also contemplating dropping math after our very first linear algebra lesson, which gave me the confidence to go the the second lesson, and for being a great companion to watch trashy movies with. Simone for always being willing to interrupt her work to have a restorative cup of tea with me and for our cooking adventures. Tovo for making our office a pleasant place to work and for affording me a glimpse into the culture of Madagascar. Joachim for his unfailing confidence in me and his invaluable career advice.

\medskip

Special thanks to Marcel for always being encouraging, while never putting any additional pressure on me. He would always make sure that I let go in the evenings by taking me dancing or by battling armies of undead with me.

\medskip

I am more grateful to my mother than I am able to express in words. Here I want to thank her for supporting me throughout my studies, in spite of her misgivings about me following in her footsteps. I promise to take a different path from here onwards.

\tableofcontents

\chapter*{List of Symbols}
\begin{longtable}{p{.17\textwidth} p{.65\textwidth} r}
\textbf{Symbol} & \textbf{Meaning} & \textbf{Page} \\
\hline
 & & \\
\endhead
$\abs(\calX)$ & sequence of absolute values of the elements in $\calX$ & \pageref{symbol_abs} \\
 & & \\
$c^\lambda_{\mu \nu}$ & Littlewood-Richardson coefficient & \pageref{symbol_LR_coeff} \\
$C_n(K)$ & $C_n(K) \sorteq (n - j + 1: j \not\in K)$ & \pageref{symbol_C_n(K)} \\
$\chi_g$ & characteristic polynomial of $g$ & \pageref{symbol_char_pol} \\
$\Cor_n(L, f)$ & $n$-correlation of zeros of L associated to $f$ & \pageref{symbol_cor_L} \\
$\Cor_n(U(N), f)$ & $n$-correlation of eigenvalues of a random unitary matrix of size $N$ associated to $f$ & \pageref{symbol_cor_U(N)} \\
 & & \\
$\Delta(\calX)$ & first $\Delta$-function & \pageref{symbol_first_Delta_function} \\
$\Delta(\calX; \calY)$ & second $\Delta$-function & \pageref{symbol_second_Delta_function} \\
$\frac{\partial}{\partial \power_r}$ & $r$-th derivation (power sum) operator & \pageref{symbol_k-th_derivation_operator} \\
$\frac{\partial}{\partial \power_\lambda}$ & $\lambda$-th derivation (power sum) operator & \pageref{symbol_lambda-th_operator} \\
 & & \\
$\elementary_r$ & $r$-th elementary symmetric polynomial/function & \pageref{symbol_elementary_poly} / \pageref{symbol_symmetric_function} \\
$\elementary_\lambda$ & $\lambda$-th elementary symmetric polynomial/function & \pageref{symbol_lambda-th_symmetric_poly} / \pageref{symbol_symmetric_function} \\
$\elementary(\calX)$ & product of the elements in $\calX$ & \pageref{3_eq_e(X)_defn} \\
 & & \\
$\complete_r$ & $r$-th complete symmetric polynomial/function & \pageref{symbol_complete_poly} / \pageref{symbol_symmetric_function} \\
$\complete_\lambda$ & $\lambda$-th complete symmetric polynomial/function & \pageref{symbol_lambda-th_symmetric_poly} / \pageref{symbol_symmetric_function} \\
$H(\pi)$ & sequence associated to the staircase walk $\pi$ & \pageref{symbol_sequence_associated_to_pi} \\
$\height(\lambda \setminus \kappa)$ & height of the ribbon $\lambda \setminus \kappa$ & \pageref{symbol_height_of_a ribbon} \\
 & & \\
$\imaginary$ & a square root of $-1$ & \\
$I_k(L, T)$ & $2k$-th moment of $L$ & \pageref{symbol_moment_L} \\
$I_k(U(N))$ & $2k$-th moment of a random characteristic polynomial from $U(N)$ & \pageref{symbol_moment_char_pol} \\
 & & \\
$\lambda$ ($\kappa$, $\mu$, $\nu$) & partition & \pageref{symbol_partition} \\
$|\lambda|$ & size of the partition $\lambda$ &  \pageref{symbol_size_of_partition} \\
$\lambda'$ & conjugate of the partition $\lambda$ & \pageref{symbol_conjugate_partition} \\
$\lambda \setminus \kappa$ & skew diagram & \pageref{symbol_skew_diagram} \\
$\tilde{\lambda}$ & complement of the partition $\lambda$ & \pageref{symbol_complement_of_partition} \\
$l(\lambda)$ & length of the partition $\lambda$ & \pageref{symbol_length_of_partition} \\
$\Lambda$ & completed $L$-function & \pageref{1_eq_defn_completed_L_funct} \\
$\Lambda_g$ & completed characteristic polynomial of $g$ & \pageref{1_defn_completed_char_pol} \\
$l(\calX)$ & length of the sequence $\calX$ & \pageref{symbol_length_of_sequence} \\
$LS_\lambda$ & Littlewood-Schur function associated to $\lambda$ & \pageref{symbol_LS_function} \\
$\LHS$ & left-hand side of the equation under consideration & \pageref{symbol_LHS/RHS} \\
 & & \\
$m_i(\lambda)$ & multiplicity of $i$ as a part of the partition $\lambda$ & \pageref{symbol_multiplicity} \\
$\monomial_\lambda$ & monomial symmetric polynomial/function indexed by $\lambda$ &  \pageref{symbol_monomial_poly} / \pageref{symbol_symmetric_function} \\
$\mu(\pi)$ & partition associated to the staircase walk $\pi$ & \pageref{symbol_partition_associated_to_pi_mu} \\
 & & \\
$[n]$ & $[n] = (1, 2, \dots, n)$ & \pageref{symbol_n_in_square_brackets} \\
$N(T)$ & number of non-trivial zeros with height in $[0, T]$ & \pageref{symbol_N(T)} \\
$\nu(\pi)$ & partition associated to the staircase walk $\pi$ & \pageref{symbol_partition_associated_to_pi_nu} \\
 & & \\
$\omega$ & involution on $\Sym$ & \pageref{symbol_involution_on_Sym} \\
$O_\calP$ & big-$O$ notation whose implicit constant depends on $\calP$ & \pageref{symbol_big_o_notation} \\
 & & \\
$\power_r$ & $r$-th power sum symmetric polynomial/function & \pageref{symbol_power_poly} / \pageref{symbol_symmetric_function} \\
 & $r$-th product (power sum) operator & \pageref{symbol_k-th_product_operator} \\
$\power_\lambda$ & $\lambda$-th power sum symmetric polynomial/function & \pageref{symbol_lambda-th_symmetric_poly} / \pageref{symbol_symmetric_function} \\
 & $\lambda$-th product (power sum) operator & \pageref{symbol_lambda-th_operator} \\
$\mathfrak{P}(m,n)$ & set of the staircase walks $\pi$ in an $m \times n$ rectangle & \pageref{symbol_staircase_walks_in_rectangle} \\
 & & \\
$\rho_n$ & $\rho_n = (n - 1, \dots, 1, 0)$ & \pageref{symbol_rho_n} \\
$\rho^\alpha$ & finite length specialization & \pageref{symbol_finite_length_specializations} \\
$\rho^\beta$ & finite length dual specialization & \pageref{symbol_finite_length_specializations} \\
$\calR(g)$ & multiset of eigenvalues of the unitary matrix $g$ & \pageref{symbol_R(g)} \\
$\RHS$ & right-hand side of the equation under consideration & \pageref{symbol_LHS/RHS} \\
 & & \\
$\schur_\lambda$ & Schur function associated to $\lambda$ & \pageref{symbol_schur_function} \\
$\sorteq$ & equality of sequences up to reordering & \pageref{symbol_sorteq} \\
$\sub_{n}(\lambda, M)$ & subpartition of $\lambda$ corresponding to $M \subset [n]$ & \pageref{symbol_subpartition} \\
$\Sym$ & ring of symmetric functions & \pageref{symbol_ring_of_symmetric_functions} \\
$\Sym(\calX)$ & ring of symmetric polynomials in the variables $\calX$ & \pageref{symbol_ring_of_symmetric_polynomials} \\
 & & \\
$U(N)$ & unitary group of degree $N$ & \pageref{symbol_U(N)} \\
 & & \\
$V(\pi)$ & sequence associated to the staircase walk $\pi$ & \pageref{symbol_sequence_associated_to_pi} \\
 & & \\
$\calX$ ($\calS$, $\calT$, $\calY$) & set of variables \textit{i.e.}\ sequence & \pageref{symbol_sequence} \\
$\calX^{-1}$ & sequence of multiplicative inverses of the elements in $\calX$ & \\
$\calX_K$ & subsequence of $\calX$ indexed by $K$ & \pageref{symbol_subsequence_indexed_by_K} \\
$\calX \setminus \calY$ & complement of the subsequence $\calY$ of $\calX$ & \pageref{symbol_complement_of_subsequence} \\
$x^+$ & positive part of $x$ & \pageref{symbol_positive_part} \\
 & & \\
$z_\lambda$ & $z_\lambda = \prod_{i \geq 1} i^{m_i(\lambda)} m_i(\lambda)!$ & \pageref{1_eq_cauchy_id} \\
$\zeta$ & the Riemann $\zeta$-function & \pageref{symbol_zeta} \\
 & & \\
$\subset$ & subset/subsequence & \pageref{symbol_subset} / \pageref{symbol_subsequence} \\
$+$ & sum of partitions/sequences & \pageref{symbol_sum_of_partitions} / \pageref{symbol_sum_of_sequences} \\
$\cup$ & union of partitions/sequences & \pageref{symbol_union_of_partitions} / \pageref{symbol_union_of_sequences} \\
$\cup_{m,n}$ & union of sequences of fixed lengths & \\
$\star_{m, n}$ & $(m,n)$-overlap & \pageref{symbol_overlap} \\
$\defeq$ & equality by definition & \pageref{symbol_defeq} \\
$\overset{k}{\to}$ & the skew diagram of the partition to the right without the partition to the left is a $k$-ribbon & \pageref{symbol_is_k_ribbon}
\end{longtable}

\chapter{Introduction} \label{1_cha_intro}
\endgroup
\noindent The aim of this introduction is to present the number theoretic motivation behind our combinatorial results. In Section~\ref{1_sec_NT_and_RMT}, we present conjectural (but widely accepted) connections between the theory of $L$-functions and random matrix theory. Section~\ref{1_sec_NT_and_alg_com} is a discussion of some combinatorial work that has been stimulated by these connections, which includes an overview of our results in Section~\ref{1_sec_my_results}.

Before we delve into the connection between the theory of $L$-functions and algebraic combinatorics, let us fix some general notation: throughout this thesis, we will use the symbol \label{symbol_defeq} $\defeq$ to denote an equality between the quantities on its left-hand side and its right-hand side which defines the quantity on its left-hand side. Furthermore, \label{symbol_LHS/RHS} $\LHS$/$\RHS$ always denote the left-hand/right-hand side of the equality under consideration.
\vspace{10pt}
\section{Number theory and random matrix theory} \label{1_sec_NT_and_RMT}
In the 19th century, Riemann conjectured that the zeros of the $\zeta$-function all lie on the critical line, \textit{i.e.}\ that all nontrivial zeros of the $\zeta$-function have real part equal to $1/2$. This conjecture is the famous Riemann hypothesis. Today, it is widely believed (but far from proven) that all so-called $L$-functions satisfy the Riemann hypothesis.

If we suppose that the Riemann hypothesis is true for some $L$-function, then all of its zeros are of the form $1/2 + \imaginary t$ for some height $t \in \R$. However, the Riemann hypothesis makes no prediction about the distribution of these heights. In 1973, Montgomery studied one statistical property of the distribution of the heights of the zeros of the $\zeta$-function, their so called pair correlation. He conjectured (and partially proved) that it is equal to the pair correlation of the eigenvalues of a random unitary matrix of large size. Taking the conjectured relationship between zeros of $L$-functions and eigenvalues of random unitary matrices of large size as a starting point, Keating and Snaith discovered a connection between the distribution of the \emph{values} of $L$-functions on the critical line and the \emph{values} of the characteristic polynomial of a random unitary matrix of large size. In 2000, they formulated a conjecture how the so-called moments of the two functions are related. 

\bigskip
In Section~\ref{1_sec_L_functs}, we give an axiomatic definition for $L$-functions, following Selberg, and show that the Riemann $\zeta$-function is indeed an $L$-function. Section~\ref{1_sec_on_correlations} is dedicated to the $n$-correlation of both zeros of $L$-functions and eigenvalues of unitary matrices. In Section~\ref{1_sec_on_moments}, we present Keating and Snaith's moment conjecture and then discuss some ideas (due to Conrey, Farmer, Keating, Rubinstein and Snaith) that were inspired by it in Section~\ref{1_sec_prop_char_pol}. In fact, all of the work which we will present in Section~\ref{1_sec_NT_and_alg_com} has also been stimulated by Keating and Snaith's discovery. In Section~\ref{1_sec_RMT}, we touch upon a few applications of random matrix theory that are not related to number theory.

\subsection[$L$-functions]{$\boldsymbol{L}$-functions} \label{1_sec_L_functs}
There does not exist one generally accepted definition for $L$-functions. In ``Old and new conjectures and results about a class of Dirichlet series'', Selberg proposes an axiomatic approach to $L$-functions \cite{selberg}. He focuses on four crucial analytic properties to characterize $L$-functions. In particular, the Riemann $\zeta$-function -- the oldest and most famous $L$-function -- can be shown to satisfy all of Selberg's conditions.

\subsubsection{The Selberg class}
The Selberg class is a class of Dirichlet series, which was introduced by Selberg \cite{selberg}. We shall call members of the Selberg class $L$-functions; however, it is important to keep in mind that there are other (conjecturally equivalent) definitions for $L$-functions, such as the notion of $L$-function used in \cite{rudnick1996}.

In \cite{selberg}, Selberg considers Dirichlet series with complex coefficients $a_n$
\begin{align*}
L(s) = \sum_{n \geq 1} \frac{a_n}{n^s}
\end{align*}
that are absolutely convergent for $\sigma > 1$ where $s = \sigma + \imaginary t$, which possess the following four additional properties:
\begin{enumerate}
\item (analytic continuation) The function $L(s)$ has a meromorphic continuation to the entire complex plane, so that $(s - 1)^m L(s)$ is an entire function of finite order for some integer $m \geq 0$.
\item (functional equation) The function $L(s)$ has a functional equation of the form
\begin{align} \label{1_eq_functional_eq_for_L_funct}
\Lambda(s) = \overline{\Lambda(1 - \bar{s})}
\end{align}
where
\begin{align} \label{1_eq_defn_completed_L_funct}
\Lambda(s) = \varepsilon Q^s \prod_{i = 1}^k \Gamma(\lambda_i s + \mu_i) L(s)
\end{align}
and 
\begin{align*}
|\varepsilon| = 1 \text{, }\; Q > 0 \text{, }\; \lambda_i > 0 \text{, }\; \Re \mu_i \geq 0
\end{align*}
are constants. Throughout this chapter, $\Gamma$ denotes the gamma function. We call $\Lambda(s)$ the completed $L$-function associated to $L(s)$, $\varepsilon$ its root number and $m = 2 \sum \lambda_i$ its degree. For convenience of notation, we sometimes abbreviate the factor $\varepsilon Q^s \prod_{i = 1}^k \Gamma(\lambda_i s + \mu_i)$ by $\gamma(s)$. Notice that the equation in \eqref{1_eq_functional_eq_for_L_funct} implies that $\Lambda(s)$ is real at the values $s = 1/2 + \imaginary t$.
\item (Ramanujan conjecture) In the context of the Selberg class, Ramanujan \emph{conjecture} is really a misnomer as it is an assumption rather than a conjecture. In any case, it states that $a_1 = 1$ and $a_n = O_\delta \left(n^\delta \right)$ for each fixed $\delta > 0$. 
\item (Euler product) For $\sigma > 1$,
\begin{align*}
\log L(s) = \sum_{n \geq 1} \frac{b_n}{n^s}
\end{align*}
with $b_n = 0$ unless $n$ is of the form $p^r$ where $p$ is a prime and $r$ a positive integer. In addition, $b_n = O \left( n^\theta \right)$ for some $\theta < 1/2$. In consequence, the completed $L$-function may also be written as a product over the primes: if $\sigma > 1$,
\begin{align*}
\Lambda(s) = \varepsilon Q^s \prod_{i = 1}^k \Gamma(\lambda_i s + \mu_i) \prod_{p \text{ prime}} \exp \left( \sum_{r \geq 1} \frac{b_{p^r}}{p^{rs}} \right)
.
\end{align*}
In fact, the $\Gamma$-factor should be seen as the factor corresponding to the ``prime at infinity'', following Tate's thesis \cite{tate}. However, this point of view seems not be relevant for the connection to random matrix theory discussed in this section.
\end{enumerate}
In spite of this axiomatic definition, all known examples of $L$-functions can be constructed from natural arithmetic objects, such as characters, automorphic forms and automorphic representations \cite[p.~37]{CFKRS05}.

The zeros of any $L$-function can be divided into two classes: the trivial zeros which are located at the poles of the functions $s \mapsto \Gamma(\lambda_i s + \mu_i)$ for $i = 1, \dots, k$, and the rest, which are called nontrivial zeros. The nontrivial zeros all lie in some vertical strip $1 - A \leq \sigma \leq A$ with $A \geq 1/2$. The generalized Riemann hypothesis is the conjecture that $A$ is equal to $1/2$, \textit{i.e.}\ that all nontrivial zeros of any $L$-function lie on the critical line given by $\sigma = 1/2$.

Let \label{symbol_N(T)} $N(T)$ denote the number of nontrivial zeros $s = \sigma + \imaginary t$ with height $t \in [0,T]$ for some positive $T$. It is a fact that no information is lost through restricting our attention to non-negative heights $t$. As a consequence of Cauchy's argument principle, there is a constant $c$ so that
\begin{align} \label{1_eq_for_N(T)}
N(T) = \frac{m}{2 \pi} T \left( \log T + c \right) + O(\log T) = \frac{m}{2 \pi} T \log T + O(T)
\end{align}
as $T \to \infty$ \cite[p.~48]{selberg}. 

\subsubsection[The Riemann $\zeta$-function]{The Riemann $\boldsymbol{\zeta}$-function}
The simplest example of an $L$-function is the Riemann $\zeta$-function, which is defined by \label{symbol_zeta}
\begin{align*}
\zeta(s) = \sum_{n \geq 1} \frac{1}{n^s}
\end{align*}
for $\sigma > 1$. It is obvious from its definition that $\zeta(s)$ satisfies the Ramanujan conjecture. In addition, a straightforward computation verifies that 
\begin{align*}
\zeta(s) = \frac{s}{s-1} - s\int_1^{\infty}\frac{x-[x]}{x^{s+1}}dx,
\end{align*}
which meromorphically continues $\zeta(s)$ to the half plane given by $\sigma > 0$, with a simple pole at $s = 1$. In ``Ueber die Anzahl der Primzahlen unter einer gegebenen Gr\"osse'' \cite{riemann}, Riemann completes the $\zeta$-function to
\begin{align*}
\xi(s) = \pi^{-s/2} \Gamma(s/2) \zeta(s)
,
\end{align*} 
and shows the following functional equation:
\begin{align*}
\xi(s) = \xi(1 - s) = \overline{\xi(1 - \bar{s})}
.
\end{align*} 
This functional equation allows us to continue $\zeta(s)$ to the entire complex plane. Finally, the Euler product of the $\zeta$-function, which states that for $\sigma > 1$ 
\begin{align*}
\prod_{p \text{ prime}} \frac{1}{1 - p^{-s}} = \prod_{p \text{ prime}} \left( \sum_{r \geq 0} p^{-rs}\right) = \sum_{n \geq 1} n^{-s} = \zeta(s)
,
\end{align*}
encodes the fact that every positive integer can be uniquely written as a product of primes. 
We therefore conclude that the Riemann $\zeta$-function is indeed an element of the Selberg class.

The trivial zeros of $\zeta(s)$ are located at the strictly negative even integers. Its infinitely many nontrivial zeros all lie in the strip $0 \leq \sigma \leq 1$. Indeed, the Euler product shows that $\zeta(s)$ is not equal to zero if $\sigma > 1$, which entails that no \emph{nontrivial} zeros are located at $\sigma < 0$, according to the functional equation. In 1859, Riemann conjectured that all nontrivial zeros
of the $\zeta$-function lie on the critical line, \textit{i.e.}\ the line consisting of all complex numbers of the form $s = 1/2+ \imaginary t$, thus giving birth to the celebrated Riemann hypothesis \cite{riemann}.

\subsection{On correlations} \label{1_sec_on_correlations}
In 1973 Montgomery conjectured that pairs of zeros of the Riemann $\zeta$-function behave like  pairs of eigenvalues of random unitary matrices \cite{montgomery73}. More precisely, Montgomery writes: ``F.~J.~Dyson has drawn my attention to the fact that the eigenvalues of a random complex Hermitian or unitary matrix of large order have precisely the same pair correlation'' as the zeros of the $\zeta$-function \cite[p.~184]{montgomery73}. He gives a partial proof of his conjecture, which relies on an explicit formula that relates the zeros of the $\zeta$-function to the prime numbers. Montgomery's discovery has led to a new branch of research, which delves into the conjectured connection between the zeros of $L$-functions and eigenvalues of random unitary matrices.

First, we give a rough sketch of the classic proof for $n$-correlation of eigenvalues. Second, we discuss partial proofs for $n$-correlation of zeros of $L$-functions, due to Montgomery \cite{montgomery73} and later in more generality Rudnick and Sarnak \cite{rudnick1996}. Third and last, we present an alternative approach to $n$-correlation that can be applied to both the Riemann $\zeta$-function (conjecturally) and to random matrices (rigorously), due to Conrey and Snaith \cite{CS}.

\subsubsection{$\boldsymbol{n}$-correlation of eigenvalues (based on Gaudin's lemma)}
In this section, we briefly introduce $n$-correlation of eigenvalues of random unitary matrices, following \cite{ConreyNotes}. 

A unitary matrix of size $N$ is a complex $N \times N$ matrix whose conjugate transpose is also its inverse. The group of all unitary matrices of size $N$ is customarily denoted $U(N)$. In symbols, \label{symbol_U(N)}
\begin{align*}
U(N) = \{g \in \text{Mat}(\C, N \times N): {\bar g}^t = g^{-1}\}
\end{align*}
where $\text{Mat}(\C, N \times N)$ is the set of all complex $N \times N$ matrices. It is a simple exercise in linear algebra to show that the eigenvalues of a unitary matrix $g$ all have absolute value 1. In consequence, we may write any eigenvalue of $g$ in the form $e^{\imaginary \theta}$ for some $\theta \in [0, 2 \pi)$, which we call an eigenangle of $g$. If $\theta_1, \dots, \theta_N$ are the eigenangles of $g \in U(N)$, its normalized eigenangles are given by
\begin{align*}
\tilde \theta_1 = \frac{N \theta_1}{2 \pi}, \dots, \tilde \theta_N = \frac{N \theta_N}{2 \pi}
.
\end{align*}
The normalization is chosen so that $\tilde \theta_1, \dots, \tilde \theta_N \in [0,N)$ have mean spacing 1. 

The $n$-correlation of the eigenvalues of a unitary matrix of size $N$ measures the correlation between differences of $n$ normalized eigenangles; more concretely, for any $(n - 1)$-dimensional box $Q = [-q, q]^{n - 1} \subset \R^{n - 1}$ it provides an estimate for the following quantity: 
\begin{align*}
\frac{1}{N} \# \left\lbrace 1 \leq j_1, \dots, j_n \leq N \text{ pairwise distinct}: \left( \tilde \theta_{j_1} - \tilde \theta_{j_2}, \dots, \tilde \theta_{j_{n - 1}} - \tilde \theta_{j_n} \right) \in Q \right\rbrace
.
\end{align*}
The reason for the division by $N$ is that the number of indices contained in the set to be counted would grow linearly with $N$ if the normalized eigenangles were uniformly distributed in the interval $[0,N)$. Informally, the $n$-correlation of a random unitary matrix of large size is obtained by first averaging the above quantity over $U(N)$ and then letting $N$ go to $\infty$.  In order to take formal averages over $U(N)$, we specify that any integral over $U(N)$ is to be understood as an integral with respect to the corresponding Haar measure, which is defined on page \pageref{1_page_Haar_measure}. In practice, it is more convenient to use smooth test functions rather than indicator functions of boxes, which leads to the following definition:

\begin{defn} [$n$-correlation of eigenvalues]
Let $f: \R^n \to \C$ be a smooth function with the following properties:
\begin{itemize}
\item $f$ is symmetric, \textit{i.e.}\ $f(x_{\sigma(1)}, \dots, x_{\sigma(n)}) = f(x_1, \dots, x_n)$ for permutations $\sigma \in S_n$;
\item $f$ is translation invariant, \textit{i.e.}\ $f(x_1 + t, \dots, x_n + t) = f(x_1, \dots, x_n)$ for $t \in \R$;
\item $f$ decays rapidly as $\sum_{1 \leq i \leq n} |x_i| \to \infty$ in the hyperplane $\sum_{1 \leq i \leq n} x_i = 0$. How fast $f(x_1, \dots, x_n)$ converges to 0, should be specified in the prerequisites of any statement about $n$-correlation, but we will not concern ourselves with this.
\end{itemize}
The $n$-correlation of eigenvalues of a random unitary matrix of size $N$ (associated to the function $f$) is defined by \label{symbol_cor_U(N)}
\begin{align*}
\Cor_n(U(N), f) = \frac{1}{N} \int_{U(N)} \sideset{}{'}\sum_{1 \leq j_1, \dots, j_n \leq N} f\left(\frac{N \theta_{j_1}}{2 \pi}, \dots, \frac{N \theta_{j_n}}{2 \pi} \right) dg
\end{align*}
where $\sideset{}{'}\sum$ indicates that the sum is over pairwise distinct indices.
\end{defn}

The three properties of the test function $f$ ensure that $\Cor_n(U(N), f)$ recovers what we seek to measure: the symmetry of $f$ mirrors the symmetries of the box $Q$; the translation invariance means that $f$ is a function of differences; and combining the last two properties entails that $\Cor_n(U(N), f)$ can be regarded as counting clusters of size $n$ in the set of eigenangles. 

\begin{thm} [$n$-correlation of eigenvalues] If $f$ is a suitable test function, then 
\begin{align} \begin{split} \label{1_thm_n-correlation_eigenvalues_eq}
& \lim_{N \to \infty} \Cor_n(U(N), f) \\
& \hspace{15pt} = \int_{\R^n} f(x_1, \dots, x_n) \det \left( \frac{\sin(\pi (x_i - x_j))}{\pi (x_i - x_j)} \right)_{1 \leq i,j \leq n} \delta \left( x_1 + \dots + x_n \right) dx_1 \dots dx_n
\end{split}
\end{align}
where $\delta$ is the Dirac mass at zero. More concretely, the integral $$\int_{\R^n} g(x_1, \dots, x_n) \delta(x_1 + \dots + x_n) dx_1 \dots dx_n$$ stands for the integral of $g$ over the hyperplane $\{(x_1, \dots, x_n) \in \R^n: x_1 + \dots + x_n = 0\}$.
\end{thm}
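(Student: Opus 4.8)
The plan is to express $\Cor_n(U(N),f)$ as an integral of the test function against the $n$-level correlation function of the eigenangle process and then to carry out the mean-spacing rescaling explicitly. First I would invoke the Weyl integration formula: for any symmetric integrable $\Phi$ on $[0,2\pi)^N$,
\[
\int_{U(N)}\Phi\,dg=\frac{1}{N!\,(2\pi)^N}\int_{[0,2\pi)^N}\Phi(\theta_1,\dots,\theta_N)\prod_{1\le j<k\le N}\bigl|e^{\imaginary\theta_j}-e^{\imaginary\theta_k}\bigr|^2\,d\theta_1\cdots d\theta_N .
\]
Writing the Vandermonde product as $\bigl|\det(e^{\imaginary(l-1)\theta_j})_{l,j}\bigr|^2$ and multiplying it out shows that the (unordered) joint density of the eigenangles is $\tfrac1{N!}\det\bigl(K_N(\theta_i,\theta_j)\bigr)_{i,j=1}^N$, where
\[
K_N(\theta,\phi)=\frac{1}{2\pi}\,\frac{\sin\bigl(N(\theta-\phi)/2\bigr)}{\sin\bigl((\theta-\phi)/2\bigr)}
\]
is the Dirichlet kernel of the orthonormal system $\{e^{\imaginary k\theta}/\sqrt{2\pi}\}_{k=0}^{N-1}$ (the extra phase $e^{\imaginary(N-1)(\theta-\phi)/2}$ from the geometric sum cancels out of every determinant after conjugating by a diagonal unitary, so $K_N$ may be taken real and symmetric).

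Second, $K_N$ is a reproducing kernel: $\int_0^{2\pi}K_N(\theta,\psi)K_N(\psi,\phi)\,d\psi=K_N(\theta,\phi)$ and $\int_0^{2\pi}K_N(\theta,\theta)\,d\theta=N$. Gaudin's lemma then lets me integrate out $N-n$ of the variables and identify the $n$-level correlation function of the eigenangle process as $R_n^{(N)}(\theta_1,\dots,\theta_n)=\det\bigl(K_N(\theta_i,\theta_j)\bigr)_{i,j=1}^n$. Since the average of the distinct-index sum of $f$ over the process is by definition the integral of $R_n^{(N)}$ against $f$, I obtain the exact identity
\[
\Cor_n(U(N),f)=\frac1N\int_{[0,2\pi)^n}f\!\left(\frac{N\theta_1}{2\pi},\dots,\frac{N\theta_n}{2\pi}\right)\det\bigl(K_N(\theta_i,\theta_j)\bigr)_{i,j=1}^n\,d\theta_1\cdots d\theta_n .
\]

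Third, I would substitute $x_i=N\theta_i/2\pi$. Since
\[
K_N\!\left(\tfrac{2\pi x}{N},\tfrac{2\pi y}{N}\right)=\frac{1}{2\pi}\cdot\frac{\sin(\pi(x-y))}{\sin(\pi(x-y)/N)}=\frac{N}{2\pi}\cdot\frac{\sin(\pi(x-y))}{\pi(x-y)}\bigl(1+O(N^{-2})\bigr),
\]
the Jacobian factor $(2\pi/N)^n$ exactly cancels the $N^n$ pulled out of the $n$ rows of the determinant, and
\[
\Bigl(\tfrac{2\pi}{N}\Bigr)^n\det\bigl(K_N(\tfrac{2\pi x_i}{N},\tfrac{2\pi x_j}{N})\bigr)_{i,j=1}^n\ \xrightarrow{\ N\to\infty\ }\ \det\!\left(\frac{\sin(\pi(x_i-x_j))}{\pi(x_i-x_j)}\right)_{i,j=1}^n
\]
pointwise on $\R^n$, the diagonal entries tending to $1$. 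The leftover prefactor $1/N$ is absorbed by the center-of-mass direction: $f$ and the limiting sine-kernel determinant are translation invariant and $f$ decays rapidly on the hyperplane $\{x_1+\dots+x_n=0\}$, so the integral over $[0,N)^n$ is asymptotic to $N$ times the integral over that hyperplane, which in the $\delta$-notation of the statement is precisely the right-hand side of \eqref{1_thm_n-correlation_eigenvalues_eq}.

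The main obstacle will be making this last step rigorous, that is, justifying the exchange of $\lim_{N\to\infty}$ with the integral and controlling the passage from the torus $[0,2\pi)^n$ to $\R^n$. I would supply an $N$-uniform dominating function: Hadamard's inequality bounds $\bigl|(2\pi/N)^n\det(K_N(2\pi x_i/N,2\pi x_j/N))\bigr|$ by a product of row norms, and the elementary bound $|K_N(\theta,\phi)|\le\min\{\,N/2\pi,\ C/\mathrm{dist}_{\text{torus}}(\theta,\phi)\,\}$ keeps each entry under control both near and away from the diagonal; together with the assumed rapid decay of $f$ (taken strong enough that the resulting bound is integrable over the hyperplane) this powers a dominated-convergence argument, and the same estimates show that configurations with eigenangle differences of order one — equivalently $|x_i-x_j|$ of order $N$, the genuinely toral regime — contribute only $o(1)$. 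The diagonal, where the sine kernel might appear singular, needs no special care since $K_N$ stays bounded there by $N/2\pi$.
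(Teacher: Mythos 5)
Your proposal is correct and is, in substance, the same argument the paper has in mind: the paper merely records the Gaudin-lemma identity
\[
\int_{U(N)} \sideset{}{'}\sum g(\theta_{j_1},\dots,\theta_{j_n})\,dg = \frac{1}{(2\pi)^n}\int_{[0,2\pi]^n} g(y)\det\!\left(\frac{\sin(N(y_i-y_j)/2)}{\sin((y_i-y_j)/2)}\right)dy
\]
and declares the theorem a ``fairly direct consequence,'' while you supply the Weyl integration formula, the identification of the determinantal kernel $K_N$, the mean-spacing rescaling, and the dominated-convergence and localization estimates that the paper leaves implicit. Nothing in your route departs from the paper's; you are simply filling in the sketch.
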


This neat formula for the $n$-correlation of eigenvalues is a fairly direct consequence of the following equality of integrals, which in its turn is an application of Gaudin's lemma \cite[p.~126]{ConreyNotes}: for symmetric Haar measurable functions $g$,
\begin{multline*}
\int_{U(N)} \sideset{}{'}\sum_{1 \leq j_1, \dots, j_n \leq N} g\left(\theta_{j_1}, \dots, \theta_{j_n} \right) dg \\ = \frac{1}{(2\pi)^n} \int_{[0, 2\pi]^n} g(y_1, \dots, y_n) \det \left( \frac{\sin(N(y_i - y_j)/2)}{\sin((y_i - y_j)/2)} \right)_{1 \leq i,j \leq n} dy_1 \dots dy_n
.
\end{multline*}

\subsubsection{$\boldsymbol{n}$-correlation of zeros of $\boldsymbol{L}$-functions (based on an explicit formula)}
Let us suppose that the generalized Riemann hypothesis is true. Under this assumption, the non-trivial zeros of any $L$-function all lie on a line, which allows us to study their $n$-correlation. More concretely, we fix an $L$-function and order its zeros according to their height (with multiplicities). If 
$0 \leq t_1 \leq \dots \leq t_N$ are the $N$ smallest non-negative heights, we set
\begin{align*}
\tilde t_1 = \frac{m}{2 \pi} t_1 \log(t_1), \dots, \tilde t_N = \frac{m}{2 \pi} t_N \log(t_N)
.
\end{align*}
These normalized heights have mean spacing 1, owing to the fact that 
\begin{align*}
N(T) \sim \frac{m}{2\pi} T \log T
\end{align*} 
as $T \to \infty$. In analogy to the case of random unitary matrices, we thus define $n$-correlation of zeros of $L$-functions as follows: 

\begin{defn} [$n$-correlation of zeros of $L$-functions]
Let $f: \R^n \to \C$ be a smooth function with the following properties:
\begin{itemize}
\item $f$ is symmetric, \textit{i.e.}\ $f(x_{\sigma(1)}, \dots, x_{\sigma(n)}) = f(x_1, \dots, x_n)$ for permutations $\sigma \in S_n$;
\item $f$ is translation invariant, \textit{i.e.}\ $f(x_1 + t, \dots, x_n + t) = f(x_1, \dots, x_n)$ for $t \in \R$;
\item $f$ decays rapidly as $\sum_{1 \leq i \leq n} |x_i| \to \infty$ in the hyperplane $\sum_{1 \leq i \leq n} x_i = 0$.
\end{itemize}
If $0\leq t_1 \leq t_2 \leq \dots$ are the non-negative heights of the zeros of some $L$-function $L$, then the $n$-correlation of zeros of $L$ (associated to $f$) is defined by \label{symbol_cor_L}
\begin{align*}
\Cor_n(L, f) = \lim_{N \to \infty} \frac{1}{N} \sideset{}{'}\sum_{1 \leq j_1, \dots, j_n \leq N} f\left( \frac{m}{2\pi} t_{j_1} \log (t_{j_1}), \dots, \frac{m}{2\pi} t_{j_n} \log (t_{j_n}) \right) 
\end{align*}
where $\sideset{}{'}\sum$ indicates that the sum is over pairwise distinct indices.
\end{defn}

As we have mentioned before, Montgomery conjectured and partially proved that the pair correlation of the eigenvalues of a random matrix in $U(N)$ converges to the pair correlation of the zeros of the Riemann $\zeta$-function as $N$ goes to infinity. In fact, he showed that the 2-correlation of zeros of $\zeta$ is equal to the right-hand side in \eqref{1_thm_n-correlation_eigenvalues_eq} (for $n = 2$) \emph{if the Fourier transform of the test function $f$ is supported on $$\left\lbrace (\xi_1, \xi_2) \in \R^2: \sum_{1 \leq i \leq 2} |\xi_i| < 2 \right\rbrace.$$} Moreover, Montgomery conjectured that the $n$-correlation of the zeros of $\zeta$ is equal to the right-hand side in \eqref{1_thm_n-correlation_eigenvalues_eq} for \emph{all} suitable test functions $f$ \cite[p.~184-185]{montgomery73}. Thanks to extensive numerical computations by Odlyzko \cite{odlyzko87, odlyzko89}, Montgomery's conjecture is now widely accepted (but still far from proven). In \cite{rudnick1996}, Rudnick and Sarnak generalized Montgomery's pair correlation theorem to $n$-correlation of the zeros of a fairly general class of $L$-functions. We should mention that Rudnick and Sarnak's result is independent of the generalized Riemann hypothesis, in spite of the fact that their motivation relies on the truth of the hypothesis. However, the results admit a nicer formulation under the assumption of the Riemann hypothesis:

\begin{thm} [$n$-correlation of zeros of $L$-functions, \cite{rudnick1996}] \label{1_thm_n-correlation_zeros_of_L-functs} Assume that the Riemann hypothesis holds for some $L$-function $L$ (according to the definition used in \cite{rudnick1996}). If $f$ is a suitable test function whose Fourier transform is supported on $\left\lbrace (\xi_1, \dots, \xi_n) \in \R^n: \sum_{1 \leq i \leq n} \left| \xi_i \right| < 2/m \right\rbrace$ where $m$ is the degree of $L$, then
\begin{align} \begin{split} \label{1_thm_n-correlation_zeros_of_L-functs_eq}
& \lim_{N \to \infty} \Cor_n(L, f) \\
& \hspace{15pt} = \int_{\R^n} f(x_1, \dots, x_n) \det \left( \frac{\sin(\pi (x_i - x_j))}{\pi (x_i - x_j)} \right)_{1 \leq i,j \leq n} \delta \left( x_1 + \dots + x_n \right) dx_1 \dots dx_n
\end{split}
\end{align}
where $\delta$ is the Dirac mass at zero.
\end{thm}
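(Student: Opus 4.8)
The plan is to reduce the statement about zeros of $L$ to a statement about the prime coefficients $b_n$ via an $n$-level explicit formula, and then to extract the sine-kernel determinant from a combinatorial analysis of the resulting prime sums; this is the strategy of Rudnick and Sarnak, which lifts Montgomery's pair-correlation argument to arbitrary $n$. I would note first that the restriction to the Riemann hypothesis in the statement is cosmetic: the argument never uses that the nontrivial zeros lie on $\sigma = 1/2$, only that they lie in the critical strip and obey the counting asymptotics \eqref{1_eq_for_N(T)}; GRH merely allows one to write a zero as $1/2 + \imaginary t$ with $t$ real, which is what makes the normalized heights $\tilde t_j$, and hence $\Cor_n(L, f)$, meaningful in the form stated.

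Next I would fix a large height parameter $T$, a smooth weight localizing the zeros near height $T$, and, for a test function $f$ as in the hypothesis, rewrite the primed $n$-correlation sum as an integral transform of $\prod_{i=1}^n\bigl(\sum_{\gamma}(\text{weighted})\bigr)$ over $n$ "zero variables", using inclusion--exclusion over set partitions of $\{1,\dots,n\}$ to pass from the pairwise-distinct sum to unrestricted sums plus corrections indexed by coarser partitions. To each factor I would apply the Weil-type explicit formula attached to $L$: using the functional equation \eqref{1_eq_functional_eq_for_L_funct}, the shape \eqref{1_eq_defn_completed_L_funct} of $\Lambda(s)$ and the Euler product axiom, a sum over zeros against a test function equals an archimedean term coming from $\gamma(s)$, which after normalization by $\tfrac{m}{2\pi}\log T$ reproduces the density part of \eqref{1_eq_for_N(T)}, minus a sum over prime powers $p^r$ weighted by the coefficients of $-L'/L$ (i.e.\ by $b_{p^r}\log p^r$), evaluated at $\widehat f$; the Ramanujan bound $a_n = O_\delta(n^\delta)$, equivalently $b_n = O(n^\theta)$ with $\theta < 1/2$, controls the size of these coefficients.

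The crucial point — and the step I expect to be the main obstacle — is splitting the resulting $n$-fold prime sum into a \emph{diagonal} part and an \emph{off-diagonal} part. Because $\widehat f$ is supported in the box $\{\sum_i|\xi_i| < 2/m\}$, the prime variables are effectively constrained to $\prod_i p_i^{r_i} \ll T^{\,c}$ with $c < 1$; one then shows, combining a zero-free region / prime number theorem for the $L$-functions of \cite{rudnick1996} with orthogonality of the characters $p \mapsto p^{\imaginary t}$ after integrating against the localizing weight in $T$, that only the "paired" configurations survive as $N, T \to \infty$ — those where the prime powers attached to the $n$ variables match up in pairs according to the blocks of a set partition — while every genuinely off-diagonal term is $o(N)$ precisely because $c < 1$. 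This is both the technical heart (estimating the error terms and computing the paired main terms) and the place where the support hypothesis is genuinely used: beyond $2/m$ one would need to understand correlations of the coefficients $b_n$ themselves, which is not available.

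Finally I would assemble the surviving contributions through the inclusion--exclusion over set partitions: a singleton block contributes the normalized density (a $1$ on the diagonal of the kernel), a paired block $\{i,j\}$ contributes a factor of the form $\tfrac{\sin\pi(x_i-x_j)}{\pi(x_i-x_j)}$ via the Fourier transform of the indicator of the box, and the signs coming from the Möbius function of the partition lattice reorganize the sum, by the standard Dyson--Montgomery computation, into $\det\bigl(\tfrac{\sin\pi(x_i-x_j)}{\pi(x_i-x_j)}\bigr)_{1\le i,j\le n}$; the translation invariance of $f$ then collapses one variable and produces the factor $\delta(x_1+\dots+x_n)$, yielding the right-hand side of \eqref{1_thm_n-correlation_zeros_of_L-functs_eq}. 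I would finish by checking the normalization constants — $\tfrac{m}{2\pi}\log t$ versus $\tfrac{m}{2\pi}\log T$, and the contribution of the localizing weight — to confirm that the limit is exactly the random-matrix answer \eqref{1_thm_n-correlation_eigenvalues_eq}.
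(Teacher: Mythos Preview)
The paper does not give its own proof of this theorem: it is stated as a background result from \cite{rudnick1996} in the introductory chapter, and the only remark the paper makes about its proof is that it ``relies on an explicit formula, which relates sums over zeros of $L$-functions to sums over prime numbers.'' The paper then reproduces Rudnick and Sarnak's proof of that explicit formula (Theorem~\ref{1_thm_explicit_formula}), but not of the $n$-correlation theorem itself.

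Your sketch is a faithful outline of Rudnick and Sarnak's actual argument in \cite{rudnick1996}: the $n$-level explicit formula, inclusion--exclusion over set partitions, the diagonal/off-diagonal split in the prime sums, the role of the support condition $\sum|\xi_i| < 2/m$ in killing the off-diagonal terms, and the combinatorial reassembly into the sine-kernel determinant. This is consistent with the one sentence the paper devotes to the proof strategy, so there is nothing to compare against beyond noting that your proposal correctly expands on what the paper only gestures at.
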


As ``all work on zeros of $L$-functions'' \cite[p.~274]{rudnick1996}, Theorem~\ref{1_thm_n-correlation_zeros_of_L-functs} relies on an explicit formula, which relates sums over zeros of $L$-functions to sums over prime numbers. In fact, the first explicit formula of this kind can be found in Riemann's paper \cite{riemann}.

\subsubsection{An explicit formula}
In this section, we reproduce Rudnick and Sarnak's proof for the explicit formula used in \cite{rudnick1996}, which we will compare to our proof of what we call an explicit formula for eigenvalues in Section~\ref{5_sec_explicit_formulae}.

As we have hinted at before, there are various (conjecturally equivalent) notions of $L$-functions. In \cite{rudnick1996}, Rudnick and Sarnak do not work with the Selberg class, but with $L$-functions $L(s, \pi)$ that are attached to an irreducible cuspidal automorphic representation $\pi$ of $GL_m$ over $\Q$. To date, it is not known whether these $L$-functions are members of the Selberg class or not. In particular, the notation introduced below does not match the notation used in Section~\ref{1_sec_L_functs}. In this context, the exact definition of cuspidal automorphic representations is irrelevant. For our purposes, it is sufficient to know that $L(s, \pi)$ has a Euler product and a functional equation:
\begin{enumerate}
\item (functional equation) The completed $L$-function is given by \begin{align*}
\Lambda(s, \pi) = L(s, \pi) \prod_{i = 1}^m \Gamma_\R \left( s + \mu_\pi(i) \right) 
\end{align*} 
where $\Gamma_\R(s) = \pi^{-s/2} \Gamma(s/2)$ and $\Re(\mu_\pi(i)) > -1/2$. If $\Lambda(s, \tilde \pi)$ is the completed $L$-function attached to the contragredient $\tilde \pi$ associated to $\pi$, then \label{1_page_functional_equation_RS}
\begin{align*} 
\Lambda(s, \pi) = \tau(\pi) Q_\pi^{-s} \Lambda(1 - s, \tilde \pi)
\end{align*}
where $\tau(\pi) \in \C \setminus \{0\}$ and $Q_\pi > 0$.
\item (Euler product) For $\Re s > 3/2$, the logarithmic derivative of $L(s, \pi)$ is of the following form:
\begin{align*}
\frac{L'}{L} (s, \pi) = - \sum_{n \geq 1} \frac{b_\pi(n)}{n^s}
\end{align*}
with $b_\pi(n) = 0$ unless $n = p^r$ where $p$ is a prime and $r$ a positive integer.
\end{enumerate}

\begin{thm} [explicit formula, \cite{rudnick1996}] \label{1_thm_explicit_formula} Let $g$ be a smooth compactly supported function. Let $H(s) = \int_{-\infty}^{+\infty} g(u) e^{\imaginary su} du$ and $h(s) = H((s - 1/2)/i)$ for $s \in \C$. If the symbol $\sum_{\rho_\pi}$ stand for the sum over all nontrivial zeros of $L(s, \pi)$, then 
\begin{align}
\begin{split} \label{1_thm_explicit_formula_eq}
\sum_{\rho_\pi} h(\rho_\pi) - \delta(\pi) \left[ h(0) + h(1) \right] ={} & \frac{1}{2 \pi} \int_{-\infty}^{\infty} h \left( 1/2 + \imaginary r \right) \Bigg( \log Q_\pi \\
& \hspace{30pt} + \sum_{i = 1}^m \bigg[ \frac{\Gamma_\R'}{\Gamma_\R} (1/2 + \mu_\pi(i) + \imaginary r) \\
& \hspace{30pt} + \frac{\Gamma_\R'}{\Gamma_\R} (1/2 + \overline{\mu_\pi(i)} - \imaginary r) \bigg] \Bigg) dr \\
& - \sum_{n \geq 1} \left( \frac{b_\pi(n)}{\sqrt{n}} g(\log n) + \frac{\overline{b_\pi(n)}}{\sqrt{n}} g(-\log n) \right)
\end{split}
\end{align}
where $\delta(\pi) = 1$ if $\pi$ corresponds to the Riemann $\zeta$-function and $\delta(\pi) = 0$ otherwise.
\end{thm}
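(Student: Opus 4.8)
The plan is to run the classical contour-integration argument for the logarithmic derivative of the completed $L$-function, following \cite{rudnick1996}. The starting point is the identity
\[
\sum_{\rho_\pi} h(\rho_\pi) - \delta(\pi)\bigl[h(0) + h(1)\bigr] = \frac{1}{2\pi\imaginary}\Bigl(\int_{(c)} - \int_{(1-c)}\Bigr) h(s)\,\frac{\Lambda'}{\Lambda}(s,\pi)\,ds
\]
for any fixed $c > 3/2$, which is Cauchy's residue theorem applied to the rectangle with vertical sides $\Re s = c$ and $\Re s = 1-c$. Making this rigorous requires: (i) $\Lambda(s,\pi)$ is entire of order $1$, so the horizontal segments of the contour contribute nothing as their height tends to infinity and $\frac{\Lambda'}{\Lambda}$ grows at most polynomially on the two vertical lines; (ii) all nontrivial zeros of $L(s,\pi)$ lie in $0 \le \Re s \le 1$, hence strictly between the two lines; and (iii) the only poles of $\Lambda(s,\pi)$ are the simple poles at $s = 0$ and $s = 1$ occurring precisely when $\pi$ corresponds to $\zeta$, which is the source of the $\delta(\pi)$-term. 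Because $g$ is smooth and compactly supported, $H$ and hence $h$ are entire and decay faster than any polynomial on vertical lines, so all integrals that appear are absolutely convergent.

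On the right-hand line I would write $\frac{\Lambda'}{\Lambda}(s,\pi) = \frac{L'}{L}(s,\pi) + \sum_{i=1}^m \frac{\Gamma_\R'}{\Gamma_\R}(s + \mu_\pi(i))$, insert the Dirichlet series $\frac{L'}{L}(s,\pi) = -\sum_{n \ge 1} b_\pi(n)\, n^{-s}$ (valid term by term for $\Re s > 3/2$), and use the elementary evaluation $\frac{1}{2\pi\imaginary}\int_{(c)} h(s)\, n^{-s}\, ds = g(\log n)/\sqrt n$, which one proves by unfolding $h$ into $g$ through its definition, interchanging the two integrals, and recognizing a Dirac mass. This yields the term $-\sum_n b_\pi(n)\, n^{-1/2} g(\log n)$ plus a gamma-factor integral that I would move to the critical line $\Re s = 1/2$; no poles are crossed, since the poles of $\frac{\Gamma_\R'}{\Gamma_\R}(s + \mu_\pi(i))$ all have real part $< 1/2$ because $\Re\mu_\pi(i) > -1/2$. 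On the left-hand line I would first apply the functional equation in logarithmic form, $\frac{\Lambda'}{\Lambda}(s,\pi) = -\log Q_\pi - \frac{\Lambda'}{\Lambda}(1-s,\tilde\pi)$, then substitute $s \mapsto 1-s$ so that the argument of $\frac{\Lambda'}{\Lambda}$ again lies in the region of absolute convergence, now for $\tilde\pi$. Since $b_{\tilde\pi}(n) = \overline{b_\pi(n)}$ and $\mu_{\tilde\pi}(i) = \overline{\mu_\pi(i)}$, and since the reflection $s \mapsto 1-s$ turns $h$ into the function attached to $u \mapsto g(-u)$, the same Mellin computation produces $-\sum_n \overline{b_\pi(n)}\, n^{-1/2} g(-\log n)$, a second gamma-factor integral (again shifted to $\Re s = 1/2$), and the term $\log Q_\pi \cdot g(0) = \frac{1}{2\pi}\int_{-\infty}^{\infty} h(1/2 + \imaginary r)\log Q_\pi\, dr$.

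Assembling the pieces, the two gamma-factor integrals combine on $\Re s = 1/2$ — after a harmless change of variable $r \mapsto -r$ in the one coming from the left line — into $\frac{1}{2\pi}\int h(1/2 + \imaginary r)\sum_i\bigl[\frac{\Gamma_\R'}{\Gamma_\R}(1/2 + \mu_\pi(i) + \imaginary r) + \frac{\Gamma_\R'}{\Gamma_\R}(1/2 + \overline{\mu_\pi(i)} - \imaginary r)\bigr]\,dr$, which together with the $\log Q_\pi$ term is exactly the archimedean side of \eqref{1_thm_explicit_formula_eq}; the two prime sums assemble into $\sum_n\bigl(b_\pi(n)\, n^{-1/2} g(\log n) + \overline{b_\pi(n)}\, n^{-1/2} g(-\log n)\bigr)$ with the overall minus sign of the statement. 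I expect the main obstacle to be the analytic bookkeeping behind the contour shift — controlling $\frac{\Lambda'}{\Lambda}(s,\pi)$ uniformly on the horizontal segments $\Im s = \pm T$ as $T \to \infty$ and on the two vertical lines — which rests on the order-$1$ Hadamard factorization of $\Lambda(s,\pi)$ and the accompanying density estimate for its zeros; the remaining steps are the routine bookkeeping of Fourier–Mellin transforms and the functional equation.
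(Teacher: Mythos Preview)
Your proposal is correct and follows essentially the same approach as the paper's sketch of proof: both set up the difference of vertical-line integrals of $h(s)\frac{\Lambda'}{\Lambda}(s,\pi)$, identify it with the zero sum via the residue theorem, apply the functional equation on the left line followed by $s\mapsto 1-s$, split $\frac{\Lambda'}{\Lambda}$ into its $L'/L$ and $\Gamma_\R'/\Gamma_\R$ parts, evaluate the Dirichlet-series integral as $g(\pm\log n)/\sqrt{n}$, and shift the gamma-factor integrals to $\Re s = 1/2$. Your write-up is in fact more careful about the analytic justifications (decay of $h$, location of poles of $\Gamma_\R'/\Gamma_\R$, Hadamard factorization) than the paper's sketch.
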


As mentioned above, this formula relates a sum over the nontrivial zeros of $L(s, \pi)$ to a sum over the prime numbers. Indeed, the left-hand side in \eqref{1_thm_explicit_formula_eq} is essentially a sum over the zeros, while the last line on the right-hand side can be viewed as a sum over the primes given that $b_\pi(n)$ vanishes unless $n = p^r$ for some prime $p$ and some positive integer $r$.


\begin{proof}[Sketch of Proof] Recall that in the vertical strip delimited by $-1 \leq \Re s \leq 2$ the function $\Lambda'(s, \pi)/ \Lambda(s, \pi)$ has simple poles at the non-trivial zeros of $L(s, \pi)$ with residues the multiplicity of the zero (and in the case of $\zeta(s)$ a simple pole with residue $-1$ at $s = 0,1$). We consider the following difference of integrals:
\begin{align*}
\text{Zeros}(\pi) \defeq \frac{1}{2\pi \imaginary} \int_{\Re s = 2} \frac{\Lambda'(s, \pi)}{\Lambda(s, \pi)} h(s) ds - \frac{1}{2\pi \imaginary} \int_{\Re s = -1} \frac{\Lambda'(s, \pi)}{\Lambda(s, \pi)} h(s) ds
.
\end{align*}
Given that $h(s)$ is rapidly decreasing in $\Im s$ and entire, we may view this difference of integrals as a closed contour integral. Hence, the residue theorem allows us to infer that $\text{Zeros}(\pi)$ is equal to the left-hand side in \eqref{1_thm_explicit_formula_eq}.

In a next step we show that $\text{Zeros}(\pi)$ is also equal to the right-hand side in \eqref{1_thm_explicit_formula_eq}. To this end, we apply the functional equation to the logarithmic derivative that is integrated along $\Re s = -1$:
\begin{align*}
\text{Zeros}(\pi) ={} & \frac{1}{2\pi \imaginary} \int_{\Re s = 2} \frac{\Lambda'(s, \pi)}{\Lambda(s, \pi)} h(s) ds - \frac{1}{2\pi \imaginary} \int_{\Re s = -1} \left[ -\log Q_\pi - \frac{\Lambda'(1 - s, \tilde{\pi})}{\Lambda(1 - s, \tilde{\pi})}\right] \! h(s) ds
.
\intertext{The change of variables $s \mapsto 1 - s$ yields}
\text{Zeros}(\pi) ={} & \frac{1}{2\pi \imaginary} \int_{\Re s = 2} \frac{\Lambda'(s, \pi)}{\Lambda(s, \pi)} h(s) ds + \frac{1}{2\pi \imaginary} \int_{\Re s = 2} \left[ \log Q_\pi + \frac{\Lambda'(s, \tilde{\pi})}{\Lambda(s, \tilde{\pi})}\right] h(1 - s) ds
.
\end{align*}
Let us show the required expression for the first integral:
\begin{align*}
\frac{1}{2\pi \imaginary} \int_{\Re s = 2} \frac{\Lambda'(s, \pi)}{\Lambda(s, \pi)} h(s) ds ={} & \frac{1}{2\pi \imaginary} \int_{\Re s = 2} \left[ \sum_{i = 1}^m \frac{\Gamma_\R'}{\Gamma_\R}(s + \mu_\pi(i)) + \frac{L'(s, \pi)}{L(s, \pi)} \right] h(s) ds
\end{align*}
For the first term, shifting the contour of integration to $\Re s = 1/2$ gives
\begin{multline*}
\frac{1}{2\pi \imaginary} \int_{\Re s = 2} \sum_{i = 1}^m \frac{\Gamma_\R'}{\Gamma_\R}(s + \mu_\pi(i)) h(s) ds 
\\ = \frac{1}{2\pi} \int_{-\infty}^{+\infty} \sum_{i = 1}^m \frac{\Gamma_\R'}{\Gamma_\R} (1/2 + \mu_\pi(i) + \imaginary r) h(1/2 + \imaginary r) dr
.
\end{multline*}
For the second term, applying the fact that the Euler product provides an explicit expression for $L'(s, \pi)/ L(s, \pi)$ sufficiently far to the right of the critical line results in
\begin{align*} 
\frac{1}{2\pi \imaginary} \int_{\Re s = 2} \frac{L'(s, \pi)}{L(s, \pi)} h(s) ds ={} & - \sum_{n \geq 1} \frac{b_\pi(n)}{\sqrt n} \frac{1}{2 \pi} \int_{-\infty}^{+\infty} H(r) e^{-\imaginary r \log n} dr \\
={} & - \sum_{n \geq 1} \frac{b_\pi(n)}{\sqrt n} g(\log n)
.
\end{align*}
Apply the same computation to the second integral in our expression for $\text{Zeros}(\pi)$, using that $\mu_{\tilde{\pi}}(i) = \overline{\mu_\pi(i)}$ and $b_{\tilde{\pi}}(n) = \overline{b_\pi(n)}$. Collecting the terms gives the right-hand side in \eqref{1_thm_explicit_formula_eq}.
\end{proof}

In summary, the explicit formula is an application of the Euler product and the functional equation; in other words, it is an application of an explicit expression for $L'(s)/L(s)$ that holds sufficiently far to the right of the critical line and a symmetry between the values of $L(s)$ to the right and to the left of the critical line.

\subsubsection[$n$-correlation of zeros of $L$-functions and eigenvalues]{$\boldsymbol{n}$-correlation of zeros of $\boldsymbol{L}$-functions and eigenvalues}
On the one hand, we have seen that for test functions $f$ whose Fourier transform has sufficiently small support 
\begin{align*}
\lim_{N \to \infty} \Cor_n(U(N), f) = \Cor_n(L, f)
;
\end{align*}
in other words, the right-hand sides of the equalities in \eqref{1_thm_n-correlation_eigenvalues_eq} and \eqref{1_thm_n-correlation_zeros_of_L-functs_eq} are identical. On the other hand, the proofs of the two equalities do not have much in common: in the case of eigenvalues, the classical proof is based on Gaudin's lemma, \textit{i.e.}\ the properties of Haar measure on the unitary group; whereas in the case of zeros of $L$-functions, the proof is based on an explicit formula, \textit{i.e.}\ a link between zeros and primes. In consequence, it is hard to grasp the exact nature of the connection between eigenvalues of a random unitary matrix of large size and the zeros of any $L$-function. However, understanding the nature of this connection might be a possible approach towards a proof of the Riemann hypothesis: ``[Montgomery's conjecture] fits well with the view that there is a linear operator (not yet discovered) whose eigenvalues characterize the zeros of the zeta function'' \cite[p.~184]{montgomery73}. In addition the conjecture suggests that ``if there is a linear operator whose eigenvalues characterize the zeros of the zeta function, we might expect that it is complex Hermitian or unitary'' \cite[p.~184]{montgomery73}, in which case its eigenvalues would lie on the unit circle, \textit{i.e.}\ a line.

In \cite{CS}, Conrey and Snaith present a unified approach to $n$-correlation which is applicable to both the eigenvalues of a random unitary matrix and the zeros of the Riemann $\zeta$-function. Their approach yields a formula that gives all of the lower-order terms in the $n$-correlation of both the zeros of the $\zeta$-function (conjecturally) and the eigenvalues of a random unitary matrix (rigorously). Moreover, the structures of the two formulas for $n$-correlation are identical. Their derivation is based on a ratio conjecture for the $\zeta$-function and a ratio theorem for averages of characteristic polynomials from $U(N)$, respectively.

Comparing \cite{CS} and \cite{rudnick1996}, the main disadvantage of Rudnick and Sarnak's method is that there is no (obvious) way to translate if into a random matrix theory framework, as it is based on a connection to prime numbers. On the other hand, the principal disadvantage of Conrey and Snaith's approach is that it is conjectural. Moreover, there is (almost) no hope of somehow making their heuristics rigorous, given that (to our knowledge) all mathematically rigorous work on the zeros of $L$-functions is based on an explicit formula, while the derivation in \cite{CS} is based on a ratio conjecture.

\subsection{On moments} \label{1_sec_on_moments}
As we have discussed in Section~\ref{1_sec_on_correlations}, there is a widely accepted conjecture that the distribution of the heights of the non-trivial zeros of the Riemann $\zeta$-function (or indeed any $L$-function) and the distribution of the eigenangles of a random unitary matrix of large size are the same (when normalized so that their mean spacing equals 1). Recall that the eigenvalues of a matrix $g \in U(N)$ are by definition equal to the zeros of its characteristic polynomial. Therefore, the distribution of the values (on the critical line) taken by an $L$-function might be related to the distribution of the values (on the unit circle) taken by the characteristic polynomial of a random unitary matrix of large size. This groundbreaking idea motivated Keating and Snaith to study the moments of characteristic polynomials of a random matrix in $U(N)$ \cite{KS00zeta}, which might thus be expected to converge to a quantity that is related to the moments of $L$-functions as $N \to \infty$. This study led to the discovery of a (conjectured) relationship between moments of the $\zeta$-function and moments of characteristic polynomials from the unitary group, which has stimulated a great deal of subsequent work. We will present a selection thereof after having discussed \cite{KS00zeta} in a bit more detail. 

Let us define the characteristic polynomial of $g \in U(N)$ (or any other invertible matrix $g$) as \label{symbol_char_pol}
\begin{align*}
\chi_g: \C \to \C; z \mapsto \det \left( I - z g^{-1} \right)  
\end{align*}
where $I$ denotes the identity matrix of the appropriate size.

\begin{defn} [moments of a random characteristic polynomial from $U(N)$] Let $k$ be a non-negative integer. The $2k$-th moment of a random characteristic polynomial from $U(N)$ is given by \label{symbol_moment_char_pol}
\begin{align*}
I_k(U(N)) = \int_{U(N)} \left| \chi_g (1) \right|^{2k} dg
.
\end{align*}
\end{defn}

In \cite{KS00zeta}, Keating and Snaith use the Selberg integral to  derive the following expression for $I_k(U(N))$: for any non-negative integer $k$,
\begin{align} \label{1_eq_moments_of_char_pol}
I_k(U(N)) = \prod_{0 \leq j \leq N - 1} \frac{j! (j + 2k)!}{(j + k)!^2}
.
\end{align}
This equality entails that as $N \to \infty$ the leading term in $I_k(U(N))$ is given by
\begin{align} \label{1_eq_leading_coeff_moments_of_char_pol}
f_k(U(N)) \defeq \lim_{N \to \infty} \frac{1}{N^{k^2}} I_k(U(N)) = \prod_{j = 0}^{k - 1} \frac{j!}{(j + k)!}
.
\end{align}

Let us turn to how Keating and Snaith arrive at a conjecture for the moments of the Riemann $\zeta$-function, taking the equality in \eqref{1_eq_leading_coeff_moments_of_char_pol} as a starting point.

\begin{defn} [moments of $L$-functions] Let $L(s)$ be an $L$-function and $k$ a non-negative integer. The $2k$-th moment of $L$ is given by \label{symbol_moment_L}
\begin{align*}
I_k(L, T) = \frac{1}{T} \int_0^T \left| L(1/2 + \imaginary t) \right|^{2k} dt
.
\end{align*} 
\end{defn}

In 1918, Hardy and Littlewood proved an asymptotic estimate for the second moment $I_1(\zeta, T)$ of the $\zeta$-function as $T \to \infty$. Ingham showed a similar estimate for $I_2(\zeta, T)$ in 1926. To this day, no asymptotic formula for any higher moment has been found. However, there is the following folklore conjecture about the leading term in $I_k(\zeta, T)$ as $T \to \infty$: for any non-negative integer $k$, there exists a constant $f_k$ with the property that
\begin{align*}
\lim_{T \to \infty} \frac{1}{(\log T)^{k^2}} I_k(\zeta, T) = f_k a_k
\end{align*}
where the arithmetic factor $a_k$ is some product over the primes, whose exact definition can be found in \cite[p.~59]{KS00zeta}. Note that $I_k(U(N))$ is of order $N^{k^2}$, while $I_k(\zeta, T)$ is conjectured to be of order $(\log T)^{k^2}$. This suggests an asymptotic correspondence between $N$ and $\log T$. In fact, this correspondence is also obtained by identifying the mean density of the eigenangles (which is equal to $N / 2\pi$) with the means density of the zeros at height $T$ (which is equal to $\log (T / 2\pi) / 2\pi$ by \eqref{1_eq_for_N(T)}) \cite[p.~59]{KS00zeta}. Further note that no value of $f_k$ was suggested for general non-negative integers $k$. In 1998, Keating and Snaith noticed that $$f_k = f_k(U(N))$$ for $k = 1,2$, leading them to conjecture that this equality holds for all non-negative integers $k$. Their conjecture is corroborated by independent conjectures for the values taken by $f_3$ and $f_4$ as well as numerical evidence \cite{KS00zeta}. According to Keating and Snaith's conjecture, as $T \to \infty$ the leading term in the $2k$-th moment of the $\zeta$-function (and presumably other $L$-functions) thus splits into a product of two factors: an arithmetic factor $a_k$ and a random matrix theory factor $f_k$. We will discuss the behavior of the lower order terms in the next section and again in Section~\ref{1_sec_POD12}.

\subsubsection[A recipe for conjecturing the moments of $L$-functions]{A recipe for conjecturing the moments of $\boldsymbol{L}$-functions}
This section is devoted to what we call the CFKRS recipe for conjecturing the lower order terms in the moments of $L$-functions due to Conrey, Farmer, Keating, Rubinstein and Snaith \cite{CFKRS05}. 

The CFKRS recipe is based on a so-called approximate functional equation, which is best explained by writing the functional equation listed in Section~\ref{1_sec_L_functs} in asymmetric form: if $L(s)$ is a member of the Selberg class, then 
\begin{align*}
L(s) = X(s) \overline{L(1 - \bar{s})}
\end{align*}
where $X(s) = \overline{\gamma(1 - \bar{s})} / \gamma(s)$. The approximate functional equation states that for all real numbers $x$, $y$ and for all $s$ in the critical
strip whose imaginary part equals $Cxy$ (where $C$ is some constant that depends on the parameters in the functional equation), it holds that
\begin{align*}
L(s) = \sum_{1 \leq n \leq x} \frac{a_n}{n^s} + X(s) \sum_{1 \leq n \leq y} \frac{\overline{a_n}}{n^{1 - s}} + \error
\end{align*}
with some bounded error. An informal justification of this equality is that if $x$ is large, the right-hand side resembles the Dirichlet series representation of $L(s)$, and if $x$ is small (and $y$ thus comparatively large), the right-hand side resembles the Dirichlet series representation of $X(s) \overline{L(1 - \bar s)}$.

We now give an outline of the CFKRS recipe for conjecturing the $2k$-th moment of an $L$-function proposed in \cite[p.~52ff]{CFKRS05}:
\begin{enumerate}
\item Take the product of $2k$-shifted $Z$-functions: the $Z$-function associated to an $L$-function $L(s)$ is given by
\begin{align*}
Z(s) = X(s)^{-1/2} L(s)
.
\end{align*}
We thus consider a sequence of pairwise distinct real shifts $\calD = (\delta_1, \dots, \delta_{2k})$ and define
\begin{align*}
Z \left( \frac{1}{2} + \imaginary t, \calD \right) = Z \left( \frac{1}{2} + \imaginary t + \delta_1 \right) \cdots Z \left( \frac{1}{2} + \imaginary t + \delta_{2k} \right)
.
\end{align*}
It is worth noting that
\begin{align*}
\lim_{\calD \to 0} \frac{1}{T} \int_0^T Z \left( \frac{1}{2} + \imaginary t, \calD \right) dt = I_k(L, T) 
\end{align*}
because $Z(1/2 + \imaginary t)$ is real and $|X(1/2 + \imaginary t)| = 1$ (for $t \in \R$).
This limit is the quantity we are interested in but the shifts are necessary in order to see the structure of these integrals, and to avoid poles of high order.
\item Replace each $L$-function that appears in $Z \left( 1/2 + \imaginary t, \calD \right)$ by the approximate functional equation, ignoring the error term as well as the bounds on the two sums in the main term. Formally multiply out the resulting expression to obtain a sum of $2^{2k}$ terms.
\item Keep the $\binom{2k}{k}$ terms for which the product of $X$ factors (that come from the definition of $Z(s)$ and the approximate functional equation) is not rapidly oscillating --which is a reasonable simplification because integrals of rapidly oscillating functions are vanishingly small. More concretely, fix one of the $2^{2k}$ terms, \textit{i.e.}\ fix two subsets $\calA, \calB \subset \calD$ that partition $\calD$, then the $X$ factor in this term is equal to
\begin{align*}
X \left(\frac{1}{2} + \imaginary t, \calA, \calB \right) = \prod_{\alpha \in \calA} X \left( \frac{1}{2} + \imaginary t + \alpha \right)^{-1/2} \prod_{\beta \in \calB} X \left( \frac{1}{2} + \imaginary t + \beta \right)^{1/2}
.
\end{align*}
Keeping in mind that $X(s)$ is defined as a rational function of $\Gamma$-functions, one can show that $X \left(1/2 + \imaginary t, \calA, \calB \right)$ is not rapidly oscillating if and only if $|\calA| = k = |\calB|$, in which case
\begin{align*}
X \left(\frac{1}{2} + \imaginary t, \calA, \calB \right) = \left( \frac{Q^{2/m} t}{2} \right)^{\frac{m}{2} \left( \sum_{\alpha \in \calA} \alpha - \sum_{\beta \in \calB} \beta \right)} \left( 1 + O \left( \frac{1}{t} \right) \right)
\end{align*}
as $t \to \infty$. Conclude this step, by using this asymptotic equality to simplify the non-oscillating $X$ factors. These heuristics result in the following simplified expression for $Z(1/2 + \imaginary t)$:
\begin{align*}
g_t \defeq {} & \sum_{I, J} \left( \frac{Q^{2/m} t}{2} \right)^{\frac{m}{2} \left( \sum_{i \in I} \delta_i - \sum_{j \in J} \delta_j \right)} \prod_{i \in I} \left[ \sum_{n \geq 1} \frac{a_n}{n^{1/2 + \imaginary t + \delta_i}} \right] \prod_{j \in J}  \left[ \sum_{n \geq 1} \frac{\overline{a_n}}{n^{1/2 - \imaginary t - \delta_j}}\right]
\end{align*}
where the sum is over subsets $I,J$ of $[2k]$ with $I \cup J = [2k]$ and $|I| = k = |J|$.
\item In each of the remaining $\binom{2k}{k}$ terms of $g_t$, discard everything except the  so-called diagonal sum, and call the resulting expression $M(1/2 + \imaginary t, \calD)$: expanding the products over $I$ and $J$ that appear in $g_t$ yields
\begin{align*}
g_t = {} & \sum_{I, J} \left( \frac{Q^{2/m} t}{2} \right)^{\frac{m}{2} \left( \sum_{i \in I} \delta_i - \sum_{j \in J} \delta_j \right)} \\
& \times \hspace{20pt} \sum_{n_1, \dots, n_{2k} \geq 1} \left[ \prod_{i \in I} \frac{a_{n_i}}{n_i^{1/2 + \delta_i}} \right] \left[ \prod_{j \in J} \frac{\overline{a_{n_j}}}{n_j^{1/2 - \delta_j}} \right] \left[ \frac{\prod_{j \in J} n_j}{\prod_{i \in I} n_i} \right]^{\imaginary t}
.
\end{align*}
For each pair of subsets $I$, $J$ the diagonal sum (\textit{i.e.}\ the part of $g_t$ that contributes to $M(1/2 + \imaginary t, \calD)$) runs through all sequences of positive integers $(n_1, \dots, n_{2k})$ so that $\prod_{i \in I} n_i = \prod_{j \in J} n_j$. Hence,
\begin{align*}
M \left( \frac{1}{2} + \imaginary t, \calD \right) = {} & \sum_{I, J} \left( \frac{Q^{2/m} t}{2} \right)^{\frac{m}{2} \left( \sum_{i \in I} \delta_i - \sum_{j \in J} \delta_j \right)} \\
& \times \hspace{20pt} \sum_{n_1, \dots, n_{2k} \geq 1} \left[ \prod_{i \in I} \frac{a_{n_i}}{n_i^{1/2 + \delta_i}} \right] \left[ \prod_{j \in J} \frac{\overline{a_{n_j}}}{n_j^{1/2 - \delta_j}} \right] 
\end{align*}
where the two sums are over subsets $I,J$ of $[2k]$ with $I \cup J = [2k]$ and $|I| = k = |J|$ and over sequences $(n_1, \dots, n_{2k})$ of $2k$ positive integers so that $\prod_{i \in I} n_i = \prod_{j \in J} n_j$, respectively.
A naive justification for discarding the off-diagonal sums is that
\begin{align*}
\frac{1}{T} \int_0^T \left[ \frac{\prod_{j \in J} n_j}{\prod_{i \in I} n_i} \right]^{\imaginary t} dt = \begin{cases} 1 &\text{if } \prod_{i \in I} n_i = \prod_{j \in J} n_j, \\ o(1) &\text{otherwise.}\end{cases}
\end{align*}
The reason why we call this justification naive is that the number of off-diagonal terms exceeds the number of diagonal terms by far. However, the authors of \cite{CFKRS05} stress that the steps of their recipe cannot be justified.
\item The conjecture is that
\begin{align*}
\frac{1}{T} \int_0^T Z \left( \frac{1}{2} + \imaginary t, \calD \right) dt = \frac{1}{T} \int_0^T M \left( \frac{1}{2} + \imaginary t, \calD \right) dt + O \left( T^{-1/2 + \varepsilon} \right)
.
\end{align*}
\end{enumerate}
In \cite{CFKRS05}, the authors carry out involved ad hoc computations in order to \emph{prove} the following reformulation of the integral on the right-hand side of their conjectured equality:

\begin{conj} [moment conjecture, \cite{CFKRS05}]
\label{1_conj_CFKRSconj} 
Let $L(s)$ be an $L$-function and $k$ a non-negative integer. Then for all $\varepsilon > 0$ 
\begin{align*}
I_k(L, T) = \frac{1}{T} \int_0^T P_k \left(L, m \log \frac{Q^{2/m} t}{2\pi}\right)dt + O \left( T^{-1/2 + \varepsilon} \right)\end{align*}
where the function $t \mapsto P_k(L, t)$ is given by a rather complicated $2k$-fold residue (defined in \cite[p.~63]{CFKRS05}). Recall that the parameters $Q$ and $m$ appear in the functional equation of $L$, and are defined in \eqref{1_eq_defn_completed_L_funct}.
\end{conj}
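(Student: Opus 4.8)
The plan is to combine the heuristic CFKRS recipe (steps (1)--(5) above) with a rigorous \emph{reformulation} step that repackages the diagonal main term $M(\tfrac12+\imaginary t,\calD)$ as the $2k$-fold residue $P_k$. Since recipe steps (2)--(4) -- discarding the error term and the length cutoffs in the approximate functional equation, retaining only the non-oscillating $X$-factors, and throwing away the off-diagonal contribution -- cannot be justified, the final assertion is and remains a conjecture; the only thing one actually \emph{proves} is the identity
\[
\lim_{\calD\to0}\frac1T\int_0^T M\Big(\tfrac12+\imaginary t,\calD\Big)\,dt \;=\; \frac1T\int_0^T P_k\Big(L,\;m\log\tfrac{Q^{2/m}t}{2\pi}\Big)\,dt \;+\; O\big(T^{-1/2+\varepsilon}\big),
\]
which, together with the elementary remark from step (1) that $\lim_{\calD\to0}\frac1T\int_0^T Z(\tfrac12+\imaginary t,\calD)\,dt = I_k(L,T)$, yields the stated asymptotic.

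First I would evaluate the inner diagonal Dirichlet sum in $M(\tfrac12+\imaginary t,\calD)$. For fixed size-$k$ subsets $I,J$ partitioning $[2k]$, the diagonal condition $\prod_{i\in I}n_i=\prod_{j\in J}n_j$ makes the sum multiplicative, so it factors as an Euler product over the primes; comparing this product prime by prime with $\prod_p(1-p^{-1-\delta_i+\delta_j})^{-1}$ one peels off a factor $\prod_{i\in I,\,j\in J}\zeta(1+\delta_i-\delta_j)$ and is left with a remainder $A_k(\delta_I;\delta_J)$ -- an Euler product (encoding the coefficients $a_n$, hence the arithmetic factor for a general member of the Selberg class) that converges absolutely, and is holomorphic and nonzero, in a fixed neighborhood of $\calD=0$.

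Next I would recognize the outer sum $\sum_{I,J}$ over size-$k$ partitions of $[2k]$, now weighted by $\big(Q^{2/m}t/2\big)^{\frac m2(\sum_{i\in I}\delta_i-\sum_{j\in J}\delta_j)}\prod_{i\in I,j\in J}\zeta(1+\delta_i-\delta_j)\,A_k$, as the complete set of residues at the origin of one meromorphic function of $2k$ variables $z_1,\dots,z_{2k}$: introduce a ``swap kernel'' whose numerator carries a Vandermonde factor $\prod_{i<j}(z_i-z_j)^2$, whose only poles are simple poles along the hyperplanes $z_i=z_{k+j}$ produced by the $\zeta$-factors, and which contains the exponential $\big(Q^{2/m}t/2\big)^{\frac m2(z_1+\dots+z_k-z_{k+1}-\dots-z_{2k})}$; the residue theorem then turns $\sum_{I,J}$ into a $2k$-fold contour integral around $z=0$. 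Passing to the limit $\calD\to0$ is now harmless, because the would-be poles of the individual terms cancel in the sum -- this is exactly what the Vandermonde factor guarantees -- and the multiple integral is precisely $P_k\big(L,\,m\log\tfrac{Q^{2/m}t}{2\pi}\big)$ as defined in \cite[p.~63]{CFKRS05}. Integrating in $t$ over $[0,T]$ and dividing by $T$ completes the reformulation; chaining it with the recipe produces the conjectured formula.

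The main obstacle is conceptual, not technical: recipe steps (2)--(4) are genuinely unjustifiable -- after multiplying out the $2^{2k}$ pieces of the approximate functional equation there is no control on the tails of the remaining sums, and the off-diagonal terms outnumber the diagonal ones by far -- so with present technology no amount of care upgrades the conjecture to a theorem, a point the authors of \cite{CFKRS05} themselves emphasize. On the rigorous side, the one delicate issue is the analytic bookkeeping in the reformulation step: one must verify that $A_k$ really converges in a fixed neighborhood of $\calD=0$ and that the poles of the $\binom{2k}{k}$ products $\prod_{i\in I,j\in J}\zeta(1+\delta_i-\delta_j)$ cancel across all terms, so that the limit $\calD\to0$ may be taken inside the $2k$-fold integral; granting that, what remains is the lengthy but routine residue calculus that produces the closed form of $P_k$.
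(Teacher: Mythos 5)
This statement is a \emph{conjecture}; the paper offers no proof of it. Instead, the paper presents the heuristic CFKRS recipe (noting, quoting \cite{CFKRS05}, that steps (2)--(4) ``cannot be justified'') and then cites \cite{CFKRS05} for the rigorous reformulation of the diagonal main term $\frac1T\int_0^T M(\tfrac12+\imaginary t,\calD)\,dt$ into $\frac1T\int_0^T P_k\,dt$. You have correctly drawn the line between the conjectural recipe and the provable reformulation, which is exactly the distinction the paper itself makes.

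Your sketch of the provable part -- factoring $\prod_{i\in I,\,j\in J}\zeta(1+\delta_i-\delta_j)$ times an absolutely convergent Euler-product factor $A_k$ out of the diagonal sum, then packaging the $\binom{2k}{k}$ swap terms into a $2k$-fold residue whose Vandermonde-squared numerator makes the cancellation of the individual-term singularities at $\calD=0$ transparent -- follows the same route as \cite{CFKRS05}, which the paper references but does not reproduce. One small imprecision in phrasing: you describe the kernel as having poles ``along the hyperplanes $z_i=z_{k+j}$'' and then say the residue theorem gives a contour integral ``around $z=0$.'' In the CFKRS closed form the $2k$-fold residue is taken at $z_1=\dots=z_{2k}=0$; the Vandermonde zero at coincident variables compensates the $\zeta$-poles so that this origin residue is finite, and it is this residue (not a residue at the swap hyperplanes) that equals $P_k$. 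Your description slightly conflates the swap-term bookkeeping, which lives where the contours are still separated by $\calD$, with the final residue pinned at the origin. This is an expository wrinkle rather than a gap; the overall account is sound.
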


Although the ``justifications'' for Conjecture~\ref{1_conj_CFKRSconj} and Keating and Snaith's moment conjecture are dissimilar, the two conjectures concur. Indeed, it is shown in \cite[p.~66-71]{CFKRS05} that $P_k(\zeta, t)$ is a polynomial of degree $k^2$ with leading coefficient $a_k f_k$, which means that Conjecture~\ref{1_conj_CFKRSconj} is a refinement of Keating and Snaith's moment conjecture. Furthermore, the formula in \eqref{1_eq_moments_of_char_pol} for the $2k$-th moment of a random characteristic polynomial from $U(N)$ can be recast so that $I_k(U(N))$ and the conjectured expression for $I_k(L, T)$ display the same structure \cite{CFKRS03}, the primary difference between the two expressions being that $I_k(U(N))$ carries additional arithmetic information. 

In the case of the Riemann $\zeta$-function, Conjecture~\ref{1_conj_CFKRSconj} is corroborated by extensive numerical computations. In fact, the same authors have developed an algorithm to obtain meromorphic expressions in $k$ for the coefficients of the polynomial $P_k(\zeta, t)$ in \cite{CFKRS08}. In addition, there already are a few numerical verifications in \cite[p.~91]{CFKRS05}.

\subsection{Properties of characteristic polynomials} \label{1_sec_prop_char_pol}
The philosophy behind the conjectures presented in \cite{KS00zeta} suggests that there is a deep connection between $L$-functions and characteristic polynomials from the unitary group. In this section, we will see that characteristic polynomials even satisfy properties that are analogous to the four characterizing properties of the Selberg class -- with one crucial exception. 

In \cite{CFKRS05} Conrey, Farmer, Keating, Rubinstein and Snaith observe that the (conjectured) properties of $L$-functions that we have listed in Section~\ref{1_sec_L_functs} possess natural analogues in the characteristic polynomials of unitary matrices -- except for the Euler product. According to \cite[p.~39]{CFKRS05}, writing the characteristic polynomial in the expanded form
\begin{align*}
\chi_g(z) = \sum_{n = 0}^N a_n z^n
\end{align*} 
corresponds to representing $L$-functions by Dirichlet series. Let us discuss the analogues of the four characterizing properties of $L$-functions:
\begin{enumerate}
\item (analytic continuation) Given that $\chi_g(z)$ is a polynomial, it is an entire function of finite order. 
\item (functional equation) For unitary matrices $g$ that satisfy $\det(-g) \neq -1$, we introduce the completed characteristic polynomial:
\begin{align} \label{1_defn_completed_char_pol}
\Lambda_g(z) ={} & \det(-g)^{1/2} z^{-N/2} \chi_g(z)
.
\end{align}
Notice that while the characteristic polynomial $\chi_g$ is an entire function, $\Lambda_g$ might only be defined on $\C \setminus \R_-$. Further observe that $\det(-g)^{1/2}$ corresponds to the root number \cite[p.~39]{CFKRS05}. As in the case of completed $L$-functions, the completed characteristic polynomial is designed to satisfy a symmetry relation: for $z \in \C \setminus \R_-$, \label{1_proof_of_funct_eq_for_char_pol}
\begin{align*}
\Lambda_g(z) ={} & \det(-g)^{1/2} z^{-N/2} \det \left( I - z g^{-1} \right) \\
={} & \det(-g)^{-1/2} z^{N/2} \det \left(-z^{-1} g \right) \det \left( I - z g^{-1} \right) \\
={} & \det \left(-g^{-1} \right)^{1/2} z^{N/2} \det \left(-z^{-1} g + I \right)  \\
={} & \Lambda_{g^{-1}} \left( z^{-1}\right) = \overline{\Lambda_g \left( \bar{z}^{-1} \right)}
\end{align*}
where the last equality is due to the assumption that the matrix $g$ be unitary. We see that completed characteristic polynomials are symmetric with respect to the transformation $z \mapsto z^{-1}$, while completed $L$-functions are symmetric with respect to the transformation $z \mapsto 1 - z$. We remark that the transformation $z \mapsto z^{-1}$ fixes the points $\pm 1$ and maps the unit circle to itself, while the transformation $z \mapsto 1 - z$ fixes the (projective) points $1/2$ and $1/2 + \imaginary \infty$ and maps the critical line to itself. Hence, the unit circle is the analogue of the critical line.
\item (Ramanujan conjecture) It is not clear (to me) what the exact analogue should be in this context. However, the fact that the Ramanujan conjecture ensures that the Dirichlet series converges to the right of the line $s = 1 + \imaginary t$ makes it plausible that the coefficients of the characteristic polynomial satisfy any reasonable analogous condition, given that any polynomial converges on the entire complex plane. Moreover, the condition that the first Dirichlet coefficient be normalized to 1 (\textit{i.e.}\ $a_1 = 1$) should correspond to the constant coefficient of the characteristic polynomial being equal to 1 (\textit{i.e.}\ $a_0 = 1$).
\item (Euler product) It is not surprising that the Euler product does not have a natural analogue in the characteristic polynomials of unitary matrices, given that it links $L$-functions to prime numbers. There is no hope of modeling the arithmetic aspect of $L$-functions by means of characteristic polynomials. In Section~\ref{5_sec_explicit_formulae}, we will propose a possible substitute for this missing analogue to the Euler product.
\end{enumerate}
The zeros of the characteristic polynomial $\chi_g$ are the eigenvalues of the matrix $g$. In fact, the characteristic polynomial is equal to the following product:
\begin{align*}
\chi_g(z) = \prod_{\rho \in \calR(g)} \left( 1 - \rho^{-1}z \right)
\end{align*}
where \label{symbol_R(g)} $\calR(g)$ stands for the multiset of the eigenvalues of $g$. As $g \in U(N)$, its eigenvalues all have absolute value 1. We conclude that the zeros of any characteristic polynomial from the unitary group all lie on the unit circle, which corresponds to the critical line. In other words, the Riemann hypothesis is true.

Another property that $L$-functions and characteristic polynomials from the unitary group have in common is that they both satisfy an approximate functional equation. In fact, this commonality is the motivation behind the CFKRS recipe for conjecturing the lower order terms in the moments of $L$-functions. According to \cite[p.~40]{CFKRS05}, the following property of the characteristic polynomial $\chi_g(z)$ of a matrix $g \in U(N)$ is analogous to the approximate functional equation: 
\begin{align*}
\chi_g(z) = \sum_{0 \leq n \leq (N + 1)/2} a_n z^{n} + \overline{ \det\left(-g \right)} z^N \sum_{0 \leq n \leq N/2} \overline{a_n} z^{-n}
.
\end{align*}
It should even be possible to use this approximate functional equation for characteristic polynomials to compute an expression for $I_k(U(N))$ following the CFKRS recipe \cite[p.~40]{CFKRS05}. However, to date nobody seems to have carried out this project.

\subsection{A note on random matrix theory} \label{1_sec_RMT}
In Section~\ref{1_sec_NT_and_RMT}, we have only focused on one aspect of random matrix theory, namely the relatively recent discovery that it is a powerful tool for predicting the behavior of $L$-functions. This section is a brief overview of other aspects of random matrix theory, which is a field in its own right that predates Montgomery's discovery in 1973. We follow \cite{Mehta} in our presentation. 

Random matrix theory originated in mathematical statistics in the 1930s but it did not attract much attention at the time. The study of random matrices revolves around the following question: given a random matrix of large size, what can one say about the behavior of its eigenvalues? It turns out that the same question is relevant for understanding nuclear reactions. In consequence, random matrices were an intensely studied topic in nuclear physics during the 1950s. In more recent years questions concerning the behavior of eigenvalues of random matrices have found applications in various other (seemingly) disparate fields, such as the conductivity in disordered metals, the enumeration of permutations having certain particularities, quantum gravity, theoretical neuroscience, \textit{etc}. 

In summary, what we have presented as random matrix theory predictions in number theory is really an intersection of two fields.

\section{Number theory and symmetric function theory} \label{1_sec_NT_and_alg_com}
In the preceding section we have discussed the predictive power of random matrix theory for the behavior of $L$-functions. This section is about applying symmetric function theory in random matrix theory, and thus also in the theory of $L$-functions (at least conjecturally). With the help of symmetric functions, Bump and Gamburd produce shorter and more elegant proofs for some of the results presented in Section~\ref{1_sec_NT_and_RMT}, such as Keating and Snaith's formula for the moments of a random characteristic polynomial from the unitary group. The methods used  in \cite{bump06} are the subject of Section~\ref{1_sec_BG}. In \cite{dehaye12}, Dehaye applies similar methods to the CFKRS recipe and obtains a neater expression for the conjectured lower order terms of the moments of the Riemann $\zeta$-function. We will give an outline of Dehaye's approach in Section~\ref{1_sec_POD12}. In Section~\ref{1_sec_my_results}, we will give a quick overview of our results, which are inspired by Bump and Gamburd's combinatorial approach to number theoretically motivated problems in random matrix theory.

\subsection{Ratios of characteristic polynomials from the unitary group} \label{1_sec_BG}
In this section, we reproduce Bump and Gamburd's combinatorial derivation of formulas for averages of products/ratios of characteristic polynomials from the unitary group. As we do not expect the reader to be proficient in symmetric function theory, we introduce the required combinatorial background along the way. A more systematic introduction to symmetric function theory can be found in Chapter~\ref{1_cha_algebraic_combinatorics}.

\subsubsection{A product formula}
The primary goal of this section is to present Bump and Gamburd's combinatorial proof of the following theorem \cite[p.~238-239]{bump06}. 
\begin{thm} [product theorem, adapted from \cite{bump06}] \label{1_thm_products_BG} Let $\calA$ and $\calB$ be two finite sets of numbers in $\C \setminus \{0\}$ that contain $n$ and $m$ elements, respectively. Then
\begin{align}
\begin{split} \label{1_eq_prods_thm_1}
\int_{U(N)} \prod_{\alpha \in \calA} \chi_g(\alpha) \prod_{\beta \in \calB} \chi_{g^{-1}}(\beta) dg ={} & \prod_{\beta \in \calB} \beta^N \schur_{\left\langle N^m \right\rangle} \left( \calA \cup B^{-1} \right) 
.
\end{split} \\
\intertext{If the numbers in $\calA \cup \calB^{-1}$ are pairwise distinct (where $\calB^{-1}$ is shorthand for the set $\left\lbrace \beta^{-1}: \beta \in \calB \right\rbrace$), then}
\begin{split} \label{1_eq_prods_thm_2}
\int_{U(N)} \prod_{\alpha \in \calA} \chi_g(\alpha) \prod_{\beta \in \calB} \chi_{g^{-1}}(\beta) dg ={} & \prod_{\beta \in \calB} \beta^N \sum_{\calS, \calT} \frac{\prod_{s \in \calS} s^{n + N}}{\Delta(\calS; \calT)}
\end{split}
\end{align}
where the sum runs over all subsets $\calS$, $\calT$ of $\calA \cup \calB^{-1}$ containing $m$ and $n$ elements, respectively, so that $\calS \cup \calT = \calA \cup \calB^{-1}$, and $\Delta(\calS; \calT) = \prod_{\substack{s \in \calS, t \in \calT}} (s - t)$.
\end{thm}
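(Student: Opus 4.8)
\emph{Plan.} I would establish \eqref{1_eq_prods_thm_1} first, by collapsing all the characteristic polynomials into a single type and using orthonormality of the irreducible characters of $U(N)$, and then deduce \eqref{1_eq_prods_thm_2} from it by Laplace-expanding the bialternant formula for the rectangular Schur polynomial $\schur_{\langle N^m\rangle}$.

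\emph{Proof of \eqref{1_eq_prods_thm_1}.} The first move is the elementary identity
$$\chi_{g^{-1}}(\beta) = \det(I - \beta g) = \det(g)\det\!\bigl(g^{-1} - \beta I\bigr) = \det(g)\,(-\beta)^N\chi_g(\beta^{-1}),$$
valid for any invertible $g$. Applying it to every $\beta \in \calB$ rewrites the integrand as $(-1)^{mN}\bigl(\prod_{\beta\in\calB}\beta\bigr)^N\det(g)^m\prod_{x\in\calA\cup\calB^{-1}}\chi_g(x)$, so it suffices to compute $\int_{U(N)}\det(g)^m\prod_{x\in\calA\cup\calB^{-1}}\chi_g(x)\,dg$. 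Writing $\chi_g(x)=\prod_j(1-xz_j^{-1})$ in terms of the eigenvalues $z_1,\dots,z_N$ of $g$ and invoking the dual Cauchy identity gives
\begin{align*}
\prod_{x\in\calA\cup\calB^{-1}}\chi_g(x) = \sum_{\nu}(-1)^{|\nu|}\,\schur_\nu(\calA\cup\calB^{-1})\,\schur_{\nu'}(g^{-1}),
\end{align*}
the sum running over partitions $\nu$ inside the $(n+m)\times N$ box. Now $\det(g)^m = \schur_{\langle m^N\rangle}(g)$, and $\schur_{\nu'}(g^{-1}) = \overline{\schur_{\nu'}(g)}$ because $g$ is unitary; since the Schur polynomials in $N$ variables are precisely the irreducible characters of $U(N)$, integrating term by term against Haar measure and using their orthonormality kills every term except the one with $\nu'=\langle m^N\rangle$, i.e. $\nu=\langle N^m\rangle$ (note that $\nu'$ here always has at most $N$ parts). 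Collecting signs, $(-1)^{mN}(-1)^{|\langle N^m\rangle|}=1$, and \eqref{1_eq_prods_thm_1} follows. One could instead expand both products by dual Cauchy and reach $\sum_\lambda\schur_\lambda(\calA)\schur_\lambda(\calB)$ after orthonormality, but identifying that sum with $\prod_{\beta\in\calB}\beta^N\schur_{\langle N^m\rangle}(\calA\cup\calB^{-1})$ is essentially the computation below.

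\emph{Proof of \eqref{1_eq_prods_thm_2}.} Assume the $n+m$ numbers of $\calA\cup\calB^{-1}=\{u_1,\dots,u_{n+m}\}$ are pairwise distinct, and write $\schur_{\langle N^m\rangle}(\calA\cup\calB^{-1})$ as the bialternant $\det(u_i^{e_j})/\prod_{i<j}(u_i-u_j)$. The exponents $e_j=\langle N^m\rangle_j+(n+m)-j$ fall into an upper block $N+n+m-1>\dots>N+n$ of length $m$ and a lower block $n-1>\dots>0$ of length $n$. I would Laplace-expand the numerator along its first $m$ columns: the terms are indexed by the $m$-element set $\calS$ of chosen rows, with complement $\calT$, and after pulling $\prod_{s\in\calS}s^{N+n}$ out of the upper block each term becomes a product of two genuine Vandermonde determinants, one over $\calS$ and one over $\calT$. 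These cancel the ``within-block'' part of the denominator Vandermonde, the Laplace sign absorbs the discrepancy between $\prod_{i<j}(u_i-u_j)$ and the product of those two sub-Vandermondes times $\Delta(\calS;\calT)$, and what is left is $\prod_{s\in\calS}s^{n+N}/\Delta(\calS;\calT)$. Summing over $\calS$ and multiplying by $\prod_{\beta\in\calB}\beta^N$ yields \eqref{1_eq_prods_thm_2}.

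\emph{Main obstacle.} There are no analytic subtleties, since everything in sight is a finite sum. The two points needing care are: (i) checking that dual Cauchy plus character orthonormality really singles out $\langle N^m\rangle$ — this rests on $\det(g)^m$ being exactly the Schur polynomial $\schur_{\langle m^N\rangle}$ and on the conjugate partitions $\nu'$ having at most $N$ parts; and (ii) the sign chase in the Laplace expansion — one must confirm that the Laplace-expansion sign attached to a choice of rows is precisely the sign needed to reorder the cross-factors of the full Vandermonde into $\Delta(\calS;\calT)$. Step (ii) is the main, if routine, obstacle.
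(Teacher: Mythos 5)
Your argument is correct and follows essentially the same route as the paper: equation \eqref{1_eq_prods_thm_1} is obtained by rewriting all factors as $\chi_g$ of a single variable set, expanding via the dual Cauchy identity, and applying Schur orthogonality; equation \eqref{1_eq_prods_thm_2} is obtained by Laplace-expanding the bialternant for $\schur_{\langle N^m\rangle}$. The only cosmetic difference is that the paper packages the Laplace step as a separately stated lemma (Lemma~\ref{1_lem_BG_laplace_Schur}) for a general indexing partition before specializing to the rectangle $\langle N^m\rangle$, whereas you perform the expansion directly on the rectangular case; the underlying sign and block computations are identical.
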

The second expression for the average of products of characteristic polynomials was first derived in \cite{CFKRS03} (without using any combinatorial methods). The symbol $\schur_{\left\langle N^m \right\rangle}$ in the first expression stands for the so-called Schur function associated to the partition $\left\langle N^m \right\rangle$, which we define in what follows. A partition is a finite sequence of non-increasing non-negative integers. For a partition $\lambda = (\lambda_1, \dots, \lambda_n)$, the elements $\lambda_i$ are called its parts, the sum of its parts is called its size (and denoted $|\lambda|$) and the number of its positive parts is called its length. The symbol $\left\langle N^m \right\rangle$ is a shorthand for the partition of length $m$ whose parts are all equal to $N$. For every set of variables $\calX = (x_1, \dots, x_n)$, define the Schur function associated to $\lambda$ 
\begin{align} \label{1_NT_part_eq_defn_Schur}
\schur_\lambda(\calX) = \frac{\det \left( x_i^{\lambda_j + n - j} \right)_{1 \leq i,j \leq n}}{\prod_{1 \leq i < j \leq n} (x_i - x_j)}
.
\end{align}
If $\calX$ contains more elements than $\lambda$ parts, append zeros to $\lambda$; if the lengths of $\lambda$ exceeds the number of variables in $\calX$, set $\schur_\lambda(\calX) = 0$. One easily verifies that 
\begin{align} \label{1_NT_part_Schur_indexed_by_rectangle}
\schur_{\langle m^n \rangle}(x_1, \dots, x_n) = \prod_{1 \leq i \leq n} x_i^m
.
\end{align}
In general, the Schur function $\schur_\lambda(\calX)$ is a symmetric homogeneous polynomial of degree $|\lambda|$, where symmetric means that $\schur_\lambda(\calX)$ is invariant under permutations of the variables in $\calX$. In fact, the set of all Schur functions $\schur_\lambda(\calX)$ associated to a partition $\lambda$ of size $n$ forms a basis for $\Sym^n(\calX)$, the vector space of the symmetric homogeneous polynomials in $\calX$ of degree $n$ (for any integer $n \geq 0$). The following two properties of Schur functions play a central role in Bump and Gamburd's combinatorial derivation of the product theorem:
\begin{itemize}
\item The dual Cauchy identity states that
\begin{align*}
\sum_\lambda \schur_\lambda(\calX) \schur_{\lambda'}(\calY) = \prod_{\substack{x \in \calX \\ y \in \calY}} (1 + xy)
\end{align*}
where $\lambda'$ is the conjugate of the partition $\lambda$. The definition of the conjugate partition can be found on page \pageref{symbol_conjugate_partition} but here we only need that $\langle m^n \rangle' = \langle n^m \rangle$ for all non-negative integers $m$ and $n$.
\item If $\calR(g)$ denotes the multiset of the eigenvalues of $g \in U(N)$, then
\begin{align*}
\int_{U(N)} \schur_\lambda(\calR(g)) \overline{\schur_\kappa(\calR(g))} dg = \begin{dcases} 1 & \text{if } \lambda = \kappa \text{ and } l(\lambda) \leq N, \\ 0 & \text{otherwise,} \end{dcases}
\end{align*}
\textit{i.e.}\ Schur functions are essentially orthonormal.
\end{itemize}

\begin{proof}[Proof of Theorem~\ref{1_thm_products_BG}] In a first step, reformulate the product of characteristic polynomials to be integrated:
\begin{align*}
\prod_{\alpha \in \calA} \chi_g(\alpha) \prod_{\beta \in \calB} \chi_{g^{-1}}(\beta) ={} & \prod_{\alpha \in \calA} \det \left(I - \alpha g^{-1} \right) \prod_{\beta \in \calB} \det \left( I - \beta g \right) \\
={} & \prod_{\beta \in \calB} \det(-\beta g) \prod_{x \in \calA \cup \calB^{-1}} \det \left( I - x g^{-1} \right).
\intertext{If we let $\calR(g)$ be the multiset of eigenvalues of $g \in U(N)$, then}
\prod_{\alpha \in \calA} \chi_g(\alpha) \prod_{\beta \in \calB} \chi_{g^{-1}}(\beta) ={} & \left( \prod_{\beta \in \calB} (-\beta)^N \right) \left( \prod_{\rho \in \calR(g)} \rho^m \right) \left( \prod_{x \in \calA \cup \calB^{-1}} \prod_{\rho \in \calR(g)} (1 - x \bar{\rho}) \right)
.
\intertext{Use the equality in \eqref{1_NT_part_Schur_indexed_by_rectangle} and the dual Cauchy identity to express the right-hand side in terms of Schur functions:}
\prod_{\alpha \in \calA} \chi_g(\alpha) \prod_{\beta \in \calB} \chi_{g^{-1}}(\beta) ={} & \left( \prod_{\beta \in \calB} (-\beta)^N \right) \schur_{\left\langle m^N \right\rangle} (\calR(g)) \sum_\lambda \schur_{\lambda'} \left(-\left(\calA \cup \calB^{-1} \right) \right) \overline{\schur_\lambda(\calR(g))}
.
\end{align*}
where we use the notation that $-\calA = \{-\alpha: \alpha \in \calA\}$. Hence, Schur orthogonality implies that
\begin{align*}
\int_{U(N)} \prod_{\alpha \in \calA} \chi_g(\alpha) \prod_{\beta \in \calB} \chi_{g^{-1}}(\beta) dg ={} &  \left( \prod_{\beta \in \calB} (-\beta)^N \right) \sum_\lambda \Bigg[ \schur_{\lambda'} \left(-\left(\calA \cup \calB^{-1} \right) \right) \\
& \times \int_{U(N)} \schur_{\left\langle m^N \right\rangle} (\calR(g)) \overline{\schur_\lambda(\calR(g))} dg \Bigg]
\\
={} & \left( \prod_{\beta \in \calB} (-\beta)^N \right) \schur_{\left\langle m^N \right\rangle'} \left(-\left( \calA \cup \calB^{-1} \right) \right)
\\
={} & \left( \prod_{\beta \in \calB} \beta^N \right) \schur_{\left\langle N^m \right\rangle} \left(\calA \cup \calB^{-1} \right)
.
\end{align*}
where the last equality is due to the fact that $\schur_\lambda(\calX)$ is homogeneous of degree $|\lambda|$. This shows the equality in \eqref{1_eq_prods_thm_1}, from which the equality in \eqref{1_eq_prods_thm_2} follows by an application of Lemma~\ref{1_lem_BG_laplace_Schur} stated below this proof.
\end{proof}

\begin{lem} [adapted from \cite{bump06}] \label{1_lem_BG_laplace_Schur} Let $\calX$ be a set of $m + n$ pairwise distinct numbers in $\C$ and let $\lambda = (\lambda_1, \dots, \lambda_{m + n})$ be a partition of length at most $m + n$. If we set $\mu = (\lambda_1 + n, \dots, \lambda_m + n)$ and $\nu = (\lambda_{m + 1}, \dots, \lambda_{m + n})$, then
\begin{align} \label{1_lem_BG_laplace_Schur_eq}
\schur_{\lambda}(\calX) = \sum_{\calS, \calT} \frac{\schur_\mu (\calS) \schur_\nu (\calT)}{\Delta(\calS; \calT)}
\end{align}
where the sum runs over all subsets $\calS$, $\calT$ of $\calX$ containing $m$ and $n$ elements, respectively, so that $\calS \cup \calT = \calX$.
\end{lem}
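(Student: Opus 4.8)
The plan is to apply Laplace expansion to the bialternant (Jacobi--Trudi-style determinant) formula for $\schur_\lambda$ along a suitable set of columns. Write $\calX = (x_1, \dots, x_{m+n})$ and recall from \eqref{1_NT_part_eq_defn_Schur} that
\begin{align*}
\schur_\lambda(\calX) = \frac{\det\left( x_i^{\lambda_j + (m+n) - j} \right)_{1 \leq i,j \leq m+n}}{\prod_{1 \leq i < j \leq m+n}(x_i - x_j)}
.
\end{align*}
The numerator is a $(m+n) \times (m+n)$ determinant whose columns are indexed by $j = 1, \dots, m+n$; the first $m$ columns carry exponents $\lambda_j + (m+n) - j = (\lambda_j + n) + m - j$ for $j = 1, \dots, m$, which are exactly the exponents one would use for $\schur_\mu$ in $m$ variables, and the last $n$ columns carry exponents $\lambda_{m+j} + n - j$ for $j = 1, \dots, n$, which are the exponents for $\schur_\nu$ in $n$ variables. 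First I would apply Laplace (cofactor) expansion of this determinant along the block of the first $m$ columns: the determinant becomes a signed sum over all $m$-element subsets $\calS$ of the row index set (equivalently, $m$-element subsets $\calS \subset \calX$), of the product of the $m \times m$ minor on rows $\calS$ / first $m$ columns with the complementary $n \times n$ minor on rows $\calT = \calX \setminus \calS$ / last $n$ columns.

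Next I would recognize each pair of minors via \eqref{1_NT_part_eq_defn_Schur} again: the $m \times m$ minor on rows $\calS$ and columns $1, \dots, m$ equals $\schur_\mu(\calS)$ times the Vandermonde $\prod_{s < s' \in \calS}(s - s')$, and the $n \times n$ minor on rows $\calT$ and columns $m+1, \dots, m+n$ equals $\schur_\nu(\calT)$ times the Vandermonde $\prod_{t < t' \in \calT}(t - t')$. The sign picked up in the Laplace expansion, together with the ratio of the full Vandermonde $\prod_{i<j}(x_i - x_j)$ to the product of the two sub-Vandermondes, should collapse to exactly $1/\Delta(\calS; \calT) = 1/\prod_{s \in \calS, t \in \calT}(s - t)$, since
\begin{align*}
\prod_{1 \leq i < j \leq m+n}(x_i - x_j) = \pm \left( \prod_{s < s' \in \calS}(s - s') \right)\left( \prod_{t < t' \in \calT}(t - t') \right) \prod_{s \in \calS, t \in \calT}(s - t)
,
\end{align*}
and the $\pm$ here is precisely the Laplace sign $(-1)^{\sum(\text{rows of }\calS) - (1+\cdots+m)}$ associated with the chosen column block. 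Putting these three identifications together and summing over $\calS$ yields \eqref{1_lem_BG_laplace_Schur_eq}.

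The main obstacle I anticipate is the careful bookkeeping of signs: one must match the Laplace-expansion sign against the sign in the factorization of the Vandermonde relative to the partition $\{\calS, \calT\}$ of $\calX$, and check that they cancel uniformly for every subset $\calS$, so that no residual $(-1)^{(\cdot)}$ survives. A clean way to handle this is to sort the elements of $\calS$ and $\calT$ and track the permutation that interleaves them back into the original order of $\calX$; its sign is exactly what appears in both the Laplace expansion and the Vandermonde splitting, so the two occurrences cancel. A minor additional point is the degenerate case where $\lambda$ has fewer than $m+n$ parts (pad with zeros) or where $\mu$ or $\nu$ fails to be a genuine partition after the shift — but since $\lambda$ is weakly decreasing, $\mu = (\lambda_1 + n, \dots, \lambda_m + n)$ and $\nu = (\lambda_{m+1}, \dots, \lambda_{m+n})$ are automatically partitions, and if $\calS$ has the wrong cardinality relative to the length of $\mu$ the corresponding Schur function vanishes by convention, so no separate argument is needed.
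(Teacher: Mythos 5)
Your proposal is correct and follows essentially the same route as the paper: Laplace expansion of the bialternant determinant along the first $m$ columns, recognizing each complementary pair of minors as $\schur_\mu(\calS)$ and $\schur_\nu(\calT)$ times their respective sub-Vandermondes, and canceling the Laplace sign against the sign arising in the factorization of the full Vandermonde. The paper formalizes the sign bookkeeping by introducing the interleaving permutation $\sigma_{K,L}$ explicitly, but this is exactly the mechanism you describe at the end of your argument.
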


Bump and Gamburd's proof of this lemma is based on Laplace expansion. Readers who want to refresh their memory will find this classical result from linear algebra on page \pageref{3_section_lapalace_expansion_for_LS}. 

\begin{proof} For any pair of subsets $K$, $L \subset [m + n]$ containing $m$ and $n$, respectively, so that $K \cup L = [m + n]$, let $\sigma_{K, L}$ be the unique permutation in $S_{m + n}$ given by the conditions that $\sigma_{K, L}(K) = [m]$ (and thus $\sigma_{K, L}(L) = \{m + 1, \dots, m + n\}$) and that $\sigma_{K, L}$ respects the relative order of the elements in $K$ and $L$. Expanding the determinant in the numerator along the $m$ left-most columns results in
\begin{align*}
\schur_\lambda (\calX) ={} & \frac{\det \left( x_i^{\lambda_j + m + n - j} \right)_{1 \leq i,j \leq m + n}}{\prod_{1 \leq i < j \leq m + n} (x_i - x_j)} 
\displaybreak[2] \\
 ={} & \sum_{K, L} \frac{\varepsilon(\sigma_{K, L}) \det \left( x_k^{\lambda_j + m + n - j} \right)_{k \in K, 1 \leq j \leq m} \det \left( x_l^{\lambda_j + m + n - j} \right)_{l \in L, m + 1 \leq j \leq m + n}}{\prod_{1 \leq i < j \leq m + n} (x_i - x_j)}
\intertext{where the sum is over subsets $K$, $L \subset [m + n]$ containing $m$ and $n$ elements, respectively, so that $K \cup L = [m + n]$. In order to remove the sign of $\sigma_{K, L}$ from the numerator, we permute the variables in the denominator:}
\schur_\lambda (\calX) ={} & \sum_{K, L} \frac{\det \left( x_k^{\lambda_j + m + n - j} \right)_{k \in K, 1 \leq j \leq m} \det \left( x_l^{\lambda_j + m + n - j} \right)_{l \in L, m + 1 \leq j \leq m + n}}{\prod_{1 \leq i < j \leq m + n} (x_{\sigma_{K, L}(i)} - x_{\sigma_{K, L}(j)})}
\displaybreak[2] \\
={} & \sum_{K, L} \frac{\det \left( x_k^{\lambda_j + m + n - j} \right)_{k \in K, 1 \leq j \leq m} \det \left( x_l^{\lambda_j + n - j} \right)_{l \in L, 1 \leq j \leq n}}{\prod_{i,j \in K: i < j} (x_i - x_j) \prod_{i,j \in L: i < j} (x_i - x_j) \prod_{k \in K, l \in L} (x_k - x_l)}
\displaybreak[2] \\
={} & \sum_{K, L} \frac{\schur_\mu (x_k: k \in K) \schur_\nu (x_l: l \in L)}{\prod_{k \in K, l \in L} (x_k - x_l)}
. \qedhere
\end{align*}
\end{proof}

One indication for the power of applying symmetric function theory in random matrix theory is that Keating and Snaith's formula for the $2k$-th moment of a random characteristic polynomial from $U(N)$ follows immediately from Bump and Gamburd's combinatorial reformulation of the product theorem \cite[p.~239]{bump06}:

\begin{cor} [moments of a random characteristic polynomial from $U(N)$, \cite{KS00zeta}] Let $k$ be a non-negative integer. Then
\begin{align} \label{1_eq_moments_BG}
I_k(U(N)) ={} & \prod_{0 \leq j \leq N - 1} \frac{j!(j + 2k)!}{(j + k)!^2}
.
\end{align}
\end{cor}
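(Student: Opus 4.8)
The plan is to recognize $I_k(U(N))$ as the value at a single point of the average of products in Theorem~\ref{1_thm_products_BG}, and then to evaluate the rectangular Schur function that appears. First I would observe that for $g \in U(N)$ one has $\overline{\chi_g(1)} = \overline{\det(I - g^{-1})} = \det(I - \bar g^{-1}) = \det(I - g) = \chi_{g^{-1}}(1)$, whence $\left| \chi_g(1) \right|^{2k} = \chi_g(1)^k\, \chi_{g^{-1}}(1)^k$. I would then apply \eqref{1_eq_prods_thm_1} with $\calA$ and $\calB$ each equal to the family consisting of $k$ copies of the number $1$. This is harmless: the derivation of \eqref{1_eq_prods_thm_1} through the dual Cauchy identity and Schur orthogonality nowhere used distinctness of the elements; alternatively one lets two disjoint sets of pairwise distinct points tend to $(1, \dots, 1)$, using dominated convergence on the left (the integrand is a polynomial in the parameters and $U(N)$ is compact) and continuity of the polynomial on the right. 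Since $\prod_{\beta \in \calB} \beta^N = 1$, this yields
\begin{align*}
I_k(U(N)) = \int_{U(N)} \left| \chi_g(1) \right|^{2k}\, dg = \schur_{\langle N^k \rangle}\bigl( \underbrace{1, \dots, 1}_{2k} \bigr).
\end{align*}

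It then remains to evaluate the Schur polynomial indexed by the $k \times N$ rectangle at $2k$ equal variables. For this I would invoke the principal specialization (Weyl dimension) formula $\schur_\lambda(1^n) = \prod_{1 \le i < j \le n} \frac{\lambda_i - \lambda_j + j - i}{j - i}$, applied to $n = 2k$ and $\lambda = (N, \dots, N, 0, \dots, 0)$ with $k$ parts equal to $N$ followed by $k$ zeros; if this formula has not already appeared in the text, it can be produced on the spot by letting the variables coalesce in the determinantal definition \eqref{1_NT_part_eq_defn_Schur}. Only the $k^2$ cross terms (those with $i \le k < j$) differ from $1$, so writing $j = k + b$ one gets
\begin{align*}
\schur_{\langle N^k \rangle}(1^{2k}) = \prod_{i = 1}^{k} \prod_{b = 1}^{k} \frac{N + k + b - i}{k + b - i} = \prod_{i = 1}^{k} \frac{(N + 2k - i)!\,(k - i)!}{(N + k - i)!\,(2k - i)!}.
\end{align*}

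Finally I would collapse both this expression and the target product into Barnes-$G$ bookkeeping. Writing $H(n) = \prod_{0 \le j \le n - 1} j!$, a short regrouping of consecutive factorials turns the right-hand side above into $\dfrac{H(N)\, H(N + 2k)\, H(k)^2}{H(N + k)^2\, H(2k)}$, while exactly the same manipulation applied to $\prod_{0 \le j \le N - 1} \frac{j!\,(j + 2k)!}{(j + k)!^2}$ produces the identical expression; this proves \eqref{1_eq_moments_BG}. I expect the only genuine obstacle to be this last piece of bookkeeping — aligning the ranges of the factorial products through the reindexings — together with, if necessary, the brief half-line argument deducing the principal specialization formula from \eqref{1_NT_part_eq_defn_Schur}. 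No combinatorial input beyond Theorem~\ref{1_thm_products_BG} and Schur orthogonality is required.
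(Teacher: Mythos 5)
Your proposal is correct and follows exactly the same route as the paper: specialize \eqref{1_eq_prods_thm_1} with $\calA = \calB = (1,\dots,1)$ to get $I_k(U(N)) = \schur_{\langle N^k\rangle}(1^{2k})$, then apply the Weyl dimension formula. The paper leaves the final evaluation as an unelaborated appeal to that formula, whereas you carry out the principal-specialization computation and the superfactorial bookkeeping explicitly; both are sound.
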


\begin{proof} Applying the equality in \eqref{1_eq_prods_thm_1} with $m = k = n$ and $\calA = \{1,\dots, 1\} = \calB$ results in
\begin{align*}
I_k(U(N)) ={} & \int_{U(N)} |\chi_g(1)|^{2k} dg = \int_{U(N)} \chi_g(1)^k \overline{\chi_g(1)^k} dg = \int_{U(N)} \chi_g(1)^k \chi_{g^{-1}}(1)^k dg
\\
={} & \schur_{\left\langle N^k \right\rangle} \left( 1^{2k} \right)
\end{align*}
where we use $1^{2k}$ to denote a set consisting of $2k$ times the number $1$. Now the equality in \eqref{1_eq_moments_BG} is a direct consequence of the Weyl dimension formula, which expresses $\schur_\lambda \left( 1^n \right)$ as a function in the parts of $\lambda$.
\end{proof}

\subsubsection{A ratio formula}
In \cite[p.~245-246]{bump06}, Bump and Gamburd use essentially the same approach to show the following formula for averages of ratios of characteristic polynomials, the main difference being that they work with Littlewood-Schur functions instead of Schur functions. 

\begin{thm} [ratio theorem, adapted from \cite{original_ratios}] \label{1_thm_ratios_BG} Let $\calA$, $\calB$, $\calC$ and $\calD$ be four finite sets of numbers in $\C \setminus \{0\}$ that contain $m$, $n$, $p$ and $q$ elements, respectively. Let the elements in $\calA \cup \calB^{-1}$ be pairwise distinct and let the elements in $\calC \cup \calD$ be less than 1 in absolute value. If $p + q \leq N$, then
\begin{align}
\begin{split} \label{1_thm_ratios_BG_eq}
& \hspace{-15pt} \int_{U(N)} \frac{\prod_{\alpha \in \calA} \chi_g(\alpha) \prod_{\beta \in \calB} \chi_{g^{-1}}(\beta)}{\prod_{\delta \in \calD} \chi_g(\delta) \prod_{\gamma \in \calC} \chi_{g^{-1}} (\gamma)} dg \\
={} & \prod_{\beta \in \calB} (-\beta)^N \sum_{\calS, \calT} \prod_{s \in \calS} (-s)^{m - q + N} \frac{\Delta(\calD; \calS)}{\Delta(\calT; \calS)} \prod_{\substack{\gamma \in \calC \\ \delta \in \calD}} (1 - \gamma \delta)^{-1} \prod_{\substack{t \in \calT \\ \gamma \in \calC}} (1 - t \gamma)
\end{split}
\end{align}
where the sum runs over subsets $\calS$, $\calT \subset \calA \cup \calB^{-1}$ containing $n$ and $m$ elements, respectively, so that $\calS \cup \calT = \calA \cup \calB^{-1}$.
\end{thm}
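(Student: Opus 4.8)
The plan is to follow Bump and Gamburd's proof of the product theorem (Theorem~\ref{1_thm_products_BG}) and of Lemma~\ref{1_lem_BG_laplace_Schur}, systematically replacing Schur functions by Littlewood-Schur functions. First I would rewrite the integrand in terms of the eigenvalues $\calR(g)$ of $g$. Exactly as in the proof of Theorem~\ref{1_thm_products_BG}, the numerator factors as $\prod_{\beta \in \calB}\det(-\beta g)\prod_{x \in \calA \cup \calB^{-1}}\chi_g(x)$, and since $\prod_{\beta \in \calB}\det(-\beta g) = \prod_{\beta \in \calB}(-\beta)^N\det(g)^n = \prod_{\beta \in \calB}(-\beta)^N\,\schur_{\langle n^N \rangle}(\calR(g))$ and $\prod_{x \in \calA \cup \calB^{-1}}\chi_g(x) = \prod_{\rho \in \calR(g)}\prod_{x \in \calA \cup \calB^{-1}}(1 - x\bar\rho)$, the numerator is $\prod_{\beta \in \calB}(-\beta)^N$ times a symmetric polynomial in the eigenvalues. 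Because the numbers in $\calC \cup \calD$ have modulus less than $1$ while the eigenvalues lie on the unit circle, each factor $1/\chi_g(\delta)$ and $1/\chi_{g^{-1}}(\gamma)$ in the denominator expands into an absolutely convergent geometric series, so the whole integrand becomes an absolutely convergent symmetric series in $\calR(g)$.

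Next I would feed these series into Cauchy-type identities, grouping them according to whether they involve $\bar\rho$ or $\rho$. The $\bar\rho$-factors make up the quotient of $\prod_{\rho}\prod_{x \in \calA \cup \calB^{-1}}(1 - x\bar\rho)$ by $\prod_{\rho}\prod_{\delta \in \calD}(1 - \delta\bar\rho)$, and the Cauchy identity for Littlewood-Schur functions turns this into $\sum_\lambda LS_\lambda(\calD; -(\calA \cup \calB^{-1}))\,\overline{\schur_\lambda(\calR(g))}$; when $\calD = \varnothing$ this is exactly the dual-Cauchy step in the proof of Theorem~\ref{1_thm_products_BG}, since $LS_\lambda(\varnothing; \calY) = \schur_{\lambda'}(\calY)$. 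The $\rho$-factors make up $\schur_{\langle n^N \rangle}(\calR(g))$ divided by $\prod_{\gamma \in \calC}\prod_{\rho}(1 - \gamma\rho)$, and the ordinary Cauchy identity turns this into $\sum_\nu \schur_\nu(\calC)\,\schur_{\nu + \langle n^N \rangle}(\calR(g))$. Integrating over $U(N)$ and invoking Schur orthogonality then kills all but the terms with $\lambda = \nu + \langle n^N \rangle$ (all of which have at most $N$ parts), so that
\begin{align*}
& \int_{U(N)} \frac{\prod_{\alpha \in \calA}\chi_g(\alpha)\prod_{\beta \in \calB}\chi_{g^{-1}}(\beta)}{\prod_{\delta \in \calD}\chi_g(\delta)\prod_{\gamma \in \calC}\chi_{g^{-1}}(\gamma)}\,dg \\
& \hspace{50pt} = \prod_{\beta \in \calB}(-\beta)^N \sum_\nu \schur_\nu(\calC)\,LS_{\nu + \langle n^N \rangle}(\calD; -(\calA \cup \calB^{-1})),
\end{align*}
where $\nu$ runs over partitions with at most $p$ parts.

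It then remains to convert the right-hand side into the advertised sum over complementary subsets $\calS, \calT$ of $\calA \cup \calB^{-1}$; this is the Littlewood-Schur analogue of Lemma~\ref{1_lem_BG_laplace_Schur}. I would feed $LS_{\nu + \langle n^N \rangle}(\calD; -(\calA \cup \calB^{-1}))$ into the determinantal formula of Moens and Van der Jeugt, which writes it as a Cauchy-type prefactor times a determinant whose columns split between the alphabet $\calD$ and the alphabet $\calA \cup \calB^{-1}$, the rectangular block $\langle n^N \rangle$ merely shifting the exponents in the columns of the second type. Summing $\schur_\nu(\calC)$ against this expression --- a Cauchy-Binet style manipulation --- should produce the factors $\prod_{\gamma \in \calC, \delta \in \calD}(1 - \gamma\delta)^{-1}$ and $\prod_{t \in \calT, \gamma \in \calC}(1 - t\gamma)$, while a Laplace expansion of the determinant along the columns indexed by $\calA \cup \calB^{-1}$, recording which $n$ of them form the ``$\calS$-block'', should produce the sum over $\calS$ and its complement $\calT$ together with the powers $\prod_{s \in \calS}(-s)^{m - q + N}$ and the Vandermonde ratio $\Delta(\calD; \calS)/\Delta(\calT; \calS)$.

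The hypotheses enter as follows: the distinctness of $\calA \cup \calB^{-1}$ keeps the $\Delta$-denominators and the determinantal identities nondegenerate, the bounds $|\gamma|, |\delta| < 1$ make the geometric expansions converge, and $p + q \leq N$ ensures that no relevant Schur or Littlewood-Schur function vanishes for size reasons. The main obstacle I anticipate is precisely this last step: setting up the Moens and Van der Jeugt determinant for the shifted rectangular partition $\nu + \langle n^N \rangle$, carrying out the $\nu$-summation and the Laplace expansion simultaneously, and matching every scalar factor --- the signs, the exponent $m - q + N$, and the exact ratio $\Delta(\calD; \calS)/\Delta(\calT; \calS)$ --- on the nose. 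A reassuring consistency check is the specialization $\calC = \calD = \varnothing$, for which $\schur_\nu(\calC)$ forces $\nu = \varnothing$ and $LS_{\langle n^N \rangle}(\varnothing; -(\calA \cup \calB^{-1})) = \schur_{\langle N^n \rangle}(-(\calA \cup \calB^{-1}))$, collapsing the identity to formula~\eqref{1_eq_prods_thm_2} of the product theorem once the signs are accounted for.
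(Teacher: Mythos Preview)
Your first three steps --- rewriting the numerator via $\prod_{\beta}\det(-\beta g)\prod_{x\in\calA\cup\calB^{-1}}\chi_g(x)$, expanding in Cauchy/Littlewood--Schur series, and collapsing via Schur orthogonality to obtain
\[
\prod_{\beta\in\calB}(-\beta)^N\sum_\nu \schur_\nu(\calC)\,LS_{\nu+\langle n^N\rangle}\bigl(\calD;\,-(\calA\cup\calB^{-1})\bigr)
\]
--- are exactly the steps the paper attributes to Bump and Gamburd.

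The difference is in your final step. The paper does not invoke the Moens--Van der Jeugt determinant here; instead it applies Lemma~\ref{1_lem_BG_laplace_LS}, the Littlewood--Schur Laplace lemma which Bump and Gamburd prove by induction on $|\calY|$ via the Pieri recursion. Conjugating the index, your summand is $LS_{\langle N^n\rangle\cup\nu'}\bigl(-(\calA\cup\calB^{-1});\calD\bigr)$; the hypothesis $p+q\le N$ gives precisely the gap condition $\lambda_n\ge\lambda_{n+1}+q$ needed for that lemma, and one obtains a sum over $\calS,\calT$ with factors $LS_{\langle(N+m)^n\rangle}(-\calS;\calD)$ and $LS_{\nu'}(-\calT;\calD)$. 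The first is computed by Littlewood's rectangle formula (Corollary~\ref{3_cor_littlewood}), yielding $\prod_{s\in\calS}(-s)^{N+m-q}\,\Delta(\calD;\calS)$; the second is $LS_\nu(\calD;-\calT)$, and \emph{only then} is the $\nu$-sum performed, via the generalized Cauchy identity, producing $\prod(1-\gamma\delta)^{-1}\prod(1-t\gamma)$.

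Your plan to feed each $LS_{\nu+\langle n^N\rangle}$ into the Moens--Van der Jeugt determinant and then ``sum $\schur_\nu(\calC)$ against this expression via a Cauchy--Binet manipulation'' before Laplace-expanding is feasible in principle --- it is essentially the thesis's own first overlap identity applied term by term --- but the order you describe is awkward: the $\nu$-dependence sits in the exponents inside the determinant, so summing over $\nu$ first is not a clean Cauchy--Binet step. The natural order is Laplace-expand first (which is exactly Lemma~\ref{1_lem_BG_laplace_LS}), then sum over $\nu$. Once reordered, your approach and the paper's coincide; the only genuine methodological difference is whether the Laplace lemma is established via Pieri induction (Bump--Gamburd) or via the determinantal formula (the route the thesis later develops).
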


In Chapter~\ref{4_cha_mixed_ratios}, we generalize Bump and Gamburd's derivation of Theorem~\ref{1_thm_ratios_BG} in order to obtain a formula for averages of so-called mixed ratios of polynomials from unitary matrices. What we call mixed ratios are simply products of logarithmic derivatives and ratios. Furthermore, Theorem~\ref{1_thm_ratios_BG} is the ratio theorem on which Conrey and Snaith's unified approach to $n$-correlation in \cite{CS} is partially based. We will not reproduce Bump and Gamburd's proof of the ratio theorem, as it is conceptually identical to their proof of the product theorem but requires more intricate computations: their first step is to express the integrand on the left-hand side in \eqref{1_thm_ratios_BG_eq} as a linear combination of products of two Schur functions in the variables $\calR(g)$ and $\overline{\calR(g)}$, respectively, where $\calR(g)$ stands for the multiset of eigenvalues of the matrix $g \in U(N)$. In a second step, they use Schur orthogonality to compute the integral on the left-hand side in \eqref{1_thm_ratios_BG_eq}. Their third and last conceptual step consists of simplifying the resulting expression with the help of a generalization of Lemma~\ref{1_lem_BG_laplace_Schur} to Littlewood-Schur functions.

We only reproduce how Bump and Gamburd generalize Lemma~\ref{1_lem_BG_laplace_Schur} to Littlewood-Schur functions, which is the only non-classical property of Littlewood-Schur functions that their proof relies on. Our reason for reproducing their proof is to convince the reader that Bump and Gamburd's derivations of Lemmas~\ref{1_lem_BG_laplace_Schur} and \ref{1_lem_BG_laplace_LS} (\textit{i.e.}\ the lemma for Schur functions and its generalization to Littlewood-Schur functions) are quite different. Littlewood-Schur functions are a generalization of Schur functions that can be characterized by the following recursive property, which relies on the notion of a vertical strip. For partitions $\lambda$ and $\kappa$, we say that $\lambda \setminus \kappa$ is a vertical strip if $0 \leq \lambda_i - \kappa_i \leq 1$ for all $i \geq 1$. For any partition $\lambda$,
\begin{align*}
LS_\lambda(\calX; \emptyset) ={} & \schur_\lambda(\calX)
\intertext{and}
LS_\lambda(\calX; \calY \cup \{y_0\}) ={} & \sum_{\substack{\kappa: \\ \lambda \setminus \kappa \text{ is a vertical strip}}} LS_\kappa(\calX; \calY) y_0^{|\lambda| - |\kappa|}
.
\end{align*}
It is worth noting that the Littlewood-Schur function are doubly symmetric polynomials, \textit{i.e.}\ $LS_\lambda(\calX; \calY)$ is a polynomial in the variables $\calX \cup \calY$ that is symmetric in both $\calX$ and $\calY$ separately, which is not immediate from this definition. In Chapters~\ref{1_cha_algebraic_combinatorics} and \ref{2_cha_det_defn_LS} we will see more common definitions for Littlewood-Schur functions but this is the property that Bump and Gamburd's derivation of the following lemma is based on.

\begin{lem} [adapted from \cite{bump06}] \label{1_lem_BG_laplace_LS}  Let $\calX$ and $\calY$ be sets of $m + n$ and $p$ complex numbers, respectively, so that the elements in $\calX$ are pairwise distinct. Let $\lambda$ be a partition so that $\lambda_m \geq \lambda_{m + 1} + p$. If we set $\mu = (\lambda_1 + n, \dots, \lambda_m + n)$ and $\nu = (\lambda_{m + 1}, \dots)$, then
\begin{align*}
LS_{\lambda} (\calX; \calY) ={} & \sum_{\calS, \calT} \frac{LS_\mu(\calS; \calY) LS_\nu(\calT; \calY)}{\Delta(\calS; \calT)}
\end{align*}
where the sum runs over all subsets $\calS, \calT$ of $\calX$ containing $m$ and $n$ elements, respectively, so that $\calS \cup \calT = \calX$.
\end{lem}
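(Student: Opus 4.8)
The plan is to induct on the cardinality $p = |\calY|$, using the recursive characterization of Littlewood-Schur functions quoted above as the driving mechanism and Lemma~\ref{1_lem_BG_laplace_Schur} as the base case. When $p = 0$ we have $\calY = \emptyset$, so $LS_\lambda(\calX; \emptyset) = \schur_\lambda(\calX)$, $LS_\mu(\calS; \emptyset) = \schur_\mu(\calS)$ and $LS_\nu(\calT; \emptyset) = \schur_\nu(\calT)$; the hypothesis $\lambda_m \geq \lambda_{m+1} + p$ reduces to $\lambda_m \geq \lambda_{m+1}$, which holds automatically, and the assertion is precisely Lemma~\ref{1_lem_BG_laplace_Schur} (both sides being $0$ when $l(\lambda) > m + n$, since then $\schur_\nu(\calT)$ vanishes for $|\calT| = n$).

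For the inductive step, assume the lemma for index sets of size $p - 1$ and write $\calY = \calY' \cup \{y_0\}$ with $|\calY'| = p - 1$. Expanding the left-hand side by the recursion in the variable $y_0$ yields $LS_\lambda(\calX; \calY) = \sum_\kappa LS_\kappa(\calX; \calY')\, y_0^{|\lambda| - |\kappa|}$, the sum running over partitions $\kappa$ with $\lambda \setminus \kappa$ a vertical strip. For each such $\kappa$ one has $\kappa_m \geq \lambda_m - 1 \geq \lambda_{m+1} + p - 1 \geq \kappa_{m+1} + (p - 1)$, so the induction hypothesis applies to $LS_\kappa(\calX; \calY')$ with the splitting $\mu(\kappa) = (\kappa_1 + n, \dots, \kappa_m + n)$, $\nu(\kappa) = (\kappa_{m+1}, \kappa_{m+2}, \dots)$. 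Substituting and interchanging the two finite sums, the left-hand side becomes $\sum_{\calS, \calT} \Delta(\calS; \calT)^{-1} \sum_\kappa y_0^{|\lambda| - |\kappa|}\, LS_{\mu(\kappa)}(\calS; \calY')\, LS_{\nu(\kappa)}(\calT; \calY')$, the outer sum over the admissible pairs $\calS, \calT$.

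It then remains to show that, for fixed $\calS$ and $\calT$, the inner sum equals $LS_\mu(\calS; \calY)\, LS_\nu(\calT; \calY)$. Expanding each of these two factors by the recursion in $y_0$ gives $\sum_{\pi, \tau} LS_\pi(\calS; \calY')\, LS_\tau(\calT; \calY')\, y_0^{|\mu| - |\pi| + |\nu| - |\tau|}$, summed over pairs with $\mu \setminus \pi$ and $\nu \setminus \tau$ vertical strips. I would then set up the bijection $\kappa \leftrightarrow (\pi, \tau)$ defined by $\pi = (\kappa_1 + n, \dots, \kappa_m + n)$ and $\tau = (\kappa_{m+1}, \kappa_{m+2}, \dots)$, equivalently $\kappa = (\pi_1 - n, \dots, \pi_m - n, \tau_1, \tau_2, \dots)$. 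Because $\mu_i - \pi_i = \lambda_i - \kappa_i$ for $i \leq m$ and $\nu_i - \tau_i = \lambda_{m+i} - \kappa_{m+i}$ for $i \geq 1$, the three vertical-strip conditions match and the $y_0$-exponents agree, $|\mu| - |\pi| + |\nu| - |\tau| = |\lambda| - |\kappa|$; hence the inner sum is $LS_\mu(\calS; \calY)\, LS_\nu(\calT; \calY)$ and the induction closes.

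The one point that genuinely needs care -- and the place where the hypothesis $\lambda_m \geq \lambda_{m+1} + p$ is spent -- is checking that the sequence $\kappa = (\pi_1 - n, \dots, \pi_m - n, \tau_1, \tau_2, \dots)$ reconstructed from a pair $(\pi, \tau)$ really is a partition. Its first $m$ entries are weakly decreasing and nonnegative since $\pi \subseteq \mu$ and $\pi_m \geq \mu_m - 1 = \lambda_m + n - 1 \geq n$ (using $\lambda_m \geq \lambda_{m+1} + p \geq 1$), its tail is weakly decreasing since $\tau$ is, and the junction inequality $\kappa_m \geq \kappa_{m+1}$ amounts to $\pi_m - n \geq \tau_1$, which follows from $\pi_m - n \geq \lambda_m - 1 \geq \lambda_{m+1} + p - 1 \geq \lambda_{m+1} \geq \nu_1 \geq \tau_1$. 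The same computation applied with $\kappa$ in place of $\lambda$ is exactly what keeps the hypothesis valid one step down the recursion, so $\lambda_m \geq \lambda_{m+1} + p$ is just the right amount of slack. Everything else is the recursion and a rearrangement of finite sums, so this bookkeeping with vertical strips is the main (and essentially the only real) obstacle.
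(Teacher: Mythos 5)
Your proposal is correct and takes essentially the same route as the paper: induction on $l(\calY)$ with Lemma~\ref{1_lem_BG_laplace_Schur} as the base case, expanding one variable via the vertical-strip recursion, checking that the hypothesis $\lambda_m \geq \lambda_{m+1} + p$ descends, and matching the vertical-strip sums through the splitting $\kappa \leftrightarrow (\varphi,\psi)$ at row $m$. The only difference is that you spell out the verification that the reconstructed $\kappa$ is a genuine partition (the junction inequality and nonnegativity), which the paper compresses into the observation that $\lambda_m > \lambda_{m+1}$ decouples the two halves.
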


\begin{proof} This proof is an induction on $p$. The base case $p = 0$ is Lemma~\ref{1_lem_BG_laplace_Schur}. For the induction step, suppose that the statement holds for a set $\calY$ that contains $p$ elements and consider $\calY \cup \{y_{p + 1}\}$:
\begin{align*}
LS_\lambda (\calX; \calY \cup \{y_{p + 1}\}) ={} & \sum_{\substack{\kappa: \\ \lambda \setminus \kappa \text{ is a vertical strip}}} LS_\kappa(\calX; \calY) y_{p + 1}^{|\lambda| - |\kappa|}
.
\intertext{Notice that each partition $\kappa$ that appears in this sum satisfies
$$\kappa_m \geq \lambda_m - 1 \geq \lambda_{m + 1} + p + 1 - 1 \geq \kappa_{m + 1} + p.$$ Hence, if we set $\varphi = (\kappa_1 + n, \dots, \kappa_m + n)$ and $\psi = (\kappa_{m + 1}, \dots)$, then the induction hypothesis allows us to infer that}
LS_\lambda (\calX; \calY \cup \{y_{p + 1}\}) ={} & \sum_{\substack{\kappa: \\ \lambda \setminus \kappa \text{ is a vertical strip}}} \sum_{\calS, \calT} \frac{LS_\varphi(\calS; \calY) LS_\psi(\calT; \calY)}{\Delta(\calS; \calT)} y_{p + 1}^{|\lambda| - |\kappa|}
.
\end{align*}
The assumption that $\lambda_m \geq \lambda_{m + 1} + p + 1 > \lambda_{m + 1}$ entails that as $\kappa$ runs through the partitions such that $\lambda \setminus \kappa$ is a vertical strip, $\varphi$, $\psi$ run through the pairs of partitions such that $\mu \setminus \varphi$ and $\nu \setminus \psi$ are vertical strips (where $\mu = (\lambda_1 + n, \dots, \lambda_m + n)$ and $\nu = (\lambda_{m + 1}, \dots)$). Moreover, $|\lambda| - |\kappa| = |\mu| - |\varphi| + |\nu| - |\psi|$, which concludes the proof.
\end{proof} 

In Chapter~\ref{3_cha_overlap_ids}, we use a determinantal definition for Littlewood-Schur functions together with the strategy underlying Bump and Gamburd's proof of Lemma~\ref{1_lem_BG_laplace_Schur} to show what we call overlap identities for Littlewood-Schur functions. It turns out that Lemma~\ref{1_lem_BG_laplace_LS} is the simplest case of the first overlap identity with stronger assumptions on the partition $\lambda$.

\subsection[A combinatorial approach to the moments of the $\zeta$-function]{A combinatorial approach to the moments of the $\boldsymbol{\zeta}$-function} \label{1_sec_POD12}

Arguably the most convincing support for the CFKRS recipe for conjecturing the moments of $L$-functions (presented in Section~\ref{1_sec_on_moments}) is numerical data that matches Conjecture~\ref{1_conj_CFKRSconj} in case of the Riemann $\zeta$-function. Therefore, it would be of great interest to obtain further numerical confirmation. In order to run additional numerical tests, precise explicit expressions for all of the coefficients of the polynomial $P_k(\zeta, t)$ are needed. However, the formula for $P_k(\zeta, t)$ given in \cite{CFKRS05} is rather implicit as it describes the polynomial through a $2k$-fold residue. This issue is partially resolved in \cite{CFKRS08}, in which the authors describe an algorithm for computing the coefficients of $P_k(\zeta, t)$. This algorithm is useful numerically but does not provide explicit expressions for the coefficients. In \cite{dehaye12}, Dehaye uses symmetric function theory to derive neat formulas for all of the coefficients of $P_k(\zeta, t)$, which would also facilitate the numerical computations carried out in \cite{CFKRS08}. 

For integers $r$, $k \geq 0$ with $r \leq k^2$, we define the coefficients $c_r(\zeta, k) = c_r(k)$ through
\begin{align*}
P_k(\zeta, t) = c_0(k) t^{k^2} + c_1(k) t^{k^2 - 1} + \dots + c_{k^2} (k)
.
\end{align*}
The main result in \cite{dehaye12} gives the following combinatorial description for $c_r(k)$:

\begin{thm} [\cite{dehaye12}] \label{1_thm_POD_coeffs}
For integers $r$, $k \geq 0$ with $r \leq k^2$, the coefficient $c_r(k)$ satisfies the equation
\begin{align*}
c_r(k) = \frac{1}{(k^2 - r)!} \sum_{\substack{\kappa, \lambda \text{ partitions} \\ |\kappa| + |\lambda| = r}} d_{\kappa \lambda} \dim(\lambda, S_k(\kappa))
\end{align*}
where $\dim(\mu, \nu)$ denotes the number of standard tableaux of skew shape $\mu / \nu$ (defined on page \pageref{symbol_skew_diagram}), $S_k(\kappa)$ is an explicitly given partition and $d_{\kappa \lambda}$ is a coefficient of arithmetic significance.
\end{thm}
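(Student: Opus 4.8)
The plan is to start from the closed form for $P_k(\zeta, t)$ that the CFKRS recipe produces --- the $2k$-fold residue underlying Conjecture~\ref{1_conj_CFKRSconj} --- and to extract the coefficient of $t^{k^2 - r}$ directly. After the usual change of variables that shrinks all contours to small circles about the origin, this residue reads (schematically, with $\calZ = (z_1, \dots, z_{2k})$ and $\Delta(\calZ) = \prod_{i < j}(z_j - z_i)$)
\begin{align*}
P_k(\zeta, t) = \frac{(-1)^{k(k - 1)/2}}{k!^2}\, \frac{1}{(2\pi\imaginary)^{2k}} \oint \cdots \oint \frac{G(\calZ)\, \Delta(\calZ)^2}{\prod_{j = 1}^{2k} z_j^{2k}}\, \exp\!\left( \frac{t}{2}\Big( {\textstyle\sum_{i = 1}^{k}} z_i - {\textstyle\sum_{j = k + 1}^{2k}} z_j \Big) \right) dz_1 \cdots dz_{2k},
\end{align*}
where $G(\calZ) = A_k(\calZ)\,\prod_{i = 1}^{k}\prod_{j = 1}^{k}\zeta\!\left( 1 + z_i - z_{k + j} \right)$ is the product of the arithmetic factor $A_k$ with the ``random matrix'' factor. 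Expanding the exponential, the coefficient of $t^{k^2 - r}$ comes multiplied by $\tfrac{1}{(k^2 - r)!}\,2^{-(k^2 - r)}\big( \sum_i z_i - \sum_j z_j \big)^{k^2 - r}$, which already produces the prefactor $1/(k^2 - r)!$ in the statement.

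Next I would separate the singular part of $G$. Because $\zeta(1 + s) = s^{-1} + \gamma + O(s)$, the random matrix factor equals $\prod_{i, j}(z_i - z_{k + j})^{-1}$ times a function holomorphic at the origin; absorbing that holomorphic germ into $A_k$ leaves a holomorphic function $\widetilde{A}_k(\calZ)$, which I would Taylor-expand and re-sum in the Schur basis, treating the two blocks $(z_1, \dots, z_k)$ and $(z_{k + 1}, \dots, z_{2k})$ separately. The resulting expansion coefficients, naturally indexed by a pair of partitions $(\kappa, \lambda)$, are exactly the numbers $d_{\kappa \lambda}$; this is the only place where genuine arithmetic --- the Euler product defining $A_k$ --- enters, the rest of the argument being formal. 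The problem is thereby reduced, for each fixed $\kappa$, to the purely algebraic task of evaluating a residue of the shape
\begin{align*}
\frac{1}{(2\pi\imaginary)^{2k}} \oint \cdots \oint \frac{\big( \sum_i z_i - \sum_j z_j \big)^{k^2 - r}\, \Delta(\calZ)^2\, \schur_\kappa(z_1, \dots, z_k)\, \big(\text{monomial in } \calZ\big)}{\prod_{j = 1}^{2k} z_j^{2k}\, \prod_{i = 1}^{k}\prod_{j = 1}^{k}(z_i - z_{k + j})}\, dz_1 \cdots dz_{2k}.
\end{align*}

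I would then carry out this residue combinatorially. Splitting $\Delta(\calZ)^2 = \Delta(z_1, \dots, z_k)^2\, \Delta(z_{k + 1}, \dots, z_{2k})^2\, \prod_{i, j}(z_i - z_{k + j})^2$ cancels the denominator $\prod_{i, j}(z_i - z_{k + j})$, leaving a genuine Laurent polynomial whose residue is just the coefficient of $\prod_j z_j^{2k - 1}$. Using the bialternant formula~\eqref{1_NT_part_eq_defn_Schur}, each product (Vandermonde)$\times$(monomial)$\times\schur_\kappa$ collapses to a single Schur polynomial indexed by the rectangle $\langle k^k \rangle$ modified according to $\kappa$ --- this is the partition $S_k(\kappa)$ of the statement --- while Lemma~\ref{1_lem_BG_laplace_Schur} lets me recombine the two size-$k$ blocks. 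Finally, by the semistandard-tableau description of Schur polynomials, the coefficient of the $(k^2 - r)$-th power of the symmetric linear form $\sum_i z_i - \sum_j z_j$ inside such a Schur polynomial is a count of standard Young tableaux; matching degrees identifies it with $\dim(\lambda, S_k(\kappa))$, and summing over $(\kappa, \lambda)$ with $|\kappa| + |\lambda| = r$ against the weights $d_{\kappa \lambda}$ yields the formula. A cleaner route to this last step, which I would develop in parallel, is to phrase the whole coefficient extraction in the $\Lambda$-ring (plethystic) formalism, where the residue against $\Delta(\calZ)^2/\prod z_j^{2k}$ becomes an inner product with a rectangular Schur function and the Pieri/branching rules deliver the tableau counts.

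The main obstacle I anticipate lies in this last step: tracking the signs and the $1/k!^2$ normalization, and --- above all --- checking that the shape emerging from the bialternant manipulation is \emph{exactly} $S_k(\kappa)$ and that the surviving coefficient is a genuine skew SYT count rather than a signed combination of such counts. Establishing this identification is essentially a nontrivial symmetric-function identity, for which Bump and Gamburd's Laplace-type Lemma~\ref{1_lem_BG_laplace_Schur} together with the branching rule for Schur functions are the natural tools.
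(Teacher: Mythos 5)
You take a genuinely different route from the paper, and it leaves a real gap exactly where you flag trouble. You work downward from the residue expression for $P_k(\zeta, t)$ in \cite{CFKRS05}, extracting the $t^{k^2-r}$-coefficient by expanding the exponential, cancelling the $\prod_{i,j}(z_i - z_{k+j})^{-1}$ singularity against part of $\Delta(\calZ)^2$, and then trying to read off a skew SYT count from what remains. The paper, following Dehaye, never forms the residue. It keeps the shifts $\calD$ alive: it expands the Euler-product factor in the Schur basis to produce $d_{\mu\nu}$, uses the Pieri rule (Theorem~\ref{1_thm_pieri_rule}) to re-expand $(t/2\pi)^{-\sum_{\beta\in\calB}\beta}\,\schur_\nu(\calB)$ in Schur polynomials of $\calB$ with $t$-dependent coefficients, collapses $\sum_{\calA,\calB}\schur_\mu(\calA)\schur_\kappa(\calB)/\Delta(\calA;\calB)$ into a single signed $\schur_\lambda(\calD)$ (or zero) via the first overlap identity, and then observes that letting $\calD \to 0$ annihilates every $\lambda \neq \emptyset$ by homogeneity. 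The empty-overlap characterization in Remark~\ref{1_rem_POD_visual_empty_partition} then delivers the complement shape $S_k(\kappa)$ and the count $\dim(\lambda, S_k(\kappa))$ directly.

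Two concrete defects in your sketch. First, you invoke Lemma~\ref{1_lem_BG_laplace_Schur}, but the paper explicitly says Dehaye needs the generalization Lemma~\ref{1_lem_POD_laplace}: after the Schur re-expansion the pairs $(\mu, \kappa)$ are arbitrary and generically do \emph{not} satisfy the cut-at-$m$-and-translate hypothesis of Lemma~\ref{1_lem_BG_laplace_Schur}; using only the weaker lemma drops or mis-signs most terms, and you also lose the built-in ``vanishes when no overlap exists'' alternative, which is what makes the bookkeeping finite. Second, the final identification --- that the coefficient of $\bigl(\sum_i z_i - \sum_j z_j\bigr)^{k^2-r}$ inside the collapsed Schur polynomial is a genuine skew SYT number for the shape $S_k(\kappa)$, and not a signed mixture --- is exactly the step you say you cannot yet check. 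In Dehaye's route this is automatic: the $\calD \to 0$ limit isolates the empty overlap, and Remark~\ref{1_rem_POD_visual_empty_partition} converts the empty-overlap condition into the required shape. Since you start from $P_k$ (i.e.\ you have already specialized $\calD \to 0$), that mechanism is unavailable to you, and you would need a separate argument ruling out cancellations; as written, this is the missing piece.
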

Recall that the leading coefficient $c_0(k)$ is the product of an arithmetic factor $a_k$ and a random matrix theory factor $f_k$ -- although $f_k$ might be more aptly called a combinatorial factor in this context. Theorem~\ref{1_thm_POD_coeffs} shows that the coefficients of the lower order terms should also be viewed as a combination of arithmetic terms (\textit{i.e.}\ $d_{\kappa \lambda}$) and combinatorial terms (\textit{i.e.}\ $\dim(\lambda, S_k(\kappa)) / (k^2 - r)!$) which is the result of a more intricate interaction than simply taking the product. It is worth mentioning that the restriction to the Riemann $\zeta$-function is not inherent in Dehaye's method. In fact, considering $c_r(L, k)$ instead of $c_r(\zeta, k)$ would only affect the arithmetic terms $d_{\kappa \lambda}$ and thus leave the general structure of the formula intact.

The remainder of this section is devoted to a more detailed description of the method used in \cite{dehaye12}. In Section~\ref{1_sec_BG}, we have encountered one family of symmetric polynomials, namely the Schur functions. In order to describe Dehaye's method we introduce another family of symmetric polynomials: for any integer $r \geq 1$ and any set of variables $\calX = (x_1, \dots, x_n)$ define the $r$-th complete symmetric polynomial
\begin{align*}
\complete_r(\calX) = \sum_{1 \leq i_1 \leq i_1 \leq \dots \leq i_r \leq n} x_{i_1} x_{i_2} \cdots x_{i_r}
\end{align*}
and set $\complete_0(\calX) = 1$. In \cite{dehaye12}, Dehaye follows the CFKRS recipe for conjecturing the $2k$-th moment of the Riemann $\zeta$-function (\textit{i.e.}\ he executes the five steps given in Section~\ref{1_sec_on_moments}) and obtains:
\begin{align*}
M \left( \frac 12 + \imaginary t \right) = \left ( \frac{t}{2 \pi} \right )^{\frac 12 \sum_{\delta \in \calD} \delta} \sum_{\calA, \calB} \left ( \frac{t}{2 \pi} \right )^{-\sum_{\beta \in \calB} \beta } \prod_{p \text{ prime}} \left ( \sum_{r \geq 0} \complete_r \left(p^{-\calA} \right) \complete_r \left(p^{\calB}\right) p^{-r} \right)
\end{align*}
where the sum is over subsets $\calA$, $\calB$ of the shifts $\calD$ so that $|\calA| = k = |\calB|$ and $\calA \cup \calB = \calD$. In addition, $p^{-\calA} = \{p^{-\alpha}: \alpha \in \calA\}$ and $p^\calB = \{p^\beta: \beta \in \calB\}$.

At this point, the translation into the language of symmetric functions has merely achieved a neater presentation. However, Dehaye goes on to show that for all integers $n \geq 0$, there is a homogeneous polynomial $g_n(\calA; \calB)$ (in the set of variables $\calA \cup \calB$) of degree $n$ which is symmetric in both $\calA$ and $\calB$ separately such that
\begin{align*}
\prod_{p \text{ prime}} \left ( \sum_{r \geq 0} \complete_r \left(p^{-\calA} \right) \complete_r \left(p^{\calB}\right) p^{-r} \right) ={} & \sum_{n \geq 0} \frac{g_n(\calA; \calB)}{\Delta(\calA; \calB)}
.
\intertext{Given that the set of all Schur functions $\schur_\lambda(\calA)$ (or $\schur_\lambda(\calB)$) associated to a partition $\lambda$ of size $n$ forms a basis for $\Sym^n(\calA)$ (or $\Sym^n(\calB)$) thus entails that}
\prod_{p \text{ prime}} \left ( \sum_{r \geq 0} \complete_r \left(p^{-\calA} \right) \complete_r \left(p^{\calB}\right) p^{-r} \right) ={} & \sum_{\mu, \nu} \frac{c_{\mu \nu} \schur_\mu(\calA) \schur_\nu(\calB)}{\Delta(\calA; \calB)}
\end{align*}
for some coefficients $c_{\mu \nu} \in \C$. In fact, the coefficient $c_{\mu \nu}$ turns out to be equal to $(-1)^{|\nu|} d_{\mu \nu}$ (the coefficient of arithmetic significance in Theorem~\ref{1_thm_POD_coeffs}). To sum up, the following asymptotic equality holds under the assumption of the CFKRS recipe: as $T \to \infty$,
\begin{multline} \label{1_eq_in_proof_POD_for_shifted_moment}
\frac{1}{T} \int_0^T Z \left( \frac{1}{2} + \imaginary t, \calD \right) dt \sim \\ \sum_{\mu, \nu} (-1)^{|\nu|} d_{\mu \nu} \frac{1}{T} \int_0^T \left ( \frac{t}{2 \pi} \right )^{\frac 12 \sum_{\delta \in \calD} \delta} \sum_{\calA, \calB} \left ( \frac{t}{2 \pi} \right )^{-\sum_{\beta \in \calB} \beta} \frac{\schur_\mu(\calA) \schur_\nu(\calB)}{\Delta(\calA; \calB)} dt
\end{multline}
where the sum is over subsets $\calA$, $\calB$ of the shifts $\calD$ such that $|\calA| = k = |\calB|$ and $\calA \cup \calB = \calD$. Let us denote this sum by $K(t, \mu, \nu)$:
\begin{align*}
K(t, \mu, \nu) = \sum_{\calA, \calB} \left ( \frac{t}{2 \pi} \right )^{-\sum_{\beta \in \calB} \beta} \frac{\schur_\mu(\calA) \schur_\nu(\calB)}{\Delta(\calA; \calB)}
.
\end{align*}
Given that the exponent of $\left ( t/ 2 \pi \right )$ is symmetric in the set variables $\calB$ it is possible (and actually easy, owing to the Pieri rule, which we state on page \pageref{1_thm_pieri_rule}) to write $(t / 2 \pi)^{-\sum_{\beta \in \calB} \beta} \schur_\nu(\calB)$ as an infinite linear combination of Schur functions in the set of variables $\calB$. It is important to keep in mind that the coefficients of this linear combination depend on $t$. Concretely,
\begin{align*}
K(t, \mu, \nu) = \sum_\kappa c_{\nu, t}(\kappa) \sum_{\calA, \calB} \frac{\schur_\mu(\calA) \schur_\kappa(\calB)}{\Delta(\calA; \calB)}
.
\end{align*}

In \cite{dehaye12}, Dehaye observes that the resulting sum over $\calA$, $\calB$ is reminiscent of the right-hand side in \eqref{1_lem_BG_laplace_Schur_eq}. Recall that Bump and Gamburd derive the identity in \eqref{1_lem_BG_laplace_Schur_eq} by expanding the determinant in the numerator of $\schur_\lambda(\calX)$ along the $m$ left-most columns. Dehaye generalizes the identity in \cite{bump06} by expanding the determinant in question along any fixed set of $m$ columns:

\begin{lem} [\cite{dehaye12}] \label{1_lem_POD_laplace} Let $\calX$ be a set of $m + n$ pairwise distinct numbers in $\C$ and let $\mu = (\mu_1, \dots, \mu_m)$, $\nu = (\nu_1, \dots, \nu_n)$ be two partitions. Suppose that there exists a partition $\lambda = (\lambda_1, \dots, \lambda_{m + n})$ so that
\begin{multline} \label{1_eq_lem_POD_laplace_condition}
\{\lambda_i + m + n - i: 1 \leq i \leq m + n\} \\
= \{\mu_i + m - i: 1 \leq i \leq m \} \cup \{\nu_i + n - i: 1 \leq i \leq n\},
\end{multline}
then
\begin{align} \label{1_eq_lem_POD_laplace_indentity}
\varepsilon(\mu, \nu) \schur_{\lambda}(\calX) = \sum_{\calS, \calT} \frac{\schur_\mu (\calS) \schur_\nu (\calT)}{\Delta(\calS; \calT)}
\end{align}
where the sum runs over all subsets $\calS$, $\calT$ of $\calX$ containing $m$ and $n$ elements, respectively, so that $\calS \cup \calT = \calX$, and $\varepsilon(\mu, \nu) = \pm 1$ is some sign. If no partition satisfies the condition in \eqref{1_eq_lem_POD_laplace_condition}, the right-hand side in \eqref{1_eq_lem_POD_laplace_indentity} is equal to zero.
\end{lem}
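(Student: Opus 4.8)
The plan is to follow Bump and Gamburd's proof of Lemma~\ref{1_lem_BG_laplace_Schur} almost verbatim, but to Laplace-expand the numerator determinant of $\schur_\lambda$ along the set of columns dictated by $\mu$ rather than the $m$ left-most columns. Write $N=m+n$. For a partition $\kappa$ with at most $\ell$ parts let $\beta_\ell(\kappa)=(\kappa_1+\ell-1,\kappa_2+\ell-2,\dots,\kappa_\ell)$ be its \emph{beta-set}, a strictly decreasing sequence of nonnegative integers, and for a tuple $C=(c_1,\dots,c_\ell)$ of nonnegative integers and a sequence of variables $\calY=(y_1,\dots,y_\ell)$ put $V_C(\calY)=\det\bigl(y_i^{c_j}\bigr)_{1\le i,j\le\ell}$. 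With this notation $\schur_\kappa(\calY)=V_{\beta_\ell(\kappa)}(\calY)/V_{\rho_\ell}(\calY)$, where $V_{\rho_\ell}(\calY)=\prod_{1\le i<j\le\ell}(y_i-y_j)$ is the ordinary Vandermonde. Note also that every $\ell$-element subset of $\mathbb{Z}_{\ge 0}$, listed in decreasing order, is $\beta_\ell(\kappa)$ for a unique partition $\kappa$ with at most $\ell$ parts; hence a $\lambda$ satisfying \eqref{1_eq_lem_POD_laplace_condition} exists if and only if the sets $\beta_m(\mu)$ and $\beta_n(\nu)$ are disjoint, in which case $\lambda$ is unique.

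First I would fix an ordering $\calX=(x_1,\dots,x_N)$ and form the single $N\times N$ determinant $V_{C^{\mu,\nu}}(\calX)$, where $C^{\mu,\nu}$ is the length-$N$ concatenation of $\beta_m(\mu)$ followed by $\beta_n(\nu)$. Laplace expansion along the first $m$ columns gives
$$V_{C^{\mu,\nu}}(\calX)=\sum_{R}\varepsilon(R)\,\det\bigl(x_i^{\mu_k+m-k}\bigr)_{i\in R,\,1\le k\le m}\,\det\bigl(x_i^{\nu_k+n-k}\bigr)_{i\notin R,\,1\le k\le n},$$
the sum over $m$-element subsets $R\subset[N]$, with $\varepsilon(R)$ the sign of the permutation carrying $R$ to $\{1,\dots,m\}$ and $[N]\setminus R$ to $\{m+1,\dots,N\}$, both order-preservingly. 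The first minor equals $\schur_\mu(\calX_R)\prod_{i,j\in R:\,i<j}(x_i-x_j)$ and the second equals $\schur_\nu(\calX_{[N]\setminus R})\prod_{i,j\notin R:\,i<j}(x_i-x_j)$, where $\calX_R$ denotes the subsequence of $\calX$ indexed by $R$.

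The decisive point, exactly as in Bump and Gamburd's computation, is that $\prod_{1\le i<j\le N}(x_i-x_j)=\varepsilon(R)\prod_{i,j\in R:\,i<j}(x_i-x_j)\prod_{i,j\notin R:\,i<j}(x_i-x_j)\prod_{i\in R,\,j\notin R}(x_i-x_j)$, with the \emph{same} sign $\varepsilon(R)$ --- this is seen by permuting the variables in the Vandermonde by the permutation defining $\varepsilon(R)$. Dividing the Laplace identity by $V_{\rho_N}(\calX)=\prod_{1\le i<j\le N}(x_i-x_j)$ therefore cancels every $R$-dependent sign and leaves
$$\frac{V_{C^{\mu,\nu}}(\calX)}{V_{\rho_N}(\calX)}=\sum_{\calS,\calT}\frac{\schur_\mu(\calS)\,\schur_\nu(\calT)}{\Delta(\calS;\calT)},$$
the sum over ordered pairs of complementary subsets $\calS,\calT\subset\calX$ with $|\calS|=m$, $|\calT|=n$ --- precisely the right-hand side of \eqref{1_eq_lem_POD_laplace_indentity}. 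It remains to evaluate the left-hand side. If $\beta_m(\mu)$ and $\beta_n(\nu)$ are disjoint, sorting $C^{\mu,\nu}$ into decreasing order is a column permutation, so $V_{C^{\mu,\nu}}(\calX)=\varepsilon(\mu,\nu)\,V_{\beta_N(\lambda)}(\calX)$ with $\lambda$ the partition of \eqref{1_eq_lem_POD_laplace_condition} and $\varepsilon(\mu,\nu)=\pm1$ the sign of that sorting; dividing by $V_{\rho_N}(\calX)$ gives $\varepsilon(\mu,\nu)\,\schur_\lambda(\calX)$. If the two beta-sets are not disjoint, $C^{\mu,\nu}$ has a repeated entry, $V_{C^{\mu,\nu}}(\calX)$ has two equal columns and vanishes, so the right-hand side is $0$; and by the remark above this is exactly the case where no $\lambda$ satisfies \eqref{1_eq_lem_POD_laplace_condition}.

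The main obstacle is the sign bookkeeping: one must verify (i) that the Laplace-expansion sign and the sign arising from splitting the Vandermonde are literally the same function $\varepsilon(R)$ of $R$, so that they cancel termwise after dividing, and (ii) that the residual global sign $\varepsilon(\mu,\nu)$ is well defined, i.e.\ independent of the chosen ordering of $\calX$ and of $R$. Both are elementary facts about signs of permutations --- and (i) is already implicit in Bump and Gamburd's proof --- but they are where the genuine content of the lemma lies; everything else is a direct transcription of their argument with ``$m$ left-most columns'' replaced by ``the columns carrying the exponents of $\mu$''.
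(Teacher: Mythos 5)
Your proof is correct and follows essentially the route the paper attributes to Dehaye (generalizing Bump and Gamburd's Lemma~\ref{1_lem_BG_laplace_Schur}): Laplace expansion of a Vandermonde-type numerator determinant along a set of $m$ columns chosen according to $\mu$, with the Laplace sign $\varepsilon(R)$ cancelling exactly against the sign arising from splitting the denominator Vandermonde, and the residual column-sorting sign giving $\varepsilon(\mu,\nu)$. Framing the argument through $V_{C^{\mu,\nu}}$ and only then relating it to $\schur_\lambda$ by a column permutation is a minor presentational rearrangement of the same computation, with the small advantage that the degenerate case (no $\lambda$ exists, so $\beta_m(\mu)\cap\beta_n(\nu)\neq\emptyset$ and a column repeats) is read off directly from the vanishing of the determinant.
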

We will revisit this lemma in Chapter~\ref{3_cha_overlap_ids}. In fact, if two partitions $\mu$ and $\nu$ satisfy the condition in \eqref{1_eq_lem_POD_laplace_condition} for some partition $\lambda$, we will call $\lambda$ the \label{1_page_overlap_defn} $(m,n)$-overlap of $\mu$ and $\nu$, for which we will introduce the notation $\lambda = \mu \star_{m,n} \nu$. In addition, we will call Lemma~\ref{1_lem_POD_laplace} the first overlap identity for Schur functions, which is a special case of the so-called first overlap identity for Littlewood-Schur functions. Coming back to Dehaye's proof of Theorem~\ref{1_thm_POD_coeffs}, we see that Lemma~\ref{1_lem_POD_laplace} implies that $K(t, \mu, \nu)$ can be written as linear combination of Schur functions $\schur_\lambda(\calD)$ whose coefficients depend on $t$. In fact, only the coefficient corresponding to the empty partition matters. Indeed, the $2k$-th moment of the $\zeta$-function is equal to
\begin{align*}
\lim_{\calD \to 0} \frac{1}{T} \int_0^T Z \left( \frac{1}{2} + \imaginary t, \calD \right) dt,
\end{align*}
entailing that we will eventually need to let shifts in $\calD$ go the zero in \eqref{1_eq_in_proof_POD_for_shifted_moment}. Moreover, the fact that $\schur_\lambda(\calD)$ is a homogeneous polynomial of degree $|\lambda|$ implies that $$\lim_{\calD \to 0} \schur_\lambda(\calD) = 0$$ unless $\lambda$ is the empty partition. Therefore, the following remark about pairs of partitions that satisfy the condition in \eqref{1_eq_lem_POD_laplace_condition} for $\lambda = \emptyset$ is the last missing ingredient in Dehaye's proof of Theorem~\ref{1_thm_POD_coeffs}.
\begin{rem} [{\cite[p.~3]{mac}}] \label{1_rem_POD_visual_empty_partition} Let $\mu = (\mu_1, \dots, \mu_m)$ and $\nu = (\nu_1, \dots, \nu_n)$ be two partitions. Then
\begin{align*}
\{m + n - i: 1 \leq i \leq m + n\}
= \{\mu_i + m - i: 1 \leq i \leq m \} \cup \{\nu_i + n - i: 1 \leq i \leq n\}
\end{align*}
if and only if the conjugate partition of $\nu$ is given by $\nu' = (n - \mu_m, \dots, n - \mu_1)$.
\end{rem} 
This condition possesses a beautiful graphical interpretation, which becomes apparent by viewing partitions as Ferrers diagrams. In Chapter~\ref{3_cha_overlap_ids}, we will extend this visualization of the condition in \eqref{1_eq_lem_POD_laplace_condition} to all partitions $\lambda$. 

\subsection{Overview of the new results in this thesis}
\label{1_sec_my_results}
In this section we give a brief overview of our results. More detailed descriptions can be found in the introductions to Chapters~\ref{3_cha_overlap_ids} and \ref{4_cha_mixed_ratios}.

Our first results are two overlap identities for Littlewood-Schur functions. (Recall that we have introduced the overlap of two partitions on page \pageref{1_page_overlap_defn}.) What we call the first overlap identity is an equality of the following form: let $\calX$, $\calY$ be sets of complex numbers so that $\calX$ consists of $m + n$ pairwise distinct variables, then
\begin{align*}
LS_\lambda(\calX; \calY) = \sum_{\calS, \calT} \frac{\varepsilon(\mu, \nu) LS_\mu(\calS; \calY) LS_\nu(\calT; \calY)}{\Delta(\calS; \calT)}
\end{align*}
where the sum runs over all subsets $\calS$, $\calT$ of $\calX$ containing $m$ and $n$ elements, respectively, so that $\calS \cup \calT = \calX$. Here, the indexing partitions $\mu$, $\nu$ are constructed from a pair of partitions whose $(m, n - k)$-overlap equals $(\lambda_1, \dots, \lambda_{m + n - k})$ where $k$ denotes the so-called \textit{index} of the partition $\lambda$. Finally, $\varepsilon(\mu, \nu) = \pm 1$ is some sign. The first overlap identity (which is formally stated in Theorem~\ref{3_thm_laplace_expansion_LS_cut_before_index}) generalizes both Lemma~\ref{1_lem_POD_laplace} (due to Dehaye) and Lemma~\ref{1_lem_BG_laplace_LS} (due to Bump and Gamburd).

While the first overlap identity represents the Littlewood-Schur function $LS_\lambda(\calX; \calY)$ as a sum over pairs of subsets of $\calX$, the second overlap identity essentially represents the Littlewood-Schur function indexed by the partition $\lambda$ as a sum over pairs of partitions whose overlap equals $\lambda$. To avoid introducing new notation, we only state its specialization to Schur functions, which also seems to be new: let $\calS$ and $\calT$ be sets consisting of $m$ and $n$ variables, respectively, and let $\calX$ be the disjoint union of $\calS$ and $\calT$. If $\Delta(\calS; \calT) \neq 0$, then
\begin{align*}
\schur_\lambda(\calX) ={} & \sum_{\substack{\mu, \nu: \\ \mu \star_{m,n} \nu = \lambda}} \frac{\varepsilon(\mu, \nu) \schur_\mu(\calS) \schur_\nu(\calT)}{\Delta(\calS; \calT)}
\end{align*}
where $\varepsilon(\mu, \nu)$ is some sign. In the full statement for Littlewood-Schur functions (which is given in Theorem~\ref{3_thm_lapalace_expansion_new_LS}), there is an additional sum over all ways to split the second set of variables $\calY$ into two subsets. 

As this equality expresses $\schur_\lambda(\calX)$ as a sum over pairs of partitions whose $(m,n)$-overlap equals $\lambda$, simple characterizations of this set facilitate applications of the second overlap identity. Therefore, we give two visual characterization for pair of partitions with the same overlap, which both generalize the equivalence stated in Remark~\ref{1_rem_POD_visual_empty_partition}.

Both overlap identities are based on a determinantal formula for Littlewood-Schur functions. In \cite{vanderjeugt}, Moens and Van der Jeugt introduce a formula for Littlewood-Schur functions whose structure mirrors that of the definition for Schur functions given in \eqref{1_NT_part_eq_defn_Schur}. More precisely, they describe $LS_\lambda(\calX; \calY)$ as a product of three factors: a factor that only depends on the variables in $\calX$ and $\calY$, a determinant and a sign. Recall that Dehaye derives the equality in \eqref{1_eq_lem_POD_laplace_indentity} by expanding the determinant in the definition for the Schur function $\schur_\lambda(\calX)$ along subsets of the columns. We obtain the first overlap identity for Littlewood-Schur functions by expanding the determinant in Moens and Van der Jeugt's formula for the Littlewood-Schur function $LS_\lambda(\calX; \calY)$ along specific subsets of the columns. Expanding the determinant in the determinantal formula for Littlewood-Schur functions along specific subsets of the \emph{rows} results in the second overlap identity for Littlewood-Schur functions. 

\bigskip 

Our second result is an asymptotic formula for the following average of mixed ratios of characteristic polynomials:
\begin{align} \label{1_eq_mixed_ratios}
\int_{U(N)} \frac{\prod_{\alpha \in \calA} \chi_g(\alpha) \prod_{\beta \in \calB} \chi_{g^{-1}}(\beta)}{\prod_{\delta \in \calD} \chi_g(\delta) \prod_{\gamma \in \calC} \chi_{g^{-1}} (\gamma)} \prod_{\varepsilon \in \calE} \frac{\chi'_g(\varepsilon)}{\chi_g(\varepsilon)} \prod_{\varphi \in \calF} \frac{\chi'_{g^{-1}}(\varphi)}{\chi_{g^{-1}}(\varphi)} dg
.
\end{align}
In fact, we describe this integral over the unitary group $U(N)$ as a main term plus a remainder. Under some assumptions on the sets of variables $\calA$, $\calB, \dots, \calF$, we can prove that the remainder converges to 0 exponentially fast as $N$ goes to infinity. By setting $\calA, \dots, \calD$ equal to the empty set, this result implies a simple combinatorial expression for the leading term in Conrey and Snaith's formula for products of logarithmic derivatives \cite{CS}. In addition, it generalizes Bump and Gamburd's product/ratio theorem (stated in Theorem~\ref{1_thm_products_BG}/Theorem~\ref{1_thm_ratios_BG}). In fact, our proof is a generalization of Bump and Gamburd's combinatorial method, which we have outlined in Section~\ref{1_sec_BG}. It is based on three additional combinatorial results: 
\begin{enumerate}
\item In order to express the integrand in \eqref{1_eq_mixed_ratios} as an infinite linear combination of products of two Schur functions in the variables $\calR(g)$ and $\overline{\calR(g)}$, respectively, we do not only use a Cauchy identity (as in Bump and Gamburd's proof of the product and ratio theorems) but also \emph{the classical Murnaghan-Nakayama rule for Schur functions}. A statement of this equality on Schur functions is given in Section~\ref{2_sec_det_proof_of_MN_for_Schur}. Once the integrand is of the required form, we follow Bump and Gamburd in computing the integral over $U(N)$ by employing the fact that Schur functions are essentially orthonormal.
\item After having computed the integral, we simplify the resulting expression with the help of the equality in Lemma~\ref{1_lem_BG_laplace_LS}. This simplification is exactly analogous to the simplification in Bump and Gamburd's proof of the ratio theorem, except that in our generalized setting the precondition of Lemma~\ref{1_lem_BG_laplace_LS} on the indexing partition $\lambda$ is not satisfied. However, Lemma~\ref{1_lem_BG_laplace_LS} is also the simplest case of \emph{the first overlap identity for Littlewood-Schur functions}, which holds under sufficiently weak assumptions.
\item In the last step, we need to simplify a term that does not appear in the simpler case considered by Bump and Gamburd. To this end we consider two ``power sum'' operators on the ring of symmetric functions, which should be thought of as the vector space of all symmetric polynomials, or equivalently, the vector space spanned by the Schur functions $\schur_\lambda$ as $\lambda$ runs through the partitions. In essence, we express our combinatorial formula as composed power sum operators acting on the Schur function associated to the empty partition, in order to derive a simpler formula. 

We have taken the idea of simplifying combinatorial expressions by recasting them as operators acting on $\schur_\emptyset$ from the so-called \emph{vertex operator formalism}. We briefly explain this method, following \cite{Steeptilings}: the vertex operators $\Gamma_+(t)$ and $\Gamma_-(t)$ on the formal vector space spanned by the partitions are defined by
\begin{align*}
\Gamma_+(t)[\lambda] = \sum_{\substack{\kappa: \\ \kappa \setminus \lambda \text{ is a horizontal strip}}} t^{|\lambda| - |\kappa|} [\kappa]
\intertext{}
\Gamma_-(t)[\lambda] = \sum_{\substack{\kappa: \\ \lambda \setminus \kappa \text{ is a horizontal strip}}} t^{|\kappa| - |\lambda|} [\kappa]
\end{align*}
where $t$ is a formal variable and $\lambda \setminus \kappa$ is called a horizontal strip if $0 \leq \lambda'_i - \kappa'_i \leq 1$ for all $i \geq 1$. Given that the Schur functions form a basis of the ring of symmetric functions which is indexed by partitions, we may regard the two vertex operators as operators on the ring of symmetric functions. The following two properties are the reason why the vertex operators are such a useful tool:
\begin{itemize}
\item by definition, $\Gamma_-(t)[\emptyset] = [\emptyset]$;
\item they satisfy the following commutation relations:
\begin{align*}
\Gamma_-(t) \Gamma_+(u) = \frac{1}{1 - tu} \Gamma_+(t) \Gamma_-(u),
\end{align*}
while vertex operators with the same sign commute.
\end{itemize}
This means that we can simplify expressions of the type $\Gamma_\pm(t_1) \dots \Gamma_\pm(t_1) [\emptyset]$ by moving the vertex operators with negative sign to the right. It turns out that the power sum operators satisfy similar properties, making it possible to apply the same trick.
\end{enumerate} 

\bigskip

Our third results consist of two statements that admit number theoretic interpretations. The formula for mixed ratios allows us to infer the following asymptotic expression for averages of products of completed characteristic polynomials: let $\calE$ and $\calF$ be sets of numbers in $\C \setminus \{0\}$ having absolute value strictly less than 1. If $\calE$ and $\calF$ contain $m$ and $n$ variables, respectively, then
\begin{align} \label{1_eq_average_of_log_ders}
\begin{split}
& \int_{U(N)} \prod_{\varepsilon \in \calE} \varepsilon \frac{\Lambda'_g(\varepsilon)}{\Lambda_g(\varepsilon)} \prod_{\varphi \in \calF} \varphi \frac{\Lambda'_{g^{-1}}(\varphi)}{\Lambda_{g^{-1}}(\varphi)} dg = \sum_\lambda \left( -\frac N2 \right)^{m + n - 2l(\lambda)}
z_\lambda \monomial_\lambda(\calE) \monomial_\lambda(\calF) + \error
\end{split}
\end{align}
where $z_\lambda = \prod_{i \geq 1} i^{m_i(\lambda)} m_i(\lambda)!$ and $\monomial_\lambda$ is the monomial symmetric polynomial indexed by $\lambda$ (defined on page \pageref{symbol_monomial_poly}). We use this equality to derive what we call an \emph{explicit formula for eigenvalues}, which establishes an asymptotic expression for a sum over zeros of a random characteristic polynomial from the unitary group (stated in Theorem~\ref{4_thm_explicit_formula}). Its principal feature is that its proof mirrors Rudnick and Sarnak's proof of the explicit formula for zeros of $L$-functions reproduced in Section~\ref{1_sec_on_correlations}, making the connection between the two worlds stronger. Recall that the explicit formula for zeros of $L$-functions is based on the functional equation and the Euler product, \textit{i.e.}\ an explicit expression for the logarithmic derivative $L' / L$ that holds sufficiently far to the right of the critical line. In analogy, our proof of the explicit formula for eigenvalues is based on the functional equation (for characteristic polynomials) and the equality in \eqref{1_eq_average_of_log_ders}, \textit{i.e.}\ an explicit expression for the average of the logarithmic derivative that holds inside the unit circle.

\section{Structure of this thesis}
Chapter~\ref{1_cha_algebraic_combinatorics} is a brief introduction to symmetric function theory with focus on Schur and Littlewood-Schur functions. In Chapter~\ref{2_cha_det_defn_LS}, we present Moens and Van der Jeugt's determinantal formula for Littlewood-Schur functions as well as one application thereof, namely an elementary proof for the generalization of the Murnaghan-Nakayama rule to Littlewood-Schur functions. Chapters~\ref{3_cha_overlap_ids} and \ref{4_cha_mixed_ratios} contain the main results of this thesis: In Chapter~\ref{3_cha_overlap_ids}, we introduce the notion of overlapping two partitions and derive the two overlap identities for Littlewood-Schur functions. We also discuss two visual characterizations for the set of all pairs of partitions with the same overlap. In Chapter~\ref{4_cha_mixed_ratios}, we first present a combinatorial method for computing averages of mixed ratios of characteristic polynomials from the unitary group. In a second part, we apply our method to products of logarithmic derivatives of completed characteristic polynomials, which results in a natural combinatorial expression that holds inside the unit disc. From this expression we then derive an explicit formula for eigenvalues. In Chapter~\ref{5_cha_conclusions}, we consider our combinatorial results from the point of view of number theoretic applications.

\chapter{Symmetric and Doubly Symmetric Functions} \label{1_cha_algebraic_combinatorics}
In this chapter we give the combinatorial background required for the subsequent chapters. More concretely, this chapter is a brief introduction to Schur and Littlewood-Schur functions, which are symmetric and doubly symmetric, respectively. 

Some of the definitions and properties stated in this chapter will be re-introduced at the beginning of Chapters~\ref{3_cha_overlap_ids} and \ref{4_cha_mixed_ratios}. We apologize to chronological readers for this repetitiveness. To compensate, we will tell linear readers which sections they may safely skip.

\section{Schur functions} \label{1_sec_Schur_functions}
In the subsequent chapters we will mainly work with a generalization of Schur functions -- the so-called Littlewood-Schur functions, which are the subject of Section~\ref{1_sec_LS_functions}. However, the classic Schur functions remain an important special case. Moreover, our combinatorial approach to mixed ratios, which is the primary focus of Chapter~\ref{4_cha_mixed_ratios}, relies on the orthogonality of Schur functions with respect to two inner products that arise in completely different contexts.

Schur functions are symmetric functions that are indexed by partitions. We first discuss the combinatorial objects that are required to define Schur functions combinatorially, namely partitions and Young tableaux. Then we present a framework for Schur functions, \textit{i.e.}\ we formally introduce the ring of symmetric functions. In the next section, we give a combinatorial as well as a determinantal definition for Schur functions. In the last section, we consider two reasons why Schur functions are the most natural basis for the ring of symmetric functions: orthogonality and their connection to representation theory.

\subsection{Partitions and tableaux}
A \label{symbol_partition} partition is a non-increasing finite sequence $\lambda = (\lambda_1, \dots, \lambda_n)$ of non-negative integers, called parts. If two partitions only differ by a string of zeros, we regard them as equal. The number of non-zero parts of $\lambda$ is its length, denoted \label{symbol_length_of_partition} $l(\lambda)$. The size of a partition $\lambda$ is the sum of its parts, denoted \label{symbol_size_of_partition} $|\lambda|$. For any integer $i \geq 1$, \label{symbol_multiplicity} $m_i(\lambda)$ is the number of parts of $\lambda$ that are equal to $i$. It is sometimes convenient to use a notation for partitions that
makes multiplicities explicit:
$$\lambda = \left\langle 1^{m_1(\lambda)} 2^{m_2(\lambda)} \dots i^{m_i(\lambda)} \dots \right\rangle.$$

It is often helpful to visualize partitions by means of Ferrers diagrams. The Ferrers diagram of a partition $\lambda$ is defined as the set of points $(i, j) \in \Z \times \Z$ such that $1 \leq i \leq \lambda_j$. When drawing diagrams, we replace dots by square boxes. As an example, the Ferrers diagram of the partition $\lambda = (5,5,2)$ is given by
\begin{center}
\begin{tikzpicture}
\node(mu) at (-0.5, 0.75) {$\lambda =$};
\draw[step=0.5cm, thin] (0, 0) grid (1, 0.5);
\draw[step=0.5cm, thin] (0, 0.5) grid (2.5, 1.5);
\end{tikzpicture}
\end{center}
Given two partitions $\kappa$ and $\lambda$, we say that $\kappa$ is a subset of $\lambda$ if the diagram of $\kappa$ is a subset of the diagram of $\lambda$, which we denote by \label{symbol_subset} $\kappa \subset \lambda$. We remark that $\kappa \subset \lambda$ if and only if $\lambda_i \geq \kappa_i$ for all $i \geq 1$. The conjugate of a partition $\lambda$ is the partition \label{symbol_conjugate_partition} $\lambda'$ whose diagram is the transpose of the diagram of $\lambda$, \textit{e.g.}\ we see that the conjugate of $(5,5,2)$ is $(3,3,2,2,2)$. Formally the $i$-th part of $\lambda'$ is defined as the number of parts of $\lambda$ that are greater or equal to $i$. In particular, $\lambda'_1 = l(\lambda)$.

We define two binary operations on partitions, namely addition and union. Let $\kappa$ and $\lambda$ be partitions. The sum of $\kappa$ and $\lambda$ is the partition \label{symbol_sum_of_partitions} $\kappa + \lambda$ given by
$$(\kappa + \lambda)_i = \kappa_i + \lambda_i.$$
The union of $\kappa$ and $\lambda$ is the sequence \label{symbol_union_of_partitions} $\kappa \cup \lambda = \left(\kappa_1, \dots, \kappa_{l(\kappa)}, \lambda_1, \dots, \lambda_{l(\lambda)}\right)$, which need not be a partition. Our notion of union is unconventional. Usually $\kappa \cup \lambda$ is defined as the partition whose parts are those of $\lambda$ and $\mu$, \emph{arranged in descending order}, such as in \cite[p.~6]{mac}. If the union $\kappa \cup \lambda$ is a partition, then
$$\left( \kappa \cup \lambda \right)' = \kappa' + \lambda'.$$
We justify this classical duality by regarding the partitions in question as diagrams: on the one hand, the $i$-th row of $\left( \kappa \cup \lambda \right)'$ is equal to the $i$-th column of $\kappa \cup \lambda$. As the diagram of $\kappa \cup \lambda$ is obtained by taking the rows of $\lambda$ and putting them below the rows of $\kappa$, the $i$-th column of $\kappa \cup \lambda$ is the sum of the $i$-th columns of $\kappa$ and $\lambda$. On the other hand, the $i$-th row of $\kappa' + \lambda'$ is the sum of the $i$-th rows of $\kappa'$ and $\lambda'$, which is also equal to the sum of the $i$-th columns of $\kappa$ and $\lambda$.

\bigskip
A Ferrers diagram can be viewed as a collection of empty boxes whose shape determines a partition $\lambda$. If we fill the boxes of a Ferrers diagram with some symbols, we obtain a Young tableau:

\begin{defn} [Young tableau] Let $\lambda$ be a partition of size $n$. A Young tableau of shape $\lambda$ is obtained by filling the boxes of the Ferrers diagram of $\lambda$ with the numbers $1, 2, \dots, n$ so that every number appears exactly once.
\end{defn}
Let us illustrate this definition with a Young tableau of shape $(5,5,2)$: 
\begin{center}
\begin{tikzpicture}
\draw[step=0.5cm, thin] (0, 0) grid (1, 0.5);
\draw[step=0.5cm, thin] (0, 0.5) grid (2.5, 1.5);
\node at (0.25, 0.25) {\small{10}};
\node at (0.75, 0.25) {\small{3}};
\node at (0.25, 0.75) {\small{6}};
\node at (0.75, 0.75) {\small{7}};
\node at (1.22, 0.75) {\small{12}};
\node at (1.75, 0.75) {\small{1}};
\node at (2.25, 0.75) {\small{9}};
\node at (0.25, 1.25) {\small{11}};
\node at (0.75, 1.25) {\small{4}};
\node at (1.25, 1.25) {\small{5}};
\node at (1.75, 1.25) {\small{8}};
\node at (2.25, 1.25) {\small{2}};
\end{tikzpicture}
\end{center}

Our focus is on generalized Young tableaux, given that what we will call the combinatorial definition for Schur functions describes the Schur function associated to a partition $\lambda$ as a sum over generalized semistandard Young tableaux of shape $\lambda$.

\begin{defn} [generalized Young tableau] Let $\lambda$ be a partition. A generalized Young tableau of shape $\lambda$ (or a generalized $\lambda$-tableau) is obtained by filling the boxes of the Ferrers diagram of $\lambda$ with positive integers, allowing repetitions. The content of a generalized $\lambda$-tableau $T$ is the sequence of non-negative integers $\mu = (\mu_1, \mu_2, \dots)$ where $\mu_i$ is given by the number of $i$'s that appear in $T$.
\end{defn}

\begin{defn} [semistandard Young tableau] A generalized Young tableau is semistandard if its rows are weakly increasing and its columns are strictly increasing.
\end{defn}

To give an explanation for this terminology, we remark that a standard Young tableau is a semistandard tableau that is not generalized, which entails that both its rows and its columns are strictly increasing. 

The following example shows two generalized Young tableaux of shape $(5,5,2)$ and with content $(2, 0, 3, 4, 2, 0, 1)$. The tableau to the left is semistandard, while the tableau to the right is not.
\begin{equation} \label{1_ex_semistandard_tableaux}
\begin{tikzpicture}
\draw[step=0.5cm, thin] (0, 0) grid (1, 0.5);
\draw[step=0.5cm, thin] (0, 0.5) grid (2.5, 1.5);
\node at (0.25, 0.25) {\small{5}};
\node at (0.75, 0.25) {\small{5}};
\node at (0.25, 0.75) {\small{3}};
\node at (0.75, 0.75) {\small{4}};
\node at (1.25, 0.75) {\small{4}};
\node at (1.75, 0.75) {\small{4}};
\node at (2.25, 0.75) {\small{7}};
\node at (0.25, 1.25) {\small{1}};
\node at (0.75, 1.25) {\small{1}};
\node at (1.25, 1.25) {\small{3}};
\node at (1.75, 1.25) {\small{3}};
\node at (2.25, 1.25) {\small{4}};
\end{tikzpicture}
\hspace{2cm}
\begin{tikzpicture}
\draw[step=0.5cm, thin] (0, 0) grid (1, 0.5);
\draw[step=0.5cm, thin] (0, 0.5) grid (2.5, 1.5);
\node at (0.25, 0.25) {\small{7}};
\node at (0.75, 0.25) {\small{4}};
\node at (0.25, 0.75) {\small{1}};
\node at (0.75, 0.75) {\small{4}};
\node at (1.25, 0.75) {\small{4}};
\node at (1.75, 0.75) {\small{4}};
\node at (2.25, 0.75) {\small{5}};
\node at (0.25, 1.25) {\small{1}};
\node at (0.75, 1.25) {\small{3}};
\node at (1.25, 1.25) {\small{3}};
\node at (1.75, 1.25) {\small{3}};
\node at (2.25, 1.25) {\small{4}};
\end{tikzpicture}
\end{equation}

The notion of a horizontal strip allows us to give an alternative description of semistandard tableaux, which will lead to identities for Schur functions:
\begin{defn} [horizontal and vertical strips] Let $\kappa$ and $\lambda$ be partitions. If $\kappa$ is a subset of $\lambda$, then the corresponding skew diagram is the set of boxes \label{symbol_skew_diagram} $\lambda \setminus \kappa$ that are contained in $\lambda$ but not in $\kappa$. If $0 \leq \lambda'_i - \kappa'_i \leq 1$ for all $i \geq 1$, the the skew diagram $\lambda \setminus \kappa$ is called a horizontal strip; if $0 \leq \lambda_i - \kappa_i \leq 1$ for all $i \geq 1$, the the skew diagram $\lambda \setminus \kappa$ is called a vertical strip. From a more visual point of view, a horizontal (resp.\ vertical) strip is characterized by the condition that it has at most one box in each column (resp.\ row).

The size of a strip (be it horizontal or vertical) is the number of its boxes. We sometimes call a strip of size
$r$ an $r$-strip.
\end{defn}
In the following drawing the shaded region below is the diagram of the horizontal strip $\lambda \setminus \kappa$ for the partitions $\lambda = (5,4,1,1)$ and $\kappa = (4, 2, 1)$:
\begin{center}
\begin{tikzpicture}
\fill[black!27.5!white] (1, 0.5) rectangle (2, 1);
\fill[black!27.5!white] (2, 1) rectangle (2.5, 1.5);
\fill[black!27.5!white] (0, -0.5) rectangle (0.5, 0);
\draw[step=0.5cm, thin] (0, 0) grid (0.5, 0.5);
\draw[step=0.5cm, thin] (0, 0.5) grid (2, 1);
\draw[step=0.5cm, thin] (0, 0.999) grid (2.5, 1.5);
\draw[step=0.5cm, thin] (0, -0.5) grid (0.5, 0);
\end{tikzpicture}
\end{center}
We \label{1_page_link_horizontal_strip_semistandard_tableau} observe that semistandard Young tableaux possess the property that for each positive integer $i$, the boxes filled by $i$ form a horizontal strip. Indeed, the condition that the columns be strictly increasing ensures that there is at most one box filled by the number $i$ in every column, while the condition that both the rows and columns be increasing ensures that the collection of boxes filled by $j \leq i$ (or $j < i$) is the Ferrers diagram of some partition. Given a generalized tableau $T$, let us use $T(i)$ as a shorthand for the collection of boxes filled by $j \leq i$. In fact, the tableau $T$ is semistandard if and only if for each $i \geq 1$, $T(i)$ is a partition and $T(i + 1) \setminus T(i)$ is a horizontal strip. 

\subsection{The ring of symmetric functions}
In the preceding paragraph we have introduced the combinatorial objects that are necessary in order to give the combinatorial definition for Schur functions. In this paragraph we establish the algebraic framework for Schur functions, namely the ring of symmetric functions. We follow \cite{mac} in our presentation.

Before we formally define the ring of symmetric functions, we give some examples of symmetric polynomials, given that they are the concrete counterpart to the rather abstract notion of symmetric functions.
\begin{defn} [monomial symmetric polynomials] \label{symbol_monomial_poly} Let $\calX = (x_1, \dots, x_n)$ be a set of variables and let $\lambda$ be a partition. If $l(\lambda) > n$, then the monomial symmetric polynomial $\monomial_\lambda(\calX)$ is identically zero; otherwise, 
$$\monomial_\lambda(\calX) = \sum_{\alpha} x_1^{\alpha_1} \cdots x_n^{\alpha_n}
$$
where the sum runs over all \emph{distinct} permutations $\alpha = (\alpha_1, \dots, \alpha_n)$ of $\lambda = (\lambda_1, \dots, \lambda_n)$. We remark that this definition makes use of the convention that any partition of length less than $n$ may be viewed as a sequence of length exactly $n$ by appending zeros. 
\end{defn}

To illustrate effect of restricting the sum to distinct permutations of the indexing partition $\lambda$, consider the following examples of monomial polynomials:
\begin{align*}
\monomial_{(5, 2, 1)}(x_1, x_2, x_3) ={} & x_1^5 x_2^2 x_3^1 + x_1^5 x_2^1 x_3^2 + x_1^2 x_2^5 x_3^1 + x_1^2 x_2^1 x_3^5 + x_1^1 x_2^5 x_3^2 + x_1^1 x_2^2 x_3^5
\intertext{and}
\monomial_{(2, 2, 1)}(x_1, x_2, x_3) ={} & x_1^2 x_2^2 x_3^1 + x_1^2 x_2^1 x_3^2 + x_1^1 x_2^2 x_3^2
.
\end{align*}

The polynomials defined above are called symmetric because they are invariant under permutations of the elements of $\calX$. The following definition lists three other commonly used families of symmetric polynomials.

\begin{defn} (power sums, elementary and complete symmetric polynomials) Let $r$ be a positive integer and let $\calX$ be a set of variables. 
\begin{enumerate}
\item \label{symbol_elementary_poly} The $r$-th elementary symmetric polynomial $\elementary_r(\calX)$ is given by $\monomial_{\left(1^r\right)} (\calX)$, which is equal to the sum of all products of $r$ variables with distinct indices. We use the convention that $\elementary_0(\calX) = 1$.
\item \label{symbol_complete_poly} The $r$-th complete symmetric polynomial $\complete_r(\calX)$ is equal to $\sum_{\lambda: |\lambda| = r} \monomial_\lambda(\calX)$. We use the convention that $\complete_0(\calX) = 1$.
\item \label{symbol_power_poly} The $r$-th power sum $\power_r(\calX)$ is defined by $\power_{(r)}(\calX) = \sum_{x \in \calX} x^r$.
\end{enumerate}
We extend the definitions listed here to symmetric polynomials indexed by a partition: if $\lambda$ is a partition, we set \label{symbol_lambda-th_symmetric_poly} $$f_\lambda(\calX) = \prod_{i \geq 1} f_i(\calX)^{m_i(\lambda)}$$ where $f$ stand for $\elementary$, $\complete$ or $\power$.
\end{defn}

We remark that the power sum, the monomial, the elementary and the complete symmetric polynomial indexed by a partition $\lambda$ are homogeneous of degree $|\lambda|$. 

For theoretical considerations, it is often more convenient to work with symmetric functions instead of symmetric polynomials, given that they are not dependent on a set of variables. Let us give a rigorous construction for the ring of symmetric functions: consider a (countably) infinite set of variables $\calX = (x_1, x_2, \dots)$, and let $\calX_{[n]}$ denote the set consisting of the variables $(x_1, \dots, x_n)$ for every integer $n \geq 1$. The symmetric polynomials in the variables $\calX_{[n]}$ form a subring of $\C \left[ \calX_{[n]} \right]$, which we call the ring of symmetric polynomials in the variables $\calX_{[n]}$ (\label{symbol_ring_of_symmetric_polynomials} $\Sym \left(\calX_{[n]} \right)$). Arguably the simplest basis of $\Sym \left(\calX_{[n]} \right)$ is given by the monomial symmetric polynomials $\monomial_\lambda\left( \calX_{[n]}\right)$ as $\lambda$ ranges over all partitions of length less than $n$. We remark that the ring of symmetric polynomials inherits a natural grading from the polynomial ring:
$$\Sym \left(\calX_{[n]} \right) = \bigoplus_{k \geq 1} \Sym^k \left(\calX_{[n]} \right)$$
where $\Sym^k \left(\calX_{[n]} \right)$ consists of the homogeneous symmetric polynomials of degree $k$ (and the zero polynomial); or equivalently, $\Sym^k \left(\calX_{[n]} \right)$ is the span of $\monomial_\lambda\left( \calX_{[n]} \right)$ as $\lambda$ runs through the partitions of size $k$ and length at most $n$. In particular, the monomial symmetric polynomials indexed by the partitions of size $k$ form a basis of $\Sym^k \left(\calX_{[n]} \right)$, whenever $k \leq n$. In order to get rid of the dependence on variables, let us consider the following ring homomorphism: for $m \geq n \geq 1$,
$$\rho_{m,n}: \C \left(\calX_{[m]} \right) \to \C \left(\calX_{[n]} \right)$$
which is given by $$\rho_{m,n}(x_i) = \begin{dcases} x_i &\text{if } i \leq n \\ 0 &\text{if } i > m. \end{dcases}$$
Restricting the domain of this homomorphism to homogeneous symmetric polynomials in $\calX_{[m]}$ of degree $k$ results in a linear map
$$\rho^k_{m,n}: \Sym^k \left(\calX_{[m]} \right) \to \Sym^k \left(\calX_{[n]} \right),$$
given that for any partition $\lambda$ of size $k$ and length at most $m$, the basis element $\monomial_\lambda \left( \calX_{[m]}\right) \in \Sym^k \left(\calX_{[m]} \right)$ is mapped to $\monomial_\lambda \left( \calX_{[n]}\right) \in \Sym^k \left(\calX_{[n]} \right)$. We observe that the monomial symmetric polynomial $\monomial_\lambda \left( \calX_{[m]}\right)$ is sent to an element of our basis for $\Sym^k \left(\calX_{[n]} \right)$, unless $l(\lambda) \geq n$. This observation entails that $\rho^k_{m,n}$ is surjective -- and even bijective in case $k \leq n$. We take the inverse limit \label{symbol_ring_of_symmetric_functions}
$$\Sym^k = \lim_{\leftarrow} \Sym^k \left( \calX_{[n]}\right)$$
relative to the linear maps $\rho^k_{m,n}$, and define the ring of symmetric functions as the direct sum
$$\Sym = \bigoplus_{k \geq 0} \Sym^k.$$ In order to specify particular symmetric functions, consider the projection $$\rho^k_n: \Sym^k \to \Sym^k \left(  \calX_{[n]} \right),$$
which is an isomorphism whenever $k \leq n$, owing to the observation made above. If $\lambda$ is a partition of size $k$, then the monomial/elementary/complete/power sum symmetric function associated to $\lambda$ is given by the condition that for all $n \geq k$, \label{symbol_symmetric_function}
$$\rho^k_n(f_\lambda) = f_\lambda \left( \calX_{[n]} \right)$$
where the symbol $f$ stand for $\monomial$/$\elementary$/$\complete$/$\power$. For example, the $r$-th elementary symmetric function is given by
$$\elementary_r = \sum_{1 \leq i_1 < \dots < i_r} x_{i_1} \cdots x_{i_r}.$$
Now, the fact that $\rho^k_n$ is bijective whenever $k \leq n$ allows us to give an alternative definition for the ring of symmetric functions:
\begin{defn} [ring of symmetric functions] The ring of symmetric functions ($\Sym$) is the complex vector space spanned by the monomial symmetric functions $\monomial_\lambda$ where $\lambda$ runs over all partitions.
\end{defn}
Owing to the fact that the product of two symmetric polynomials is again symmetric, $\Sym$ is endowed with a natural ring structure. It is worth noting that there exist formal power series in $\calX = (x_1, x_2, \dots)$ over $\C$ that are invariant under permutations of $\calX$ but that do not belong  to the ring of symmetric functions, such as the infinite product $\prod_{i \geq 1} (1 + x_i)$. Indeed, it cannot be written as a \emph{finite} linear combination of monomial symmetric polynomials.

The following classic theorem states that the monomial symmetric polynomials are not the only natural basis for the ring of the symmetric functions. A proof can be found in \cite[p.~154]{Sagan}.
\begin{thm} The following are bases for $\Sym^k$:
\begin{enumerate}
\item $\{\elementary_\lambda: \lambda \text{ is a partition of size $k$}\}$;
\item $\{\complete_\lambda: \lambda \text{ is a partition of size $k$}\}$;
\item $\{\power_\lambda: \lambda \text{ is a partition of size $k$}\}$.
\end{enumerate}
\end{thm}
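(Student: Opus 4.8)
The plan is to leverage the one basis we already have. Since $\{\monomial_\lambda : |\lambda| = k\}$ is a basis of $\Sym^k$, the dimension of $\Sym^k$ equals the number $p(k)$ of partitions of $k$; each of the three families $\{\elementary_\lambda\}$, $\{\complete_\lambda\}$, $\{\power_\lambda\}$ (indexed by $\lambda$ with $|\lambda| = k$) also has exactly $p(k)$ members, so in each case it suffices to prove linear independence. I would do this by writing down the transition matrix from the family in question to a basis I already control, and checking it is triangular with nonzero diagonal entries. Fix once and for all a total order $\preceq$ on the partitions of $k$ refining the dominance order $\trianglelefteq$ (where $\mu \trianglelefteq \lambda$ means $\sum_{i \le s}\mu_i \le \sum_{i \le s}\lambda_i$ for all $s$). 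I will use two standard facts about dominance: merging two parts of a partition into their sum produces a partition that strictly dominates the original, and $\nu \trianglelefteq \nu'$ implies $\rho \cup \nu \trianglelefteq \rho \cup \nu'$ for any partition $\rho$.

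First, the power sums. Since $\power_r = \monomial_{(r)}$, expanding $\power_\lambda = \prod_i \monomial_{(\lambda_i)}$ and collecting terms gives a sum of monomial symmetric functions $\monomial_\mu$, one for each way of grouping the $l(\lambda)$ factors; the grouping into singletons contributes $\monomial_\lambda$, and every other grouping merges parts and hence contributes some $\monomial_\mu$ with $\mu \vartriangleright \lambda$. Counting the monomials of shape $\lambda$ coming from the singleton grouping yields
\begin{align*}
\power_\lambda = \Big(\prod_{i \ge 1} m_i(\lambda)!\Big)\,\monomial_\lambda + \sum_{\mu\,\vartriangleright\,\lambda} a_{\lambda\mu}\,\monomial_\mu .
\end{align*}
Over $\C$ the diagonal coefficient $\prod_i m_i(\lambda)!$ is nonzero, so this transition matrix is triangular for $\preceq$ with nonzero diagonal, hence invertible, and $\{\power_\lambda\}$ is a basis. (This is the only place the characteristic-zero hypothesis enters.)

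Next, the elementary symmetric functions. Writing $\elementary_\lambda = \prod_i \elementary_{\lambda_i}$ and multiplying out the squarefree sums, a direct exponent count (the classical lemma behind the fundamental theorem of symmetric functions) shows that the sorted exponent vector of every monomial that occurs is $\trianglelefteq \lambda'$, and that $\lambda'$ itself occurs exactly once, coming from nested index sets $S_1 \supseteq S_2 \supseteq \cdots$. Hence
\begin{align*}
\elementary_\lambda = \monomial_{\lambda'} + \sum_{\mu\,\vartriangleleft\,\lambda'} b_{\lambda\mu}\,\monomial_\mu .
\end{align*}
Relabelling the rows of this transition matrix by the involution $\lambda \mapsto \lambda'$ turns it into a matrix $E'$ with $E'_{\lambda\lambda} = 1$ and $E'_{\lambda\mu} = 0$ unless $\mu \trianglelefteq \lambda$; so $\det E' = 1$, the original matrix is invertible, and $\{\elementary_\lambda\}$ is a basis.

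Finally, the complete symmetric functions. From $\big(\sum_r \complete_r t^r\big)\big(\sum_r (-1)^r \elementary_r t^r\big) = 1$, expanding $1/E(t)$ as a geometric series and grouping compositions of $n$ into partitions gives
\begin{align*}
\complete_n = \sum_{|\nu| = n} (-1)^{\,n - l(\nu)}\frac{l(\nu)!}{\prod_i m_i(\nu)!}\,\elementary_\nu = (-1)^{\,n-1}\elementary_n + (\text{a sum of products of at least two }\elementary_r) .
\end{align*}
Multiplying these over the parts of $\lambda$ and using compatibility of dominance with $\cup$, the term in which each $\complete_{\lambda_i}$ contributes its $\elementary_{\lambda_i}$ gives $(-1)^{|\lambda| - l(\lambda)}\elementary_\lambda$, and every other term is an $\elementary_\mu$ with $\mu \vartriangleleft \lambda$:
\begin{align*}
\complete_\lambda = (-1)^{\,|\lambda| - l(\lambda)}\,\elementary_\lambda + \sum_{\mu\,\vartriangleleft\,\lambda} c_{\lambda\mu}\,\elementary_\mu .
\end{align*}
This is triangular with nonzero diagonal in the basis $\{\elementary_\mu\}$ just established, so $\{\complete_\lambda\}$ is a basis too. (Once the involution $\omega$ on $\Sym$ with $\omega(\elementary_r) = \complete_r$ is available, a one-line alternative is that $\omega$, being a ring automorphism, sends the $\elementary$-basis bijectively onto the $\complete$-basis.) The one genuinely non-formal ingredient here is the leading-term lemma for the $\elementary_\lambda$ in the monomial basis — i.e.\ that the sorted exponent vectors occurring in $\prod_i \elementary_{\lambda_i}$ are all $\trianglelefteq \lambda'$ with $\lambda'$ attained once; the power-sum and complete-function cases, and the cardinality count, are then routine bookkeeping.
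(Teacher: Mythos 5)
The paper supplies no proof of this theorem—it simply cites Sagan's book, and the argument there is exactly the one you give: triangularity of the transition matrices with respect to a linear refinement of dominance. Your write-up is correct, including the substantive leading-term lemma for $\elementary_\lambda$ in the $\monomial$-basis, the reduction of the $\complete$-case to the $\elementary$-case (or via $\omega$), and the observation that the power-sum diagonal entry $\prod_{i} m_i(\lambda)!$ is what forces characteristic zero.
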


\subsection{Schur functions: a combinatorial and a determinantal definition}
We are finally ready to define Schur functions. There are a variety of different definitions for Schur functions, each emphasizing one facet of this versatile symmetric function. In this section, we present a combinatorial definition (based on \cite{Sagan}) as well as a determinantal definition (based on \cite{mac}).

Consider an infinite set of variables $\calX = (x_1, x_2, \dots)$ and a generalized Young tableau $T$. The weight of $T$ in $\calX$ is given by
\begin{align*}
\calX^T = \calX^\mu = x_1^{\mu_1} x_2^{\mu_2} \cdots
\end{align*}
where $\mu$ is the content of $T$. By definition, the sequence $\mu$ only contains finitely many non-zero elements, ensuring that $\calX^T$ is a monomial in spite of its definition as a (seemingly) infinite product.

\begin{defn} [Schur functions, combinatorial definition] \label{1_defn_cominatorial_schur_function} Given a partition $\lambda$ and an infinite set of variables $\calX = (x_1, x_2, \dots)$, we define the associated Schur function 
\begin{align*}
\schur_\lambda = \sum_T \calX^T
\end{align*}
where the sum runs over all semistandard $\lambda$-tableaux $T$.
\end{defn}

\begin{ex*} Let us illustrate this definition on the example of $\lambda = \left\langle 1^r \right\rangle$ for some integer $r \geq 1$. Given that the columns of semistandard tableaux are strictly increasing, the content of any semistandard tableau of shape $\left\langle 1^r \right\rangle$ consists of exactly $r$ $1$'s and zeros. Hence,
$$\schur_{\left\langle 1^r \right\rangle} = \sum_{1 \leq i_1 < \dots < i_r} x_{i_1} \cdots x_{i_r} = \elementary_r.$$
\end{ex*}

It is not obvious from the combinatorial definition given above that Schur functions are symmetric. As symmetry is such a crucial property of Schur functions, we include a proof of this fact.

\begin{lem} The function $\schur_\lambda$ is symmetric.
\end{lem}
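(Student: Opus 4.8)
The plan is to reduce symmetry to invariance under the adjacent transpositions $s_i = (i,i+1)$ of the variable indices, since these generate every finite permutation of $\{1,2,\dots\}$ and each monomial occurring in $\schur_\lambda$ involves only finitely many variables. Concretely, $\schur_\lambda = \sum_T \calX^T$ is invariant under $s_i$ precisely when there is a bijection $\beta_i$ from the set of semistandard $\lambda$-tableaux to itself such that if $T$ has content $\mu = (\mu_1, \mu_2, \dots)$ then $\beta_i(T)$ has content $(\dots, \mu_{i-1}, \mu_{i+1}, \mu_i, \mu_{i+2}, \dots)$, i.e. the numbers of $i$'s and of $(i+1)$'s are interchanged while all other entry-counts are preserved. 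So the whole problem becomes: construct such a $\beta_i$ for every $i$.

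The construction is the Bender--Knuth involution, which I would describe row by row. Fix $i$ and a semistandard $\lambda$-tableau $T$. Call a box \emph{frozen} if its column contains both an $i$ and an $i+1$ (these two boxes sit in consecutive rows); leave all frozen boxes unchanged. In any given row, delete the frozen boxes from consideration; because rows are weakly increasing, the remaining boxes of that row filled with $i$ or $i+1$ form a contiguous run consisting of $r$ copies of $i$ followed by $s$ copies of $i+1$ for some $r, s \geq 0$. Replace that run by $s$ copies of $i$ followed by $r$ copies of $i+1$. Performing this replacement independently in every row produces the filling $\beta_i(T)$.

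It then remains to check three things: that $\beta_i(T)$ is again semistandard (rows stay weakly increasing by construction; columns stay strictly increasing because the frozen boxes are untouched and the non-frozen part of any affected column contains at most one of $\{i, i+1\}$, so a short analysis of the interface between consecutive rows shows no column acquires a repeated entry); that $\beta_i$ is an involution, hence a bijection; and that the content changes exactly as claimed, since in each row the roles of $i$ and $i+1$ are swapped among the non-frozen boxes while the frozen columns contribute one of each either way. Given all $\beta_i$, invariance of $\schur_\lambda$ under every $s_i$, and hence under all of $S_\infty$, follows, so $\schur_\lambda$ is symmetric. The main obstacle is the preservation of the strict-increase condition on columns: one must argue carefully that the frozen boxes absorb precisely the potential collisions, so that after swapping the $i$/$(i+1)$ runs in each row no column ends up with two equal entries. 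The generation of finite permutations by adjacent transpositions, the involution property, and the content bookkeeping are all routine by comparison.
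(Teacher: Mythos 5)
Your proof is correct and is essentially identical to the paper's: both use the Bender--Knuth involution, with your ``frozen'' boxes being exactly what the paper calls the ``fixed'' occurrences of $i$ and $i+1$, and the row-by-row swap of the non-frozen runs is the same map. The paper likewise reduces to invariance under adjacent transpositions and leaves the column-preservation check at roughly the same level of detail you do.
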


\begin{proof} This proof is due to Knuth \cite{BenderKnuth}. It is sufficient to show that $$\schur_\lambda(x_1, \dots, x_{i - 1}, x_{i + 1}, x_i, \dots) = \schur_\lambda(x_1, \dots, x_{i - 1}, x_i, x_{i + 1}, \dots)$$ for all $i \geq 1$. To this end, we construct an involution $\varphi$ on semistandard $\lambda$-tableaux with the following property: if $\varphi(\mu)$ denotes the content of the tableau $\varphi(T)$, then $\varphi(\mu)_i =\mu_{i + 1}$, $\varphi(\mu)_{i + 1} = \mu_i$ and $\varphi(\mu)_j = \mu_j$ for all $j \neq i, i + 1$. 

Let us consider a column of a semistandard tableau $T$: as its entries are in strictly increasing order, it either contains exactly one pair $i$, $i + 1$; exactly one of $i$ or $i + 1$; or no occurrence of either. If only $i$ (resp.\ $i + 1$) appears in one columns, call this occurrence of $i$ (resp.\ $i + 1$) free; call all other occurrences of $i$ and $i + 1$ (that appear in vertical pairs) fixed. For any semistandard $\lambda$-tableau $T$, each row of $\varphi(T)$ is obtained by switching the number of free $i$'s and free $(i + 1)$'s that occur in this row so that the affected entries are still in increasing order, without altering any other entry of the row in question. Let us illustrate the map $\varphi$ on a concrete example: if $i = 2$, then
\begin{center}
\begin{tikzpicture}
\draw[step=0.5cm, thin] (0, 0) grid (1, 0.5);
\draw[step=0.5cm, thin] (0, 0.5) grid (3, 1);
\draw[step=0.5cm, thin] (0, 0.999) grid (3.5, 1.5);
\node at (0.25, 0.25) {\small{3}};
\node at (0.75, 0.25) {\small{4}};
\node at (0.25, 0.75) {\small{2}};
\node at (0.75, 0.75) {\small{\textbf{2}}};
\node at (1.25, 0.75) {\small{\textbf{2}}};
\node at (1.75, 0.75) {\small{3}};
\node at (2.25, 0.75) {\small{4}};
\node at (2.75, 0.75) {\small{4}};
\node at (0.25, 1.25) {\small{1}};
\node at (0.75, 1.25) {\small{1}};
\node at (1.25, 1.25) {\small{1}};
\node at (1.75, 1.25) {\small{2}};
\node at (2.25, 1.25) {\small{\textbf{2}}};
\node at (2.75, 1.25) {\small{\textbf{3}}};
\node at (3.25, 1.25) {\small{\textbf{3}}};
\end{tikzpicture}
\begin{tikzpicture}
\node at (-0.5, 0.78) {$\xmapsto{\varphi}$};
\draw[step=0.5cm, thin] (0, 0) grid (1, 0.5);
\draw[step=0.5cm, thin] (0, 0.5) grid (3, 1);
\draw[step=0.5cm, thin] (0, 0.999) grid (3.5, 1.5);
\node at (0.25, 0.25) {\small{3}};
\node at (0.75, 0.25) {\small{4}};
\node at (0.25, 0.75) {\small{2}};
\node at (0.75, 0.75) {\small{\textbf{3}}};
\node at (1.25, 0.75) {\small{\textbf{3}}};
\node at (1.75, 0.75) {\small{3}};
\node at (2.25, 0.75) {\small{4}};
\node at (2.75, 0.75) {\small{4}};
\node at (0.25, 1.25) {\small{1}};
\node at (0.75, 1.25) {\small{1}};
\node at (1.25, 1.25) {\small{1}};
\node at (1.75, 1.25) {\small{2}};
\node at (2.25, 1.25) {\small{\textbf{2}}};
\node at (2.75, 1.25) {\small{\textbf{2}}};
\node at (3.25, 1.25) {\small{\textbf{3}}};
\end{tikzpicture}
\end{center}
We have marked the free occurrences of $i = 2$ and $i + 1 = 3$ in bold for the convenience of the reader. 

By construction, $\varphi(T)$ is a semistandard $\lambda$-tableau. Indeed, $\varphi$ only affects the columns that contain either $i$ or $i + 1$, leaving the strictly increasing order intact. Moreover, $\varphi$ is an involution, given that it does not alter the location of free (resp.\ fixed) occurrences. Clearly, $\varphi(\mu)_j = \mu_j$ for all $j \neq i, i + 1$. Since the fixed occurrences come in pairs $i$, $i + 1$, it also holds that  $\varphi(\mu)_i =\mu_{i + 1}$ and $\varphi(\mu)_{i + 1} = \mu_i$, which completes the proof.
\end{proof}

This lemma establishes that $\schur_\lambda$ is an element of the ring of symmetric functions. In fact, the set $\schur_\lambda$ so that $\lambda$ is a partition of size $k$ forms a basis for $\Sym^k$. Hence, the projection $\rho^k_n$ allows us to consider the symmetric polynomials $\schur_\lambda(x_1, \dots, x_n)$, which are homogeneous of degree $|\lambda|$. As the Schur function associated to the partition $\lambda$ is defined as a sum over tableaux with strictly increasing columns, $\schur_\lambda(x_1, \dots, x_n)$ is the zero polynomial whenever the length of $\lambda$ exceeds $n$. We also note for later reference that Schur functions possess the property of restriction, \textit{i.e.}\ $$\schur_\lambda(x_1, \dots, x_n, 0) = \rho^{|\lambda|}_n \left(\schur_\lambda \right) = \schur_\lambda(x_1, \dots, x_n).$$ Another application of the combinatorial definition for Schur functions is the following corollary to the Pieri rule (\textit{i.e.}\ Theorem~\ref{1_thm_pieri_rule}), which can also be derived by applying the Pieri rule to the equality in \eqref{1_eq_other_identity_for_LR_coeffs}: let $\lambda$ be a partition and $\calX = (x_1, \dots, x_n)$ a set of variables. If we set $\calX' = (x_1, \dots, x_{n - 1})$, then
\begin{align}
\label{1_eq_pieri_rule_schur_eq_add_variable}
\schur_\lambda \left( \calX \right) ={} & \sum_{\substack{\kappa \subset \lambda: \\ \lambda \setminus \kappa \text{ is a horizontal strip}}} \schur_\kappa \left( \calX' \right) x_n^{|\lambda| - |\kappa|}
.
\end{align}
Indeed, the left-hand side of the equation in \eqref{1_eq_pieri_rule_schur_eq_add_variable} is equal to
\begin{align*} 
\schur_\lambda(\calX) ={} & \sum_T \calX^T = \sum_{T} x_1^{\mu_1} \cdots x_{n - 1}^{\mu_{n - 1}} \cdot x_n^{\mu_n} 
\intertext{where $T$ runs through the semistandard tableaux of shape $\lambda$, and $\mu$ denotes the content of $T$. In fact, only the generalized tableaux $T$ that are filled with numbers form $1$ to $n$ contribute to the sum. As discussed on page \pageref{1_page_link_horizontal_strip_semistandard_tableau}, any semistandard $\lambda$-tableau $T$ filled with numbers up to $n$ can be divided into a semistandard $\kappa$-tableau $T'$ filled with numbers up to $n - 1$ and a horizontal strip $\lambda \setminus \kappa$ (that corresponds to the boxes of $T$ filled with the number $n$). If the shape of $T'$ is equal to the partition $\kappa$, then the content of $T$ is equal to $(\mu'_1, \dots, \mu'_{n - 1}, |\lambda| - |\kappa|)$ where $\mu'$ is the content of $T'$. Therefore,}
\schur_\lambda(\calX) ={} & \sum_{\substack{\kappa \subset \lambda: \\ \lambda \setminus \kappa \text{ is a horizontal strip}}} \left( \sum_{T'} x_1^{\mu'_1} \cdots x_{n - 1}^{\mu'_{n - 1}} \right) x_n^{|\lambda| - |\kappa|}
\end{align*}
where $T'$ runs through the semistandard $\kappa$-tableaux. The combinatorial definition for Schur functions allows us to conclude that $\schur_\lambda(\calX)$ is indeed equal to the right-hand side in \eqref{1_eq_pieri_rule_schur_eq_add_variable}.

\bigskip

We switch gears and turn to the determinantal definition for Schur functions, which does not rely on the notion of tableaux. It does rely, however, on the Vandermonde determinant: for $\calX = (x_1, \dots, x_n)$,
\begin{align*} 
\det \left( x^{n - j} \right)_{x \in \calX, 1 \leq j \leq n} = \prod_{1 \leq i < j \leq n} (x_i - x_j)
.
\end{align*}
For ease of notation, we use \label{1_page_first_Delta_function} \label{symbol_first_Delta_function} $\Delta(\calX)$ as a shorthand for the Vandermonde determinant.

\begin{defn} [Schur functions, determinantal definition] \label{1_defn_determinantal_schur_function} Let $\lambda$ be a partition and $\calX = (x_1, \dots, x_n)$ a set of variables. If $l(\lambda) > 0$, $\schur_\lambda(\calX) = 0$; otherwise, \label{symbol_schur_function} 
\begin{align*}
\schur_\lambda(\calX) = \frac{\det \left( x^{\lambda_j + n - j} \right)_{x \in \calX, 1 \leq j \leq n}}{\Delta(\calX)}
.
\end{align*} 
\end{defn}

This is the definition that Schur originally used \cite{schur}. In fact, this definition was introduced by Jacobi \cite{jacobi}, but Schur was the one to discover the connection with irreducible characters, which we will briefly touch upon in the section on orthogonality of Schur functions.

A priori it is not clear that $\schur_\lambda(\calX)$ is well defined because the $\Delta$-function in the denominator vanishes whenever $x_i = x_j$ for some $i \neq j$. However, setting $x_i = x_j$ in the matrix in the numerator results in two identical rows, meaning that its determinant is equal to zero. Hence, the numerator is divisible by $\Delta(\calX)$, which makes $\schur_\lambda(\calX)$ a polynomial. In practice, the determinantal definition tends not to be suitable for sets of variables with repetitions. Unlike the combinatorial definition, the determinantal definition makes it is easy to see that $\schur_\lambda(\calX)$ is a symmetric polynomial -- given that it is the ratio of two skew-symmetric polynomials. Technically, we have thus defined Schur \emph{polynomials}. We follow \cite{mac} in calling both $\schur_\lambda(\calX)$ and the corresponding symmetric function $\schur_\lambda$ Schur functions.

Owing to the multilinearity of determinants, the following identity (which will prove useful in Chapters~\ref{3_cha_overlap_ids} and \ref{4_cha_mixed_ratios}) is an immediate consequence of the determinantal formula given above: if $\calX$ consists of exactly $n$ variables,
\begin{align} \label{1_eq_schur_times_product_of_all_variables}
\schur_{\lambda + \langle m^n\rangle} (\calX) = \schur_\lambda(\calX) \left( \prod_{x \in \calX} x^m \right).
\end{align}

It is beyond the scope of this brief introduction to Schur functions to prove that the combinatorial and the determinantal definitions discussed here actually define the same object. Sagan provides an elegant proof based on the Jacobi-Trudi identities \cite[p.~165]{Sagan}, which is another classic definition for Schur functions. 

\subsection{Orthogonality}
One of the reasons why Schur functions are considered the most natural basis for the ring of symmetric functions is their orthogonality with respect to the so-called Hall inner product. Another reason is that Schur functions are intimately connected with irreducible representations of the symmetric group as well as a number of matrix groups, which in turn gives a different perspective on the fact that they are orthogonal. This section is divided into two parts: in the first part, we discuss the Hall inner product. In the second part, we briefly touch upon the link between Schur functions and representation theory of the unitary group. The latter will play a crucial role in the derivation of combinatorial formulas for mixed ratios of characteristic polynomials from the unitary group, which is the subject of Chapter~\ref{4_cha_mixed_ratios}. 

We follow Chapter~I.4 of \cite{mac} in our discussion of the Hall inner product. We start by giving three series expansions for the product
\begin{align*}
\prod_{\substack{x \in \calX \\ y \in \calY}} (1 - xy)^{-1}
\end{align*}
where $\calX$ and $\calY$ are sets of variables with the property that $|xy| < 1$ for all $x \in \calX$ and $y \in \calY$. The condition on the absolute value of $xy$ ensures that the series on the right-hand side of the following equalities convergence. It holds that
\begin{align} 
\label{1_eq_defining_for_Hall_inner_prod}
\prod_{\substack{x \in \calX \\ y \in \calY}} (1 - xy)^{-1} ={} & \sum_\lambda \complete_\lambda(\calX) \monomial_\lambda(\calY)
\intertext{and}
\label{1_eq_cauchy_id_power_sum_version}
\prod_{\substack{x \in \calX \\ y \in \calY}} (1 - xy)^{-1} ={} & \sum_\lambda z_\lambda^{-1} \power_\lambda(\calX) \power_\lambda(\calY)
\intertext{where $z_\lambda = \prod_{i \geq 1} i^{m_i(\lambda)} m_i(\lambda)!$. The third identity is the so-called Cauchy identity:}
\label{1_eq_cauchy_id}
\prod_{\substack{x \in \calX \\ y \in \calY}} (1 - xy)^{-1} ={} & \sum_\lambda \schur_\lambda(\calX) \schur_\lambda(\calY)
.
\end{align}
There exists a unique sesquilinear form $\langle \cdot, \cdot \rangle$ on the complex vector space $\Sym$ that satisfies the following condition: if $u_\lambda$ and $v_\lambda$ are bases of $\Sym^k$, indexed by the partitions of size $k$ (as $k$ runs through the non-negative integers), then $\langle u_\lambda, v_\kappa \rangle = \delta_{\lambda \kappa}$ (where $\delta_{\lambda \kappa}$ is the Kronecker delta) for all partitions $\lambda$, $\mu$ if and only if $$\prod_{\substack{x \in \calX \\ y \in \calY}} (1 - xy)^{-1} = \sum_\lambda u_\lambda(\calX) v_\lambda(\calY)$$
for all sets of variables $\calX$, $\calY$ so that $|xy| < 1$. This form is called the Hall inner product on the ring of symmetric functions. By definition, the Cauchy identity, given in \eqref{1_eq_cauchy_id}, entails that the Schur functions form an orthonormal basis of $\Sym$, from which we deduce that the sesquilinear form defined above is symmetric and positive definite, \textit{i.e.}\ the Hall inner product is indeed an inner product on $\Sym$. In addition, the identities in \eqref{1_eq_defining_for_Hall_inner_prod} and \eqref{1_eq_cauchy_id_power_sum_version} allow us to infer that $\langle \complete_\lambda, \monomial_\kappa \rangle = \delta_{\lambda \kappa}$ and $\langle \power_\lambda, \power_\kappa \rangle = z_\lambda \delta_{\lambda \kappa}$.

Let us define the following ring homomorphism on the ring of symmetric functions, which will turn out to be an isometry with respect to the Hall inner product: \label{symbol_involution_on_Sym}
$$\omega: \Sym \to \Sym \text{ given by } \omega(\elementary_r) = \complete_r$$
for all $r \geq 0$. Considering the generating functions of the symmetric functions in question (which we have not introduced in this brief overview on Schur functions), one sees that
\begin{itemize}
\item $\omega(\complete_r) = \elementary_r$, \textit{i.e.}\ $\omega$ is an involution, and thus an automorphism on $\Sym$,
\item $\omega(\power_r) = (-1)^{r - 1} \power_r$,
\item $\omega(\schur_\lambda) = \schur_{\lambda'}$.
\end{itemize}
The last property makes it obvious that $\omega$ is an isometry. Indeed,
\begin{align*}
\langle \omega(\schur_\lambda), \omega(\schur_\kappa) \rangle = \langle \schur_{\lambda'}, \schur_{\kappa'} \rangle = \delta_{\lambda' \kappa'} = \delta_{\lambda \kappa} = \langle \schur_\lambda, \schur_\kappa \rangle
.
\end{align*}
Furthermore, these properties of $\omega$ allow us to derive the so-called dual Cauchy identity from the Cauchy identity. In fact, the dual Cauchy identity is the result of applying $\omega$ in the variables $\calY$ to both sides of the equality in \eqref{1_eq_cauchy_id}. In order to apply $\omega$ to the left-hand side in \eqref{1_eq_cauchy_id}, we reformulate it as
\begin{align*}
\prod_{\substack{x \in \calX \\ y \in \calY}} (1 - xy)^{-1} ={} & \prod_{\substack{x \in \calX \\ y \in \calY}} \left[ \sum_{k \geq 0} (xy)^k \right] =  \prod_{x \in \calX} \left[ \sum_{k \geq 0} \sum_{\substack{\alpha_1, \dots, \alpha_m \geq 0: \\ \alpha_1 + \dots + \alpha_m = k}} x^k y_1^{\alpha_1} \cdots y_m^{\alpha_m} \right]
\intertext{where $\calY = (y_1, \dots, y_m)$. Hence,}
\prod_{\substack{x \in \calX \\ y \in \calY}} (1 - xy)^{-1} ={} & \prod_{x \in \calX} \left[ \sum_{k \geq 0} x^k \complete_k(\calY) \right]
,
\end{align*}
allowing us to see that applying $\omega$ in the variables $\calY$ to the left-hand side in \eqref{1_eq_cauchy_id} gives
\begin{multline*}
\prod_{x \in \calX} \left[ \sum_{k \geq 0} x^k \omega \left( \complete_k(\calY) \right) \right] = \prod_{x \in \calX} \left[ \sum_{k \geq 0} x^k \elementary_k(\calY) \right] \\ = \prod_{x \in \calX} \left[ \sum_{k \geq 0} \sum_{1 \leq i_1 < \dots < i_k} x^k y_{i_1} \cdots y_{i_k} \right] = \prod_{\substack{x \in \calX \\ y \in \calY}} (1 + xy)
.
\end{multline*}
We thus conclude that
\begin{align} \label{1_eq_dual_cauchy_id}
\prod_{\substack{x \in \calX \\ y \in \calY}} (1 + xy) = \sum_\lambda \schur_\lambda(\calX) \omega( \schur_\lambda(\calY) ) = \sum_\lambda \schur_\lambda(\calX) \schur_{\lambda'}(\calY)
,
\end{align}
which is called the dual Cauchy identity. We remark that if $\calX$ and $\calY$ contain $n$ and $m$ variables, respectively, then the sum on the right-hand side is actually finite, given that the $\schur_\lambda(\calX) = 0$ whenever $l(\lambda) > n$ and $\schur_{\lambda'}(\calY) = 0$ whenever $\lambda_1 = l(\lambda') > m$. In consequence, we need no longer concern ourselves with questions of convergence, which means that the dual Cauchy identity also holds for finite sets of variables whose pairwise products are not necessarily less than 1 in absolute value.

\bigskip
In this second part we give the essential facts that are required to view Schur orthogonality from a representation theoretic point of view. This quick introduction to representation theory of the unitary group is based on \cite{BumpLieGroups}. If $G$ is a compact topological group, there exists a unique regular Borel measure $\mu$ that is invariant under translation so that the group $G$ itself has volume $\mu(G) = 1$. More concretely, Borel measure means that $\mu$ is defined on all open sets of $G$ and translation invariance means that it satisfies $\mu(X) = \mu(gX) = \mu(Xg)$ for all measurable sets $X \subset G$ and all elements $g \in G$. This measure is called the \label{1_page_Haar_measure} Haar measure on $G$. Making use of the fact that the Haar measure is unique, we denote the corresponding integral by
\begin{align*}
\int_G f(g) dg
\end{align*}
for any Haar integrable function $f$ on $G$. It is worth noting that the uniqueness of the Haar measure entails that
\begin{align*}
\int_G f \left(g^{-1} \right) dg = \int_G f(g) dg
.
\end{align*}

We now introduce some basic notions from representation theory of compact groups. This will allow us to give a family of (square-integrable) functions on any compact group $G$ that are orthonormal with respect to the inner product defined by
\begin{align*}
\langle f_1, f_2 \rangle = \int_G f_1(g) \overline{f_2(g)} dg
.
\end{align*}
Let $G$ be some compact group. If $V$ is a finite-dimensional complex vector space and $\pi: G \to GL(V)$ a continuous homomorphism, then the pair $(\pi, V)$ is called a representation of $G$. A representation $(\pi, V)$ is irreducible if $V$ has no proper nonzero invariant subspaces. (A subspace $W$ of $V$ is called invariant if $\pi(g) w \in W$ for all $g \in G$ and $w \in W$.) In fact, we will only consider irreducible representations given that each representation is a direct sum of irreducible representations. The character of a representation $(\pi, V)$ is the function $\chi: G \to \C; g \mapsto \text{tr}(\pi(g))$ where $\text{tr}$ denotes the trace.
\begin{thm} [Schur orthogonality] Let $(\pi_1, V_1)$ and $(\pi_2, V_2)$ be irreducible representations of a compact group $G$ with characters $\chi_1$ and $\chi_2$, then
\begin{align*}
\int_G \chi_1(g) \overline{\chi_2(g)} dg = \begin{dcases} 1 &\text{if } (\pi_1, V_1) \cong (\pi_2, V_2); \\ 0 &\text{otherwise.} \end{dcases}
\end{align*}
\end{thm}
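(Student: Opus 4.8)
The plan is to reduce the statement to the orthogonality relations for matrix coefficients, which follow from Schur's lemma together with an averaging trick.

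First I would record two preliminary facts. (i) Every finite-dimensional representation $(\pi,V)$ of a compact group $G$ is unitarizable: starting from any inner product $(\cdot,\cdot)_0$ on $V$, the averaged form $(v,w)=\int_G (\pi(g)v,\pi(g)w)_0\,dg$ is $G$-invariant by translation invariance of the Haar measure, so after choosing an orthonormal basis for it we may assume that each matrix $\pi(g)$ is unitary, i.e.\ $\pi(g)^{-1}=\pi(g)^{*}$. In such a basis the matrix coefficients satisfy $\pi_{ij}(g^{-1})=\overline{\pi_{ji}(g)}$, hence $\chi(g^{-1})=\overline{\chi(g)}$; since the character depends only on the equivalence class of the representation, this choice is harmless. (ii) Schur's lemma: a $G$-equivariant linear map $\varphi\colon V_1\to V_2$ between irreducibles is either $0$ or an isomorphism, and a $G$-equivariant endomorphism of an irreducible \emph{complex} representation is a scalar multiple of the identity (any eigenspace is invariant, hence all of $V$).

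Next I would run the averaging construction. For a linear map $T\colon V_1\to V_2$ I would set
\[
\widetilde{T}=\int_G \pi_2(g)\,T\,\pi_1(g)^{-1}\,dg .
\]
A change of variable $g\mapsto hg$ using left-invariance of the Haar measure shows $\pi_2(h)\widetilde{T}=\widetilde{T}\pi_1(h)$ for all $h\in G$, so $\widetilde{T}$ is equivariant. If $V_1\not\cong V_2$, Schur's lemma forces $\widetilde{T}=0$ for every $T$; if $V_1=V_2=V$ and $\pi_1=\pi_2=\pi$, then $\widetilde{T}=\lambda(T)\,\mathrm{Id}$, and taking traces (the trace being conjugation-invariant) gives $\lambda(T)=\text{tr}(T)/\dim V$. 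Writing this out in coordinates with $T$ an elementary matrix, and using $\overline{\pi_{kl}(g)}=\pi_{lk}(g^{-1})$ from unitarity, yields the matrix-coefficient orthogonality relations: $\int_G \pi^{(1)}_{ij}(g)\,\overline{\pi^{(2)}_{kl}(g)}\,dg=0$ when $\pi_1\not\cong\pi_2$, and $\int_G \pi_{ij}(g)\,\overline{\pi_{kl}(g)}\,dg=\delta_{ik}\delta_{jl}/\dim V$ when $\pi_1=\pi_2=\pi$.

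Finally I would assemble the characters. Since $\chi_a(g)=\sum_i \pi^{(a)}_{ii}(g)$,
\[
\int_G \chi_1(g)\,\overline{\chi_2(g)}\,dg=\sum_{i,j}\int_G \pi^{(1)}_{ii}(g)\,\overline{\pi^{(2)}_{jj}(g)}\,dg ,
\]
which vanishes term by term when $\pi_1\not\cong\pi_2$; when $\pi_1\cong\pi_2$ one may take $\pi_1=\pi_2=\pi$ (the integral depends only on equivalence classes), and the sum collapses to $\sum_{i,j}\delta_{ij}\delta_{ij}/\dim V=\dim V/\dim V=1$. The main obstacle is not any individual computation but organizing the unitary/conjugate bookkeeping correctly --- in particular the passage from complex conjugation of a matrix coefficient to evaluation at $g^{-1}$ --- and invoking the unitarization step and Schur's lemma cleanly from the recalled properties of the Haar measure; everything downstream is finite-dimensional linear algebra.
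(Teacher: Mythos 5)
Your proof is correct and is the standard textbook argument (unitarization by averaging the Haar measure, Schur's lemma applied to the averaged intertwiner $\widetilde{T}$, matrix-coefficient orthogonality via elementary matrices, and finally summing diagonal coefficients). Note, however, that the paper does not actually prove this theorem: it is stated as background material in the introduction to representation theory of compact groups, with an implicit reference to the cited book of Bump, so there is no in-paper proof to compare against. Your argument is complete; the only point worth flagging is that the equivariance computation $\pi_2(h)\widetilde{T}=\widetilde{T}\pi_1(h)$ requires the substitution $g\mapsto h^{-1}g$ (equivalently $g'=hg$) together with \emph{both} left- and right-invariance of the Haar measure, but since $G$ is compact the Haar measure is automatically two-sided invariant, so this is harmless.
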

\noindent Here, $(\pi_1, V_1) \cong (\pi_2, V_2)$ symbolizes that the two representations are isomorphic. The reader who is not familiar with representation theory may think of them as equal. This simplification should not cause any confusion, given that we will now restrict our attention to the unitary group of degree $N$, whose irreducible representations are in a bijective correspondence with highest weights, \textit{i.e.}\ non-increasing sequences $(\lambda_1, \dots, \lambda_N)$ of (potentially negative) integers.

We recall that a unitary matrix of size $N$ is a complex $N \times N$ matrix whose conjugate transpose is also its inverse. The unitary group of degree $N$, denoted $U(N)$, is the group of all unitary matrices of size $N$. That fact that the unitary group is a compact connected Lie group entails that the characters of the irreducible representations of $U(N)$ are given by the Weyl character formula: the character of the irreducible representation with highest weight $\lambda$ is equal to
\begin{align*}
\chi(\lambda)(g) ={} & \frac{\sum_{\sigma \in S_N} \varepsilon(\sigma) e^{\imaginary \theta_1 (\lambda_{\sigma(1)} + N - \sigma(1))} e^{\imaginary \theta_2 (\lambda_{\sigma(2)} + N - \sigma(2))} \cdots e^{\imaginary \theta_N (\lambda_{\sigma(N)} + N - \sigma(N))}}{\sum_{\sigma \in S_N} \varepsilon(\sigma) e^{\imaginary \theta_1 (N - \sigma(1))} e^{\imaginary \theta_2 (N - \sigma(2))} \cdots e^{\imaginary \theta_N (N - \sigma(N))}},
\intertext{where $\theta_1, \dots, \theta_N$ are the eigenangles of $g \in U(N)$. According to the Leibniz formula for determinants, this is equal to the following ratio of determinants:}
\chi(\lambda)(g) ={} & \frac{\det \left( e^{\imaginary \theta_i(\lambda_j + N - j)}\right)_{1 \leq i,j \leq N}}{\det \left( e^{\imaginary \theta_i(N - j)}\right)_{1 \leq i,j \leq N}}.
\end{align*}
The determinantal definition for Schur functions thus allows us to conclude that if $\lambda$ is a partition; or equivalently, if $\lambda_N \geq 0$, then the character of the irreducible representation with highest weight $\lambda$ is given by 
$$\chi(\lambda): U(N) \to \C; g \mapsto \schur_\lambda \left(e^{\imaginary \theta_1}, \dots, e^{\imaginary \theta_N} \right)$$
where $e^{\imaginary \theta_1}, \dots, e^{\imaginary \theta_N}$ are the eigenvalues of $g$. For completeness we remark that dropping the condition that $\lambda_N \geq 0$, the equality in \eqref{1_eq_schur_times_product_of_all_variables} allows us to compute the characters of all irreducible representations of $U(N)$: if $\calR(g)$ denotes the multiset of eigenvalues of $g$, then the irreducible characters of $U(N)$ are given by
$$\chi(\lambda): U(N) \to \C; g \mapsto \schur_{\lambda - \left\langle \left( \lambda_N \right)^N \right\rangle}(\calR(g)) \times \det(g)^{\lambda_N}$$
as $\lambda$ runs through the highest weights. We conclude by stating Schur orthogonality in the case of the unitary group, which will become relevant in Chapter~\ref{4_cha_mixed_ratios}.
\begin{cor} [Schur orthogonality] 
Let $\mu$ and $\nu$ be partitions. If for each matrix $g \in U(N)$ we write $\calR(g)$ for the multiset of its eigenvalues, then
\begin{align*}
\int_{U(N)} \schur_\mu(\calR(g)) \overline{\schur_\nu(\calR(g))}dg = \begin{cases} 1 &\text{if $\mu = \nu$ and $l(\mu) \leq N$;} \\ 0 &\text{otherwise.}\end{cases}
\end{align*}
\end{cor}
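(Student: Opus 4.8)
The plan is to reduce the identity to the Schur orthogonality theorem for characters of a compact group, stated above, applied to $G = U(N)$. First I would dispose of the degenerate cases. If $l(\mu) > N$, then $g \mapsto \schur_\mu(\calR(g))$ is identically zero, because the Schur \emph{polynomial} $\schur_\mu(x_1, \dots, x_N)$ in the $N$ eigenvalues of $g$ vanishes whenever the length of $\mu$ exceeds the number of variables; hence the integrand vanishes and the integral is $0$. This agrees with the right-hand side, since the condition ``$\mu = \nu$ and $l(\mu) \leq N$'' then fails regardless of whether $\mu = \nu$. The symmetric argument handles $l(\nu) > N$. So it remains to treat the case $l(\mu), l(\nu) \leq N$.

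Assume now $l(\mu), l(\nu) \leq N$. By the discussion preceding the corollary (the Weyl character formula together with the determinantal definition of Schur functions), the map $g \mapsto \schur_\mu(\calR(g))$ is precisely the character $\chi(\mu)$ of the irreducible representation of $U(N)$ with highest weight $\mu$, where $\mu$ is read as a non-increasing sequence of $N$ non-negative integers by padding with zeros; likewise $g \mapsto \schur_\nu(\calR(g))$ is the irreducible character $\chi(\nu)$. Therefore
\begin{align*}
\int_{U(N)} \schur_\mu(\calR(g)) \overline{\schur_\nu(\calR(g))}\, dg = \int_{U(N)} \chi(\mu)(g) \overline{\chi(\nu)(g)}\, dg,
\end{align*}
and the Schur orthogonality theorem identifies the right-hand side with $1$ if the two irreducible representations are isomorphic and $0$ otherwise. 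Since irreducible representations of $U(N)$ are in bijection with their highest weights, they are isomorphic exactly when $\mu = \nu$ as sequences of length $N$, i.e.\ as partitions. This yields the value $1$ when $\mu = \nu$ and $0$ when $\mu \neq \nu$, completing the proof.

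The argument is essentially bookkeeping once the identification of Schur polynomials with irreducible characters of $U(N)$ is in hand. The only points that require attention are the vanishing of $\schur_\mu(\calR(g))$ when $l(\mu) > N$, which is what makes the two sides of the formula agree in the degenerate range, and the classification of irreducible representations of $U(N)$ by highest weight, which is what forces the off-diagonal integrals to vanish rather than merely being non-negative. I do not anticipate a genuine obstacle.
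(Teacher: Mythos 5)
Your proof is correct and follows exactly the approach the paper intends: the corollary is placed immediately after the identification of Schur polynomials in the eigenvalues with irreducible characters of $U(N)$ via the Weyl character formula, and the paper leaves the deduction implicit. You simply spell out that deduction, including the necessary side remark that $\schur_\mu(\calR(g))$ vanishes identically when $l(\mu) > N$, which is precisely what reconciles the degenerate case with the right-hand side.
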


\section{Littlewood-Schur functions} \label{1_sec_LS_functions}
In this section we present a generalization of Schur functions, the so-called Littlewood-Schur functions. The overlap identities, which form the core of Chapter~\ref{3_cha_overlap_ids}, are identities for Littlewood-Schur functions. Moreover, they are essential for the derivation of the formulas for mixed ratios of characteristic polynomials discussed in Chapter~\ref{4_cha_mixed_ratios}. In fact, this section only covers the classic combinatorial definition for Littlewood-Schur functions. A determinantal formula for Littlewood-Schur functions, which was discovered by Moens and Van der Jeugt \cite{vanderjeugt}, will be the main focus of Chapter~\ref{2_cha_det_defn_LS}.

\subsection{Littlewood-Richardson coefficients}
In order to state the combinatorial definition for Littlewood-Schur functions, we need to introduce Littlewood-Richardson coefficients. As the Schur functions form a (linear) basis of the ring of symmetric functions, the product of any two Schur functions can be uniquely written as a linear combination of Schur functions. Its coefficients are the Littlewood-Richardson coefficients. 

\begin{defn} [Littlewood-Richardson coefficients] \label{1_defn_Littlewood-Richardson_coefficients} \label{symbol_LR_coeff} Let $\mu$ and $\nu$ be partitions. The Littlewood-Richardson coefficients are defined by the property that
\begin{align} \label{1_eq_defn_Littlewood-Richardson_coefficients} 
\schur_\mu \schur_\nu = \sum_\lambda c_{\mu \nu}^\lambda \schur_\lambda
.
\end{align}
\end{defn}

The Littlewood-Richardson coefficients also appear in another identity for Schur functions: let $\calX$ and $\calY$ be sets of variables. If $\calX \cup \calY$ denotes the union of the two sets of variables, then
\begin{align} \label{1_eq_other_identity_for_LR_coeffs}
\schur_\lambda(\calX \cup \calY) = \sum_{\mu, \nu} c_{\mu \nu}^\lambda \schur_\mu(\calX) \schur_\nu(\calY)
.
\end{align}
A justification of the equality, which is based on the Cauchy identity stated in \eqref{1_eq_cauchy_id}, is given in \cite[p.~71]{mac}.

Before going on to famous combinatorial theorems about the Littlewood-Richardson coefficients, we discuss a few basic properties that follow directly from their definition:
\begin{enumerate} 
\item \label{1_page_basic_property_of_LS_1} The fact that the ring of symmetric functions is commutative allows us to infer that $c_{\mu \nu}^\lambda = c_{\nu \mu}^\lambda$ for all partitions $\mu$, $\nu$ and $\lambda$. 
\item Letting the involution $\omega$ act on both sides of the defining equation in \eqref{1_eq_defn_Littlewood-Richardson_coefficients}, we see that $c^\lambda_{\mu \nu} = c^{\lambda'}_{\mu' \nu'}$.
\item Given that for any set of variables $\calX$, the Schur function $\schur_\lambda(\calX)$ is a homogeneous polynomial of degree $|\lambda|$, the Littlewood-Richardson coefficient $c_{\mu \nu}^\lambda$ vanishes unless $|\lambda| = |\mu| + |\nu|$. We claim that it also vanishes if the Ferrers diagram of $\mu$ is not a subset of the diagram of $\lambda$. Indeed, if $\calX$ and $\calY$ contain $n$ and $m$ variables, respectively, the combinatorial definition for Schur functions allows us to view the polynomial $\schur_\lambda(\calX \cup \calY)$ as a sum over semistandard $\lambda$-tableaux $T$ filled with integers from the sequence $(1, \dots, n, n + 1, \dots, n + m)$ so that the first $n$ numbers are weighted by $\calX$ and the last $m$ numbers by $\calY$. In any such tableau $T$, the boxes filled with numbers from 1 to $n$ form a semistandard tableau $T'$ of shape $\mu$ for some partition $\mu \subset \lambda$. In addition, the ways of completing a semistandard tableau $T'$ of shape $\mu \subset \lambda$ to a semistandard $\lambda$-tableau $T$ by filling the boxes in the skew diagram $\lambda \setminus \mu$ with numbers from $n + 1$ to $n + m$ only depend on the shape $\mu$. Therefore,
\begin{align*}
\schur_\lambda(\calX \cup \calY) ={} & \sum_{\mu \subset \lambda} \left( \sum_{T'} \calX^{T'} \right) P_{\lambda, \mu} (\calY) = \sum_{\mu \subset \lambda} \schur_\mu(\calX) P_{\lambda, \mu} (\calY)
\end{align*}
for some polynomial $P_{\lambda, \mu} (\calY)$ in the variables $\calY$ that depends on the partitions $\lambda$ and $\mu$. We conclude that the coefficients associated to $\schur_\mu(\calX)$ in the expansion of $\schur_\lambda(\calX \cup \calY)$ vanish unless $\mu$ is a subset of $\lambda$, which implies that the Littlewood-Richardson coefficient $c^\lambda_{\mu \nu}$ vanishes whenever $\mu \not\subset \lambda$, according to the equality stated in \eqref{1_eq_other_identity_for_LR_coeffs}.
\item \label{1_page_basic_property_of_LS_4} The Schur function associated to the empty partition is the constant polynomial equal to 1, which implies that $c_{\mu \emptyset}^\lambda = \delta_{\lambda \mu}$ (where $\delta_{\lambda \mu}$ is the Kronecker delta).
\end{enumerate}
The Pieri rule describes the values of the Littlewood-Richardson coefficients whose subscripts contain a partition that consists of exactly one part. A proof can be found in \cite[p.~72]{mac}.

\begin{thm} [Pieri rule] \label{1_thm_pieri_rule} Let $r$ be a positive integer and $\lambda$, $\kappa$ partitions, then
\begin{align*}
c^\lambda_{\kappa \langle r \rangle} = \begin{dcases} 1 &\text{if $\lambda \setminus \kappa$ is a horizontal $r$-strip,} \\ 0 &\text{otherwise.} \end{dcases}
\end{align*}
\end{thm}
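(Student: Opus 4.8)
The plan is to pin down the coefficient $c^\lambda_{\kappa\langle r\rangle}$ by expanding $\schur_\lambda$ in one additional variable in two different ways and matching coefficients. This uses only the identities already recorded in this chapter and avoids any Littlewood--Richardson combinatorics.

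First I would specialize the expansion \eqref{1_eq_other_identity_for_LR_coeffs} to the case where $\calY$ consists of a single variable $y$. Since $\schur_\nu(y) = 0$ unless $\nu$ has at most one part, and $\schur_{(s)}(y) = y^{s}$ by \eqref{1_NT_part_Schur_indexed_by_rectangle} with one variable, the right-hand side collapses to $\schur_\lambda(\calX \cup \{y\}) = \sum_{\mu}\sum_{s \ge 0} c^\lambda_{\mu,(s)}\,\schur_\mu(\calX)\, y^{s}$, where $(0)$ is read as the empty partition (so the $s=0$ term is $\delta_{\lambda\mu}\schur_\mu(\calX)$ by basic property~\ref{1_page_basic_property_of_LS_4}). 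On the other hand, viewing $\calX\cup\{y\}$ as a set of variables with distinguished last variable $y$, the ``add-a-variable'' identity \eqref{1_eq_pieri_rule_schur_eq_add_variable} --- which was proved directly from the combinatorial definition, so invoking it here is not circular --- gives $\schur_\lambda(\calX \cup \{y\}) = \sum_{\kappa} \schur_\kappa(\calX)\, y^{|\lambda|-|\kappa|}$, the sum running over $\kappa \subset \lambda$ with $\lambda\setminus\kappa$ a horizontal strip.

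Next I would compare these two expressions. Choosing the number of variables in $\calX$ to be at least $l(\lambda)$, the Schur polynomials $\schur_\mu(\calX)$ occurring in a fixed degree are linearly independent (they extend to a basis of $\Sym^{|\mu|}(\calX)$), and distinct powers of $y$ are linearly independent over $\Sym(\calX)$; hence the family $\{\schur_\mu(\calX)\,y^{s}\}$ appearing above is linearly independent, and matching coefficients is legitimate. This forces $c^\lambda_{\mu,(s)} = 1$ when $\mu \subset \lambda$, $\lambda\setminus\mu$ is a horizontal strip and $s = |\lambda|-|\mu|$, and $c^\lambda_{\mu,(s)} = 0$ otherwise. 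Because $c^\lambda_{\mu,(s)}$ is an invariant of $\Sym$ that does not depend on the number of variables, this holds unconditionally; taking $s = r$ and $\mu = \kappa$ gives exactly the asserted formula, since the condition ``$\lambda\setminus\kappa$ is a horizontal strip and $|\lambda|-|\kappa| = r$'' is precisely the condition that $\lambda\setminus\kappa$ be a horizontal $r$-strip.

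The only delicate point --- and the nearest thing to an obstacle --- is justifying the coefficient comparison: one must take enough variables for the relevant Schur polynomials to remain linearly independent, and one must note that \eqref{1_eq_pieri_rule_schur_eq_add_variable} has an independent tableau-theoretic proof so that no circularity arises. Both are routine, so the argument is short. (An essentially equivalent alternative would be to redo the one-variable case of \eqref{1_eq_pieri_rule_schur_eq_add_variable} by hand from the combinatorial definition, tracking the boxes carrying the largest label; this again exhibits them as a horizontal strip of the prescribed size and leads to the same conclusion.)
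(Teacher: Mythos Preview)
Your argument is correct. The paper does not actually supply its own proof of the Pieri rule: it simply cites Macdonald \cite[p.~72]{mac}. So there is no ``paper's proof'' to compare against in the strict sense.

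That said, your route is a natural one and is internally consistent with how the chapter is organised. The key observation --- that the add-a-variable identity \eqref{1_eq_pieri_rule_schur_eq_add_variable} is established directly from the tableau definition, hence may be invoked without circularity --- is exactly right, and you flag it explicitly. Combining \eqref{1_eq_other_identity_for_LR_coeffs} specialised to a single extra variable with \eqref{1_eq_pieri_rule_schur_eq_add_variable} and matching coefficients is clean; the linear-independence justification (take $l(\calX) \geq l(\lambda)$ so that the relevant $\schur_\mu(\calX)$ are independent, and separate by the power of $y$) is sound. The proof Macdonald gives proceeds instead via the Hall inner product, computing $\langle \schur_\lambda, \schur_\kappa \complete_r\rangle$ and using the duality between $\complete$ and $\monomial$; your variable-expansion approach is equivalent in spirit but stays closer to the concrete polynomial identities already developed in this chapter, which is arguably a better fit for its expository style.
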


The following Corollary gives two equalities that are both equivalent to the Pieri rule. 

\begin{cor} \label{1_cor_pieri_rule_schur} Let $r$ be a positive integer and $\kappa$ a partition, then
\begin{align} \label{1_cor_pieri_rule_schur_eq_complete}
\schur_\kappa \complete_r ={} & \sum_{\substack{\lambda: \\ \lambda \setminus \kappa \text{ is a horizontal $r$-strip}}} \schur_\lambda
\intertext{and} \label{1_cor_pieri_rule_schur_eq_elementary}
\schur_\kappa \elementary_r ={} & \sum_{\substack{\lambda: \\ \lambda \setminus \kappa \text{ is a vertical $r$-strip}}} \schur_\lambda
.
\end{align}
\end{cor}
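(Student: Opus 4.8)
The plan is to deduce both identities directly from the Pieri rule (Theorem~\ref{1_thm_pieri_rule}) together with the defining relation \eqref{1_eq_defn_Littlewood-Richardson_coefficients} for the Littlewood-Richardson coefficients. First I would record the two special Schur functions that are needed: the combinatorial definition (Definition~\ref{1_defn_cominatorial_schur_function}) gives $\schur_{(r)} = \sum_T \calX^T$ where $T$ ranges over semistandard tableaux whose shape is a single row of length $r$, i.e.\ weakly increasing words $i_1 \leq \dots \leq i_r$, so that $\schur_{(r)} = \complete_r$; and the example following Definition~\ref{1_defn_cominatorial_schur_function} already records $\schur_{\langle 1^r \rangle} = \elementary_r$.

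For the first equality, I would expand $\schur_\kappa \complete_r = \schur_\kappa \schur_{(r)}$ using \eqref{1_eq_defn_Littlewood-Richardson_coefficients} as $\sum_\lambda c^\lambda_{\kappa \langle r \rangle} \schur_\lambda$, and then substitute the value $c^\lambda_{\kappa \langle r \rangle} = 1$ when $\lambda \setminus \kappa$ is a horizontal $r$-strip and $0$ otherwise, supplied by Theorem~\ref{1_thm_pieri_rule}. This gives \eqref{1_cor_pieri_rule_schur_eq_complete} at once.

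For the second equality, I would apply the involution $\omega$ to \eqref{1_cor_pieri_rule_schur_eq_complete}, using $\omega(\schur_\mu) = \schur_{\mu'}$ and $\omega(\complete_r) = \elementary_r$, to obtain $\schur_{\kappa'} \elementary_r = \sum_{\lambda} \schur_{\lambda'}$, the sum being over partitions $\lambda$ with $\lambda \setminus \kappa$ a horizontal $r$-strip. Reindexing by $\mu = \lambda'$ and noting that $\lambda \setminus \kappa$ is a horizontal $r$-strip if and only if $\mu \setminus \kappa'$ is a vertical $r$-strip (conjugation exchanges the roles of rows and columns, hence of the conditions $0 \leq \lambda_i - \kappa_i \leq 1$ and $0 \leq \lambda'_i - \kappa'_i \leq 1$, and preserves the number of boxes), together with the fact that $\kappa'$ ranges over all partitions as $\kappa$ does, yields \eqref{1_cor_pieri_rule_schur_eq_elementary}. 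An equivalent route is to argue as in the first part using the Pieri rule for $c^{\lambda}_{\kappa' \langle 1^r \rangle}$, invoking the symmetry $c^\lambda_{\mu\nu} = c^{\lambda'}_{\mu'\nu'}$ and $\langle r \rangle' = \langle 1^r \rangle$.

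Finally, for the assertion that each of \eqref{1_cor_pieri_rule_schur_eq_complete} and \eqref{1_cor_pieri_rule_schur_eq_elementary} is \emph{equivalent} to the Pieri rule, I would observe that the $\schur_\lambda$ are linearly independent, so comparing coefficients of $\schur_\lambda$ on both sides recovers $c^\lambda_{\kappa \langle r\rangle}$ (respectively $c^\lambda_{\kappa \langle 1^r\rangle}$ via the conjugation symmetry), and hence the implications reverse. I do not expect any genuine obstacle; the only point meriting care is the bookkeeping with conjugation in the second identity, to confirm that horizontal $r$-strips transpose precisely to vertical $r$-strips, which is immediate from the definition of a strip.
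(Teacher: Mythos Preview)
Your proposal is correct and follows essentially the same approach as the paper: identify $\schur_{(r)} = \complete_r$ via the combinatorial definition, read off \eqref{1_cor_pieri_rule_schur_eq_complete} from the Pieri rule and the defining relation for Littlewood--Richardson coefficients, and then obtain \eqref{1_cor_pieri_rule_schur_eq_elementary} by applying the involution $\omega$ together with the observation that conjugation exchanges horizontal and vertical strips. Your additional remark on the reverse implication via linear independence of the $\schur_\lambda$ is a fine way to justify the word ``equivalent'' in the statement.
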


\begin{proof} One easily deduces that $\schur_{\langle r \rangle} = \complete_r$ from the combinatorial definition for Schur functions. Hence, the identity in \eqref{1_cor_pieri_rule_schur_eq_complete} is just the result of applying the Pieri rule to the equality that defines the Littlewood-Richardson coefficients $c^\lambda_{\kappa \langle r \rangle}$ for all partitions $\lambda$. The equality in \eqref{1_cor_pieri_rule_schur_eq_elementary} is obtained by letting the involution $\omega$ act on the identity in \eqref{1_cor_pieri_rule_schur_eq_complete}, given that $\lambda' \setminus \kappa'$ is a horizontal $r$-strip if and only if $\lambda \setminus \kappa$ is a vertical $r$-strip.
\end{proof}

We conclude this section by stating the Littlewood-Richardson rule, which will play a minor role in Chapter~\ref{4_cha_mixed_ratios}. In theory, it gives a complete combinatorial description for the values of the Littlewood-Richardson coefficients. In practice, the Littlewood-Richardson rule is hard to use because of its complexity. Although the statement is named after Littlewood and Richardson (who conjectured it in \cite{LRcoeffs}), the first complete proof is due to Thomas \cite{thomas}.

Three additional definitions are needed to describe the tableaux that the Littlewood-Richardson coefficients count.
\begin{defn} [generalized tableau of skew shape] Let $\kappa$ and $\lambda$ be partitions. Recall that if $\kappa$ is a subset of $\lambda$, then the corresponding skew diagram is the set of boxes $\lambda \setminus \kappa$ that are contained in $\lambda$ but not in $\kappa$. A generalized tableau of skew shape $\lambda \setminus \kappa$ is obtained by filling the boxes of the skew diagram $\lambda \setminus \kappa$ with positive integers, allowing repetitions. We remark that the conditions defining semistandard tableaux still make sense in this generalized setting. 
\end{defn}

\begin{defn} [lattice word] A lattice word is a sequence of positive integers, say $(i_1, i_2, \dots, i_n)$, such that in every prefix, say $(i_1, i_2, \dots, i_k)$ for some $1 \leq k \leq n$, any number $i$ occurs at least as often as the number $i + 1$.
\end{defn}

\begin{defn} [row word] The row word of a tableau $T$ is the sequence of integers $R_1 R_2 \dots$ where $R_i$ is the $i$-th row of $T$ read from right to left.
\end{defn}

For example, the row word of the tableau on the left-hand side in \eqref{1_ex_semistandard_tableaux} is given by $(4, 3, 3, 1, 1, 7, 4, 4, 4, 3, 5, 5)$.

\begin{thm} [Littlewood-Richardson rule, \cite{LRcoeffs}] \label{1_thm_Littlewood_Richardson rule} The value of the coefficient $c_{\mu \nu}^\lambda$ is equal to the number of semistandard $\lambda \setminus \nu$-tableaux $T$ with content $\mu$ with the property that the row word of $T$ is a lattice word.
\end{thm}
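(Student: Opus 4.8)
This is the classical Littlewood--Richardson rule; the first complete proof is due to Thomas, and I would reconstruct a proof along jeu-de-taquin lines. Write $\schur_{\lambda\setminus\nu}(\calX) = \sum_S \calX^S$ for the \emph{skew} Schur function, the sum ranging over semistandard tableaux $S$ of shape $\lambda\setminus\nu$, with $\calX^S$ the weight of $S$. The first step is to identify $c^\lambda_{\mu\nu}$ as the coefficient of $\schur_\mu$ in the Schur expansion of $\schur_{\lambda\setminus\nu}$. To see this, split a semistandard $\lambda$-tableau filled with letters from a union $\calX\cup\calY$ into the straight-shape sub-tableau occupied by the letters of $\calX$ --- which has some shape $\mu\subset\lambda$ --- together with the remaining skew tableau on the letters of $\calY$; summing weights gives
$$\schur_\lambda(\calX\cup\calY) = \sum_{\mu\subset\lambda}\schur_\mu(\calX)\,\schur_{\lambda\setminus\mu}(\calY).$$
Comparing with \eqref{1_eq_other_identity_for_LR_coeffs} and using that the $\schur_\mu(\calX)$ are linearly independent yields $\schur_{\lambda\setminus\nu} = \sum_\mu c^\lambda_{\mu\nu}\,\schur_\mu$, so everything reduces to computing the Schur expansion of a skew Schur function.

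Next I would set up the rectification machinery. Introduce Knuth (plactic) equivalence $\equiv$ on words over $\{1,2,\dots\}$, the Schensted insertion tableau $P(w)$, and prove Knuth's theorem that $w\equiv w'$ if and only if $P(w) = P(w')$; insertion visibly preserves the content of a word. Then introduce jeu-de-taquin slides on skew tableaux and verify that a single slide alters the row word only by Knuth moves. Consequently the rectification $\mathrm{rect}(S)$, defined as $P$ applied to the row word of $S$, is a well-defined semistandard tableau of straight shape, independent of the order in which slides are performed, and has the same content as $S$. Establishing the well-definedness of rectification (Sch\"utzenberger's theorem) is the technical core of the whole argument, and is where I expect the real difficulty to lie.

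With rectification available, group the semistandard $\lambda\setminus\nu$-tableaux by their rectification. Since $\mathrm{rect}$ preserves content, every $S$ with $\mathrm{rect}(S) = T$ has the content of $T$, hence
$$\schur_{\lambda\setminus\nu}(\calX) = \sum_{T}\#\{S : \mathrm{rect}(S) = T\}\,\calX^{\mathrm{cont}(T)},$$
the sum over straight-shape semistandard tableaux $T$. The second key input is the ``fundamental symmetry'': the number $\#\{S\text{ of shape }\lambda\setminus\nu : \mathrm{rect}(S) = T\}$ depends only on the shape of $T$. This too is proved with jeu de taquin --- sliding a fixed $T$ to every tableau of its shape induces content-preserving bijections among the corresponding families of skew tableaux. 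Granting it, collect the $T$'s of a fixed shape $\mu$ and use $\sum_{T : \mathrm{shape}(T) = \mu}\calX^{\mathrm{cont}(T)} = \schur_\mu(\calX)$ (the combinatorial definition of $\schur_\mu$, Definition~\ref{1_defn_cominatorial_schur_function}) to get $\schur_{\lambda\setminus\nu} = \sum_\mu \#\{S : \mathrm{rect}(S) = U_\mu\}\,\schur_\mu$ for \emph{any} fixed straight-shape tableau $U_\mu$ of shape $\mu$. Matching with the first step gives $c^\lambda_{\mu\nu} = \#\{S\text{ of shape }\lambda\setminus\nu : \mathrm{rect}(S) = U_\mu\}$.

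Finally, take $U_\mu$ to be the superstandard tableau whose $i$-th row consists entirely of $i$'s. With the reading conventions matched appropriately, one checks that a straight-shape tableau is superstandard exactly when its row word is a lattice word, and that a word of content $\mu$ lies in the plactic class of the superstandard tableau if and only if it is itself a lattice word --- equivalently, $\mathrm{rect}(S) = U_\mu$ if and only if $S$ has content $\mu$ and its row word is a lattice word. Substituting this characterization into the count from the previous step produces exactly the number of semistandard $\lambda\setminus\nu$-tableaux of content $\mu$ with lattice row word, which is the assertion of the theorem. The lattice-word characterization is the one remaining purely combinatorial lemma; it is elementary once the conventions (direction of the reading word, and row versus column insertion) are pinned down consistently, so the genuine obstacle remains the well-definedness of jeu-de-taquin rectification and the fundamental symmetry.
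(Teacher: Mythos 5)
The paper does not supply a proof of Theorem~\ref{1_thm_Littlewood_Richardson rule}; the theorem is stated with a citation, with the text noting that the first complete proof is due to Thomas, so there is no argument in the paper to compare yours against. On its own merits, your outline is the standard jeu-de-taquin / plactic proof in the tradition of Sch\"utzenberger, Lascoux--Sch\"utzenberger and Fulton, and the overall structure is sound: reduce to expanding the skew Schur function $\schur_{\lambda\setminus\nu}$ in Schur functions via the addition formula \eqref{1_eq_other_identity_for_LR_coeffs}, use rectification to sort skew tableaux by Knuth class, invoke the fundamental symmetry to replace the class by its shape, and characterize rectifying to the superstandard tableau of shape $\mu$ by the lattice condition on the row word. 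Your reading conventions are consistent with the paper's --- reading each row right to left, rows top to bottom, and requiring the lattice condition on prefixes is the reverse of Fulton's bottom-to-top, left-to-right ballot-sequence convention, so the two statements agree. You rightly flag the load-bearing lemmas that are stated but not proved: that a jeu-de-taquin slide changes the reading word only by elementary Knuth moves (hence rectification is well defined), the fundamental symmetry, and the Knuth-invariance of the lattice property; these are the genuine content of the theorem and none is short. One small technical caveat: to extract the coefficient of $\schur_\mu(\calX)$ from the two expansions of $\schur_\lambda(\calX\cup\calY)$ by ``linear independence,'' you should either work in $\Sym$ directly or take $\calX$ with at least $l(\lambda)$ variables, since Schur polynomials in too few variables are not independent. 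With that understood, your sketch is a faithful high-level account of one of the accepted proofs of the rule.
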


\subsection{Littlewood-Schur functions}
In this section we give the classic combinatorial definition for Littlewood-Schur functions. Littlewood-Schur functions are a generalization of Schur functions, whose combinatorial definition appeared for the first time in the work of Littlewood \cite{littlewood}. These functions were studied under a variety of different names: they are called hook Schur functions by Berele and Regev \cite{berele_regev}, supersymmetric polynomials by Nicoletti, Metropolis and Rota \cite{metropolis}, super-Schur functions by Brenti \cite{brenti}, and Macdonald denotes them $s_\lambda(x/y)$ \cite[p.~58ff]{mac}. 
We follow Bump and Gamburd in calling them Littlewood-Schur functions and denoting them $LS_\lambda(\calX, \calY)$ \cite{bump06}.

\begin{defn} [Littlewood-Schur functions] \label{1_defn_LS_functions} 
Given a partition $\lambda$ and two sets of variables $\calX$, $\calY$, we define the associated Littlewood-Schur function \label{symbol_LS_function}
\begin{align*}
LS_\lambda(\calX; \calY) = \sum_{\mu, \nu} c_{\mu \nu}^\lambda \schur_\mu (\calX) \schur_{\nu'}(\calY)
.
\end{align*}
\end{defn}

Going back to the identity involving Littlewood-Richardson coefficients stated in \eqref{1_eq_other_identity_for_LR_coeffs}, we see that the only difference between the Schur function $\schur_\lambda(\calX \cup \calY)$ and the Littlewood-Schur function $LS_\lambda(\calX; \calY)$ lies in the partitions associated to the Schur functions in the variables $\calY$: they are conjugate. The consequences of this seemingly innocuous alteration has deeper reaching consequences than one might expect. Probably the most obvious effect is that the Littlewood-Schur function $LS_\lambda(\calX; \calY)$ is \emph{not} symmetric in the variables $\calX \cup \calY$. Instead it is what we call doubly-symmetric, \textit{i.e.}\ it is symmetric in both sets of variables separately.

As we have mentioned before, Littlewood-Schur functions generalize Schur functions. More concretely, the fact that the Schur function $\schur_\nu(\emptyset)$ is equal to 1 if $\nu$ is the empty partition, and equal to 0 otherwise allows us to infer that for any partition $\lambda$,
\begin{align*}
LS_\lambda(\calX; \emptyset) = \sum_{\mu, \nu} c_{\mu \nu}^\lambda \schur_\mu (\calX) \schur_{\nu'}(\emptyset) = \sum_\mu c_{\mu \emptyset}^\lambda \schur_\mu (\calX) = \schur_\lambda(\calX)
\end{align*}
by the fourth basic property of Littlewood-Richardson coefficients listed on page \pageref{1_page_basic_property_of_LS_4}. Other properties of Littlewood-Richardson coefficients also translate into properties of Littlewood-Schur functions. For instance, the first and the second properties listed on page \pageref{1_page_basic_property_of_LS_1} entail that
\begin{multline*}
LS_{\lambda'}(\calX; \calY) = \sum_{\mu, \nu} c_{\mu \nu}^{\lambda'} \schur_\mu (\calX) \schur_{\nu'}(\calY) = \sum_{\mu, \nu} c_{\mu' \nu'}^\lambda \schur_\mu (\calX) \schur_{\nu'}(\calY) \\ = \sum_{\mu, \nu} c_{\nu \mu}^\lambda \schur_{\mu'} (\calX) \schur_\nu(\calY) = LS_\lambda(\calY; \calX)
.
\end{multline*}
Moreover, the second property of Littlewood-Richardson coefficients translates into the fact that the Littlewood-Schur function $LS_\lambda(\calX; \calY)$ is a homogeneous polynomial in the variables $\calX \cup \calY$ of degree $|\lambda|$. We will derive further properties of Littlewood-Schur functions from their combinatorial definition in Section~\ref{2_sec_5_characterizing properties}. 

Here, we state and prove a corollary to the Pieri rule, which we have taken from \cite[p.~241-242]{bump06}. It will allow us to show a sufficient condition for the vanishing of the function $LS_\lambda(\calX; \calY)$, which is a generalization of the fact that $\schur_\lambda(\calX)$ vanishes whenever the length of $\lambda$ exceeds the number of variables in $\calX$.
\begin{cor} [\cite{bump06}] \label{1_cor_Pieri_rule_for_LS} Let $\lambda$ be a partition. If $\calX = \calX' \cup (x_n)$, then
\begin{align} \label{1_cor_Pieri_rule_for_LS_hor_x_eq}
LS_\lambda(\calX; \calY) ={} & \sum_{\substack{\kappa \subset \lambda: \\ \lambda \setminus \kappa \text{ is a horizontal strip}}} LS_{\kappa} \left( \calX'; \calY \right) x_n^{|\lambda| - |\kappa|}
.
\intertext{If $\calY = \calY' \cup (y_m)$, then} \label{1_cor_Pieri_rule_for_LS_vert_y_eq}
LS_\lambda(\calX; \calY) ={} & \sum_{\substack{\kappa \subset \lambda: \\ \lambda \setminus \kappa \text{ is a vertical strip}}} LS_{\kappa} \left( \calX; \calY' \right) y_m^{|\lambda| - |\kappa|}
.
\end{align}
\end{cor}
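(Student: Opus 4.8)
The plan is to derive both identities directly from the combinatorial definition $LS_\lambda(\calX;\calY) = \sum_{\mu,\nu} c_{\mu\nu}^\lambda \schur_\mu(\calX)\schur_{\nu'}(\calY)$ (Definition~\ref{1_defn_LS_functions}), by applying the ``adjoin a variable'' form of the Pieri rule to the single Schur factor that sees the new variable and then reorganizing the resulting double sum. For \eqref{1_cor_Pieri_rule_for_LS_hor_x_eq}, I would use \eqref{1_eq_pieri_rule_schur_eq_add_variable} to write $\schur_\mu(\calX'\cup(x_n)) = \sum_{\rho:\ \mu\setminus\rho\ \text{horiz.\ strip}} \schur_\rho(\calX')\,x_n^{|\mu|-|\rho|}$ and substitute this into the definition. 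Since $c_{\mu\nu}^\lambda$ vanishes unless $|\mu| = |\lambda|-|\nu|$, the exponent $|\mu|-|\rho|$ is forced to equal $|\lambda|-|\rho|-|\nu|$ and no longer depends on $\mu$, so after interchanging the order of summation one is left with
$$\sum_{\rho,\nu}\schur_\rho(\calX')\,\schur_{\nu'}(\calY)\;x_n^{\,|\lambda|-|\rho|-|\nu|}\Bigl(\sum_{\mu:\ \mu\setminus\rho\ \text{horiz.\ }r\text{-strip}} c_{\mu\nu}^\lambda\Bigr),\qquad r:=|\lambda|-|\rho|-|\nu|.$$
For \eqref{1_cor_Pieri_rule_for_LS_vert_y_eq} I would do the mirror-image manipulation on $\schur_{\nu'}(\calY'\cup(y_m))$, first rewriting its Pieri expansion via the fact (used already in the proof of Corollary~\ref{1_cor_pieri_rule_schur}) that $\nu'\setminus\tau'$ is a horizontal strip if and only if $\nu\setminus\tau$ is a vertical strip, so that $\schur_{\nu'}(\calY'\cup(y_m)) = \sum_{\tau:\ \nu\setminus\tau\ \text{vert.\ strip}} \schur_{\tau'}(\calY')\,y_m^{|\nu|-|\tau|}$.

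The one genuinely non-formal ingredient is an associativity relation for Littlewood-Richardson coefficients: for all partitions $\rho,\nu,\lambda$ with $r := |\lambda|-|\rho|-|\nu|\ge 0$,
$$\sum_{\mu:\ \mu\setminus\rho\ \text{horiz.\ }r\text{-strip}} c_{\mu\nu}^\lambda \;=\; \sum_{\kappa:\ \lambda\setminus\kappa\ \text{horiz.\ }r\text{-strip}} c_{\rho\nu}^\kappa,$$
together with its conjugate in which ``horizontal'' is replaced by ``vertical''. I would prove this by evaluating the coefficient of $\schur_\lambda$ in $\schur_\rho\,\complete_r\,\schur_\nu$ two ways: grouping as $(\schur_\rho\,\complete_r)\,\schur_\nu$ and expanding with \eqref{1_cor_pieri_rule_schur_eq_complete} and the definition of the $c_{\mu\nu}^\lambda$ yields the left-hand side, while grouping as $\complete_r\,(\schur_\rho\,\schur_\nu)$ and expanding with the $c_{\rho\nu}^\kappa$ and then \eqref{1_cor_pieri_rule_schur_eq_complete} yields the right-hand side; the vertical version is identical with $\elementary_r$ and \eqref{1_cor_pieri_rule_schur_eq_elementary} in place of $\complete_r$ and \eqref{1_cor_pieri_rule_schur_eq_complete}. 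This is just associativity of multiplication in $\Sym$ combined with $\schur_{\langle r\rangle}=\complete_r$, $\schur_{\langle 1^r\rangle}=\elementary_r$ and the Pieri rule.

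Feeding this relation into the displayed expression turns the inner sum $\sum_{\mu}c_{\mu\nu}^\lambda$ into $\sum_{\kappa:\ \lambda\setminus\kappa\ \text{horiz.\ }r\text{-strip}} c_{\rho\nu}^\kappa$; a final interchange of summation—now with $\kappa$ (such that $\lambda\setminus\kappa$ is a horizontal strip) on the outside, and using that $|\lambda|-|\rho|-|\nu| = |\lambda|-|\kappa|$ whenever $c_{\rho\nu}^\kappa\neq 0$, so the power of $x_n$ depends only on $\kappa$—collapses the double sum over $\rho,\nu$ back into $\sum_{\rho,\nu}c_{\rho\nu}^\kappa\schur_\rho(\calX')\schur_{\nu'}(\calY)=LS_\kappa(\calX';\calY)$, giving \eqref{1_cor_Pieri_rule_for_LS_hor_x_eq}; the same bookkeeping with the vertical-strip associativity relation gives \eqref{1_cor_Pieri_rule_for_LS_vert_y_eq}. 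Note that none of the steps uses linear independence of the Schur products, so the identities are valid for arbitrary finite sets $\calX',\calY$ (resp.\ $\calX,\calY'$). The main obstacle is not conceptual but organizational: isolating and cleanly proving the Littlewood-Richardson associativity identity, and keeping the two interchanges of summation and the degree bookkeeping straight so that the correct power of the new variable survives; after that the corollary is a direct substitution.
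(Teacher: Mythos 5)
Your argument is correct and rests on the same core idea as the paper's: an associativity relation among Littlewood--Richardson coefficients, deduced from commutativity of multiplication in $\Sym$, is used to transfer a Pieri factor from $\mu$ to $\lambda$. The organization differs in two ways worth noting. The paper first proves the more general decomposition
$LS_\lambda(\calS\cup\calT;\calY) = \sum_{\theta,\psi} c^{\lambda}_{\theta\psi}\, LS_\theta(\calS;\calY)\,\schur_\psi(\calT)$
for an arbitrary split $\calX=\calS\cup\calT$, via the full associativity identity $\sum_\mu c^\lambda_{\mu\nu} c^\mu_{\phi\psi} = \sum_\theta c^\lambda_{\theta\psi} c^\theta_{\phi\nu}$, and only then specializes $\calT=(x_n)$ and invokes Pieri at the end; your associativity relation is precisely the $\psi=(r)$ instance of that identity, obtained by substituting the Pieri rule on both sides, so you are specializing at the start rather than at the finish. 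The paper's route is a touch more modular since the intermediate decomposition is a reusable statement, but yours is more direct for the corollary alone. Secondly, for \eqref{1_cor_Pieri_rule_for_LS_vert_y_eq} the paper does not repeat the argument on the $\calY$ side: it deduces it from \eqref{1_cor_Pieri_rule_for_LS_hor_x_eq} by the conjugation symmetry $LS_\lambda(\calX;\calY)=LS_{\lambda'}(\calY;\calX)$ together with the fact that transposing a horizontal strip gives a vertical strip. Your mirror-image computation also works, but the symmetry trick halves the bookkeeping.
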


\begin{proof} We prove a statement that implies the equality in \eqref{1_cor_Pieri_rule_for_LS_hor_x_eq}, which in its turn implies the equality in \eqref{1_cor_Pieri_rule_for_LS_vert_y_eq}: we claim that for any set of variables $\calX = \calS \cup \calT$,
\begin{align} \label{1_in_proof_cor_Pieri_rule_for_LS}
LS_\lambda(\calX; \calY) = \sum_{\mu, \nu} c^\lambda_{\mu \nu} LS_\mu(\calS; \calY) \schur_\nu(\calT)
.
\end{align}
Indeed, first applying the definition of Littlewood-Schur functions and then the identity on Littlewood-Richardson coefficients in \eqref{1_eq_other_identity_for_LR_coeffs} yields
\begin{align*}
LS_\lambda(\calX; \calY) ={} & \sum_{\mu, \nu} c^\lambda_{\mu \nu} \schur_\mu(\calX) \schur_{\nu'}(\calY) \displaybreak[2] \\
={} & \sum_{\mu, \nu, \phi, \psi} c^\lambda_{\mu \nu} c^\mu_{\phi \psi} \schur_\phi (\calS) \schur_\psi (\calT) \schur_{\nu'}(\calY) 
.
\intertext{Notice that the fact that the ring of symmetric functions is commutative implies that
$$\sum_\mu c^\lambda_{\mu \nu} c^\mu_{\phi \psi} = \langle \schur_\lambda, \schur_\nu \schur_\phi \schur_\psi \rangle = \langle \schur_\lambda, \schur_\psi \schur_\phi \schur_\nu \rangle = \sum_\theta c^{\lambda}_{\theta \psi} c^{\theta}_{\phi \nu}.$$ Hence,}
LS_\lambda(\calX; \calY) ={} & \sum_{\theta, \nu, \phi, \psi} c^{\lambda}_{\theta \psi} c^{\theta}_{\phi \nu} \schur_\phi (\calS) \schur_\psi (\calT) \schur_{\nu'}(\calY)
.
\intertext{By the definition of Littlewood-Schur functions, we thus have}
LS_\lambda(\calX; \calY) ={} & \sum_{\theta, \psi} c^{\lambda}_{\theta \psi} LS_\theta(\calS; \calY) \schur_\psi(\calT)
\end{align*}
as required. It remains to show that the equality in \eqref{1_in_proof_cor_Pieri_rule_for_LS} does indeed imply the first equality stated in Corollary~\ref{1_cor_Pieri_rule_for_LS}. Setting $\calS = \calX'$ and $\calT = x_n$, we see that the identity in \eqref{1_cor_Pieri_rule_for_LS_hor_x_eq} is a direct consequence of the Pieri rule stated in Theorem~\ref{1_thm_pieri_rule}. In its turn, the equality in \eqref{1_cor_Pieri_rule_for_LS_vert_y_eq} follows from \eqref{1_cor_Pieri_rule_for_LS_hor_x_eq}, given that $LS_\lambda(\calX; \calY) = LS_{\lambda'}(\calY; \calX)$ and that transposing a horizontal strip results in a vertical strip of the same size.
\end{proof}

\begin{cor} \label{1_cor_LS=0} Let $\calX$ and $\calY$ consist of $n$ and $m$ variables, respectively. If a partition $\lambda$ contains the box with coordinates (m + 1, n + 1), then $LS_\lambda(\calX; \calY) = 0$. 
\end{cor}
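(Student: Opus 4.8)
The plan is to prove the statement by induction on the number $m$ of variables in $\calY$, keeping $n = l(\calX)$ arbitrary and $\lambda$ otherwise unrestricted, using the Pieri rules for Littlewood-Schur functions recorded in Corollary~\ref{1_cor_Pieri_rule_for_LS}. The single bookkeeping point to keep straight is that, with the paper's convention for Ferrers diagrams (the box $(i,j)$ lies in $\lambda$ iff $i \le \lambda_j$), the partition $\lambda$ contains the box $(m+1,n+1)$ exactly when $\lambda_{n+1} \ge m+1$.

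For the base case $m=0$ I would use $LS_\lambda(\calX; \emptyset) = \schur_\lambda(\calX)$: the hypothesis that $\lambda$ contains $(1,n+1)$ reads $\lambda_{n+1} \ge 1$, i.e.\ $l(\lambda) > n$, and a Schur polynomial in $n$ variables indexed by a partition of length exceeding $n$ is identically zero, so $LS_\lambda(\calX; \emptyset) = 0$.

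For the inductive step, assuming the claim for sets of $m$ variables (and all $n$, all $\lambda$), I would write $\calY = \calY' \cup (y_{m+1})$ with $|\calY'| = m$, assume $\lambda$ contains the box $(m+2,n+1)$, i.e.\ $\lambda_{n+1} \ge m+2$, and expand $LS_\lambda(\calX; \calY)$ via the vertical-strip Pieri rule \eqref{1_cor_Pieri_rule_for_LS_vert_y_eq} as a sum of terms $LS_\kappa(\calX; \calY')\, y_{m+1}^{|\lambda|-|\kappa|}$ over partitions $\kappa \subset \lambda$ for which $\lambda \setminus \kappa$ is a vertical strip. Since a vertical strip removes at most one box from each row, each such $\kappa$ satisfies $\kappa_{n+1} \ge \lambda_{n+1} - 1 \ge m+1$, hence still contains the box $(m+1,n+1)$; by the induction hypothesis (applied with the same $\calX$ and with $\calY'$ of size $m$) every summand $LS_\kappa(\calX;\calY')$ vanishes, so $LS_\lambda(\calX;\calY)=0$.

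I do not anticipate a genuine obstacle here; the only subtlety is checking that the containment hypothesis survives one step of the Pieri recursion, which is exactly the observation that a vertical strip shrinks each row by at most one box. As a consistency check one can run the symmetric argument instead: induct on $|\calX|$ via the horizontal-strip Pieri rule \eqref{1_cor_Pieri_rule_for_LS_hor_x_eq}, or reduce to that case through $LS_\lambda(\calX;\calY) = LS_{\lambda'}(\calY;\calX)$, using that conjugation of $\lambda$ interchanges the roles of the coordinates $m+1$ and $n+1$.
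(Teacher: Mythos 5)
Your proposal is correct and follows essentially the same route as the paper: induction on the size of $\calY$, with base case $LS_\lambda(\calX;\emptyset) = \schur_\lambda(\calX) = 0$ for $l(\lambda) > n$, and the inductive step via the vertical-strip Pieri rule from Corollary~\ref{1_cor_Pieri_rule_for_LS}, observing that removing a vertical strip shrinks each row by at most one box so the containment hypothesis propagates. The only cosmetic difference is that you index the step as $m \to m+1$ while the paper uses $m-1 \to m$.
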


\begin{proof} This proof is an induction on $m$, the number of elements in $\calY$. Recalling that $LS_\lambda(\calX; \emptyset) = \schur_\lambda(\calX) = 0$ whenever $l(\lambda) > n$ takes care of the base case $m = 0$. For the induction step, we consider a set of variables $\calY = \calY' \cup (y_m)$ consisting of $m \geq 1$ elements. By the equality in \eqref{1_cor_Pieri_rule_for_LS_vert_y_eq},
\begin{align*}
LS_\lambda(\calX; \calY) ={} & \sum_{\substack{\kappa \subset \lambda: \\ \lambda \setminus \kappa \text{ is a vertical strip}}} LS_{\kappa} \left( \calX; \calY' \right) y_m^{|\lambda| - |\kappa|}
\end{align*}
for any partition $\lambda$. If $(m + 1, n + 1) \in \lambda$, then $(m, n + 1) \in \kappa$ whenever $\lambda \setminus \kappa$ is a vertical strip. Thus, the induction hypothesis allows us to conclude that $LS_\lambda(\calX; \calY) = 0$.
\end{proof}

\chapter{A Determinantal Definition for Littlewood-Schur Functions} \label{2_cha_det_defn_LS}
\section{Introduction} 
This chapter is dedicated to a determinantal formula for Littlewood-Schur functions, which was discovered by Moens and Van der Jeugt \cite{vanderjeugt}. Let us state it without 
giving the definition of index (an omission which we rectify on page \pageref{2_defn_index}): if $\calX$ and $\calY$ are sets containing $n$ and $m$ variables, respectively, and $\lambda$ is a partition with non-negative $(m,n)$-index $k$, then 
\begin{align*} 
LS_\lambda(-\calX; \calY) ={} & \varepsilon(\lambda) \frac{\Delta(\calY; \calX)}{\Delta(\calX) \Delta(\calY)} \det \begin{pmatrix} \left( (x - y)^{-1} \right)_{\substack{x \in \calX \\ y \in \calY}} & \left( x^{\lambda_j + n - m - j} \right)_{\substack{x \in \calX \\ 1 \leq j \leq n - k}} \\ \left( y^{\lambda'_i + m - n - i} \right)_{\substack{1 \leq i \leq m - k \\ y \in \calY}} & 0\end{pmatrix}
\end{align*}
where $\varepsilon(\lambda)$ is some sign. By setting $\calY$ equal to the empty set, we see that this formula generalizes the determinantal definition for Schur functions -- given that the $(0,n)$-index of any partition $\lambda$ of length at most $n$ is equal to 0 and that $$LS_\lambda(-\calX; \emptyset) = \schur_\lambda(-\calX) = \pm \schur_\lambda(\calX).$$
The identities for Littlewood-Schur functions that are presented in Chapter~\ref{3_cha_overlap_ids} are based on this determinantal formula. In this chapter, we reproduce a proof of Moens and Van der Jeugt's result, which relies on a list of five characterizing properties of Littlewood-Schur functions provided by \cite{mac}.

After having established the determinantal formula for Littlewood-Schur functions, we give an example of how it might be used: we will extend the classic determinantal proof of the Murnaghan-Nakayama rule for Schur functions to a determinantal proof that works for Littlewood-Schur functions. While the resulting generalization of the Murnaghan-Nakayama rule to Littlewood-Schur functions is known, the elementary proof presented in this chapter seems to be new.

Throughout this chapter, we assume that the reader is familiar with the concepts presented in Sections~\ref{1_sec_Schur_functions} and \ref{1_sec_LS_functions}.

\subsection{Structure of this chapter}
In Section~\ref{2_sec_5_characterizing properties}, we discuss the five properties that characterize Littlewood-Schur functions. In Section~\ref{2_sec_proof_of_det_formula_for_LS}, we state and prove the determinantal formula for Littlewood-Schur functions. In Section~\ref{2_sec_det_proof_of_MN_for_LS}, we first define the notion of ribbons and then present a determinantal proof of the Murnaghan-Nakayama rule for Littlewood-Schur functions.

\section[Five characterizing properties of Littlewood-Schur functions]{Five characterizing properties of Littlewood-Schur \\ functions} \label{2_sec_5_characterizing properties}
In this section we discuss the five characterizing properties of Littlewood-Schur functions that are listed in Example~23 of Chapter~I.3 in \cite{mac}. In fact, Macdonald studies functions that he denotes $s_\lambda(\calX / \calY)$, which are equal to $LS_\lambda(\calX; -\calY)$ in our notation. Moreover, he only states four properties but they are trivially equivalent to the five properties given here. Before stating the characterizing properties, we define the second $\Delta$-function -- the first having been introduced on page \pageref{1_page_first_Delta_function}.
\begin{defn} [$\Delta$-function] Let $\calX$ and $\calY$ be two sets of variables. We define \label{symbol_second_Delta_function}
\begin{align*}
\Delta(\calX; \calY) ={} & \prod_{\substack{x \in \calX \\ y \in \calY}} (x - y)
.
\end{align*}
\end{defn}

\begin{thm} [\cite{mac}] \label{2_thm_5_properties_of_LS} Suppose that for each partition $\lambda$, each pair of parameters $m$,$n$ and each pair of sets of variables $\calX = (x_1, \dots, x_n)$, $\calY = (y_1, \dots, y_m)$, we are given a function $LS_{\lambda}^{\ast}(-\calX; \calY)$. If these functions satisfy all of the following five conditions, then $LS_{\lambda}^{\ast}(-\calX; \calY) = LS_{\lambda}(-\calX; \calY)$.
\begin{enumerate}
\item (homogeneity) The function $LS_\lambda^{\ast}(-\calX; \calY)$ is a homogeneous polynomial in the variables $\calX \cup \calY$ of degree $|\lambda|$.
\item (double-symmetry) The function $LS_\lambda^{\ast}(-\calX; \calY)$ is symmetric in each set of variables separately.
\item (restriction) If $n \geq 1$, then setting $x_n = 0$ in $LS_\lambda^{\ast}(-\calX; \calY)$ results in $LS_\lambda^{\ast}\left(-\calX'; \calY\right)$ where $\calX' = (x_1, \dots, x_{n-1})$. 
If $m \geq 1$, then setting $y_m = 0$ in $LS_\lambda^{\ast}(-\calX; \calY)$ results in $LS_\lambda^{\ast}\left(-\calX; \calY'\right)$ where $\calY' = (y_1, \dots, y_{m - 1})$.
\item (cancellation) If $m$, $n \geq 1$, then setting $x_n = y_m$ in $LS_\lambda^{\ast}(-\calX; \calY)$ results in $LS_\lambda^{\ast}\left(-\calX'; \calY'\right)$ where $\calX' = (x_1, \dots, x_{n - 1})$ and $\calY' = (y_1, \dots, y_{m - 1})$.
\item (factorization) Suppose that $\calX$ and $\calY$ only contain non-zero variables. If the partition $\lambda$ satisfies $\lambda_n \geq m \geq \lambda_{n + 1}$, so that it can be written in the form $\left( \left\langle m^n \right\rangle + \alpha \right) \cup \beta'$ for some partitions $\alpha$ and $\beta$ of lengths at most $n$ and $m$, respectively, then 
$
LS_\lambda^{\ast}(-\calX; \calY) = \Delta(\calY; \calX) \schur_\alpha(-\calX) \schur_\beta(\calY)
.
$
\end{enumerate}
\end{thm}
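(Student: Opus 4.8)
The plan is to read the statement as a uniqueness result. First I would record that the combinatorially-defined Littlewood-Schur functions $LS_\lambda$ themselves satisfy the five conditions: homogeneity and double-symmetry were observed in Section~\ref{1_sec_LS_functions}, restriction and cancellation are consequences of the combinatorial definition (in particular of the Pieri-type recursions of Corollary~\ref{1_cor_Pieri_rule_for_LS} together with $LS_\lambda(\calX;\emptyset) = \schur_\lambda(\calX)$), and the factorization identity follows from the combinatorial definition and the Littlewood-Richardson rule. Granting this, it suffices to show the five conditions pin down a unique family. So suppose $LS^{\ast}$ and $LS$ are two families satisfying the conditions, and put $h_\lambda = LS^{\ast}_\lambda - LS_\lambda$ for every $\lambda$ and every pair of sets $\calX$, $\calY$. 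Then each $h_\lambda(-\calX;\calY)$ is doubly symmetric and homogeneous of degree $|\lambda|$ (or identically zero), it obeys the same restriction and cancellation identities, and it vanishes whenever $\lambda_n \geq m \geq \lambda_{n+1}$, since there $LS^{\ast}_\lambda$ and $LS_\lambda$ both equal the same explicit product $\Delta(\calY;\calX)\schur_\alpha(-\calX)\schur_\beta(\calY)$. The goal is to conclude $h_\lambda \equiv 0$.

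I would argue by induction on $m$, the number of variables in $\calY$. For the base case $m = 0$: the $m = 0$ instance of factorization gives $h_\lambda(-\calX;\emptyset) = 0$ whenever $l(\lambda) \leq n$, both functions then being equal to $\schur_\lambda(-\calX)$; if $l(\lambda) > n$, apply this with $N = l(\lambda)$ variables in $\calX$ and then specialize the extra $N - n$ of them to $0$, using restriction, to obtain $h_\lambda(-\calX;\emptyset) = 0$ for every $n$. For the inductive step, assume $h_\lambda$ vanishes identically for all partitions and all $\calX$ as soon as $\calY$ has fewer than $m$ elements. Fix $\lambda$ and an $n$-element $\calX$. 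By cancellation, specializing $x_n = y_m$ in $h_\lambda(-\calX;\calY)$ returns a difference in $n-1$ and $m-1$ variables, which vanishes by the inductive hypothesis; together with double-symmetry this shows $h_\lambda(-\calX;\calY)$ vanishes whenever $x_i = y_j$, so $\Delta(\calX;\calY) = \prod_{i,j}(x_i - y_j)$ divides it. Likewise, restriction in $\calY$ shows $h_\lambda(-\calX;\calY)$ vanishes whenever some $y_j = 0$, so $\prod_j y_j$ divides it. These factors are coprime, hence $\bigl(\prod_j y_j\bigr)\,\Delta(\calX;\calY)$, a polynomial of degree $m(n+1)$, divides $h_\lambda(-\calX;\calY)$.

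Since $h_\lambda(-\calX;\calY)$ is homogeneous of degree $|\lambda|$, this forces $h_\lambda(-\calX;\calY) = 0$ as soon as $m(n+1) > |\lambda|$, in particular whenever $n \geq |\lambda|$. To propagate this down to small $n$ I would run a descending induction on $n$: restriction in $\calX$ says that specializing $x_{n+1} = 0$ in the $(n+1)$-variable function returns the $n$-variable one, so vanishing for $n+1$ variables in $\calX$ implies vanishing for $n$; starting from $n = |\lambda|$ and descending gives $h_\lambda(-\calX;\calY) = 0$ for all $n$, which completes the induction on $m$. Hence $h_\lambda \equiv 0$, \textit{i.e.}\ $LS^{\ast}_\lambda = LS_\lambda$.

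The genuine work lies in two places. The first is establishing that the combinatorial $LS_\lambda$ really satisfies all five conditions: homogeneity, double-symmetry, restriction and cancellation are routine, but the factorization identity genuinely uses the structure of the Littlewood-Richardson expansion of $LS_\lambda$ in the variables $-\calX\cup\calY$ (this is essentially the content of the relevant example in \cite{mac}), and this is where I expect the main technical effort. The second subtlety is to get the shape of the uniqueness argument right: at a single fixed pair $(n,m)$ the divisibility constraints alone do \emph{not} determine the function when $\lambda$ has a long first row, so one cannot argue by plain polynomial interpolation; the argument only closes because one first enlarges $\calX$ until the degree of the forced divisor exceeds $|\lambda|$ and then transports the conclusion back down via restriction. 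Choosing the order of the inductions --- $m$ on the outside, a degree estimate for large $n$, then a descending induction on $n$ --- is the crux.
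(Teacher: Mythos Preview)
The paper does not actually prove this theorem. Immediately after the statement it says ``We will not prove that these properties characterize Littlewood-Schur functions. We will merely verify that the Littlewood-Schur functions $LS_\lambda(-\calX; \calY)$ \dots\ possess the five properties,'' and the subsequent lemmas (on cancellation and factorization, via the Pieri recursion of Corollary~\ref{1_cor_Pieri_rule_for_LS}) carry out exactly that verification. The uniqueness direction is delegated to \cite{mac}.

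Your proposal therefore goes further than the paper: you supply the uniqueness argument the paper omits. The argument is correct. Forming the difference $h_\lambda$, using cancellation plus the inductive hypothesis to force divisibility by $\Delta(\calX;\calY)$, using restriction in $\calY$ to force divisibility by $\prod_j y_j$, and then killing $h_\lambda$ by the degree bound $m(n+1) > |\lambda|$ for large $n$ before descending via restriction in $\calX$ is essentially the standard proof (and is the one in Macdonald). One small point: in the base case $m=0$ you invoke factorization, which in the statement carries the hypothesis that the variables are non-zero; since $h_\lambda$ is a polynomial, vanishing on the Zariski-open set of non-zero $\calX$ gives vanishing everywhere, so this is harmless but worth a sentence. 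You also correctly identify that the substantive work is checking factorization for the combinatorial $LS_\lambda$; that is precisely what the paper spends its pages on after the theorem statement. In short, the paper trades the uniqueness proof for a citation, while you make the argument self-contained.
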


We will not prove that these properties characterize Littlewood-Schur functions. We will merely verify that the Littlewood-Schur functions $LS_\lambda(-\calX; \calY)$ (defined on page \pageref{1_defn_LS_functions}) possess the five properties listed in Theorem~\ref {2_thm_5_properties_of_LS}. The first three properties follow immediately from the corresponding properties for Schur functions: firstly, for any partitions $\mu$ and $\nu$, the Schur functions $\schur_\mu(-\calX)$ and $\schur_{\nu'}(\calY)$ are homogeneous polynomials of degree $|\mu|$ and $|\nu|$, respectively. Hence, the fact that the Littlewood-Richardson coefficient $c^\lambda_{\mu \nu'}$ vanishes unless $|\lambda| = |\mu| + |\nu|$ implies that $$LS_\lambda(-\calX; \calY) = \sum_{\mu, \nu} c^\lambda_{\mu \nu} \schur_\mu(-\calX) \schur_{\nu'}(\calY)$$
is a homogeneous polynomial in $\calX \cup \calY$ of degree $|\lambda|$. Secondly, the Littlewood-Schur function $LS_\lambda(-\calX; \calY)$ is defined as a linear combination of products of symmetric polynomials in $\calX$ and in $\calY$, making double-symmetry obvious. Thirdly, the property of restriction is another consequence of the fact that Littlewood-Schur functions are linear combinations of products of Schur functions, which possess the property in question.

\begin{lem} [cancellation] If $m$, $n \geq 1$, then setting $x_n = y_m$ in $LS_\lambda(-\calX; \calY)$ results in $LS_\lambda\left(-\calX'; \calY'\right)$ where $\calX' = (x_1, \dots, x_{n - 1})$ and $\calY' = (y_1, \dots, y_{m - 1})$.
\end{lem}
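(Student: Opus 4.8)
The plan is to exhibit the cancellation as the standard ``supersymmetry'' phenomenon: adding a variable $x_n$ to the $\calX$-slot and the \emph{same} variable $y_m$ to the $\calY$-slot should undo each other, because horizontal strips in one slot and vertical strips in the other combine via an elementary/complete cancellation. Concretely, write $\calX = \calX' \cup (x_n)$ and $\calY = \calY' \cup (y_m)$, and note that $-\calX = (-\calX') \cup (-x_n)$. First I would apply the Pieri-type identity \eqref{1_cor_Pieri_rule_for_LS_hor_x_eq} of Corollary~\ref{1_cor_Pieri_rule_for_LS} in the variable $-x_n$, and then apply \eqref{1_cor_Pieri_rule_for_LS_vert_y_eq} in the variable $y_m$ to each resulting term, to obtain
\begin{align*}
LS_\lambda(-\calX; \calY) = \sum_{\substack{\tau \subseteq \kappa \subseteq \lambda: \\ \lambda \setminus \kappa \text{ hor.\ strip},\ \kappa \setminus \tau \text{ vert.\ strip}}} LS_\tau(-\calX'; \calY')\, (-x_n)^{|\lambda| - |\kappa|}\, y_m^{|\kappa| - |\tau|}.
\end{align*}

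Setting $x_n = y_m = t$ and collecting the terms by $\tau$, the coefficient of $LS_\tau(-\calX'; \calY')$ is $t^{|\lambda| - |\tau|}$ times
\begin{align*}
A_{\lambda/\tau} := \sum_{\kappa} (-1)^{|\lambda| - |\kappa|}\, [\lambda \setminus \kappa \text{ is a horizontal strip}]\,[\kappa \setminus \tau \text{ is a vertical strip}],
\end{align*}
so the whole statement reduces to proving $A_{\lambda/\tau} = \delta_{\tau\lambda}$. For $\tau = \lambda$ only $\kappa = \lambda$ contributes and $A_{\lambda/\lambda} = 1$; since $|\lambda| - |\tau| = 0$ forces $\tau = \lambda$ when $\tau \subseteq \lambda$, the substituted sum then collapses to the single term $LS_\lambda(-\calX'; \calY')$, as desired. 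It remains to show $A_{\lambda/\tau} = 0$ whenever $\tau \subsetneq \lambda$.

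For the vanishing I would pass to the ring of symmetric functions and use the Hall inner product together with the Pieri rules of Corollary~\ref{1_cor_pieri_rule_schur}: for fixed strip sizes $b = |\kappa| - |\tau|$ and $a = |\lambda| - |\kappa|$, expanding $\schur_\tau \elementary_b$ and then multiplying by $\complete_a$ gives
\begin{align*}
\langle \schur_\tau\, \elementary_b\, \complete_a,\ \schur_\lambda \rangle = \sum_{\kappa} [\kappa \setminus \tau \text{ v.s.\ of size } b]\,[\lambda \setminus \kappa \text{ h.s.\ of size } a].
\end{align*}
Hence, with $N = |\lambda| - |\tau| \geq 1$, we have $A_{\lambda/\tau} = \big\langle \schur_\tau \sum_{a+b=N} (-1)^{a}\, \elementary_b\, \complete_a,\ \schur_\lambda \big\rangle$, and the inner argument is $\schur_\tau \sum_{r=0}^{N} (-1)^{r}\, \complete_r\, \elementary_{N-r} = 0$ by the classical identity $\sum_{r \geq 0} (-1)^r \complete_r \elementary_{N-r} = 0$ for $N \geq 1$ (equivalently, $H(t)E(-t) = 1$ for the generating functions). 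Therefore $A_{\lambda/\tau} = 0$, which finishes the argument.

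The computation is essentially routine once the reduction is in place; the only point requiring care — and the mild obstacle — is the bookkeeping of the two \emph{different} strip conditions (horizontal coming from the $\calX$-Pieri rule, vertical from the $\calY$-Pieri rule) and matching the resulting alternating sum to the correct form of the elementary--complete cancellation rather than its $\omega$-image. A purely combinatorial alternative would be to build a sign-reversing involution on the chains $\tau \subseteq \kappa \subseteq \lambda$ with the prescribed strip conditions, but the symmetric-function route above is shorter and relies only on results already established in the excerpt (Corollaries~\ref{1_cor_Pieri_rule_for_LS} and \ref{1_cor_pieri_rule_schur} and the Hall inner product).
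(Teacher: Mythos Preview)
Your proof is correct and follows the same overall reduction as the paper: both apply the two Pieri-type identities of Corollary~\ref{1_cor_Pieri_rule_for_LS} to reduce the claim to showing that the signed count $A_{\lambda/\tau}=\sum_{\kappa}(-1)^{|\lambda|-|\kappa|}[\lambda\setminus\kappa\text{ h.s.}][\kappa\setminus\tau\text{ v.s.}]$ equals $\delta_{\tau\lambda}$. The difference lies only in how the vanishing for $\tau\subsetneq\lambda$ is established. The paper constructs an explicit sign-reversing involution on the set $S(\lambda,\tau)$ of intermediate $\kappa$'s, flipping the membership of the bottom-left-most box of $\lambda\setminus\tau$ in $\kappa$; you instead recognise $A_{\lambda/\tau}$ as $\langle\schur_\tau\sum_{a+b=N}(-1)^a\complete_a\elementary_b,\schur_\lambda\rangle$ and invoke the classical relation $\sum_{a+b=N}(-1)^a\complete_a\elementary_b=0$ for $N\geq 1$. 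Your algebraic route is arguably slicker and avoids the case-check that $\varphi(\kappa)$ stays in $S(\lambda,\tau)$, though it relies on an identity not explicitly recorded in the text (it is of course standard, equivalent to $H(t)E(-t)=1$). You even mention the involution as an alternative, which is precisely what the paper does.
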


\begin{proof} Let us set $x_n = t = y_m$. Applying the equalities stated in  \eqref{1_cor_Pieri_rule_for_LS_hor_x_eq} and \eqref{1_cor_Pieri_rule_for_LS_vert_y_eq} yields the following expression for the Littlewood-Schur function $LS_\lambda(-\calX; \calY)$. Let $S(\lambda, \mu) = \{\kappa: \mu \subset \kappa \subset \lambda, \; \lambda \setminus \kappa \text{ is a horizontal strip}, \; \kappa \setminus \mu \text{ is a vertical strip}\},$
then
\begin{align*}
LS_\lambda(-\calX; \calY) ={} & \sum_{r \geq 0} t^r \sum_{\substack{\mu \subset \lambda: \\ |\lambda| - |\mu| = r}} LS_\mu \left( -\calX'; \calY' \right) \sum_{\kappa \in S(\lambda, \mu)} \varepsilon(\kappa)
\end{align*}
where the sign is given by $\varepsilon(\kappa) = (-1)^{|\lambda| - |\kappa|}$.

The summand corresponding to $r = 0$ is equal to $LS_\lambda \left( -\calX'; \calY' \right)$. In order to show that the sum over $r \geq 1$ vanishes, fix a partition $\mu \subset \lambda$ so that $|\lambda| - |\mu| = r$ for some $r \geq 1$. We show cancellation by constructing an involution $\varphi$ on $S(\lambda, \mu)$ with the property that $\varepsilon(\varphi(\kappa)) = - \varepsilon(\kappa)$ for all $\kappa \in S(\lambda, \mu)$: if the bottom-left-most box of the difference $\lambda \setminus \mu$ belongs to $\kappa$, $\varphi$ removes it, and if the bottom-left-most box of $\lambda \setminus \mu$ is not contained in $\kappa$, $\varphi$ adds it. In either case the sign $\varepsilon(\kappa) = (-1)^{|\lambda| - |\kappa|}$ is multiplied by $-1$. In addition, it follows immediately from the definition that $\varphi$ is an involution.

It remains to show that $\varphi(\kappa) \in S(\lambda, \mu)$ for all $\kappa \in S(\lambda, \mu)$. If we color the horizontal strip $\lambda \setminus \kappa$ in light gray and the vertical strip $\kappa \setminus \mu$ in dark gray, then the bottom-left-most boxes of $\lambda \setminus \mu$ come in the following two types:
\begin{center}
\begin{tikzpicture}
\fill[black!15!white] (-0.5, 0) rectangle (1.5, 0.5);
\fill[black!40!white] (1, 0.5) rectangle (1.5, 1.5);
\fill[black!15!white] (-1, 0) rectangle (-0.5, 0.5);
\draw[step=0.5cm, thin] (-1.5, 0) grid (1.5, 0.5);
\draw[step=0.5cm, thin] (0.999, 0.5) grid (1.5, 1.5);
\draw[dotted] (0, 0) -- (-1.8, 0);
\draw[dotted] (1, 1.5) -- (1, 1.8);
\draw[dotted] (1.5, 1.5) -- (1.5, 1.8);
\end{tikzpicture}
\hspace{1cm}
\begin{tikzpicture}
\fill[black!15!white] (-0.5, 0) rectangle (1.5, 0.5);
\fill[black!40!white] (1, 0.5) rectangle (1.5, 1.5);
\fill[black!40!white] (-1, 0) rectangle (-0.5, 0.5);
\draw[step=0.5cm, thin] (-1.5, 0) grid (1.5, 0.5);
\draw[step=0.5cm, thin] (0.999, 0.5) grid (1.5, 1.5);
\draw[dotted] (0, 0) -- (-1.8, 0);
\draw[dotted] (1, 1.5) -- (1, 1.8);
\draw[dotted] (1.5, 1.5) -- (1.5, 1.8);
\end{tikzpicture}
\end{center}
By definition, the map $\varphi$ changes the shade of the bottom-left-most gray box; in other words, $\varphi$ switches these two types. Hence, $\varphi$ is indeed a map from the set $S(\lambda, \mu)$ to itself.
\end{proof}

Our proof for the property of factorization is based on a variant of Pieri rule, which we show first. 

\begin{lem} \label{2_prop_variant_Pieri_rule} Let $\calX$ contain $n$ variables. For any partition $\lambda$,
\begin{align} \label{2_prop_variant_Pieri_rule_eq} \schur_\lambda(\calX) \elementary_{n - r}(\calX) = \sum_{\substack{\kappa: \\ \left( \lambda + \left\langle 1^n \right\rangle \right) \setminus \kappa \text{ is a vertical $r$-strip}}} \schur_\kappa(\calX)
.
\end{align}
\end{lem}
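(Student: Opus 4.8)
The plan is to read \eqref{2_prop_variant_Pieri_rule_eq} as a restriction-to-$n$-variables of the elementary-symmetric Pieri rule, followed by a complementation of Ferrers diagrams. First I would invoke the Pieri rule in the form \eqref{1_cor_pieri_rule_schur_eq_elementary} (equivalently Corollary~\ref{1_cor_pieri_rule_schur}), which is an identity in $\Sym$: $\schur_\lambda\,\elementary_{n-r}=\sum_{\mu}\schur_\mu$, the sum being over all partitions $\mu$ such that $\mu\setminus\lambda$ is a vertical $(n-r)$-strip. Specializing this identity to the $n$ variables of $\calX$ and using that $\schur_\mu(\calX)=0$ whenever $l(\mu)>n$, the sum may be cut down to those $\mu$ with $l(\mu)\le n$; since any such $\mu$ contains $\lambda$, we may and do assume $l(\lambda)\le n$.

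The second step is to match this surviving index set with the one on the right of \eqref{2_prop_variant_Pieri_rule_eq}. Write $\sigma=\lambda+\langle 1^n\rangle$, a partition of length exactly $n$ with $\sigma_i=\lambda_i+1$ for $1\le i\le n$ and $\sigma_i=0$ otherwise. For a partition $\mu$ with $l(\mu)\le n$ one has $\sigma_i-\mu_i=1-(\mu_i-\lambda_i)$ for $i\le n$ and $\sigma_i-\mu_i=0=\mu_i-\lambda_i$ for $i>n$, so that $0\le\mu_i-\lambda_i\le1$ for all $i$ is equivalent to $0\le\sigma_i-\mu_i\le1$ for all $i$, the inclusion $\lambda\subseteq\mu$ is equivalent to $\mu\subseteq\sigma$, and $\sum_i(\sigma_i-\mu_i)=n-\sum_i(\mu_i-\lambda_i)$. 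Hence ``$\mu\setminus\lambda$ is a vertical $(n-r)$-strip'' is equivalent to ``$(\lambda+\langle 1^n\rangle)\setminus\mu$ is a vertical $r$-strip'', and relabeling $\mu$ as $\kappa$ turns the truncated Pieri expansion into exactly the right-hand side of \eqref{2_prop_variant_Pieri_rule_eq}.

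I expect the main obstacle to be purely one of bookkeeping: one has to check that specializing to $n$ variables removes precisely the partitions of length greater than $n$ in the Pieri sum, and that the complement is taken inside the $n$-row region $\lambda+\langle 1^n\rangle$ rather than inside $\lambda$ itself — it is exactly this complementation inside an $n$-box-tall column that trades a vertical $(n-r)$-strip for a vertical $r$-strip. As a cross-check of the combinatorics one could instead factor $\elementary_{n-r}(\calX)=\elementary_n(\calX)\,\elementary_r(\calX^{-1})$, rewrite $\elementary_n(\calX)=\prod_{x\in\calX}x$ as the shift $\schur_\nu(\calX)\mapsto\schur_{\nu+\langle 1^n\rangle}(\calX)$ via \eqref{1_eq_schur_times_product_of_all_variables}, and apply Pieri once more; but the argument above is the more economical one.
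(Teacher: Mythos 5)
Your proposal is correct and takes essentially the same approach as the paper: both invoke the elementary-symmetric Pieri rule \eqref{1_cor_pieri_rule_schur_eq_elementary}, kill the terms with $l(\kappa)>n$ using the vanishing of Schur polynomials in too few variables, and then observe that for partitions of length at most $n$ the conditions ``$\kappa\setminus\lambda$ is a vertical $(n-r)$-strip'' and ``$(\lambda+\langle 1^n\rangle)\setminus\kappa$ is a vertical $r$-strip'' are equivalent. The only cosmetic difference is direction (you transform the left-hand side into the right-hand side; the paper does the reverse), and you spell out the complementation arithmetic that the paper states tersely as an ``if and only if.''
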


\begin{proof} If $\kappa$ is a partition of length at most $n$, then $\left( \lambda + \left\langle 1^n \right\rangle \right) \setminus \kappa$ is a vertical $r$-strip if and only if $\kappa \setminus \lambda$ is a vertical $(n - r)$-strip. Hence the right-hand side of the equality in \eqref{2_prop_variant_Pieri_rule_eq} may be reformulated as
\begin{align*}
\sum_{\substack{\kappa: \\ \left( \lambda + \left\langle 1^n \right\rangle \right) \setminus \kappa \text{ is a vertical $r$-strip}}} \schur_\kappa(\calX) ={} & \sum_{\substack{\kappa: \\ \kappa \setminus \lambda \text{ is a vertical $(n - r)$-strip} \\ l(\kappa) \leq n}} \schur_\kappa(\calX)
\end{align*}
Given that $\schur_\kappa(\calX) = 0$ for all $\kappa$ of length strictly greater than $n$, the result follows from the equality in \eqref{1_cor_pieri_rule_schur_eq_elementary}.
\end{proof}

\begin{lem} [factorization] Suppose that $\calX$ and $\calY$ only contain non-zero variables. If $\lambda$ satisfies $\lambda_n \geq m \geq \lambda_{n + 1}$, so that it can be written in the form $\left( \left\langle m^n \right\rangle + \alpha \right) \cup \beta'$ for some partitions $\alpha$ and $\beta$ of lengths at most $n$ and $m$, respectively, then 
$$
LS_\lambda(-\calX; \calY) = \Delta(\calY; \calX) \schur_\alpha(-\calX) \schur_\beta(\calY)
.
$$ 
\end{lem}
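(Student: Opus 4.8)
The plan is to prove the identity by induction on $m$, the number of variables in $\calY$, leaving $n$ and the shape $\lambda$ (hence the partitions $\alpha$ and $\beta$, which $\lambda$, $m$, $n$ determine) free to vary. The base case $m=0$ is immediate: then $\lambda_n\ge 0\ge\lambda_{n+1}$ forces $l(\lambda)\le n$, so $\beta=\emptyset$ and $\lambda=\alpha$, and the claim reduces to the defining reduction $LS_\lambda(-\calX;\emptyset)=\schur_\lambda(-\calX)$ together with $\Delta(\emptyset;\calX)=\schur_\emptyset(\emptyset)=1$.

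For the inductive step I would write $\calY=\calY'\cup(y_m)$ with $|\calY'|=m-1$ and apply the vertical‑strip Pieri rule \eqref{1_cor_Pieri_rule_for_LS_vert_y_eq}:
\begin{align*}
LS_\lambda(-\calX;\calY)=\sum_{\substack{\kappa\subset\lambda\\ \lambda\setminus\kappa\text{ vertical strip}}}LS_\kappa(-\calX;\calY')\,y_m^{|\lambda|-|\kappa|}.
\end{align*}
The first task is to cut this sum down. Since $\lambda_n=m+\alpha_n\ge m$, a vertical strip forces $\kappa_n\ge m-1$; and if $\kappa_{n+1}\ge m$ then $\kappa$ contains the box $(m,n+1)$, so $LS_\kappa(-\calX;\calY')=0$ by Corollary~\ref{1_cor_LS=0} (applied with $n$ and $m-1$ variables). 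Hence every surviving $\kappa$ satisfies $\kappa_n\ge m-1\ge\kappa_{n+1}$ and decomposes as $\kappa=(\langle (m-1)^n\rangle+\gamma)\cup\delta'$ with $l(\gamma)\le n$, $l(\delta)\le m-1$, where $\gamma_i=\kappa_i-(m-1)$ for $i\le n$ and $\delta'=(\kappa_{n+1},\kappa_{n+2},\dots)$. The induction hypothesis then gives $LS_\kappa(-\calX;\calY')=\Delta(\calY';\calX)\,\schur_\gamma(-\calX)\,\schur_\delta(\calY')$.

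Next I would translate the condition on $\kappa$ into conditions on $(\gamma,\delta)$: comparing rows, ``$\lambda\setminus\kappa$ is a vertical strip'' becomes ``$(\alpha+\langle 1^n\rangle)\setminus\gamma$ is a vertical strip'' (rows $1,\dots,n$) together with ``$\beta\setminus\delta$ is a horizontal strip'' (the lower rows), and $|\lambda|-|\kappa|=(|\alpha|+n-|\gamma|)+(|\beta|-|\delta|)$, where $|\alpha|+n-|\gamma|$ is exactly the size of the strip $(\alpha+\langle 1^n\rangle)\setminus\gamma$. Substituting and separating variables, $LS_\lambda(-\calX;\calY)$ becomes $\Delta(\calY';\calX)$ times
\begin{align*}
\Biggl(\sum_{(\alpha+\langle 1^n\rangle)\setminus\gamma\text{ vert.\ strip}}\schur_\gamma(-\calX)\,y_m^{|\alpha|+n-|\gamma|}\Biggr)\Biggl(\sum_{\beta\setminus\delta\text{ hor.\ strip}}\schur_\delta(\calY')\,y_m^{|\beta|-|\delta|}\Biggr),
\end{align*}
where the constraint $l(\delta)\le m-1$ may be dropped from the second sum since $\schur_\delta(\calY')=0$ once $l(\delta)=m$. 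By the ``add one variable'' identity \eqref{1_eq_pieri_rule_schur_eq_add_variable} the second factor is $\schur_\beta(\calY'\cup(y_m))=\schur_\beta(\calY)$. Grouping the first factor by the strip size $t$ and applying Lemma~\ref{2_prop_variant_Pieri_rule} (with $\calX$ replaced by $-\calX$ and $\lambda$ by $\alpha$) turns its inner sum into $\schur_\alpha(-\calX)\elementary_{n-t}(-\calX)$, so the first factor equals $\schur_\alpha(-\calX)\sum_{t\ge 0}\elementary_{n-t}(-\calX)y_m^{t}=\schur_\alpha(-\calX)\prod_{x\in\calX}(y_m-x)$. Since $\Delta(\calY';\calX)\prod_{x\in\calX}(y_m-x)=\Delta(\calY;\calX)$, this is exactly $\Delta(\calY;\calX)\schur_\alpha(-\calX)\schur_\beta(\calY)$, closing the induction.

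The part that will require care is not any single computation but the combinatorial bookkeeping: verifying that $\kappa\mapsto(\gamma,\delta)$ is a bijection from the surviving $\kappa$ onto the pairs with $(\alpha+\langle 1^n\rangle)\setminus\gamma$ a vertical strip and $\beta\setminus\delta$ a horizontal strip (so the length restrictions on $\gamma$ and $\delta$ are either automatic or, as with $l(\delta)\le m-1$, harmless), and tracking the exponent of $y_m$ so that it really is the size of $(\alpha+\langle 1^n\rangle)\setminus\gamma$ — which is precisely what makes the variant Pieri rule of Lemma~\ref{2_prop_variant_Pieri_rule}, rather than the plain rule \eqref{1_cor_pieri_rule_schur_eq_elementary}, the natural tool for the $\calX$‑sum. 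Beyond that variant Pieri rule no new ingredient seems to be needed.
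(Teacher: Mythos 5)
Your proposal is correct and follows essentially the same route as the paper's own proof: induction on $m$ via the vertical-strip Pieri rule \eqref{1_cor_Pieri_rule_for_LS_vert_y_eq}, discarding the $\kappa$ with $(m,n+1)\in\kappa$ by Corollary~\ref{1_cor_LS=0}, decomposing the surviving $\kappa$ into a $\gamma$-part and a $\delta$-part, applying the induction hypothesis, resummating the $\delta$-sum via \eqref{1_eq_pieri_rule_schur_eq_add_variable} and the $\gamma$-sum via Lemma~\ref{2_prop_variant_Pieri_rule}, and recognizing $\sum_{t}\elementary_{n-t}(-\calX)y_m^t=\prod_{x\in\calX}(y_m-x)$. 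The only difference is presentational: where the paper verifies the $\kappa\leftrightarrow(\gamma,\delta)$ correspondence with a case split ($l(\beta)\le m-1$ vs.\ $l(\beta)=m$) and Ferrers-diagram pictures, you argue it directly by combining the vanishing from Corollary~\ref{1_cor_LS=0} with the vanishing of $\schur_\delta(\calY')$ for $l(\delta)=m$, which is a cleaner way to say the same thing.
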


\begin{proof} This proof is an induction over $m$, the number of elements of $\calY$. Recalling that $LS_\lambda(-\calX; \emptyset) = \schur_\lambda(-\calX)$, the base case $m = 0$ is trivial. For the induction step, we consider a set of variables $\calY = \calY' \cup (y_m)$ consisting of $m \geq 1$ elements, and a partition $\lambda = \left( \left\langle m^n \right\rangle + \alpha \right) \cup \beta'$ with $l(\alpha) \leq n$ and $l(\beta) \leq m$. Corollary~\ref{1_cor_Pieri_rule_for_LS} states that
\begin{align} 
\begin{split} \label{2_in_proof_factorization_LS_Pieri}
LS_{\lambda}(-\calX; \calY) ={} & \sum_{\substack{\kappa \subset \lambda: \\ \lambda \setminus \kappa \text{ is a vertical strip}}} LS_\kappa \left(-\calX, \calY' \right) y_{m}^{|\lambda| - |\kappa|}
.
\end{split}
\intertext{We claim that the sum on the right-hand side in \eqref{2_in_proof_factorization_LS_Pieri} may be reformulated as}
\begin{split} \label{2_in_proof_factorization_LS_reformulate}
LS_{\lambda}(-\calX; \calY) ={} & \sum_{\substack{\mu \subset \left\langle 1^n \right\rangle + \alpha, \; \nu \subset \beta: \\ \left( \left\langle 1^n \right\rangle + \alpha \right) \setminus \mu \text{ is a vertical strip} \\ \beta \setminus \nu \text{ is a horizontal strip} \\ l(\nu) \leq m - 1}} LS_{\left( \left\langle (m - 1)^n \right\rangle + \mu \right) \cup \nu'} \left(-\calX, \calY' \right) y_m^{|\alpha| + n + |\beta| - |\mu| - |\nu|}
.
\end{split}
\end{align}
First, suppose that $l(\beta) \leq m - 1$, which entails that the Ferrers diagram of the partition $\lambda$ is of the following general shape:
\begin{center}
\begin{tikzpicture}
\fill[black!40!white] (0, -1.75) rectangle (1.5, -2.75);
\fill[black!40!white] (1.5, -1.75) rectangle (2.5, -2.5);
\fill[black!40!white] (2.5, -1.75) rectangle (2.75, -2.25);
\fill[black!40!white] (2.75, -1.75) rectangle (4, -2);
\fill[black!15!white] (4, 0) rectangle (6, -0.25);
\fill[black!15!white] (4, -0.25) rectangle (5.75, -0.5);
\fill[black!15!white] (4, -0.5) rectangle (5, -1);
\fill[black!15!white] (4, -1) rectangle (4.75, -1.25);
\fill[black!15!white] (4, 0) rectangle (4.25, -1.75);
\draw (0,0) rectangle (4,-1.75);
\draw (4,0) rectangle (4.25, -1.75);
\draw (4.25, 0) -- (6, 0);
\draw (6,0) -- (6,-0.25);
\draw (6, -0.25) -- (5.75, -0.25);
\draw (5.75, -0.25) -- (5.75, -0.5);
\draw (5.75, -0.5) -- (5, -0.5);
\draw (5, -0.5) -- (5, -1);
\draw (5, -1) -- (4.75, -1);
\draw (4.75, -1) -- (4.75, -1.25);
\draw (4.75, -1.25) -- (4.25, -1.25);
\draw (4, -1.75) -- (4, -2);
\draw (4, -2) -- (2.75, -2);
\draw (2.75, -2) -- (2.75, -2.25);
\draw (2.75, -2.25) -- (2.5, -2.25);
\draw (2.5, -2.25) -- (2.5, -2.5);
\draw (2.5, -2.5) -- (1.5, -2.5);
\draw (1.5, -2.5) -- (1.5, -2.75);
\draw (1.5, -2.75) -- (0, -2.75);
\draw (0, -2.75) -- (0, -1.75);
\draw[decoration={brace, raise=5pt},decorate] (0, 0) -- node[above=6pt] {$\scriptstyle{m - 1}$} (4, 0);
\draw[decoration={brace, raise=5pt, mirror},decorate] (0,0) -- node[left=6pt] {$\scriptstyle{n}$} (0,-1.75);
\end{tikzpicture}
\end{center}
Here the diagrams of the partitions $\beta'$ and $\left\langle 1^n \right\rangle + \alpha$ are colored in dark and light gray, respectively. By assumption, $(m, n) \in \lambda$ and $(m, n + 1) \not\in \lambda$. Therefore, the map that decomposes any vertical strip $\lambda \setminus \kappa$ into a dark gray and a light gray part is a bijection between the set of all vertical strips $\lambda \setminus \kappa$ and the set of all pairs of vertical strips $\left( \left\langle 1^n \right\rangle + \alpha \right) \setminus \mu$, $\beta' \setminus \nu'$. Thus, our claim is an immediate consequence of the fact that transposing a vertical strip results in a horizontal strip. 

Second, suppose that $l(\beta) = m$. Then the Ferrers diagram of a typical partition $\lambda$ might look as follows:
\begin{center}
\begin{tikzpicture}
\fill[black!40!white] (0, -1.75) rectangle (1.5, -2.75);
\fill[black!40!white] (1.5, -1.75) rectangle (2.5, -2.5);
\fill[black!40!white] (2.5, -1.75) rectangle (4.25, -2.25);
\fill[black!60!white] (4, -1.75) rectangle (4.25, -2);
\fill[black!15!white] (4, 0) rectangle (6, -0.25);
\fill[black!15!white] (4, -0.25) rectangle (5.75, -0.5);
\fill[black!15!white] (4, -0.5) rectangle (5, -1);
\fill[black!15!white] (4, -1) rectangle (4.75, -1.25);
\fill[black!15!white] (4, 0) rectangle (4.25, -1.75);
\draw (0,0) rectangle (4,-1.75);
\draw (4,0) rectangle (4.25, -1.75);
\draw (4.25, 0) -- (6, 0);
\draw (6,0) -- (6,-0.25);
\draw (6, -0.25) -- (5.75, -0.25);
\draw (5.75, -0.25) -- (5.75, -0.5);
\draw (5.75, -0.5) -- (5, -0.5);
\draw (5, -0.5) -- (5, -1);
\draw (5, -1) -- (4.75, -1);
\draw (4.75, -1) -- (4.75, -1.25);
\draw (4.75, -1.25) -- (4.25, -1.25);
\draw (4.25, -1.75) -- (4.25, -2.25);
\draw (4.25, -2.25) -- (2.5, -2.25);
\draw (2.5, -2.25) -- (2.5, -2.5);
\draw (2.5, -2.5) -- (1.5, -2.5);
\draw (1.5, -2.5) -- (1.5, -2.75);
\draw (1.5, -2.75) -- (0, -2.75);
\draw (0, -2.75) -- (0, -1.75);
\draw[decoration={brace, raise=5pt},decorate] (0, 0) -- node[above=6pt] {$\scriptstyle{m - 1}$} (4, 0);
\draw[decoration={brace, raise=5pt, mirror},decorate] (0,0) -- node[left=6pt] {$\scriptstyle{n}$} (0,-1.75);
\end{tikzpicture}
\end{center}
Here the diagrams of the partitions $\beta'$ and $\left\langle 1^n \right\rangle + \alpha$ are also colored in dark and light gray, respectively. We see that $\lambda$ contains the box with coordinates $(m, n + 1)$ (colored in even darker gray), which entails that decomposing any vertical strip $\lambda \setminus \kappa$ into two parts (of different shades of gray) no longer results in a bijection between the set of all vertical strips $\lambda \setminus \kappa$ and the set of all pairs of vertical strips $\left( \left\langle 1^n \right\rangle + \alpha \right) \setminus \mu$, $\beta' \setminus \nu'$. Indeed, any partition $\kappa$ that contains the box at $(m, n + 1)$ must also contain the box at $(m, n)$; in consequence, the map is not surjective. However, $LS_\kappa\left(-\calX; \calY' \right)$ vanishes whenever $\kappa$ contains the box at $(m, n + 1)$, according to Corollary~\ref{1_cor_LS=0}. We conclude that there is a bijection between the set of all partitions $\kappa$ that contribute to the sum in \eqref{2_in_proof_factorization_LS_Pieri} and the set of all pairs of partitions $\mu$, $\nu$ so that $\left( \left\langle 1^n \right\rangle + \alpha \right) \setminus \mu$, $\beta' \setminus \nu'$ are vertical strips \emph{and} and $\nu'_1 \leq m - 1$, which completes the proof of our claim that the sum in \eqref{2_in_proof_factorization_LS_Pieri} may be reformulated as in \eqref{2_in_proof_factorization_LS_reformulate}.

Applying the induction hypothesis to the Littlewood-Schur functions on the right-hand side in \eqref{2_in_proof_factorization_LS_reformulate} yields
\begin{align*} 
LS_{\lambda}(-\calX; \calY) ={} & \Delta \left( \calY'; \calX \right) \left[ \sum_{\substack{\mu \subset \left\langle 1^n \right\rangle + \alpha: \\ \left( \left\langle 1^n \right\rangle + \alpha \right) \setminus \mu \text{ is a vertical strip}}} \schur_\mu (-\calX) y_m^{|\alpha| + n - |\mu|} \right] \\
& \times \left[ \sum_{\substack{\nu \subset \beta: \\ \beta \setminus \nu \text{ is a horizontal strip} \\ l(\nu) \leq m - 1}} \schur_\nu \left( \calY' \right) y_m^{|\beta| - |\nu|} \right]
.
\intertext{Given that $\calY'$ contains $m - 1$ variables, the condition on the length of $\nu$ is superfluous. Hence, the equality in \eqref{1_eq_pieri_rule_schur_eq_add_variable} implies that} 
LS_{\lambda}(-\calX; \calY) ={} & \Delta \left( \calY'; \calX \right) \left[ \sum_{r = 0}^n y_m^r \sum_{\substack{\mu \subset \left\langle 1^n \right\rangle + \alpha: \\ \left( \left\langle 1^n \right\rangle + \alpha \right) \setminus \mu \text{ is a vertical $r$-strip}}} \schur_\mu (-\calX) \right] \schur_\beta(\calY)
.
\intertext{Lemma~\ref{2_prop_variant_Pieri_rule} allows us to conclude that}
LS_{\lambda}(-\calX; \calY) ={} & \Delta \left( \calY'; \calX \right) \left[ \sum_{r = 0}^n y_m^r \elementary_{n - r}(-\calX) \right] \schur_\alpha(-\calX) \schur_\beta(\calY) \displaybreak[2]
\\
={} & \Delta \left( \calY'; \calX \right) \prod_{x \in \calX} (y_m - x) \schur_\alpha(-\calX) \schur_\beta(\calY) = \Delta \left( \calY; \calX \right) \schur_\alpha(-\calX) \schur_\beta(\calY)
. \qedhere
\end{align*}
\end{proof}

\section{The determinantal formula} \label{2_sec_proof_of_det_formula_for_LS}
This section consists of the statement and proof of Moens and Van der Jeugt's determinantal formula for Littlewood-Schur functions \cite{vanderjeugt}. The proof relies on a few basic properties of determinants, such as Laplace expansion. Let us quickly recall this result from linear algebra. For an $n \times n$ matrix $A = (a_{ij})$, the symbol $A_{\bar{i} \bar{j}}$ denotes the $(n - 1) \times (n - 1)$ matrix $(a_{kl})_{k \neq i, l \neq j}$, \textit{i.e.}\ the matrix $A$ without the $i$-th row and the $j$-th column.

\begin{lem} [Laplace expansion] \label{2_lem_laplace_expansion} The determinant of an $n \times n$ matrix $A$ can be expanded in the following two ways:
\begin{enumerate}
\item for $1 \leq i \leq n$, $\displaystyle \det(A) = \sum_{j = 1}^n (-1)^{i + j} a_{ij} \det \left( A_{\bar{i} \bar{j}}\right)$;
\item for $1 \leq j \leq n$, $\displaystyle \det(A) = \sum_{i = 1}^n (-1)^{i + j} a_{ij} \det \left( A_{\bar{i} \bar{j}}\right)$.
\end{enumerate}
\end{lem}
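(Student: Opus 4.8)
The plan is to derive both expansions from the basic algebraic properties of the determinant — multilinearity and the alternating property, or equivalently the Leibniz formula $\det(A) = \sum_{\sigma \in S_n} \varepsilon(\sigma) \prod_{k=1}^n a_{k\sigma(k)}$ — and then to reduce the column expansion to the row expansion by transposition.

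First I would fix a row index $i$ and write the $i$-th row of $A$ as $\sum_{j=1}^n a_{ij} e_j^t$, where $e_j^t$ denotes the $j$-th standard basis row vector. By multilinearity of the determinant in the $i$-th row, $\det(A) = \sum_{j=1}^n a_{ij} \det\bigl(A^{(i,j)}\bigr)$, where $A^{(i,j)}$ is obtained from $A$ by replacing its $i$-th row with $e_j^t$. The crux is then to show $\det\bigl(A^{(i,j)}\bigr) = (-1)^{i+j}\det\bigl(A_{\bar i \bar j}\bigr)$. To see this, move the $i$-th row of $A^{(i,j)}$ to the top by $i-1$ successive adjacent row transpositions, each of which flips the sign of the determinant, and likewise move the $j$-th column to the far left by $j-1$ adjacent column transpositions; the total sign picked up is $(-1)^{(i-1)+(j-1)} = (-1)^{i+j}$. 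The matrix so obtained has first row $e_1^t$ and first column $e_1$, hence is block triangular with a $1$ in position $(1,1)$ and with $A_{\bar i \bar j}$ occupying the complementary block, so its determinant is exactly $\det\bigl(A_{\bar i \bar j}\bigr)$. This establishes expansion (1). (Alternatively, one could argue straight from the Leibniz formula by partitioning $S_n$ according to the value $\sigma(i) = j$ and identifying each resulting summand with $(-1)^{i+j} a_{ij}$ times a term in the Leibniz expansion of $\det(A_{\bar i \bar j})$; this is the same computation packaged differently.)

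For expansion (2) along a column $j$, I would simply invoke $\det(A) = \det(A^t)$ together with the observation that the $(i,j)$-submatrix of $A$ is the transpose of the $(j,i)$-submatrix of $A^t$, so $\det(A_{\bar i\bar j})=\det\bigl((A^t)_{\bar j\bar i}\bigr)$; applying the already-proved row expansion (1) to $A^t$ along its $j$-th row then yields $\det(A) = \det(A^t) = \sum_{i=1}^n (-1)^{i+j} a_{ij}\det\bigl(A_{\bar i\bar j}\bigr)$, as required. The only point requiring any care is the sign bookkeeping in the middle step — verifying that the row and column moves contribute precisely $(-1)^{i+j}$ — but this is a routine count of adjacent transpositions, and once it is in hand both statements follow immediately from standard facts about determinants.
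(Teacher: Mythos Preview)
Your argument is correct and is a standard proof of the cofactor expansion. The paper, however, does not prove this lemma at all: it is introduced with ``Let us quickly recall this result from linear algebra'' and is stated without proof, as a classical fact to be used later. So there is nothing to compare against; your proposal simply supplies a proof where the paper chose to cite the result as well known.
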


The statement of the determinantal formula relies on the notion of the index of a partition.

\begin{defn} [index of a partition] \label{2_defn_index} The $(m,n)$-index of a partition $\lambda$ is the largest (possibly negative) integer $k$ with the properties that $(m + 1 - k, n + 1 - k) \not\in \lambda$ and $k \leq \min\{m,n\}$. Equivalently, the index $k$ is the smallest integer such that the rectangle $\left\langle (m - k)^{n - k} \right\rangle$ is contained in the Ferrers diagram of $\lambda$.
\end{defn}

If $(m,n) \not\in \lambda$, then $k$ is the side of the largest square with bottom-right corner $(m,n)$ that fits next to the diagram of the partition $\lambda$. If $(m,n) \in \lambda$, then $-k$ is the side of the largest square with top-left corner $(m,n)$ that fits inside the diagram of $\lambda$. Let us illustrate this by a sketch: the area colored in gray is the diagram of some partition $\lambda$; for various locations of the point $(m,n)$, we draw the square whose side is equal to the $(m,n)$-index of $\lambda$ in absolute value:
\begin{center}
\begin{tikzpicture}
\fill[black!27.5!white] (-0.25, 0) rectangle (1, 2.5);
\fill[black!27.5!white] (1, 0.75) rectangle (1.25, 2.5);
\fill[black!27.5!white] (1.25, 1.5) rectangle (1.5, 2.5);
\fill[black!27.5!white] (1.5, 2) rectangle (2.25, 2.5);
\fill[black!27.5!white] (2.25, 2.25) rectangle (2.5, 2.5);
\draw (-0.25, 0) -- (1, 0);
\draw (1, 0) -- (1, 0.75);
\draw (1, 0.75) -- (1.25, 0.75);
\draw (1.25, 0.75) -- (1.25, 1.5);
\draw (1.25, 1.5) -- (1.5, 1.5);
\draw (1.5, 1.5) -- (1.5, 2);
\draw (1.5, 2) -- (2.25, 2);
\draw (2.25, 2) -- (2.25, 2.25);
\draw (2.25, 2.25) -- (2.5, 2.25);
\draw (2.5, 2.25) -- (2.5, 2.5);
\draw (-0.25, 2.5) -- (2.5, 2.5);
\draw (-0.25, 2.5) -- (-0.25, 0);
\draw (1.25, 0.75) rectangle (1.75, 0.25);
\draw (1.75, 2) rectangle (2.5, 1.25);
\draw (1.25, 1.25) rectangle (0.75, 1.75);
\draw (3, 2.5) rectangle (3.5, 2);
\draw[decoration={brace, raise=5pt, mirror},decorate] (1.75, 0.25) -- node[right=6pt] {$\scriptstyle{k}$} (1.75, 0.75);
\draw[decoration={brace, raise=5pt, mirror},decorate] (2.5, 1.25) -- node[right=6pt] {$\scriptstyle{k}$} (2.5, 2);
\draw[decoration={brace, raise=5pt},decorate] (0.75, 1.25) -- node[left=6pt] {$\scriptstyle{-k}$} (0.75, 1.75);
\draw[decoration={brace, raise=5pt, mirror},decorate] (3.5, 2) -- node[right=6pt] {$\scriptstyle{k}$} (3.5, 2.5);
\node[anchor=north west] at (1.75, 0.25) {$\scriptstyle{(m,n)}$};
\node[anchor=north west] at (2.5, 1.25) {$\scriptstyle{(m,n)}$};
\node[anchor=south east] at (0.75, 1.75) {$\scriptstyle{(m,n)}$};
\node[anchor=north west] at (3.5, 2) {$\scriptstyle{(m,n)}$};
\node at (1.75, 0.25) {\tiny{\textbullet}};
\node at (2.5, 1.25) {\tiny{\textbullet}};
\node at (0.75, 1.75) {\tiny{\textbullet}};
\node at (3.5, 2) {\tiny{\textbullet}};
\node at (0, 2.6) {};
\end{tikzpicture}
\end{center}
We remark that the definition given above is not equivalent to the definition of index used in \cite{vanderjeugt}. The main advantage of our notion is that it is invariant under conjugation. Indeed, $(m + 1 - k, n + 1 - k) \not\in \lambda$ is equivalent to $(n + 1 - k, m + 1 - k) \not\in \lambda'$, which shows that the $(m,n)$-index of $\lambda$ is equal to the $(n,m)$-index of $\lambda'$. Another advantage is that the $(m,n)$-index of a partition $\lambda$ is 0 if and only if $\lambda$ satisfies the prerequisites for the property of factorization, thus giving a special role to the integer 0. In addition, the following theorem also makes a distinction between partitions with negative and non-negative index.

\begin{thm} [determinantal formula for Littlewood-Schur functions, adapted from \cite{vanderjeugt}] \label{2_thm_det_formula_for_Littlewood-Schur}
Let $\calX$ and $\calY$ be sets of variables with $n$ and $m$ elements, respectively, so that the elements of $\calX \cup \calY$ are pairwise distinct. Let $\lambda$ be a partition with $(m,n)$-index $k$. If $k$ is negative, then $LS_\lambda(-\calX; \calY) = 0$; otherwise,
\begin{align} \label{2_thm_det_formula_for_Littlewood-Schur_eq}
LS_\lambda(-\calX; \calY) ={} & \varepsilon(\lambda) \frac{\Delta(\calY; \calX)}{\Delta(\calX) \Delta(\calY)} \det \begin{pmatrix} \left( (x - y)^{-1} \right)_{\substack{x \in \calX \\ y \in \calY}} & \left( x^{\lambda_j + n - m - j} \right)_{\substack{x \in \calX \\ 1 \leq j \leq n - k}} \\ \left( y^{\lambda'_i + m - n - i} \right)_{\substack{1 \leq i \leq m - k \\ y \in \calY}} & 0\end{pmatrix}
\end{align}
where $\varepsilon(\lambda) = (-1)^{\left|\lambda_{[n - k]} \right|} (-1)^{mk} (-1)^{k(k - 1)/2}$.
\end{thm}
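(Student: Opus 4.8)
The plan is to prove Theorem~\ref{2_thm_det_formula_for_Littlewood-Schur} by exhibiting the right-hand side as a function satisfying the five hypotheses of Theorem~\ref{2_thm_5_properties_of_LS}. Concretely, for a partition $\lambda$ with non-negative $(m,n)$-index $k$ let $LS_\lambda^{\ast}(-\calX;\calY)$ denote the right-hand side of \eqref{2_thm_det_formula_for_Littlewood-Schur_eq}, and set $LS_\lambda^{\ast}(-\calX;\calY)=0$ when the index is negative. I would first observe that the negative-index case is already settled, and consistent: if the $(m,n)$-index of $\lambda$ is negative then $(m+1,n+1)\in\lambda$, so $LS_\lambda(-\calX;\calY)=0$ by Corollary~\ref{1_cor_LS=0}; and this property is stable under the operations ``$x_n=0$'', ``$x_n=y_m$'' that appear in Theorem~\ref{2_thm_5_properties_of_LS}, so the five conditions hold trivially for these $\lambda$. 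It then remains to check the five conditions in the case $k\ge 0$, where the matrix in \eqref{2_thm_det_formula_for_Littlewood-Schur_eq} is genuinely $(n+m-k)\times(n+m-k)$.

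For homogeneity I would first establish polynomiality of $LS_\lambda^{\ast}$ (which also makes it defined for non-distinct variables): the apparent poles cancel, since setting $x_i=x_j$ makes two rows of the big matrix equal, setting $y_i=y_j$ makes two columns equal, and the factors $(x-y)^{-1}$ occurring in any permutation term of the determinant are killed against the corresponding factors of $\Delta(\calY;\calX)$, exactly as in the Cauchy determinant. The degree count then goes through: in a non-vanishing permutation term exactly $m-k$ of the rows indexed by $\calX$ go to $\calY$-columns, the remaining $n-k$ to the ``Schur'' columns, and all $m-k$ bottom rows to $\calY$-columns; summing the exponents and invoking the inclusion--exclusion identity $|\lambda_{[n-k]}|+|\lambda'_{[m-k]}|=|\lambda|+(n-k)(m-k)$ — valid precisely because $\langle(m-k)^{n-k}\rangle\subset\lambda$ while $(m-k+1,n-k+1)\notin\lambda$ — one computes the total degree to be $|\lambda|$. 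Double-symmetry is immediate: permuting $\calX$ permutes the first $n$ rows of the matrix and is absorbed, up to the permutation sign, by the factors $\Delta(\calX)$ and $\Delta(\calY;\calX)$, and likewise for $\calY$.

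The substantive work is in restriction, cancellation and factorization. Factorization is cleanest: when $\lambda=(\langle m^n\rangle+\alpha)\cup\beta'$ the index is $0$, the top-right and bottom-left blocks become the numerator matrices of $\schur_\alpha(\calX)$ and $\schur_\beta(\calY)$ (one checks $\lambda_j+n-m-j=\alpha_j+n-j$ for $j\le n$ and $\lambda'_i+m-n-i=\beta_i+m-i$ for $i\le m$), the block-determinant identity eliminates the Cauchy block at the cost of a sign $(-1)^{nm}$, and the resulting $\varepsilon(\lambda)(-1)^{nm}\,\Delta(\calY;\calX)\,\schur_\alpha(\calX)\,\schur_\beta(\calY)$ equals $\Delta(\calY;\calX)\,\schur_\alpha(-\calX)\,\schur_\beta(\calY)$ because $\varepsilon(\lambda)=(-1)^{|\lambda_{[n]}|}=(-1)^{nm+|\alpha|}$ and $\schur_\alpha(-\calX)=(-1)^{|\alpha|}\schur_\alpha(\calX)$. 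For restriction I would set $x_n=0$ in \eqref{2_thm_det_formula_for_Littlewood-Schur_eq}, pull the scalar $-1$ out of each entry $(0-y)^{-1}$, note (using that all Schur-block exponents are non-negative for $j\le n-k$, $i\le m-k$) that the $x_n$-row has a single potentially nonzero Schur entry, and use Laplace expansion (Lemma~\ref{2_lem_laplace_expansion}) along that row to recover $LS_\lambda^{\ast}(-\calX';\calY)$; here a case split is unavoidable, since the $(m,n-1)$-index of $\lambda$ is $k$ if $(m+1-k,n-k)\notin\lambda$ and $k-1$ otherwise, which changes the size of the determinant and the exponents, so the two cases must be matched against the two incarnations of $\varepsilon(\lambda)$. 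Cancellation ($x_n=y_m$) is similar: the diverging entry $(x_n-y_m)^{-1}$ is balanced by the vanishing factor $(y_m-x_n)$ of $\Delta(\calY;\calX)$, the finite limit picks out the minor obtained by deleting the $x_n$-row and the $y_m$-column (whose shape matches the formula for $\calX',\calY'$, since the $(m-1,n-1)$-index is $k-1$), and one tracks the signs.

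I expect the main obstacle to be precisely this sign-and-index bookkeeping in the restriction and cancellation steps: one must check that the row and column operations on the bordered Cauchy determinant reproduce \emph{the same} formula in fewer variables with $\varepsilon(\lambda)$ transforming correctly, and this requires a careful case analysis of where the boundary of $\lambda$ lies relative to the point $(m,n)$. Everything else (polynomiality, the degree computation, double-symmetry, factorization) is a routine, if lengthy, calculation once the bordered-Cauchy structure is exploited.
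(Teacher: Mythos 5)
Your overall strategy is exactly the paper's: identify the right-hand side of \eqref{2_thm_det_formula_for_Littlewood-Schur_eq} as a candidate, set it to zero when the index is negative, and verify the five characterizing properties of Theorem~\ref{2_thm_5_properties_of_LS}, with factorization as the clean anchor and restriction/cancellation as the hard cases. Your treatment of homogeneity, double-symmetry, factorization and cancellation matches the paper essentially step for step; the paper additionally proves a small transposition-invariance corollary (that $LS^\ast_{\lambda'}(-\calX;\calY)=LS^\ast_\lambda(\calY;-\calX)$) so as to only do each of the restriction and symmetry checks once, which you would presumably discover.

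Where you have underestimated the work is the restriction step, specifically in the subcase where the $(m,n-1)$-index of $\lambda$ remains $k>0$. After setting $x_n=0$, the Schur block of the $x_n$-row has at most one nonzero entry (at column $j=n-k$, equal to $1$), but the $m$ Cauchy entries $(-y_j)^{-1}$ are all nonzero, so Laplace expansion along that row produces a sum of $m+1$ minors rather than a single determinant. Collapsing this sum to the desired $(n-1+m-k)\times(n-1+m-k)$ determinant is not mere sign-and-index bookkeeping: the paper first rescales rows and columns so that the $x_n$-row becomes $(-1,\dots,-1,0,\dots,0,1)$, then Laplace-expands along both the last column and the offending row, and the leftover double sum is shown to vanish only via a separate homogeneous polynomial identity $P_k=0$ of degree $k^2$, which is itself proved by induction on $k$ (comparing specializations $x_p=y_q$ and counting degrees). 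Nothing in your sketch anticipates this identity, and it is where the real work lives; everything else in your outline would go through essentially as you describe.
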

The sign $\varepsilon(\lambda)$ merits a few remarks. For any sequence $\lambda$, $\lambda_{[n]}$ is our notation for the sequence that consists of the first $n$ elements of $\lambda$; in symbols, $\lambda_{[n]} = (\lambda_1, \dots, \lambda_n)$. It is also worth mentioning that $\varepsilon(\lambda)$ not only depends on $\lambda$, but also on the parameters $m$ and $n$. If the parameters are not clear from the context, we add them as subscripts. Adding the proper parameters for this theorem would give $\varepsilon_{m,n}(\lambda)$.

We prove Theorem~\ref{2_thm_det_formula_for_Littlewood-Schur} by showing that the function defined by the right-hand side in \eqref{2_thm_det_formula_for_Littlewood-Schur_eq} satisfies the five characterizing properties listed in Theorem~\ref{2_thm_5_properties_of_LS}. Although Moens and Van der Jeugt originally discovered the determinantal formula for Littlewood-Schur functions through the study of representations of Lie superalgebras, they provide several independent justifications in \cite{vanderjeugt}, among which a proof based on the strategy used here.

For the remainder of this section we fix two parameters $m$, $n$, two sets of variables $\calX = (x_1, \dots, x_n)$, $\calY = (y_1, \dots, y_m)$ so that the elements in $\calX \cup \calY$ are pairwise distinct and a partition $\lambda$ with $(m,n)$-index $k$. Let us denote the right-hand side in \eqref{2_thm_det_formula_for_Littlewood-Schur_eq} by $LS_\lambda^{\ast}(-\calX; \calY)$ whenever $k \geq 0$; otherwise, we set $LS_\lambda^{\ast}(-\calX; \calY) = 0$.

\begin{lem} [homogeneity] \label{2_lem_homogeneity_with_det_formula} The function $LS_\lambda^{\ast}(-\calX; \calY)$ is a homogeneous polynomial in the variables $\calX \cup \calY$ of degree $|\lambda|$.
\end{lem}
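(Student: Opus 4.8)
The plan is to show that the right-hand side of \eqref{2_thm_det_formula_for_Littlewood-Schur_eq} is a homogeneous polynomial of degree $|\lambda|$ by carefully bookkeeping the degree contributions of the three factors $\Delta(\calY;\calX)/(\Delta(\calX)\Delta(\calY))$, the determinant, and the (degree-zero) sign $\varepsilon(\lambda)$, and then showing that the apparent rational dependence on the variables is in fact polynomial. First I would treat the case $k \geq 0$; the case $k < 0$ is trivial since $LS_\lambda^\ast = 0$ by definition.

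\textbf{Degree count.} The prefactor: $\Delta(\calY;\calX) = \prod_{x\in\calX, y\in\calY}(x-y)$ is homogeneous of degree $mn$; $\Delta(\calX)$ is homogeneous of degree $\binom{n}{2}$; $\Delta(\calY)$ of degree $\binom{m}{2}$. So the prefactor is homogeneous of degree $mn - \binom{n}{2} - \binom{m}{2}$ \emph{as a rational function}. For the determinant I would use multilinearity/homogeneity in a clean way: assign to each variable $x\in\calX$ and $y\in\calY$ weight $1$ and compute the weighted degree of each entry. The block $((x-y)^{-1})$ contributes $-1$ to each of its entries; the block $(x^{\lambda_j + n - m - j})$ has entry-degree $\lambda_j + n - m - j$ in its $j$-th column (for the $x$-variable in that row); the block $(y^{\lambda'_i + m - n - i})$ has entry-degree $\lambda'_i + m - n - i$ in its $i$-th row. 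Summing one entry per row and column (as in the Leibniz expansion) and noting that the permutation structure matches the block form (an entry from the $(1,1)$ block in row $x$ must be paired with a $y$-column from the $(1,2)$-block, etc.), I would verify that the total degree is the same for every nonzero term, namely
\begin{align*}
-(\text{number of }(x,y)\text{ pairs used in the NW block}) + \sum_{j=1}^{n-k}(\lambda_j + n - m - j) + \sum_{i=1}^{m-k}(\lambda'_i + m - n - i).
\end{align*}
Since the NW block is $n\times m$ but the full matrix is $(n+m-k)\times(n+m-k)$, exactly $k$ of the $x$'s and $k$ of the $y$'s go into the off-diagonal blocks, so the NW block contributes $-(n-k)$... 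I would carefully recompute this; combining with the prefactor degree and using the identity $\sum_{j\ge 1}(\lambda_j - j) + \sum_{i\ge 1}(\lambda'_i - i)$-type telescoping (the standard fact that relates $|\lambda|$, $|\lambda_{[n-k]}|$, and $|\lambda'_{[m-k]}|$ via the index $k$), the total should collapse to $|\lambda|$. This telescoping identity — essentially that $\{\lambda_j + n - m - j\}$ and $\{\lambda'_i + m - n - i\}$ partition an arithmetic-progression-like set because of the index condition — is the combinatorial heart of the computation.

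\textbf{Polynomiality.} It remains to argue that $LS_\lambda^\ast(-\calX;\calY)$ is genuinely a polynomial, not merely a homogeneous rational function. The denominator is $\Delta(\calX)\Delta(\calY)$. For the $\Delta(\calX)$ part: swapping $x_i \leftrightarrow x_{i'}$ permutes two rows of the determinant (those indexed by $x_i$ and $x_{i'}$), hence negates it; so the determinant is antisymmetric in $\calX$ and therefore divisible by $\Delta(\calX)$. Likewise it is antisymmetric in $\calY$ and divisible by $\Delta(\calY)$. The only subtlety is the poles of the entries $(x-y)^{-1}$ at $x=y$: I would clear these by multiplying the NW block rows/columns through, observing that $\Delta(\calY;\calX) = \prod(x-y)$ in the numerator exactly cancels every such pole — more precisely, after multiplying the $i$-th row (an $x$-row) by $\prod_{y\in\calY}(x-y)$, every entry becomes a polynomial, and the determinant picks up $\Delta(\calY;\calX)$, which is the prefactor numerator. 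So $\Delta(\calY;\calX)\det(\cdots)$ is a polynomial, antisymmetric in $\calX$ and in $\calY$, hence divisible by $\Delta(\calX)\Delta(\calY)$; the quotient is a polynomial, and by the degree count it is homogeneous of degree $|\lambda|$. I expect the main obstacle to be purely clerical: getting all the index-$k$ offsets and the signs to line up in the degree computation, and handling the $(x-y)^{-1}$ entries cleanly so the cancellation against $\Delta(\calY;\calX)$ is manifest rather than asserted.
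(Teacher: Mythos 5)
Your proposal matches the paper's proof in all essentials: the paper also argues polynomiality by absorbing $\Delta(\calX;\calY)$ into the $x$-rows of the determinant to clear the $(x-y)^{-1}$ poles and then invokes the coincidence of rows (resp.\ columns) when two variables in $\calX$ (resp.\ $\calY$) are set equal to divide by $\Delta(\calX)\Delta(\calY)$, and it obtains the degree by substituting $\calX\mapsto a\calX$, $\calY\mapsto a\calY$ and reducing via the identity $|\lambda| = \left|\lambda_{[n-k]}\right| + \left|\lambda'_{[m-k]}\right| - (m-k)(n-k)$, which is precisely the telescoping you identify as the combinatorial heart. One small correction to the Leibniz bookkeeping you flag for recomputation: since the SE block is zero, each nonzero term must take all $n-k$ NE columns from $x$-rows and all $m-k$ SW rows from $y$-columns, leaving exactly $k$ entries from the NW block, so the NW contribution to the degree is $-k$ rather than $-(n-k)$; with that fix the count closes to $|\lambda|$.
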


\begin{proof} If $k$ is negative, then $LS_\lambda^{\ast}(-\calX; \calY)$ can be regarded as the zero polynomial. If $k$ is non-negative, we may use the factor $\Delta(\calX; \calY)$ to multiply each row of the matrix in \eqref{2_thm_det_formula_for_Littlewood-Schur_eq} that corresponds to some $x \in \calX$ by $\prod_{y \in \calY} (x - y)$, ensuring that the resulting determinant is a polynomial in $\calX \cup \calY$. Moreover, this polynomial is divisible by $\Delta(\calX) \Delta(\calY)$ because if any two variables in $\calX$ or in $\calY$ are equal then the two corresponding rows respectively columns of the matrix in \eqref{2_thm_det_formula_for_Littlewood-Schur_eq} are also identical and thus its determinant vanishes.

It remains to verify that $LS_\lambda^{\ast}(-\calX; \calY)$ is homogeneous of degree $|\lambda|$. For $a \in \C \setminus \{0\}$, the structure of the matrix entails that
\begin{align*}
LS_\lambda^{\ast}(-a\calX; a\calY) ={} & \frac{a^{mn} a^{-k}}{a^{n(n - 1)/2} a^{m(m - 1)/2}} \left[ \prod_{1 \leq j \leq n - k} a^{\lambda_j + n - m - j} \right] \!\! \left[ \prod_{1 \leq i \leq m  - k} a^{\lambda'_i + m - n - i} \right] \\
& \times LS_\lambda^{\ast}(-\calX; \calY) \displaybreak[2] \\
={} & a^{\left|\lambda_{[n - k]}\right| + \left| \lambda'_{[m - k]} \right|} a^{mn - k - k(k - 1) - m(n - k) - n(m - k)} LS_\lambda^{\ast}(-\calX; \calY) \displaybreak[2] \\
={} & a^{\left|\lambda_{[n - k]}\right| + \left| \lambda'_{[m - k]} \right| - (m - k)(n - k)} LS_\lambda^{\ast}(-\calX; \calY) = a^{|\lambda|} LS_\lambda^{\ast}(-\calX; \calY)
\end{align*}
because $(m - k, n - k) \in \lambda$ and $(m + 1 - k, n + 1 - k) \not\in \lambda$ by the definition of index.
\end{proof}

Although the following property is not listed in Theorem~\ref{2_thm_5_properties_of_LS}, we state and prove it here because it will shorten the proofs of some of the characterizing properties that are listed in Theorem~\ref{2_thm_5_properties_of_LS}.

\begin{cor} \label{2_cor_transposition_invariance} Transposing the indexing partition $\lambda$ results in
\begin{align} \label{2_cor_transposition_invariance_eq}
LS_{\lambda'}^{\ast}(-\calX; \calY) = LS_\lambda^{\ast}(\calY; -\calX)
.
\end{align}
\end{cor}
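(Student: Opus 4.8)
The plan is to argue directly from the determinantal formula \eqref{2_thm_det_formula_for_Littlewood-Schur_eq}, since at this stage $LS_{\lambda}^{\ast}$ is merely shorthand for its right-hand side (and $0$ when the relevant index is negative). First I would check that the two sides of \eqref{2_cor_transposition_invariance_eq} involve the \emph{same} index: the formula \eqref{2_thm_det_formula_for_Littlewood-Schur_eq} for $LS_{\lambda'}^{\ast}(-\calX;\calY)$ uses the $(m,n)$-index of $\lambda'$, while for $LS_{\lambda}^{\ast}(\calY;-\calX)$ — in which $-\calY$ takes the role of the $n$-element set and $-\calX$ that of the $m$-element set — it uses the $(n,m)$-index of $\lambda$, and these two integers are equal by the conjugation-invariance of the index noted just after Definition~\ref{2_defn_index}. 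Call the common value $k$. If $k<0$ both sides vanish by definition and there is nothing to prove, so assume $k\ge 0$ from now on.

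For $k\ge 0$ the main observation is that, after the substitutions forced by the arguments, the bordered matrix occurring in $LS_{\lambda}^{\ast}(\calY;-\calX)$ is obtained from the transpose $M^{\top}$ of the bordered matrix $M$ of $LS_{\lambda'}^{\ast}(-\calX;\calY)$ by rescaling its last $n-k$ rows and its last $m-k$ columns by signs. Indeed, the Cauchy block transforms as $\bigl((-y)-(-x)\bigr)^{-1}=(x-y)^{-1}$, which is exactly the transpose of the Cauchy block of $M$; the bordering row of $M^{\top}$ indexed by $i$ picks up the factor $(-1)^{\lambda'_{i}+n-m-i}$, the bordering column indexed by $j$ picks up the factor $(-1)^{\lambda_{j}+m-n-j}$, and the exponents in the two borders agree once one uses $(\lambda')'=\lambda$ and interchanges the roles of $m$ and $n$. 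Setting $D_{2}=\operatorname{diag}\bigl((-1)^{\lambda'_{i}+n-m-i}\bigr)_{i=1}^{n-k}$ and $D_{1}=\operatorname{diag}\bigl((-1)^{\lambda_{j}+m-n-j}\bigr)_{j=1}^{m-k}$, the determinant of the matrix for $LS_{\lambda}^{\ast}(\calY;-\calX)$ therefore equals $\det(D_{1})\det(D_{2})\det M$, since transposition does not change the determinant.

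Finally I would reconcile the scalar prefactors. From $\Delta(-\calX;-\calY)=\Delta(\calY;\calX)$, $\Delta(-\calX)=(-1)^{\binom{n}{2}}\Delta(\calX)$ and $\Delta(-\calY)=(-1)^{\binom{m}{2}}\Delta(\calY)$, the Vandermonde prefactor of $LS_{\lambda}^{\ast}(\calY;-\calX)$ is $(-1)^{\binom{m}{2}+\binom{n}{2}}$ times that of $LS_{\lambda'}^{\ast}(-\calX;\calY)$. Putting the three pieces together, \eqref{2_cor_transposition_invariance_eq} reduces to the scalar identity
\[
(-1)^{\binom{m}{2}+\binom{n}{2}}\,\varepsilon_{n,m}(\lambda)\,\det(D_{1})\det(D_{2})=\varepsilon_{m,n}(\lambda').
\]
Substituting $\varepsilon_{m,n}(\lambda')=(-1)^{|\lambda'_{[n-k]}|}(-1)^{mk}(-1)^{k(k-1)/2}$, $\varepsilon_{n,m}(\lambda)=(-1)^{|\lambda_{[m-k]}|}(-1)^{nk}(-1)^{k(k-1)/2}$, and $\det(D_{1})\det(D_{2})=(-1)^{|\lambda_{[m-k]}|+|\lambda'_{[n-k]}|}(-1)^{(m-n)^{2}+\binom{m-k+1}{2}+\binom{n-k+1}{2}}$, the partition-dependent factors $(-1)^{|\lambda_{[m-k]}|}$, $(-1)^{|\lambda'_{[n-k]}|}$ and $(-1)^{k(k-1)/2}$ cancel between the two sides, leaving a purely numerical congruence in $m,n,k$. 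That congruence is verified using the elementary identity $\binom{a}{2}+\binom{b}{2}\equiv\binom{a+b}{2}+ab\pmod{2}$ (applied to $\binom{m}{2}+\binom{m-k+1}{2}$ and to $\binom{n}{2}+\binom{n-k+1}{2}$) together with $\binom{2\ell}{2}\equiv\ell\pmod{2}$, using that $(m-k,n-k)\in\lambda$ and $(m+1-k,n+1-k)\notin\lambda$ to control the lengths of the truncated partitions. This last sign bookkeeping is the only delicate step; the transpose-and-rescale part is routine.
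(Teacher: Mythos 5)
Your proof is correct and takes essentially the same route as the paper: appeal to the determinantal formula, use conjugation-invariance of the index, transpose the bordered matrix, rescale rows/columns, and match signs. The one point of divergence is how the residual sign is handled. You rescale the border rows and columns by the varying powers $(-1)^{\lambda'_i+n-m-i}$ and $(-1)^{\lambda_j+m-n-j}$ (landing directly on the matrix of $LS_\lambda^{\ast}(\calY;-\calX)$) and are then left to verify the purely numerical congruence
\[
\binom{m}{2}+\binom{n}{2}+nk+(m-n)^2+\binom{m-k+1}{2}+\binom{n-k+1}{2}+mk\equiv 0\pmod 2,
\]
which indeed holds (a clean route: reduce via $\binom{a}{2}+\binom{b}{2}=\binom{a+b}{2}-ab$ and then split on the parity of $k$; one does not actually need the facts about $(m-k,n-k)\in\lambda$ that you mention, since the partition-dependent factors cancel on the nose). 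The paper instead rescales the first $m$ rows and last $m-k$ columns by a uniform $-1$, landing on the matrix of $LS_\lambda^{\ast}(-\calY;\calX)$, and then evaluates the residual sign to $(-1)^{|\lambda|}$ using the identity $|\lambda|=|\lambda_{[m-k]}|+|\lambda'_{[n-k]}|-(m-k)(n-k)$ before invoking homogeneity (Lemma~\ref{2_lem_homogeneity_with_det_formula}) for the last step. This makes the sign bookkeeping considerably lighter, and you could shorten your argument by adopting the same homogeneity shortcut. Also note a small slip in wording: $-\calY$ has $m$ elements and $-\calX$ has $n$, so $-\calY$ takes the role of the formula's $m$-element slot, not its $n$-element slot (the conclusion about the $(n,m)$-index of $\lambda$ is nonetheless correct).
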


\begin{proof} Let $k'$ be the $(m,n)$-index of $\lambda'$. Given that $k'$ is also the $(n,m)$-index of $\lambda$, the equality in \eqref{2_cor_transposition_invariance_eq} holds whenever $k'$ is negative. If $k' \geq 0$, we use that transposing a matrix leaves its determinant invariant to see that
\begin{align*}
LS_{\lambda'}^{\ast}(-\calX; \calY) ={} & \varepsilon_{m,n}\left(\lambda'\right) \frac{\Delta(\calY; \calX)}{\Delta(\calX) \Delta(\calY)} \\
& \times \det \begin{pmatrix} \left( (x_j - y_i)^{-1} \right)_{\substack{1 \leq i \leq m \\ 1 \leq j \leq n}} & \left( y_i^{\lambda_j + m - n - j} \right)_{\substack{1 \leq i \leq m \\ 1 \leq j \leq m - k'}} \\ \left( x_j^{\lambda'_i + n - m - i} \right)_{\substack{1 \leq i \leq n - k' \\ 1 \leq j \leq n}} & 0\end{pmatrix}
.
\intertext{In order to obtain a top-left block of the required form, we multiply by $(-1)$ the first $m$ rows as well as the last $m - k'$ columns of the matrix, which results in}
LS_{\lambda'}^{\ast}(-\calX; \calY) ={} & \varepsilon_{m,n}\left(\lambda'\right) \frac{(-1)^{mn} \Delta(\calX; \calY)}{\Delta(\calX) \Delta(\calY)} (-1)^{m + m - k'} \\
& \times \det \begin{pmatrix} \left( (y_i - x_j)^{-1} \right)_{\substack{1 \leq i \leq m \\ 1 \leq j \leq n}} & \left( y_i^{\lambda_j + m - n - j} \right)_{\substack{1 \leq i \leq m \\ 1 \leq j \leq m - k'}} \\ \left( x_j^{\lambda'_i + n - m - i} \right)_{\substack{1 \leq i \leq n - k' \\ 1 \leq j \leq n}} &  0\end{pmatrix} \displaybreak[2] \\
={} & \varepsilon_{m,n}\left(\lambda'\right) (-1)^{mn} (-1)^{k'} \varepsilon_{n,m}(\lambda) LS_\lambda^{\ast}(-\calY; \calX)
\end{align*}
owing to the transposition invariance of the index. Recall that $$|\lambda| = \left| \lambda_{[m - k']} \right| + \left| \lambda'_{[n - k']} \right| - (m - k')(n - k').$$ Hence,
\begin{multline*} (-1)^{mn} (-1)^{k'} \varepsilon_{m,n}\left(\lambda'\right) \varepsilon_{n,m}(\lambda) \\ 
= (-1)^{mn + k'} \left[ (-1)^{\left|\lambda'_{[n - k']} \right|} (-1)^{mk'} (-1)^{k'(k' - 1)/2} \right] \left[ (-1)^{\left|\lambda_{[m - k']} \right|} (-1)^{nk'} (-1)^{k'(k' - 1)/2} \right] \\
= (-1)^{mn + k'} (-1)^{|\lambda| + (m - k')(n - k')} (-1)^{mk'} (-1)^{nk'} (-1)^{k'(k' - 1)}
= (-1)^{|\lambda|}
\end{multline*}
and the equality in \eqref{2_cor_transposition_invariance_eq} is an immediate consequence of the homogeneity shown in the preceding lemma.
\end{proof}

In order to prove the determinantal formula stated in Theorem~\ref{2_thm_det_formula_for_Littlewood-Schur}, we turn to the next properties listed in Theorem~\ref{2_thm_5_properties_of_LS}.

\begin{lem} [double-symmetry] The polynomial $LS_\lambda^{\ast}(-\calX; \calY)$ is symmetric in each set of variables separately.
\end{lem}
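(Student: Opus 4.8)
The plan is to prove that $LS_\lambda^{\ast}(-\calX; \calY)$ is symmetric in $\calX$ and in $\calY$ separately, and by Corollary~\ref{2_cor_transposition_invariance} it suffices to treat just one of the two sets of variables, say $\calX$. Indeed, once symmetry in $\calX$ is established for all partitions and all parameters, the equality $LS_{\lambda'}^{\ast}(-\calX; \calY) = LS_\lambda^{\ast}(\calY; -\calX)$ transfers this to symmetry in $\calY$ (a permutation of $\calY$ on the right-hand side corresponds to a permutation of the first argument on the left, and the sign change from $-\calX$ to $\calX$ does not affect symmetry). So the whole statement reduces to: for a transposition swapping two variables $x_a, x_b \in \calX$, the quantity $LS_\lambda^{\ast}(-\calX; \calY)$ is unchanged. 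When $k < 0$ the function is identically zero, so we may assume $k \geq 0$ and work with the determinantal expression in \eqref{2_thm_det_formula_for_Littlewood-Schur_eq}.

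First I would examine how each factor in \eqref{2_thm_det_formula_for_Littlewood-Schur_eq} transforms under swapping $x_a$ and $x_b$. The sign $\varepsilon(\lambda)$ and the second $\Delta$-function structure are symmetric functions of $\calX$ in the relevant sense, but the individual pieces $\Delta(\calX)$ (the Vandermonde, which is antisymmetric in $\calX$), $\Delta(\calY;\calX) = \prod_{y \in \calY, x \in \calX}(y - x)$ (symmetric under permutations of $\calX$), and the big determinant all need to be tracked. The Vandermonde $\Delta(\calX)$ in the denominator picks up a sign $-1$ under the transposition. The determinant in \eqref{2_thm_det_formula_for_Littlewood-Schur_eq} has one row indexed by each $x \in \calX$: the rows for $x_a$ and $x_b$ are interchanged by the swap, so the determinant also picks up a factor $-1$. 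These two sign changes cancel, while $\Delta(\calY;\calX)$, $\Delta(\calY)$ and $\varepsilon(\lambda)$ are all manifestly invariant. Hence $LS_\lambda^{\ast}(-\calX;\calY)$ is invariant under the transposition, which is all that is needed since transpositions generate the symmetric group.

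The argument for $\calY$ can be given directly in the same spirit rather than via Corollary~\ref{2_cor_transposition_invariance}, if one prefers: swapping $y_a, y_b \in \calY$ interchanges the two columns of the determinant indexed by those variables (in the top-left block) and leaves the $x$-columns untouched, contributing a factor $-1$; simultaneously $\Delta(\calY)$ in the denominator changes sign; and $\Delta(\calY;\calX)$ is symmetric under permutations of $\calY$. Again the signs cancel. Either route works; I would present the $\calX$-case in detail and then invoke Corollary~\ref{2_cor_transposition_invariance} to get the $\calY$-case for free, remarking that the same bookkeeping applies directly.

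The main (and really only) obstacle here is purely bookkeeping: one must be careful that the ``row indexed by $x$'' structure of the matrix genuinely means the rows are permuted — not permuted-and-rescaled — when $\calX$ is permuted, so that the sign contribution is exactly that of a transposition of rows; and one must confirm that $\Delta(\calY;\calX)$ is defined as a product over all pairs (hence permutation-symmetric in each argument) rather than as an ordered/antisymmetric object. Both are immediate from the definitions given on page~\pageref{symbol_second_Delta_function} and in \eqref{2_thm_det_formula_for_Littlewood-Schur_eq}, so no genuine difficulty arises; the proof is a two-line sign count once the structure is spelled out.
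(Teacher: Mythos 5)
Your proof is correct and takes essentially the same approach as the paper: reduce to $k \ge 0$, observe that under a transposition of two variables in $\calX$ both $\Delta(\calX)$ and the determinant pick up a factor $-1$ while $\Delta(\calY;\calX)$, $\Delta(\calY)$, and $\varepsilon(\lambda)$ are invariant, so the signs cancel; then obtain symmetry in $\calY$ either via Corollary~\ref{2_cor_transposition_invariance} or by the analogous column argument. The paper phrases the $\calX$-case in terms of skew-symmetry of $\Delta(\calX)$ and of the determinant rather than tracking a single transposition, but the content is identical.
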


\begin{proof} If $k$ is negative, then $LS_\lambda^{\ast}(-\calX; \calY) = 0$. In particular, it is symmetric. Without loss of generality, we thus assume that $k \geq 0$. 

Let us first consider the determinantal formula on the right-hand side in \eqref{2_thm_det_formula_for_Littlewood-Schur_eq} from the point of view of permuting the variables in $\calX$. Given that $\Delta(\calY; \calX)$ is symmetric in $\calX$, while both $\Delta(\calX)$ and the determinant are skew-symmetric in $\calX$, the symmetry in $\calX$ follows immediately. The symmetry in $\calY$ can now be concluded from Corollary~\ref{2_cor_transposition_invariance}, or from an analogous argument.
\end{proof}

\begin{lem} [factorization] \label{2_lem_factorization} Suppose that $\calX$ and $\calY$ only contain nonzero variables. If $\lambda$ satisfies $\lambda_n \geq m \geq \lambda_{n + 1}$, so that it can be written in the form $\left( \left\langle m^n \right\rangle + \alpha \right) \cup \beta'$ for some partitions $\alpha$ and $\beta$ of lengths at most $n$ and $m$, respectively, then 
\begin{align*} 
LS_\lambda^{\ast}(-\calX; \calY) = \Delta(\calY; \calX) \schur_\alpha(-\calX) \schur_\beta(\calY)
.
\end{align*}
\end{lem}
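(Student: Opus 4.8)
The plan is to prove this by direct computation with the determinantal formula, i.e.\ to show that the function $LS_\lambda^{\ast}(-\calX;\calY)$ given by the right-hand side of \eqref{2_thm_det_formula_for_Littlewood-Schur_eq} collapses to the claimed product when $\lambda$ has the factorized form. The first step is to note that a partition $\lambda$ with $\lambda_n \geq m \geq \lambda_{n+1}$ has $(m,n)$-index $k = 0$: one has $(m,n) \in \lambda$ (since $\lambda_n \geq m$) and $(m+1,n+1)\notin\lambda$ (since $\lambda_{n+1}\leq m$), so by Definition~\ref{2_defn_index} the index is exactly $0$. Consequently $\varepsilon(\lambda) = (-1)^{|\lambda_{[n]}|}$, and since the first $n$ parts of $\lambda = (\langle m^n\rangle + \alpha)\cup\beta'$ are $(m+\alpha_1,\dots,m+\alpha_n)$, we get $\varepsilon(\lambda) = (-1)^{mn+|\alpha|}$.

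Next I would identify the two nonzero blocks of the matrix in \eqref{2_thm_det_formula_for_Littlewood-Schur_eq}. With $k=0$, its top-right block is the $n\times n$ matrix $(x^{\lambda_j+n-m-j})_{x\in\calX,\,1\le j\le n} = (x^{\alpha_j+n-j})_{x\in\calX,\,1\le j\le n}$, and its bottom-left block is the $m\times m$ matrix $(y^{\lambda'_i+m-n-i})_{1\le i\le m,\,y\in\calY}$. To pin down the exponents in the latter I need $\lambda'_i$ for $i\le m$: transposing $\langle m^n\rangle+\alpha$ puts the $m\times n$ rectangle $\langle n^m\rangle$ above the rows of $\alpha'$, so $(\langle m^n\rangle+\alpha)' = \langle n^m\rangle\cup\alpha'$ (valid as $l(\alpha)\le n$), and then the duality between union and addition of partitions gives $\lambda' = \bigl(\langle n^m\rangle\cup\alpha'\bigr)+\beta$; hence $\lambda'_i = n+\beta_i$ for $1\le i\le m$, so the bottom-left block is $(y^{\beta_i+m-i})_{1\le i\le m,\,y\in\calY}$.

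The matrix is thus of block shape $\left(\begin{smallmatrix}\ast & B\\ C & 0\end{smallmatrix}\right)$ with $B = (x^{\alpha_j+n-j})$ of size $n\times n$, $C = (y^{\beta_i+m-i})$ of size $m\times m$, and an $m\times n$ block of zeros in the corner. Applying Laplace expansion (Lemma~\ref{2_lem_laplace_expansion}) along the last $n$ columns, every minor that uses one of the bottom $m$ rows vanishes, so only the term picking the top $n$ rows survives; tracking the sign gives $\det\left(\begin{smallmatrix}\ast & B\\ C & 0\end{smallmatrix}\right) = (-1)^{mn}\det B\,\det C$. By the determinantal definition of Schur functions (Definition~\ref{1_defn_determinantal_schur_function}), $\det B = \schur_\alpha(\calX)\,\Delta(\calX)$, and since $C$ is the transpose of the corresponding Schur matrix in the variables $\calY$, $\det C = \schur_\beta(\calY)\,\Delta(\calY)$. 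Substituting into \eqref{2_thm_det_formula_for_Littlewood-Schur_eq}, the factors $\Delta(\calX)$ and $\Delta(\calY)$ cancel, leaving $\varepsilon(\lambda)(-1)^{mn}\Delta(\calY;\calX)\schur_\alpha(\calX)\schur_\beta(\calY) = (-1)^{|\alpha|}\Delta(\calY;\calX)\schur_\alpha(\calX)\schur_\beta(\calY)$. Finally, homogeneity of $\schur_\alpha$ of degree $|\alpha|$ converts $(-1)^{|\alpha|}\schur_\alpha(\calX)$ into $\schur_\alpha(-\calX)$, yielding exactly $\Delta(\calY;\calX)\schur_\alpha(-\calX)\schur_\beta(\calY)$.

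The main obstacle is entirely the sign bookkeeping: one must get the Laplace sign $(-1)^{mn}$, the prefactor sign $\varepsilon(\lambda)=(-1)^{mn+|\alpha|}$, and the homogeneity sign $(-1)^{|\alpha|}$ (turning $\schur_\alpha(\calX)$ into $\schur_\alpha(-\calX)$) to cancel correctly, and verify the conjugate identity $\lambda'_i = n+\beta_i$ for $i\le m$ carefully, including the boundary cases $l(\alpha)=n$ and $l(\beta)=m$ (where one must be sure no box of $\lambda$ escapes the intended decomposition) and $m=0$ (where the statement degenerates to $LS_\lambda(-\calX;\emptyset)=\schur_\lambda(-\calX)$). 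I would sanity-check the final sign on a small instance, e.g.\ $m=n=1$, $\alpha=\beta=(1)$, $\lambda=(2,1)$.
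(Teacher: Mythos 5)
Your proposal is correct and follows essentially the same route as the paper's own proof: both observe that $\lambda_n \ge m \ge \lambda_{n+1}$ forces the $(m,n)$-index to be $0$, plug into the determinantal formula, exploit that the off-diagonal blocks are square (so the determinant factors as $(-1)^{mn}$ times the product of two Vandermonde-type minors), recognize each minor as a Schur determinant, and then cancel the signs $\varepsilon(\lambda)(-1)^{mn}=(-1)^{|\alpha|}$ against the homogeneity of $\schur_\alpha$. The only cosmetic difference is that you spell out the conjugation identity $\lambda'_i = n + \beta_i$ for $i \le m$, whereas the paper states it without derivation.
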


\begin{proof} By definition, $\lambda_n \geq m \geq \lambda_{n + 1}$ if and only if $k = 0$. Therefore, the fact that $\alpha_j = \lambda_j - m$ and $\beta_i = \lambda'_i - n$ for all $1 \leq j \leq n$ and $1 \leq i \leq m$ implies that
\begin{align*}
LS_\lambda^{\ast}(-\calX; \calY) ={} & \varepsilon(\lambda) \frac{\Delta(\calY; \calX)}{\Delta(\calX) \Delta(\calY)} \det \begin{pmatrix} \left( (x - y)^{-1} \right)_{\substack{x \in \calX \\ y \in \calY}} & \left( x^{\alpha_j + n - j} \right)_{\substack{x \in \calX \\ 1 \leq j \leq n}} \\ \left( y^{\beta_i + m - i} \right)_{\substack{1 \leq i \leq m \\ y \in \calY}} & 0\end{pmatrix} \displaybreak[2] \\
={} & \varepsilon(\lambda) \frac{\Delta(\calY; \calX)}{\Delta(\calX) \Delta(\calY)} (-1)^{mn} \det \left( x^{\alpha_j + n - j} \right)_{\substack{x \in \calX \\ 1 \leq j \leq n}} \det \left( y^{\beta_i + m - i} \right)_{\substack{1 \leq i \leq m \\ y \in \calY}} \displaybreak[2] \\
={} & \varepsilon(\lambda) (-1)^{mn} \Delta(\calY; \calX) \schur_\alpha(\calX) \schur_\beta(\calY)
.
\end{align*}
Recalling that Schur functions are homogeneous, the result follows from the fact that $\varepsilon(\lambda) (-1)^{mn} = (-1)^{\left|\lambda_{[n]} \right|} (-1)^{mn} = (-1)^{|\alpha|}.$
\end{proof}

If $n \geq 1$ (resp.\ $m \geq 1$), define $\calX' = (x_1, \dots, x_{n - 1})$ (resp.\ $\calY' = (y_1, \dots, y_{m - 1})$).

\begin{lem} [cancellation] If $m$, $n \geq 1$, then 
\begin{align} \label{2_lem_cancellation_eq}
LS_\lambda^{\ast}(-\calX; \calY)_{{\big{\vert}}_{x_n = y_m}} = LS_\lambda^{\ast}\left(-\calX'; \calY'\right)
.
\end{align}
\end{lem}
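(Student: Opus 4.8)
The plan is to substitute $x_n = y_m$ directly into the rational expression on the right-hand side of \eqref{2_thm_det_formula_for_Littlewood-Schur_eq} and read off the result; since Lemma~\ref{2_lem_homogeneity_with_det_formula} guarantees that $LS_\lambda^\ast(-\calX;\calY)$ is an honest polynomial, this substitution is legitimate and may be computed as the limit $x_n \to y_m$. I would first dispose of the degenerate cases. If the $(m,n)$-index $k$ is negative, then $LS_\lambda^\ast(-\calX;\calY) = 0$ by definition, and the $(m-1,n-1)$-index of $\lambda$ is at most $k-1 < 0$, so $LS_\lambda^\ast(-\calX';\calY') = 0$ too. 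If $k = 0$, factorization (Lemma~\ref{2_lem_factorization}) gives $LS_\lambda^\ast(-\calX;\calY) = \Delta(\calY;\calX)\,\schur_\alpha(-\calX)\,\schur_\beta(\calY)$, which vanishes at $x_n = y_m$ because $\Delta(\calY;\calX)$ carries the factor $(y_m - x_n)$; one checks (again via factorization, or Corollary~\ref{1_cor_LS=0}) that the right-hand side of \eqref{2_lem_cancellation_eq} vanishes as well. So the substantive case is $k \ge 1$.

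Before the main computation I would record one index bookkeeping fact: for $k \ge 1$ the $(m-1,n-1)$-index of $\lambda$ equals $k-1$. This is immediate from the characterization in Definition~\ref{2_defn_index} of the index as the smallest $j$ with $\langle (m-j)^{n-j}\rangle \subset \lambda$, together with $k \le \min\{m,n\}$. It follows that the matrix in the determinantal formula for $LS_\lambda^\ast(-\calX';\calY')$ has a top-right block with $(n-1)-(k-1) = n-k$ columns and a bottom-left block with $(m-1)-(k-1) = m-k$ rows, and that its exponents $x^{\lambda_j + (n-1)-(m-1)-j}$ and $y^{\lambda'_i + (m-1)-(n-1)-i}$ agree with those in the formula for $LS_\lambda^\ast(-\calX;\calY)$. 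Hence that matrix is exactly the minor $M'$ obtained from the matrix $M$ of \eqref{2_thm_det_formula_for_Littlewood-Schur_eq} by deleting the row indexed by $x_n$ and the column indexed by $y_m$.

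The core step is to isolate the pole at $x_n = y_m$ by Laplace expansion along the $x_n$-row. The entry $M_{nm} = (x_n - y_m)^{-1}$ is the only entry of $M$ that is singular there, and $M'$ does not involve $x_n$ at all, so $\det M = (x_n - y_m)^{-1}(-1)^{n+m}\det M' + R$ with $R$ regular at $x_n = y_m$. Splitting the prefactor as
\[
\varepsilon(\lambda)\,\frac{\Delta(\calY;\calX)}{\Delta(\calX)\Delta(\calY)} = \varepsilon(\lambda)\,(y_m - x_n)\cdot\frac{\prod_{j<m}(y_j - x_n)\,\prod_{i<n}(y_m - x_i)}{\prod_{i<n}(x_i - x_n)\,\prod_{j<m}(y_j - y_m)}\cdot\frac{\Delta(\calY';\calX')}{\Delta(\calX')\Delta(\calY')}
\]
and letting $x_n \to y_m$: the term $R$ is killed by $(y_m - x_n)$, the middle quotient collapses to $(-1)^{n-1}$, and one is left with $LS_\lambda^\ast(-\calX;\calY)\big|_{x_n = y_m} = \varepsilon(\lambda)(-1)^m\,\dfrac{\Delta(\calY';\calX')}{\Delta(\calX')\Delta(\calY')}\det M'$.

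To conclude I would verify the sign identity $\varepsilon_{m,n}(\lambda)(-1)^m = \varepsilon_{m-1,n-1}(\lambda)$. Writing $k' = k-1$ one has $(n-1)-k' = n-k$, so the partition-dependent factor $(-1)^{|\lambda_{[n-k]}|}$ cancels on both sides, and the remaining exponents $mk + \tfrac{k(k-1)}{2} + m$ and $(m-1)(k-1) + \tfrac{(k-1)(k-2)}{2}$ differ by $2m + 2k - 2 \equiv 0 \pmod 2$. This matches the formula for $LS_\lambda^\ast(-\calX';\calY')$ and proves \eqref{2_lem_cancellation_eq}. I expect the index bookkeeping (and keeping the degenerate cases $k \le 0$ consistent) together with the final sign chase to be the only places requiring care; the determinant manipulation itself is routine.
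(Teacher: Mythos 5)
Your proof is correct and follows essentially the same route as the paper: dispose of $k<0$ and $k=0$ directly, observe that for $k\geq 1$ the $(m-1,n-1)$-index is $k-1$ so the candidate matrix for $LS_\lambda^{\ast}(-\calX';\calY')$ is precisely the $(n,m)$-minor of $M$, isolate the $(x_n-y_m)^{-1}$ pole via Laplace expansion along the $x_n$-row and cancel it against the factor $(y_m-x_n)$ sitting inside $\Delta(\calY;\calX)$, then verify $\varepsilon_{m,n}(\lambda)(-1)^m=\varepsilon_{m-1,n-1}(\lambda)$. One small slip worth correcting: in the $k=0$ case the right-hand side vanishes simply because its $(m-1,n-1)$-index is $-1<0$, forcing $LS_\lambda^{\ast}(-\calX';\calY')=0$ \emph{by the definition of $LS_\lambda^\ast$}; neither factorization (which needs index $0$) nor Corollary~\ref{1_cor_LS=0} (a statement about $LS_\lambda$, not the candidate $LS_\lambda^\ast$ being constructed) is the right citation there, though this does not affect the argument.
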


\begin{proof} Technically it is not permissible to set $x_n = y_m$ since this violates the condition that the elements of $\calX \cup \calY$ be pairwise distinct. However, if we let $x_n$ and $y_n$ converge towards each other in $LS_\lambda^{\ast}(-\calX; \calY)$, the fact that it is a polynomial in $\calX \cup \calY$ implies that the limit is well defined.

We remark that since $k$ is the $(m,n)$-index of $\lambda$, $k - 1$ is its $(m - 1, n - 1)$-index. Hence, the equality in \eqref{2_lem_cancellation_eq} trivially holds if $k < 0$ and it is a consequence of the factorization stated in Lemma~\ref{2_lem_factorization} if $k = 0$. If $k > 0$, we exploit that the right-hand side in \eqref{2_thm_det_formula_for_Littlewood-Schur_eq} remains unaltered if we simultaneously divide $\Delta(\calY; \calX)$ by $(x_n - y_m)$ and multiply the $n$-th row of the matrix by $(x_n - y_m)$, in order to obtain an expression that may be specialized to $x_n = y_m$. Upon specializing, all entries of the $n$-th row of the matrix vanish except the entry in the $m$-th column, which remains 1. Hence, Laplace expansion implies that its determinant is equal to the determinant of the matrix without the $n$-th row and the $m$-th column (up to the sign $(-1)^{m + n}$). These linear algebra considerations allow us to conclude that
\begin{align*}
LS_\lambda^{\ast}(-\calX; \calY)_{{\big{\vert}}_{x_n = y_m}} ={} & \varepsilon_{m,n}(\lambda) \frac{(-1) \Delta \left(\calY'; \calX'\right) \Delta\left(y_m; \calX'\right) \Delta\left(\calY'; x_n\right)}{\Delta(\calX) \Delta(\calY)} (-1)^{m + n} \\
& \times \det \begin{pmatrix} \left( (x - y)^{-1} \right)_{\substack{x \in \calX' \\ y \in \calY'}} & \left( x^{\lambda_j + n - m - j} \right)_{\substack{x \in \calX' \\ 1 \leq j \leq n - k}} \\ \left( y^{\lambda'_i + m - n - i} \right)_{\substack{1 \leq i \leq m - k \\ y \in \calY'}} & 0\end{pmatrix} \displaybreak[2] \\
={} & \varepsilon_{m,n}(\lambda) \frac{(-1) \Delta \left(\calY'; \calX'\right) (-1)^{n - 1}}{\Delta\left(\calX'\right) \Delta\left(\calY'\right)} (-1)^{m + n} \\
& \times \det \begin{pmatrix} \left( (x - y)^{-1} \right)_{\substack{x \in \calX' \\ y \in \calY'}} & \left( x^{\lambda_j + n - m - j} \right)_{\substack{x \in \calX' \\ 1 \leq j \leq n - k}} \\ \left( y^{\lambda'_i + m - n - i} \right)_{\substack{1 \leq i \leq m - k \\ y \in \calY'}} & 0\end{pmatrix} \displaybreak[2] \\
={} & (-1)^m \varepsilon_{m,n}(\lambda) \varepsilon_{m-1,n-1}(\lambda) LS_\lambda^{\ast}\left(-\calX'; \calY'\right)
\end{align*}
where the subscripts indicate with respect to which parameters the sign of $\lambda$ is taken. The equality in \eqref{2_lem_cancellation_eq} is a consequence of the fact that the signs cancel.
\end{proof}

According to Theorem~\ref{2_thm_5_properties_of_LS}, the determinantal formula for Littlewood-Schur functions follows if $LS_\lambda^{\ast}(-\calX; \calY)$ also satisfies the property of restriction.

\begin{lem} [restriction] If $n \geq 1$, then
\begin{align} \label{2_lem_restriction_eq_x}
LS_\lambda^{\ast}(-\calX; \calY)_{{\big{\vert}}_{x_n = 0}} ={} & LS_\lambda^{\ast}\left(-\calX'; \calY\right)
.
\intertext{If $m \geq 1$, then}
LS_\lambda^{\ast}(-\calX; \calY)_{{\big{\vert}}_{y_m = 0}} ={} & LS_\lambda^{\ast}\left(-\calX; \calY'\right)
.
\end{align}
\end{lem}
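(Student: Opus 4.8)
Since $LS_\lambda^{\ast}(-\calX;\calY)$ is a polynomial in $\calX\cup\calY$ (Lemma~\ref{2_lem_homogeneity_with_det_formula}), the plan is to verify the first equality in \eqref{2_lem_restriction_eq_x} for generic values of $x_1,\dots,x_{n-1},y_1,\dots,y_m$ --- all nonzero, pairwise distinct, and distinct from $0$ --- and then invoke Zariski density. The second equality in \eqref{2_lem_restriction_eq_x} will then follow from the transposition invariance $LS_\lambda^{\ast}(-\calX;\calY)=LS_{\lambda'}^{\ast}(\calY;-\calX)$ of Corollary~\ref{2_cor_transposition_invariance}, so I shall concentrate on setting $x_n=0$. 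I would first dispose of two degenerate cases by the $(m,n)$-index $k$ of $\lambda$. If $k<0$, then the $(m,n-1)$-index of $\lambda$ is also negative and both sides of \eqref{2_lem_restriction_eq_x} vanish. If $k=0$, then $\lambda=(\langle m^n\rangle+\alpha)\cup\beta'$ satisfies the hypothesis of factorization, and Lemma~\ref{2_lem_factorization} reduces the claim to the restriction property of Schur functions for $\schur_\alpha$ together with $\schur_{\beta+\langle 1^m\rangle}(\calY)=\schur_\beta(\calY)\prod_{y\in\calY}y$ from \eqref{1_eq_schur_times_product_of_all_variables}; here one separates $\lambda_n=m$ (the $(m,n-1)$-index stays $0$) from $\lambda_n>m$ (the index becomes $-1$ and both sides vanish because $l(\alpha)=n$).

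The substantive case is $k\ge 1$, which I would treat directly with the determinantal formula. Write $e_j:=\lambda_j+n-m-j$ for the $\calX$-power exponents in \eqref{2_thm_det_formula_for_Littlewood-Schur_eq}; these are strictly decreasing and $e_{n-k}=\lambda_{n-k}+k-m\ge 0$ because $(m-k,n-k)\in\lambda$. Substituting $x_n=0$, the prefactor splits as
\[
\frac{\Delta(\calY;\calX)}{\Delta(\calX)\Delta(\calY)}\Big|_{x_n=0}=\frac{\prod_{y\in\calY}y}{\prod_{x\in\calX'}x}\cdot\frac{\Delta(\calY;\calX')}{\Delta(\calX')\Delta(\calY)},
\]
and the row of the matrix indexed by $x_n$ becomes $\big((-y^{-1})_{y\in\calY}\,\big|\,(0^{e_j})_{1\le j\le n-k}\big)$. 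Its $\calX$-power block is the zero vector when $\lambda_{n-k}>m-k$ (then $e_{n-k}\ge 1$) and is the last standard basis vector (single entry $1$ in the last slot) when $\lambda_{n-k}=m-k$ (then $e_{n-k}=0$); these alternatives match the $(m,n-1)$-index $k'$ being $k-1$, respectively $k$.

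If $\lambda_{n-k}=m-k$ (so $k'=k$): the last $\calX$-power column of the matrix is $(1,\dots,1,0,\dots,0)$ (ones in the $\calX$-rows), and subtracting the $x_n$-row from every other $\calX$-row zeroes that column everywhere except in the $x_n$-row; Laplace expansion along it then contributes a sign $(-1)^{m-k}$ and a minor which, after dividing the $\calY$-columns by $y$ and multiplying the $\calX'$-rows by $x$, is exactly the matrix of $LS_\lambda^{\ast}(-\calX';\calY)$. If $\lambda_{n-k}>m-k$ (so $k'=k-1$): the two matrices have the same size, and the plan is to (i) multiply each $\calY$-column by $y$, (ii) multiply each $\calX'$-row by $x^{-1}$, (iii) negate the $x_n$-row and move it to the bottom (past the $m-k$ $\calY$-power rows), and finally (iv) add $-x^{-1}$ times this last row to each $\calX'$-row. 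The key point of step (iv) is the partial-fraction identity $\tfrac{y}{x(x-y)}=\tfrac{1}{x-y}-\tfrac{1}{x}$: after steps (i)--(iii) the $\calY$-entry of an $\calX'$-row equals $\tfrac{y}{x(x-y)}$, whose discrepancy from the target $\tfrac{1}{x-y}$ is the $y$-free term $-\tfrac{1}{x}$, which step (iv) removes; one also checks, from $(m-k+1,n-k)\in\lambda$ and $(m-k+1,n+1-k)\notin\lambda$, that $\lambda'_{m-k+1}=n-k$, so that the relocated $x_n$-row carries exactly the exponent $0$ needed to serve as the new $\calY$-power row. In either case one is left with the matrix of $LS_\lambda^{\ast}(-\calX';\calY)$, and it remains to check that the accumulated scalars combine with $\varepsilon_{m,n}(\lambda)$ to give $\varepsilon_{m,n-1}(\lambda)$; this is a direct computation using $|\lambda_{[n-1-k']}|=|\lambda_{[n-k]}|$ when $k'=k-1$ and $|\lambda_{[n-1-k']}|=|\lambda_{[n-k]}|-(m-k)$ when $k'=k$.

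The hard part will be the sub-case $\lambda_{n-k}>m-k$: because the matrices for parameters $(m,n)$ and $(m,n-1)$ are then of equal size, there is no dimension-dropping Laplace step, and a naive attempt using only row and column rescalings fails, since the $\calY$-entries of the $\calX'$-rows pick up a spurious factor depending on $y$. It is precisely the partial-fraction identity above that shows this factor contributes a $y$-independent term that can be absorbed by a single row addition; carrying out this manipulation correctly, keeping track of which power exponents in the $(m,n-1)$-matrix are matched (which is where the combinatorial input $\lambda'_{m-k+1}=n-k$ enters), and then reconciling the $\varepsilon(\lambda)$ signs across the two cases, is where the real care is needed.
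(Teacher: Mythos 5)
Your proof is correct and follows the paper's framework---determinantal formula, reduction to $x_n=0$ via transposition invariance, case split on the $(m,n)$- and $(m,n-1)$-indices, with $k<0$ trivial, $k=0$ via factorization, and the sub-case $k'=k-1$ ultimately resting on the matrix-rescaling identity of Lemma~\ref{2_lem_alternative_matrix_for_det_formula_LS}---but your treatment of the sub-case $k'=k$ (equivalently $\lambda_{n-k}=m-k$) is genuinely simpler than the paper's. The paper rescales first (multiplying the $\calY$-columns by $y_j$ and the $\calX'$-rows by $x_i^{-1}$), which turns the last column into $\left(x_1^{-1},\dots,x_{n-1}^{-1},1,0,\dots,0\right)$; it then Laplace-expands along that column to get a sum of $n$ minors, expands each minor again, reorganizes, and reduces everything to the vanishing of a degree-$k^2$ polynomial $\text{P}_k$, established by a separate induction on $k$. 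You instead leave the column unscaled, observe that it is $\left(1,\dots,1,1,0,\dots,0\right)$ with a $1$ in every $\calX$-row including the $x_n$-row (the exponent $\lambda_{n-k}+n-m-(n-k)$ vanishes exactly when $\lambda_{n-k}=m-k$, and $0^0=1$, while the earlier exponents are strictly positive), and kill all $\calX'$-entries of this column by a single row subtraction; Laplace expansion then yields a single term, and rescaling the one surviving minor already produces the matrix of $LS_\lambda^{\ast}(-\calX';\calY)$, bypassing $\text{P}_k=0$ and its induction entirely. For $k'=k-1$ your steps (i)--(iv) unwind the proof of Lemma~\ref{2_lem_alternative_matrix_for_det_formula_LS} at $r=1$ rather than citing it; the partial-fraction identity you highlight is precisely the computational core of that lemma's proof, and $\lambda'_{m-k+1}=n-k$ is exactly the condition that makes its hypothesis hold. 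Two pen-slips do not affect the argument: in the $k'=k$ rescaling the directions should be reversed (multiply $\calY$-columns by $y_j$ and divide $\calX'$-rows by $x_i$, so that $x_i/(y_j(x_i-y_j))$ becomes $1/(x_i-y_j)$), and in step (iv) the multiple of the last row should be $+x_i^{-1}$, not $-x_i^{-1}$, as your own remark about ``removing'' the $y$-free term $-1/x$ confirms; the sign bookkeeping in $\varepsilon_{m,n}(\lambda)$ versus $\varepsilon_{m,n-1}(\lambda)$ that you describe at the end is nonetheless carried out correctly.
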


\begin{proof} Owing to the equality in~\eqref{2_cor_transposition_invariance_eq}, it is sufficient to show one of the claims. We study the effect of specializing $x_n$ to 0.

The $(m, n - 1)$-index of $\lambda$ is either $k - 1$ or $k$. Hence, the equality in \eqref{2_lem_restriction_eq_x} holds if $k$ is negative. If $k = 0$, then the statement follows from the factorization stated in Lemma~\ref{2_lem_factorization}: given that Schur functions satisfy the property of restriction,
\begin{align*}
LS_\lambda^{\ast}(-\calX; \calY)_{{\big{\vert}}_{x_n = 0}} ={} & \left[ \Delta(\calY; \calX) \schur_{\lambda_{[n]} - \langle m^n \rangle}(-\calX) \schur_{\lambda'_{[m]} - \langle n^m \rangle
}(\calY) \right]_{{\big{\vert}}_{x_n = 0}} \displaybreak[2] \\
={} & \Delta\left(\calY; \calX' \right) \left( \prod_{y \in \calY} y \right) \schur_{\lambda_{[n]} - \langle m^n \rangle}\left(-\calX' \right) \schur_{\lambda'_{[m]} - \langle n^m \rangle 
}(\calY) \displaybreak[2] \\
={} & \Delta\left(\calY; \calX' \right) \schur_{\lambda_{[n]} - \langle m^n \rangle}\left(-\calX' \right) \schur_{\lambda'_{[m]} - \left\langle (n - 1)^m \right\rangle 
}(\calY)
.
\end{align*}
We remark that the $(m, n - 1)$-index of $\lambda$ is 0 if and only if $(m + 1, n) \not\in \lambda$ if and only if $\lambda_{[n]} - \langle m^n \rangle = \lambda_{[n - 1]} - \langle m^{n - 1} \rangle$, so that the equation in \eqref{2_lem_restriction_eq_x} holds in case the $(m, n - 1)$-index of $\lambda$ is 0. Otherwise, the $(m, n - 1)$-index of $\lambda$ is negative and $\schur_{\lambda_{[n]} - \langle m^n \rangle}\left(-\calX' \right) = 0$, which means that both sides of the equation in \eqref{2_lem_restriction_eq_x} vanish.

If $k > 0$, we distinguish between two cases, namely whether the $(m, n - 1)$-index of $\lambda$ is equal to $k$ or $k - 1$. We note for later reference that the index remains unchanged if and only if $(m - k + 1, n - k) \not\in \lambda$, while it decreases if and only if $(m - k + 1, n - k) \in \lambda$ or $k = n$. In both cases,
\begin{align}
\begin{split} \notag
LS_\lambda^{\ast}(-\calX; \calY)_{{\big{\vert}}_{x_n = 0}} ={} & \varepsilon_{m,n}(\lambda) \frac{\Delta\left(\calY; \calX'\right)}{\Delta \left( \calX' \right) \Delta(\calY)} \cdot \frac{\prod_{y \in \calY} y}{\prod_{x \in \calX'} x} \\
& \times \det \begin{pmatrix} \left( (x_i - y_j)^{-1} \right)_{\substack{1 \leq i \leq n-1 \\ 1 \leq j \leq m}} & \left( x_i^{\lambda_j + n - m - j} \right)_{\substack{1 \leq i \leq n - 1 \\ 1 \leq j \leq n - k}} \\
 \left( -y_j^{-1} \right)_{1 \leq j \leq m} & \left( 0^{\lambda_j + n - m - j} \right)_{1 \leq j \leq n - k} \\
\left( y_j^{\lambda'_i + m - n - i} \right)_{\substack{1 \leq i \leq m - k \\ 1 \leq j \leq m}} & 0 
\end{pmatrix}
.
\end{split}
\intertext{We use the additional factor in front of the determinant to multiply the $j$-th column by $y_j$ for each $1 \leq j \leq m$ and the $i$-th row by $x_i^{-1}$ for each $1 \leq i \leq n - 1$:}
\begin{split} \label{2_in_proof_restriction_k_positive}
LS_\lambda^{\ast}(-\calX; \calY)_{{\big{\vert}}_{x_n = 0}} ={} & \varepsilon_{m,n}(\lambda) \frac{\Delta\left(\calY; \calX'\right)}{\Delta \left( \calX' \right) \Delta(\calY)} \\
& \times \det \begin{pmatrix} \left( x_i^{-1}y_j(x_i - y_j)^{-1} \right)_{\substack{1 \leq i \leq n-1 \\ 1 \leq j \leq m}} & \left( x_i^{\lambda_j + (n - 1) - m - j} \right)_{\substack{1 \leq i \leq n - 1 \\ 1 \leq j \leq n - k}} \\
 \left( -1 \right)_{1 \leq j \leq m} & \left( 0^{\lambda_j + n - m - j} \right)_{1 \leq j \leq n - k} \\
\left( y_j^{\lambda'_i + m - (n - 1) - i} \right)_{\substack{1 \leq i \leq m - k \\ 1 \leq j \leq m}} & 0 
\end{pmatrix}
.
\end{split}
\end{align}
First suppose that the $(m, n - 1)$-index of $\lambda$ is $k > 0$. This entails that $k \leq n - 1$ and $(m - k + 1, n - k) \not\in \lambda$; however, $(m - k, n - k) \in \lambda$ by the definition of $k$ (unless $m = k$). Therefore,
\begin{align*}
\varepsilon_{m,n}(\lambda) = (-1)^{\left| \lambda_{[n - k]} \right|} (-1)^{mk} (-1)^{k(k - 1/2)} = (-1)^{\lambda_{n - k}} \varepsilon_{m,n-1}(\lambda) = (-1)^{m - k} \varepsilon_{m,n-1}(\lambda)
.
\end{align*}
In consequence, the result follows if the following determinant is of the required form: \label{2_in_proof_defn_D}
\begin{align*}
\text{D} \defeq {} & (-1)^{m - k} \det \begin{pmatrix} \left( x_i^{-1}y_j(x_i - y_j)^{-1} \right)_{\substack{1 \leq i \leq n-1 \\ 1 \leq j \leq m}} & \left( x_i^{\lambda_j + (n - 1) - m - j} \right)_{\substack{1 \leq i \leq n - 1 \\ 1 \leq j \leq n - k}} \\
 \left( -1 \right)_{1 \leq j \leq m} & \left( 0^{\lambda_j + n - m - j} \right)_{1 \leq j \leq n - k} \\
\left( y_j^{\lambda'_i + m - (n - 1) - i} \right)_{\substack{1 \leq i \leq m - k \\ 1 \leq j \leq m}} & 0 
\end{pmatrix}
.
\end{align*}
By definition, $\lambda_{n - k} + (n - 1) - m - (n - k) = m - k + (n - 1) - m - (n - k) = -1$. Hence, the last column of the matrix is equal to $\left(x_1^{-1}, \dots, x_{n - 1}^{-1}, 1, 0 \dots, 0 \right)$ transposed. Expanding the determinant along this last column with the help of Lemma~\ref{2_lem_laplace_expansion} thus gives
\begin{align*}
\text{D} ={} & (-1)^{m - k} \sum_{p = 1}^{n - 1} (-1)^{m + n - k + p} x_p^{-1} \\
& \hspace{15pt} \times \det \begin{pmatrix} \left( x_i^{-1}y_j(x_i - y_j)^{-1} \right)_{\substack{1 \leq i \leq n-1, i \neq p \\ 1 \leq j \leq m}} & \left( x_i^{\lambda_j + (n - 1) - m - j} \right)_{\substack{1 \leq i \leq n - 1, i \neq p \\ 1 \leq j \leq n - 1 - k}} \\
 \left( -1 \right)_{1 \leq j \leq m} & \left( 0 \right)_{1 \leq j \leq n - 1 - k} \\
\left( y_j^{\lambda'_i + m - (n - 1) - i} \right)_{\substack{1 \leq i \leq m - k \\ 1 \leq j \leq m}} & 0 
\end{pmatrix} \\
& + (-1)^{m - k} \sum_{p = 1}^{n - 1} \frac{1}{n - 1} (-1)^{m + n - k + n} \\
& \hspace{15pt} \times \det \begin{pmatrix} \left( x_i^{-1}y_j(x_i - y_j)^{-1} \right)_{\substack{1 \leq i \leq n-1 \\ 1 \leq j \leq m}} & \left( x_i^{\lambda_j + (n - 1) - m - j} \right)_{\substack{1 \leq i \leq n - 1 \\ 1 \leq j \leq n - 1 - k}} \\
\left( y_j^{\lambda'_i + m - (n - 1) - i} \right)_{\substack{1 \leq i \leq m - k \\ 1 \leq j \leq m}} & 0 
\end{pmatrix}
\intertext{where we have artificially added a sum over $p$ to the second term, which will be useful for the next step. We expand the determinants in the first sum along the $(n - 1)$-th row and the determinants in the second sum along the $p$-th row:}
\text{D} ={} & \sum_{p = 1}^{n - 1} (-1)^{n + p} x_p^{-1} \sum_{q = 1}^{m} (-1)^{1 + q + n - 1} \\
& \hspace{15pt} \times \det \begin{pmatrix} \left( x_i^{-1}y_j(x_i - y_j)^{-1} \right)_{\substack{1 \leq i \leq n-1, i \neq p \\ 1 \leq j \leq m, j \neq q}} & \left( x_i^{\lambda_j + (n - 1) - m - j} \right)_{\substack{1 \leq i \leq n - 1, i \neq p \\ 1 \leq j \leq n - 1 - k}} \\
\left( y_j^{\lambda'_i + m - (n - 1) - i} \right)_{\substack{1 \leq i \leq m - k \\ 1 \leq j \leq m, j \neq q}} & 0 
\end{pmatrix} \displaybreak[2] \\
& + \sum_{p = 1}^{n - 1} \frac{1}{n-1} \sum_{q = 1}^m (-1)^{q + p} x_p^{-1}y_q(x_p - y_q)^{-1} \\
& \hspace{15pt} \times \det \begin{pmatrix} \left( x_i^{-1}y_j(x_i - y_j)^{-1} \right)_{\substack{1 \leq i \leq n-1, i \neq p \\ 1 \leq j \leq m, j \neq q}} & \left( x_i^{\lambda_j + (n - 1) - m - j} \right)_{\substack{1 \leq i \leq n - 1, i \neq p \\ 1 \leq j \leq n - 1 - k}} \\
\left( y_j^{\lambda'_i + m - (n - 1) - i} \right)_{\substack{1 \leq i \leq m - k \\ 1 \leq j \leq m, j \neq q}} & 0
\end{pmatrix} \\
& + \sum_{p = 1}^{n - 1} \frac{1}{n - 1} \sum_{r = 1}^{n - 1- k} (-1)^{p + m + r} x_p^{\lambda_r + (n - 1) - m - r} \\
& \hspace{15pt} \times \det \begin{pmatrix} \left( x_i^{-1}y_j(x_i - y_j)^{-1} \right)_{\substack{1 \leq i \leq n-1, i \neq p \\ 1 \leq j \leq m}} & \left( x_i^{\lambda_j + (n - 1) - m - j} \right)_{\substack{1 \leq i \leq n - 1, i \neq p \\ 1 \leq j \leq n - 1 - k, j \neq r}} \\
\left( y_j^{\lambda'_i + m - (n - 1) - i} \right)_{\substack{1 \leq i \leq m - k \\ 1 \leq j \leq m}} & 0
\end{pmatrix}
.
\intertext{As that the matrices in question are identical, the sum of the first two terms simplifies to:}
\text{D} ={} & \sum_{p = 1}^{n - 1} \sum_{q = 1}^{m} \left[ (x_p - y_q)^{-1} - \frac{n - 2}{n - 1} x_p^{-1} y_q (x_p - y_q)^{-1} \right] (-1)^{p + q} \\
& \hspace{15pt} \times \det \begin{pmatrix} \left( x_i^{-1}y_j(x_i - y_j)^{-1} \right)_{\substack{1 \leq i \leq n-1, i \neq p \\ 1 \leq j \leq m, j \neq q}} & \left( x_i^{\lambda_j + (n - 1) - m - j} \right)_{\substack{1 \leq i \leq n - 1, i \neq p \\ 1 \leq j \leq n - 1 - k}} \\
\left( y_j^{\lambda'_i + m - (n - 1) - i} \right)_{\substack{1 \leq i \leq m - k \\ 1 \leq j \leq m, j \neq q}} & 0 
\end{pmatrix} 
\displaybreak[2] \\
& + \sum_{p = 1}^{n - 1} \frac{1}{n - 1} \sum_{r = 1}^{n - 1- k} (-1)^{p + m + r} x_p^{\lambda_r + (n - 1) - m - r} \\
& \hspace{15pt} \times \det \begin{pmatrix} \left( x_i^{-1}y_j(x_i - y_j)^{-1} \right)_{\substack{1 \leq i \leq n-1, i \neq p \\ 1 \leq j \leq m}} & \left( x_i^{\lambda_j + (n - 1) - m - j} \right)_{\substack{1 \leq i \leq n - 1, i \neq p \\ 1 \leq j \leq n - 1 - k, j \neq r}} \\
\left( y_j^{\lambda'_i + m - (n - 1) - i} \right)_{\substack{1 \leq i \leq m - k \\ 1 \leq j \leq m}} & 0 \end{pmatrix}
.
\intertext{Applying the Leibniz formula for determinants results in}
\text{D} ={} & \sum_{p = 1}^{n - 1} \sum_{q = 1}^m \sum_{\substack{\sigma \in S_{m + n - 1 - k}: \\ \forall i \geq n \; \sigma(i) \leq m \\ \sigma(p) = q}} \left[ \varepsilon(\sigma) (-1)^{p + \sigma(p)} \right] (x_p - y_{\sigma(p)})^{-1} (-1)^{p + \sigma(p)} \\
& \hspace{15pt} \times \prod_{\substack{i \leq n - 1: \\ \sigma(i) \leq m \\ i \neq p}} x_i^{-1} y_{\sigma(i)} (x_i - y_{\sigma(i)})^{-1}  \prod_{\substack{i \leq n - 1: \\ \sigma(i) > m}} x_i^{\lambda_{\sigma(i) - m} + (n - 1)- \sigma(i)} \prod_{i \geq n} y_{\sigma(i)}^{\lambda'_{i - n + 1} + m - i} \\
& + \sum_{p = 1}^{n - 1} \sum_{q = 1}^{m}  \sum_{\substack{\sigma \in S_{m + n - 1 - k}: \\ \forall i \geq n \; \sigma(i) \leq m \\ \sigma(p) = q}} \left[ \varepsilon(\sigma) (-1)^{p + \sigma(p)} \right] (-1)^{p + \sigma(p) + 1} \frac{n - 2}{n - 1} x_p^{-1} y_{\sigma(p)} (x_p - y_{\sigma(p)})^{-1} \\
& \hspace{15pt} \times \prod_{\substack{i \leq n - 1: \\ \sigma(i) \leq m \\ i \neq p}} x_i^{-1} y_{\sigma(i)} (x_i - y_{\sigma(i)})^{-1}  \prod_{\substack{i \leq n - 1: \\ \sigma(i) > m}} x_i^{\lambda_{\sigma(i) - m} + (n - 1)- \sigma(i)} \prod_{i \geq n} y_{\sigma(i)}^{\lambda'_{i - n + 1} + m - i}
\displaybreak[2] \\
& + \sum_{p = 1}^{n - 1} \sum_{r = 1}^{n - 1 - k} \sum_{\makebox[48pt]{$\substack{\sigma \in S_{m + n - 1 - k}: \\ \forall i \geq n \; \sigma(i) \leq m \\ \sigma(p) = m + r}$}} \left[ \varepsilon(\sigma) (-1)^{p + m + \sigma(p)} \right] \frac{1}{n - 1} (-1)^{p + m + \sigma(p)} x_p^{\lambda_{\sigma(p) - m} + (n - 1) - \sigma(p)} \\
& \hspace{15pt} \times \prod_{\substack{i \leq n - 1: \\ \sigma(i) \leq m}} x_i^{-1} y_{\sigma(i)} (x_i - y_{\sigma(i)})^{-1}  \prod_{\substack{i \leq n - 1: \\ \sigma(i) > m \\ i \neq p}} x_i^{\lambda_{\sigma(i) - m} + (n - 1) - \sigma(i)} \prod_{i \geq n} y_{\sigma(i)}^{\lambda'_{i - n + 1} + m - i} 
.
\intertext{We reformulate each of the three terms by means of elementary algebraic manipulations:}
\text{D} ={} & \sum_{\substack{\sigma \in S_{m + n - 1 - k}: \\ \forall i \geq n \; \sigma(i) \leq m}} \varepsilon(\sigma)  \left( \sum_{\substack{1 \leq p \leq n - 1: \\ \sigma(p) \leq m}} x_p y_{\sigma(p)} \right) \\
& \hspace{15pt} \times \prod_{\substack{i \leq n - 1: \\ \sigma(i) \leq m}} x_i^{-1} y_{\sigma(i)} (x_i - y_{\sigma(i)})^{-1}  \prod_{\substack{i \leq n - 1: \\ \sigma(i) > m}} x_i^{\lambda_{\sigma(i) - m} + (n - 1)- \sigma(i)} \prod_{i \geq n} y_{\sigma(i)}^{\lambda'_{i - n + 1} + m - i} 
\displaybreak[2] \\
& - \sum_{\substack{\sigma \in S_{m + n - 1 - k}: \\ \forall i \geq n \; \sigma(i) \leq m}} \varepsilon(\sigma) \left( \sum_{\substack{1 \leq p \leq n - 1: \\ \sigma(p) \leq m}} 1 \right) \frac{n - 2}{n - 1} \\
& \hspace{15pt} \times \prod_{\substack{i \leq n - 1: \\ \sigma(i) \leq m}} x_i^{-1} y_{\sigma(i)} (x_i - y_{\sigma(i)})^{-1}  \prod_{\substack{i \leq n - 1: \\ \sigma(i) > m}} x_i^{\lambda_{\sigma(i) - m} + (n - 1)- \sigma(i)} \prod_{i \geq n} y_{\sigma(i)}^{\lambda'_{i - n + 1} + m - i}
\displaybreak[2] \\
& + \sum_{\substack{\sigma \in S_{m + n - 1 - k}: \\ \forall i \geq n \; \sigma(i) \leq m}} \varepsilon(\sigma) \left( \sum_{\substack{1 \leq p \leq n - 1: \\ \sigma(p) > m}} 1 \right)  \frac{1}{n - 1} \\
& \hspace{15pt} \times \prod_{\substack{i \leq n - 1: \\ \sigma(i) \leq m}} x_i^{-1} y_{\sigma(i)} (x_i - y_{\sigma(i)})^{-1}  \prod_{\substack{i \leq n - 1: \\ \sigma(i) > m}} x_i^{\lambda_{\sigma(i) - m} + (n - 1) - \sigma(i)} \prod_{i \geq n} y_{\sigma(i)}^{\lambda'_{i - n + 1} + m - i} 
.
\intertext{These manipulations make it obvious that summing the three terms results in}
\text{D} ={} & \sum_{\substack{\sigma \in S_{m + n - 1 - k}: \\ \forall i \geq n \; \sigma(i) \leq m}} \varepsilon(\sigma)  \left( \sum_{\substack{1 \leq p \leq n - 1: \\ \sigma(p) \leq m}} x_p y_{\sigma(p)} - \frac{k(n - 2)}{n - 1} + \frac{n - 1 - k}{n - 1}\right) \\
& \hspace{15pt} \times \prod_{\substack{i \leq n - 1: \\ \sigma(i) \leq m}} x_i^{-1} y_{\sigma(i)} (x_i - y_{\sigma(i)})^{-1}  \prod_{\substack{i \leq n - 1: \\ \sigma(i) > m}} x_i^{\lambda_{\sigma(i) - m} + (n - 1)- \sigma(i)} \prod_{i \geq n} y_{\sigma(i)}^{\lambda'_{i - n + 1} + m - i} 
.
\intertext{Another application of the Leibniz formula for determinants allows us to reformulate this expression as}
\text{D} ={} & \sum_{\substack{\sigma \in S_{m + n - 1 - k}: \\ \forall i \geq n \; \sigma(i) \leq m}} \varepsilon(\sigma)  \left( \sum_{\substack{1 \leq p \leq n - 1: \\ \sigma(p) \leq m}} x_p y_{\sigma(p)} - (k - 1) - \prod_{\substack{i \leq n - 1: \\ \sigma(i) \leq m}} x_i y_{\sigma(i)}^{-1} \right) \\
& \hspace{15pt} \times \prod_{\substack{i \leq n - 1: \\ \sigma(i) \leq m}} x_i^{-1} y_{\sigma(i)} (x_i - y_{\sigma(i)})^{-1}  \prod_{\substack{i \leq n - 1: \\ \sigma(i) > m}} x_i^{\lambda_{\sigma(i) - m} + (n - 1)- \sigma(i)} \prod_{i \geq n} y_{\sigma(i)}^{\lambda'_{i - n + 1} + m - i} \\
& + \det \begin{pmatrix} \left( (x_i - y_j)^{-1} \right)_{\substack{1 \leq i\leq n - 1 \\ 1 \leq j \leq m}} & \left( x_i^{\lambda_j + (n - 1) - m - j} \right)_{\substack{1 \leq i \leq n - 1 \\ 1 \leq j \leq n - 1 - k}} \\ \left( y_j^{\lambda'_i + m - (n - 1) - i} \right)_{\substack{1 \leq i \leq m - k \\ 1 \leq j \leq m}} & 0 \end{pmatrix}
.
\end{align*}
Going back to the definition of D on page \pageref{2_in_proof_defn_D}, we see that property of restriction holds (in case the $(m, n - 1)$ index of $\lambda$ is $k > 0$) if and only if the above sum over $\sigma$ vanishes. In fact, we will show that each partial sum over all partitions $\sigma$ so that $\sigma(i) = p_i$ if $i \geq n$ and $\sigma(i) = q_{\sigma(i)}$ if $\sigma(i) > m$ for some fixed sequences $(p_n, \dots, p_{n - 1 + m - k})$ and $(q_{m + 1}, \dots, q_{m + n - 1 - k})$ vanishes. This is equivalent to the claim that 
\begin{align*}
\sum_{\tau \in S_k} \varepsilon(\tau) \left( \sum_{1 \leq i \leq k} x_i y_{\tau(i)}^{-1} - (k - 1) - \prod_{1 \leq i \leq k} x_i y_i^{-1} \right) \left( \prod_{1 \leq i \leq k} x_i y_i^{-1} (x_i - y_{\tau(i)}) \right)^{-1} = 0
.
\end{align*}
By reducing the fractions to a common denominator, we infer that this equality holds if and only if the following homogeneous polynomial of degree $k(k - 1) + k = k^2$ vanishes:
\begin{align*}
\text{P}_k \defeq{} & \sum_{\tau \in S_k} \varepsilon(\tau) \left[ \prod_{1 \leq i \leq k} \prod_{\substack{1 \leq j \leq k: \\ j \neq \tau(i)}} (x_i - y_j) \right] \\
& \times \left[ \left( \sum_{1 \leq i \leq k} x_i \prod_{\substack{1 \leq j \leq k:\\ j \neq \tau(i)}} y_j \right) - (k - 1) \left( \prod_{1 \leq j \leq k} y_j \right)  - \left( \prod_{1 \leq i \leq k} x_i \right) \right]
.
\end{align*}
We show that $P_k = 0$ for all $k \geq 2$ by induction on $k$. For the base case $k = 2$, the formula reads
\begin{align*}
\text{P}_2 ={} & \sum_{\tau \in S_2} \varepsilon(\tau) \left[ \prod_{1 \leq i \leq 2} \prod_{\substack{1 \leq j \leq 2: \\ j \neq \tau(i)}} (x_i - y_j) \right] \\
& \times \left[ \left( \sum_{1 \leq i \leq 2} x_i \prod_{\substack{1 \leq j \leq 2:\\ j \neq \tau(i)}} y_j \right) - \left( \prod_{1 \leq j \leq 2} y_j \right)  - \left( \prod_{1 \leq i \leq 2} x_i \right) \right]
\displaybreak[2] \\
={} & (x_1 - y_2)(x_2 - y_1) \left[ x_1 y_2 + x_2 y_1 - y_1 y_2 - x_1 x_2 \right] \\
& - (x_1 - y_1) (x_2 - y_2) \left[ x_1 y_1 + x_2 y_2 - y_1 y_2 - x_1 x_2 \right]
\displaybreak[2] \\
={} & - (x_1 - y_2)(x_2 - y_1)(x_1 - y_1)(x_2 - y_2) + (x_1 - y_1) (x_2 - y_2)(x_1 - y_2)(x_2 - y_1) \\
={} & 0
.
\end{align*}
For the induction step, we first set $x_p = t = y_q$ for some $1 \leq p, q \leq k + 1$ in P$_{k + 1}$. Given that the factor 
$$\prod_{1 \leq i \leq k + 1} \prod_{\substack{1 \leq j \leq k + 1: \\ j \neq \tau(i)}} (x_i - y_j)$$
vanishes unless $\tau(p) = q$, the formula simplifies to
\begin{align*}
{\text{P}_{k + 1}}_{{\big{\vert}}_{x_p = y_q}} ={} & \prod_{\substack{1 \leq i \leq k + 1: \\ i \neq p}} (x_i - t) \prod_{\substack{1 \leq j \leq k + 1: \\ j \neq q}} (t - y_j) \sum_{\substack{\sigma \in S_{k + 1}: \\ \sigma(p) = q}} \varepsilon(\sigma) \left[ \prod_{\substack{1 \leq i \leq k + 1: \\ i \neq p}} \prod_{\substack{1 \leq j \leq k + 1: \\ j \neq \sigma(i) \\ j \neq q}} (x_i - y_j) \right] \\
\times & \Bigg[ \left( t \prod_{\substack{1 \leq j \leq k + 1: \\ j \neq q}} y_j + \sum_{\substack{1 \leq i \leq k + 1: \\ i \neq p}} x_i t \prod_{\substack{1 \leq j \leq k + 1: \\ j \neq \sigma(i) \\ j \neq q}} y_j \right) \\
& \hspace{20pt} - k \left( t \prod_{\substack{1 \leq j \leq k + 1: \\ j \neq q}} y_j \right) - \left( t \prod_{\substack{1 \leq i \leq k + 1: \\ i \neq p}} x_i \right) \Bigg]
\displaybreak[2] \\
={} & t \prod_{\makebox[36pt]{$\substack{1 \leq i \leq k + 1: \\ i \neq p}$}} (x_i - t) \prod_{\substack{1 \leq j \leq k + 1: \\ j \neq q}} (t - y_j) \sum_{\substack{\sigma \in S_{k + 1}: \\ \sigma(p) = q}} \varepsilon(\sigma) \left[ \prod_{\substack{1 \leq i \leq k + 1: \\ i \neq p}} \prod_{\substack{1 \leq j \leq k + 1: \\ j \neq \sigma(i) \\ j \neq q}} (x_i - y_j) \right] \\
\times & \left[ \left(\sum_{\substack{1 \leq i \leq k + 1: \\ i \neq p}} x_i \prod_{\substack{1 \leq j \leq k + 1: \\ j \neq \sigma(i) \\ j \neq q}} y_j \right) -  (k - 1) \left( \prod_{\substack{1 \leq j \leq k + 1: \\ j \neq q}} y_j \right) - \left( \prod_{\substack{1 \leq i \leq k + 1: \\ i \neq p}} x_i \right) \right]
\end{align*}
Up to a sign we may view the sum over all permutations $\sigma \in S_{k + 1}$ with $\sigma(p) = q$ as a sum over all permutations in $S_k$. By the induction hypothesis, this sum vanishes. We conclude that for all $1 \leq p,q \leq k + 1$, setting $x_p = y_q$ in P$_{k + 1}$ results in $0$. Therefore, the fact that P$_{k + 1}$ is a polynomial of degree $(k + 1)^2$, implies that 
$$\text{P}_{k + 1} = c \times \prod_{1 \leq p,q \leq k + 1} (x_p - y_q)$$
for some constant $c$. Let us compute this constant by specializing the polynomial P$_{k + 1}$ to $x_1 = \dots = x_{k + 1} = 1$ and $y_1 = \dots = y_{k + 1} = 0$:
\begin{align*}
c ={} & {\text{P}_{k + 1}}_{{\big{\vert}}_{\substack{x_1 = \dots = x_{k + 1} = 1 \\ y_1 = \dots = y_{k + 1} = 0}}} = \sum_{\tau \in S_{k + 1}} - \varepsilon(\tau) = 0
.
\end{align*}
This proves that the the property of restriction holds for $LS^{\ast}_\lambda(-\calX; \calY)$ indexed by partitions $\lambda$ whose $(m, n - 1)$-index is $k > 0$.

Second suppose that the $(m,n-1)$-index of $\lambda$ is $k - 1 \geq 0$, which is equivalent to $\lambda'_{m - k + 1} = n - k$ and $\lambda_{n - k} > m - k$. In this case, $\lambda'_{m - (k - 1)} + m - (n - 1) - (m - (k - 1)) = 0$ and $\lambda_{n - k} +  n - m - (n - k) > 0$. Therefore, multiplying the $n$-th row of the matrix in \eqref{2_in_proof_restriction_k_positive} by $(-1)$ and then moving it to the last row results in
\begin{align}
\begin{split} \notag
LS_\lambda^{\ast}(-\calX; \calY)_{{\big{\vert}}_{x_n = 0}} ={} & \varepsilon_{m,n}(\lambda) (-1)^{1 + m - k}  \frac{\Delta\left(\calY; \calX'\right)}{\Delta \left( \calX' \right) \Delta(\calY)} \\
& \times \det \begin{pmatrix} \left( x_i^{-1}y_j(x_i - y_j)^{-1} \right)_{\substack{1 \leq i \leq n-1 \\ 1 \leq j \leq m}} \!\! & \left( x_i^{\lambda_j + (n - 1) - m - j} \right)_{\substack{1 \leq i \leq n - 1 \\ 1 \leq j \leq n - k}} \\
\left( y_j^{\lambda'_i + m - (n - 1) - i} \right)_{\substack{1 \leq i \leq m - (k - 1) \\ 1 \leq j \leq m}} \!\! & 0 
\end{pmatrix} 
\!\!.
\end{split}
\end{align}
One easily checks that $\varepsilon_{m,n}(\lambda) (-1)^{1 + m - k} = \varepsilon_{m, n - 1}(\lambda)$. Moreover, the determinant is of the required form owing to Lemma~\ref{2_lem_alternative_matrix_for_det_formula_LS}, which we state and prove right afterwards. Indeed, the matrix is equal to $A(1)$ (defined in Lemma~\ref{2_lem_alternative_matrix_for_det_formula_LS}) and $$m - (k - 1) - \lambda_{n - k + 1} \geq m - (k - 1) - (m - k) = 1$$
since the $(m,n)$-index of $\lambda$ is $k$. This completes the proof that $LS^{\ast}_\lambda(-\calX; \calY)$ possesses the property of restriction condition to Lemma~\ref{2_lem_alternative_matrix_for_det_formula_LS}.
\end{proof}

\begin{lem} \label{2_lem_alternative_matrix_for_det_formula_LS} Let $\calX$ and $\calY$ consist of $n$ and $m$ non-zero variables, respectively, such that $\Delta(\calX; \calY) \neq 0$. If the $(m,n)$-index $k$ of a partition $\lambda$ is non-negative, then the determinant in \eqref{2_thm_det_formula_for_Littlewood-Schur_eq} is equal to the determinant of the following matrix $A(r)$ for all integers $r$ with $0 \leq r \leq \max\{m - k - \lambda_{n - k + 1}, n - k - \lambda'_{m - k + 1} \}$:
\begin{align*}
A(r) = \begin{pmatrix} \left( x_i^{-r} y_j^r (x_i - y_j)^{-1} \right)_{\substack{1 \leq i \leq n \\ 1 \leq j \leq m}} & \left( x_i^{\lambda_j + n - m - j} \right)_{\substack{1 \leq i \leq n \\ 1 \leq j \leq n - k}} \\ \left( y_j^{\lambda'_i + m - n - i} \right)_{\substack{1 \leq i \leq m - k \\ 1 \leq j \leq m}} & 0\end{pmatrix}
.
\end{align*}
\end{lem}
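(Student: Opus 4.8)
The plan is to induct on $r$ and reduce everything to the single step $\det A(r)=\det A(r-1)$; the case $r=0$ is trivial, since $A(0)$ is literally the matrix appearing in \eqref{2_thm_det_formula_for_Littlewood-Schur_eq}. For the inductive step I would use the elementary identity $x^{-r}y^{r}(x-y)^{-1}=x^{-(r-1)}y^{r-1}(x-y)^{-1}-x^{-r}y^{r-1}$, which shows that $A(r)$ arises from $A(r-1)$ by subtracting from its top-left $n\times m$ block the rank-one matrix with $(i,j)$-entry $x_i^{-r}y_j^{r-1}$, the column block, the row block and the zero block all being unchanged. Writing $u$ for the length-$(n+m-k)$ vector whose $i$-th entry is $x_i^{-r}$ for $i\le n$ and $0$ otherwise, and $v$ for the vector whose $j$-th entry is $y_j^{r-1}$ for $j\le m$ and $0$ otherwise, we have $A(r)=A(r-1)-uv^{\mathsf{T}}$, so the rank-one update formula gives $\det A(r)=\det A(r-1)+\det C$, where $C$ is the $(n+m-k+1)\times(n+m-k+1)$ matrix obtained by bordering $A(r-1)$ with the extra column $u$, the extra row $v^{\mathsf{T}}$ and a $0$ in the new corner. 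Thus it suffices to prove $\det C=0$ for every $r$ in the stated range.

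To establish $\det C=0$, I would first rescale as in the proof of Lemma~\ref{2_lem_homogeneity_with_det_formula}: multiplying every row of $C$ indexed by $x_i\in\calX$ by $x_i^{\,r-1}$ and every column indexed by $y_j\in\calY$ by $y_j^{-(r-1)}$ changes $\det C$ only by a nonzero scalar, restores the Cauchy block $\bigl((x_i-y_j)^{-1}\bigr)$ in the top-left corner, turns the bordering row into $(1,\dots,1,0,\dots,0)$ (ones over the $\calY$-columns, zeros over the $\calX$-columns and the new column), shifts the $\calX$-column exponents up by $r-1$ and the $\calY$-row exponents down by $r-1$, and leaves the bordering column equal to $(x_i^{-1})_{i\le n}$ on the $\calX$-rows and $0$ elsewhere. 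The crux is then combinatorial: the hypothesis $1\le r\le m-k-\lambda_{n-k+1}$ forces $\lambda'_{m-k}=\lambda'_{m-k-1}=\dots=\lambda'_{m-k-(r-1)}=n-k$ (this is where $\lambda_{n-k+1}\le m-k$, from the definition of the index, enters), so the shifted $\calY$-row exponents $\lambda'_i+m-n-(r-1)-i$ attain the value $0$ at $i=m-k-(r-1)$; for that index the corresponding row of the rescaled bordered matrix is exactly $(1,\dots,1,0,\dots,0)$ over the $\calY$-columns, i.e.\ it coincides with the bordering row, whence $\det C=0$ and $\det A(r)=\det A(r-1)$. This disposes of the range $0\le r\le m-k-\lambda_{n-k+1}$; the remaining range $0\le r\le n-k-\lambda'_{m-k+1}$ is handled by the conjugate-symmetric version of the same argument, interchanging the roles of $\calX$ and $\calY$ and of $\lambda$ and $\lambda'$ (compatibly with the transposition identity of Corollary~\ref{2_cor_transposition_invariance}), and since $r$ is at most the larger of the two bounds one of the two arguments always applies.

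The main obstacle is precisely the combinatorial step in the middle paragraph: one must verify carefully that $r\le m-k-\lambda_{n-k+1}$ is exactly the condition making $0$ appear among the shifted $\calY$-row exponents --- equivalently, that the smallest $m-k-\lambda_{n-k+1}$ of those exponents are precisely $0,1,\dots,m-k-\lambda_{n-k+1}-1$ --- and, for the second branch, that conjugation really transports the matrix $A(r)$ and the relevant bound to those attached to $\lambda'$, with the signs $\varepsilon(\lambda)$ tracked as in Corollary~\ref{2_cor_transposition_invariance}. The genuinely degenerate configurations (empty column or row blocks, $k=m$, $k=n$) should be checked separately but introduce no new ideas.
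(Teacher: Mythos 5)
Your first-branch argument ($1 \le r \le m-k-\lambda_{n-k+1}$) is correct, and it is a mild repackaging of the paper's. The paper performs, for each $\calX$-row $i$, the single combined row operation adding $\sum_{l=m-k-r+1}^{m-k} x_i^{l-m+k-1}$ times the $l$-th $\calY$-row; for that range of $l$ the condition $\lambda'_l=n-k$ makes the $y$-exponents run through $r-1,\dots,1,0$, and a geometric series telescopes $A(r)$ directly into $A(0)$. You break the same transformation into $r$ single steps. One can even avoid the rank-one/bordered-determinant machinery: for $1\le r\le m-k-\lambda_{n-k+1}$, the identity $\lambda'_{m-k-r+1}=n-k$ means the $\calY$-row numbered $n+m-k-r+1$ of $A(r)$ equals $(y_1^{r-1},\dots,y_m^{r-1},0,\dots,0)$, and the row operation $A(r)_i\mapsto A(r)_i+x_i^{-r}\,A(r)_{n+m-k-r+1}$ applied to each $\calX$-row carries $A(r)$ to $A(r-1)$, giving $\det A(r)=\det A(r-1)$ directly. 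The combinatorial input you identify is precisely the one the paper uses.

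The genuine gap is in the second branch, and you were right to flag it as an obstacle. The transposition of Corollary~\ref{2_cor_transposition_invariance} does \emph{not} carry $A(r;\lambda,\calX,\calY)$ to a determinant-equivalent copy of $A(r;\lambda',\calY,\calX)$: the transpose $A(r;\lambda)^{\mathsf T}$ agrees with $A(r;\lambda',\calY,\calX)$ in both exponent blocks, but its Cauchy block differs by the factor $-x_j^{-2r}y_i^{2r}$; the diagonal rescaling that repairs the Cauchy block then shifts the exponent blocks by $2r$, so for $r>0$ the two matrices are not related by any determinant-preserving row/column operation. Thus the conjugate run of your first-branch argument proves a statement about a different matrix, not about $A(r)$. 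Indeed the lemma as stated (with $\max$) fails: take $\lambda=(1,1)$, $m=n=3$ (so $k=2$), $\calX=(1,2,3)$, $\calY=(4,5,6)$; then $m-k-\lambda_{n-k+1}=0$ and $n-k-\lambda'_{m-k+1}=1$, so $r=1$ is admitted only by the second bound, yet a direct $4\times 4$ computation gives $\det A(0)=\tfrac{1}{30}$ while $\det A(1)=-\tfrac{1}{12}$. This gap is inherited from the paper, whose proof simply asserts that the second bound "follows by symmetry"; as far as I can tell it does not, and the lemma is only proved (and only true) for $0\le r\le m-k-\lambda_{n-k+1}$. Fortunately the one place the lemma is invoked (the end of the proof of restriction) explicitly verifies $r=1\le m-\tilde k-\lambda_{\tilde n-\tilde k+1}$, so only the first bound is ever used and the downstream argument is unharmed.
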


\begin{proof} First suppose that $r \leq m - k - \lambda_{n - k + 1}$. For each index $1 \leq i \leq n$, perform the following row operation, which will leave the determinant invariant, and call the resulting matrix $B(r)$:
$$A(r)_i \mapsto A(r)_i + \sum_{l = m - k - r + 1}^{m - k} x_i^{l - m + k - 1} A(r)_{l + n}
.
$$
By construction, the matrices $A(r)$ and $B(r)$ only differ in the top-left block. In fact, for $1 \leq i \leq n$ and $1 \leq j \leq m$, 
\begin{align*}
B(r)_{ij} ={} & \frac{x_i^{-r} y_j^r}{x_i - y_j} + \sum_{l = m - k - r + 1}^{m - k} x_i^{l - m + k - 1} y_j^{\lambda'_{l} + m - n - l}
.
\intertext{Given that $\lambda_{n - k + 1} \leq m - k - r < l \leq m - k$ and $\lambda_{n - k} \geq m - k$ by the definition of index, the exponent of $y_j$ simplifies to $\lambda'_l + m - n - l = n - k + m - n - l = m - k - l$:}
B(r)_{ij} ={} & \frac{x_i^{-r} y_j^r}{x_i - y_j} + \sum_{l = m - k - r + 1}^{m - k} x_i^{l - m + k - 1} y_j^{m - k - l} \displaybreak[2 ]\\
={} & \frac{x_i^{-r} y_j^r}{x_i - y_j} + x_i^{-r} \sum_{p = 0}^{r - 1} x_i^p y_j^{r - 1 - p} \displaybreak[2] \\
={} & \frac{x_i^{-r} y_j^r}{x_i - y_j} + \frac{x_i^{-r} \left( x_i^r - y_j^r \right)}{x_i - y_j} = \frac{1}{x_i - y_j}
,
\end{align*}
completing the proof in case $r \leq m - k - \lambda_{n - k + 1}$. If $r \leq n - k - \lambda'_{m - k + 1}$, the result follows by symmetry.
\end{proof}

\begin{proof}[Proof of Theorem~\ref{2_thm_det_formula_for_Littlewood-Schur}] Owing to the five characterizing properties of Littlewood-Schur functions listen in Theorem~\ref{2_thm_5_properties_of_LS}, these five Lemmas allow us to conclude that the determinantal formula on the right-hand side in \eqref{2_thm_det_formula_for_Littlewood-Schur_eq} is indeed equal to the Littlewood-Schur function on the left-hand side, under the assumption that the variables in $\calX \cup \calY$ are pairwise distinct. 
\end{proof}
Moreover, the fact that both sides of the equality are polynomials in $\calX \cup \calY$ even allows us to drop this condition. In practice, however, the determinantal formula might not be the best definition to use for sets of variables that contain repetitions.

\section{Example: A determinantal proof of the Murnaghan-Nakayama rule for Littlewood-Schur functions} \label{2_sec_det_proof_of_MN_for_LS}
As the title indicates, the Murnaghan-Nakayama rule for Littlewood-Schur functions is \emph{not} the primary focus of this section. The goal of this section is to present a proof that only requires elementary linear algebra -- and the determinantal formula for Littlewood-Schur functions. In particular, our proof of the Murnaghan-Nakayama rule does not rely on Littlewood-Richardson coefficients. The point of this example is to illustrate that if there exists a determinantal proof for some statement about Schur functions, then there should also exist an analogous determinantal proof for the generalization of the statement to Littlewood-Schur functions. This idea yields elementary  -- if not necessarily very elegant -- proofs for some properties of Littlewood-Schur functions. As always, using more elaborate machinery leads to shorter and more elegant proofs: we will present the theory of specializations in Chapter~\ref{4_cha_mixed_ratios}, which will allow us to derive the Murnaghan-Nakayama rule for Littlewood-Schur functions from the classic Murnaghan-Nakayama rule for Schur functions in one line.

Given that the theory of specializations makes it obvious, the generalization of the Murnaghan-Nakayama rule to Littlewood-Schur functions is certainly not new (although the elementary proof given in this section seems to be). However, the triviality of the generalization also makes it difficult to find in the literature. The Murnaghan-Nakayama rule for skew-Schur functions can be found in \cite[p.~523]{MNrule}.

\subsection{Ribbons}
\begin{defn} [ribbon] Let $\lambda$ and $\mu$ be partitions so that $\mu$ is a subset of $\lambda$. The skew diagram $\lambda \setminus \mu$ is the set-theoretic difference between the Ferrers diagrams of $\lambda$ and $\mu$, \textit{i.e.}\ it is the set of boxes that are contained in $\lambda$ but not in $\mu$. A ribbon is a skew diagram that is edgewise connected and contains no $2 \times 2$ subset of boxes. The size of a ribbon is the number of its boxes. We sometimes call a ribbon of size $r$ an $r$-ribbon. The height (\label{symbol_height_of_a ribbon} $\height$) of a ribbon is one less than the number of its rows. 
\end{defn} 
Let us illustrate this definition by some examples: only the left-most diagram is a ribbon. Indeed, the skew-diagram in the middle is not edgewise connected and the skew-diagram on the right-hand side contains a $2 \times 2$ subset of boxes.
\begin{center}
\begin{tikzpicture} 
\draw[step=0.5cm, thin] (0, 0) grid (0.5, 0.5);
\draw[step=0.5cm, thin] (0, 0.5) grid (0.5, 1);
\draw[step=0.5cm, thin] (0.5, 0.5) grid (1, 1);
\draw[step=0.5cm, thin] (0.5, 1) grid (1, 1.5);
\draw[step=0.5cm, thin] (0.999, 0.999) grid (1.5, 1.5);
\draw[step=0.5cm, thin] (0.999, 1.5) grid (1.5, 2);
\draw[step=0.5cm, thin] (1.5, 1.499) grid (2, 2);
\draw[step=0.5cm, thin] (2, 1.499) grid (2.5, 2);
\end{tikzpicture}
\hspace{1cm}
\begin{tikzpicture} 
\draw[step=0.5cm, thin] (0, 0) grid (0.5, 0.5);
\draw[step=0.5cm, thin] (0, 0.5) grid (0.5, 1);
\draw[step=0.5cm, thin] (0.5, 0.5) grid (1, 1);
\draw[step=0.5cm, thin] (0.5, 1) grid (1, 1.5);
\draw[step=0.5cm, thin] (0.999, 1.499) grid (1.5, 2);
\draw[step=0.5cm, thin] (1.5, 1.499) grid (2, 2);
\draw[step=0.5cm, thin] (2, 1.499) grid (2.5, 2);
\end{tikzpicture}
\hspace{1cm}
\begin{tikzpicture} 
\draw[step=0.5cm, thin] (0, 0) grid (0.5, 0.5);
\draw[step=0.5cm, thin] (0, 0.5) grid (0.5, 1);
\draw[step=0.5cm, thin] (0.5, 0.5) grid (1, 1);
\draw[step=0.5cm, thin] (1, 0.5) grid (1.5, 1);
\draw[step=0.5cm, thin] (0.999, 1) grid (1.5, 1.5);
\draw[step=0.5cm, thin] (0.999, 1.5) grid (1.5, 2);
\draw[step=0.5cm, thin] (1.5, 1.499) grid (2, 2);
\draw[step=0.5cm, thin] (2, 1.499) grid (2.5, 2);
\draw[step=0.5cm, thin] (0.5, 1) grid (1, 1.5);
\end{tikzpicture}
\end{center}
This visual definition makes it easy to see that $\lambda \setminus \mu$ is an $r$-ribbon if and only if $\lambda' \setminus \mu'$ is. In this case,
\begin{align} \label{2_eq_height_of_transpose}
\height\left( \lambda' \setminus \mu' \right) = r - 1 - \height(\lambda \setminus \mu)
.
\end{align}
What we call ribbon is also known as skew or rim hook \cite[p.~180]{Sagan}, and as border strip \cite[p.~5]{mac}.

Given that partitions can be viewed as diagrams or sequences, it is natural to ask what characterizes ribbons from the point of view of sequences. To answer this question, we first introduce the relevant notation for sequences: if two sequences, say $\alpha$ and $\beta$, are equal up to reordering, we write $\alpha \sorteq \beta$. For $n \geq 1$, we define the partition $
\rho_n = (n - 1, \dots, 1, 0)$. The following lemma is taken from \cite[p.~48]{mac}.

\begin{lem} [\cite{mac}] \label{2_lem_sequence_defn_for_ribbons} Let $n$, $r$ be positive integers and $\mu$ a partition of length at most $n$. If we define
\begin{align} \label{2_eq_defn_mu[q]}
\mu[q] = (\mu_1, \dots, \mu_{q-1}, \mu_q + r, \mu_{q + 1}, \dots, \mu_n)
\end{align} 
for all $1 \leq q \leq n$, then the following two sets are equal:
\begin{enumerate}
\item $\{\lambda: \lambda \setminus \mu \text{ is an $r$-ribbon and } l(\lambda) \leq n\}$
\item $\{\lambda: \lambda + \rho_n \sorteq \mu[q] + \rho_n \text{ for some } 1 \leq q \leq n\}$
\end{enumerate}
In addition, the sign of the sorting permutation corresponding to $\lambda + \rho_n \sorteq \mu[q] + \rho_n$ is equal to $(-1)^{\height(\lambda \setminus \mu)}$.
\end{lem}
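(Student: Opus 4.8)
The plan is to work entirely with the encoding of a partition $\lambda$ of length at most $n$ by the strictly decreasing sequence $\lambda + \rho_n = (\lambda_1 + n - 1, \lambda_2 + n - 2, \dots, \lambda_n)$; this is the standard bijection between partitions with at most $n$ parts and $n$-element subsets of $\{0,1,2,\dots\}$. Adding $r$ to the single coordinate $\mu_q$ of $\mu$ amounts to replacing the entry $\mu_q + n - q$ of $\mu + \rho_n$ by $\mu_q + r + n - q$, leaving the other $n-1$ entries untouched; the resulting multiset $\mu[q] + \rho_n$ is a genuine set (all entries distinct) precisely when $\mu_q + r + n - q$ differs from every other $\mu_i + n - i$, and in that case $\lambda$ is recovered by sorting it back into decreasing order. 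So the second set in the lemma is exactly the set of partitions obtained from $\mu$ by ``moving a single bead forward by $r$ units'' on the abacus $\mu + \rho_n$, as $q$ ranges over those indices for which the move lands on a free position.

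First I would recall (or quote from Macdonald) the elementary combinatorial fact that removing an $r$-ribbon from $\lambda$ to obtain $\mu$ corresponds, on the level of beta-numbers, to sliding one bead of $\lambda + \rho_n$ backward by $r$ positions into a vacant slot — equivalently, $\lambda \setminus \mu$ is an $r$-ribbon with $l(\lambda)\le n$ iff $\lambda + \rho_n$ and $\mu + \rho_n$ agree in $n-1$ coordinates and the remaining coordinate of $\lambda+\rho_n$ exceeds that of $\mu+\rho_n$ by exactly $r$. This is precisely the condition $\lambda + \rho_n \sorteq \mu[q] + \rho_n$ for some $q$: the coordinate of $\mu+\rho_n$ that gets increased is $\mu_q + n - q$, and after increasing it by $r$ and re-sorting one obtains $\lambda + \rho_n$. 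Thus the equality of the two sets is just a restatement, in the language of sequences, of the bead-sliding description of ribbon removal; I would give this argument carefully in both directions (any $\lambda$ in set~(1) produces such a $q$; any valid $q$ in set~(2) yields a $\lambda$ with $\lambda\setminus\mu$ an $r$-ribbon), checking along the way that the side condition $l(\lambda)\le n$ is automatic since $\mu[q]+\rho_n$ consists of $n$ nonnegative integers.

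For the sign statement, I would track what the sorting permutation does. Starting from $\mu[q] + \rho_n = (\mu_1 + n - 1, \dots, \mu_q + r + n - q, \dots, \mu_n)$, the only entry out of decreasing order is the $q$-th one, which must be moved leftward past exactly those entries $\mu_i + n - i$ with $i < q$ and $\mu_i + n - i < \mu_q + r + n - q$; each such transposition contributes a factor $-1$, so the sign of the sorting permutation is $(-1)^{j}$ where $j$ is the number of such indices $i$. The final step is to identify $j$ with $\height(\lambda\setminus\mu)$: the beads that the moving bead jumps over are in bijection with the rows of the skew diagram $\lambda\setminus\mu$ strictly above its bottom row — this is the standard correspondence between the cells occupied as a bead slides and the ribbon it sweeps out, with each intermediate bead marking a ``drop'' to a new row. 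Hence $j = (\text{number of rows of }\lambda\setminus\mu) - 1 = \height(\lambda\setminus\mu)$.

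\textbf{Main obstacle.} The genuinely non-formal part is the bead-sliding lemma itself, i.e.\ the claim that sliding one bead of $\mu+\rho_n$ forward by $r$ (into a free slot) produces exactly the $\lambda$ with $\lambda\setminus\mu$ an $r$-ribbon, together with the height being read off as the number of beads jumped. Everything else is bookkeeping with the $\rho_n$-shift and the sign of a cyclic-type permutation. Since this bead-sliding fact is classical and is exactly what \cite[p.~48]{mac} supplies, I would cite it and spend most of the write-up making the translation between the ``ribbon'' and ``sorted sequence'' pictures explicit and verifying the height/sign correspondence by a small picture, rather than reproving the combinatorics of ribbon removal from scratch.
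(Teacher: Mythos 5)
Your plan matches the paper's proof in approach: both work with the beta-number encoding $\lambda + \rho_n$, identify adding an $r$-ribbon to $\mu$ with incrementing a single entry of $\mu+\rho_n$ by $r$ and re-sorting, and read off the sign and the height as the number $q-p$ of entries that the incremented value is sorted past. The one organizational difference is that the paper re-derives the ribbon/bead-slide correspondence from scratch (exhibiting the changed parts $(\mu_1,\dots,\mu_{p-1},\,\mu_q-q+p+r,\,\mu_p+1,\dots,\mu_{q-1}+1,\,\mu_{q+1},\dots,\mu_n)$ explicitly and drawing the ribbon) rather than citing it from Macdonald as you propose; since that fact is the entire content of the lemma, the paper's choice is the more self-contained one, but your plan is otherwise the same argument.
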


\begin{proof} Fix an integer $q$ that lies between $1$ and $n$. On the one hand, let $\lambda^{(1)}(q)$ denote the unique partition with the following properties if it exists: $\lambda^{(1)}(q) \setminus \mu$ is an $r$-ribbon, $l\left(\lambda^{(1)}(q) \right) \leq n$, $\lambda^{(1)}(q)_q \neq \mu_q$ and $\left(\lambda^{(1)}(q)_{q + 1}, \dots, \lambda^{(1)}(q)_n \right) = (\mu_{q + 1}, \dots, \mu_n)$. Assuming the existence of $\lambda^{(1)}(q)$, its Ferrers diagram is of the following generic form:
\begin{center}
\begin{tikzpicture}
\fill[black!27.5!white] (0, 0) rectangle (1, 0.25);
\fill[black!27.5!white] (0.75, 0) rectangle (1, 1);
\fill[black!27.5!white] (1, 0.75) rectangle (1.25, 1);
\fill[black!27.5!white] (1, 1) rectangle (1.25, 1.75);
\fill[black!27.5!white] (1.25, 1.5) rectangle (2.25, 1.75);
\draw (-1, -0.5) -- (-0.75, -0.5);
\draw (-0.75, -0.5) -- (-0.75, -0.25);
\draw (-0.75, -0.25) -- (-0.5, -0.25);
\draw (-0.5, -0.25) -- (-0.5, 0);
\draw (-0.5, 0) -- (1, 0);
\draw (1, 0) -- (1, 0.75);
\draw (1, 0.75) -- (1.25, 0.75);
\draw (1.25, 0.75) -- (1.25, 1.5);
\draw (1.25, 1.5) -- (2.25, 1.5);
\draw (2.25, 1.5) -- (2.25, 2.25);
\draw (2.25, 2.25) -- (2.5, 2.25);
\draw (2.5, 2.25) -- (2.5, 2.5);
\draw (-1, 2.5) -- (2.5, 2.5);
\draw (-1, 2.5) -- (-1, -0.5);
\draw[decoration={brace, raise=5pt, mirror},decorate] (1, -0.5) -- node[right=6pt] {$n - q$} (1, 0);
\draw[decoration={brace, raise=5pt, mirror},decorate] (2.5, 1.75) -- node[right=6pt] {$p - 1$} (2.5, 2.5);
\end{tikzpicture}
\end{center}
In this sketch, the diagram of $\mu$ is in white and the $r$-ribbon $\lambda^{(1)}(q) \setminus \mu$ is colored in gray. Still assuming that $\lambda^{(1)}(q)$ exists, we see that there is an integer $1 \leq p \leq q$ so that
\begin{align} \label{2_in_proof_sequence_defn_for_ribbons}
\lambda^{(1)}(q) = \left(\mu_1, \dots, \mu_{p - 1}, x, \mu_p + 1, \dots, \mu_{q-1} + 1, \mu_{q + 1}, \dots, \mu_n \right)
\end{align}
for some integer $x$. The fact that $\left| \lambda^{(1)}(q) \right| - |\mu| = r$ allows us to compute $x$. Indeed, their difference in size is also equal to
\begin{align*}
\left| \lambda^{(1)}(q) \right| - |\mu| = (x - \mu_q) + (\mu_p + 1 - \mu_p) + \dots + (\mu_{q-1} + 1 - \mu_q) = x - \mu_q + q - p
,
\end{align*}
which entails that $x = \mu_q - q + p + r$. Moreover, this shows that the existence of $\lambda^{(1)}(q)$ is equivalent to the condition that
\begin{align} \label{2_in_proof_sequence_defn_for_ribbons_cond_1}
\mu_{p - 1} \geq \mu_q - q + p + r \geq \mu_p + 1
\end{align}
where we use the convention that $\mu_0$ is infinitely large, while $\mu_{n + 1} = 0$.

On the other hand, let $\lambda^{(2)}(q)$ denote the unique partition with the property that $\lambda^{(2)}(q) + \rho_n \sorteq \mu[q] + \rho_n$ if it exists. By definition, $\lambda^{(2)}(q)$ exists if and only if the elements of $\mu[q] + \rho_n$ are pairwise distinct, which is equivalent to the existence of $1 \leq p \leq q$ such that
\begin{align} \label{2_in_proof_sequence_defn_for_ribbons_cond_2}
\mu_{p - 1} + n - (p - 1) > \mu_q + r + n - q > \mu_p + n - p
.
\end{align}
One easily checks that the conditions given in \eqref{2_in_proof_sequence_defn_for_ribbons_cond_1} and \eqref{2_in_proof_sequence_defn_for_ribbons_cond_2} are equivalent. Thus, the result is consequence of the following equality. If the conditions that guarantee existence are satisfied for some integer $p$, then
\begin{align*}
\lambda^{(2)}(q) ={} & ( \mu_1 + n - 1, \dots, \mu_{p - 1} + n - (p - 1), \mu_q + r + n - q, \mu_p + n - p, \dots, \\ 
& \hspace{15pt} \mu_{q - 1} + n - (q - 1), \mu_{q + 1} + n - (q + 1), \dots, \mu_n ) - \rho_n \displaybreak[2] \\
={} & (\mu_1, \dots, \mu_{p - 1}, \mu_q + r + p - q, \mu_p + 1, \dots, \mu_{q - 1} + 1, \mu_{q + 1}, \dots, \mu_n) \displaybreak[2] \\
={} & \lambda^{(1)}(q)
\end{align*}
by \eqref{2_in_proof_sequence_defn_for_ribbons}.

In order to justify the statement on the sign, we still assume that the conditions for existence are satisfied for some integer $p$. We observe that sorting $\mu[q] + \rho_n$ into a strictly decreasing sequence requires moving the element $\mu_q + r - n - q$ (at position $q$) $q - p$ position to the left such that it is relocated at position $p$. Hence, the sign of the sorting is $(-1)^{q - p}$. Moreover, the explicit sequence given in \eqref{2_in_proof_sequence_defn_for_ribbons} shows that the height of $\lambda^{(1)}(q) \setminus \mu$ is $q - p$, as required.
\end{proof}

\subsection{The classic determinantal proof for Schur functions} \label{2_sec_det_proof_of_MN_for_Schur}
Due to Lemma~\ref{2_lem_sequence_defn_for_ribbons}, it is an easy exercise to derive the classic Murnaghan-Nakayama rule from the determinantal definition for Schur functions (\textit{i.e.}\ Definition~\ref{1_defn_determinantal_schur_function}), which can also be found in Example~11 of Chapter~I.3 in \cite{mac}.

\begin{thm} [Murnaghan-Nakayama rule, \cite{Murnaghan, Nakayama}] \label{2_thm_MN_rule} Let $\mu$ be a partition. For any strictly positive integer $r$,
\begin{align} \label{2_thm_MN_rule_eq}
\power_r \schur_\mu = \sum_{\substack{\lambda: \\ \lambda \setminus \mu \text{ is an $r$-ribbon}}} (-1)^{\height(\lambda \setminus \mu)} \schur_\lambda.
\end{align}
\end{thm}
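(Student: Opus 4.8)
The plan is to prove the identity first as a polynomial identity in a finite set of variables $\calX = (x_1, \dots, x_n)$ with $n$ large (say $n \geq l(\mu) + r$), using the determinantal definition of Schur functions (Definition~\ref{1_defn_determinantal_schur_function}), and then lift it to $\Sym$. Write $a_\alpha(\calX) = \det\left(x^{\alpha_j}\right)_{x \in \calX,\, 1 \leq j \leq n}$ for an integer vector $\alpha = (\alpha_1, \dots, \alpha_n)$, so that $\Delta(\calX) = a_{\rho_n}(\calX)$ and $\schur_\mu(\calX) = a_{\mu + \rho_n}(\calX)/\Delta(\calX)$. The computational heart of the argument is the elementary identity
\begin{align*}
\power_r(\calX)\, a_\alpha(\calX) = \sum_{q = 1}^n a_{\alpha + r e_q}(\calX),
\end{align*}
valid for every $\alpha$, where $e_q$ denotes the $q$-th coordinate vector. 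I would deduce this directly from the Leibniz formula: expanding $a_\alpha = \sum_{\sigma \in S_n} \varepsilon(\sigma) \prod_i x_i^{\alpha_{\sigma(i)}}$ and multiplying by $\power_r = \sum_k x_k^r$, the monomial $x_k^r \prod_i x_i^{\alpha_{\sigma(i)}}$ equals $\prod_i x_i^{(\alpha + r e_{\sigma(k)})_{\sigma(i)}}$, and re-indexing the sum over $k$ by $q = \sigma(k)$ regroups the terms into $\sum_q a_{\alpha + r e_q}$.

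Applying this with $\alpha = \mu + \rho_n$ and dividing by $\Delta(\calX)$ gives
\begin{align*}
\power_r(\calX)\, \schur_\mu(\calX) = \sum_{q = 1}^n \frac{a_{\mu[q] + \rho_n}(\calX)}{\Delta(\calX)},
\end{align*}
where $\mu[q]$ is the sequence defined in \eqref{2_eq_defn_mu[q]}. The next step is to read off each summand. If the entries of $\mu[q] + \rho_n$ fail to be pairwise distinct, the matrix defining $a_{\mu[q] + \rho_n}$ has two equal columns, so the term vanishes. Otherwise there is a unique partition $\lambda$ with $\lambda + \rho_n \sorteq \mu[q] + \rho_n$, and permuting columns yields $a_{\mu[q] + \rho_n} = \varepsilon_q\, a_{\lambda + \rho_n}$ with $\varepsilon_q$ the sign of the sorting permutation, so that the $q$-th summand equals $\varepsilon_q\, \schur_\lambda(\calX)$. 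By Lemma~\ref{2_lem_sequence_defn_for_ribbons}, as $q$ runs over $1, \dots, n$ the partitions $\lambda$ arising in this way (with $l(\lambda) \leq n$) are exactly those with $\lambda \setminus \mu$ an $r$-ribbon, each occurring exactly once, and the sorting sign is $\varepsilon_q = (-1)^{\height(\lambda \setminus \mu)}$. Hence
\begin{align*}
\power_r(\calX)\, \schur_\mu(\calX) = \sum_{\substack{\lambda:\ \lambda \setminus \mu \text{ is an } r\text{-ribbon} \\ l(\lambda) \leq n}} (-1)^{\height(\lambda \setminus \mu)} \schur_\lambda(\calX).
\end{align*}

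Finally, $\lambda \setminus \mu$ being an $r$-ribbon forces $l(\lambda) \leq l(\mu) + r \leq n$, so the length condition on the right-hand side is vacuous for our choice of $n$; the displayed equality is thus an identity in $\Sym\left(\calX_{[n]}\right)$ between homogeneous polynomials of degree $|\mu| + r$. Since it holds for all sufficiently large $n$, it lifts to the asserted identity in $\Sym$. The only genuinely delicate point is the bijection-with-signs between the nonzero terms and the $r$-ribbons, but that is precisely the content of Lemma~\ref{2_lem_sequence_defn_for_ribbons}, so nothing new is required there; the rest of the argument is the short Leibniz-expansion identity together with routine bookkeeping.
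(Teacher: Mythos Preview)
Your proposal is correct and follows essentially the same route as the paper: expand $\power_r$ times the numerator determinant via the Leibniz formula to obtain $\sum_q a_{\mu[q]+\rho_n}$, then invoke Lemma~\ref{2_lem_sequence_defn_for_ribbons} to identify the surviving terms with $r$-ribbons and their signs. The only cosmetic difference is that you fix $n \geq l(\mu)+r$ upfront so the length constraint becomes vacuous, whereas the paper allows any $n \geq l(\mu)$ and disposes of the constraint using $\schur_\lambda(\calX)=0$ when $l(\lambda)>n$.
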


\begin{proof} The statement follows if the equation in \eqref{2_thm_MN_rule_eq} holds for all sets of variables $\calX = (x_1, \dots, x_n)$ where $n$ ranges over the non-negative integers. Whenever $n < l(\mu)$, both sides of the equation vanish. In case $n \geq l(\mu)$, we consider
\begin{align*}
\Delta(\calX) \schur_\mu (\calX) \power_r (\calX)
={} & \det \left( x_i^{\mu_j + n - j} \right)_{1 \leq i,j \leq n} \left( \sum_{p = 1}^n x_p^r \right)
.
\intertext{According to the Leibniz formula for determinants,}
\Delta(\calX) \schur_\mu (\calX) \power_r (\calX)
={} & \left( \sum_{\sigma \in S_n} \varepsilon(\sigma) \prod_{i = 1}^n x_i^{\mu_{\sigma(i)} + n - \sigma(i)} \right) \left( \sum_{p = 1}^n x_p^r \right) \displaybreak[2]
\\
= {}& \sum_{p = 1}^n \sum_{\sigma \in S_n} \varepsilon(\sigma) x_p^{\mu_{\sigma(p)} + n - \sigma(p) + r} \prod_{\substack{i = 1 \\ i \neq p}}^n x_i^{\mu_{\sigma(i)} + n - \sigma(i)} \displaybreak[2] \\
={} & \sum_{p = 1}^n \sum_{q = 1}^n \sum_{\substack{\sigma \in S_n: \\ \sigma(p) = q}} \varepsilon(\sigma) x_p^{\mu_{q} + n - q + r} \prod_{\substack{i = 1 \\ i \neq p}}^n x_i^{\mu_{\sigma(i)} + n - \sigma(i)} 
.
\intertext{Recalling the definition of $\mu[q]$ given in \eqref{2_eq_defn_mu[q]}, this can be reformulated as}
\Delta(\calX) \schur_\mu (\calX) \power_r (\calX)
={} & \sum_{q = 1}^n \sum_{p = 1}^n \sum_{\substack{\sigma \in S_n \\ \sigma(p) = q}} \varepsilon(\sigma) \prod_{i = 1}^n x_i^{(\mu[q] + \rho_n)_{\sigma(i)}} \displaybreak[2] \\
={} & \sum_{q = 1}^n \sum_{\sigma \in S_n} \varepsilon(\sigma) \prod_{i = 1}^n x_i^{(\mu[q] + \rho_n)_{\sigma(i)}}
.
\intertext{Another application of the Leibniz formula for determinants results in}
\Delta(\calX) \schur_\mu (\calX) \power_r (\calX)
={} & \sum_{q = 1}^n \det \left( x_i^{(\mu[q] + \rho_n)_j} \right)_{1 \leq i,j \leq n}
.
\intertext{If two elements of $\mu[q] + \rho_n$ are identical, the determinant vanishes; otherwise, there exists a partition $\lambda$ such that $\lambda + \rho_n \sorteq \mu[q] + \rho_n$. Hence, Lemma~\ref{2_lem_sequence_defn_for_ribbons} allows us to view the sum over $q$ as a \emph{signed} sum over partitions $\lambda$ with the properties that $\lambda \setminus \mu$ is an $r$-ribbon and $l(\lambda) \leq n$. As determinants are antisymmetric, the sign corresponding to $\lambda$ with $\lambda + \rho_n \sorteq \mu[q] + \rho_n$ is given by the sign of the sorting permutation, which is equal to $(-1)^{\height(\lambda \setminus \mu)}$. We conclude that}
\Delta(\calX) \schur_\mu (\calX) \power_r (\calX)
={} & \sum_{\substack{\lambda: \\ \lambda \setminus \mu \text{ is an $r$-ribbon} \\ l(\lambda) \leq n}} (-1)^{\height(\lambda \setminus \mu)} \det \left( x_i^{\lambda_j + n - j} \right)_{1 \leq i,j \leq n}
.
\end{align*}
Taking the factor $\Delta(\calX)$ to the other side of the equation completes the proof, given that by definition $\schur_\lambda(\calX) = 0$ whenever $l(\lambda) > n$.
\end{proof}

\subsection{A determinantal proof for Littlewood-Schur functions}
Since Littlewood-Schur functions do not have the property that they vanish whenever their indexing partition exceeds a given length (which depends on the number of variables), we need the following auxiliary lemma in order to extend the determinantal proof for Schur functions to Littlewood-Schur functions.

\begin{lem} \label{2_lem_ribbon_change_index} Let $m$, $n$ and $r$ be strictly positive integers. Let $\mu$ be a partition with the property that $(m, n) \not\in \mu$. If we define
$$\alpha(l) = (\mu_1 - m, \dots, \mu_{n - 1} - m, l) \text{ and } \beta(l) = (\mu'_1 - n, \dots, \mu'_{m - 1} - n, r - 1 - l)$$
for all $0 \leq l \leq r - 1$, then there is a 1-to-1 correspondence between the following two sets:
\begin{enumerate}
\item $\{\lambda: \lambda \setminus \mu \text{ is an $r$-ribbon and } (m, n) \in \lambda \}$
\item $\{(\delta, \gamma): \delta, \gamma \text{ are partitions, } \delta + \rho_n \sorteq \alpha(l) + \rho_n \text{ and } \gamma + \rho_m \sorteq \beta(l) + \rho_m \text{ for} \text{ some } \\ 0 \leq l \leq r - 1\}$
\end{enumerate}
The correspondence is given by $$(\delta, \gamma) \mapsto \left( \left\langle m^n \right\rangle + \delta \right) \cup \gamma'.$$
In addition, if $\lambda$ is the partition associated to some $l$, the product of the signs of the permutations that sort $\alpha(l) + \rho_n$ and $\beta(l) + \rho_m$ into strictly decreasing sequences is equal to $(-1)^{r - 1 - l} (-1)^{\height(\lambda \setminus \mu)}$. 
\end{lem}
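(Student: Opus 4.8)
The plan is to peel the statement down to the single-ribbon correspondence of Lemma~\ref{2_lem_sequence_defn_for_ribbons}, applied once on the ``row side'' (length-$n$ $\beta$-numbers) and once on the ``column side'' (length-$m$ $\beta$-numbers), with the distinguished cell $(m,n)$ serving as a pivot. First, the structural reduction: if $\lambda\setminus\mu$ is an $r$-ribbon with $(m,n)\in\lambda$, then $(m,n)\notin\mu$ forces $\mu_n\le m-1$, while the absence of $2\times2$ squares forces $\lambda_{n+1}\le\mu_n+1\le m$; together with $\lambda_n\ge m$ this says $\lambda$ has $(m,n)$-index $0$, so $\lambda=(\langle m^n\rangle+\delta)\cup\gamma'$ for the genuine partitions $\delta=(\lambda_1-m,\dots,\lambda_n-m)$ and $\gamma=(\lambda'_1-n,\dots,\lambda'_m-n)$ — precisely the decomposition underlying the factorization property, and precisely the stated (visibly injective) map, whose image is $\{\lambda:\lambda_{n+1}\le m\le\lambda_n\}$. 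Conversely, partitions $(\delta,\gamma)$ with $l(\delta)\le n$ and $l(\gamma)\le m$ always produce such a $\lambda$. So it only remains to match, on the $\lambda$-side, ``$\lambda\setminus\mu$ is an $r$-ribbon'' with the $\sorteq$ conditions on the $(\delta,\gamma)$-side.

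Second, I would reformulate the two $\sorteq$ conditions. Adding $m$ to every entry turns $\alpha(l)+\rho_n$ into $\{\mu_j+n-j:1\le j\le n-1\}\cup\{m+l\}$ and $\delta+\rho_n$ into $\{\lambda_j+n-j:1\le j\le n\}$; since $(\mu_j+n-j)_{j=1}^{n}$ is strictly decreasing with minimum $\mu_n<m$, the relation $\delta+\rho_n\sorteq\alpha(l)+\rho_n$ becomes $\lambda_{[n]}+\rho_n\sorteq\bar\mu[n]+\rho_n$, where $\bar\mu=(\mu_1,\dots,\mu_{n-1},0)$ and $\bar\mu[n]=(\mu_1,\dots,\mu_{n-1},m+l)$. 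Because $\lambda_n\ge m\ge1$, row $n$ is forced into the relevant skew shape, so Lemma~\ref{2_lem_sequence_defn_for_ribbons} (with $\mu\leadsto\bar\mu$, $r\leadsto m+l$, turning row $q=n$) identifies this with ``$\lambda_{[n]}\setminus\bar\mu$ is an $(m+l)$-ribbon'', the sort having sign $(-1)^{h_1}$ with $h_1:=\height(\lambda_{[n]}\setminus\bar\mu)$. Dually, $\gamma+\rho_m\sorteq\beta(l)+\rho_m$ means ``$\lambda'_{[m]}\setminus\bar{\mu'}$ is an $(n+r-1-l)$-ribbon'', with $\bar{\mu'}=(\mu'_1,\dots,\mu'_{m-1},0)$ and sort-sign $(-1)^{h_2}$, $h_2:=\height(\lambda'_{[m]}\setminus\bar{\mu'})$.

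Third, and this is the heart of the matter, I would prove: for $\lambda$ of index $0$ with $(m,n)\notin\mu$, the skew shape $\lambda\setminus\mu$ is an $r$-ribbon if and only if there is an $l$ with $0\le l\le r-1$ such that $\lambda_{[n]}\setminus\bar\mu$ is an $(m+l)$-ribbon and $\lambda'_{[m]}\setminus\bar{\mu'}$ is an $(n+r-1-l)$-ribbon. The key observation is that $\bar\mu$ differs from $\mu$ only in row $n$ (where $\mu_n$ is replaced by $0$), so $\lambda_{[n]}\setminus\bar\mu$ is just $(\lambda\setminus\mu)$ restricted to rows $1,\dots,n$ with its $n$-th row extended leftwards to column $1$; symmetrically $\lambda'_{[m]}\setminus\bar{\mu'}$ is $(\lambda\setminus\mu)$ restricted to columns $1,\dots,m$ with column $m$ extended upwards, and the cell $(m,n)$ lies in both restrictions. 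One checks directly — using connectedness and the absence of $2\times2$ squares in $\lambda\setminus\mu$, together with $\mu_n<m$, $\mu'_m<n$, and index $0$ (which makes the region ``row $>n$ and column $>m$'' empty) — that these extensions stay connected and create no $2\times2$ square, hence are ribbons; a box count then forces $m+l=|\lambda_{[n]}|-|\mu_{[n-1]}|$ and $n+r-1-l=|\lambda'_{[m]}|-|\mu'_{[m-1]}|$, and splitting the $r$ cells of $\lambda\setminus\mu$ according to ``row $\le n$?'' and ``column $\le m$?'' (the $m-\mu_n\ge1$ cells of row $n$ up to column $m$ being the shared bookkeeping) yields $0\le l\le r-1$. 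The converse runs the same picture backwards: cut the two extended rows/columns back at column $m$ / row $n$ and re-glue at the pivot $(m,n)$. I expect this step to be the real obstacle — it requires careful tracking of the pivot cell and a clean box-count to pin down $l$ and its range.

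Finally, the sign. Multiplying the two sort-signs gives $(-1)^{h_1}(-1)^{h_2}$, so it suffices to establish the height identity $h_1-h_2=\height(\lambda\setminus\mu)-(r-1-l)$. This I would obtain by writing each height as (number of nonempty rows of the relevant skew shape) $-1$ and comparing the row counts of $\lambda_{[n]}\setminus\bar\mu$, of $(\lambda'_{[m]}\setminus\bar{\mu'})'$, and of $\lambda\setminus\mu$, using the conjugation relation~\eqref{2_eq_height_of_transpose} for $\lambda\setminus\mu$ together with the box split from Step~3. Reducing modulo $2$ replaces $h_1-h_2$ by $h_1+h_2$ and $\height(\lambda\setminus\mu)-(r-1-l)$ by $\height(\lambda\setminus\mu)+(r-1-l)$, giving exactly the asserted product $(-1)^{r-1-l}(-1)^{\height(\lambda\setminus\mu)}$. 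As a consistency check, applying the whole statement to the conjugate data $(\mu',n,m)$ interchanges $\delta\leftrightarrow\gamma$, $l\leftrightarrow r-1-l$ and, via~\eqref{2_eq_height_of_transpose}, $\height(\lambda\setminus\mu)\leftrightarrow r-1-\height(\lambda\setminus\mu)$, under all of which the sign $(-1)^{r-1-l}(-1)^{\height(\lambda\setminus\mu)}$ is invariant.
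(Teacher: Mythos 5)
Your proposal is correct, and it takes a genuinely different route from the paper. The paper's proof is a self-contained diagram chase: it first disposes of the degenerate case $m,n>1$ with $(m-1,n-1)\notin\mu$ (both sets empty), then introduces parameters $p,q$ (the number of rows above, respectively columns to the left of, the ribbon), reads off explicit sequence formulas for $\delta^{(1)}(l)$ and $\gamma^{(1)}(l)$ from the annotated Ferrers picture, independently computes $\delta^{(2)}(l)$ and $\gamma^{(2)}(l)$ from the $\sorteq$ conditions, and matches existence conditions and signs by hand — in effect re-deriving the single-ribbon sorting argument twice. You instead recognize that the map is the index-$0$ factorization $\lambda\mapsto(\delta,\gamma)$, and that after shifting by $\langle m^n\rangle$ and $\langle n^m\rangle$ the two $\sorteq$ relations are literally instances of Lemma~\ref{2_lem_sequence_defn_for_ribbons} applied to $\bar\mu=(\mu_1,\dots,\mu_{n-1},0)$ (with ribbon size $m+l$, modified entry $q=n$) and to $\bar{\mu'}$ (size $n+r-1-l$, $q=m$). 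This is a cleaner reduction that reuses the proved lemma rather than redoing its computation, and it makes the sign formula fall out from the already-known $(-1)^{\height}$ in Lemma~\ref{2_lem_sequence_defn_for_ribbons}; your exact identity $h_1-h_2=\height(\lambda\setminus\mu)-(r-1-l)$ is correct (not merely mod~$2$). The cost is that the reduction relocates the real work into your Step~3 — that $\lambda\setminus\mu$ is an $r$-ribbon iff the two sub-shapes $\lambda_{[n]}\setminus\bar\mu$ and $\lambda'_{[m]}\setminus\bar{\mu'}$ are ribbons of sizes $m+l$ and $n+r-1-l$ — which the paper never needs to isolate because it parametrizes everything from the outset. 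Your outline of Step~3 correctly identifies all the needed ingredients (connectedness of the leftward extension of row $n$ because $\mu_{n-1}\ge\mu_n$ and there are at least $m-\mu_n\ge1$ surviving boxes; absence of $2\times2$ squares; the box count forcing $|\lambda_{[n]}\setminus\bar\mu|=m+l$; the split of $\lambda\setminus\mu$ at $(m,n)$ using index~$0$), and a detailed write-up would also want to note, as the paper does, that the degenerate case $m,n>1$, $(m-1,n-1)\notin\mu$ is handled automatically on your side because $\alpha(l)+\rho_n$ then has a negative entry so there is no admissible $\delta$, matching the emptiness of set~1.
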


\begin{proof} Both sets are empty in case $m,n > 1$ and $(m - 1, n - 1) \not\in \mu$. Indeed, for any partition $\lambda$ with $(m,n) \in \lambda$ the difference $\lambda \setminus \mu$ contains the boxes with coordinates $(m - 1, n - 1)$ and $(m,n)$, and thus a $2 \times 2$ block. To see that the second set is empty, consider
$$\alpha(l)_{n - 1} + \left( \rho_n \right)_{n - 1} = \mu_{n - 1} - m + 1 \leq m - 2 - m + 1 < 0,$$
which implies that there cannot exist a partition $\delta$ with the required property.

We thus assume that $m = 1$, $n = 1$ or $(m - 1, n - 1) \in \mu$. In order to fix our notation, we draw a sketch of an annotated Ferrers diagram of a partition $\lambda$ that belongs to the first set: \label{2_page_visual_defn_of_l}
\begin{center}
\begin{tikzpicture}
\fill[black!15!white] (5.75,-0.75) rectangle (5,-1);
\fill[black!15!white] (5.25,-1) rectangle (5,-2);
\fill[black!15!white] (5.25,-1.75) rectangle (4,-2);
\fill[black!15!white] (4.25,-2.25) rectangle (4,-2);
\fill[black!40!white] (4.25,-2.5) rectangle (4,-2.25);
\fill[black!60!white] (4.25,-3) rectangle (4,-2.5);
\draw (0,0) -- (4,0);
\draw (0,0) -- (0,-2.25);
\draw (4,0) -- (6.5,0);
\draw (6.5,0) -- (6.5,-0.75);
\draw (6.5,-0.75) -- (5.75,-0.75);
\draw (5.75,-0.75) -- (5.75,-1);
\draw (5.75,-1) -- (5.25, -1);
\draw (5.25,-1) -- (5.25,-2);
\draw (5.25,-2) -- (4.25, -2);
\draw (4.25,-2) -- (4.25,-2.5);
\draw (4.25,-2.5) -- (4.25,-3);
\draw (4.25,-3) -- (4,-3);
\draw (4,-3) -- (4,-3.25);
\draw (4,-3.25) -- (2.5,-3.25);
\draw (2.5,-3.25) -- (2.5,-3.75);
\draw (2.5,-3.75) -- (0,-3.75);
\draw (0,-3.75) -- (0,-2.25);
\draw[dotted] (4.25,0.2) -- (4.25,-3.95);
\draw[dotted] (-0.2,-2.5) -- (6.7,-2.5);
\draw[decoration={brace, raise=5pt},decorate] (0,0) -- node[above=6pt] {$m$} (4.25, 0);
\draw[decoration={brace, raise=5pt},decorate] (0,0.5) -- node[above=6pt] {$m - 1$} (4, 0.5);
\draw[decoration={brace, raise=5pt, mirror},decorate] (0,0) -- node[left=6pt] {$n$} (0, -2.5);
\draw[decoration={brace, raise=5pt, mirror},decorate] (-0.5,0) -- node[left=6pt] {$n - 1$} (-0.5, -2.25);
\draw[decoration={brace, raise=5pt, mirror},decorate] (0,-3.75) -- node[below=6pt] {$q - 1$} (4, -3.75);
\draw[decoration={brace, raise=5pt},decorate] (6.5,0) -- node[right=6pt] {$p - 1$} (6.5, -0.75);
\node at (5,-0.5) {$\scriptstyle{\delta^{(1)}(l)}$};
\node at (1.5,-3.2) {$\scriptstyle{\gamma^{(1)}(l)'}$};
\end{tikzpicture}
\end{center}
The Ferrers diagram of $\mu$ is colored in white and the ribbon $\lambda \setminus \mu$ in (different shades of) gray. We see that the box with coordinates $(m,n)$ is contained in $\lambda$ but not in $\mu$, as required. The quantity $p - 1$ (resp.\ $q - 1$) counts the number of rows above (resp.\ columns to the left of) the ribbon $\lambda \setminus \mu$. We define $l$ as the number of boxes that are colored in light gray; more formally, $l$ is the number of boxes in the $r$-ribbon $\lambda \setminus \mu$ that lie to the right and above the box $(m,n)$ without counting the box $(m,n)$ itself. Thus, there are $r - 1 - l$ dark gray boxes. We say that the partition $\lambda$ is associated to the parameter $l$. Notice that there is at most one partition $\lambda$ associated to each parameter $0 \leq l \leq r - 1$. Finally, the boxes to the right of the vertical dotted line and below the horizontal dotted line define two partitions, which we denote $\delta^{(1)}(l)$ and $\gamma^{(1)}(l)'$, respectively. By definition, $\lambda = \left( \left\langle m^n \right\rangle + \delta^{(1)}(l) \right) \cup \gamma^{(1)}(l)'$. We conclude that it is sufficient to show that the set $$\left\{ \left( \delta^{(1)}(l), \gamma^{(1)}(l) \right): 0 \leq l \leq r - 1 \text{ and there exists a partition $\lambda$ associated to $l$}\right\}$$ is equal to the second set stated in the lemma. 

Fix an integer $0 \leq l \leq r - 1$. On the one hand, if we assume that there exists a partition $\lambda$ associated to $l$, then 
\begin{align} \label{2_in_proof_lem_ribbon_change_index_delta_1}
\delta^{(1)}(l) ={} & (\mu_1 - m, \dots \mu_{p - 1} - m, x_\delta, \mu_p - m + 1, \dots, \mu_{n - 1} - m + 1)
\intertext{and} \label{2_in_proof_lem_ribbon_change_index_gamma_1}
\gamma^{(1)}(l) ={} & (\mu'_1 - n, \dots, \mu'_{q - 1} - n, x_\gamma, \mu'_q - n + 1, \dots, \mu'_{m - 1} - n + 1)
\end{align}
for some integers $x_\delta$, $x_\gamma$. The above diagram shows that
\begin{align*}
\left| \delta^{(1)}(l)\right| ={} & \mu_1 + \dots + \mu_{n - 1} + (m - 1) + (l + 1) - mn
\intertext{and}
\left| \gamma^{(1)}(l)\right| ={} & \mu'_1 + \dots + \mu'_{m - 1} + (n - 1) + (r - l) - mn
,
\end{align*}
which entails that $x_\delta = l - n + p$ and $x_\gamma = r - 1 - l - m + q$. In addition, we infer that the existence of a partition $\lambda$ associated to $l$ is equivalent to the existence of a pair of integers $1 \leq p \leq n$ and $1 \leq q \leq m$ so that
\begin{align} \label{2_in_proof_lem_ribbon_change_index_cond_1}
\mu_{p - 1} - m \geq l - n + p \geq \mu_p - m + 1 \text{ and } \mu'_{q - 1} - n \geq r - 1 - l - m + q \geq \mu'_q - n + 1
\end{align}
where we use the convention that $\mu_0$ and $\mu'_0$ are infinitely large.

On the other hand, there exist partitions $\delta^{(2)}(l)$ and $\gamma^{(2)}(l)$ with the property that $\delta^{(2)}(l) + \rho_n \sorteq \alpha(l) + \rho_n$ and $\gamma^{(2)}(l) + \rho_m \sorteq \beta(l) + \rho_m $ if and only if the elements of both $\alpha(l) + \rho_n$ and $\beta(l) + \rho_m $ are pairwise distinct. By definition, this is equivalent to the following two inequalities:
\begin{align} 
\begin{split} \label{2_in_proof_lem_ribbon_change_index_cond_2}
\mu_{p - 1} - m + n - p + 1 >{} & l > \mu_p - m + n - p 
\\
\mu'_{q - 1} - n + m - q + 1 > r - {} & 1 - l > \mu'_q - n + m - q
\end{split}
\end{align}
for some $1 \leq p \leq n$ and $1 \leq q \leq m$. We observe that the conditions in \eqref{2_in_proof_lem_ribbon_change_index_cond_1} and \eqref{2_in_proof_lem_ribbon_change_index_cond_2} are equivalent. Furthermore, if the conditions are satisfied for some parameter $p$, then
\begin{align*}
\delta^{(2)}(l) ={} & (\mu_1 - m + n - 1, \dots, \mu_{p - 1} - m - n - p + 1, l, \\
& \hspace{40pt} \mu_p - m + n - p , \dots, \mu_{n - 1} - m + 1) - \rho_n \displaybreak[2] \\
={} & (\mu_1 - m, \dots, \mu_{p - 1} - m, l - n + p, \mu_p - m + 1, \dots, \mu_{n - 1} - m + 1) = \delta^{(1)}(l)
\end{align*}
by \eqref{2_in_proof_lem_ribbon_change_index_delta_1}. An analogous argument shows that $\gamma^{(1)}(l)$ and $\gamma^{(2)}(l)$ are identical, completing the proof that there is correspondence between the two sets stated in the lemma.

It remains to show the statement on the signs. Let us assume that the conditions for existence are satisfied for some integers $p$ and $q$. Given that the sorting that defines the second set moves the $n$-th (resp.\ $m$-th) element of a sequence to position $p$ (resp.\ $q$), its sign is $(n - p)$ (resp.\ $(m - q)$). For the first set, recall the visual definitions of the parameters $p$, $q$ and $l$, to see that 
\begin{align*}
\height(\lambda \setminus \mu) ={} & \height\left( \left(\delta + \langle m^n \rangle \right) \setminus \left( \mu_{[n - 1]} \cup (m - 1) \right) \right) \\
& + \height\left( \left(\gamma + \langle n^m \rangle \right)' \setminus \left( \mu'_{[m - 1]} \cup (n - 1) \right)' \right) \displaybreak[2] \\
={} & n - p + [(r - l - 1) - (m - q)]
\end{align*}
owing two the equality in \eqref{2_eq_height_of_transpose}. 
\end{proof}

\begin{thm} [Murnaghan-Nakayama rule for Littlewood-Schur functions] \label{2_thm_MN_for_LS} Let $\mu$ be a partition. For any strictly positive integer $r$,
\begin{align} \label{2_thm_MN_for_LS_eq}
LS_\mu(-\calX; \calY) \left[ \power_r(-\calX) + (-1)^{r - 1} \power_r(\calY) \right] = \sum_{\substack{\lambda: \\ \lambda \setminus \mu \text{ is an $r$-ribbon}}} (-1)^{\height(\lambda \setminus \mu)} LS_\lambda(-\calX; \calY).
\end{align} 
\end{thm}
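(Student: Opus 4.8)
The plan is to imitate the classical determinantal proof of the Murnaghan--Nakayama rule for Schur functions given in Section~\ref{2_sec_det_proof_of_MN_for_Schur}, but applied to Moens and Van der Jeugt's determinantal formula (Theorem~\ref{2_thm_det_formula_for_Littlewood-Schur}). Since both sides of \eqref{2_thm_MN_for_LS_eq} are polynomials in $\calX \cup \calY$, it suffices to establish the identity when the elements of $\calX \cup \calY$ are pairwise distinct, so that Theorem~\ref{2_thm_det_formula_for_Littlewood-Schur} applies; write $n$, $m$ for the sizes of $\calX$, $\calY$ and $k$ for the $(m,n)$-index of $\mu$. If $k < 0$ then $(m+1,n+1) \in \mu \subseteq \lambda$ for every $\lambda$ occurring on the right, so both sides vanish by Corollary~\ref{1_cor_LS=0}; moreover only those $\lambda$ with $(m+1,n+1)\notin\lambda$ (equivalently, non-negative index) contribute on the right. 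The case $k = 0$ I would dispatch separately, since then $(m,n)\in\mu$ and Lemma~\ref{2_lem_ribbon_change_index} does not apply: by the factorization in Lemma~\ref{2_lem_factorization}, $LS_\mu(-\calX;\calY) = \Delta(\calY;\calX)\schur_\alpha(-\calX)\schur_\beta(\calY)$, and one expands $\power_r(-\calX)\schur_\alpha(-\calX)$ and $\power_r(\calY)\schur_\beta(\calY)$ by the classical rule (Theorem~\ref{2_thm_MN_rule}); the factor $(-1)^{r-1}$ in front of $\power_r(\calY)$ is exactly what converts $(-1)^{\height(\beta'\setminus\beta)}$ into $(-1)^{\height(\lambda\setminus\mu)}$ via the transpose relation \eqref{2_eq_height_of_transpose}, and reassembling with the factorization returns the right-hand side. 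Hence we may assume $k \geq 1$, so $(m,n)\notin\mu$.

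For $k \geq 1$ I would substitute the determinantal formula, write $\power_r(-\calX) = (-1)^r\power_r(\calX)$, and distribute $(-1)^r\power_r(\calX) + (-1)^{r-1}\power_r(\calY)$ over the Leibniz expansion of the determinant. Because of the zero block, in every nonzero term each of the $m-k$ rows coming from $\mu'$ must be matched to one of the $m$ columns coming from $\calY$; hence exactly $k$ of the $\calY$-columns are matched to $\calX$-rows, the other $n-k$ $\calX$-rows to the $\mu$-columns, and the other $m-k$ $\calY$-columns to the $\mu'$-rows. Multiplying such a term by $x_p^r$ (respectively $y_q^r$) either raises the exponent of a $\mu$-column entry $x_p^{\mu_j + n-m-j}$ (respectively a $\mu'$-row entry $y_q^{\mu'_i + m-n-i}$) by $r$ --- a \emph{diagonal} contribution --- or multiplies an entry $(x_p - y)^{-1}$ of the upper-left block by $x_p^r$ (respectively an entry $(x - y_q)^{-1}$ by $y_q^r$) --- a \emph{cross} contribution.

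The diagonal contributions are handled just as in Section~\ref{2_sec_det_proof_of_MN_for_Schur}: bumping a $\mu$-column exponent and resorting the $n-k$ $\mu$-columns by antisymmetry produces, via Lemma~\ref{2_lem_sequence_defn_for_ribbons} applied to the truncated sequence $\mu_{[n-k]}$, the matrices of the determinantal formula for those $\lambda$ with $\lambda\setminus\mu$ an $r$-ribbon lying in the first $n-k$ rows (so $\lambda_l = \mu_l$ for $l > n-k$; one checks $\lambda$ still has index $k$ and $\lambda'_i = \mu'_i$ for $i\leq m-k$), each weighted by $(-1)^{\height(\lambda\setminus\mu)}$, with $\varepsilon(\mu)$ turning into $\varepsilon(\lambda)$ precisely by the $(-1)^r$ out front, so the prefactor reassembles $LS_\lambda(-\calX;\calY)$. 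The symmetric argument on the $\mu'$-rows, using \eqref{2_eq_height_of_transpose} and the $(-1)^{r-1}$ in front of $\power_r(\calY)$, gives the ribbons lying in the first $m-k$ columns. For the cross contributions I would combine, for a fixed matching of $x_p$'s row to $y_q$'s column, the term $(-1)^r x_p^r(x_p-y_q)^{-1}$ from $\power_r(\calX)$ with the term $(-1)^{r-1}y_q^r(x_p-y_q)^{-1}$ from $\power_r(\calY)$; their sum is $(-1)^r(x_p^r - y_q^r)(x_p-y_q)^{-1} = (-1)^r\sum_{l=0}^{r-1}x_p^l y_q^{r-1-l}$, which converts the cross matching into an $\calX$-contribution $x_p^l$ (a new ``$\mu$-column'' of exponent $l$) together with a $\calY$-contribution $y_q^{r-1-l}$ (a new ``$\mu'$-row'' of exponent $r-1-l$). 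Reorganizing and invoking Lemma~\ref{2_lem_ribbon_change_index} --- which turns the data $\alpha(l) = (\mu_1-m,\dots,\mu_{n-1}-m,l)$, $\beta(l) = (\mu'_1-n,\dots,\mu'_{m-1}-n,r-1-l)$ into pairs $(\delta,\gamma)$ with $\lambda = (\langle m^n\rangle+\delta)\cup\gamma'$, and records the sorting sign as $(-1)^{r-1-l}(-1)^{\height(\lambda\setminus\mu)}$ --- yields, after another appeal to the factorization of Lemma~\ref{2_lem_factorization} for the index-$0$ function $LS_\lambda$, exactly the $r$-ribbons $\lambda\setminus\mu$ with $(m,n)\in\lambda$, again weighted by $(-1)^{\height(\lambda\setminus\mu)}$.

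Summing the two diagonal families and the cross family then produces the full right-hand side, once one checks that every $r$-ribbon $\lambda\setminus\mu$ with $LS_\lambda\neq 0$ lies in exactly one of the three classes --- confined to the first $n-k$ rows, confined to the first $m-k$ columns, or containing the box $(m,n)$ --- which follows from edgewise connectivity of ribbons together with the fact that the $(m-k)\times(n-k)$ rectangle lies inside $\mu$. The step I expect to be the real obstacle is the sign-and-index bookkeeping: reconciling $\varepsilon_{m,n}(\lambda)$ with $\varepsilon_{m,n}(\mu)$ in each case --- the index being unchanged in the first two but dropping to $0$ in the third, which changes the very shape of the matrix --- while simultaneously tracking the antisymmetry signs from resorting, the sorting signs furnished by Lemmas~\ref{2_lem_sequence_defn_for_ribbons} and \ref{2_lem_ribbon_change_index}, the exponent $l$ in $\sum_l x^l y^{r-1-l}$, and the $(-1)^r$, $(-1)^{r-1}$ prefactors, and verifying that everything collapses to a single $(-1)^{\height(\lambda\setminus\mu)}$. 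Degenerate situations (variables running out, $k = n$, empty sub-blocks) will also need a short separate check.
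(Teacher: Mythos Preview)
Your strategy is exactly the paper's, and the diagonal contributions are handled correctly. The cross contribution, however, is misidentified when $k \geq 2$.

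After replacing the single cross entry $(x_p - y_q)^{-1}$ by $\sum_l x_p^l\,y_q^{r-1-l}$, you are left with $n-k+1$ column-type monomials (the original $n-k$ from the $\mu$-columns together with $x_p^l$) and $m-k+1$ row-type monomials, \emph{plus} the remaining $k-1$ cross matchings, which still contribute factors $(x_i - y_{\sigma(i)})^{-1}$. The resulting Leibniz sum is therefore the determinantal formula for a partition $\lambda$ of $(m,n)$-index $k-1$, not index $0$. Concretely, Lemma~\ref{2_lem_ribbon_change_index} must be invoked with the point $(m+1-k,\,n+1-k)$ in place of $(m,n)$: the correct data are
\[
\alpha(l) = \bigl(\mu_1-(m{+}1{-}k),\dots,\mu_{n-k}-(m{+}1{-}k),\,l\bigr),\qquad
\beta(l) = \bigl(\mu'_1-(n{+}1{-}k),\dots,\mu'_{m-k}-(n{+}1{-}k),\,r{-}1{-}l\bigr),
\]
of lengths $n-k+1$ and $m-k+1$, matching the actual number of monomial factors. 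Your $\alpha(l),\beta(l)$ have lengths $n$ and $m$, which simply do not fit the data unless $k=1$; indeed, for $k\geq 2$ the lemma as you invoke it (with $(m,n)\notin\mu$ but $(m-1,n-1)\notin\mu$ as well) produces the \emph{empty} correspondence, whereas the cross sums are manifestly nonempty. Consequently the partitions $\lambda$ arising from the cross terms contain the box $(m+1-k,\,n+1-k)$, not $(m,n)$, so the factorization of Lemma~\ref{2_lem_factorization} is unavailable and one must instead reassemble $LS_\lambda$ via Theorem~\ref{2_thm_det_formula_for_Littlewood-Schur} at index $k-1$. Your final trichotomy is likewise stated with the wrong box: the argument you give from edgewise connectivity and the containment $\langle (m-k)^{n-k}\rangle\subset\mu$ actually proves the partition into ribbons confined to the first $n-k$ rows, ribbons confined to the first $m-k$ columns, and ribbons containing $(m+1-k,\,n+1-k)$. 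With these indices corrected, your outline becomes the paper's proof; your separate treatment of $k=0$ via factorization is a legitimate alternative to the paper's observation that the cross sums are empty in that case.
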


\begin{proof} Let $\calX = (x_1, \dots, x_n)$ and $\calY = (y_1, \dots, y_m)$ for some $n$ and $m$. Let $k$ denote the $(m,n)$-index of the partition $\mu$. By definition, the $(m,n)$-index of any partition $\lambda$ so that $\lambda \setminus \mu$ is a ribbon is equal to $k$ or $k -1$. In fact, the index of $\lambda$ is $k - 1$ if and only if $(m + 1 - k, n + 1 - k) \in \lambda$. In particular, both sides of the equation in \eqref{2_thm_MN_for_LS_eq} vanish if $k < 0$. If $k \geq 0$, then Theorem~\ref{2_thm_det_formula_for_Littlewood-Schur} states that
\begin{align}
\begin{split} \notag
\text{D} \defeq {} & \varepsilon(\mu) \frac{\Delta(\calX) \Delta(\calY)}{\Delta(\calY; \calX)} LS_\mu(-\calX; \calY) \left[ \power_r(-\calX) + (-1)^{r - 1} \power_r(\calY) \right] 
\end{split} \displaybreak[2] \\
\begin{split} \notag
={} & \det \begin{pmatrix} \left( (x_i - y_j)^{-1} \right)_{\substack{1 \leq i \leq n \\ 1 \leq j \leq m}} & \left( x_i^{\mu_j + n - m - j} \right)_{\substack{1 \leq i \leq n \\ 1 \leq j \leq n - k}} \\ \left( y_j^{\mu'_i + m - n - i} \right)_{\substack{1 \leq i \leq m - k \\ 1 \leq j \leq m}} & 0\end{pmatrix} \\
& \hspace{25pt} \times \left[ \sum_{p = 1}^n (-x_p)^r + (-1)^{r - 1} \sum_{s = 1}^m y_s^r \right]
.
\end{split}
\intertext{According to the Leibniz formula for determinants,}
\begin{split} \notag
\text{D} = {} & \left[ \sum_{\substack{\sigma \in S_{n + m - k}: \\ \forall i > n \: \sigma(i) \leq m}} \!\! \varepsilon(\sigma) \prod_{\substack{1 \leq i \leq n: \\ \sigma(i) \leq m}} (x_i - y_{\sigma(i)})^{-1} \prod_{\substack{1 \leq i \leq n: \\ \sigma(i) > m}} x_i^{\mu_{\sigma(i) - m} + n - \sigma(i)} \prod_{\makebox[48pt]{$\substack{n < i \leq n + m - k}$}} y_{\sigma(i)}^{\mu'_{i - n} + m - i}  \right] \\
& \hspace{25pt} \times \left[ (-1)^r \sum_{p = 1}^n x_p^r + (-1)^{r - 1} \sum_{s = 1}^m y_s^r \right]
.
\end{split}
\intertext{We remark that the restriction on the permutations $\sigma$ that appear in the sum is due to the block that is equal to a $(m - k) \times (n - k)$ zero matrix. As in the proof for Schur functions, we multiply out. However, unlike in the proof for Schur functions, it is not clear whether to interpret $x_p^r$ (resp.\ $y_s^r$) as part of the first or second (resp.\ first or third) product inside the sum over $\sigma$. In fact, which interpretation is the most natural depends on whether $q = \sigma(p) \in [m]$ or not (resp.\ $t = \sigma^{-1}(s) \in [n]$ or not). Therefore multiplying out results in the following four sums:}
\begin{split} \label{2_in_proof_MN_for_LS_like_Schur_x}
\text{D} = {} & (-1)^r \sum_{p = 1}^n \sum_{q = m + 1}^{m + n - k} \sum_{\substack{\sigma \in S_{n + m - k}: \\ \forall i > n \: \sigma(i) \leq m \\ \sigma(p) = q}} \varepsilon(\sigma) \left[ \prod_{\substack{1 \leq i \leq n: \\ \sigma(i) \leq m}} (x_i - y_{\sigma(i)})^{-1} \right] \\ 
& \hspace{25pt} \times \left[ x_p^{\mu_{q - m} + n - q + r} \prod_{\substack{1 \leq i \leq n: \\ \sigma(i) > m \\ i \neq p}} x_i^{\mu_{\sigma(i) - m} + n - \sigma(i)} \right] \left[ \prod_{\substack{n < i \leq n + m - k}} y_{\sigma(i)}^{\mu'_{i - n} + m - i} \right] \end{split} \displaybreak[2] \\
\begin{split} \label{2_in_proof_MN_for_LS_like_Schur_y}
& + (-1)^{r - 1} \sum_{s = 1}^m \sum_{t = n + 1}^{n + m - k} \sum_{\substack{\sigma \in S_{n + m - k}: \\ \forall i > n \: \sigma(i) \leq m \\ \sigma(t) = s}} \varepsilon(\sigma) \left[ \prod_{\substack{1 \leq i \leq n: \\ \sigma(i) \leq m}} (x_i - y_{\sigma(i)})^{-1} \right] \\
& \hspace{25pt} \times \left[ \prod_{\substack{1 \leq i \leq n: \\ \sigma(i) > m}} x_i^{\mu_{\sigma(i) - m} + n - \sigma(i)} \right] \left[ y_s^{\mu'_{t - n} + m - t + r} \prod_{\substack{n < i \leq n + m - k \\ i \neq t}} y_{\sigma(i)}^{\mu'_{i - n} + m - i} \right]  \end{split} \displaybreak[2] \\
\begin{split} \label{2_in_proof_MN_for_LS_unlike_Schur}
& + (-1)^r \sum_{p = 1}^n \sum_{q = 1}^m \sum_{\substack{\sigma \in S_{n + m - k}: \\ \forall i > n \: \sigma(i) \leq m \\ \sigma(p) = q}} \varepsilon(\sigma) x_p^r (x_p - y_q)^{-1} \left[ \prod_{\substack{1 \leq i \leq n: \\ \sigma(i) \leq m \\ i \neq p}} (x_i - y_{\sigma(i)})^{-1} \right] \\ 
& \hspace{25pt} \times \left[ \prod_{\substack{1 \leq i \leq n: \\ \sigma(i) > m}} x_i^{\mu_{\sigma(i) - m} + n - \sigma(i)} \right] \left[ \prod_{\substack{n < i \leq n + m - k}} y_{\sigma(i)}^{\mu'_{i - n} + m - i} \right] \\
& + (-1)^{r - 1} \sum_{s = 1}^m \sum_{t = 1}^{n} \sum_{\substack{\sigma \in S_{n + m - k}: \\ \forall i > n \: \sigma(i) \leq m \\ \sigma(t) = s}} \varepsilon(\sigma) \left[ y_s^r (x_t - y_s)^{-1} \prod_{\substack{1 \leq i \leq n: \\ \sigma(i) \leq m \\ i \neq t}} (x_i - y_{\sigma(i)})^{-1} \right] \\
& \hspace{25pt} \times \left[ \prod_{\substack{1 \leq i \leq n: \\ \sigma(i) > m}} x_i^{\mu_{\sigma(i) - m} + n - \sigma(i)} \right] \left[ \prod_{\substack{n < i \leq n + m - k}} y_{\sigma(i)}^{\mu'_{i - n} + m - i} \right]
.
\end{split}
\end{align}
Let us focus on the first sum (out of the four). Notice that the expression in \eqref{2_in_proof_MN_for_LS_like_Schur_x} is the empty sum if and only if $k = n$. For now we assume that $k < n$. Recalling the definition of $\mu[q]$ given in \eqref{2_eq_defn_mu[q]}, the two lines in \eqref{2_in_proof_MN_for_LS_like_Schur_x} can be reformulated as
\begin{align}
\begin{split} \notag
\text{first sum} \defeq{} & (-1)^r \sum_{p = 1}^n \sum_{q = m + 1}^{m + n - k} \sum_{\substack{\sigma \in S_{n + m - k}: \\ \forall i > n \: \sigma(i) \leq m \\ \sigma(p) = q}} \varepsilon(\sigma) \left[ \prod_{\substack{1 \leq i \leq n: \\ \sigma(i) \leq m}} (x_i - y_{\sigma(i)})^{-1} \right] \\ 
& \times \left[ \prod_{\substack{1 \leq i \leq n: \\ \sigma(i) > m}} x_i^{\mu[q - m]_{\sigma(i) - m} + n - \sigma(i)} \right] \left[ \prod_{\substack{n < i \leq n + m - k}} y_{\sigma(i)}^{\mu'_{i - n} + m - i} \right] 
\end{split} \displaybreak[2] \\
\begin{split} \notag
={} & (-1)^r \sum_{q = m + 1}^{m + n - k} \sum_{\substack{\sigma \in S_{n + m - k}: \\ \forall i > n \: \sigma(i) \leq m}} \varepsilon(\sigma) \left[ \prod_{\substack{1 \leq i \leq n: \\ \sigma(i) \leq m}} (x_i - y_{\sigma(i)})^{-1} \right] \\ 
& \times \left[ \prod_{\substack{1 \leq i \leq n: \\ \sigma(i) > m}} x_i^{\mu[q - m]_{\sigma(i) - m} + n - \sigma(i)} \right] \left[ \prod_{\substack{n < i \leq n + m - k}} y_{\sigma(i)}^{\mu'_{i - n} + m - i} \right]
. \end{split} 
\intertext{Another application of the Leibniz formula for determinants results in}
\begin{split} \notag
\text{first sum} ={} & (-1)^r \sum_{q = 1}^{n - k} \det \begin{pmatrix} \left( (x_i - y_j)^{-1} \right)_{\substack{1 \leq i \leq n \\ 1 \leq j \leq m}} & \left( x_i^{\mu[q]_j + n - m - j} \right)_{\substack{1 \leq i \leq n \\ 1 \leq j \leq n - k}} \\ \left( y_j^{\mu'_i + m - n - i} \right)_{\substack{1 \leq i \leq m - k \\ 1 \leq j \leq m}} & 0 \end{pmatrix}
.
\end{split}
\intertext{We truncate the sequence $\mu[q]$ to $\mu[q]_{[n - k]}$. If two elements of $\mu[q]_{[n - k]} + \rho_{n - k}$ are identical, the determinant vanishes; otherwise, there exists a partition $\kappa$ such that $\kappa + \rho_{n - k} \sorteq \mu[q]_{[n - k]} + \rho_{n - k}$. Hence, Lemma~\ref{2_lem_sequence_defn_for_ribbons} allows us to view the sum over $q$ as a signed sum over partitions $\kappa$ with the properties that $\kappa \setminus \mu_{[n - k]}$ is an $r$-ribbon and $l(\kappa) \leq n - k$. The sign corresponding to $\kappa$ with $\kappa + \rho_{n - k} \sorteq \mu[q]_{[n - k]} + \rho_{n - k}$ is given by the sign of the sorting permutation, which is equal to $(-1)^{\height(\kappa \setminus \mu_{[n - k]})}$. Therefore,}
\begin{split} \notag
\text{first sum} ={} & (-1)^r \sum_{\substack{\kappa: \\ \kappa \setminus \mu_{[n - k]} \text{ is an $r$-ribbon} \\ l(\kappa) \leq n - k}} (-1)^{\height \left( \kappa \setminus \mu_{[n - k]} \right)} \\
& \times\det \begin{pmatrix} \left( (x_i - y_j)^{-1} \right)_{\substack{1 \leq i \leq n \\ 1 \leq j \leq m}} & \left( x_i^{\kappa_j + n - m - j} \right)_{\substack{1 \leq i \leq n \\ 1 \leq j \leq n - k}} \\ \left( y_j^{\mu'_i + m - n - i} \right)_{\substack{1 \leq i \leq m - k \\ 1 \leq j \leq m}} & 0 \end{pmatrix}
.
\end{split}
\intertext{Consider the partition $\lambda = (\kappa_1, \dots, \kappa_{n - k}, \mu_{n - k + 1}, \dots)$. By the definition of the $(m,n)$-index, $(m - k, n - k) \in \mu$ and thus $(m - k, n - k) \in \kappa$, which entails that $\mu'_i = \lambda'_i$ for all $1 \leq i \leq m - k$. Moreover, the $(m,n)$-index of $\lambda$ is also $k$ because $(m + 1 - k, n + 1 - k) \not\in \mu$ is equivalent to $(m + 1 - k, n + 1 - k) \not\in \lambda$. We apply Theorem~\ref{2_thm_det_formula_for_Littlewood-Schur} to obtain}
\begin{split} \notag
\text{first sum} ={} & (-1)^r \sum_{\substack{\lambda: \\ \lambda \setminus \mu \text{ is an $r$-ribbon} \\ \text{for all } i > n - k \text{, } \lambda_i = \mu_i}} (-1)^{\height \left( \lambda \setminus \mu \right)} \varepsilon(\lambda) \frac{\Delta(\calX) \Delta(\calY)}{\Delta(\calY; \calX)} LS_\lambda (-\calX; \calY)
.
\end{split}
\intertext{As any partition $\lambda$ that appears in the sum has $(m,n)$-index $k$, $\left|\lambda_{[n - k]} \right| - \left|\mu_{[n - k]} \right| = r$ implies that $(-1)^r \varepsilon(\mu) = \varepsilon(\lambda)$. Hence,}
\begin{split} \label{2_in_proof_MN_for_LS_first_sum}
\text{first sum} ={} & \varepsilon(\mu) \frac{\Delta(\calX) \Delta(\calY)}{\Delta(\calY; \calX)} \sum_{\substack{\lambda: \\ \lambda \setminus \mu \text{ is an $r$-ribbon} \\ \text{for all } i > n - k \text{, } \lambda_i = \mu_i}} (-1)^{\height \left( \lambda \setminus \mu \right)} LS_\lambda (-\calX; \calY)
.
\end{split}
\end{align}
We remark that this equality also holds when $k = n$. Indeed, in this case the sums in \eqref{2_in_proof_MN_for_LS_like_Schur_x} and \eqref{2_in_proof_MN_for_LS_first_sum} are both empty. An analogous computation shows that the sum in \eqref{2_in_proof_MN_for_LS_like_Schur_y} is equal to
\begin{align}
\begin{split} \notag
\text{second sum} \defeq{} & (-1)^{r - 1} \sum_{s = 1}^m \sum_{t = n + 1}^{n + m - k} \sum_{\substack{\sigma \in S_{n + m - k}: \\ \forall i > n \: \sigma(i) \leq m \\ \sigma(t) = s}} \varepsilon(\sigma) \left[ \prod_{\substack{1 \leq i \leq n: \\ \sigma(i) \leq m}} (x_i - y_{\sigma(i)})^{-1} \right] \\
& \hspace{25pt} \times \left[ \prod_{\substack{1 \leq i \leq n: \\ \sigma(i) > m}} x_i^{\mu_{\sigma(i) - m} + n - \sigma(i)} \right] \left[ y_s^{\mu'_{t - n} + m - t + r} \prod_{\substack{n < i \leq n + m - k \\ i \neq t}} y_{\sigma(i)}^{\mu'_{i - n} + m - i} \right]  \end{split} \displaybreak[2] \\
\begin{split} \label{2_in_proof_MN_for_LS_second_sum}
={} & \varepsilon(\mu) \frac{\Delta(\calX) \Delta(\calY)}{\Delta(\calY; \calX)} \sum_{\substack{\lambda: \\ \lambda \setminus \mu \text{ is an $r$-ribbon} \\ \text{for all } j > m - k \text{, } \lambda'_j = \mu'_j}} (-1)^{\height \left( \lambda \setminus \mu \right)} LS_\lambda (-\calX; \calY)
.
\end{split}
\end{align}
Summing up the equalities in \eqref{2_in_proof_MN_for_LS_first_sum} and \eqref{2_in_proof_MN_for_LS_second_sum},
\begin{align*}
\text{first sum} + \text{second sum} = \varepsilon(\mu) \frac{\Delta(\calX) \Delta(\calY)}{\Delta(\calY; \calX)} \sum_{\substack{\lambda: \\ \lambda \setminus \mu \text{ is an $r$-ribbon} \\ (m + 1 - k, n + 1 - k) \not\in \lambda}} (-1)^{\height \left( \lambda \setminus \mu \right)} LS_\lambda (-\calX; \calY)
.
\end{align*}

It therefore remains to show that the two sums in \eqref{2_in_proof_MN_for_LS_unlike_Schur} correspond to the case when the $r$-ribbon $\lambda \setminus \mu$ contains the box with coordinates $(m + 1 - k, n + 1 - k)$. First observe that if $k = 0$, any partition $\lambda$ that contains the box $(m + 1 - k, n + 1 - k)$ has a negative $(m,n)$-index, meaning that $LS_\lambda(-\calX; \calY) = 0$. Hence, under the assumption that $k = 0$,
\begin{align*}
\text{first sum} + \text{second sum} = \varepsilon(\mu) \frac{\Delta(\calX) \Delta(\calY)}{\Delta(\calY; \calX)} \sum_{\substack{\lambda: \\ \lambda \setminus \mu \text{ is an $r$-ribbon}}} (-1)^{\height \left( \lambda \setminus \mu \right)} LS_\lambda (-\calX; \calY)
.
\end{align*}
In addition, the condition on the permutations $\sigma$ that appear in the sums entails that both sums in \eqref{2_in_proof_MN_for_LS_unlike_Schur} are empty whenever $k = 0$. This completes the proof for indexing partitions $\mu$ whose $(m,n)$-index is 0. 

Let us suppose that $k > 0$. Replacing the summation indices $s$ and $t$ by $q$ and $p$, respectively, we see that the expression in \eqref{2_in_proof_MN_for_LS_unlike_Schur} is equal to
\begin{align*}
\text{last sums} \defeq {} & (-1)^r \sum_{p = 1}^n \sum_{q = 1}^m \sum_{\substack{\sigma \in S_{n + m - k}: \\ \forall i > n \: \sigma(i) \leq m \\ \sigma(p) = q}} \varepsilon(\sigma) \frac{x_p^r - y_q^r}{x_p - y_q} \left[ \prod_{\substack{1 \leq i \leq n: \\ \sigma(i) \leq m \\ i \neq p}} (x_i - y_{\sigma(i)})^{-1} \right] \\ 
& \times \left[ \prod_{\substack{1 \leq i \leq n: \\ \sigma(i) > m}} x_i^{\mu_{\sigma(i) - m} + n - \sigma(i)} \right] \left[ \prod_{\substack{n < i \leq n + m - k}} y_{\sigma(i)}^{\mu'_{i - n} + m - i} \right] \displaybreak[2] \\
= & (-1)^r \sum_{l = 0}^{r - 1} \sum_{p = 1}^n \sum_{q = 1}^m \sum_{\substack{\sigma \in S_{n + m - k}: \\ \forall i > n \: \sigma(i) \leq m \\ \sigma(p) = q}} \varepsilon(\sigma) \left[ \prod_{\substack{1 \leq i \leq n: \\ \sigma(i) \leq m \\ i \neq p}} (x_i - y_{\sigma(i)})^{-1} \right] \\ 
& \times \left[ x_p^l \prod_{\substack{1 \leq i \leq n: \\ \sigma(i) > m}} x_i^{\mu_{\sigma(i) - m} + n - \sigma(i)} \right] \left[ y_q^{r - 1 - l} \prod_{\substack{n < i \leq n + m - k}} y_{\sigma(i)}^{\mu'_{i - n} + m - i} \right]
\end{align*}
For every triple $p$, $q$, $\tau$, we define a permutation $\tau \in S_{n + m - k + 1}$ by $\tau(p) = n + m - k + 1$, $\tau(n + m - k + 1) = q$ and $\tau(i) = \sigma(i)$ for all $i \neq p, n + m - k + 1$. We note for later reference that $\varepsilon(\tau) = -\varepsilon(\sigma)$. In addition, we recall the definitions of the following sequences: $$\alpha(l) = (\mu_1 - m - 1 + k, \dots, \mu_{n - k} - m - 1 + k, l)$$ and $$\beta(l) = (\mu'_1 - n - 1 + k, \dots, \mu'_{m - k} - n - 1 + k, r - 1 - l).$$ These definitions simplify the expression to
\begin{align*}
\text{last sums} ={} & (-1)^r \sum_{l = 0}^{r - 1} \sum_{p = 1}^n \sum_{q = 1}^m \sum_{\substack{\tau \in S_{n + m - k + 1}: \\ \forall i > n \: \tau(i) \leq m \\ \tau(p) = n + m - k + 1 \\ \tau(n + m - k + 1) = q}} -\varepsilon(\tau) \left[ \prod_{\substack{1 \leq i \leq n: \\ \tau(i) \leq m}} (x_i - y_{\tau(i)})^{-1} \right] \\ 
& \times \left[ \prod_{\substack{1 \leq i \leq n: \\ \tau(i) > m}} x_i^{\alpha(l)_{\tau(i) - m} + n - k + 1 - (\tau(i) - m)} \right] \left[ \prod_{\substack{n < i \\ i \leq n + m - k + 1}} y_{\tau(i)}^{\beta(l)_{i - n} + m - k + 1 - (i - n)} \right] \displaybreak[2] \\
={} & (-1)^r \sum_{l = 0}^{r - 1} \sum_{\substack{\tau \in S_{n + m - k + 1}: \\ \forall i > n \: \tau(i) \leq m}} -\varepsilon(\tau) \left[ \prod_{\substack{1 \leq i \leq n: \\ \tau(i) \leq m}} (x_i - y_{\tau(i)})^{-1} \right] \\ 
& \times \left[ \prod_{\substack{1 \leq i \leq n: \\ \tau(i) > m}} x_i^{\alpha(l)_{\tau(i) - m} + n - k + 1 - (\tau(i) - m)} \right] \left[ \prod_{\substack{n < i \\ i \leq n + m - k + 1}} y_{\tau(i)}^{\beta(l)_{i - n} + m - k + 1 - (i - n)} \right]
.
\intertext{Another application of the Leibniz formula for determinants results in}
\text{last sums} ={} & (-1)^{r + 1} \sum_{l = 0}^{r - 1} \\
& \times \det \begin{pmatrix} \left((x_i - y_j)^{-1}\right)_{\substack{1 \leq i \leq n \\ 1 \leq j \leq m}} & \left( x_i^{\alpha(l)_j + n - k + 1 - j} \right)_{\substack{1 \leq i \leq n \\ 1 \leq j \leq n - k + 1}} \\ \left( y_j^{\beta(l)_i + m - k + 1 - i} \right)_{\substack{1 \leq i \leq m - k + 1 \\ 1 \leq j \leq m}} & 0 \end{pmatrix}
.
\end{align*}
Lemma \ref{2_lem_ribbon_change_index} allows us to view the sum over $l$ as a signed sum over partitions $\lambda$ with the properties that $\lambda \setminus \mu$ is an $r$-ribbon and $(m - k + 1, n - k + 1) \in \lambda$. To keep track of which partition $\lambda$ is associated to the parameter $l$, we denote the partition $$\lambda = \left( \left\langle (m - k + 1)^{n - k + 1} \right\rangle + \delta \right) \cup \gamma'$$ where $\delta + \rho_{n - k + 1} \sorteq \alpha(l) + \rho_{n - k + 1}$ and $\gamma + \rho_{m - k + 1} \sorteq \beta(l) + \rho_{m - k + 1}$ by $\lambda(l)$ \emph{if it exists}; otherwise, the determinant corresponding to the summation index $l$ vanishes, allowing us to ignore this case:
\begin{align*}
\text{last sums} ={} & \sum_{\substack{0 \leq l \leq r - 1: \\ \lambda(l) \text{ exists}}} (-1)^{r + 1} (-1)^{r - 1 - l} (-1)^{\height(\lambda(l) \setminus \mu)} \\
& \times \det \begin{pmatrix} \left((x_i - y_j)^{-1}\right)_{\substack{1 \leq i \leq n \\ 1 \leq j \leq m}} & \left( x_i^{\lambda(l)_j + n - m - j} \right)_{\substack{1 \leq i \leq n \\ 1 \leq j \leq n - k + 1}} \\ \left( y_j^{\lambda(l)'_i + m - n - i} \right)_{\substack{1 \leq i \leq m - k + 1 \\ 1 \leq j \leq m}} & 0 \end{pmatrix}
.
\intertext{By definition, the $(m,n)$-index of any partition $\lambda(l)$ that appears in the sum is $k - 1$. Therefore, Theorem~\ref{2_thm_det_formula_for_Littlewood-Schur} states that}
\text{last sums} ={} & \sum_{\substack{0 \leq l \leq r - 1: \\ \lambda(l) \text{ exists}}} (-1)^{\height(\lambda(l) \setminus \mu)} (-1)^l \varepsilon(\lambda(l)) \frac{\Delta(\calX) \Delta(\calY)}{\Delta(\calY; \calX)} LS_{\lambda(l)} (-\calX; \calY)
.
\end{align*}
Recalling the visual definition of the parameter $l$ given on page \pageref{2_page_visual_defn_of_l}, we see that
\begin{multline*}
(-1)^l \varepsilon(\lambda(l)) = (-1)^l (-1)^{\left| \lambda_{[n - k + 1]} \right|} (-1)^{m(k - 1)} (-1)^{(k -1)(k -2)/2} \\ = (-1)^l (-1)^{\left| \mu_{[n - k]} \right| + l + (m - k + 1)} (-1)^{m(k - 1)} (-1)^{(k -1)(k - 2)/2} = \varepsilon(\mu)
.
\end{multline*}
Hence,
\begin{align*}
\text{last sums} ={} & \varepsilon(\mu) \frac{\Delta(\calX) \Delta(\calY)}{\Delta(\calY; \calX)} \sum_{\substack{\lambda: \\ \lambda \setminus \mu \text{ is an $r$-ribbon} \\ (m - k + 1, n - k + 1) \in \lambda}} (-1)^{\height(\lambda \setminus \mu)} LS_\lambda(-\calX; \calY)
,
\end{align*}
which completes the proof.
\end{proof}

\chapter{Overlap Identities for Littlewood-Schur Functions} \label{3_cha_overlap_ids}
\section{Introduction}
In this chapter we introduce an operation on partitions, which we call \emph{overlap}. We formally define the overlap of two partitions on page~\pageref{3_defn_overlap} but for now let it suffice to merely introduce the notation that will permit us to state the first and the second overlap identities for Schur functions: if a partition $\lambda$ is the $(m,n)$-overlap of $\mu$ and $\nu$, we write $\lambda = \mu \star_{m,n} \nu$. In addition, we denote its sign by $\varepsilon(\mu, \nu)$.

The first overlap identity states that for any set $\calX$ consisting of $m + n$ pairwise distinct variables,
\begin{align*}
\schur_{\mu \star_{m,n} \nu} (\calX) = \sum_{\substack{\calS, \calT}} \frac{\varepsilon(\mu, \nu) \schur_\mu(\calS) \schur_\nu(\calT)}{\Delta(\calS; \calT)}
\end{align*}
where the sum is over all disjoint subsets $\calS$ and $\calT$ of $\calX$ with $m$ and $n$ elements, respectively, so that their union is $\calX$. The symbol $\Delta(\calS; \calT)$ denotes the product of all pairwise differences between $s \in \calS$ and $t \in \calT$. Although the concept of overlap seems to be new, this result is not. Recalling that Schur functions can be defined as fractions of determinants, Dehaye derives this identity by expanding the determinant in the numerator with the help of Laplace expansion \cite{dehaye12}. Taking the idea of applying Laplace expansion to the determinantal definition as a starting point, we show a second overlap identity: let $\calS$ and $\calT$ be sets consisting of $m$ and $n$ variables, respectively, and set $\calX = \calS \cup \calT$. If $\Delta(\calS; \calT) \neq 0$, then
\begin{align*}
\schur_\lambda(\calX) ={} & \sum_{\substack{\mu, \nu: \\ \mu \star_{m,n} \nu = \lambda}} \frac{\varepsilon(\mu, \nu) \schur_\mu(\calS) \schur_\nu(\calT)}{\Delta(\calS; \calT)}
.
\end{align*}
In fact, the two identities for Schur functions presented here are mere corollaries to two of the main results of this chapter, namely the first and the second overlap identities for Littlewood-Schur functions. Littlewood-Schur functions are a generalization of Schur functions, whose combinatorial definition (which we have presented in Section~\ref{1_sec_LS_functions}) was first studied by Littlewood \cite{littlewood}: for any two sets of variables $\calX$ and $\calY$, the Littlewood-Schur function associated to a partition $\lambda$ is defined by
\begin{align*}
LS_\lambda(\calX; \calY) = \sum_{\mu,\nu} c^\lambda_{\mu \nu} \schur_\mu(\calX) \schur_{\nu'}(\calY)
\end{align*}
where $c^\lambda_{\mu \nu}$ are Littlewood-Richardson coefficients. As we have discussed in Chapter~\ref{2_cha_det_defn_LS}, Littlewood-Schur functions can also be described by a determinantal formula due to Moens and Van der Jeugt \cite{vanderjeugt}, which forms the basis for the overlap identities presented in this chapter.

Dropping a few technical conditions, the first and the second overlap identities for Littlewood-Schur functions have the following prerequisites: let $\calX$ and $\calY$ be sets consisting of $n$ and $m$ variables, respectively, and let $\lambda$ be a partition with $(m,n)$-index $k$. (Turn to page~\pageref{3_defn_index} for the definition of index.) The first overlap identity states that if $\lambda_{[n - k]} = \mu \star_{l, n - k - l} \nu$, then 
\begin{align*}
LS_{\lambda} (-\calX; \calY) ={} & \sum_{\calS, \calT} \frac{\varepsilon(\mu, \nu) LS_{\mu + \left\langle k^l \right\rangle}(-\calS; \calY) LS_{\nu \cup \lambda_{(n + 1 - k, n + 2 - k, \dots)}}(- \calT; \calY)}{\Delta(\calT; \calS)}
\end{align*}
where the sum ranges over all disjoint subsets $\calS$ and $\calT$ of $\calX$ with $l$ and $n - l$ elements, respectively, so that their union is $\calX$. An explanation of the symbols that may be new to a non-chronological reader can be found in Sections~\ref{3_section_sequences} and \ref{3_section_partitions}. Partitioning $\calX$ into two disjoint subsets, say $\calS$ and $\calT$, consisting of $l$ and $n - l$ variables, respectively, the second overlap identity states that
\begin{align*}
LS_{\lambda} (-\calX; \calY) ={} & \sum_{p = 0}^{\min\{l,m\}} \sum_{\substack{\calU, \calV}} \hspace{10pt} \sum_{\substack{\mu, \nu: \\ \mu \star_{l - p, n - k - l + p} \nu = \lambda_{[n - k]}}} \frac{\Delta(\calV; \calS) \Delta(\calT; \calU)}{\Delta(\calV; \calU) \Delta(\calT; \calS)} \\
& \times \varepsilon(\mu, \nu) LS_{\mu - \left\langle (m - k)^{l - p} \right\rangle} (-\calS; \calU) LS_{\nu \cup \lambda_{(n + 1 - k, n + 2 - k, \dots)}} (-\calT; \calV)
\end{align*}
where $\calU$ and $\calV$ range over all disjoint subsets of $\calY$ with $p$ and $m - p$ elements, respectively, so that their union is equal to $\calY$.

We also provide two visual descriptions for the notion of overlap, which are more intuitive than its formal definition. Let us illustrate the first visualization (which is formally stated in Proposition~\ref{3_prop_visual_interpretation_overlap}) on an example, as it is the reason why this operation is called the $(m,n)$-overlap of two partitions. Fix two non-negative integers $m$ and $n$, say 3 and 5, as well as a partition of length at most $m + n = 8$, say $\lambda = (4, 2, 2, 2, 2, 1, 0, 0)$. Notice that we have appended zeros, ensuring that $\lambda$ is represented by a sequence of length exactly 8. Pairs of partitions whose $(m,n)$-overlap equals $\lambda$ may be viewed as staircase walks in an $n \times m$ rectangle whose steps are labeled by the parts of $\lambda$. Let us consider the following diagram of a labeled staircase walk $\pi$ in a $5 \times 3$ rectangle:
\begin{center}
\begin{tikzpicture} 
\draw[step=0.5cm, thin] (0, 0) grid (2.5, 1.5);
\draw[ultra thick, ->] (0.5, 0) -- (0, 0);
\draw[ultra thick] (0.5, 0) -- (0.5, 0.5);
\draw[ultra thick] (0.5, 0.5) -- (2, 0.5);
\draw[ultra thick] (2, 0.5) -- (2, 1);
\draw[ultra thick] (2, 1) -- (2.5, 1);
\draw[ultra thick] (2.5, 1) -- (2.5, 1.5);
\node[anchor=west] at (2.5, 1.25) {\tiny{4}};
\node[anchor=south] at (2.25, 1) {\tiny{2}};
\node[anchor=west] at (2, 0.75) {\tiny{2}};
\node[anchor=south] at (1.75, 0.5) {\tiny{2}};
\node[anchor=south] at (1.25, 0.5) {\tiny{2}};
\node[anchor=south] at (0.75, 0.5) {\tiny{1}};
\node[anchor=west] at (0.5, 0.25) {\tiny{0}};
\node[anchor=south] at (0.25, 0) {\tiny{0}};
\end{tikzpicture} 
\end{center}
For the duration of this chapter we always walk down stairs that connect the top-right and the bottom-left corners of a rectangle. By means of this diagram, we can now describe the pair of partitions $\mu$, $\nu$ that correspond to the staircase walk $\pi$, thus giving an example of overlap. In fact, we will visualize the two partitions with the help of Ferrers diagrams, which are defined on page \pageref{3_page_Ferrers_diagram}. The partition $\mu$ is specified by the diagram on the right-hand side:
\begin{center}
\begin{tikzpicture} 
\draw[step=0.5cm, thin] (0, 0) grid (0.5, 1.5);
\draw[step=0.5cm, thin] (0.5, 0.5) grid (2, 1.5);
\draw[step=0.5cm, thin] (2, 1) grid (2.5, 1.5);
\draw[ultra thick, ->] (0.5, 0) -- (0, 0);
\draw[ultra thick] (0.5, 0) -- (0.5, 0.5);
\draw[ultra thick] (0.5, 0.5) -- (2, 0.5);
\draw[ultra thick] (2, 0.5) -- (2, 1);
\draw[ultra thick] (2, 1) -- (2.5, 1);
\draw[ultra thick] (2.5, 1) -- (2.5, 1.5);
\node[anchor=west] at (2.5, 1.25) {\tiny{4}};
\node[anchor=west] at (2, 0.75) {\tiny{2}};
\node[anchor=west] at (0.5, 0.25) {\tiny{0}};
\end{tikzpicture} 
\begin{tikzpicture} 
\node(arrow) at (-0.75, 0.74) {$\mapsto$};
\draw[step=0.5cm, thin] (0, 0) grid (0.5, 1.5);
\draw[step=0.5cm, thin] (0.5, 0.5) grid (3, 1.5);
\draw[step=0.5cm, thin] (3, 0.999) grid (4.5, 1.5);
\draw[decoration={brace, raise=5pt},decorate] (2.5, 1.5) -- node[above=6pt] {\small{4 additional boxes}} (4.5, 1.5);
\draw[decoration={brace, raise=5pt, mirror},decorate] (2, 0.5) -- node[below=6pt] {\small{2 additional boxes}} (3, 0.5);
\end{tikzpicture} 
\end{center}
In words, the labels attached to the vertical steps of $\pi$ indicate how many boxes must be added to each row of the Ferrers diagram consisting of the boxes that lie above $\pi$ in order to obtain the Ferrers diagram of the partition $\mu$. An analogous construction gives the Ferrers diagram of $\nu$:
\begin{center}
\begin{tikzpicture} 
\draw[step=0.5cm, thin] (0.5, 0) grid (2, 0.5);
\draw[step=0.5cm, thin] (2, 0) grid (2.5, 1);
\draw[ultra thick, ->] (0.5, 0) -- (0, 0);
\draw[ultra thick] (0.5, 0) -- (0.5, 0.5);
\draw[ultra thick] (0.5, 0.5) -- (2, 0.5);
\draw[ultra thick] (2, 0.5) -- (2, 1);
\draw[ultra thick] (2, 1) -- (2.5, 1);
\draw[ultra thick] (2.5, 1) -- (2.5, 1.5);
\node[anchor=south] at (2.25, 1) {\tiny{2}};
\node[anchor=south] at (1.75, 0.5) {\tiny{2}};
\node[anchor=south] at (1.25, 0.5) {\tiny{2}};
\node[anchor=south] at (0.75, 0.5) {\tiny{1}};
\node[anchor=south] at (0.25, 0) {\tiny{0}};
\end{tikzpicture} 
\begin{tikzpicture} 
\node(arrow) at (-0.45, 0.74) {$\mapsto$};
\draw[step=0.5cm, thin] (0, 0) grid (0.5, 1);
\draw[step=0.5cm, thin] (0.5, 0) grid (1.5, 1.5);
\draw[step=0.5cm, thin] (1.499, 0) grid (2, 2);
\end{tikzpicture} 
\begin{tikzpicture} 
\node(arrow) at (-1.5, 0.85) {$\xmapsto{\text{adjust orientation}}$
};
\draw[step=0.5cm, thin] (0, 0) grid (1, 2);
\draw[step=0.5cm, thin] (1, 0.5) grid (1.5, 2);
\draw[step=0.5cm, thin] (1.5, 1.499) grid (2, 2);
\end{tikzpicture} 
\end{center}
The only difference is that the resulting collection of boxes needs to be rotated by 180 degrees and then transposed to adjust their orientation. We conclude that the $(3,5)$-overlap of the partitions $\mu = (9, 6, 1)$ and $\nu = (4, 3, 3, 2)$ is $\lambda = (4, 2, 2, 2, 2, 1)$; in symbols, $\mu \star_{m,n} \nu = \lambda$. In brief, the $(3,5)$-overlap of a pair of partitions encodes which boxes must be deleted in order to assemble the diagrams of both partitions into a $5 \times 3$ rectangle, \textit{i.e.}\ it keeps track of where the two partitions overlap. In addition, we define the sign of an overlap to be $(-1)$ to the power of the number of boxes below the staircase walk that corresponds to it. In our example the sign is thus given by $\varepsilon(\mu, \nu) = (-1)^5$.

One application of this visualization for two overlapping partitions is a new proof of the dual Cauchy identity. Another application of the overlap identities presented in this chapter is the formula for mixed ratios of characteristic polynomials, which we will derive in Chapter~\ref{4_cha_mixed_ratios}.

\subsection{Structure of this chapter}
In Section~\ref{3_section_background_and_notation}, we give the required background on partitions, Schur functions and Littlewood-Schur functions. In Section~\ref{3_section_lapalace_expansion_for_LS}, we introduce the formal definition of overlap and then use Laplace expansion to derive two overlap identities for Littlewood-Schur functions. Section~\ref{3_section_visualizations_of_overlap} contains two visual characterizations for the set of all pairs of partitions with the same overlap. Applying these visualizations to the second overlap identity results in more overlap identities, which are listed in Section~\ref{3_section_more_overlap_identities}.

\section{Background and notation} \label{3_section_background_and_notation}
We encourage the chronological reader, who has read Chapters~\ref{1_cha_algebraic_combinatorics} and \ref{2_cha_det_defn_LS}, to skip the sections on Schur and Littlewood-Schur functions -- with the exception of the notation introduced in \eqref{3_eq_e(X)_defn} on page \pageref{3_eq_e(X)_defn}.

\subsection{Sequences and sets of variables} \label{3_section_sequences}
Throughout this chapter a sequence will be a \emph{finite} enumeration of elements, such as \label{symbol_sequence} $\calX = \left(\calX_1, \dots, \calX_n \right)$. Its length is the number of its elements, denoted by \label{symbol_length_of_sequence} $l(\calX) = n$. A subsequence $\calY$ of $\calX$ is a sequence given by $\calY_k = \calX_{n_k}$ where $1 \leq n_1 < n_2 < \dots \leq n$ is an increasing sequence of indices; in other words, if $K$ is a subsequence of \label{symbol_n_in_square_brackets} $[n] = (1,2, \dots, n)$, then the $K$-subsequence of $\calX$ is given by \label{symbol_subsequence_indexed_by_K} $\calX_K = \left(\calX_{K_1}, \dots, \calX_{K_{l(K)}}\right)$. We denote the complement of a subsequence \label{symbol_subsequence} $\calY \subset \calX$ by \label{symbol_complement_of_subsequence} $\calX \setminus \calY \subset \calX$. The union of two sequences \label{symbol_union_of_sequences} $\calX \cup \calY$ is obtained by appending $\calY$ to $\calX$; we sometimes add subscripts to indicate the lengths of the two sequences in question. All other operations on sequences, such as \label{symbol_sum_of_sequences} addition, are understood to be element wise.

For sequences whose elements lie in a ring, we define the following two functions: \label{3_page_Delta}
\begin{align*}
\Delta(\calX) = \prod_{1 \leq i < j \leq n} (\calX_i - \calX_j) \:\text{ and }\: \Delta(\calX; \calY) = \prod_{x \in \calX, y \in \calY} (x - y)
.
\end{align*}
If two sequences $\calX$ and $\calY$ of the same length are equal up to reordering their elements, we write \label{symbol_sorteq} $\calX \sorteq \calY$. In that case $\Delta(\calX) = \varepsilon(\sigma)\Delta(\calY)$, where $\varepsilon(\sigma)$ is the sign of the sorting permutation $\sigma$ with the property that $\sigma(\calX) = \calY$. We implicitly view all sets of variables as sequences but for simplicity of notation we will not fix the order of the variables explicitly. It is important, however, to stick to one order throughout a computation or within a formula.

\subsection{Partitions} \label{3_section_partitions}
A partition is a non-increasing sequence $\lambda = (\lambda_1, \dots, \lambda_n)$ of non-negative integers, called parts. If two partitions only differ by a sequence of zeros, we regard them as equal. By an abuse of notation, we say that the length of a partition is the length of the subsequence that consists of its positive parts. The size of a partition $\lambda$ is the sum of its parts, denoted $|\lambda|$.

\label{3_page_Ferrers_diagram} The Ferrers diagram of a partition $\lambda$ is defined as the set of points $(i,j) \in \Z \times \Z$ such that $1 \leq i \leq \lambda_j$; it is often convenient to replace the points by square boxes. Turn to page \pageref{3_ferrers_diagram} for some examples of Ferrers diagrams. In this chapter, we use the convention that for any partition $\lambda$ and any non-negative integer $i$, $(i, 0) \in \lambda$ and $(0, i) \in \lambda$, although the Ferrers diagram of $\lambda$ only contains points with strictly positive coordinates. Similarly, we define the $0$-th part of any partition to be infinitely large. This counter-intuitive usage will allow us to avoid case analysis during later computations. Given two partitions $\kappa$ and $\lambda$, we say that $\kappa$ is a subset of $\lambda$ if their Ferrers diagrams satisfy that containment relation. Note that $\kappa \subset \lambda$ is our shorthand for both subset and subsequence. It will be clear from the context whether we view $\kappa$ and $\lambda$ as sequences or diagrams. For instance, we will study sub\emph{sets} of partitions whose Ferrers diagram are rectangular -- not subsequences of constant sequences. We denote by $\langle m^n \rangle$ the partition $(m, \dots, m)$ of length $n$, whose Ferrers diagram is a rectangle.

The conjugate partition $\lambda'$ of $\lambda$ is given by the condition that the Ferrers diagram of $\lambda'$ is the transpose of the Ferrers diagram of $\lambda$, \textit{e.g.}\ the conjugate of $(5,5,2)$ is $(3,3,2,2,2)$. We note for later reference that if the union of two partitions $\mu$ and $\nu$ is a partition, then $(\mu \cup \nu)' = \mu' + \nu'$. 

\subsection{Schur functions} \label{3_section_schur}
We briefly present symmetric polynomials and Schur functions, following the conventions of Macdonald \cite{mac}. For each non-negative integer $r$, the $r$-th elementary symmetric polynomial is defined by
\begin{align*}
\elementary_r(\calX) = \sum_{\substack{\calY \subset \calX: \\ l(\calY) = r}} \prod_{y \in \calY} y
.
\end{align*}
We observe that for any set of variables $\calX$, the $l(\calX)$-th elementary polynomial $\elementary_{l(\calX)}(\calX)$ is just the product of all elements of $\calX$. This observation motivates the following non-standard notation:
\begin{align} \label{3_eq_e(X)_defn}
\elementary(\calX) = \prod_{x \in \calX} x
.
\end{align} 
The elementary polynomials are called symmetric as they are invariant under permutations of the elements of $\calX$. In this context, Schur functions are another family of symmetric polynomials, indexed by partitions. 

\begin{defn} [Schur functions] Let $\lambda$ be a partition and $\calX$ a set of pairwise distinct variables of length $n$. If $l(\lambda) > n$, then $\schur_\lambda(\calX) = 0$; otherwise, 
\begin{align*}
\schur_\lambda(\calX) = \frac{\det \left( x^{\lambda_j + n - j} \right)_{x \in \calX, 1 \leq j \leq n}}{\Delta(\calX)}
.
\end{align*}
This definition can be extended to sets of variables that contain repetitions given that the polynomial $\Delta(\calX)$ is a divisor of the determinant in the numerator.
\end{defn}
Using that the determinant is multilinear, one quickly checks that the Schur function $\schur_\lambda(\calX)$ is homogeneous of degree $|\lambda|$. The multilinearity of determinants also entails that for any set $\calX$ consisting of exactly $n$ variables, 
\begin{align}
\schur_{\lambda + \langle m^n \rangle} (\calX) = \elementary(\calX)^m \schur_\lambda(\calX)
.
\end{align} 

\subsection{Littlewood-Schur functions}
\begin{defn} [Littlewood-Schur functions] \label{3_defn_comb_LS} Let $\calX$ and $\calY$ be two sets of variables. Define
$$LS_\lambda(\calX; \calY) = \sum_{\mu, \nu} c^\lambda_{\mu \nu} \schur_\mu(\calX) \schur_{\nu'}(\calY)$$
where the Littlewood-Richardson coefficients $c^\lambda_{\mu \nu}$ are given in Definition~\ref{1_defn_Littlewood-Richardson_coefficients}.
\end{defn}

In this chapter, we will not work with the combinatorial definition of Littlewood-Schur functions. Instead, the identities for Littlewood-Schur functions described in
Section~\ref{3_section_two_overlap_identities_for_LS_functions} are based on a determinantal formula for Littlewood-Schur functions discovered by Moens and Van der Jeugt [MdJ03]. In order to state their result, we need
to introduce the notion of an index of a partition.

\begin{defn} [index of a partition] \label{3_defn_index} The $(m,n)$-index of a partition $\lambda$ is the largest (possibly negative) integer $k$ which satisfies $(m + 1 - k, n + 1 - k) \not\in \lambda$ and $k \leq \min\{m,n\}$. Making use of the convention introduced in Section~\ref{3_section_partitions}, it is equivalent to define the $(m,n)$-index of $\lambda$ as the smallest integer $k$ so that $(m - k, n - k) \in \lambda$.
\end{defn}

It is worth noting that this definition is not equivalent to the definition used in \cite{vanderjeugt}. They work with a similar notion of index except that it is not invariant under conjugation. 

Given the Ferrers diagram of a partition $\lambda$, its $(m,n)$-index can be read off visually:
If $(m,n) \not\in \lambda$, then $k$ is the side of the largest square with bottom-right corner $(m,n)$ that fits next to the diagram of the partition $\lambda$. If $(m,n) \in \lambda$, then $-k$ is the side of the largest square with top-left corner $(m,n)$ that fits inside the diagram of $\lambda$. Let us illustrate this by a sketch: the area colored in gray is the diagram of the partition $\lambda = (7, 4, 2, 2)$.

\begin{center}
\begin{tikzpicture}
\fill[black!10!white] (0, 0) rectangle (1, 2);
\fill[black!10!white] (1, 1) rectangle (2, 2);
\fill[black!10!white] (2, 1.5) rectangle (3.5, 2);
\draw[step=0.5cm, thin] (0, 0) grid (1, 2);
\draw[step=0.5cm, thin] (1, 0.999) grid (2, 2);
\draw[step=0.5cm, thin] (2, 1.499) grid (3.5, 2);
\draw (1, 0) rectangle (1.5, -0.5);
\draw (2, 1.5) rectangle (3, 0.5);
\fill[white] (1, 1.5) rectangle (1.5, 1);
\draw (1, 1.5) rectangle (1.5, 1);
\node[anchor=north west] at (1.5, -0.5) {$\scriptstyle{(3, 5)}$};
\node[anchor=north west] at (3, 0.5) {$\scriptstyle{(6, 3)}$};
\node[anchor=south east] at (1, 1.5) {$\scriptstyle{(2, 1)}$};
\node at (1.5, -0.5) {\tiny{\textbullet}};
\node at (3, 0.5) {\tiny{\textbullet}};
\node at (1, 1.5) {\tiny{\textbullet}};
\draw[decoration={brace, raise=5pt},decorate] (3, 1.5) -- node[right=6pt] {$\scriptstyle{k}$} (3, 0.5);
\draw[decoration={brace, raise=5pt, mirror},decorate] (1.5, -0.5) -- node[right=6pt] {$\scriptstyle{k}$} (1.5, 0);
\draw[decoration={brace, raise=5pt},decorate] (1, 1) -- node[left=6pt] {$\scriptstyle{-k}$} (1, 1.5);
\end{tikzpicture}
\end{center}
We see that the $(6,3)$-index of $\lambda$ is 2, its $(3, 5)$-index is 1, and its $(2,1)$-index is $-1$. 

\begin{thm} [determinantal formula for Littlewood-Schur functions, adapted from \cite{vanderjeugt}] \label{3_thm_det_formula_for_Littlewood-Schur}
Let $\calX$ and $\calY$ be sets of variables of length $n$ and $m$, respectively, so that the elements of $\calX \cup \calY$ are pairwise distinct. Let $\lambda$ be a partition with $(m,n)$-index $k$. If $k$ is negative, then $LS_\lambda(-\calX; \calY) = 0$; otherwise,
\begin{align*}
LS_\lambda(-\calX; \calY) ={} & \varepsilon(\lambda) \frac{\Delta(\calY; \calX)}{\Delta(\calX) \Delta(\calY)} \det \begin{pmatrix} \left( (x - y)^{-1} \right)_{\substack{x \in \calX \\ y \in \calY}} & \left( x^{\lambda_j + n - m - j} \right)_{\substack{x \in \calX \\ 1 \leq j \leq n - k}} \\ \left( y^{\lambda'_i + m - n - i} \right)_{\substack{1 \leq i \leq m - k \\ y \in \calY}} & 0\end{pmatrix}
\end{align*}
where $\varepsilon(\lambda) = (-1)^{\left|\lambda_{[n - k]} \right|} (-1)^{mk} (-1)^{k(k - 1)/2}$.
\end{thm}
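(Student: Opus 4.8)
The plan is to verify that the function defined by the right-hand side of the displayed formula satisfies the five characterizing properties of Littlewood-Schur functions collected in Theorem~\ref{2_thm_5_properties_of_LS}; by that theorem the right-hand side must then coincide with $LS_\lambda(-\calX;\calY)$. This is exactly the strategy already carried out in Section~\ref{2_sec_proof_of_det_formula_for_LS}, so the present statement follows essentially verbatim from the lemmas established there: homogeneity (Lemma~\ref{2_lem_homogeneity_with_det_formula}), double-symmetry, factorization (Lemma~\ref{2_lem_factorization}), cancellation, and restriction. Writing $LS^{\ast}_\lambda(-\calX;\calY)$ for the right-hand side (and $0$ when the index $k$ is negative), one first observes that multiplying each $x$-row by $\prod_{y\in\calY}(x-y)$ absorbs the $\Delta(\calY;\calX)$ factor, that the resulting determinant is divisible by $\Delta(\calX)\Delta(\calY)$ because coinciding variables produce equal rows or columns, and that a degree count using $(m-k,n-k)\in\lambda$, $(m+1-k,n+1-k)\notin\lambda$ gives homogeneity of degree $|\lambda|$. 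Double-symmetry is immediate from the skew-symmetry of $\Delta(\calX)$ and of the determinant in $\calX$, together with the transposition identity $LS^{\ast}_{\lambda'}(-\calX;\calY)=LS^{\ast}_\lambda(\calY;-\calX)$ of Corollary~\ref{2_cor_transposition_invariance}, which also reduces all the $\calY$-side checks to $\calX$-side checks.

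Next I would record that the $(m,n)$-index of $\lambda$ being $0$ is precisely the hypothesis of the factorization property, in which case the zero block forces the determinant to split as a product of two Vandermonde-type determinants, yielding $\varepsilon(\lambda)(-1)^{mn}\Delta(\calY;\calX)\schur_\alpha(\calX)\schur_\beta(\calY)$, and the sign identity $\varepsilon(\lambda)(-1)^{mn}=(-1)^{|\alpha|}$ converts $\schur_\alpha(\calX)$ into $\schur_\alpha(-\calX)$. For cancellation one lets $x_n\to y_m$: after moving a factor $(x_n-y_m)$ from $\Delta(\calY;\calX)$ onto the $n$-th row, all entries of that row vanish except a single $1$ in the $m$-th column, so Laplace expansion along that row deletes the $n$-th row and $m$-th column, and the sign bookkeeping $(-1)^m\varepsilon_{m,n}(\lambda)\varepsilon_{m-1,n-1}(\lambda)=1$ closes the step. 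Restriction ($x_n=0$) is the genuinely laborious case: when $k=0$ it follows from factorization, and when $k>0$ one distinguishes whether the $(m,n-1)$-index equals $k$ or $k-1$; the latter subcase is handled by the auxiliary matrix identity of Lemma~\ref{2_lem_alternative_matrix_for_det_formula_LS} (replacing $(x_i-y_j)^{-1}$ by $x_i^{-r}y_j^r(x_i-y_j)^{-1}$ via row operations), while the former subcase reduces, after repeated Laplace expansions and the Leibniz formula, to showing that a certain symmetric polynomial $\mathrm P_k$ vanishes identically for all $k\ge 2$.

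The main obstacle is precisely this vanishing of $\mathrm P_k$. I would prove $\mathrm P_k=0$ by induction on $k$: the base case $k=2$ is a short direct expansion, and for the inductive step one specializes $x_p=y_q$, uses that the product $\prod_{i}\prod_{j\neq\tau(i)}(x_i-y_j)$ kills every term with $\tau(p)\neq q$, and recognizes the surviving sum (up to sign) as $\mathrm P_{k-1}$, which vanishes by hypothesis; hence $\mathrm P_{k+1}$ is divisible by $\prod_{p,q}(x_p-y_q)$, and a degree count plus the specialization $x_i=1$, $y_j=0$ (which gives $\sum_{\tau\in S_{k+1}}-\varepsilon(\tau)=0$) shows the proportionality constant is $0$. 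Once all five properties are in hand, Theorem~\ref{2_thm_5_properties_of_LS} yields the formula for pairwise distinct variables in $\calX\cup\calY$, and since both sides are polynomials in $\calX\cup\calY$ the distinctness hypothesis can be dropped. Finally, the claim that $LS_\lambda(-\calX;\calY)=0$ when $k<0$ matches Corollary~\ref{1_cor_LS=0}, since negative index means $\lambda$ contains the box $(m+1,n+1)$.
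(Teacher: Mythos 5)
Your proposal reproduces the paper's own proof of this theorem (given for Theorem~\ref{2_thm_det_formula_for_Littlewood-Schur} in Section~\ref{2_sec_proof_of_det_formula_for_LS}) step for step: verify the five characterizing properties of Theorem~\ref{2_thm_5_properties_of_LS}, with the hard work isolated in the restriction property, the auxiliary row-operation Lemma~\ref{2_lem_alternative_matrix_for_det_formula_LS}, and the inductive vanishing of $\mathrm{P}_k$. It is correct and essentially identical to the paper's argument, including the final polynomial-extension remark to drop the distinctness hypothesis.
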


Clearly, the sign $\varepsilon(\lambda)$ does not only depend on the partition $\lambda$, but also on the lengths of the sets of variables $\calX$ and $\calY$. However, the additional parameters $m$ and $n$ are omitted for simplicity of notation. Owing to this determinantal formula, it becomes a linear algebra exercise to check basic properties of Littlewood-Schur functions, such as that $LS_\lambda(-\calX; \calY)$ is a homogeneous polynomial of degree $|\lambda|$, which possesses the property that $LS_{\lambda'}(\calX; \calY) = LS_\lambda(\calY; \calX)$. In addition, it follows easily that Littlewood-Schur functions are symmetric in both sets of variables separately; more precisely, $LS_\lambda(-\calX; \emptyset) = \schur_\lambda(-\calX)$ and $LS_\lambda(\emptyset;\calY) = \schur_{\lambda'}(\calY)$. In fact, the solutions to these exercises can be found in Section~\ref{2_sec_proof_of_det_formula_for_LS}. Theorem~\ref{3_thm_det_formula_for_Littlewood-Schur} also allows us to give new elementary proofs for some old results, such a special case of Littlewood's formula for Littlewood-Schur functions whose partition is a square.

\begin{cor} [\cite{littlewood}] \label{3_cor_littlewood} Let $\calX$ and $\calY$ be sets of variables with $n$ and $m$ elements, respectively. For any integer $l \geq 0$,
\begin{align} \label{3_cor_littlewood_eq}
LS_{\left\langle (m + l)^n \right\rangle} (-\calX; \calY) = \elementary(-\calX)^l \Delta(\calY; \calX)
.
\end{align}
\end{cor}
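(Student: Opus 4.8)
The plan is to apply the determinantal formula of Theorem~\ref{3_thm_det_formula_for_Littlewood-Schur} to the rectangular partition $\lambda = \langle (m+l)^n \rangle$. The first step is to pin down its $(m,n)$-index $k$. Since $l \geq 0$, the $n$-th part of $\lambda$ has length $m+l \geq m$, so the box $(m,n)$ lies in the Ferrers diagram of $\lambda$ and hence $k \leq 0$; on the other hand $\lambda$ has only $n$ rows, so $(m+1,n+1) \notin \lambda$ and $k \geq 0$. Thus $k = 0$, which is precisely the factorization range $\lambda_n \geq m \geq \lambda_{n+1}$, and the formula applies with sign $\varepsilon(\lambda) = (-1)^{|\lambda_{[n]}|} = (-1)^{n(m+l)}$.

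With $k = 0$ the matrix appearing in Theorem~\ref{3_thm_det_formula_for_Littlewood-Schur} has block form $\begin{pmatrix} A & B \\ C & 0 \end{pmatrix}$, where $A = \big((x-y)^{-1}\big)_{x \in \calX,\, y \in \calY}$ is $n \times m$, $B = \big(x^{\lambda_j + n - m - j}\big)_{x \in \calX,\, 1 \leq j \leq n}$ is square of size $n$, $C = \big(y^{\lambda'_i + m - n - i}\big)_{1 \leq i \leq m,\, y \in \calY}$ is square of size $m$, and the $m \times n$ bottom-right block is zero. Laplace expansion along the bottom $m$ rows, whose last $n$ entries vanish, gives $\det \begin{pmatrix} A & B \\ C & 0 \end{pmatrix} = (-1)^{mn} \det(B) \det(C)$. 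Now $\lambda_j = m+l$ for $j \leq n$ and $\lambda'_i = n$ for $i \leq m \leq m+l$, so the exponents collapse to $l + n - j$ and $m - i$; the Vandermonde evaluation then yields $\det(B) = \elementary(\calX)^l\, \Delta(\calX)$ and $\det(C) = \Delta(\calY)$.

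Substituting back, the factors $\Delta(\calX)$ and $\Delta(\calY)$ cancel the denominator, leaving $LS_\lambda(-\calX;\calY) = \varepsilon(\lambda)(-1)^{mn}\, \elementary(\calX)^l\, \Delta(\calY;\calX)$. Since $\varepsilon(\lambda)(-1)^{mn} = (-1)^{n(m+l)+mn} = (-1)^{nl}$ and $\elementary(-\calX)^l = (-1)^{nl}\elementary(\calX)^l$ (as $\calX$ has $n$ elements), the signs combine to give exactly $\elementary(-\calX)^l\, \Delta(\calY;\calX)$. Finally, both sides are polynomials in $\calX \cup \calY$, so the pairwise-distinctness hypothesis can be dropped, as remarked after the proof of the determinantal formula.

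I expect the only genuinely delicate point to be the sign bookkeeping: keeping track simultaneously of $\varepsilon(\lambda)$, the $(-1)^{mn}$ produced by the Laplace expansion, and the $(-1)^{nl}$ hidden inside $\elementary(-\calX)^l$. This is a computation to be done carefully rather than cleverly. As an alternative (which I would relegate to a remark, since the stated point of the section is to illustrate Theorem~\ref{3_thm_det_formula_for_Littlewood-Schur}), one may bypass the determinant entirely: $\lambda = \langle m^n\rangle + \langle l^n\rangle$ is of the form $(\langle m^n\rangle + \alpha) \cup \beta'$ with $\alpha = \langle l^n\rangle$ and $\beta = \emptyset$, so the factorization property of Littlewood-Schur functions (see Lemma~\ref{2_lem_factorization}) immediately gives $LS_\lambda(-\calX;\calY) = \Delta(\calY;\calX)\, \schur_{\langle l^n\rangle}(-\calX)\, \schur_\emptyset(\calY) = \elementary(-\calX)^l\, \Delta(\calY;\calX)$, using $\schur_{\langle l^n\rangle}(-\calX) = \elementary(-\calX)^l$.
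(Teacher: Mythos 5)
Your proof is correct and takes essentially the same route as the paper's: apply the Moens–Van der Jeugt determinantal formula with $(m,n)$-index $k=0$, expand the block determinant into $(-1)^{mn}\det(B)\det(C)$, recognize two Vandermonde determinants, and let the signs cancel. Your alternative remark (factorization) is sound as well, though note that Lemma~\ref{2_lem_factorization} as stated concerns the right-hand side of the determinantal formula, so for a proof that truly bypasses the determinant you would want to cite the earlier (unnamed) factorization lemma for $LS_\lambda$ in Section~\ref{2_sec_5_characterizing properties}.
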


In this proof we will omit obvious subscripts, such as $x \in \calX$, as they clutter up the block matrices unnecessarily. Throughout this chapter we take the liberty of omitting similarly intuitive subscripts during proofs whenever we feel that they are more hindrance than help.

\begin{proof} First suppose that the elements of $\calX \cup \calY$ are pairwise distinct. Given that the $(m,n)$-index of the partition $\left\langle (m + l)^n \right\rangle$ is 0,
\begin{align*}
LS_{\left\langle (m + l)^n \right\rangle}(-\calX; \calY) ={} & (-1)^{(m + l)n} \frac{\Delta(\calY; \calX)}{\Delta(\calX) \Delta(\calY)} \det \begin{pmatrix}  (x - y)^{-1}  & \left( x^{n + l - j} \right)_{\substack{1 \leq j \leq n}} \\ \left( y^{m - i} \right)_{\substack{1 \leq i \leq m}} & 0\end{pmatrix}
.
\intertext{The two off-diagonal blocks in the matrix are squares. Hence,}
LS_{\left\langle (m + l)^n \right\rangle}(-\calX; \calY) ={} & (-1)^{(m + l)n + mn} \frac{\Delta(\calY; \calX)}{\Delta(\calX) \Delta(\calY)} \det \left( x^{n + l - j} \right)_{\substack{1 \leq j \leq n}} \det \left( y^{m - i} \right)_{\substack{1 \leq i \leq m }}
.
\intertext{Using that the determinant is multilinear, we infer that both determinants are essentially Vandermonde determinants, which cancel with $\Delta(\calX)$ and $\Delta(\calY)$, respectively. This allows us to conclude that}
LS_{\left\langle (m + l)^n \right\rangle}(-\calX; \calY) ={} & (-1)^{ln} \Delta(\calY; \calX) \elementary(\calX)^l = \Delta(\calY; \calX) \elementary(-\calX)^l
.
\end{align*}
If the elements of $\calX \cup \calY$ are not pairwise distinct, the equality in \eqref{3_cor_littlewood_eq} is a direct consequence of the fact that both sides are polynomials in $\calX \cup \calY$, which agree on infinitely many points.
\end{proof}

\section{Laplace expansion for Littlewood-Schur functions} \label{3_section_lapalace_expansion_for_LS}
In this section we present two equalities on Littlewood-Schur functions which are based on the Laplace expansion of determinants. Let us quickly recall this classical result from linear algebra. For an $n \times n$ matrix $A = \left( a_{ij} \right)$ and two subsequences $I, J \subset [n]$, we need the following notation:
\begin{align*} A_{IJ} = \left(a_{ij}\right)_{\substack{i \in I \\ j \in J}} \text{ and its complement } A_{\bar{I}\bar{J}} = \left(a_{ij}\right)_{\substack{i \not\in I \\ j \not\in J}}
.
\end{align*}

\begin{lem} [Laplace expansion] \label{3_lem_laplace_expansion_matrix} Let $A$ be an $n \times n$ matrix. For a subsequence $K \subset [n]$, the determinant of $A$ can be expanded in the two following ways:
\begin{enumerate}
\item $\displaystyle \det(A) = \sum_{\substack{J \subset [n]: \\ l(J) = l(K)}} \varepsilon(\sigma(K, J)) \det \left(A_{KJ} \right) \det \left( A_{\bar{K}\bar{J}} \right)$
\item $\displaystyle \det(A) = \sum_{\substack{I \subset [n]: \\ l(I) = l(K)}} \varepsilon(\sigma(I, K)) \det \left(A_{IK} \right) \det \left( A_{\bar{I} \bar{K}} \right)$
\end{enumerate}
where $\varepsilon(\sigma(I,J))$ is the sign of the permutation $\sigma(I,J) \in S_n$ given by the conditions that $\sigma(I) = J$ (and thus $\sigma(\bar{I}) = \bar{J}$) and that $\sigma$ respects the relative order of the indices in $I$ and $J$ as well as in $\bar{I}$ and $\bar{J}$.
\end{lem}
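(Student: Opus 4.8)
The statement to prove is the classical Laplace expansion of a determinant along a fixed set of rows or columns (Lemma~\ref{3_lem_laplace_expansion_matrix}). Since this is a standard result from linear algebra that the author states merely to fix notation before using it, I would give a short, self-contained proof that derives it directly from the Leibniz formula for the determinant.

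The plan is to prove the first identity (expansion along the column set $K$); the second follows by applying the first to the transpose $A^t$, since $\det(A) = \det(A^t)$, $(A^t)_{KJ} = (A_{JK})^t$, and $\varepsilon(\sigma(K,J)) = \varepsilon(\sigma(J,K))$ (the sorting permutation and its inverse have the same sign). So I would first reduce the second claim to the first in one sentence, then focus entirely on the first.

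For the first identity, I would start from $\det(A) = \sum_{\sigma \in S_n} \varepsilon(\sigma) \prod_{j=1}^n a_{\sigma(j)j}$ and partition the symmetric group according to the set $I = \sigma(K) \subset [n]$, which ranges over all subsequences of $[n]$ of length $l(K)$. For a fixed such $I$, every $\sigma$ with $\sigma(K) = I$ factors uniquely through the ``canonical'' permutation $\sigma(K,I)$ that sends $K$ to $I$ and $\bar K$ to $\bar I$ preserving relative order: write $\sigma = \sigma(K,I) \circ \tau$ where $\tau$ preserves the sets $K$ and $\bar K$, hence $\tau = \tau_1 \times \tau_2$ with $\tau_1$ a bijection of $K$ and $\tau_2$ a bijection of $\bar K$. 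Then $\varepsilon(\sigma) = \varepsilon(\sigma(K,I)) \varepsilon(\tau_1) \varepsilon(\tau_2)$, and the product $\prod_j a_{\sigma(j)j}$ splits as the product of the entries indexed by $K$ (a term contributing to $\det(A_{IK})$, after relabeling via the order-preserving identifications of $K$ with $[l(K)]$ and $I$ with $[l(K)]$) times the entries indexed by $\bar K$ (contributing to $\det(A_{\bar I \bar K})$). Summing over $\tau_1$ and $\tau_2$ independently reconstitutes the two smaller determinants, giving
\begin{align*}
\det(A) = \sum_{\substack{I \subset [n]: \\ l(I) = l(K)}} \varepsilon(\sigma(K,I)) \det(A_{IK}) \det(A_{\bar I \bar K}),
\end{align*}
which, after renaming $I$ to $J$ and noting $\varepsilon(\sigma(K,J)) = \varepsilon(\sigma(I,K))$ is exactly the asserted formula (the author's indexing in the first bullet sums over the column-complement set rather than the row set, but these are in bijection and the sign matches).

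The only genuinely delicate point is bookkeeping the sign: one must check carefully that factoring an arbitrary $\sigma$ with $\sigma(K)=I$ as $\sigma(K,I)\circ(\tau_1\times\tau_2)$ is well-defined and that the sign is multiplicative under this factorization, and that the order-preserving identifications used to read off $\det(A_{IK})$ and $\det(A_{\bar I\bar K})$ introduce no extra sign (they don't, precisely because $\sigma(K,I)$ is defined to respect relative order on both $K,\bar K$ and $I,\bar I$). This is the main obstacle in the sense that it is the one place where care is needed; everything else is a routine reorganization of the Leibniz sum. Alternatively, since this is a textbook fact, I might simply cite it (e.g. \cite{mac} or a standard linear algebra reference) rather than reproduce the proof, depending on how much detail the surrounding text warrants.
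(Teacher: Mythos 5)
The paper does not actually give a proof of Lemma~\ref{3_lem_laplace_expansion_matrix}: it is recalled as a classical linear-algebra fact (just as the single-row/column version is stated without proof in Chapter~2), and the author simply uses it. So there is no proof in the paper to compare yours against; your instinct at the end, that one might simply cite a standard reference rather than reproduce the argument, matches what the paper actually does.

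That said, your proof is mathematically sound, and the strategy --- rearrange the Leibniz sum $\det(A)=\sum_{\sigma}\varepsilon(\sigma)\prod_{j}a_{\sigma(j)j}$, partition $S_n$ by the image $I=\sigma(K)$, factor each $\sigma$ with $\sigma(K)=I$ uniquely as $\sigma(K,I)\circ(\tau_1\times\tau_2)$ with $\tau_1\in S_K$, $\tau_2\in S_{\bar K}$, and sum over $\tau_1,\tau_2$ independently to reconstitute the two minors --- is the standard way to prove the statement. The multiplicativity of the sign under this factorization, and the fact that the order-preserving identifications of $K$ with $[l(K)]$ and $I$ with $[l(K)]$ introduce no extra sign precisely because $\sigma(K,I)$ is order-preserving on both $K,\bar K$, are exactly the points that need care, and your writeup correctly flags them.

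One small bookkeeping slip worth fixing: in the Leibniz formula $\prod_{j=1}^n a_{\sigma(j)j}$ the running index $j$ is a \emph{column} index, so when you take $K$ to be a subset of the $j$'s and set $I=\sigma(K)$, the set $K$ appears as the second (column) index of the minors $A_{IK}$ and $A_{\bar I\bar K}$. Your derivation therefore directly yields the \emph{second} bullet of the lemma (after noting $\varepsilon(\sigma(K,I))=\varepsilon(\sigma(I,K))$ since $\sigma(I,K)=\sigma(K,I)^{-1}$), and it is the \emph{first} bullet that then follows by passing to the transpose. Your opening paragraph has the two roles reversed, and the parenthetical remark about ``summing over the column-complement set'' does not quite describe what the first bullet does (it sums over all column sets $J$ of the correct size, with $K$ fixed as the row set). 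Since the two statements are equivalent by transposition as you yourself note, this is purely a labeling issue and does not affect the correctness of the argument.
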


\subsection{Two overlap identities for Littlewood-Schur functions} \label{3_section_two_overlap_identities_for_LS_functions}
Before stating and proving the first and the second overlap identity, we formally introduce the notion of overlapping two partitions.

\begin{defn} [overlap] \label{3_defn_overlap} For any positive integer $n$, we define the partition $\rho_n$ by \label{symbol_rho_n} $\rho_n = (n - 1, \dots, 1, 0)$; for $n = 0$, we use the convention that $\rho_0$ is the empty partition. Let $\mu$, $\nu$ be partitions of length at most $m$ and $n$, respectively. The $(m,n)$-overlap of $\mu$ and $\nu$, denoted \label{symbol_overlap} $\mu \star_{m, n} \nu$, is the partition that satisfies 
\begin{align} \label{3_condition_overlap}
\mu \star_{m,n} \nu + \rho_{m + n} \sorteq (\mu + \rho_m) \cup (\nu + \rho_n)
\end{align}
if it exists; otherwise, we set $\mu \star_{m, n} \nu = \infty$. Here, $\infty$ is just a symbol with the property that $LS_\infty(\calX; \calY) = 0$ for any sets of variables $\calX$ and $\calY$, \textit{i.e.}\ it symbolizes a partition that contains the rectangle $\left\langle (m + 1)^{n + 1} \right\rangle$ for any pair of non-negative integers $m$ and $n$.

If the condition in \eqref{3_condition_overlap} is satisfied, then the sign of the overlap, denoted $\varepsilon_{m,n}(\mu,\nu)$, is just the sign of the corresponding sorting permutation; otherwise, we set the sign equal to 1. This notion is well defined because, unless the $(m,n)$-overlap of $\mu$ and $\nu$ is infinity, the sequence on the left-hand side in \eqref{3_condition_overlap} is strictly decreasing. If the parameters $m, n$ are clear from the context, they are sometimes omitted. 
\end{defn}

\begin{thm} [first overlap identity] \label{3_thm_laplace_expansion_LS_cut_before_index} Let $\calX$ and $\calY$ be sets of variables with $n$ and $m$ elements, respectively, so that $\calX$ consists of pairwise distinct elements. Let $\lambda$ be a partition with $(m,n)$-index $k$. If $\lambda_{[n - k]}$ is the $(l, n - k - l)$-overlap of $\mu$ and $\nu$ for some integer $0 \leq l \leq \min\{n - k, n\}$ and some partitions $\mu$ and $\nu$, then
\begin{align} \label{3_thm_laplace_expansion_LS_cut_before_index_eq}
LS_{\lambda} (-\calX; \calY) ={} & \sum_{\makebox[48pt]{$\substack{\calS, \calT \subset \calX: \\ \calS \cup_{l, n - l} \calT \sorteq \calX}$}} \frac{\varepsilon_{l, n - k - l}(\mu, \nu) LS_{\mu + \left\langle k^l \right\rangle}(-\calS; \calY) LS_{\nu \cup \lambda_{(n + 1 - k, n + 2 - k, \dots)}}(- \calT; \calY)}{\Delta(\calT; \calS)} .
\end{align}
\end{thm}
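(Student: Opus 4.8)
The plan is to mimic Dehaye's proof of Lemma~\ref{1_lem_POD_laplace} (the first overlap identity for Schur functions), but starting from the Moens--Van der Jeugt determinantal formula (Theorem~\ref{3_thm_det_formula_for_Littlewood-Schur}) instead of the classical determinantal definition of Schur functions. Since $\calX$ consists of pairwise distinct elements, we may assume for the bulk of the argument that all of $\calX \cup \calY$ is pairwise distinct (the general case then follows, as in the proof of Corollary~\ref{3_cor_littlewood}, because both sides are polynomials in $\calX \cup \calY$ agreeing on infinitely many points). Write $\calX$ in a fixed order and apply Theorem~\ref{3_thm_det_formula_for_Littlewood-Schur} to $LS_\lambda(-\calX;\calY)$: this expresses it as $\varepsilon(\lambda)\,\Delta(\calY;\calX)/(\Delta(\calX)\Delta(\calY))$ times the determinant of the block matrix $M$ whose rows are indexed by $\calX$ (the first $n$ rows) together with the $m-k$ ``$\calY$-rows'' $(y^{\lambda'_i+m-n-i})$, and whose columns are the $m$ columns $((x-y)^{-1})$ together with the $n-k$ columns $(x^{\lambda_j+n-m-j})_{1\le j\le n-k}$.

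\textbf{Key steps.}
First, apply Laplace expansion (Lemma~\ref{3_lem_laplace_expansion_matrix}, part~1) along the set $K$ of the first $n$ rows of $M$, i.e.\ expand along all rows indexed by $\calX$. Each term of the expansion selects an $n$-element column set $J$; since the bottom-left $(m-k)\times m$ block pairs with the complementary columns and the bottom-right block is zero, a term survives only when $J$ consists of exactly $l$ of the ``$(x-y)^{-1}$'' columns (chosen so the surviving $\calY$-rows can be matched to the remaining $m-l$ such columns) and all $n-k$ of the ``$x^{\lambda_j+n-m-j}$'' columns, forcing $l = n-(n-k) = k$... more carefully, the column count must be $n$, and the $n-k$ power-columns are all in $J$ while $k$ of the $m$ rational columns complete it; this is where the index bookkeeping enters. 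Second, having chosen which rows of $\calX$ go into the top block, partition $\calX$ accordingly into $\calS$ (the $l$ rows matched with power-columns-plus-some-rational-columns) and $\calT=\calX\setminus\calS$; recognize each of the two resulting sub-determinants, after reattaching the appropriate Vandermonde and $\Delta(\cdot;\cdot)$ factors, as $LS_{\mu+\langle k^l\rangle}(-\calS;\calY)$ and $LS_{\nu\cup\lambda_{(n+1-k,\dots)}}(-\calT;\calY)$ via Theorem~\ref{3_thm_det_formula_for_Littlewood-Schur} applied in reverse. Here the partitions $\mu,\nu$ enter precisely because the truncated exponent sequence $\lambda_{[n-k]}+n-m-j$ has to be split into two decreasing subsequences, and that splitting is exactly the combinatorial content of the overlap condition \eqref{3_condition_overlap}: $\lambda_{[n-k]}+\rho_{n-k} \sorteq (\mu+\rho_l)\cup(\nu+\rho_{n-k-l})$ shifted by $n-m$. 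Third, collect the signs: the Laplace sign $\varepsilon(\sigma(K,J))$, the signs $\varepsilon(\lambda)$, $\varepsilon(\mu+\langle k^l\rangle)$, $\varepsilon(\nu\cup\dots)$ coming from the three applications of the determinantal formula, and the sign produced when the Vandermonde $\Delta(\calX)$ is rewritten as $\pm\Delta(\calS)\Delta(\calT)\Delta(\calS;\calT)$ (similarly for the $\calY$-factors, which in fact cancel since $\calY$ is shared). The claim is that all of these combine to leave exactly $\varepsilon_{l,n-k-l}(\mu,\nu)/\Delta(\calT;\calS)$, matching the right-hand side of \eqref{3_thm_laplace_expansion_LS_cut_before_index_eq}.

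\textbf{Main obstacle.}
The genuine difficulty is not the structural part but the sign reconciliation in the third step. One must carefully track: (i) the sign $\varepsilon(\sigma(K,J))$ from Laplace expansion, which depends on the interleaving of the chosen columns; (ii) the three occurrences of the explicit sign $\varepsilon(\cdot)=(-1)^{|\cdot_{[\,\cdot\,]}|}(-1)^{\cdot\,\cdot}(-1)^{\cdot(\cdot-1)/2}$ from Theorem~\ref{3_thm_det_formula_for_Littlewood-Schur}, each taken with its own pair of length-parameters; (iii) the sign from permuting Vandermonde factors $\Delta(\calX)$, $\Delta(\calY;\calX)$ into block form; and (iv) the definition of $\varepsilon_{l,n-k-l}(\mu,\nu)$ as the sign of the sorting permutation in \eqref{3_condition_overlap}. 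The cleanest route is to verify the sign identity by reducing to the Schur case $\calY=\emptyset$, $k=0$ (which is Dehaye's Lemma~\ref{1_lem_POD_laplace}, already proved), and then checking that adding $\calY$-variables and passing to nonzero index $k$ multiplies both sides by the same correction factor --- this can be done by an induction on $m$ using the Pieri-type recursion (Corollary~\ref{1_cor_Pieri_rule_for_LS}) and on $k$ using the structure of the index, rather than by a head-on Leibniz-formula computation. A secondary nuisance is the degenerate cases $l=0$ or $l=n-k$ or $k=n$, where some of the subsequences are empty and one must check the conventions (empty $\rho_0$, sign $1$, $LS$ of the empty partition) make the formula hold trivially.
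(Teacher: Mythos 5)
The overall strategy (apply Laplace expansion to the Moens--Van der Jeugt determinantal formula) is indeed what the paper does, but the specific expansion you propose is the wrong one, and as written it does not yield the statement being proved.

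Concretely: you propose to fix $K$ = all $n$ rows indexed by $\calX$ and sum over $n$-element column subsets $J$. In that expansion, the complementary $(m-k)\times(m-k)$ minor (over the $\calY$-rows) is nonzero only if the complementary columns contain no power columns; so $J$ must consist of \emph{all} $n-k$ power columns plus $k$ of the $m$ rational columns. The resulting sum therefore runs over $k$-element subsets of the rational columns --- i.e., over $k$-element subsets of $\calY$ --- and every $x\in\calX$ appears in the top minor. This does not produce a sum over subsets $\calS, \calT\subset\calX$ of sizes $l$ and $n-l$ at all (your parenthetical ``forcing $l=n-(n-k)=k$'' is the symptom, and your subsequent ``more carefully\dots'' does not resolve it). Your second step (``having chosen which rows of $\calX$ go into the top block\dots'') contradicts your first, since expanding along a fixed set of rows makes a choice of columns, not of rows.

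What the paper actually does --- and what you need --- is the following. After permuting the $n-k$ power columns of $M$ by the sorting permutation from the overlap condition \eqref{3_condition_overlap} (this is where $\varepsilon_{l,n-k-l}(\mu,\nu)$ enters, and you do gesture at it), one has a $\mu$-block of $l$ columns and a $\nu$-block of $n-k-l$ columns. One then Laplace expands along the \emph{fixed $l$-column $\mu$-block}, summing over $l$-element \emph{row} subsets $I$. The $\calY$-rows are zero in these columns, so only $I\subset\{\text{$\calX$-rows}\}$ survives, and $I$ is precisely the choice of $\calS$; its complement among $\calX$-rows is $\calT$. The paper presents this backwards (starting from the right-hand side, applying the determinantal formula to each summand, simplifying $\Delta(\calT;\calS)\Delta(\calS)\Delta(\calT)$ to $\pm\Delta(\calX)$, and reading the sum as exactly this column expansion), and then tracks the signs directly; it does not reduce to $\calY=\emptyset$ nor induct on $m$ or $k$. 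Your alternative sign strategy is unverified and would need work, but the more serious gap is the misidentified Laplace expansion.

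Two smaller points. The $\Delta(\calY)$ factors do not simply ``cancel since $\calY$ is shared'': two applications of the determinantal formula produce $\Delta(\calY)^{-2}$, and one copy is absorbed because the first summand's determinant (for $\mu+\langle k^l\rangle$, which has index $0$) factors into a block product containing a Vandermonde $\Delta(\calY)$. And the final extension to non-distinct $\calX\cup\calY$ is as you say: for fixed $\calX$ both sides are polynomials in $\calY$, so equality on a dense set suffices.
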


\begin{proof} In a first step, suppose that the variables in $\calX \cup \calY$ are also pairwise distinct, making the determinantal formula for Littlewood-Schur functions applicable. The proof boils down to writing all Littlewood-Schur functions as determinants, and then using Laplace expansion to show that the left-hand side and the right-hand side in \eqref{3_thm_laplace_expansion_LS_cut_before_index_eq} are indeed equal.

Using the definitions of both overlap and index, we determine the relevant indices of the partitions that appear on the right-hand side in \eqref{3_thm_laplace_expansion_LS_cut_before_index_eq}: The $(m,l)$-index of the partition $\mu + \left\langle k^l \right\rangle$ is 0 since $\mu_l + k \geq \lambda_{n - k} + k \geq m - k + k = m$. Furthermore, the fact that
$$\nu_{n - l - k} \geq \lambda_{n - k} \geq m - k \:\text{ and }\: \left( \nu \cup \lambda_{(n + 1 - k, \dots)} \right)_{n - l - k  + 1} = \lambda_{n + 1 - k} \leq m - k$$
implies that the $(m, n - l)$-index of $\nu \cup \lambda_{(n + 1 - k, \dots)}$ is still $k$. In particular, both sides of the equation in \eqref{3_thm_laplace_expansion_LS_cut_before_index_eq} vanish whenever $k$ is negative. Otherwise, Theorem~\ref{3_thm_det_formula_for_Littlewood-Schur} states that the right-hand side in \eqref{3_thm_laplace_expansion_LS_cut_before_index_eq} equals
\begin{align}
\begin{split} 
\RHS ={} & \sum_{\makebox[53pt]{$\substack{\calS, \calT \subset \calX: \\ \calS \cup_{l, n - l} \calT \sorteq \calX}$}} \frac{ \varepsilon_{l, n - k - l}(\mu, \nu)  \varepsilon\left(\mu + \left\langle k^l \right\rangle \right) \varepsilon \left(\nu \cup \lambda_{(n + 1 - k, n + 2 - k, \dots)} \right) \Delta(\calY; \calS) \Delta(\calY; \calT)}{\Delta(\calT; \calS) \Delta(\calS) \Delta(\calT) \Delta(\calY)^2} 
\notag \end{split}\\
\begin{split} \label{3_in_proof_first_overlap_id_first_det}
& \times \det \begin{pmatrix} \left( (s - y)^{-1} \right) & \left( s^{\mu_j + k + l - m - j} \right)_{1 \leq j \leq l} \\ \left( y^{\left( \mu + \left\langle k^l \right\rangle \right)'_i + m - l - i} \right)_{1 \leq i \leq m} & 0 \end{pmatrix}
\end{split} \\
\begin{split} \label{3_in_proof_first_overlap_id_second_det}
& \times \det \begin{pmatrix} \left( (t - y)^{-1} \right) & \left( t^{\nu_j + n - l - m - j} \right)_{1 \leq j \leq n - l - k} \\ \left( y^{\left( \nu \cup \lambda_{(n + 1 - k, \dots)} \right)'_i + m - n + l - i} \right)_{1 \leq i \leq m - k} & 0 \end{pmatrix}
.
\end{split}
\end{align}
Focusing on the bottom-left block of the matrix in line \eqref{3_in_proof_first_overlap_id_second_det}, we observe that for $1 \leq i \leq m - k$, the exponent of $y$ in the $i$-th row is 
\begin{align*}
\text{exponent}_i(y) \defeq{} & \left( \nu \cup \lambda_{(n + 1 - k, n + 2 - k, \dots)} \right)'_i + l + m - n - i \\
={} & \nu'_i + l + \left(\lambda_{(n + 1 - k, n + 2 - k, \dots)} \right)'_i + m - n - i
.
\intertext{As $(m - k, n - l - k) \in \nu$ and $\nu'_1 = l(\nu) \leq n - l - k$, we infer that $\nu'_i = n - l - k$ for $1 \leq i \leq m - k$:}
\text{exponent}_i(y) ={} & n - k + \left(\lambda_{(n + 1 - k, n + 2 - k, \dots)} \right)'_i + m - n - i
.
\intertext{Finally, the fact that $(m - k, n - k) \in \lambda$ allows us to simplify this expression to}
\text{exponent}_i(y) ={} & \lambda'_i + m - n - i
.
\end{align*} 
Focusing on the determinant in line \eqref{3_in_proof_first_overlap_id_first_det}, we first remark that for all $1 \leq i \leq m$, $\left( \mu + \left\langle k^l \right\rangle \right)'_i = l$ because $(m,l) \in \mu + \left\langle k^l \right\rangle$ and $\mu'_1 = l(\mu) \leq l$. Hence,
\begin{align*}
\text{determinant} \defeq{} & \det \begin{pmatrix} \left( (s - y)^{-1} \right) & \left( s^{\mu_j + k + l - m - j} \right)_{1 \leq j \leq l} \\ \left( y^{\left( \mu + \left\langle k^l \right\rangle \right)'_i + m - l - i} \right)_{1 \leq i \leq m} & 0 \end{pmatrix} \\
={} & \det \begin{pmatrix} \left( (s - y)^{-1} \right) & \left( s^{\mu_j + k + l - m - j} \right)_{1 \leq j \leq l} \\ \left( y^{m - i} \right)_{1 \leq i \leq m} & 0 \end{pmatrix}
.
\intertext{As the off-diagonal blocks are squares, expanding the determinant yields}
\text{determinant} ={} & (-1)^{ml} \det \left( s^{\mu_j + k + l - m - j} \right)_{1 \leq j \leq l} \det \left( y^{m - i} \right)_{1 \leq i \leq m} \\
={} & (-1)^{ml} \det \left( s^{\mu_j + k + l - m - j} \right)_{1 \leq j \leq l}  \Delta(\calY)
.
\end{align*}
In sum, the right-hand side in \eqref{3_thm_laplace_expansion_LS_cut_before_index_eq} is equal to the following simplified determinantal expression:
\begin{align}
\begin{split}
\RHS ={} & (-1)^{ml} \varepsilon_{l, n - k - l}(\mu, \nu) \varepsilon \left(\mu + \left\langle k^l \right\rangle \right) \varepsilon \left( \nu \cup \lambda_{(n + 1 - k, \dots)} \right) \frac{\Delta(\calY; \calX)}{\Delta(\calY)} \\
& \times \sum_{\substack{\calS, \calT \subset \calX: \\ \calS \cup_{l, n - l} \calT \sorteq \calX}} \frac{1}{\Delta(\calT; \calS) \Delta(\calS) \Delta(\calT)} \det \left( s^{\mu_j + k + l - m - j} \right)_{1 \leq j \leq l} \\
& \times \det \begin{pmatrix} \left( (t - y)^{-1} \right) & \left( t^{\nu_j + n - l - m - j} \right)_{1 \leq j \leq n - l - k} \\ \left( y^{\lambda'_i + m - n - i} \right)_{1 \leq i \leq m - k} & 0 \end{pmatrix} 
.
\notag \end{split}
\intertext{Notice that $\Delta(\calT; \calS) \Delta(\calS) \Delta(\calT)$ is equal to $\Delta(\calX)$ up to a sign that measures the number of inversions with respect to the order on $\calX$. Hence, Lemma~\ref{3_lem_laplace_expansion_matrix} allows us to view this sum over $\calS$ and $\calT$ as a Laplace expansion of one determinant:}
\begin{split} \label{3_in_proof_first_overlap_id_for_cor}
\RHS ={} & (-1)^{ml + l(n + m - l)} \varepsilon_{l, n - k - l}(\mu, \nu) \varepsilon\left(\mu + \left\langle k^l \right\rangle \right) \varepsilon \left( \nu \cup \lambda_{(n + 1 - k, \dots)} \right) \\
& \times \frac{\Delta(\calY; \calX)}{\Delta(\calX) \Delta(\calY)} \\
& \times \det \begin{pmatrix} \left( (x - y)^{-1} \right) & \!\!\!\!\!\! \left( x^{\mu_j + k + l - m - j} \right)_{1 \leq j \leq l} & \!\! \left( x^{\nu_j + n - l - m - j} \right)_{1 \leq j \leq n - l - k} \\ \left( y^{\lambda'_i + m - n - i} \right)_{\substack{1 \leq i \leq m - k}} & \!\!\!\!\!\!0 & \!\!0 \end{pmatrix} \!
.
\end{split}
\intertext{The condition that $\lambda_{[n - k]} = \mu \star_{l, n - k - l} \nu$ entails that permuting the $n - k$ last columns results in}
\begin{split}
\RHS ={} & (-1)^{l(n - l)} \varepsilon\left(\mu + \left\langle k^l \right\rangle \right) \varepsilon \left( \nu \cup \lambda_{(n + 1 - k, \dots)} \right) \varepsilon(\lambda) LS_\lambda(-\calX; \calY)
\notag \end{split}
\end{align}
through another application of Theorem~\ref{3_thm_det_formula_for_Littlewood-Schur}. Under the additional assumption that the variables in $\calX \cup \calY$ are pairwise distinct, the equality in \eqref{3_thm_laplace_expansion_LS_cut_before_index_eq} is thus an immediate consequence of the fact that the signs cancel each other out. If we permit that $\calX \cup \calY$ contains repetitions, it suffices to remark that for a fixed set of variables $\calX$, both sides of the equation in \eqref{3_thm_laplace_expansion_LS_cut_before_index_eq} are polynomials in $\calY$.
\end{proof}

\begin{rem} \label{3_rem_vanderjeugt_counterexample} In case the sorting of the overlap is the identity, the equation in \eqref{3_thm_laplace_expansion_LS_cut_before_index_eq} reads
\begin{align} \label{3_vanderjeugt_counterexample}
LS_{\lambda} (-\calX; \calY) ={} & \sum_{\substack{\calS, \calT \subset \calX: \\ \calS \cup_{l, n - l} \calT \sorteq \calX}} \frac{LS_{\lambda_{[l]} + \left\langle (n - l)^l \right\rangle}(-\calS; \calY) LS_{\lambda_{(l + 1, l + 2, \dots)}}(- \calT; \calY)}{\Delta(\calT; \calS)}
\end{align}
for any integer $0 \leq l \leq \min\{n - k, n \}$.

This specialization of Theorem~\ref{3_thm_laplace_expansion_LS_cut_before_index} is a slight generalization of Proposition 8 in \cite{bump06}. In fact, they prove equality \eqref{3_vanderjeugt_counterexample} under the assumption that $\lambda_l \geq \lambda_{l + 1} + m$ (and $l \leq n$). Their assumption in stronger than ours: it entails that $l \leq n - k$ where $k$ stands for the $(m,n)$-index of $\lambda$. Indeed, if $k \geq 1$, then $\lambda_i \leq m - k < m$ for all $i > n - k$ but $\lambda_l \geq m$. Bump and Gamburd's proof is an induction over $m$ based on Pieri's formula.

Independently of \cite{bump06}, Lemma 5.4 in \cite{vanderjeugt} also states equality \eqref{3_vanderjeugt_counterexample} but without any assumptions on $\lambda$. Moreover, their proof is also an induction based on Pieri's formula. However, it is possible to construct counter-examples to their claim when $l > n - k$: let us fix $n = 2$ and $m = 3$, then $\lambda = (1,1,1)$ has $(m,n)$-index $k = 2$. Setting $l = 1$, one computes that
\begin{align*}
LS_{(1,1,1)}(-x_1, -x_2; y_1, y_2, y_3) ={} & \sum_{\makebox[36.5pt]{$\substack{\calS, \calT \subset (x_1, x_2): \\ \calS \cup_{1,1} \calT \sorteq (x_1, x_2)}$}} \frac{LS_{(1) + (1)}(-\calS; y_1, y_2, y_3) LS_{(1,1)} (-\calT; y_1, y_2, y_3)}{\Delta(\calT; \calS)}\\
& + y_1y_2y_3
\end{align*}
despite the fact that equation \eqref{3_vanderjeugt_counterexample} does not predict the additional term $y_1y_2y_3$. The main theorem in \cite{vanderjeugt} still holds because they provide several independent proofs.
\end{rem}

\begin{cor} \label{3_cor_laplace_expansion_LS_mu_has_maximal_index} Let $\mu, \nu$ be partitions and $\calX$, $\calY$ sets of variables with $n$ and $m$ elements, respectively, so that the elements of $\calX$ are pairwise distinct. Fix an integer $l(\mu) \leq l \leq n$ and let $k$ denote the $(m, n - l)$-index of $\nu$. If $l \leq n - k$ and the $(m,l)$-index of $\mu + \left\langle k^l \right\rangle$ is $0$, then
\begin{align}
\begin{split} \label{3_cor_laplace_expansion_LS_mu_has_maximal_index_eq}
& LS_{\left( \mu \star_{l, n - l - k} \nu_{[n - l - k]} \right) \cup \nu_{(n + 1 - l - k, \dots)}} (-\calX; \calY) = \\
& \sum_{\substack{\calS, \calT \subset \calX: \\ \calS \cup_{l, n - l} \calT \sorteq \calX}} \frac{\varepsilon \left(\mu, \nu_{[n - l - k]} \right) LS_{\mu + \left\langle k^l \right\rangle}(-\calS; \calY) LS_\nu(- \calT; \calY)}{\Delta(\calT; \calS)}
.
\end{split}
\end{align} 
\end{cor}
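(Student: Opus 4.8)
The plan is to deduce Corollary~\ref{3_cor_laplace_expansion_LS_mu_has_maximal_index} directly from the first overlap identity (Theorem~\ref{3_thm_laplace_expansion_LS_cut_before_index}), by choosing the partition $\lambda$ in that theorem appropriately and then checking that all the bookkeeping quantities --- the $(m,n)$-index $k'$ of $\lambda$, the truncation $\lambda_{[n-k']}$, the ``tail'' $\lambda_{(n+1-k',\dots)}$, and the sign $\varepsilon_{l,n-k'-l}$ --- match up with the data $\mu$, $\nu$, $k$ appearing in the corollary. Concretely, set
\begin{align*}
\lambda = \left( \mu \star_{l, n - l - k} \nu_{[n - l - k]} \right) \cup \nu_{(n + 1 - l - k, \dots)}.
\end{align*}
The first thing to establish is that the $(m,n)$-index of this $\lambda$ equals $k$, the given $(m,n-l)$-index of $\nu$. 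This is plausible from the hypotheses: the assumption that the $(m,l)$-index of $\mu + \langle k^l\rangle$ is $0$ forces $\mu_l + k \geq m$, i.e.\ the first $l$ parts of $\lambda$ are all at least $m - k$; and $\nu$ having $(m,n-l)$-index $k$ together with $l \leq n - k$ controls the rectangle $\langle (m-k)^{n-k}\rangle$ sitting inside $\lambda$ and the box $(m+1-k, n+1-k)$ lying outside it. I would verify $(m-k, n-k) \in \lambda$ and $(m+1-k, n+1-k) \notin \lambda$ by splitting into the ``overlap part'' (columns/rows $\leq n-l-k$ worth of $\nu$, suitably overlapped with $\mu$) and the ``tail part'' $\nu_{(n+1-l-k,\dots)}$, using the interpretation of $\star$ in terms of sorting $(\mu+\rho_l)\cup(\nu_{[n-l-k]}+\rho_{n-l-k})$.

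Once the index is identified as $k$, the next step is to compute $\lambda_{[n-k]}$. I expect that $\lambda_{[n-k]} = \mu \star_{l, n-k-l} \nu_{[n-k-l]}$, exactly the overlap appearing on the left of \eqref{3_cor_laplace_expansion_LS_mu_has_maximal_index_eq}: the tail $\nu_{(n+1-l-k,\dots)}$ only contributes parts that are $\leq m-k$, which are the parts of $\lambda$ in positions $> n-k$, so truncating at $n-k$ discards precisely the tail and leaves the overlap of $\mu$ with the first $n-k-l$ parts of $\nu$. Correspondingly, $\lambda_{(n+1-k, n+2-k, \dots)} = \nu_{(n+1-l-k, n+2-l-k,\dots)}$, so that the second Littlewood-Schur factor in Theorem~\ref{3_thm_laplace_expansion_LS_cut_before_index} reads $LS_{\nu_{[n-k-l]}\,\cup\,\nu_{(n+1-l-k,\dots)}}(-\calT;\calY) = LS_\nu(-\calT;\calY)$, which is what we want. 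With $\mu' := \mu$ and $\nu' := \nu_{[n-k-l]}$ playing the roles of the partitions in Theorem~\ref{3_thm_laplace_expansion_LS_cut_before_index}, the first factor $LS_{\mu' + \langle k^l\rangle} = LS_{\mu + \langle k^l\rangle}$ matches directly, and the index hypotheses of the theorem (that $\mu+\langle k^l\rangle$ has $(m,l)$-index $0$, and implicitly that $\nu_{[n-k-l]}\cup\lambda_{(\dots)}$ has the right index) are exactly the hypotheses imposed in the corollary. Finally, the sign $\varepsilon_{l,n-k-l}(\mu,\nu_{[n-k-l]})$ in the theorem coincides with $\varepsilon(\mu,\nu_{[n-l-k]})$ in the corollary by definition of the sign of an overlap.

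The main obstacle, and the step deserving the most care, is the index computation and the accompanying truncation identities --- showing $(m,n)\text{-}\mathrm{index}(\lambda) = k$ and $\lambda_{[n-k]} = \mu \star_{l,n-k-l}\nu_{[n-k-l]}$. These are not hard in substance but require handling the interaction between the overlap operation (a statement about sorting shifted sequences) and the index (a statement about which rectangle fits inside the Ferrers diagram), and one must be careful that $l \leq n-k$ and that the index of $\nu$ is measured with parameters $(m, n-l)$ rather than $(m,n)$. A clean way to organize this is to note that the overlap $\mu \star_{l, n-k-l} \nu_{[n-k-l]}$, when it is a genuine partition, has all of its first $l$ parts $\geq m-k$ (from $\mu$) and its remaining $n-k-l$ parts governed by $\nu_{[n-k-l]}$, whose parts in positions $\leq m-k$ equal $n-k-l$ after conjugation --- precisely the conditions needed to place the corner at $(m-k,n-k)$. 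Everything else is then a matter of substituting into \eqref{3_thm_laplace_expansion_LS_cut_before_index_eq} and reading off the result; as in the proof of Theorem~\ref{3_thm_laplace_expansion_LS_cut_before_index}, one may assume the variables of $\calX\cup\calY$ pairwise distinct and then remove that hypothesis at the end since both sides are polynomials in $\calY$.
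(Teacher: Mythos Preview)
Your plan handles the case where the overlap $\mu \star_{l,\,n-l-k} \nu_{[n-l-k]}$ is a genuine partition, and in that case your approach coincides with the paper's: define $\lambda$, check that its $(m,n)$-index is $k$, identify the truncation and tail, and invoke Theorem~\ref{3_thm_laplace_expansion_LS_cut_before_index}. That part is fine.

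However, you are missing a case. Nothing in the hypotheses of the corollary guarantees that the overlap $\mu \star_{l,\,n-l-k} \nu_{[n-l-k]}$ is finite: it may happen that the sequence $(\mu+\rho_l)\cup(\nu_{[n-l-k]}+\rho_{n-l-k})$ contains a repetition, so that the overlap equals $\infty$ by convention. In that situation the left-hand side of \eqref{3_cor_laplace_expansion_LS_mu_has_maximal_index_eq} is $0$ (because $LS_\infty\equiv 0$), but your argument does not even get started, since there is no partition $\lambda$ to feed into Theorem~\ref{3_thm_laplace_expansion_LS_cut_before_index}. You still owe an argument that the right-hand side vanishes in this degenerate case.

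The paper treats this separately: when the overlap is $\infty$ there exist indices $1\leq p\leq l$ and $1\leq q\leq n-l-k$ with $\mu_p + l - p = \nu_q + n - l - k - q$. Returning to the determinantal expression established midway through the proof of Theorem~\ref{3_thm_laplace_expansion_LS_cut_before_index} (the display labelled \eqref{3_in_proof_first_overlap_id_for_cor}, which is still valid for the right-hand side of \eqref{3_cor_laplace_expansion_LS_mu_has_maximal_index_eq} when $k\geq 0$), one checks that this numerical coincidence forces two of the last $n-k$ columns of the matrix to be identical, so the determinant, and hence the right-hand side, vanishes. You need to supply this step.
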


\begin{proof} First suppose that there exists a partition $\lambda$ with the property that $$\lambda = \left( \mu \star_{l, n - l - k} \nu_{[n - l - k]} \right) \cup \nu_{(n + 1 - l - k, \dots)}.$$ It is easy to check that the $(m, n)$-index of $\lambda$ is $k$. Hence, the equality in \eqref{3_cor_laplace_expansion_LS_mu_has_maximal_index_eq} is a direct consequence of Theorem~\ref{3_thm_laplace_expansion_LS_cut_before_index}. 

Second suppose that $\mu \star_{l, n - l - k} \nu_{[n - l - k]} = \infty$. On the one hand, the left-hand side in \eqref{3_cor_laplace_expansion_LS_mu_has_maximal_index_eq} vanishes by definition. On the other hand, condition \eqref{3_condition_overlap} implies that there exist $1 \leq p \leq l$ and $1 \leq q \leq n - l - k$ such that $\mu_p + l - p = \alpha = \nu_q + n - l - k - q$. If $k$ is negative, the right-hand side in \eqref{3_cor_laplace_expansion_LS_mu_has_maximal_index_eq} also vanishes. If $k \geq 0$, the right-hand side in \eqref{3_cor_laplace_expansion_LS_mu_has_maximal_index_eq} is equal to the following determinantal expression, owing to the arguments used to justify that the right-hand side in \eqref{3_thm_laplace_expansion_LS_cut_before_index_eq} is equal to \eqref{3_in_proof_first_overlap_id_for_cor}:
\begin{align*}
\RHS ={} & \pm \frac{\Delta(\calY; \calX)}{\Delta(\calX) \Delta(\calY)} \\
& \times \det \begin{pmatrix} \left( (x - y)^{-1} \right) & \left( x^{\mu_j + k + l - m - j} \right)_{\substack{1 \leq j \leq l}} & \left( x^{\nu_j + n - l - m - j} \right)_{\substack{1 \leq j \leq n - l - k}} \\ \ast & 0 & 0\end{pmatrix}
\end{align*}
where $\ast$ stands for some block that is not relevant here. 
Given that 
\begin{align*}
\mu_p + k + l - m - p = \alpha + k - m = \nu_q + n - l - m - q 
,\end{align*}
we conclude that the matrix contains two identical columns, which means that the right-hand side in \eqref{3_cor_laplace_expansion_LS_mu_has_maximal_index_eq} also vanishes.
\end{proof}

\begin{thm} [second overlap identity] \label{3_thm_lapalace_expansion_new_LS} Let $0 \leq l \leq \min\{n - k, n\}$. Let $\calS$, $\calT$ and $\calY$ be sets containing $l$, $n - l$ and $m$ variables, respectively, so that $\Delta(\calY) \neq 0$ and $\Delta(\calS; \calT) \neq 0$. Suppose that $k$ is the $(m,n)$-index of a partition $\lambda$, then
\begin{align}
\begin{split} \label{3_thm_lapalace_expansion_new_LS_eq}
LS_{\lambda} (-(\calS \cup \calT); \calY) ={} & \sum_{p = 0}^{\min\{l,m\}} \sum_{\substack{\calU, \calV \subset \calY: \\ \calU \cup_{p, m - p} \calV \sorteq \calY}} \hspace{10pt} \sum_{\substack{\mu, \nu: \\ \mu \star_{l - p, n - k - l + p} \nu = \lambda_{[n - k]}}} \frac{\Delta(\calV; \calS) \Delta(\calT; \calU)}{\Delta(\calV; \calU) \Delta(\calT; \calS)} \\
& \times \varepsilon(\mu, \nu) LS_{\mu - \left\langle (m - k)^{l - p} \right\rangle} (-\calS; \calU) LS_{\nu \cup \lambda_{(n + 1 - k, n + 2 - k, \dots)}} (-\calT; \calV)
.
\end{split}
\end{align}
\end{thm}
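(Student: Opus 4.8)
The plan is to prove the second overlap identity the same way the first one (Theorem~\ref{3_thm_laplace_expansion_LS_cut_before_index}) was proved: write every Littlewood-Schur function appearing in \eqref{3_thm_lapalace_expansion_new_LS_eq} as a determinant via Moens and Van der Jeugt's formula (Theorem~\ref{3_thm_det_formula_for_Littlewood-Schur}), reduce each term to a ratio involving a single block determinant, and recognize the triple sum on the right-hand side as an iterated Laplace expansion (Lemma~\ref{3_lem_laplace_expansion_matrix}) of the one determinant attached to $LS_\lambda(-(\calS \cup \calT); \calY)$. The essential difference from the first identity is that we now expand along \emph{rows} rather than columns: the first identity split the column index set $[n]$ of the big matrix into the $x$-columns coming from $\calS$ and those coming from $\calT$, whereas here we must additionally split the $y$-rows of the big matrix (the rows indexed by $\calY$) into those coming from $\calU$ and those coming from $\calV$. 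As usual, I would first assume all the variables in $(\calS \cup \calT) \cup \calY$ are pairwise distinct so that the determinantal formula applies, and at the end invoke the polynomiality of both sides in $\calY$ (and in $\calS \cup \calT$) to drop that hypothesis.

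\textbf{Key steps.} First I would set $\calX = \calS \cup \calT$ with $n = l + (n-l)$ elements and apply Theorem~\ref{3_thm_det_formula_for_Littlewood-Schur} to the left-hand side, writing $LS_\lambda(-\calX;\calY)$ as $\varepsilon(\lambda)\,\Delta(\calY;\calX)/(\Delta(\calX)\Delta(\calY))$ times the determinant of the $(m+n-k)\times(m+n-k)$ block matrix $M$ whose top-left $n\times m$ block is $((x-y)^{-1})$, whose top-right block is $(x^{\lambda_j+n-m-j})_{1\le j\le n-k}$, whose bottom-left block is $(y^{\lambda'_i+m-n-i})_{1\le i\le m-k}$, and whose bottom-right block is $0$. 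Second, I would perform a Laplace expansion of $\det M$ along the set of rows indexed by $\calY$ (there are $m$ of them) together with the set of columns indexed by $\calT$ (there are $n-l$ of them); more precisely, expand simultaneously so as to separate, among the $m$ $y$-rows, a subset of $p$ of them (to be assigned to $\calU$) from the remaining $m-p$ (assigned to $\calV$), and among the $n$ $x$-columns, the $l$ columns of $\calS$ from the $n-l$ columns of $\calT$. This produces a signed double sum over subsets $\calU \subset \calY$, $|\calU|=p$, and over the choice of $\calS$ versus $\calT$; inside each term one gets a product of two block determinants, one in the variables $(\calS,\calU)$ and one in the variables $(\calT,\calV)$, each of which I would then match — after the same exponent bookkeeping performed in the proof of Theorem~\ref{3_thm_laplace_expansion_LS_cut_before_index}, using $(m-k,n-k)\in\lambda$ and the rectangle-in-diagram description of the index — against $\Delta(\calU;\calS)^{\pm1}\,\Delta(\calV;\calT)^{\pm1}$ times $LS_{\mu-\langle(m-k)^{l-p}\rangle}(-\calS;\calU)$ and $LS_{\nu\cup\lambda_{(n+1-k,\dots)}}(-\calT;\calV)$ respectively. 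Third, the fact that the top-right block $(x^{\lambda_j+n-m-j})_{1\le j\le n-k}$ has been split between the $\calS$-determinant (contributing columns that encode $\mu$) and the $\calT$-determinant (contributing columns that encode $\nu$) is exactly the assertion that $\mu\star_{l-p,\,n-k-l+p}\nu = \lambda_{[n-k]}$, which is why the innermost sum over pairs $(\mu,\nu)$ with that overlap appears; the Vandermonde-type cross factors $\Delta(\calV;\calS)\Delta(\calT;\calU)/(\Delta(\calV;\calU)\Delta(\calT;\calS))$ come from reorganizing $\Delta(\calX)\Delta(\calY)$ and the cross terms $\Delta(\calY;\calX)$ according to the four-way partition $\calX = \calS\sqcup\calT$, $\calY=\calU\sqcup\calV$. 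Finally I would check that the accumulated signs collapse to $\varepsilon(\mu,\nu)$ on the right-hand side, exactly as in the first identity, using the sign formula in Theorem~\ref{3_thm_det_formula_for_Littlewood-Schur} and the definition of $\varepsilon_{m,n}(\mu,\nu)$ as the sign of a sorting permutation.

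\textbf{Main obstacle.} The conceptual content — Laplace expansion along a mixed set of rows and columns, followed by recognizing overlaps — is straightforward given the template of Theorem~\ref{3_thm_laplace_expansion_LS_cut_before_index}; the genuinely delicate part will be the sign and exponent bookkeeping when the big matrix is cut \emph{both} horizontally and vertically at once. In particular one must verify that the bottom-left block $(y^{\lambda'_i+m-n-i})$ splits correctly so that the $\calU$-rows land in the $\calS$-determinant with the shifted partition $\mu-\langle(m-k)^{l-p}\rangle$ (the shift by $m-k$ being forced by the column-exponent normalization $\lambda_j+n-m-j$ versus the normalization natural to $l$ variables) while the $\calV$-rows land in the $\calT$-determinant with $\nu\cup\lambda_{(n+1-k,\dots)}$; getting the ranges $1\le i\le m-k$ to distribute consistently with the index $k$ of each sub-partition is where the convention that the $0$-th part is infinite and the identities $(m-k,n-k)\in\lambda$, $\nu'_i=n-l-k$ for $1\le i\le m-k$ (already used in the proof of Theorem~\ref{3_thm_laplace_expansion_LS_cut_before_index}) do the real work. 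Once those identifications are in place, collecting the powers of $(-1)$ and confirming they telescope to the single sign $\varepsilon(\mu,\nu)$ is routine but must be done carefully; I would organize it by first treating the generic case where all relevant sorting permutations are identities (the analogue of Remark~\ref{3_rem_vanderjeugt_counterexample}) and then inserting the sorting signs.
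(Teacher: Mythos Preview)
Your strategy is the same as the paper's, but you have the orientation of the matrix reversed: in Theorem~\ref{3_thm_det_formula_for_Littlewood-Schur} the variables in $\calX=\calS\cup\calT$ index the \emph{rows} and those in $\calY$ index the \emph{columns}. The paper therefore performs a single Laplace expansion along the fixed set of rows indexed by $\calS$ (the first $l$ rows), summing over all $l$-subsets $J$ of the column index set $[m+n-k]$; each such $J$ automatically decomposes as $p$ columns drawn from the $\calY$-block (yielding $\calU$, with $\calV$ its complement) together with $l-p$ columns drawn from the $\lambda$-block (whose choice encodes $\mu$, with $\nu$ coming from the complementary $\lambda$-columns, so that $\mu\star_{l-p,\,n-k-l+p}\nu=\lambda_{[n-k]}$). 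No ``mixed row-and-column'' expansion is needed. Once this orientation is corrected, the remainder of your outline---identifying the two resulting block determinants with the Littlewood-Schur functions on the right via the index computations you mention, reorganising the $\Delta$-factors, checking signs, and finishing by polynomiality to relax the distinctness hypothesis---matches the paper's argument exactly.
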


It is worth noting that for any partition $\nu$ that appears in the sum the union $\nu \cup \lambda_{(n + 1 - k, \dots)}$ is again a partition. Indeed, the fact that the $(l - p, n - k - l + p)$-overlap of $\mu$ and $\nu$ is $\lambda_{[n - k]}$ implies that there exists an index $1 \leq i \leq n - k$ so that $\nu_{n - k - l + p} = \lambda_i + n - k - i \geq \lambda_{n - k}.$

\begin{proof} We remark that if $\mu \star_{l - p, n - k - l + p} \nu = \lambda_{[n - k]}$, then the $(m - p, n - l)$-index of the partition $\nu \cup \lambda_{(n + 1 - k, \dots)}$ is $k - p$. Indeed, recalling that $k$ is the $(m,n)$-index of $\lambda$ allows us to infer that
\begin{align*} \left( \nu \cup \lambda_{(n + 1 - k, \dots)} \right)_{n - l - (k - p)} ={} & \nu_{n - k - l + p} \geq \lambda_{n - k} \geq m - k \intertext{ and } \left( \nu \cup \lambda_{(n + 1 - k, \dots)} \right)_{n - l - (k - p) + 1} ={} & \lambda_{n - k + 1} \leq m - k
.
\end{align*}
Therefore, both sides of the equation in \eqref{3_thm_lapalace_expansion_new_LS_eq} vanish whenever $k$ is negative. To prove equality for non-negative $k$, we first suppose that the variables in $\calS \cup \calT \cup \calY$ are pairwise distinct. According to Theorem~\ref{3_thm_det_formula_for_Littlewood-Schur}, the left-hand side in \eqref{3_thm_lapalace_expansion_new_LS_eq} can be written as
\begin{align*}
\LHS ={} & \varepsilon(\lambda) \frac{\Delta(\calY; \calS \cup \calT)}{\Delta(\calS \cup \calT) \Delta(\calY)} \det \begin{pmatrix} \left( (s - y)^{-1} \right) & \left( s^{\lambda_j + n - m - j} \right)_{\substack{1 \leq j \leq n - k}} \\ \left( (t - y)^{-1} \right) & \left( t^{\lambda_j + n - m - j} \right)_{\substack{1 \leq j \leq n - k}} \\ \left( y^{\lambda'_i + m - n - i} \right)_{\substack{1 \leq i \leq m - k}} & 0\end{pmatrix}
.
\intertext{Let us expand the determinant along the first $l$ rows by applying Lemma~\ref{3_lem_laplace_expansion_matrix}. This results in a signed sum over all subsets of the $m$ first columns and the $n - k$ last columns, respectively, that contain exactly $l$ columns in total.  The sum over the $m$ first columns corresponds to dividing $\calY$ into two subsequences, while the sum over the $n - k$ last columns (essentially) corresponds to dividing $\lambda$ into two subpartitions:}
\LHS ={} & \varepsilon(\lambda) \frac{\Delta(\calY; \calS) \Delta(\calY; \calT)}{\Delta(\calS \cup \calT) \Delta(\calY)} \\
& \times \sum_{p = 0}^{\min\{l,m\}} \sum_{\substack{\calU, \calV \subset \calY: \\ \calU \cup_{p, m - p} \calV \sorteq \calY}} \sum_{\substack{\mu, \nu: \\ \mu \star_{l - p, n - k - l + p} \nu = \lambda_{[n - k]}}} \frac{\varepsilon(\mu, \nu) \Delta(\calY) (-1)^{(m - p)(l - p)}}{\Delta(\calU) \Delta(\calV) \Delta(\calU; \calV)} \\
& \times \det \begin{pmatrix} \left( (s - u)^{-1} \right) & \left( s^{(\mu_j - (m - k)) + l - p - j} \right)_{\substack{1 \leq j \leq l - p}} \end{pmatrix} \\
& \times \det \begin{pmatrix} \left( (t - v)^{-1} \right) & \left( t^{\nu_j + (n - l) - (m - p) - j} \right)_{\substack{1 \leq j \leq n - k - l + p}} \\ \left( v^{\lambda'_i + m - n - i} \right)_{\substack{1 \leq i \leq m - k}} & 0 \end{pmatrix}
.
\intertext{Notice that $\mu - \left\langle (m - k)^{l - p} \right\rangle$ is a partition since $\mu_{l - p} - (m - k) \geq \lambda_{n - k} - (m - k) \geq 0$. Moreover, the same inequality shows that the $(p,l)$-index of this partition is $p$. In addition, the fact that $(m - k, n - k) \in \lambda$ and $(m - k, n - l - k + p) \in \nu$ entails that for $1 \leq i \leq m - k$, $\lambda'_i = \left( \nu \cup \lambda_{(n - k + 1, \dots)} \right)'_i + l - p$. Hence, Theorem~\ref{3_thm_det_formula_for_Littlewood-Schur} states that}
\LHS ={} & \varepsilon(\lambda) \frac{\Delta(\calY; \calS) \Delta(\calY; \calT)}{\Delta(\calS \cup \calT)} \\
& \times \sum_{p = 0}^{\min\{l,m\}} \sum_{\substack{\calU, \calV \subset \calY: \\ \calU \cup_{p, m - p} \calV \sorteq \calY}} \sum_{\substack{\mu, \nu: \\ \mu \star_{l - p, n - k - l + p} \nu = \lambda_{[n - k]}}} \frac{\varepsilon(\mu, \nu) (-1)^{(m - p)(l - p)}}{\Delta(\calU) \Delta(\calV) \Delta(\calU; \calV)} \\
& \times \varepsilon\left(\mu - \left\langle (m - k)^{l - p} \right\rangle \right) \frac{\Delta(\calS) \Delta(\calU)}{\Delta(\calU; \calS)} LS_{\mu - \left\langle (m - k)^{l - p} \right\rangle}(-\calS; \calU) \\
& \times \varepsilon\left(\nu \cup \lambda_{(n - k + 1, \dots)} \right) \frac{\Delta(\calT) \Delta(\calV)}{\Delta(\calV; \calT)} LS_{\nu \cup \lambda_{(n - k + 1, \dots)}}(-\calT; \calV)
.
\end{align*}
Combining the different factors in front of the Littlewood-Schur functions gives the desired equality. If we weaken the assumption that $\calS \cup \calT \cup \calY$ consist of pairwise distinct elements to the condition that $\Delta(\calY) \neq 0$ and $\Delta(\calS; \calT) \neq 0$, the equality in \eqref{3_thm_lapalace_expansion_new_LS_eq} still holds as $\Delta(\calT; \calS) \times \LHS$ and $\Delta(\calT; \calS) \times \RHS$ are polynomials in $\calS \cup \calT$ for any fixed set of variables $\calY$.
\end{proof}

\subsection{Laplace expansion for Schur functions}
Recall that any Schur function may be viewed as a specialization of a Littlewood-Schur function, given that $\schur_\lambda(\calX) = LS_\lambda(\calX; \emptyset)$. The first and second overlap identities look much neater when specialized to Schur functions. The primary reason why these statements simplify so drastically is that the $(0,n)$-index of any partition with length less than $n$ is equal to $0$. Specializing Corollary~\ref{3_cor_laplace_expansion_LS_mu_has_maximal_index} to Schur functions gives the following identity for Schur functions. 

\begin{cor} [first overlap identity for Schur functions, \cite{dehaye12}] \label{3_cor_laplace_expansion_POD} Let the set $\calX$ consist of $m + n$ pairwise distinct variables. For any pair of partitions $\mu$ and $\nu$ of lengths at most $m$ and $n$, respectively, it holds that
\begin{align*}
\schur_{\mu \star_{m,n} \nu} (\calX) = \sum_{\substack{\calS, \calT \subset \calX: \\ \calS \cup_{m,n} \calT \sorteq \calX}} \frac{\varepsilon(\mu, \nu) \schur_\mu(\calS) \schur_\nu(\calT)}{\Delta(\calS; \calT)}
.
\end{align*}
\end{cor}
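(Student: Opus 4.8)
The plan is to obtain Corollary~\ref{3_cor_laplace_expansion_POD} as the $\calY = \emptyset$ specialization of Corollary~\ref{3_cor_laplace_expansion_LS_mu_has_maximal_index}, which in turn rests on the first overlap identity (Theorem~\ref{3_thm_laplace_expansion_LS_cut_before_index}). First I would recall that $\schur_\lambda(\calZ) = LS_\lambda(\calZ; \emptyset)$ for any partition $\lambda$ and any set of variables $\calZ$, and moreover $\schur_\lambda(\calZ) = \pm\schur_\lambda(-\calZ)$ since Schur functions are homogeneous; so it suffices to prove the identity with $\calX$ replaced by $-\calX$ and all Schur functions written as Littlewood-Schur functions in an empty second alphabet. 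I would then apply Corollary~\ref{3_cor_laplace_expansion_LS_mu_has_maximal_index} with the following choices of parameters: take the ambient alphabet to have $n' \defeq m+n$ elements (playing the role of $n$ there), take the second alphabet $\calY$ to be empty (so $m' \defeq 0$), take $l \defeq m$, and take the indexing partition to be $\nu$ (playing the role of $\nu$ there), while $\mu$ plays the role of $\mu$. The key point is that the $(0, n'-l)$-index of $\nu$ is $k = 0$: indeed the condition $(0+1-k', n'-l+1-k') \notin \nu$ with $k' \le \min\{0, n'-l\} = 0$ forces $k' = 0$, since the box $(1, \text{anything})$ already lies in any nonempty partition only when $k' \le 0$, and $k'=0$ is the largest admissible value because $(0, n-k') = (0, n) \in \nu$ always holds by the convention on the $0$-th coordinate. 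With $k=0$ the hypotheses $l \le n'-k$, i.e. $m \le m+n$, and ``the $(m', l)$-index of $\mu + \langle 0^l\rangle = \mu$ is $0$'', i.e. the $(0,m)$-index of $\mu$ is $0$ (again automatic when $l(\mu) \le m$), are both satisfied.

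Next I would simply read off what Corollary~\ref{3_cor_laplace_expansion_LS_mu_has_maximal_index} yields under these substitutions. The left-hand side becomes
\[
LS_{\left(\mu \star_{m,\, n} \nu_{[n]}\right) \cup \nu_{(n+1,\dots)}}(-\calX; \emptyset),
\]
and since $l(\nu) \le n$ we have $\nu_{[n]} = \nu$ and $\nu_{(n+1, n+2, \dots)} = \emptyset$, so this is $LS_{\mu \star_{m,n}\nu}(-\calX;\emptyset) = \schur_{\mu\star_{m,n}\nu}(-\calX)$. On the right-hand side, the sum is over $\calS, \calT \subset \calX$ with $\calS \cup_{m, n}\calT \sorteq \calX$; the summand is
\[
\frac{\varepsilon\!\left(\mu, \nu_{[n]}\right)\, LS_{\mu + \langle 0^m\rangle}(-\calS; \emptyset)\, LS_\nu(-\calT;\emptyset)}{\Delta(\calT;\calS)}
= \frac{\varepsilon(\mu,\nu)\,\schur_\mu(-\calS)\,\schur_\nu(-\calT)}{\Delta(\calT;\calS)}.
\]
Then I would clean up the signs: $\Delta(\calT;\calS) = (-1)^{mn}\Delta(\calS;\calT)$ since swapping the two alphabets in the $\Delta$-function introduces $(-1)^{|\calS|\cdot|\calT|}$; and $\schur_\mu(-\calS) = (-1)^{|\mu|}\schur_\mu(\calS)$, $\schur_\nu(-\calT) = (-1)^{|\nu|}\schur_\nu(\calT)$, $\schur_{\mu\star_{m,n}\nu}(-\calX) = (-1)^{|\mu|+|\nu|}\schur_{\mu\star_{m,n}\nu}(\calX)$ (using $|\mu\star_{m,n}\nu| = |\mu|+|\nu|$, which follows from the defining sorting relation $\mu\star\nu + \rho_{m+n} \sorteq (\mu+\rho_m)\cup(\nu+\rho_n)$ by comparing sizes). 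I would check that all these sign factors cancel, leaving exactly
\[
\schur_{\mu\star_{m,n}\nu}(\calX) = \sum_{\substack{\calS,\calT\subset\calX:\\ \calS\cup_{m,n}\calT \sorteq \calX}} \frac{\varepsilon(\mu,\nu)\,\schur_\mu(\calS)\,\schur_\nu(\calT)}{\Delta(\calS;\calT)},
\]
which is the claim. (The $\mu\star_{m,n}\nu = \infty$ case is handled uniformly, since both sides vanish there — the left by the convention $LS_\infty = 0$ hence $\schur_\infty = 0$, the right by the second half of the proof of Corollary~\ref{3_cor_laplace_expansion_LS_mu_has_maximal_index}.)

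The main obstacle, such as it is, is the bookkeeping of signs: verifying that $\varepsilon(\mu,\nu_{[n]}) = \varepsilon_{m,n}(\mu,\nu)$ is consistent with the sign $\varepsilon(\mu,\nu)$ appearing in the target statement (it is, by definition of overlap sign), and that the powers of $-1$ coming from $\elementary(\calX)$-type rescalings, from $\Delta(\calT;\calS)$ versus $\Delta(\calS;\calT)$, and from the homogeneity of the three Schur functions all combine to the trivial sign. Since $|\mu\star_{m,n}\nu| = |\mu| + |\nu|$ exactly and $mn \equiv |\mu||\nu| + |\mu| + |\nu| \pmod 2$ is \emph{not} generally true, I would double-check that the $(-1)^{mn}$ from the $\Delta$-swap is in fact absorbed correctly — most likely it is cancelled against a matching factor inside the proof of Corollary~\ref{3_cor_laplace_expansion_LS_mu_has_maximal_index} when specialized to $m' = 0$, so the cleanest route is to specialize that corollary's \emph{statement} directly rather than re-deriving from Theorem~\ref{3_thm_laplace_expansion_LS_cut_before_index}. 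Everything else is a routine substitution.
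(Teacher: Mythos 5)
Your route — specialize Corollary~\ref{3_cor_laplace_expansion_LS_mu_has_maximal_index} to $\calY = \emptyset$ with $l = m$, so that all Littlewood-Schur functions collapse to Schur functions and the index $k$ vanishes — is exactly what the paper does (the paper's entire proof is the sentence ``Specializing Corollary~\ref{3_cor_laplace_expansion_LS_mu_has_maximal_index} to Schur functions gives the following identity''). Your verification of the hypotheses of that corollary is fine.

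Your sign bookkeeping, however, rests on a false premise, and this is exactly where you hedge at the end. You claim $|\mu \star_{m,n} \nu| = |\mu| + |\nu|$ by ``comparing sizes'' in the defining relation $\mu \star_{m,n} \nu + \rho_{m+n} \sorteq (\mu + \rho_m) \cup (\nu + \rho_n)$, but comparing sizes of the two sides actually gives $|\mu \star_{m,n} \nu| + \binom{m+n}{2} = |\mu| + |\nu| + \binom{m}{2} + \binom{n}{2}$, and since $\binom{m+n}{2} - \binom{m}{2} - \binom{n}{2} = mn$, the correct formula is
\begin{align*}
|\mu \star_{m,n} \nu| = |\mu| + |\nu| - mn.
\end{align*}
Once you have this, the sign bookkeeping is not a worry at all: the left-hand side of the specialized identity picks up $(-1)^{|\mu \star_{m,n} \nu|} = (-1)^{|\mu| + |\nu| - mn}$ from $\schur_{\mu\star\nu}(-\calX) = (-1)^{|\mu\star\nu|}\schur_{\mu\star\nu}(\calX)$, while the right-hand side picks up $(-1)^{|\mu|}(-1)^{|\nu|}$ from the two Schur functions and $(-1)^{-mn}$ from replacing $\Delta(\calT;\calS)$ by $\Delta(\calS;\calT)$. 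These two products of signs are identically equal, so they cancel outright — there is no hidden factor that needs to be ``absorbed inside the proof of Corollary~\ref{3_cor_laplace_expansion_LS_mu_has_maximal_index}''. Your instinct that the naive identity $|\mu\star_{m,n}\nu| = |\mu|+|\nu|$ would not make the signs cancel was correct; the resolution is that the size formula itself was off by $mn$, not that anything extra has to be invoked.
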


Corollary~\ref{3_cor_laplace_expansion_POD} is nothing more than a reformulation of Lemma 5 in \cite{dehaye12}. The only notable difference is that Dehaye does not introduce the notion of overlapping two partitions. In fact, Dehaye's lemma was the starting point for this entire chapter. Interestingly, the case $\mu_{m} \geq \nu_1 + n$ (i.e. when the sorting algorithm is the identity) appears independently in both \cite{bump06} and \cite{vanderjeugt} with essentially identical proofs.

The following corollary to Theorem~\ref{3_thm_lapalace_expansion_new_LS} is obtained by setting $\calY = \emptyset$.

\begin{cor} [second overlap identity for Schur functions] \label{3_cor_laplace_expansion_schur_new} Let $\lambda$ be a partition and let $\calS$ and $\calT$ be sets consisting of $m$ and $n$ variables, respectively. If $\Delta(\calS; \calT) \neq 0$, then
\begin{align*}
\schur_\lambda(\calS \cup \calT) ={} & \sum_{\substack{\mu, \nu: \\ \mu \star_{m,n} \nu = \lambda}} \frac{\varepsilon(\mu, \nu) \schur_\mu(\calS) \schur_\nu(\calT)}{\Delta(\calS; \calT)}
.
\end{align*}
\end{cor}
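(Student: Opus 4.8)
The plan is to obtain this as the $\calY=\emptyset$ specialization of the second overlap identity for Littlewood-Schur functions (Theorem~\ref{3_thm_lapalace_expansion_new_LS}), followed by a short translation between $LS_\kappa(-\calX;\emptyset)$ and ordinary Schur functions. First I would apply Theorem~\ref{3_thm_lapalace_expansion_new_LS} with the sets $\calS$ and $\calT$ of the corollary playing the roles of its $\calS$ and $\calT$ (so that its internal parameters become $l=m$ and ``$n$''$=m+n$) and with its third alphabet empty (its ``$m$''$=0$). The standing hypothesis $\Delta(\calS;\calT)\neq0$ is precisely what makes that theorem applicable, the condition $\Delta(\calY)\neq0$ being vacuous here, and it is also what gives meaning to the fraction in the statement of the corollary.

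Next I would simplify the resulting formula. If $l(\lambda)>m+n$ then there are no partitions with $\mu\star_{m,n}\nu=\lambda$ (an overlap has at most $m+n$ parts) and $\schur_\lambda$ in $m+n$ variables vanishes, so both sides are $0$; assume therefore $l(\lambda)\le m+n$, in which case the $(0,m+n)$-index of $\lambda$ is $k=0$. With $k=0$ and an empty second alphabet, the sum over $p$ in Theorem~\ref{3_thm_lapalace_expansion_new_LS} collapses to $p=0$, the sum over $\calU,\calV$ is trivial, the rectangle $\langle(m-k)^{l-p}\rangle$ and the tail $\lambda_{(n+1-k,n+2-k,\dots)}$ are empty, the overlap condition becomes $\mu\star_{m,n}\nu=\lambda$, and the coefficient $\Delta(\calV;\calS)\Delta(\calT;\calU)/\bigl(\Delta(\calV;\calU)\Delta(\calT;\calS)\bigr)$ reduces to $1/\Delta(\calT;\calS)$. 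Using $LS_\kappa(-\calX;\emptyset)=\schur_\kappa(-\calX)$ on all three Littlewood-Schur factors, this leaves
\[
\schur_\lambda\bigl(-(\calS\cup\calT)\bigr)=\sum_{\substack{\mu,\nu:\\\mu\star_{m,n}\nu=\lambda}}\frac{\varepsilon(\mu,\nu)\,\schur_\mu(-\calS)\,\schur_\nu(-\calT)}{\Delta(\calT;\calS)}.
\]

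Finally I would strip the minus signs using homogeneity of Schur functions. Since $\schur_\kappa$ is homogeneous of degree $|\kappa|$, we have $\schur_\lambda(-(\calS\cup\calT))=(-1)^{|\lambda|}\schur_\lambda(\calS\cup\calT)$ and $\schur_\mu(-\calS)\schur_\nu(-\calT)=(-1)^{|\mu|+|\nu|}\schur_\mu(\calS)\schur_\nu(\calT)$; taking sizes in the defining relation $\lambda+\rho_{m+n}\sorteq(\mu+\rho_m)\cup(\nu+\rho_n)$ gives $|\mu|+|\nu|=|\lambda|+mn$, and reversing the factors in the product gives $\Delta(\calT;\calS)=(-1)^{mn}\Delta(\calS;\calT)$. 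Substituting these three facts, every power of $-1$ cancels and the asserted identity drops out. The whole argument is routine once Theorem~\ref{3_thm_lapalace_expansion_new_LS} is in hand; the only thing that requires care, and the nearest thing to an obstacle, is this sign bookkeeping, namely tracking the size shift $|\mu|+|\nu|=|\lambda|+mn$ intrinsic to overlapping, the orientation factor $(-1)^{mn}$ relating $\Delta(\calT;\calS)$ and $\Delta(\calS;\calT)$, and the degenerate length case, all of which must conspire to leave $\varepsilon(\mu,\nu)$ unchanged.
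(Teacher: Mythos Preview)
Your approach is exactly the paper's: the corollary is stated as obtained from Theorem~\ref{3_thm_lapalace_expansion_new_LS} by setting $\calY=\emptyset$, with no further detail given. Your careful sign bookkeeping (the size shift $|\mu|+|\nu|=|\lambda|+mn$, the orientation factor $(-1)^{mn}$ between $\Delta(\calT;\calS)$ and $\Delta(\calS;\calT)$, and the degenerate-length case) is correct and simply fills in what the paper leaves implicit.
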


\section{Visualizing the overlap of two partitions} \label{3_section_visualizations_of_overlap}
In this section we present two visual interpretations for overlapping partitions. Both visualizations characterize the set of all pairs of partitions whose overlaps are equal by identifying their Ferrers diagrams with some part of the diagram of a so-called staircase walk.

\begin{defn} [staircase walks]  A staircase walk is a lattice walk that only uses west and south steps. Let \label{symbol_staircase_walks_in_rectangle} $\mathfrak{P}(n,m)$ be the set of all staircase walks going from the top-right to the bottom-left corner of an $n \times m$ rectangle. For $\pi \in \mathfrak{P}(n,m)$, \label{symbol_partition_associated_to_pi_mu} $\mu(\pi) \subset \langle n^m \rangle$ denotes the partition whose Ferrers diagram lies above $\pi$, while \label{symbol_partition_associated_to_pi_nu} $\nu(\pi) \subset \langle n^m \rangle$ denotes the partition whose Ferrers diagram (rotated by 180 degrees) lies below $\pi$. In addition, $V(\pi)$ and \label{symbol_sequence_associated_to_pi} $H(\pi)$ denote the sequences of all times of vertical and horizontal steps of $\pi$, respectively.
\end{defn}

\begin{ex*} The staircase walk $\pi \in \mathfrak{P}(6,3)$ cuts the $6 \times 3$ rectangle into the following two partitions: $\mu(\pi) = (5,5,2)$ and $\nu(\pi) = (4,1,1)$.
\begin{center}
\begin{tikzpicture} 
\node(pi) at (-0.5, 0.75) {$\pi =$};
\draw[step=0.5cm, thin] (0, 0) grid (3, 1.5);
\draw[ultra thick, ->] (1, 0) -- (0, 0);
\draw[ultra thick] (1, 0) -- (1, 0.5);
\draw[ultra thick] (1, 0.5) -- (2.5, 0.5);
\draw[ultra thick] (2.5, 0.5) -- (2.5, 1.5);
\draw[ultra thick] (2.5, 1.5) -- (3, 1.5);
\end{tikzpicture}
\begin{tikzpicture} \label{3_ferrers_diagram}
\node(mu) at (-0.7, 0.75) {$\mu(\pi) =$};
\draw[step=0.5cm, thin] (0, 0) grid (1, 0.5);
\draw[step=0.5cm, thin] (0, 0.5) grid (2.5, 1.5);
\end{tikzpicture}
\begin{tikzpicture}
\node(nu) at (-0.7, 0.75) {$\nu(\pi) =$};
\draw[step=0.5cm, thin] (0, 0) grid (0.5, 1.5);
\draw[step=0.5cm, thin] (0, 0.99) grid (2, 1.5);
\end{tikzpicture}
\end{center}
We further see that $V(\pi) = (2,3,7)$ and $H(\pi) = (1,4,5,6,8,9)$.
\end{ex*}

\subsection{Labeled staircase walks}
Our first visualization for the overlap of two partitions is based on the following lemma, which provides a non-visual description for the two partitions $\mu(\pi)$ and $\nu(\pi)$ associated to any staircase walk $\pi$.  

\begin{lem} \label{3_lem_macdonald_page_3} Let $\pi \in \mathfrak{P}(n,m)$, then $\mu(\pi)$ and $\nu(\pi)$ are the unique partitions that satisfy the following two equations:
$$\mu(\pi) + \rho_m = (\rho_{m + n})_{V(\pi)} \:\text{ and }\: \nu(\pi)' + \rho_n = (\rho_{m + n})_{H(\pi)}.$$
\end{lem}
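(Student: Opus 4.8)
The plan is to verify directly that the partitions defined visually --- $\mu(\pi)$ having Ferrers diagram above $\pi$, and $\nu(\pi)$ having Ferrers diagram (after a $180^\circ$ rotation) below $\pi$ --- satisfy the two displayed equations, and that these equations pin the partitions down uniquely. Uniqueness is the easy half: if $\mu + \rho_m = (\rho_{m+n})_{V(\pi)}$, then $\mu$ is recovered by subtracting $\rho_m$ entrywise from a specified subsequence of $\rho_{m+n}$, so at most one partition can satisfy it (and similarly for $\nu'$, whence $\nu$); one only needs to remark that the right-hand side is strictly decreasing and that subtracting $\rho_m$ keeps it non-increasing, so the candidate is genuinely a partition.

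For the first equation, the idea is to read off $\mu(\pi)_i$, the length of the $i$-th row of the region above $\pi$, in terms of the walk. Index the rows of the $n\times m$ rectangle from the top, $i = 1, \dots, m$; the $i$-th row of $\mu(\pi)$ extends as far right as the $i$-th vertical (south) step of $\pi$. If $V(\pi) = (v_1 < v_2 < \dots < v_m)$ lists the times of the vertical steps among the $m+n$ total steps, then the $i$-th vertical step is preceded by exactly $v_i - i$ horizontal (west) steps, and since a west step moves one unit left while starting from horizontal coordinate $n$, the $i$-th row of $\mu(\pi)$ has length $n - (v_i - i)$. On the other hand $(\rho_{m+n})_{V(\pi)}$ has $i$-th entry $(\rho_{m+n})_{v_i} = (m+n) - v_i$, and $(\rho_m)_i = m - i$, so $\mu(\pi)_i + (\rho_m)_i = n - v_i + i + m - i = (m+n) - v_i = (\rho_{m+n})_{v_i}$, exactly as claimed. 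The main care needed here is bookkeeping with the two coordinate conventions (the "time" of a step versus the number of steps of each type that have occurred), plus handling the boundary/degenerate cases where $\mu(\pi)$ has parts equal to $0$ or to $n$; the convention that appended zeros do not change a partition makes this harmless.

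For the second equation, the cleanest route is to invoke the symmetry between the two partitions rather than redo the computation from scratch. Reflecting the walk $\pi$ through the anti-diagonal of the rectangle (equivalently, rotating by $180^\circ$ and transposing) sends $\mathfrak{P}(n,m)$ to $\mathfrak{P}(m,n)$, interchanges "above" and "below", interchanges west and south steps, and transposes both associated partitions; under this reflection $V(\pi)$ and $H(\pi)$ swap roles after the obvious reindexing $j \mapsto m+n+1-j$, which also fixes $\rho_{m+n}$ up to the reversal built into the definition. Concretely, the region below $\pi$, rotated, is the region above the reflected walk, so applying the already-proven first equation to the reflected walk and then transposing back yields $\nu(\pi)' + \rho_n = (\rho_{m+n})_{H(\pi)}$. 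The one subtlety --- and the step I expect to cost the most attention --- is checking that the reindexing of step-times interacts correctly with $\rho_{m+n}$: one must confirm that if $H(\pi)$ is the complement of $V(\pi)$ in $[m+n]$, then $(\rho_{m+n})_{H(\pi)}$ really is the correct right-hand side, which amounts to the identity $(\rho_{m+n})_j = (m+n) - j$ being compatible with reading the horizontal steps in increasing time order; a short direct computation paralleling the $\mu$ case (the $j$-th horizontal step is preceded by $h_j - j$ vertical steps, so it occurs at height $m - (h_j - j)$, giving the $j$-th part of $\nu(\pi)'$) will serve as a safety net if the reflection argument feels too slick to state cleanly.
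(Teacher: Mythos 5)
Your proof is correct and follows essentially the same route as the paper: the key computation $\mu(\pi)_i = n - (v_i - i)$, i.e.\ $V(\pi)_i = i + n - \mu(\pi)_i$, is exactly the paper's central observation, and the paper likewise dispatches the $\nu(\pi)'$ equation "by symmetry." Your added remarks on uniqueness and the explicit fallback computation for $\nu(\pi)'_j = m - (h_j - j)$ are both correct and make the argument slightly more self-contained than the paper's version, but the approach is the same.
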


\begin{proof} This proof reproduces arguments from \cite[p.~3]{mac}. Consider the following diagram of a staircase walk $\pi \in \mathfrak{P}(n,m)$ and the corresponding partition $\mu(\pi)$ (colored in gray):
\begin{center}
\begin{tikzpicture} 
\fill[black!10!white] (0, 0) rectangle (1, 1.5);
\fill[black!10!white] (1, 0.5) rectangle (2.5, 1.5);
\draw[step=0.5cm, thin] (0, 0) grid (3, 1.5);
\draw[ultra thick, ->] (1, 0) -- (0, 0);
\draw[ultra thick] (1, 0) -- (1, 0.5);
\draw[ultra thick] (1, 0.5) -- (2.5, 0.5);
\draw[ultra thick] (2.5, 0.5) -- (2.5, 1.5);
\draw[ultra thick] (2.5, 1.5) -- (3, 1.5);
\draw[decoration={brace, raise=5pt},decorate] (3, 1.5) -- node[right=6pt] {$m$} (3, 0);
\draw[decoration={brace, raise=5pt, mirror},decorate] (0, 0) -- node[below=6pt] {$n$} (3, 0);
\node at (0, 1.7) {};
\end{tikzpicture}
\end{center}
We see that for $1 \leq i \leq m$, $V(\pi)_i = i + n - (\mu(\pi))_i$. Let us illustrate this observation for $i = 2$:
\begin{center}
\begin{tikzpicture} 
\fill[black!10!white] (0, 0) rectangle (1, 1.5);
\fill[black!10!white] (1, 0.5) rectangle (2.5, 1.5);
\draw[step=0.5cm, thin] (0, 0) grid (3, 1.5);
\draw[ultra thick, ->] (1, 0) -- (0, 0);
\draw[ultra thick] (1, 0) -- (1, 0.5);
\draw[ultra thick] (1, 0.5) -- (2.5, 0.5);
\draw[ultra thick] (2.5, 0.5) -- (2.5, 1.5);
\draw[ultra thick] (2.5, 1.5) -- (3, 1.5);
\draw[decoration={brace, raise=5pt},decorate] (3, 1.5) -- node[right=6pt] {$i$} (3, 0.5);
\draw[decoration={brace, raise=5pt, mirror},decorate] (2.5, 0) -- node[below=6pt] {$n - (\mu(\pi))_i$} (3, 0);
\end{tikzpicture}
\end{center}
In consequence,
$$m + n - V(\pi)_i = m + n - (i + n - (\mu(\pi))_i) = m - i + (\mu(\pi))_i,$$
which shows the first equality. By symmetry the analogue holds for $\nu(\pi)'$.
\end{proof}

\begin{prop} \label{3_prop_visual_interpretation_overlap} For a fixed partition $\lambda$ of length at most $m + n$, there is a 1-to-1 correspondence between $\mathfrak{P}(n,m)$ and $\{(\mu, \nu): \mu \star_{m,n} \nu = \lambda\}$
given by
\begin{align} \label{3_eq_visual_interpretation_overlap_map}
\pi \mapsto \left(\mu(\pi) + \lambda_{V(\pi)}, \nu(\pi)' + \lambda_{H(\pi)}\right).
\end{align}
Moreover, $\varepsilon_{m,n} \left(\mu(\pi) + \lambda_{V(\pi)}, \nu(\pi)' + \lambda_{H(\pi)}\right) = (-1)^{|\nu(\pi)|} = (-1)^{mn - |\mu(\pi)|}$.
\end{prop}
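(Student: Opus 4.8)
The statement has two parts: the bijection \eqref{3_eq_visual_interpretation_overlap_map} and the sign formula. I would prove the bijection by reducing everything to Lemma~\ref{3_lem_macdonald_page_3}, which already encodes the combinatorics of a staircase walk $\pi$ as the splitting of the ``reference sequence'' $\rho_{m+n}$ into its $V(\pi)$-subsequence and its $H(\pi)$-subsequence. The key observation is that $V(\pi)$ and $H(\pi)$ together partition $[m+n]$ into an increasing sequence of length $m$ and an increasing sequence of length $n$, and conversely every such ordered pair of complementary subsequences of $[m+n]$ arises from a unique $\pi \in \mathfrak{P}(n,m)$. So it suffices to show that, for fixed $\lambda$, the map $\pi \mapsto (\mu,\nu)$ defined by \eqref{3_eq_visual_interpretation_overlap_map} is a bijection onto $\{(\mu,\nu): \mu \star_{m,n}\nu = \lambda\}$.

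First I would unwind the definition of overlap. By Definition~\ref{3_defn_overlap}, $\mu \star_{m,n}\nu = \lambda$ means exactly that $\lambda + \rho_{m+n} \sorteq (\mu+\rho_m)\cup(\nu+\rho_n)$, i.e.\ the multiset $\{(\mu+\rho_m)_i\} \cup \{(\nu+\rho_n)_j\}$ equals the set $\{\lambda_k + m+n-k : 1\le k\le m+n\}$ as sets (and this forces the latter to have distinct entries). Now given $\pi$, set $\mu = \mu(\pi)+\lambda_{V(\pi)}$ and $\nu' = \nu(\pi)' + \lambda_{H(\pi)}$ (note $\nu$ is determined since $\nu(\pi)'$ and $\lambda_{H(\pi)}$ are genuine sequences; I should check $\nu$ is a partition, which follows because $\nu(\pi)' + \lambda_{H(\pi)}$ will turn out to be $(\lambda+\rho_{m+n})_{H(\pi)} - \rho_n$, a decreasing-minus-$\rho_n$ sequence). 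Then using Lemma~\ref{3_lem_macdonald_page_3},
\begin{align*}
\mu + \rho_m &= \mu(\pi) + \rho_m + \lambda_{V(\pi)} = (\rho_{m+n})_{V(\pi)} + \lambda_{V(\pi)} = (\lambda + \rho_{m+n})_{V(\pi)},\\
\nu' + \rho_n &= \nu(\pi)' + \rho_n + \lambda_{H(\pi)} = (\rho_{m+n})_{H(\pi)} + \lambda_{H(\pi)} = (\lambda+\rho_{m+n})_{H(\pi)}.
\end{align*}
Since $V(\pi)$ and $H(\pi)$ partition $[m+n]$, the union $(\mu+\rho_m)\cup(\nu+\rho_n)$ is a rearrangement of $\lambda + \rho_{m+n}$; hence $\mu \star_{m,n}\nu = \lambda$. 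Conversely, given any pair $(\mu,\nu)$ with $\mu\star_{m,n}\nu=\lambda$, the sequence $\lambda+\rho_{m+n}$ has distinct entries, and writing $\mu+\rho_m$ as the subsequence of $\lambda+\rho_{m+n}$ occupying positions $V \subset [m+n]$ (with $\nu'+\rho_n$ occupying the complement $H$), I recover $\pi$ as the unique staircase walk with $V(\pi)=V$, $H(\pi)=H$, and then peel off $\lambda_{V(\pi)}$ and $\lambda_{H(\pi)}$ to get back $\mu(\pi)=\mu-\lambda_{V(\pi)}$, $\nu(\pi)'=\nu'-\lambda_{H(\pi)}$ (these are honest partitions by Lemma~\ref{3_lem_macdonald_page_3} run in reverse). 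This gives the two-sided inverse, proving the bijection.

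For the sign, recall $\varepsilon_{m,n}(\mu,\nu)$ is by definition the sign of the sorting permutation taking $(\mu+\rho_m)\cup(\nu+\rho_n)$ to its decreasing rearrangement $\lambda+\rho_{m+n}$. From the previous paragraph, $(\mu+\rho_m)\cup(\nu+\rho_n)$ is obtained from $\lambda+\rho_{m+n}$ by listing first the entries at positions $V(\pi)$ (in increasing order of position) and then those at positions $H(\pi)$; the sorting permutation is thus the ``unshuffle'' permutation of the ordered partition $(V(\pi), H(\pi))$ of $[m+n]$, whose sign is $(-1)^{\#\{(a,b): a \in V(\pi), b\in H(\pi), a > b\}}$ — the number of inversions between the $V$-block and the $H$-block. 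I would then identify this inversion count with $|\nu(\pi)|$: for a south step at time $a$ (so $a \in V(\pi)$) and a west step at time $b$ (so $b \in H(\pi)$), the pair contributes an inversion iff $a>b$, i.e.\ iff the west step occurs before the south step; counting, for each south step, the number of earlier west steps, and summing, is precisely counting the boxes strictly below-left of the walk $\pi$, which by definition is $|\nu(\pi)|$. Finally $|\nu(\pi)| = mn - |\mu(\pi)|$ since $\mu(\pi)$ and (the $180^\circ$ rotation of) $\nu(\pi)$ tile the $n\times m$ rectangle.

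\textbf{Main obstacle.} The conceptual content is light; the real work is keeping the bookkeeping straight — in particular making sure the orientation conventions (the $180^\circ$ rotation in the definition of $\nu(\pi)$, the transpose implicit in writing $\nu(\pi)'$, and the direction in which $V(\pi)$, $H(\pi)$ list times) are all consistent so that ``$\lambda_{V(\pi)}$'' and ``$\lambda_{H(\pi)}$'' attach to the correct rows/columns. The cleanest safeguard is to route the entire argument through the sequence identities of Lemma~\ref{3_lem_macdonald_page_3} and Definition~\ref{3_defn_overlap}, never manipulating diagrams directly except in the final step identifying the inversion count with $|\nu(\pi)|$, where a single labeled picture (like the one in the chapter introduction) makes the claim transparent.
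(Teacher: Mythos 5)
Your proof follows the same route as the paper: Lemma~\ref{3_lem_macdonald_page_3} gives the identities $\mu(\pi)+\rho_m=(\rho_{m+n})_{V(\pi)}$ and $\nu(\pi)'+\rho_n=(\rho_{m+n})_{H(\pi)}$, from which the overlap condition \eqref{3_condition_overlap} for the image pair drops out because $V(\pi)$ and $H(\pi)$ partition $[m+n]$, and the inverse is recovered from the two subsequences $V,H$ determined by any $(\mu,\nu)$ with $\mu\star_{m,n}\nu=\lambda$. However, there is a small but real slip worth flagging: you set $\nu'=\nu(\pi)'+\lambda_{H(\pi)}$, whereas the map \eqref{3_eq_visual_interpretation_overlap_map} declares this quantity to be $\nu$ itself (the prime on $\nu(\pi)'$ is the conjugate of the staircase partition $\nu(\pi)$, not an invitation to conjugate the output). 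As written, you establish $(\mu+\rho_m)\cup(\nu'+\rho_n)\sorteq\lambda+\rho_{m+n}$, which yields $\mu\star_{m,n}\nu'=\lambda$ rather than the stated $\mu\star_{m,n}\nu=\lambda$, and the parenthetical ``check that $\nu$ is a partition'' would then be vacuous (conjugates of partitions are partitions). Dropping the prime on your $\nu$ fixes everything, and then your partition-check does honest work: it verifies that $(\lambda+\rho_{m+n})_{H(\pi)}-\rho_n$ is a genuine partition.

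On the sign, your argument is a genuine (and, to my mind, cleaner) alternative to the paper's. The paper argues by induction on $|\nu(\pi)|$: the walk with all vertical steps first gives the identity sorting permutation, and each transfer of a box from $\mu(\pi)$ to $\nu(\pi)$ (swapping one south step with an adjacent west step) composes the sorting permutation with a transposition, flipping the sign. You instead compute the sign directly as the inversion count of the unshuffle permutation of $(V(\pi),H(\pi))$ and identify $\#\{(a,b):a\in V(\pi),\,b\in H(\pi),\,a>b\}$ with $|\nu(\pi)|$ via the observation that the south step at time $V(\pi)_i$ has exactly $V(\pi)_i-i=n-\mu(\pi)_i$ earlier west steps, and summing over $i$ gives $mn-|\mu(\pi)|=|\nu(\pi)|$. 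Both routes are short; the direct count avoids the poset/induction bookkeeping at the cost of the one geometric identification, which you correctly reduce to boxes below the walk.
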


\begin{proof} By Lemma~\ref{3_lem_macdonald_page_3},
\begin{multline*}
\left( \mu(\pi) + \lambda_{V(\pi)} + \rho_m \right) \cup \left( \nu(\pi)' + \lambda_{H(\pi)} + \rho_n\right) \\ = \left( \rho_{m + n} + \lambda \right)_{V(\pi)} \cup \left( \rho_{m + n} + \lambda \right)_{H(\pi)} \sorteq \lambda + \rho_{m + n},
\end{multline*} 
which implies that the map given in \eqref{3_eq_visual_interpretation_overlap_map} is well defined. We further see that the sign of the sorting permutation, say $\sigma$, only depends on the timing of the vertical steps of $\pi$. In fact, the identity permutation corresponds to the first $m$ steps of $\pi$ being vertical, and thus $\nu(\pi)$ being the empty partition. Removing a box from the Ferrers diagram of $\mu(\pi)$ and adding it to the diagram of $\nu(\pi)$ corresponds to composing $\sigma$ with a transposition, \textit{i.e.}\ multiplying its sign by $(-1)$.

We show that the map in \eqref{3_eq_visual_interpretation_overlap_map} is bijective by giving its inverse. Let $\mu$, $\nu$ be a pair of partitions whose $(m,n)$-overlap is $\lambda$. By definition, there exists a pair of subsequences $V$, $H \subset [m + n]$ with $V \cup_{m, n} H = [m + n]$ such that
$$\mu + \rho_m = \left( \lambda + \rho_{m + n} \right)_V \:\text{ and }\: \nu + \rho_n = \left( \lambda + \rho_{m + n} \right)_H
.$$
If $\pi \in \mathfrak{P}(n,m)$ denotes the staircase walk determined by the condition that $V(\pi) = V$ and $H(\pi) = H$, then Lemma~\ref{3_lem_macdonald_page_3} implies that $\mu = \mu(\pi) + \lambda_{V(\pi)}$ and $\nu = \nu(\pi)' + \lambda_{H(\pi)}$, which allows us to conclude that $\pi$ is the preimage of the pair $\mu$, $\nu$. 
\end{proof}

\begin{ex} \label{3_ex_visual_interpretation_overlap} Let us fix $m = 3$, $n = 6$ and a partition $\lambda = (7, 4, 3, 3, 3, 1)$ of length less than $m + n$. Proposition~\ref{3_prop_visual_interpretation_overlap} tells us that any staircase walk $\pi \in \mathfrak{P}(n,m)$ corresponds to a pair of partitions whose $(m,n)$-overlap equals $\lambda$. In order to visualize this correspondence, consider the following diagram of a staircase walk $\pi \in \mathfrak{P}(6,3)$ labeled by the partition $\lambda$:
\vspace{-0.3cm}
\begin{center}
\begin{tikzpicture} 
\draw[step=0.5cm, thin] (0, 0) grid (3, 1.5);
\draw[ultra thick, ->] (1, 0) -- (0, 0);
\draw[ultra thick] (1, 0) -- (1, 0.5);
\draw[ultra thick] (1, 0.5) -- (2.5, 0.5);
\draw[ultra thick] (2.5, 0.5) -- (2.5, 1.5);
\draw[ultra thick] (2.5, 1.5) -- (3, 1.5);
\node[anchor=south] at (2.75, 1.5) {\tiny{7}};
\node[anchor=west] at (2.5, 1.25) {\tiny{4}};
\node[anchor=west] at (2.5, 0.75) {\tiny{3}};
\node[anchor=south] at (2.25, 0.5) {\tiny{3}};
\node[anchor=south] at (1.75, 0.5) {\tiny{3}};
\node[anchor=south] at (1.25, 0.5) {\tiny{1}};
\node[anchor=west] at (1, 0.25) {\tiny{0}};
\node[anchor=south] at (0.75, 0) {\tiny{0}};
\node[anchor=south] at (0.25, 0) {\tiny{0}};
\end{tikzpicture} 
\end{center}
Under the map defined in \eqref{3_eq_visual_interpretation_overlap_map}, $\pi$ is sent to the pair of partitions
\begin{multline*}
(\mu, \nu) = \left(\mu(\pi) + \lambda_{V(\pi)}, \nu(\pi)' + \lambda_{H(\pi)} \right) \\ = ((5, 5, 2) + (4, 3), (3, 1, 1, 1) + (7, 3, 3, 1)) = ((9, 8, 2), (10, 4, 4, 2)).
\end{multline*}
Recalling that the parts of $\mu(\pi)$ (or $\nu(\pi)'$) correspond to the rows of boxes above the staircase walk (or columns below the staircase walk), these numbers are easy to see in the diagram: for each part of $\mu(\pi)$ (or $\nu(\pi)'$), the label of the corresponding step of $\pi$ indicates the number of boxes that must be added to the row (or column) to obtain the corresponding part of $\mu$ (or $\nu$).
\end{ex}

The visualization of overlap explained in this example also provides a framework for visualizing pairs of partitions whose overlap is infinity. We recall that the $(m,n)$-overlap of two partitions, say $\mu$ and $\nu$, is infinity if and only if the sequence $\left( \mu + \rho_m \right) \cup \left( \nu + \rho_n \right)$ contains repetitions. This visualization makes use of the notion of quasi-partitions.

\begin{defn} [quasi-partition] Let $\pi \in \mathfrak{P}(n,m)$. We call a sequence $\alpha$ of length $m + n$ a quasi-partition associated to the staircase walk $\pi$ if it satisfies all of the following conditions:
\begin{enumerate}
\item the elements of $\alpha$ are possibly negative integers but $\alpha_{m + n} \geq 0$;
\item there is no index $i$ so that $\alpha_{i - 1} < \alpha_i < \alpha_{i + 1}$;
\item if $i, i + 1 \in V(\pi)$ (or $i, i + 1 \in H(\pi)$), then $\alpha_{i + 1} \leq \alpha_i$;
\item if $i \in V(\pi)$ and $i + 1 \in H(\pi)$ (or vice versa), then $\alpha_{i + 1} \leq \alpha_i + 1$.
\end{enumerate}
\end{defn}
We remark that a sequence of length $m + n$ is a partition if and only if it is a quasi-partition associated to \emph{all} staircase walks in $\mathfrak{P}(n,m)$ -- unless $m = 1 = n$. In case $m = 1 = n$, a partition of length at most $2$ is still a quasi-partition associated to all staircase walks in $\mathfrak{P}(n,m)$, but the converse does not hold.

\begin{prop} \label{3_prop_visualization_of_infinite_overlap} Let $m$ and $n$ be non-negative integers. The $(m,n)$-overlap of two partitions $\mu$ and $\nu$ is equal to infinity if and only if there exist a staircase walk $\pi \in \mathfrak{P}(n,m)$ and a quasi-partition $\alpha$ associated to $\pi$ with the properties that $\alpha$ is \emph{not} a partition, $\mu = \mu(\pi) + \alpha_{V(\pi)}$ and $\nu = \nu(\pi)' + \alpha_{H(\pi)}$.
\end{prop}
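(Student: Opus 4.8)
The plan is to mimic the proof of Proposition~\ref{3_prop_visual_interpretation_overlap}, replacing the use of Lemma~\ref{3_lem_macdonald_page_3} with a direct analysis of when the sequence $(\mu + \rho_m) \cup (\nu + \rho_n)$ fails to be strictly decreasing after sorting. Recall from Definition~\ref{3_defn_overlap} that $\mu \star_{m,n} \nu = \infty$ precisely when the concatenation $(\mu + \rho_m) \cup (\nu + \rho_n)$ contains a repeated entry (this is the only obstruction to the existence of the overlap partition, since a multiset of non-negative integers sorts into a strictly decreasing sequence if and only if it has no repetitions). So the whole statement amounts to: the pairs $(\mu, \nu)$ for which this concatenation has a repetition are exactly those arising from a staircase walk $\pi$ together with a quasi-partition $\alpha$ associated to $\pi$ that is not a genuine partition, via $\mu = \mu(\pi) + \alpha_{V(\pi)}$, $\nu = \nu(\pi)' + \alpha_{H(\pi)}$.

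First I would set up the correspondence in the ``easy'' direction. Given $\pi \in \mathfrak{P}(n,m)$ and a quasi-partition $\alpha$ associated to $\pi$, define $\mu = \mu(\pi) + \alpha_{V(\pi)}$ and $\nu = \nu(\pi)' + \alpha_{H(\pi)}$; I would first check $\mu$ and $\nu$ are honest partitions (of length at most $m$ and $n$), which is where conditions (1), (3) on $\alpha$ are used: for two consecutive indices both in $V(\pi)$ we get the needed weak decrease, and $\alpha_{m+n} \ge 0$ together with the shape of $\rho$ guarantees non-negativity of the last part. Then, exactly as in the proof of Lemma~\ref{3_lem_macdonald_page_3} and Proposition~\ref{3_prop_visual_interpretation_overlap}, one computes
\begin{align*}
(\mu + \rho_m) \cup (\nu + \rho_n) = (\rho_{m+n} + \alpha)_{V(\pi)} \cup (\rho_{m+n} + \alpha)_{H(\pi)} \sorteq \rho_{m+n} + \alpha,
\end{align*}
so the concatenation, up to reordering, is just $\rho_{m+n} + \alpha$. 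Thus $\mu \star_{m,n} \nu = \infty$ if and only if $\rho_{m+n} + \alpha$ has a repeated entry; I would then show this happens exactly when $\alpha$ is not a partition. If $\alpha$ is a partition then $\rho_{m+n} + \alpha$ is strictly decreasing, no repetition. Conversely, if $\alpha$ is not a partition, then (using conditions (2)--(4), which are precisely the constraints ``$\alpha$ looks like a partition except possibly at the corner of $\pi$'') the only way $\alpha$ can fail to be non-increasing is that there is some index $i$ with $i \in V(\pi)$, $i+1 \in H(\pi)$ (or vice versa) and $\alpha_{i+1} = \alpha_i + 1$; at such an $i$ one has $(\rho_{m+n} + \alpha)_i = (\rho_{m+n} + \alpha)_{i+1}$, giving the repetition. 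This is the step I expect to require the most care: verifying that the four quasi-partition axioms are exactly calibrated so that ``not a partition'' forces a repetition of the specific adjacent-equal type, and nothing worse (e.g. no $\alpha_{i-1} < \alpha_i < \alpha_{i+1}$ pattern, excluded by (2)) can occur.

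For the converse direction, suppose $\mu \star_{m,n} \nu = \infty$, i.e.\ $(\mu + \rho_m) \cup (\nu + \rho_n)$ has a repetition. The entries of $\mu + \rho_m$ are strictly decreasing and likewise for $\nu + \rho_n$, so the repetition is between one entry of each block: there are $p, q$ with $\mu_p + m - p = \nu_q + n - q$. I would then recover the staircase walk and quasi-partition combinatorially. Form the multiset $M = (\mu + \rho_m) \cup (\nu + \rho_n)$ of size $m+n$; because it is a multiset of non-negative integers with at least one repetition, it need not determine a partition, but I would pick any total order refining the weak order and read off $V \cup_{m,n} H = [m+n]$ by recording, position by position in decreasing order, whether an element came from the $\mu$-block or the $\nu$-block (breaking ties between a $\mu$-entry and an equal $\nu$-entry by a fixed convention, say $\mu$-entry first). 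Let $\pi$ be the staircase walk with $V(\pi) = V$, $H(\pi) = H$, and define $\alpha$ by $\rho_{m+n} + \alpha \sorteq M$ under this chosen ordering, i.e.\ $\alpha = (\text{sorted } M) - \rho_{m+n}$ in the chosen order. One checks $\alpha_{m+n} \ge 0$ (last entry of $M$ is a value of $\mu$ or $\nu$, hence $\ge 0$, minus $0$), and that $\alpha$ satisfies (2)--(4) as a direct consequence of the way ties were ordered: consecutive indices both in $V$ or both in $H$ come from the strictly-decreasing blocks, giving (3); a tie straddling $V$ and $H$ is exactly the source of the $\alpha_{i+1} = \alpha_i + 1$ phenomenon allowed by (4); and (2) holds because three strictly increasing consecutive $\rho + \alpha$ values can never arise from a weakly decreasing sort. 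Then by Lemma~\ref{3_lem_macdonald_page_3}, $\mu(\pi) + \rho_m = (\rho_{m+n})_{V(\pi)}$ and $\nu(\pi)' + \rho_n = (\rho_{m+n})_{H(\pi)}$, whence $\mu(\pi) + \alpha_{V(\pi)} = \mu$ and $\nu(\pi)' + \alpha_{H(\pi)} = \nu$. Finally $\alpha$ is not a partition because $M$ has a repetition and therefore $\rho_{m+n} + \alpha$ (which equals $M$ up to reordering) is not strictly decreasing; equivalently, at the tie index $\alpha_{i+1} = \alpha_i + 1 > \alpha_i$ violates monotonicity. This completes both directions. The main obstacle, as noted, is the bookkeeping in matching ``$\alpha$ is not a partition'' with ``the corner of $\pi$ carries a strict ascent of exactly one'' — but this is forced by the quasi-partition axioms and is the same corner phenomenon that already appears (with a weak inequality) in Proposition~\ref{3_prop_visual_interpretation_overlap}.
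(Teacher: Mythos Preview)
Your proposal follows essentially the same route as the paper's proof: both directions proceed by relating $(\mu+\rho_m)\cup(\nu+\rho_n)$ to $\rho_{m+n}+\alpha$ via Lemma~\ref{3_lem_macdonald_page_3}, and the converse direction in both constructs $\alpha$ by sorting the multiset and reading off $V,H$ from which block each entry originates (the paper, like you, leaves the verification of the quasi-partition axioms to the reader).

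One small correction: you say that checking $\mu=\mu(\pi)+\alpha_{V(\pi)}$ is a partition ``is where conditions (1), (3) on $\alpha$ are used''. In fact all four conditions are needed. When $V(\pi)_{p+1}=V(\pi)_p+2$ (a single horizontal step between two vertical ones), conditions (3) and (4) alone only give $\alpha_{V(\pi)_{p+1}}\leq\alpha_{V(\pi)_p}+2$, which is not enough; you must invoke condition (2) to rule out two consecutive strict ascents and sharpen this to $\leq\alpha_{V(\pi)_p}+1$. The paper handles this by a three-way case split on $V(\pi)_{p+1}-V(\pi)_p$. Similarly, your argument that ``$\alpha$ not a partition forces a strict ascent'' silently uses condition (1): a priori $\alpha$ could fail to be a partition by having a negative entry while remaining non-increasing, and it is precisely $\alpha_{m+n}\geq 0$ that rules this out.
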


\begin{ex*} Let us illustrate this visualization for partitions whose overlap is infinity on a concrete example. The left-most diagram depicts a staircase walk $\pi \in \mathfrak{P}(6,3)$ labeled by a quasi-partition $\alpha$ associated to $\pi$. 
\begin{center}
\begin{tikzpicture} 
\draw[step=0.5cm, thin] (0, 0) grid (3, 1.5);
\draw[ultra thick, ->] (1, 0) -- (0, 0);
\draw[ultra thick] (1, 0) -- (1, 0.5);
\draw[ultra thick] (1, 0.5) -- (2.5, 0.5);
\draw[ultra thick] (2.5, 0.5) -- (2.5, 1);
\draw[ultra thick] (2.5, 1) -- (3, 1);
\draw[ultra thick] (3, 1) -- (3, 1.5);
\node[anchor=west] at (3, 1.25) {\tiny{4}};
\node[anchor=south] at (2.75, 1) {\tiny{2}};
\node[anchor=west] at (2.5, 0.75) {\tiny{3}};
\node[anchor=south] at (2.25, 0.5) {\tiny{1}};
\node[anchor=south] at (1.75, 0.5) {\tiny{1}};
\node[anchor=south] at (1.25, 0.5) {\tiny{-1}};
\node[anchor=west] at (1, 0.25) {\tiny{-1}};
\node[anchor=south] at (0.75, 0) {\tiny{0}};
\node[anchor=south] at (0.25, 0) {\tiny{0}};
\end{tikzpicture}
\hspace{0.2cm}
\begin{tikzpicture}
\node(mu) at (-0.5, 0.75) {$\mu =$};
\draw[step=0.5cm, thin] (0, 0) grid (0.5, 0.5);
\draw[step=0.5cm, thin] (0, 0.5) grid (4, 1);
\draw[step=0.5cm, thin] (0, 0.99) grid (5, 1.5);
\end{tikzpicture}
\hspace{0.2cm}
\begin{tikzpicture}
\node(nu) at (-0.5, 0.75) {$\nu =$};
\draw[step=0.5cm, thin] (0, 0) grid (1, 1.5);
\draw[step=0.5cm, thin] (0, 0.99) grid (2, 1.5);
\end{tikzpicture}
\end{center}
As in the preceding example, the label of each step of $\pi$ indicates the number of boxes that must be added to \emph{or removed from} the corresponding row of $\mu(\pi)$ (or column of $\nu(\pi)'$) to obtain the diagram of $\mu$ (or $\nu$). Using the formal definition of overlap, we compute that 
$$\mu \star_{3,6} \nu = (10, 8, 1) \star_{3,6} (4,2,2) = \infty$$
since $8 + 1 = 4 + 5$.
\end{ex*}

\begin{proof}[Proof of Proposition~\ref{3_prop_visualization_of_infinite_overlap}] Fix a staircase walk $\pi \in \mathfrak{P}(n,m)$ and a quasi-partition $\alpha$ associated to $\pi$ with the property that $\alpha$ is not a partition. First we show that the sequence $\mu = \mu(\pi) + \alpha_{V(\pi)}$ is a partition. For any index $1 \leq p \leq m - 1$, $\mu_{p + 1} \leq \mu_p$: if $V(\pi)_{p + 1} = V(\pi)_p + 1$, the third property listed in the definition of quasi-partition implies that 
\begin{multline*}
\mu_{p + 1} = \mu(\pi)_{p + 1} + \alpha_{V(\pi)_{p + 1}} = \mu(\pi)_{p + 1} + \alpha_{V(\pi)_p + 1} \\ \leq \mu(\pi)_{p + 1} + \alpha_{V(\pi)_p}= \mu(\pi)_p + \alpha_{V(\pi)_p} = \mu_p
;
\end{multline*}
if $V(\pi)_{p + 1} = V(\pi)_p + q + 1$ for some $q \geq 2$, the combining the third and the fourth property implies that
\begin{multline*}
\mu_{p + 1} = \mu(\pi)_{p + 1} + \alpha_{V(\pi)_{p + 1}} = \mu(\pi)_{p + 1} + \alpha_{V(\pi)_p + q + 1} \\ \leq \mu(\pi)_{p + 1} + \alpha_{V(\pi)_p} + 2 = \mu(\pi)_p - q + \alpha_{V(\pi)_p} + 2 \leq \mu_p
;
\end{multline*}
if $V(\pi)_{p + 1} = V(\pi)_p + 2$, combining the second and the fourth property allows us to infer that
\begin{multline*}
\mu_{p + 1} = \mu(\pi)_{p + 1} + \alpha_{V(\pi)_{p + 1}} = \mu(\pi)_{p + 1} + \alpha_{V(\pi)_p + 2} \\ \leq \mu(\pi)_{p + 1} + \alpha_{V(\pi)_p} + 1 = \mu(\pi)_p - 1 + \alpha_{V(\pi)_p} + 1 = \mu_p
.
\end{multline*}
It also follows that $\mu_m \geq 0$: if $V(\pi)_m = m + n$, the first property implies that $$\mu_m = \mu(\pi)_{m} + \alpha_{V(\pi)_m} \geq \mu(\pi)_{m} = 0;$$
if $V(\pi)_m < m + n$, the first, third and fourth properties entail that $\alpha(V(\pi))_m \geq -1$ and thus that
$$\mu_m = \mu(\pi)_{m} + \alpha_{V(\pi)_m} \geq \mu(\pi)_{m} - 1 \geq 0.$$
We conclude that $\mu$ is indeed a partition. An analogous argument shows that the sequence $\nu = \nu(\pi)' + \alpha_{H(\pi)}$ is also a partition.

Second we show that the $(m,n)$-overlap of $\mu$ and $\nu$ is infinity by constructing a pair of indices $1 \leq p \leq m$ and $1 \leq q \leq n$ so that $\mu_p + m - p = \nu_q + n - q$. By assumption, $\alpha$ is not a partition, which a priori means that the quasi-partition $\alpha$ contains a strictly negative element or a strict increase. However, the condition that the last element of $\alpha$ be non-negative allows us to infer that $\alpha$ must contain a strict increase. More precisely, there exists an index $1 \leq i \leq m + n - 1$ so that $\alpha_{i + 1} = \alpha_i + 1$. According to the third and the fourth property, $i \in V(\pi)$ and $i + 1 \in H(\pi)$ (or vice versa). As the two cases are exact analogues, we may assume the first case. Let us introduce $p$ and $q$ by means of an annotated diagram of a possible staircase walk $\pi$:
\begin{center}
\begin{tikzpicture} 
\draw[step=0.5cm, thin] (0, 0) grid (3, 1.5);
\draw[ultra thick, ->] (1, 0) -- (0, 0);
\draw[ultra thick] (1, 0) -- (1, 0.5);
\draw[ultra thick] (1, 0.5) -- (2.5, 0.5);
\draw[ultra thick] (2.5, 0.5) -- (2.5, 1);
\draw[ultra thick] (2.5, 1) -- (3, 1);
\draw[ultra thick] (3, 1) -- (3, 1.5);
\draw[decoration={brace, raise=5pt, mirror},decorate] (3, 0.5) -- node[right=6pt] {$\scriptstyle{p}$} (3, 1.5);
\draw[decoration={brace, raise=5pt, mirror},decorate] (0, 1.5) -- node[left=6pt] {$\scriptstyle{m}$} (0, 0);
\draw[decoration={brace, raise=5pt},decorate] (0, 1.5) -- node[above=6pt] {$\scriptstyle{n}$} (3, 1.5);
\draw[decoration={brace, raise=5pt},decorate] (3, 0) -- node[below=6pt] {$\scriptstyle{q}$} (2, 0);
\draw (4, 0.3) edge[out=185,in=-45,->] (2.6, 0.75);
\draw (-1, 1.2) edge[out=5,in=135,->] (2.25, 0.6);
\node at (-3.4, 1.2) {\small{$(i + 1)$-th step with label $\alpha_{i + 1}$}};
\node at (5.85, 0.3) {\small{$i$-th step with label $\alpha_i$}};
\end{tikzpicture}
\end{center}
\vspace{-0.3cm}
We see that
\begin{align*}
\mu_p + m - p ={} & \mu(\pi)_p + \alpha_{V(\pi)_p} + m - p = n - q + 1 + \alpha_i + m - p
\\
\nu_q + n - q ={} & \nu(\pi)'_q + \alpha_{H(\pi)_q} + n - q = m - p + \alpha_{i + 1} + n - q
.
\end{align*}
By construction, $\alpha_{i + 1} = \alpha_i + 1$, from which we conclude one direction of implication in the equivalence to be shown. 

In order to show the other direction, fix a pair of partitions, say $\mu$ and $\nu$, whose $(m,n)$-overlap is infinity. Let $\alpha$ be the sequence determined by the conditions that $\alpha + \rho_{m + n}$ be non-increasing and
\begin{align*}
\alpha + \rho_{m + n} \sorteq (\mu + \rho_m) \cup (\nu + \rho_n)
.
\end{align*}
Choose a pair of subsequences $V$, $H \subset [m + n]$ so that $\left( \alpha + \rho_{m + n} \right)_V = \mu + \rho_m$ and $\left( \alpha + \rho_{m + n} \right)_H = \nu + \rho_n$. It is worth noting that this choice is not unique since the sequences $\mu + \rho_m$ and $\nu + \rho_n$ have at least one element in common. Let $\pi \in \mathfrak{P}(n,m)$ be the staircase walk determined by $V(\pi) = V$ and $H(\pi) = H$. We claim that $\alpha$ is a quasi-partition associated to $\pi$ with the properties that $\alpha$ is \emph{not} a partition, $\mu = \mu(\pi) + \alpha_{V(\pi)}$ and $\nu = \nu(\pi)' + \alpha_{H(\pi)}$. The latter two follow immediately from Lemma~\ref{3_lem_macdonald_page_3}. In addition, the sequence $\alpha$ cannot be a partition since $\alpha + \rho_{m + n}$ is not strictly decreasing. The justification that $\alpha$ is indeed a quasi-partition is left to the reader. 
\end{proof}

The fact that the choice of $V$ and $H$ in the proof of Proposition~\ref{3_prop_visualization_of_infinite_overlap} is not unique entails that there is \emph{no} 1-to-1 correspondence between $\mathfrak{P}(n,m)$ and pairs of partitions whose $(m,n)$-overlap equals infinity.

\subsection{Complementary partitions and some of their properties}
In this section we study how taking the complement of a partition interacts with other operations on partitions, such as overlap. 

\begin{defn} [complement of a partition] The $(m,n)$-complement of a partition $\lambda$ contained in the rectangle $\left\langle m^n \right\rangle$ is given by \label{symbol_complement_of_partition}
\begin{align*}
\tilde \lambda = (m - \lambda_n, \dots, m - \lambda_1) \subset \langle m^n \rangle.
\end{align*}
When it is clear from the context with respect to which rectangle the complement is taken, we dispense with stating the parameters.
\end{defn}
The visual interpretation of this notion is that for any staircase walk $\pi \in \mathfrak{P}(m,n)$, the partitions $\mu(\pi)$ and $\nu(\pi)$ are $(m,n)$-complementary. Therefore, Proposition~\ref{3_prop_visual_interpretation_overlap} entails that two partitions are $(m,n)$-complementary if and only if they do not overlap, \textit{i.e.}\ if their $(n,m)$-overlap is the empty partition. In fact, this special case of Proposition~\ref{3_prop_visual_interpretation_overlap} is equivalent to Lemma~6 in \cite{dehaye12}.

In the following remark we collect a few properties of the complement. These observations are certainly not new given that they follow directly from the definition, but they will prove useful later.

\begin{rem} \label{3_rem_properties_of_complement} Taking the complement commutes with both conjugation and addition. More concretely, if $\lambda \subset \langle m^n \rangle$, then the $(n,m)$-complement of $\lambda'$ is conjugate to the $(m,n)$-complement of $\lambda$. For an additional partition $\kappa \subset \left\langle k^n \right\rangle$, the $(m + k,n)$-complement of $\lambda + \kappa$ is given by $\tilde\lambda + \tilde\kappa$. 
\end{rem}

Overlapping two partitions almost commutes with taking the complement. In fact, one could say that the two operations are skew-commutative.

\begin{lem} \label{3_lem_overlap_and_complement} If a partition $\lambda \subset \left\langle l^{m + n} \right\rangle$ is the $(m,n)$-overlap of the partitions $\mu$ and $\nu$, then $\tilde\lambda$ is the $(m,n)$-overlap of $\tilde\mu$ and $\tilde\nu$ where we view $\mu$ and $\nu$ as subsets of $\left\langle (n + l)^m \right\rangle$ and $\left\langle (m + l)^n \right\rangle$, respectively. Moreover, $\varepsilon_{m,n}(\tilde\mu, \tilde\nu) = (-1)^{mn} \varepsilon_{m,n}(\mu, \nu)$.
\end{lem}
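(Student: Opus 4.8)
The statement to prove is Lemma~\ref{3_lem_overlap_and_complement}: if $\lambda \subset \left\langle l^{m+n}\right\rangle$ equals $\mu \star_{m,n} \nu$ (with $\mu \subset \left\langle (n+l)^m\right\rangle$ and $\nu \subset \left\langle (m+l)^n\right\rangle$), then $\tilde\lambda = \tilde\mu \star_{m,n} \tilde\nu$ and $\varepsilon_{m,n}(\tilde\mu,\tilde\nu) = (-1)^{mn}\varepsilon_{m,n}(\mu,\nu)$. The cleanest route is to unwind the defining condition \eqref{3_condition_overlap} for the overlap and translate it into a statement about the complement sequences. Recall that $\mu \star_{m,n}\nu = \lambda$ means
\begin{align*}
\lambda + \rho_{m+n} \sorteq (\mu + \rho_m) \cup (\nu + \rho_n).
\end{align*}
The key observation is that the operation "take the reverse of the sequence $(c - x_i)_i$" is (up to a shift) exactly what the complement does: for $\mu \subset \left\langle (n+l)^m\right\rangle$ we have $\tilde\mu + \rho_m = (n + l + m - 1 - (\mu + \rho_m)_{m+1-i})_{i}$, i.e.\ the sequence $\tilde\mu + \rho_m$ is obtained from $(n+l+m-1) - (\mu+\rho_m)$ by reversing the order of its entries; and this is a non-increasing sequence of non-negative integers precisely because $\mu + \rho_m$ was strictly decreasing with entries in $[0, n+l+m-1]$. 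The same holds for $\nu$ with the constant $m+l+n-1$, which equals the same number $l + m + n - 1$, and for $\lambda \subset \left\langle l^{m+n}\right\rangle$ with the constant $l + m + n - 1$.

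First I would set $N \defeq l + m + n - 1$ and observe that all three "shifted complement" identities use this same constant $N$. Then, starting from $\lambda + \rho_{m+n} \sorteq (\mu+\rho_m)\cup(\nu+\rho_n)$, I apply the map $x \mapsto N - x$ entrywise: this turns the multiset equality into $(N - (\lambda+\rho_{m+n})) \sorteq (N - (\mu+\rho_m)) \cup (N - (\nu+\rho_n))$. Each of these three sequences, once reordered into non-increasing order, is exactly $\tilde\lambda + \rho_{m+n}$, $\tilde\mu + \rho_m$, $\tilde\nu + \rho_n$ respectively, by the shifted-complement identity from the previous paragraph. Since multiset equality is preserved under the bijection $x\mapsto N-x$, we conclude
\begin{align*}
\tilde\lambda + \rho_{m+n} \sorteq (\tilde\mu + \rho_m) \cup (\tilde\nu + \rho_n),
\end{align*}
which is precisely $\tilde\mu \star_{m,n}\tilde\nu = \tilde\lambda$ (and in particular the overlap is a genuine partition, not $\infty$, because $\lambda+\rho_{m+n}$ strictly decreasing forces $\tilde\lambda+\rho_{m+n}$ strictly decreasing). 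I should also note the degenerate boundary cases $m=0$ or $n=0$ separately, but there the statement is immediate since one of the two partitions is empty and the other equals $\lambda$ resp.\ $\tilde\lambda$.

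For the sign, I would track the sorting permutation through the reversal. Write $\sigma$ for the sorting permutation with $\sigma\big((\mu+\rho_m)\cup(\nu+\rho_n)\big) = \lambda + \rho_{m+n}$, so $\varepsilon_{m,n}(\mu,\nu) = \varepsilon(\sigma)$. Applying $x\mapsto N-x$ does not change the \emph{relative order} of entries that have distinct values within a sequence, but it reverses it; more precisely, to pass from $(\mu+\rho_m)\cup(\nu+\rho_n)$ to $(\tilde\mu+\rho_m)\cup(\tilde\nu+\rho_n)$ one must additionally reverse the block of length $m$ and the block of length $n$ separately (this is the "reversing" in the shifted-complement identity), and to pass from $\lambda+\rho_{m+n}$ to $\tilde\lambda+\rho_{m+n}$ one reverses a block of length $m+n$. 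The sign of reversing a block of length $k$ is $(-1)^{k(k-1)/2}$. So the new sorting permutation $\tilde\sigma$ satisfies $\varepsilon(\tilde\sigma) = (-1)^{(m+n)(m+n-1)/2}\,\varepsilon(\sigma)\,(-1)^{m(m-1)/2}(-1)^{n(n-1)/2}$, and a short binomial computation gives $(m+n)(m+n-1)/2 - m(m-1)/2 - n(n-1)/2 = mn$, so $\varepsilon_{m,n}(\tilde\mu,\tilde\nu) = (-1)^{mn}\varepsilon_{m,n}(\mu,\nu)$, as claimed. The main obstacle — and the step I would write out most carefully — is exactly this sign bookkeeping: one has to be precise about the fact that $\varepsilon(\sigma)$ is computed with respect to fixed orderings of the \emph{input} blocks, so that composing with the block-reversals on both input and output sides is what produces $\tilde\sigma$; an alternative, perhaps cleaner, way to nail the sign is to invoke the visual description (Proposition~\ref{3_prop_visual_interpretation_overlap}), under which $\varepsilon_{m,n}(\mu,\nu) = (-1)^{mn - |\mu(\pi)|}$ for the staircase walk $\pi$ attached to the pair, and note that complementation sends $\pi$ to the walk $\tilde\pi$ obtained by the point-reflection that swaps the roles of $\mu(\pi)$ and $\nu(\pi)$, so that $|\mu(\tilde\pi)| = mn - |\mu(\pi)|$, giving the factor $(-1)^{mn}$ directly. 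I would present the shifted-complement/multiset argument for the partition identity and then use whichever of the two sign arguments is shorter.
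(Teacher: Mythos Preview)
Your argument is correct. The shifted-complement identity $(\tilde\mu + \rho_m)_j = N - (\mu+\rho_m)_{m+1-j}$ with $N = l+m+n-1$ does hold for all three partitions with the \emph{same} constant $N$, and the entrywise bijection $x \mapsto N-x$ transports the multiset equality defining the overlap to the multiset equality for the complements. Your sign computation via $\tilde\sigma = R_{m+n}\,\sigma\,(R_m R_n)^{-1}$ is also correct, and the binomial identity $\binom{m+n}{2} - \binom{m}{2} - \binom{n}{2} = mn$ gives exactly the factor $(-1)^{mn}$.

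This is a genuinely different route from the paper's. The paper works entirely through the staircase-walk correspondence of Proposition~\ref{3_prop_visual_interpretation_overlap}: it takes the walk $\pi$ attached to $(\mu,\nu)$, constructs the rotated walk $\tau$ with $\mu(\tau)=\nu(\pi)$ and $\nu(\tau)=\mu(\pi)$, checks that $\widetilde{\lambda_{V(\pi)}} = \tilde\lambda_{V(\tau)}$, and reads off both the partition identity and the sign from the visual formula $\varepsilon = (-1)^{|\nu(\pi)|}$. Your approach instead goes straight back to the sequence definition \eqref{3_condition_overlap} and exploits that complementation is, up to a common shift, the order-reversing involution $x\mapsto N-x$; this is more elementary in that it does not invoke Proposition~\ref{3_prop_visual_interpretation_overlap} at all for the partition statement. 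The price is that the sign bookkeeping via block reversals is a bit more delicate to write cleanly than the one-line computation $(-1)^{|\nu(\tau)|} = (-1)^{|\mu(\pi)|} = (-1)^{mn-|\nu(\pi)|}$ the paper gets from the visual description --- which is precisely the alternative you sketch at the end. Either version is fine; if you keep the sequence argument for the partition identity, I would recommend pairing it with the staircase-walk sign computation, as you suggest, since that avoids having to spell out carefully which orderings the sorting permutation is taken with respect to.
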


\begin{proof} According to Proposition~\ref{3_prop_visual_interpretation_overlap}, whenever $\lambda = \mu \star_{m,n} \nu$ there exists a staircase walk $\pi \in \mathfrak{P}(n,m)$ such that $\mu = \mu(\pi) + \lambda_{V(\pi)}$ and $\nu = \nu(\pi)' + \lambda_{H(\pi)}$. Hence,
\begin{align*}
\tilde\mu = \nu(\pi) + \widetilde{\lambda_{V(\pi)}} \text{ and } \tilde\nu = \mu(\pi)' + \widetilde{\lambda_{H(\pi)}}
.
\end{align*}
If $\tau \in \mathfrak{P}(n,m)$ is the staircase walk obtained from $\pi$ by walking in the opposite direction (\textit{i.e.}\ up the stairs) and rotating the entire grid by 180 degrees, then  $\mu(\tau) = \nu(\pi)$ and $\nu(\tau) = \mu(\pi)$. Let us illustrate this relationship between the partitions associated to $\pi$ and $\tau$ by means of diagrams:
\begin{center}
\begin{tikzpicture} 
\node(pi) at (-0.5, 0.75) {$\pi =$};
\fill[black!10!white] (0, 0) rectangle (1, 1.5);
\fill[black!10!white] (1, 0.5) rectangle (2.5, 1.5);
\draw[step=0.5cm, thin] (0, 0) grid (3, 1.5);
\draw[ultra thick, ->] (1, 0) -- (0, 0);
\draw[ultra thick] (1, 0) -- (1, 0.5);
\draw[ultra thick] (1, 0.5) -- (2.5, 0.5);
\draw[ultra thick] (2.5, 0.5) -- (2.5, 1.5);
\draw[ultra thick] (2.5, 1.5) -- (3, 1.5);
\end{tikzpicture}
\begin{tikzpicture} 
\node(arrow) at (-0.9, 0.85) {$\xmapsto{\text{walk up}}$
};
\fill[black!10!white] (0, 0) rectangle (1, 1.5);
\fill[black!10!white] (1, 0.5) rectangle (2.5, 1.5);
\draw[step=0.5cm, thin] (0, 0) grid (3, 1.5);
\draw[ultra thick] (1, 0) -- (0, 0);
\draw[ultra thick] (1, 0) -- (1, 0.5);
\draw[ultra thick] (1, 0.5) -- (2.5, 0.5);
\draw[ultra thick] (2.5, 0.5) -- (2.5, 1.5);
\draw[ultra thick, ->] (2.5, 1.5) -- (3, 1.5);
\end{tikzpicture}
\begin{tikzpicture} 
\node(arrow) at (-0.75, 0.85) {$\xmapsto{\text{rotate}}$
};
\fill[black!10!white] (0.5, 0) rectangle (2, 1);
\fill[black!10!white] (2, 0) rectangle (3, 1.5);
\draw[step=0.5cm, thin] (0, 0) grid (3, 1.5);
\draw[ultra thick, ->] (0.5, 0) -- (0, 0);
\draw[ultra thick] (0.5, 0) -- (0.5, 1);
\draw[ultra thick] (0.5, 1) -- (2, 1);
\draw[ultra thick] (2, 1) -- (2, 1.5);
\draw[ultra thick] (2, 1.5) -- (3, 1.5);
\node(pi) at (3.45, 0.75) {$= \tau$};
\end{tikzpicture}
\end{center}
We see that the Ferrers diagrams of both $\mu(\pi)$ and $\nu(\tau)$ are determined by the boxes colored in gray, while the diagrams of both $\nu(\pi)$ and $\mu(\tau)$ correspond to the white boxes.
In addition, we infer from
\begin{align*}
V(\pi)_{m + 1 - i} = m + n + 1 - V(\tau)_i \:\text{ and }\: H(\pi)_{n + 1 - i} = m + n + 1 - H(\tau)_i
\end{align*}
that $\widetilde{\lambda_{V(\pi)}} = \tilde\lambda_{V(\tau)}$ and $\widetilde{\lambda_{H(\pi)}} = \tilde\lambda_{H(\tau)}$. Invoking again Proposition~\ref{3_prop_visual_interpretation_overlap}, we conclude that $\tilde{\lambda} = \tilde{\mu} \star_{m,n} \tilde{\nu}$.
The sign of this overlap is given by
\begin{align*}
\hspace{35.1pt} \varepsilon_{m,n}(\tilde\mu, \tilde\nu) = (-1)^{|\nu(\tau)|} = (-1)^{|\mu(\pi)|} = (-1)^{mn - |\nu(\pi)|} = (-1)^{mn} \varepsilon_{m,n}(\mu, \nu)
. \hspace{35.1pt} \qedhere 
\end{align*}
\end{proof}

\subsection{Marked staircase walks}
Our second visualization for the overlap of two partitions makes use of the notion of subpartitions, which are obtained by viewing overlap as a containment relation on partitions.

\begin{defn} [subpartition] Let $\lambda$ and $\mu$ be partitions. We call $\mu$ an $(m,n)$-subpartition of $\lambda$ if there exists a partition $\nu$ such that
$\mu \star_{m,n} \nu = \lambda$. Equivalently, $\mu$ is an $(m,n)$-subpartition of $\lambda$ if there exists a subsequence $M \subset [m + n]$ with $l(M) = m$ such that for all $1 \leq j \leq m$
\begin{align*}
\mu_j + m - j = \lambda_{M_j} + m + n - M_j.
\end{align*}
and $\mu_{m + 1} = 0 = \lambda_{m + n + 1}$. We denote the subpartition of $\lambda$ corresponding to the subsequence $M \subset [m + n]$ by \label{symbol_subpartition} $\sub_{m + n}(\lambda, M)$. If the parameter $m + n$ is clear from the context, it is sometimes omitted.
\end{defn}

\begin{ex} Let $\lambda$ be a partition of length at most $n$. The easiest example of a subpartition of $\lambda$ corresponds to removing one element from $[n]$: for $1 \leq j \leq n$, \begin{align} \label{3_ex_easiest_subpartition_eq}
\sub_n\left(\lambda, [n] \setminus (j) \right) = \lambda_{[n] \setminus (j)} + \left\langle 1^{j - 1} \right\rangle.
\end{align} 
This observation makes it possible to construct subpartitions $\sub(\lambda, M)$ for $M \subset [n]$ iteratively.
\end{ex}

In order to formally state our second visual interpretation for overlap, we also require the following technical definition.

\begin{defn} Let $n$ be a non-negative integer and $K \subset [n]$ a subsequence. We define \label{symbol_C_n(K)}
\begin{align*}
C_n(K) \sorteq (n - j + 1: j \not\in K)
\end{align*}
so that $C_n(K)$ is a subsequence of $[n]$.
\end{defn}
Let us make a quick numerical example: $C_6((1,2,4,5)) = (1,4)$.

\begin{lem} \label{3_lem_overlap_with_subpartition} Let $\lambda \subset \langle m^n \rangle$ be a partition and $K \subset [n]$ a subsequence. If $\kappa$ is the subpartition $\sub_n(\lambda, K)$, then
\begin{align} \label{3_lem_overlap_with_subpartition_eq}
\lambda' \star_{m, l(C_n(K))} \sub\left(\tilde\lambda, C_n(K) \right) = \kappa'
.
\end{align}
Furthermore, $\varepsilon_{m, l(C_n(K))}\left(\lambda', \sub_n\left(\tilde\lambda, C_n(K) \right) \right) = (-1)^{\left|\tilde\lambda_{C_n(K)}\right|}$.
\end{lem}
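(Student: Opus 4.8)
The plan is to prove Lemma~\ref{3_lem_overlap_with_subpartition} by unwinding all three ingredients --- subpartition, complement, and overlap --- into statements about the sorted sequences $\lambda+\rho_{\bullet}$, and then checking that the two sides of \eqref{3_lem_overlap_with_subpartition_eq} describe the same multiset up to reordering. First I would set $L = l(C_n(K))$ and recall that, by the definition of subpartition, $\kappa = \sub_n(\lambda,K)$ is characterized by $\kappa_j + m - j = \lambda_{K_j} + m + n - K_j$ for $1\le j\le l(K)$, i.e.\ the strictly decreasing sequence $\kappa+\rho_{l(K)}$ is obtained by selecting the entries of $\lambda+\rho_n$ indexed by $K$. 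Conjugating, I would translate this into a statement about $\kappa'$ using the convention $(\mu\cup\nu)'=\mu'+\nu'$ already recorded in Section~\ref{3_section_partitions}, so that $\kappa'$ is the $(m,L)$-overlap of two pieces; the goal is then to identify those two pieces as $\lambda'$ and $\sub(\tilde\lambda,C_n(K))$.

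The key computation is a complement identity: for a partition $\lambda\subset\langle m^n\rangle$ and a subsequence $K\subset[n]$, one should have $\widetilde{\sub_n(\lambda,K)}=\sub_n(\tilde\lambda, C_n(K))$ in the appropriate rectangle, where the tilde on the left is taken with respect to $\langle m^{l(K)}\rangle$ and the one inside the right-hand side is with respect to $\langle m^n\rangle$. This should follow by combining Remark~\ref{3_rem_properties_of_complement} (complement commutes with conjugation) with the definition of $C_n(K)$: complementing a partition reverses the order of parts and sends $\lambda_i\mapsto m-\lambda_i$, so selecting the parts indexed by $K$ and then complementing is the same as complementing first and then selecting the ``reflected'' indices $n-j+1$ for $j\notin K$ --- which is precisely $C_n(K)$. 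I would verify this at the level of the sequences $\lambda+\rho_n$ rather than the partitions, since that is where the index bookkeeping is cleanest. Granting this, the desired overlap relation becomes the assertion that $\kappa'+\rho_{m+L}$, as a multiset, is the union of $(\lambda'+\rho_m)$ and $(\widetilde{\kappa}+\rho_L)$ --- but this is exactly the complement-overlap skew-commutation already proved in Lemma~\ref{3_lem_overlap_and_complement}, applied with the roles of the rectangle edges appropriately assigned, together with the elementary subpartition identity \eqref{3_ex_easiest_subpartition_eq} iterated along the elements of $K$ and $C_n(K)$.

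For the sign, I would track it through the same chain of identifications. Lemma~\ref{3_lem_overlap_and_complement} contributes a factor $(-1)^{mn}$-type term, Lemma~\ref{2_lem_sequence_defn_for_ribbons}-style sorting signs appear from iterating \eqref{3_ex_easiest_subpartition_eq}, and the claim is that everything collapses to $(-1)^{|\tilde\lambda_{C_n(K)}|}$. Concretely, the sign of the overlap $\lambda'\star_{m,L}\sub(\tilde\lambda,C_n(K))$ equals, by Proposition~\ref{3_prop_visual_interpretation_overlap} read in the form $\varepsilon_{m,n}=(-1)^{|\nu(\pi)|}$, the parity of the number of boxes below the corresponding staircase walk; I would argue that this walk is the one whose horizontal-step set records $C_n(K)$, so that $|\nu(\pi)| \equiv |\tilde\lambda_{C_n(K)}|\pmod 2$ after accounting for the labels. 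This parity count is essentially the only non-formal step.

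The main obstacle I expect is the index arithmetic: getting the second parameter of the overlap on the left-hand side to come out as $l(C_n(K))=n-l(K)$ rather than $n$ or $m$, and making sure the rectangles in which the various complements are taken are consistent throughout the conjugation. The cleanest way around this is to avoid pictures entirely for the core of the argument and work purely with the identity ``$(\mu\star_{a,b}\nu)+\rho_{a+b}\sorteq(\mu+\rho_a)\cup(\nu+\rho_b)$'' from Definition~\ref{3_defn_overlap}, reducing the whole lemma to an equality of multisets of integers built from $\lambda+\rho_n$, $\rho_m$, and the index sets $K$ and $C_n(K)$; the sign then drops out as the sign of the single sorting permutation relating the two sides, which I would compute directly by counting inversions between the $K$-block and the $C_n(K)$-block.
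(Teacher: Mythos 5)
Your overall strategy of reducing the lemma to a multiset identity for $\rho$-shifted sequences is sound, but the specific complement identity on which you hang the argument is false. Take $m=2$, $n=4$, $\lambda=(2,1,1,0)\subset\langle 2^4\rangle$, $K=(1,3)$. Then $\lambda+\rho_4=(5,3,2,0)$ and $\kappa=\sub_4(\lambda,K)$ satisfies $\kappa+\rho_2=(5,2)$, so $\kappa=(4,2)$. Already $\kappa\not\subset\langle m^{l(K)}\rangle=\langle 2^2\rangle$, so the complement you propose to take is ill-defined; taking it instead in the rectangle $\langle(m+n-l(K))^{l(K)}\rangle=\langle 4^2\rangle$ that actually contains $\kappa$ gives $\tilde\kappa=(2,0)$. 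On the other hand $\tilde\lambda=(2,1,1,0)$, $C_4(K)=(1,3)$, and $\sub_4(\tilde\lambda,C_4(K))=(4,2)\neq(2,0)$. Consequently your reformulation of the overlap claim --- that $\kappa'+\rho_{m+L}$ is the union of $\lambda'+\rho_m$ and $\tilde\kappa+\rho_L$ --- is also false: in the example $(\lambda'+\rho_2)\cup(\tilde\kappa+\rho_2)\sorteq(4,3,1,0)$, while $\kappa'+\rho_4=(5,4,2,1)$. The correct second piece is $\sub(\tilde\lambda,C_n(K))+\rho_L=(\tilde\lambda+\rho_n)_{C_n(K)}$, not a complement of $\kappa$, and Lemma~\ref{3_lem_overlap_and_complement} as stated does not bridge this gap.

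A multiset argument in the spirit of your final paragraph \emph{can} be made to work, but the intermediate facts you need are of a different flavor: (i) $(\lambda'+\rho_m)\cup(\tilde\lambda+\rho_n)\sorteq\rho_{m+n}$ (which is $\lambda'\star_{m,n}\tilde\lambda=\emptyset$); (ii) the small computation $(\tilde\lambda+\rho_n)_{n-K_i+1}=(m+n-1)-(\kappa_i+l(K)-i)$, showing that the entries of $\tilde\lambda+\rho_n$ indexed by $[n]\setminus C_n(K)$ are the reflections of $\kappa+\rho_{l(K)}$ in $\{0,\dots,m+n-1\}$; and (iii) the classical fact that the complement of such a reflected set in $\{0,\dots,m+n-1\}$ is precisely $\kappa'+\rho_{m+n-l(K)}$. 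This gives the overlap identity cleanly, but none of it is what you wrote. For comparison, the paper's proof takes a genuinely different route: it proves a stronger inductive claim that constructs, step by step, a staircase walk $\pi\in\mathfrak P(l(C_n(K)),m)$ labeled by $\sub(\lambda,K)'$ whose $\mu(\pi)$- and $\nu(\pi)$-data encode $\lambda'$ and $\sub(\tilde\lambda,C_n(K))$, and reads the sign off as $(-1)^{|\nu(\pi)|}$ via Proposition~\ref{3_prop_visual_interpretation_overlap}; that construction also carries the extra bookkeeping (the ``third property'' of $\pi$) needed to make the induction go through, which a one-shot multiset argument avoids.
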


\begin{proof} We prove a slightly stronger statement by induction on the length of $K$. We claim that for each $K \subset [n]$, there exists a staircase walk $\pi \in \mathfrak{P}(l(C_n(K)), m)$ with the following properties:
\begin{enumerate}
\item $\lambda' = \mu(\pi) + \left( \sub(\lambda, K)' \right)_{V(\pi)}$;
\item $\sub\left(\tilde\lambda, C_n(K) \right) = \nu(\pi)' + \left( \sub(\lambda, K)' \right)_{H(\pi)}$;
\item for each element $i \in [n]$ with $i < \min\{K\}$, the $(l(C_n(K)) - i + 1)$-th \emph{horizontal} step of $\pi$ is the $(\lambda_i + l(C_n(K)) - i + 1)$-th step of $\pi$.
\end{enumerate}
Notice that the existence of $\pi$ with the first two properties is equivalent to the equality stated in \eqref{3_lem_overlap_with_subpartition_eq}, according to the correspondence given in Proposition~\ref{3_prop_visual_interpretation_overlap}. For the base case $l(K) = 0$, we put a visual interpretation on the fact that $\lambda'$ and $\tilde\lambda'$ are $(n,m)$-complementary to infer the existence of $\pi \in \mathfrak{P}(n,m)$ such that $\lambda' = \mu(\pi)$ and $\tilde\lambda = \nu(\pi)'$. By definition, $\pi$ satisfies the first two conditions, and the third can be read off the following annotated diagram of the staircase walk $\pi \in \mathfrak{P}(6,3)$ with $\mu(\pi) = \lambda' = (5, 5, 2)$ (colored in gray):
\begin{center}
\begin{tikzpicture} 
\fill[black!10!white] (0, 0) rectangle (1, 1.5);
\fill[black!10!white] (1, 0.5) rectangle (2.5, 1.5);
\fill[black!30!white] (1, 0.5) rectangle (1.5, 1.5);
\draw[step=0.5cm, thin] (0, 0) grid (3, 1.5);
\draw[ultra thick, ->] (1, 0) -- (0, 0);
\draw[ultra thick] (1, 0) -- (1, 0.5);
\draw[ultra thick] (1, 0.5) -- (2.5, 0.5);
\draw[ultra thick] (2.5, 0.5) -- (2.5, 1.5);
\draw[ultra thick] (2.5, 1.5) -- (3, 1.5);
\draw[decoration={brace, raise=5pt},decorate] (3, 1.5) -- node[right=6pt] {$\lambda_i$} (3, 0.5);
\draw[decoration={brace, raise=5pt, mirror},decorate] (1, 0) -- node[below=6pt] {$n - i + 1$} (3, 0);
\draw[decoration={brace, raise=5pt},decorate] (0, 1.5) -- node[above=6pt] {$n$} (3, 1.5);
\end{tikzpicture}
\end{center}
The annotations are based on the case $i = 3$.

For the induction step, consider a subsequence $(k) \cup K \subset [n]$ together with a staircase walk $\pi \in \mathfrak{P}(l(C_n(K)), m)$ which possesses the three properties stated above for the subsequence $K$. Construct a staircase walk $\tau \in \mathfrak{P}(l(C_n(K)) - 1, m)$ by removing the $(l(C_n(K)) - k + 1)$-th horizontal step from $\pi$; or equivalently, by removing the $(\lambda_k + l(C_n(K)) - k + 1)$-th step from $\pi$. By construction, the third property still holds for all elements $i \in [n]$ with $i < \min \{ (k) \cup K \} = k$. In order to justify that $\tau$ also satisfies the other two conditions, we first observe that 
\begin{multline} \label{3_in_proof_lem_overlap_with_subpartition}
\sub(\lambda, (k) \cup K)' = \left( (\lambda_k + l(C_n(K)) - k) \cup \sub(\lambda, K) \right)' \\ = \left\langle 1^{\lambda_k + l(C_n(K)) - k} \right\rangle + \sub(\lambda, K)'
.
\end{multline}
In particular, $\sub(\lambda, K)'$ has length at most $\lambda_k + l(C_n(K)) - k$. Hence, the fact that the first $\lambda_k + l(C_n(K)) - k$ steps of $\pi$ and $\tau$ are identical allows us to deduce that
\begin{align*}
\nu(\tau)' + \left(\sub(\lambda, (k) \cup K)'\right)_{H(\tau)} ={} & \nu(\tau)'+ \left(\sub(\lambda, K)'\right)_{H(\tau)} + \left\langle 1^{\lambda_k + l(C_n(K)) - k} \right\rangle_{H(\tau)} 
\\
={} & \nu(\tau)' + \left(\sub(\lambda, K)'\right)_{H(\pi)} + \left\langle 1^{l(C_n(K)) - k} \right\rangle
.
\intertext{By construction, $\nu(\tau)'$ is a subsequence of $\nu(\pi)'$:}
\nu(\tau)' + \left(\sub(\lambda, (k) \cup K)'\right)_{H(\tau)} ={} & \left( \nu(\pi)' \right)_{[l(C_n(K))] \setminus (l(C_n(K)) - k + 1)} \\ & + \left(\sub(\lambda, K)'\right)_{H(\pi)} + \left\langle 1^{l(C_n(K)) - k} \right\rangle
.
\intertext{Hence, the second property of $\pi$, together with the observation that $\left(\sub(\lambda, K)'\right)_{H(\pi)}$ has length at most $l(C_n(K)) - k$, gives}
\nu(\tau)' + \left(\sub(\lambda, (k) \cup K)'\right)_{H(\tau)} ={} & \sub\left(\tilde{\lambda}, C_n(K)\right)_{[l(C_n(K))] \setminus (l(C_n(K)) - k + 1)} \\ & + \left\langle 1^{l(C_n(K)) - k} \right\rangle
.
\intertext{Finally, the equality in \eqref{3_ex_easiest_subpartition_eq} on page \pageref{3_ex_easiest_subpartition_eq} states that}
\nu(\tau)' + \left(\sub(\lambda, (k) \cup K)'\right)_{H(\tau)} ={} & \sub\left(\tilde{\lambda}(C_n(K)), [l(C_n(K))] \setminus (l(C_n(K)) - k + 1)\right) 
\\
={} & \sub\left(\tilde{\lambda}, C_n(K)_{[l(C_n(K))] \setminus (l(C_n(K)) - k + 1)} \right) 
\\
={} & \sub \left( \tilde{\lambda}, C_n(K) \setminus (n - k + 1)\right)
= \sub \left( \tilde{\lambda}, C((k) \cup K) \right)
\!.
\end{align*}
Combining the first and the third property of $\pi$ allows us to infer the first property for $\tau$. Indeed, by the equality given in \eqref{3_in_proof_lem_overlap_with_subpartition},
\begin{align*}
\mu(\tau) + \left(\sub(\lambda, (k) \cup K)'\right)_{V(\tau)} ={} & \mu(\tau) + \left\langle 1^{\lambda_k + l(C_n(K)) - k} \right\rangle_{V(\tau)} + \left(\sub(\lambda, K)'\right)_{V(\tau)}
.
\intertext{Given that $\sub(\lambda, K)'$ is at most of length $\lambda_k + l(C_n(K)) - k$, the observation that the first $\lambda_k + l(C_n(K)) - k$ steps of $\pi$ and $\tau$ are identical allows us to infer that}
\mu(\tau) + \left(\sub(\lambda, (k) \cup K)'\right)_{V(\tau)} ={} & \mu(\tau) + \left\langle 1^{\lambda_k + l(C_n(K)) - k} \right\rangle_{V(\pi)} + \left(\sub(\lambda, K)'\right)_{V(\pi)}
\intertext{Moreover the third property implies that there are exactly $\lambda_k$ vertical steps among the first $\lambda_k + l(C_n(K)) - k$ steps of $\pi$:}
\mu(\tau) + \left(\sub(\lambda, (k) \cup K)'\right)_{V(\tau)} ={} & \mu(\tau) + \left\langle 1^{\lambda_k} \right\rangle + \left(\sub(\lambda, K)'\right)_{V(\pi)}
.
\intertext{By construction, $\mu(\tau) = \mu(\pi) - \left\langle 1^{\lambda_k} \right\rangle$, from which we conclude that}
\mu(\tau) + \left(\sub(\lambda, (k) \cup K)'\right)_{V(\tau)} ={} & \mu(\pi) + \left(\sub(\lambda, K)'\right)_{V(\pi)} = \lambda'
.
\end{align*} 
This justifies the claim, and thus the equality in \eqref{3_lem_overlap_with_subpartition_eq}. For the statement on the sign, recall that Proposition ~\ref{3_prop_visual_interpretation_overlap} entails that 
\begin{align*}
\varepsilon_{m, l(C_n(K))}\left(\lambda', \sub\left(\tilde\lambda, C_n(K)\right)\right) = (-1)^{|\nu(\pi)|} = (-1)^{\left|\nu(\pi)' \right|} = (-1)^{\left| {\tilde{\lambda}}_{C_n(K)} \right|}
\end{align*}
since $\nu(\pi)' = \tilde{\lambda}_{C_n(K)}$ by construction.
\end{proof}

\begin{ex} \label{3_ex_visualize_construction_in_lem_overlap_with_subpartition} To visualize this construction on a concrete example, fix $m = 4$, $n = 7$, a partition $\lambda = (4,4,2,2,1,1,1) \subset \left\langle 4^7 \right\rangle$ and a subsequence $K = (1, 4, 5, 7) \subset [7]$. Draw the diagram of the staircase walk $\pi \in \mathfrak{P}(7,4)$ that is determined by the condition that $\mu(\pi) = \lambda'$, and then mark/color the horizontal steps that lie in the subsequence $H(\pi)_{[7] \setminus C_7(K)}$. In our example $C_7(K) = (2,5,6)$. 
\begin{center}
\begin{tikzpicture}
\fill[black!10!white] (3, 0) rectangle (3.5, 2);
\fill[black!10!white] (2, 0) rectangle (2.5, 2);
\fill[black!10!white] (1.5, 0) rectangle (2, 2);
\fill[black!10!white] (0, 0) rectangle (0.5, 2);
\draw[step=0.5cm, thin] (0, 0) grid (3.5, 2);
\draw[ultra thick] (3.5, 2) -- (3.5, 1.5);
\draw[ultra thick] (3.5, 1.5) -- (2, 1.5);
\draw[ultra thick] (2, 1.5) -- (2, 1);
\draw[ultra thick] (2, 1) -- (1, 1);
\draw[ultra thick] (1, 1) -- (1, 0);
\draw[ultra thick, ->] (1, 0) -- (0, 0);
\draw[ultra thick, black!40!white] (3.5, 1.5) -- (3, 1.5);
\draw[ultra thick, black!40!white] (2.5, 1.5) -- (2, 1.5);
\draw[ultra thick, black!40!white] (2, 1) -- (1.5, 1);
\draw[ultra thick, black!40!white] (0.5, 0) -- (0, 0);
\node[anchor=north] at (3.25, 0) {\tiny{7}};
\node[anchor=north] at (2.25, 0) {\tiny{5}};
\node[anchor=north] at (1.75, 0) {\tiny{4}};
\node[anchor=north] at (0.25, 0) {\tiny{1}};
\end{tikzpicture}
\end{center}
The numbers along the bottom indicate which elements of $K$ the marked horizontal steps correspond to. Now, imagine that $\pi$ is labeled by the empty partition $(0, \dots, 0)$ as described in Example \ref{3_ex_visual_interpretation_overlap}. Throughout this construction the labeling of the staircase walk will keep track of $\sub(\lambda, L)'$ where $L$ consists of the elements of $K$ that have been removed from the diagram. At the moment, $\sub(\lambda, L)' = \sub(\lambda, \emptyset)' = \emptyset$, which matches the (imaginary) labels.

Following the construction outlined in the proof of Lemma~\ref{3_lem_overlap_with_subpartition}, remove the marked horizontal step corresponding to the largest element $k \in K$ and increase the label of each preceding step by 1:
\begin{center}
\begin{tikzpicture}
\fill[black!10!white] (3, 0) rectangle (3.5, 2);
\fill[black!10!white] (2, 0) rectangle (2.5, 2);
\fill[black!10!white] (1.5, 0) rectangle (2, 2);
\fill[black!10!white] (0, 0) rectangle (0.5, 2);
\draw[step=0.5cm, thin] (0, 0) grid (3.5, 2);
\draw[ultra thick] (3.5, 2) -- (3.5, 1.5);
\draw[ultra thick] (3.5, 1.5) -- (2, 1.5);
\draw[ultra thick] (2, 1.5) -- (2, 1);
\draw[ultra thick] (2, 1) -- (1, 1);
\draw[ultra thick] (1, 1) -- (1, 0);
\draw[ultra thick, ->] (1, 0) -- (0, 0);
\draw[ultra thick, black!40!white] (3.5, 1.5) -- (3, 1.5);
\draw[ultra thick, black!40!white] (2.5, 1.5) -- (2, 1.5);
\draw[ultra thick, black!40!white] (2, 1) -- (1.5, 1);
\draw[ultra thick, black!40!white] (0.5, 0) -- (0, 0);
\node[anchor=north] at (3.25, 0) {\tiny{7}};
\node[anchor=north] at (2.25, 0) {\tiny{5}};
\node[anchor=north] at (1.75, 0) {\tiny{4}};
\node[anchor=north] at (0.25, 0) {\tiny{1}};
\end{tikzpicture}
\begin{tikzpicture}
\node(arrow) at (-0.5, 0.95) {$\mapsto$};
\fill[black!10!white] (2, 0) rectangle (2.5, 2);
\fill[black!10!white] (1.5, 0) rectangle (2, 2);
\fill[black!10!white] (0, 0) rectangle (0.5, 2);
\draw[step=0.5cm, thin] (0, 0) grid (3, 2);
\draw[ultra thick] (3, 2) -- (3, 1.5);
\draw[ultra thick] (3, 1.5) -- (2, 1.5);
\draw[ultra thick] (2, 1.5) -- (2, 1);
\draw[ultra thick] (2, 1) -- (1, 1);
\draw[ultra thick] (1, 1) -- (1, 0);
\draw[ultra thick, ->] (1, 0) -- (0, 0);
\draw[ultra thick, black!40!white] (2.5, 1.5) -- (2, 1.5);
\draw[ultra thick, black!40!white] (2, 1) -- (1.5, 1);
\draw[ultra thick, black!40!white] (0.5, 0) -- (0, 0);
\node[anchor=north] at (2.25, 0) {\tiny{5}};
\node[anchor=north] at (1.75, 0) {\tiny{4}};
\node[anchor=north] at (0.25, 0) {\tiny{1}};
\node[anchor=west] at (3, 1.75) {\tiny{1}};
\end{tikzpicture}
\end{center} 
According to the equality in \eqref{3_in_proof_lem_overlap_with_subpartition}, we have that
$$\sub(\lambda, L)' = \sub(\lambda, (7))' = \left\langle 1^{\lambda_7 + 7 - 7} \right\rangle + \emptyset = (1),$$
which thus matches the labels. In fact, the number of steps preceding the marked horizontal step corresponding to the largest remaining element $k \in K$ is always equal to $\lambda_k + l(C(L)) - k$, owing to the third property shown in the proof of the preceding Lemma. Therefore, the recursive equality in \eqref{3_in_proof_lem_overlap_with_subpartition} entails that increasing the labels of the preceding steps by 1 upon the removal of an element $k$ from the diagram ensures that the labels of the staircase walk will always match $\sub(\lambda, L)'$. Proceeding in this manner, you thus end up with a staircase walk labeled by $\sub(\lambda, K)'$:
\begin{center}
\begin{tikzpicture}
\fill[black!10!white] (1.5, 0) rectangle (2, 2);
\fill[black!10!white] (0, 0) rectangle (0.5, 2);
\draw[step=0.5cm, thin] (0, 0) grid (2.5, 2);
\draw[ultra thick] (2.5, 2) -- (2.5, 1.5);
\draw[ultra thick] (2.5, 1.5) -- (2, 1.5);
\draw[ultra thick] (2, 1.5) -- (2, 1);
\draw[ultra thick] (2, 1) -- (1, 1);
\draw[ultra thick] (1, 1) -- (1, 0);
\draw[ultra thick, ->] (1, 0) -- (0, 0);
\draw[ultra thick, black!40!white] (2, 1) -- (1.5, 1);
\draw[ultra thick, black!40!white] (0.5, 0) -- (0, 0);
\node[anchor=west] at (2.5, 1.75) {\tiny{2}};
\node[anchor=south] at (2.25, 1.5) {\tiny{1}};
\node[anchor=north] at (1.75, 0) {\tiny{4}};
\node[anchor=north] at (0.25, 0) {\tiny{1}};
\end{tikzpicture}
\begin{tikzpicture}
\node(arrow) at (-0.75, 0.95) {$\mapsto$};
\fill[black!10!white] (0, 0) rectangle (0.5, 2);
\draw[step=0.5cm, thin] (0, 0) grid (2, 2);
\draw[ultra thick] (2, 2) -- (2, 1.5);
\draw[ultra thick] (2, 1.5) -- (1.5, 1.5);
\draw[ultra thick] (1.5, 1.5) -- (1.5, 1);
\draw[ultra thick] (1.5, 1) -- (1, 1);
\draw[ultra thick] (1, 1) -- (1, 0);
\draw[ultra thick, ->] (1, 0) -- (0, 0);
\draw[ultra thick, black!40!white] (0.5, 0) -- (0, 0);
\node[anchor=west] at (2, 1.75) {\tiny{3}};
\node[anchor=south] at (1.75, 1.5) {\tiny{2}};
\node[anchor=west] at (1.5, 1.25) {\tiny{1}};
\node[anchor=north] at (0.25, 0) {\tiny{1}};
\end{tikzpicture}
\begin{tikzpicture}
\node(arrow) at (-0.75, 0.95) {$\mapsto$};
\draw[step=0.5cm, thin] (0, 0) grid (1.5, 2);
\draw[ultra thick] (1.5, 2) -- (1.5, 1.5);
\draw[ultra thick] (1.5, 1.5) -- (1, 1.5);
\draw[ultra thick] (1, 1.5) -- (1, 1);
\draw[ultra thick] (1, 1) -- (0.5, 1);
\draw[ultra thick] (0.5, 1) -- (0.5, 0);
\draw[ultra thick, ->] (0.5, 0) -- (0, 0);
\node[anchor=south] at (0.25, 0) {\tiny{1}};
\node[anchor=west] at (0.5, 0.25) {\tiny{1}};
\node[anchor=west] at (0.5, 0.75) {\tiny{1}};
\node[anchor=south] at (0.75, 1) {\tiny{1}};
\node[anchor=west] at (1, 1.25) {\tiny{2}};
\node[anchor=south] at (1.25, 1.5) {\tiny{3}};
\node[anchor=west] at (1.5, 1.75) {\tiny{4}};
\node[anchor=north] at (0.25, 0) {\color{white}{\tiny{1}}};
\end{tikzpicture}
\end{center}
In fact, the last diagram is our visualization for the correspondence between staircase walks in a $3 \times 4$-rectangle and pairs of partitions whose $(4,3)$-overlap is equal to $\sub(\lambda, K)' = (4, 3, 2, 1, 1, 1, 1)$. According to Lemma~\ref{3_lem_overlap_with_subpartition}, the pair of partitions corresponding to this diagram is $\left( \lambda', \sub\left( \tilde{\lambda}, C(K)\right) \right)$. The following proposition states that this algorithmic procedure is invertible.
\end{ex}

\begin{prop} \label{3_prop_subpartition_interpretation_overlap} For a fixed partition $\kappa \subset \left\langle (m + n)^l \right\rangle$, there is a 1-to-1 correspondence between 
\begin{enumerate}
\item $\{(\mu, \nu): \mu \star_{m,n} \nu = \kappa'\}$
\item $\{(\lambda, K): \lambda \subset \left\langle m^{n + l} \right\rangle, K \subset [n + l], l(K) = l \text{ and } \sub_{n + l}(\lambda, K) = \kappa\}$
\end{enumerate}
given by the following mapping:
\begin{align} \label{3_prop_subpartition_interpretation_overlap_map}
(\lambda, K) \mapsto \left(\lambda', \sub_{n + l} \left(\tilde\lambda, C_{n + l}(K)\right)\right).
\end{align} 
\end{prop}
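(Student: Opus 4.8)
The plan is to exhibit the map in \eqref{3_prop_subpartition_interpretation_overlap_map} together with an explicit inverse, so that the claimed bijection follows by checking that the two constructions are mutually inverse. First I would verify that the map \eqref{3_prop_subpartition_interpretation_overlap_map} is well defined, i.e.\ that whenever $\lambda \subset \left\langle m^{n + l} \right\rangle$, $K \subset [n + l]$ with $l(K) = l$ and $\sub_{n + l}(\lambda, K) = \kappa$, the pair $\left(\lambda', \sub_{n + l}\left(\tilde\lambda, C_{n + l}(K)\right)\right)$ indeed has $(m,n)$-overlap equal to $\kappa'$. This is precisely the content of Lemma~\ref{3_lem_overlap_with_subpartition} applied with the rectangle $\left\langle m^{n + l} \right\rangle$ (so the ``$n$'' there is our $n + l$ and the ``$K$'' there is our $K$): the lemma gives $\lambda' \star_{m, l(C_{n + l}(K))} \sub\left(\tilde\lambda, C_{n + l}(K)\right) = \kappa'$, and since $l(C_{n + l}(K)) = (n + l) - l = n$, this is exactly $\lambda' \star_{m,n} \sub_{n + l}\left(\tilde\lambda, C_{n + l}(K)\right) = \kappa'$. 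Hence \eqref{3_prop_subpartition_interpretation_overlap_map} lands in the first set.

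Next I would construct the inverse map. Given a pair $(\mu, \nu)$ with $\mu \star_{m,n} \nu = \kappa'$, Proposition~\ref{3_prop_visual_interpretation_overlap} (applied with the partition $\kappa'$ of length at most $m + n$ in an $n \times m$ rectangle) produces a unique staircase walk $\pi \in \mathfrak{P}(n,m)$ with $\mu = \mu(\pi) + \kappa'_{V(\pi)}$ and $\nu = \nu(\pi)' + \kappa'_{H(\pi)}$. The recipe visualized in Example~\ref{3_ex_visualize_construction_in_lem_overlap_with_subpartition} runs in reverse: starting from $\pi$ labelled by $\kappa'$ one re-inserts, for each part of $\kappa$, a marked horizontal step at the appropriate position (dictated by the iterated identity \eqref{3_in_proof_lem_overlap_with_subpartition}) while decreasing the labels of the preceding steps, eventually obtaining a staircase walk in $\mathfrak{P}(n + l, m)$ labelled by the empty partition, i.e.\ the staircase walk underlying the complementary pair $(\lambda', \tilde\lambda')$ for a uniquely determined partition $\lambda \subset \left\langle m^{n + l} \right\rangle$, together with the subsequence $K \subset [n + l]$ recording which of the $n + l$ horizontal steps were re-inserted. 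Concretely, $K$ is recovered as $C_{n + l}\big(C_{n + l}(K)\big)$ from the positions of the marked steps, and $\lambda$ from $\mu(\pi^{\ast})$ where $\pi^{\ast}$ is the enlarged walk; the third property established in the proof of Lemma~\ref{3_lem_overlap_with_subpartition} guarantees that $\sub_{n + l}(\lambda, K) = \kappa$, so the pair $(\lambda, K)$ lies in the second set.

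Finally I would check that the two assignments compose to the identity in both directions. One direction is Lemma~\ref{3_lem_overlap_with_subpartition} itself: starting from $(\lambda, K)$, the walk produced by Proposition~\ref{3_prop_visual_interpretation_overlap} for the pair $\left(\lambda', \sub(\tilde\lambda, C_{n+l}(K))\right)$ is, by the explicit construction in the proof of that lemma, exactly the truncation of the complementary walk of $(\lambda', \tilde\lambda')$ obtained by deleting the horizontal steps indexed by $[n + l] \setminus C_{n + l}(K)$, so re-inserting them returns $(\lambda, K)$. The other direction amounts to the uniqueness clause in Proposition~\ref{3_prop_visual_interpretation_overlap} together with the observation $C_{n + l}(C_{n + l}(K)) = K$: two different pairs $(\lambda, K)$ and $(\lambda_0, K_0)$ mapping to the same $(\mu, \nu)$ would yield the same truncated walk, forcing $C_{n + l}(K) = C_{n + l}(K_0)$ hence $K = K_0$, and then $\lambda' = \mu(\pi^{\ast}) = \lambda_0'$. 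The step I expect to be the main obstacle is making the reverse (re-insertion) construction rigorous without simply re-deriving Lemma~\ref{3_lem_overlap_with_subpartition}: one must argue that the positions at which marked horizontal steps must be inserted are uniquely forced by the requirement $\sub_{n+l}(\lambda, K) = \kappa$, and that the label bookkeeping is consistent, which is exactly where the iterated relation \eqref{3_ex_easiest_subpartition_eq}/\eqref{3_in_proof_lem_overlap_with_subpartition} does the work. If one prefers to avoid the visual language entirely, the whole proposition can instead be proved algebraically: unwind both sides into statements about the multiset $\kappa' + \rho_{m+n} \sorteq (\mu + \rho_m) \cup (\nu + \rho_n)$ versus $\lambda + \rho_{n+l}$ split according to $K$, and match them using $C_{n+l}(C_{n+l}(K)) = K$ and the identity $(\mu \cup \nu)' = \mu' + \nu'$; I would present the bijective argument as the main proof and relegate the algebraic verification to a remark.
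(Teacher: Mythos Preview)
Your proposal is correct, but the paper takes a different (and somewhat shorter) route. Both proofs begin identically: well-definedness of \eqref{3_prop_subpartition_interpretation_overlap_map} via Lemma~\ref{3_lem_overlap_with_subpartition}. From there, the paper proves injectivity by a one-line argument (if $(\lambda,K)$ and $(\eta,L)$ have the same image $(\mu,\nu)$, then $\lambda' = \mu = \eta'$ and the strictly decreasing sequence $(\tilde\lambda+\rho_{n+l})_{C_{n+l}(K)} = \nu+\rho_n$ forces $C_{n+l}(K)=C_{n+l}(L)$), and then replaces surjectivity by a cardinality count: the first set has $\binom{m+n}{m}$ elements by Proposition~\ref{3_prop_visual_interpretation_overlap}, and the second set is shown to have the same cardinality by induction on $l(\kappa)$, the base case $\kappa=\emptyset$ being a direct count and the step inserting one part of $\kappa$ at a time. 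Your approach instead builds the inverse explicitly via the re-insertion procedure of Example~\ref{3_ex_visualize_construction_in_lem_overlap_with_subpartition}. This has the merit of actually producing the inverse map (which the paper only describes informally, \emph{after} the proof), but as you yourself flag, making the re-insertion rigorous---in particular showing that the insertion positions are uniquely forced and that the resulting $(\lambda,K)$ lands in the second set---essentially re-traces the inductive construction inside the proof of Lemma~\ref{3_lem_overlap_with_subpartition}. The paper's cardinality argument sidesteps that work entirely.
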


\begin{proof} By Lemma~\ref{3_lem_overlap_with_subpartition}, the map in \eqref{3_prop_subpartition_interpretation_overlap_map} is well defined. We observe that it is also injective. Indeed, if $(\lambda, K)$ and $(\eta, L)$ are both mapped to $(\mu, \nu)$, then $\lambda = \mu' = \eta$ and hence $$\left( \tilde\lambda + \rho_{n + l}\right)_{C_{n + l}(K)} = \nu + \rho_n = \left( \tilde\lambda + \rho_{n + l}\right)_{C_{n + l}(L)},$$ which entails that $K = L$ because the three sequences in question are strictly decreasing. Rather than showing directly that the map in \eqref{3_prop_subpartition_interpretation_overlap_map} is surjective, we will prove that both of the sets stated in the proposition are of cardinality $\binom{m + n}{m}$. For the first set, this is an immediate consequence of Proposition~\ref{3_prop_visual_interpretation_overlap}. 

For the second set, we prove the claim by induction on the length of the partition $\kappa$. Let us denote the second set by $\calS_2(\kappa)$. For the base case, we compute its cardinality under the assumption that $\kappa$ is the empty partition. In this case, any pair $(\lambda, K)$ that lies in $\calS_2(\kappa)$ satisfies $\rho_l = \left( \lambda + \rho_{n + l} \right)_K$. Given that the latter sequence is strictly decreasing, this equality implies that $K = (n + 1, \dots, n + l)$ and $\lambda_K = \emptyset$. Hence, $\lambda$ ranges over all partitions that are contained in the rectangle $\langle m^n \rangle$, of which there are exactly $\binom{m + n}{m}$. 

For the induction step, consider some partition $\kappa$ of length $0 < i \leq l$. We will construct a pair $(\lambda, K) \in \calS_2(\kappa)$ from each pair $(\eta, L) \in \calS_2((\kappa_1, \dots, \kappa_{i - 1}, 0))$. Let $j$ be the largest index so that $\eta_{j - 1} + n + l - (j - 1) > \kappa_i + l - i$ (where we use the convention that $\eta_0$ is infinitely large). By definition, $L_{i - 1} < j \leq L_i$. Indeed,
\begin{align*}
\eta_{L_{i - 1}} + n + l - L_{i - 1} ={} & \kappa_{i - 1} + l - (i - 1) > \kappa_i + l - i
\intertext{and}
\eta_{L_i} + n + l - L_i ={} & 0 + l - i \leq \kappa_i + l - i
.
\end{align*}
Hence, $K = (L_1, \dots, L_{i - 1}, j, L_{i + 1}, \dots, L_l)$ defines a subsequence of $[n + l]$. Moreover, the definition of $j$ also ensures that $\lambda = (\eta_1, \dots, \eta_{j - 1}, \kappa_i - i - n + j, \eta_j, \dots, \eta_{n + l - 1})$ is a partition. It is left to the reader to verify that the pair $(\lambda, K)$ is an element of $\calS_2(\kappa)$. The induction hypothesis thus allows us to conclude that the cardinality of the set $\calS_2(\kappa)$ is at least $\binom{m + n}{m}$. Recalling that the map in \eqref{3_prop_subpartition_interpretation_overlap_map} is an injection from $\calS_2(\kappa)$ to a set of cardinality $\binom{m + n}{m}$ completes the proof.
\end{proof}

Keeping in mind Example~\ref{3_ex_visualize_construction_in_lem_overlap_with_subpartition}, it is easy to give a visual description of the inverse of the map defined in \eqref{3_prop_subpartition_interpretation_overlap_map}: Given a pair of partitions $\mu$ and $\nu$ whose $(m,n)$-overlap equals $\kappa'$, use the visual interpretation described in Example~\ref{3_ex_visual_interpretation_overlap} to associate it to a staircase walk $\pi \in \mathfrak{P}(n,m)$ labeled by $\kappa'$. Iteratively construct a staircase walk $\tau \in \mathfrak{P}(n + l, m)$ by inserting a \emph{marked} horizontal step between any two adjacent steps of $\pi$ with distinct labels and simultaneously decreasing by 1 the labels of all steps to the right of the insertion. Here we use the convention that the ``label'' before the first step is $l$, while the ``label'' after the last step is 0. As illustrated in Example~\ref{3_ex_visualize_construction_in_lem_overlap_with_subpartition}, we can then map the pair $(\mu, \nu)$ to the partition $\lambda = \mu(\tau)'$ and the following subsequence $K \subset [n]$: $K \sorteq (n + l - i + 1: H(\pi)_i \text{ is marked})$.

\section{More overlap identities} \label{3_section_more_overlap_identities}
Applying the different ways of seeing the overlap of two partitions, which we discussed in the preceding section, to the second overlap identity allows us to derive more overlap identities. This will allow us to regard the dual Cauchy identity as an overlap identity.

\subsection{Variations on the second overlap identity}
\begin{cor} 
Let $0 \leq l \leq \min\{n - k, n\}$. Let $\calS$, $\calT$ and $\calY$ be sets containing $l$, $n - l$ and $m$ variables, respectively, so that $\Delta(\calY) \neq 0$ and $\Delta(\calS; \calT) \neq 0$. Suppose that $k$ is the $(m,n)$-index of a partition $\lambda$, then
\begin{align} \label{3_cor_first_visualization_of_overlap_LS_eq}
\begin{split} 
LS_{\lambda} (-(\calS \cup \calT); \calY) 
={} & \sum_{\pi \in \mathfrak{P}(m + n - k - l, l)} (-1)^{|\nu(\pi_1)|} \frac{\Delta\left(\calY_{H(\pi_2)}; \calS \right) \Delta\left(\calT; \calY_{V(\pi_2)}\right)}{\Delta \left(\calY_{H(\pi_2)}; \calY_{V(\pi_2)}\right) \Delta(\calT; \calS)} \\
& \times LS_{\mu(\pi_1) + \lambda_{V(\pi_1)} - \left\langle (m - k)^{l(V(\pi_1))} \right\rangle} \left(-\calS; \calY_{V(\pi_2)}\right) \\
& \times LS_{\nu(\pi_1)' + \lambda_{H(\pi_1)} \cup \lambda_{(n + 1 - k, n + 2 - k, \dots )}} \left(-\calT; \calY_{H(\pi_2)}\right)
\end{split}
\end{align}
where $\pi_1$ denotes the $n - k$ first steps of $\pi$, while $\pi_2$ denotes the $m$ last steps of $\pi$. We view $\pi_1$ and $\pi_2$ as staircase walks inside the appropriate rectangles.
\end{cor}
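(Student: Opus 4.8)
The plan is to deduce the corollary directly from the second overlap identity (Theorem~\ref{3_thm_lapalace_expansion_new_LS}) by re-expressing its two nested sums — the sum over ways to split $\calY$ into $\calU, \calV$ with $|\calU| = p$, and the sum over pairs $\mu, \nu$ with $\mu \star_{l-p, n-k-l+p} \nu = \lambda_{[n-k]}$ — as a single sum over staircase walks $\pi \in \mathfrak{P}(m+n-k-l, l)$. The bridge is Proposition~\ref{3_prop_visual_interpretation_overlap}: for the fixed partition $\lambda_{[n-k]}$ (which has length at most $n-k$, hence at most $(l-p) + (n-k-l+p) = n-k$, so the proposition applies with $m$ replaced by $l$... wait — one must be careful about which parameters play which role), pairs $\mu, \nu$ whose $(l-p, n-k-l+p)$-overlap equals $\lambda_{[n-k]}$ are in bijection with staircase walks in an $(n-k-l+p) \times (l-p)$ rectangle. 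But the two sums together — over $p$, over $\calU,\calV$, and over $\mu,\nu$ — should be repackaged so that the $\calU,\calV$-splitting data and the overlap data are encoded by a single walk $\pi$ in the $(m+n-k-l) \times l$ rectangle, with $\pi_1$ (its first $n-k-l$... actually its first $n-k$ steps, as stated) carrying the overlap data and $\pi_2$ (its last $m$ steps) carrying the $\calY$-splitting data. The integer $p$ is recovered from $\pi$ as the number of vertical steps among the last $m$ steps, i.e. $p = l(V(\pi_2))$.

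First I would write out the right-hand side of \eqref{3_thm_lapalace_expansion_new_LS_eq} and, for each fixed $p$, apply Proposition~\ref{3_prop_visual_interpretation_overlap} to convert the inner sum over $\mu, \nu$ into a sum over $\pi_1 \in \mathfrak{P}(n-k-l+p, l-p)$, using $\mu = \mu(\pi_1) + (\lambda_{[n-k]})_{V(\pi_1)}$ and $\nu = \nu(\pi_1)' + (\lambda_{[n-k]})_{H(\pi_1)}$, with sign $\varepsilon(\mu,\nu) = (-1)^{|\nu(\pi_1)|}$. Simultaneously, the sum over subsequences $\calU \subset \calY$ of length $p$ together with its complement $\calV$ is exactly a sum over staircase walks $\pi_2 \in \mathfrak{P}(m-p, p)$ via the identification $\calU = \calY_{V(\pi_2)}$, $\calV = \calY_{H(\pi_2)}$ (this is the trivial bijection between $p$-subsequences and such walks). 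Concatenating $\pi_1$ and $\pi_2$ — the walk that first does the $\pi_1$-steps in the top-left $(n-k-l+p) \times (l-p)$... block and then the $\pi_2$-steps — produces a walk $\pi$ in the $(m + n - k - l) \times l$ rectangle (the total horizontal extent is $(n-k-l+p) + (m-p) = m+n-k-l$ and the total vertical extent is $(l-p) + p = l$), and conversely every $\pi \in \mathfrak{P}(m+n-k-l, l)$ arises uniquely from such a pair once we declare $\pi_1$ to be the first $n-k$ steps and $\pi_2$ the last $m$ steps — the split point is forced because $\pi_1$ is required to end at the correct lattice point. Then one checks that under this dictionary: $\lambda_{V(\pi_1)} = (\lambda_{[n-k]})_{V(\pi_1)}$ since $V(\pi_1) \subset [n-k]$; $\mu - \langle (m-k)^{l-p}\rangle$ becomes $\mu(\pi_1) + \lambda_{V(\pi_1)} - \langle (m-k)^{l(V(\pi_1))}\rangle$ with $l(V(\pi_1)) = l-p$; and $\nu \cup \lambda_{(n+1-k, n+2-k, \dots)}$ becomes $\nu(\pi_1)' + \lambda_{H(\pi_1)} \cup \lambda_{(n+1-k,\dots)}$. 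Finally the $\Delta$-factors match verbatim: $\Delta(\calV; \calS) = \Delta(\calY_{H(\pi_2)}; \calS)$, $\Delta(\calT; \calU) = \Delta(\calT; \calY_{V(\pi_2)})$, $\Delta(\calV; \calU) = \Delta(\calY_{H(\pi_2)}; \calY_{V(\pi_2)})$, and $\Delta(\calT; \calS)$ is unchanged.

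The routine but slightly delicate point is to confirm that the concatenation-of-walks bijection is genuinely a bijection and that nothing is over- or under-counted: each $\pi \in \mathfrak{P}(m+n-k-l, l)$ has a well-defined truncation into its first $n-k$ steps and last $m$ steps (note $(n-k) + m$ is the total number of steps), the first piece is automatically a valid staircase walk $\pi_1$ from corner to corner of its bounding rectangle and likewise for $\pi_2$, and the value of $p$ — the height at which the two pieces meet — ranges over exactly $0 \le p \le \min\{l, m\}$, matching the outer sum in \eqref{3_thm_lapalace_expansion_new_LS_eq}. (When $p = l$ the walk $\pi_1$ is all-vertical and $\nu(\pi_1)$ is empty; when $p = 0$, $\pi_2$ is all-horizontal; both degenerate cases are covered by the conventions already in place for $\rho_0$ and for empty sequences.)

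The main obstacle I anticipate is purely bookkeeping: getting every parameter substitution and every sign exactly right — in particular verifying that $\varepsilon(\mu,\nu) = (-1)^{|\nu(\pi_1)|}$ is the only sign that appears (Theorem~\ref{3_thm_lapalace_expansion_new_LS} already packages all other signs into the $\Delta$-ratios and the Littlewood-Schur functions, so no extra $(-1)$ should materialize from the concatenation), and that the index conditions needed to even state the right-hand side (the $(m-p, n-l)$-index of $\nu \cup \lambda_{(n+1-k,\dots)}$ being $k-p$, etc., all established inside the proof of Theorem~\ref{3_thm_lapalace_expansion_new_LS}) transfer without change. There is no new analytic or algebraic content: the corollary is a pure reformulation of the second overlap identity through the staircase-walk picture, so once the dictionary is pinned down the proof is a direct term-by-term comparison, and I would present it as such.
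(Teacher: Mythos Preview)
Your proposal is correct and follows essentially the same approach as the paper. The paper's proof starts from the right-hand side of \eqref{3_cor_first_visualization_of_overlap_LS_eq}, decomposes the sum over $\pi$ into sums over $p$, $\pi_1 \in \mathfrak{P}(n-k-l+p, l-p)$ and $\pi_2 \in \mathfrak{P}(m-p, p)$, replaces the $\pi_2$-sum by the $\calU,\calV$-sum via the trivial subsequence-walk bijection, and then invokes Proposition~\ref{3_prop_visual_interpretation_overlap} on the $\pi_1$-sum to land on the right-hand side of Theorem~\ref{3_thm_lapalace_expansion_new_LS}; you run the same dictionary in the opposite direction, which is immaterial.
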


The definitions of the ``partial'' staircase walks $\pi_1$ and $\pi_2$ are best explained by means of a diagram: let $m = 5$, $n = 8$, $k = 4$ and $l = 3$, then the following staircase walk $\pi \in \mathfrak{P}(m + n - k - l, l)$ splits into $\pi_1 \in \mathfrak{P}(2,2)$ and $\pi_2 \in \mathfrak{P}(4,1)$ comprising of $n - k$ and $m$ steps, respectively.
\begin{center}
\begin{tikzpicture} 
\node(pi) at (-0.5, 0.75) {$\pi =$};
\draw[step=0.5cm, thin] (0, 0) grid (3, 1.5);
\draw[ultra thick, ->] (1, 0) -- (0, 0);
\draw[ultra thick] (1, 0) -- (1, 0.5);
\draw[ultra thick] (1, 0.5) -- (2.5, 0.5);
\draw[ultra thick] (2.5, 0.5) -- (2.5, 1.5);
\draw[ultra thick] (2.5, 1.5) -- (3, 1.5);
\end{tikzpicture}
\begin{tikzpicture} 
\node(pi) at (-0.5, 0.75) {$=$};
\fill[black!10!white] (0, 0) rectangle (2, 0.5);
\fill[black!30!white] (2, 0.5) rectangle (3, 1.5);
\draw[step=0.5cm, thin] (0, 0) grid (3, 1.5);
\draw[ultra thick, ->] (1, 0) -- (0, 0);
\draw[ultra thick] (1, 0) -- (1, 0.5);
\draw[ultra thick] (1, 0.5) -- (2.5, 0.5);
\draw[ultra thick] (2.5, 0.5) -- (2.5, 1.5);
\draw[ultra thick] (2.5, 1.5) -- (3, 1.5);
\draw[ultra thick, ->] (2.5, 0.5) -- (2, 0.5);
\end{tikzpicture}
\end{center}
The diagrams of $\pi_1$ and $\pi_2$ are colored in different shades of gray.

\begin{proof} The right-hand side of the equation in \eqref{3_cor_first_visualization_of_overlap_LS_eq} is equal to
\begin{align*}
\RHS ={} & \sum_{p = 0}^{\min\{l,m\}} \sum_{\pi_2 \in \mathfrak{P}(m - p, p)} \hspace{10pt} \sum_{\substack{\pi_1 \in \mathfrak{P}(n - k - l + p, l - p)}} \frac{\Delta\left(\calY_{H(\pi_2)}; \calS \right) \Delta\left(\calT; \calY_{V(\pi_2)}\right)}{\Delta \left(\calY_{H(\pi_2)}; \calY_{V(\pi_2)}\right) \Delta(\calT; \calS)} \\
& \times (-1)^{|\nu(\pi_1)|} LS_{\mu(\pi_1) + \lambda_{V(\pi_1)} - \left\langle (m - k)^{l(V(\pi_1))} \right\rangle} \left(-\calS; \calY_{V(\pi_2)}\right) \\
& \times LS_{\nu(\pi_1)' + \lambda_{H(\pi_1)} \cup \lambda_{(n + 1 - k, n + 2 - k, \dots )}} \left(-\calT; \calY_{H(\pi_2)}\right)
.
\intertext{Setting $\calU = \calY_{V(\pi)}$ and $\calV = \calY_{H(\pi)}$, we may view the sum over $\pi_2$ as a sum over all subsequences $\calU$, $\calV \subset \calY$ with the property that $\calU \cup_{p, m - p} \calV \sorteq \calY$:}
\RHS ={} & \sum_{p = 0}^{\min\{l,m\}} \sum_{\substack{\calU, \calV \subset \calY: \\ \calU \cup_{p, m - p} \calV \sorteq \calY}} \hspace{10pt} \sum_{\substack{\pi_1 \in \mathfrak{P}(n - k - l + p, l - p)}} \frac{\Delta\left(\calV; \calS \right) \Delta\left(\calT; \calU \right)}{\Delta \left(\calV; \calU \right) \Delta(\calT; \calS)} \\
& \times (-1)^{|\nu(\pi_1)|} LS_{\mu(\pi_1) + \lambda_{V(\pi_1)} - \left\langle (m - k)^{l(V(\pi_1))} \right\rangle} \left(-\calS; \calU \right) \\
& \times LS_{\nu(\pi_1)' + \lambda_{H(\pi_1)} \cup \lambda_{(n + 1 - k, n + 2 - k, \dots )}} \left(-\calT; \calV \right)
.
\end{align*}
Proposition~\ref{3_prop_visual_interpretation_overlap} allows us to conclude that this expression is equal to the right-hand of the equality in \eqref{3_thm_lapalace_expansion_new_LS_eq}. Hence, the result follows directly from the second overlap identity (\textit{i.e.}\ Theorem~\ref{3_thm_lapalace_expansion_new_LS}).
\end{proof}

\begin{cor} \label{3_cor_second_overlap_id_labeled_walks}
Let $\lambda$ be a partition and let $\calS$ and $\calT$ be sets consisting of $m$ and $n$ variables, respectively. If $\Delta(\calS; \calT) \neq 0$, then
\begin{align} \label{3_cor_second_overlap_id_labeled_walks_eq}
\schur_\lambda(\calS \cup \calT) 
={} & \sum_{\substack{\pi \in \mathfrak{P}(n,m)}} \frac{(-1)^{|\nu(\pi)|} \schur_{\mu(\pi) + \lambda_{V(\pi)}} (\calS) \schur_{\nu(\pi)' + \lambda_{H(\pi)}} (\calT)}{\Delta(\calS; \calT)}
.
\end{align}
\end{cor}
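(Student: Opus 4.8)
The plan is to obtain Corollary~\ref{3_cor_second_overlap_id_labeled_walks} by combining the second overlap identity for Schur functions (Corollary~\ref{3_cor_laplace_expansion_schur_new}) with the first visual characterization of overlap (Proposition~\ref{3_prop_visual_interpretation_overlap}). Recall that Corollary~\ref{3_cor_laplace_expansion_schur_new} already gives, under the hypothesis $\Delta(\calS; \calT) \neq 0$,
\begin{align*}
\schur_\lambda(\calS \cup \calT) ={} & \sum_{\substack{\mu, \nu: \\ \mu \star_{m,n} \nu = \lambda}} \frac{\varepsilon(\mu, \nu) \schur_\mu(\calS) \schur_\nu(\calT)}{\Delta(\calS; \calT)}
,
\end{align*}
so the entire content of the corollary is to re-index the sum over pairs $(\mu,\nu)$ with $\mu \star_{m,n} \nu = \lambda$ by staircase walks $\pi \in \mathfrak{P}(n,m)$.

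First I would invoke Proposition~\ref{3_prop_visual_interpretation_overlap}, which (for a fixed partition $\lambda$ of length at most $m+n$) provides a bijection
\begin{align*}
\mathfrak{P}(n,m) \;\longleftrightarrow\; \{(\mu,\nu): \mu \star_{m,n}\nu = \lambda\}, \qquad
\pi \mapsto \bigl(\mu(\pi) + \lambda_{V(\pi)},\ \nu(\pi)' + \lambda_{H(\pi)}\bigr),
\end{align*}
together with the sign identity $\varepsilon_{m,n}\bigl(\mu(\pi)+\lambda_{V(\pi)},\ \nu(\pi)'+\lambda_{H(\pi)}\bigr) = (-1)^{|\nu(\pi)|}$. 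Substituting $\mu = \mu(\pi)+\lambda_{V(\pi)}$ and $\nu = \nu(\pi)'+\lambda_{H(\pi)}$ into the sum from Corollary~\ref{3_cor_laplace_expansion_schur_new} and replacing $\varepsilon(\mu,\nu)$ by $(-1)^{|\nu(\pi)|}$ yields exactly the right-hand side of \eqref{3_cor_second_overlap_id_labeled_walks_eq}. One small point to address is the length hypothesis in Proposition~\ref{3_prop_visual_interpretation_overlap}: if $l(\lambda) > m+n$ then $\schur_\lambda(\calS\cup\calT) = 0$ since $\calS\cup\calT$ has $m+n$ variables, and on the right-hand side every $\mu(\pi)+\lambda_{V(\pi)}$ has length exceeding $m$ (or $\nu(\pi)'+\lambda_{H(\pi)}$ exceeds $n$), so each Schur factor vanishes; thus both sides are zero and the identity holds trivially in that range. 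Hence we may assume $l(\lambda) \le m+n$ and apply the bijection directly.

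There is no serious obstacle here — this corollary is a cosmetic reformulation of Corollary~\ref{3_cor_laplace_expansion_schur_new}, and the only thing to verify carefully is that the summation index genuinely runs over \emph{all} of $\mathfrak{P}(n,m)$ (including walks $\pi$ for which $\mu(\pi)+\lambda_{V(\pi)}$ has length greater than $m$, so that the Schur function $\schur_{\mu(\pi)+\lambda_{V(\pi)}}(\calS)$ vanishes). This causes no problem: such terms contribute $0$ on the right-hand side, exactly matching the fact that the corresponding pairs $(\mu,\nu)$ satisfy $\schur_\mu(\calS) = 0$ in Corollary~\ref{3_cor_laplace_expansion_schur_new}, so the bijection of Proposition~\ref{3_prop_visual_interpretation_overlap} remains term-by-term faithful. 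If desired, one can alternatively note that this corollary is the $\calY = \emptyset$ (hence $m_{\text{second}} = 0$, $p = 0$) specialization of the preceding Corollary~\eqref{3_cor_first_visualization_of_overlap_LS_eq}, with $\calS,\calT$ swapping the roles of the two variable sets and $k = 0$; but the direct derivation from Corollaries~\ref{3_cor_laplace_expansion_schur_new} and Proposition~\ref{3_prop_visual_interpretation_overlap} is cleanest.
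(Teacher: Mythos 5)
Your proof is correct and takes exactly the same route as the paper: the paper's entire proof reads ``Owing to the correspondence given in Proposition~\ref{3_prop_visual_interpretation_overlap}, this identity is a direct consequence of Corollary~\ref{3_cor_laplace_expansion_schur_new}.'' Your additional remarks about the case $l(\lambda) > m+n$ and about vanishing Schur factors are reasonable bookkeeping, but the core argument is identical.
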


\begin{proof} Owing to the correspondence given in Proposition~\ref{3_prop_visual_interpretation_overlap}, this identity is a direct consequence of Corollary~\ref{3_cor_laplace_expansion_schur_new}.
\end{proof}

\begin{cor} \label{3_cor_overlap_with_subpartitions_LS} Let $m$, $n$, $\tilde n$, $l$ and $q$ be non-negative integers such that $\tilde n \leq q$. Let $\calS$, $\calT$ and $\calY$ be sets of variables of length $m$, $n + \tilde n$ and $q$, respectively, so that $\Delta(\calY) \neq 0$ and $\Delta(\calS, \calT) \neq 0$. Suppose that a partition $\kappa \subset \left\langle (m + n)^l \right\rangle$ satisfies $(m + n, q - \tilde n) \in \kappa$, then
\begin{align*}
LS_{\kappa'}(-(\calS \cup \calT), \calY) ={} & \sum_{p = 0}^{\min\{m, q\}} \sum_{\substack{\calU, \calV \subset \calY: \\ \calU \cup_{p, q - p} \calV \sorteq \calY}} \sum_{\substack{\lambda \subset \left\langle (m - p)^{n + p + l} \right\rangle \\ K \subset [n + p + l] \text{ with } l(K) = l: \\ \sub(\lambda, K) = \kappa}} \frac{\Delta(\calV; \calS) \Delta(\calT; \calU)}{\Delta(\calV; \calU) \Delta(\calT; \calS)} \\
& \times (-1)^{\left| \tilde\lambda_{C_{n + p + l}(K)} \right|} LS_{\lambda' - \left\langle (q - \tilde n)^{m - p} \right\rangle} (-\calS; \calU) \\
& \times LS_{\sub\left(\tilde\lambda, C_{n + p + l}(K)\right)} (-\calT; \calV)
.
\end{align*}
\end{cor}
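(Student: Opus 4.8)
The plan is to obtain this corollary by feeding the second overlap identity (Theorem~\ref{3_thm_lapalace_expansion_new_LS}) into the subpartition visualization of overlap (Proposition~\ref{3_prop_subpartition_interpretation_overlap}), with Lemma~\ref{3_lem_overlap_with_subpartition} supplying the sign. First I would apply Theorem~\ref{3_thm_lapalace_expansion_new_LS} to $LS_{\kappa'}(-(\calS \cup \calT); \calY)$ under the identification in which the theorem's first variable set (of length ``$n$'') is $\calS \cup \calT$ of length $m + (n + \tilde n)$, split into a first block $\calS$ of length ``$l$'' $= m$ and a second block $\calT$ of length $n + \tilde n$; the theorem's second variable set (of length ``$m$'') is $\calY$ of length $q$; and the theorem's indexing partition is $\kappa'$. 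The preliminary point to check is that the $(q, m + n + \tilde n)$-index of $\kappa'$ equals $\tilde n$: the hypothesis $(m + n, q - \tilde n) \in \kappa$ is equivalent to $(q - \tilde n, m + n) \in \kappa'$ (since $\kappa \subset \langle (m + n)^l \rangle$ forces every part of $\kappa$ that is at least $m + n$ to equal $m + n$), while $(q - \tilde n + 1, m + n + 1) \notin \kappa'$ because $\kappa'$ has no part of size $\geq m + n + 1$; combined with $\tilde n \leq q$ this pins the index at $\tilde n \geq 0$. Because $l(\kappa') \leq m + n$, the truncation $(\kappa')_{[m + n + \tilde n - \tilde n]}$ is just $\kappa'$ and the tail $(\kappa')_{(m + n + 1, m + n + 2, \dots)}$ is empty, so Theorem~\ref{3_thm_lapalace_expansion_new_LS} produces
\begin{align*}
LS_{\kappa'}(-(\calS \cup \calT); \calY) ={} & \sum_{p = 0}^{\min\{m, q\}} \sum_{\substack{\calU, \calV \subset \calY: \\ \calU \cup_{p, q - p} \calV \sorteq \calY}} \sum_{\substack{\mu, \nu: \\ \mu \star_{m - p, n + p} \nu = \kappa'}} \frac{\Delta(\calV; \calS) \Delta(\calT; \calU)}{\Delta(\calV; \calU) \Delta(\calT; \calS)} \\
& \times \varepsilon_{m - p, n + p}(\mu, \nu)\, LS_{\mu - \left\langle (q - \tilde n)^{m - p} \right\rangle}(-\calS; \calU)\, LS_\nu(-\calT; \calV) ,
\end{align*}
where I have used that $l - p = m - p$, $n_{\text{thm}} - k - l + p = n + p$, and $m_{\text{thm}} - k = q - \tilde n$ in the theorem's data.

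Second, I would re-index the innermost sum $\sum_{\mu \star_{m - p, n + p} \nu = \kappa'}$ by Proposition~\ref{3_prop_subpartition_interpretation_overlap}, applied with its parameters ``$m$'' $\to m - p$ and ``$n$'' $\to n + p$ (so that ``$m + n$'' stays $m + n$ and the fixed partition $\kappa \subset \langle (m + n)^l \rangle$ is unchanged). This replaces each $(\mu, \nu)$ with a pair $(\lambda, K)$ satisfying $\lambda \subset \langle (m - p)^{n + p + l} \rangle$, $K \subset [n + p + l]$, $l(K) = l$ and $\sub_{n + p + l}(\lambda, K) = \kappa$, via $\mu = \lambda'$ and $\nu = \sub_{n + p + l}(\tilde\lambda, C_{n + p + l}(K))$; note that $l(C_{n + p + l}(K)) = (n + p + l) - l = n + p$, which is consistent with the overlap being taken with parameters $(m - p, n + p)$. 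Substituting gives $\mu - \langle (q - \tilde n)^{m - p} \rangle = \lambda' - \langle (q - \tilde n)^{m - p} \rangle$ and $LS_\nu(-\calT; \calV) = LS_{\sub(\tilde\lambda, C_{n + p + l}(K))}(-\calT; \calV)$, which are exactly the two Littlewood--Schur factors in the claimed formula.

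Third, I would match the signs. Lemma~\ref{3_lem_overlap_with_subpartition}, applied with ``$n$'' $\to n + p + l$ and ``$m$'' $\to m - p$, yields $\varepsilon_{m - p, \, l(C_{n + p + l}(K))}\bigl(\lambda', \sub_{n + p + l}(\tilde\lambda, C_{n + p + l}(K))\bigr) = (-1)^{\left| \tilde\lambda_{C_{n + p + l}(K)} \right|}$, and since $l(C_{n + p + l}(K)) = n + p$ this is precisely $\varepsilon_{m - p, n + p}(\mu, \nu)$. Collecting the summands then gives the asserted identity verbatim.

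The main obstacle is not conceptual but clerical: there is no new idea beyond chaining Theorem~\ref{3_thm_lapalace_expansion_new_LS}, Proposition~\ref{3_prop_subpartition_interpretation_overlap} and Lemma~\ref{3_lem_overlap_with_subpartition}, so the work is entirely parameter bookkeeping. The most delicate steps are (i) deducing the $(q, m + n + \tilde n)$-index of $\kappa'$ from the box hypothesis on $\kappa$, and especially (ii) confirming that the sign $\varepsilon_{m - p, n + p}(\mu, \nu)$ emitted by the second overlap identity coincides, under the bijection of Proposition~\ref{3_prop_subpartition_interpretation_overlap}, with the sign $(-1)^{|\tilde\lambda_{C_{n + p + l}(K)}|}$ furnished by Lemma~\ref{3_lem_overlap_with_subpartition}. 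Step (ii), reconciling the two sign conventions across the change of index, is where an off-by-a-sign error is most likely to creep in, so that is where I would be most careful.
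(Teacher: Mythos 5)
Your proposal is correct and follows the same route as the paper: verify that the $(q, m + n + \tilde n)$-index of $\kappa'$ equals $\tilde n$, then substitute the bijection of Proposition~\ref{3_prop_subpartition_interpretation_overlap} into the right-hand side of Theorem~\ref{3_thm_lapalace_expansion_new_LS}. You also correctly identify that the sign comparison must be routed through Lemma~\ref{3_lem_overlap_with_subpartition} (which the paper leaves implicit), and the parameter bookkeeping is carried out accurately throughout.
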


\begin{proof} Notice that the assumptions on $\kappa$ entail that the $(q, m + n + \tilde n)$-index of $\kappa'$ is $\tilde n$. Thus, the result follows by substituting the correspondence described in Proposition~\ref{3_prop_subpartition_interpretation_overlap} in the right-hand side of the equality in \eqref{3_thm_lapalace_expansion_new_LS_eq}.
\end{proof}

As usual the formula looks considerably nicer specialized to Schur functions, or equivalently to the case $\calY = \emptyset$.

\begin{cor} \label{3_cor_overlap_with_subpartitions_schur} Let $\calS$ and $\calT$ be two sets of variables of lengths $m$ and $n$, respectively, with the property that $\Delta(\calS; \calT) \neq 0$. For any partition $\kappa$ contained in the rectangle $\left\langle (m + n)^l \right\rangle$,
\begin{align*}
\schur_{\kappa'}(\calS \cup \calT) = \sum_{\substack{\lambda \subset \left\langle m^{n + l} \right\rangle \\ K \subset [n + l] \text{ with } l(K) = l: \\ \sub(\lambda, K) = \kappa}} \frac{(-1)^{\left|\tilde\lambda_{C_{n + l}(K)} \right|} \schur_{\lambda'}(\calS) \schur_{\sub\left(\tilde\lambda, C_{n + l}(K)\right)} (\calT)}{\Delta(\calS; \calT)}
.
\end{align*}
\end{cor}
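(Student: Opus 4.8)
The plan is to obtain this corollary as a reindexing of the second overlap identity for Schur functions, Corollary~\ref{3_cor_laplace_expansion_schur_new}. Applying that corollary with indexing partition $\kappa'$ (keeping $\calS$, $\calT$ of sizes $m$, $n$) gives
\begin{align*}
\schur_{\kappa'}(\calS \cup \calT) = \sum_{\substack{\mu, \nu: \\ \mu \star_{m,n} \nu = \kappa'}} \frac{\varepsilon(\mu, \nu)\, \schur_\mu(\calS)\, \schur_\nu(\calT)}{\Delta(\calS; \calT)},
\end{align*}
so the task reduces to rewriting the sum over pairs $(\mu, \nu)$ with $\mu \star_{m,n} \nu = \kappa'$ as a sum over pairs $(\lambda, K)$ with $\lambda \subset \langle m^{n+l} \rangle$, $K \subset [n+l]$, $l(K) = l$, and $\sub_{n+l}(\lambda, K) = \kappa$.

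Next I would invoke the bijection of Proposition~\ref{3_prop_subpartition_interpretation_overlap}, which for a fixed $\kappa \subset \langle (m+n)^l \rangle$ identifies $\{(\mu, \nu): \mu \star_{m,n} \nu = \kappa'\}$ with exactly that set of pairs $(\lambda, K)$, via $(\lambda, K) \mapsto (\lambda', \sub_{n+l}(\tilde\lambda, C_{n+l}(K)))$. Substituting $\mu = \lambda'$ and $\nu = \sub_{n+l}(\tilde\lambda, C_{n+l}(K))$ turns the two Schur factors into those appearing on the right-hand side of the claimed identity. The one point that deserves care is the bookkeeping of parameters: since $l(K) = l$, the complementary subsequence $C_{n+l}(K)$ has length $n$, so Lemma~\ref{3_lem_overlap_with_subpartition} (applied with its role of "$n$" played by $n + l$) indeed yields $\lambda' \star_{m, n} \sub_{n+l}(\tilde\lambda, C_{n+l}(K)) = \kappa'$, which matches the overlap parameters $(m,n)$ used in Corollary~\ref{3_cor_laplace_expansion_schur_new}.

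Finally, the same Lemma~\ref{3_lem_overlap_with_subpartition} records the sign $\varepsilon_{m, l(C_{n+l}(K))}(\lambda', \sub_{n+l}(\tilde\lambda, C_{n+l}(K))) = (-1)^{|\tilde\lambda_{C_{n+l}(K)}|}$, that is, $\varepsilon(\mu,\nu) = (-1)^{|\tilde\lambda_{C_{n+l}(K)}|}$ under the substitution; plugging this in completes the proof. I do not expect a genuine obstacle here, since the combinatorial content is already carried by Proposition~\ref{3_prop_subpartition_interpretation_overlap} and Lemma~\ref{3_lem_overlap_with_subpartition}, and this corollary is essentially a transcription. (As an alternative one could instead specialize Corollary~\ref{3_cor_overlap_with_subpartitions_LS} by setting $\calY = \emptyset$, hence $q = \tilde n = 0$ and $p = 0$; that route reaches the same formula but requires carrying the extra signs $\schur_\rho(-\calX) = (-1)^{|\rho|}\schur_\rho(\calX)$ and $\Delta(\calT;\calS) = (-1)^{mn}\Delta(\calS;\calT)$, which cancel once one uses that overlapping shifts total size by $-mn$ — more calculation than the direct route above, so I would present the direct route.)
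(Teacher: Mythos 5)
Your proposal is correct, and you have in fact identified both viable routes. The paper's own argument is the one you describe parenthetically: specialize Corollary~\ref{3_cor_overlap_with_subpartitions_LS} to $\calY=\emptyset$ (hence $q = \tilde n = p = 0$), after which the signs from $LS_\rho(-\calX;\emptyset) = (-1)^{|\rho|}\schur_\rho(\calX)$ and $\Delta(\calT;\calS) = (-1)^{mn}\Delta(\calS;\calT)$ cancel via $|\lambda'| + |\sub(\tilde\lambda, C_{n+l}(K))| = |\kappa'| + mn$. Your preferred direct route — apply Corollary~\ref{3_cor_laplace_expansion_schur_new} with indexing partition $\kappa'$ and then reindex the sum over $(\mu,\nu)$ via the bijection of Proposition~\ref{3_prop_subpartition_interpretation_overlap}, reading the sign off Lemma~\ref{3_lem_overlap_with_subpartition} with the parameter $n$ there taken to be $n+l$ — uses exactly the same underlying ingredients (second overlap identity plus the subpartition bijection and its sign), just applied in the opposite order: you specialize to Schur first and reindex afterward, while the paper reindexes at the Littlewood--Schur level and specializes afterward. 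Your order has the small advantage of avoiding the parity bookkeeping entirely, at no real cost; either write-up would be acceptable. The one detail worth spelling out in a final version is precisely the parameter match you flagged: since $l(K)=l$, the complementary subsequence $C_{n+l}(K)$ has length $n$, so Lemma~\ref{3_lem_overlap_with_subpartition} does produce an $(m,n)$-overlap equal to $\kappa'$, which is what Corollary~\ref{3_cor_laplace_expansion_schur_new} requires with $l(\calS)=m$, $l(\calT)=n$.
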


\subsection{A first application}
In this section we present a small application of the second overlap identity. We have called it a first application because the recipe for mixed ratios, which is one of the main results of Chapter~\ref{4_cha_mixed_ratios}, can be viewed as an application of the first overlap identity for Littlewood-Schur functions.  

Our first application is to derive the dual Cauchy identity from the second overlap identity, or rather from Corollary~\ref{3_cor_second_overlap_id_labeled_walks}. Although the result is classical, this elegant proof seems to be new. The proof relies on the following relationship between the Schur functions indexed by a partition $\lambda$ and it complement $\tilde\lambda$, respectively.

\begin{lem} \label{3_lem_Schur_indexed_by_complement} Let $\calX$ contain $n$ non-zero variables. If a partition $\lambda$ is a subset of the rectangle $\langle m^n \rangle$, then
\begin{align} \label{3_lem_Schur_indexed_by_complement_eq}
\schur_{\tilde\lambda}(\calX) = \schur_\lambda \left( \calX^{-1} \right) \elementary(\calX)^m
.
\end{align}
\end{lem}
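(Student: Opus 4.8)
The plan is to use the determinantal definition of Schur functions directly, since both sides of \eqref{3_lem_Schur_indexed_by_complement_eq} are ratios of determinants and the identity should reduce to an elementary manipulation of the Vandermonde-type matrix. Write $\calX = (x_1, \dots, x_n)$ and recall that $\tilde\lambda = (m - \lambda_n, \dots, m - \lambda_1)$. By the determinantal definition,
\begin{align*}
\schur_{\tilde\lambda}(\calX) = \frac{\det \left( x_i^{(m - \lambda_{n + 1 - j}) + n - j} \right)_{1 \leq i, j \leq n}}{\Delta(\calX)}.
\end{align*}
First I would reindex the columns by $j \mapsto n + 1 - j$, which reverses the column order and contributes a sign $(-1)^{n(n-1)/2}$; the $(i,j)$ entry becomes $x_i^{m - \lambda_j + j - 1} = x_i^{m} \cdot x_i^{-(\lambda_j + n - j)} \cdot x_i^{n - 1}$. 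Pulling the factor $x_i^{m + n - 1}$ out of row $i$ (using that the variables are non-zero) turns the matrix into $\left( x_i^{-(\lambda_j + n - j)} \right)_{i,j}$ times $\elementary(\calX)^{m + n - 1}$.

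Next I would recognize the resulting determinant as the numerator of $\schur_\lambda(\calX^{-1})$: indeed $\det \left( x_i^{-(\lambda_j + n - j)} \right)_{i,j} = \det \left( (x_i^{-1})^{\lambda_j + n - j} \right)_{i,j} = \Delta(\calX^{-1}) \schur_\lambda(\calX^{-1})$. It then remains to relate $\Delta(\calX^{-1})$ to $\Delta(\calX)$, and here the standard identity $\Delta(\calX^{-1}) = (-1)^{n(n-1)/2} \elementary(\calX)^{-(n-1)} \Delta(\calX)$ does the job: each factor $x_i^{-1} - x_j^{-1} = (x_j - x_i)/(x_i x_j)$, and collecting the $\binom{n}{2}$ sign changes and the powers of each variable gives exactly this. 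Substituting everything back, the two signs $(-1)^{n(n-1)/2}$ cancel, the powers of $\elementary(\calX)$ combine as $(m + n - 1) - (n - 1) = m$, and one obtains $\schur_{\tilde\lambda}(\calX) = \elementary(\calX)^m \schur_\lambda(\calX^{-1})$, as claimed.

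I expect no serious obstacle here; the only mild care needed is bookkeeping the three sources of signs (column reversal, row-reversal implicit in $\lambda \mapsto \tilde\lambda$, and the Vandermonde inversion) and checking they cancel, together with tracking the exponent of $\elementary(\calX)$. If one prefers to avoid the determinantal formula for sets with repetitions, one can first prove \eqref{3_lem_Schur_indexed_by_complement_eq} for pairwise distinct non-zero variables and then invoke the usual polynomial-identity argument: both sides, after clearing denominators, are Laurent polynomials agreeing on a Zariski-dense set. Alternatively, a slicker route would be to apply the earlier identity $\schur_{\lambda + \langle m^n \rangle}(\calX) = \elementary(\calX)^m \schur_\lambda(\calX)$ after writing $\tilde\lambda$ in terms of the reversal of $\lambda$, but the direct determinant computation is the cleanest and is what I would present.
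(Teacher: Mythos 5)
Your proof is correct and takes essentially the same approach as the paper: both rest directly on the determinantal definition and elementary column/row manipulations of the Vandermonde-type matrices. The only cosmetic difference is that the paper reverses the column order in the numerator and denominator simultaneously (so the two sign factors cancel automatically and no $(-1)^{n(n-1)/2}$ bookkeeping is needed), whereas you track the three sign contributions explicitly and check they cancel.
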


\begin{proof} This equality is a fairly immediate consequence of the determinantal definition for Schur functions:
\begin{align*}
\elementary(\calX)^m \schur_\lambda \left( \calX^{-1} \right) ={} & \frac{\elementary(\calX)^m \det \left( x^{-(\lambda_j + n - j)} \right)_{1 \leq j \leq n}}{\det \left( x^{-(n - j)} \right)_{1 \leq j \leq n}} \times \frac{\elementary(\calX)^{n - 1}}{\elementary(\calX)^{n - 1}}
\\
={} & \frac{\det \left( x^{m - \lambda_j + j - 1} \right)_{1 \leq j \leq n}}{\det \left( x^{j - 1} \right)_{1 \leq j \leq n}}
.
\intertext{Inverting the order of the columns in both the numerator and the denominator yields}
\elementary(\calX)^m \schur_\lambda \left( \calX^{-1} \right) ={} & \frac{\det \left( x^{\tilde\lambda_j + n - j} \right)_{1 \leq j \leq n}}{\det \left( x^{n - j} \right)_{1 \leq j \leq n}} = \schur_{\tilde\lambda}(\calX)
. \qedhere
\end{align*}
\end{proof}

\begin{cor} [dual Cauchy identity] Let $\calX$ and $\calY$ be two sets of variables. It holds that
\begin{align} \label{3_cor_dual_cauchy_id_eq}
\sum_\lambda \schur_\lambda(\calX) \schur_{\lambda'}(\calY) = \prod_{\substack{x \in \calX \\ y \in \calY}} (1 + xy).
\end{align}
\end{cor}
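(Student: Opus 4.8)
The plan is to deduce the dual Cauchy identity \eqref{3_cor_dual_cauchy_id_eq} from the second overlap identity for Schur functions in the labeled-walk form of Corollary~\ref{3_cor_second_overlap_id_labeled_walks}, combined with the complement formula of Lemma~\ref{3_lem_Schur_indexed_by_complement}. Both sides of \eqref{3_cor_dual_cauchy_id_eq} are polynomials in $\calX \cup \calY$, and $\schur_\lambda(\calX)$ vanishes once $l(\lambda)$ exceeds $l(\calX)$ while $\schur_{\lambda'}(\calY)$ vanishes once $\lambda_1 > l(\calY)$; so it suffices to prove the identity when $\calX = (x_1,\dots,x_m)$ and $\calY = (y_1,\dots,y_n)$ consist of nonzero variables that are generic — in particular with $xy \neq -1$ for all $x \in \calX$, $y \in \calY$, so that $\Delta(-\calX;\calY^{-1}) \neq 0$ — and to conclude for all values by polynomiality. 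Under these assumptions the sum in \eqref{3_cor_dual_cauchy_id_eq} runs over $\mu \subset \langle n^m \rangle$.

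First I would apply Corollary~\ref{3_cor_second_overlap_id_labeled_walks} with $\lambda = \emptyset$ and with $\calS = -\calX$ (of size $m$) and $\calT = \calY^{-1}$ (of size $n$). Since $\schur_\emptyset \equiv 1$ and the sequences $\emptyset_{V(\pi)}$, $\emptyset_{H(\pi)}$ are zero, this gives
$$\Delta(\calS;\calT) = \sum_{\pi \in \mathfrak{P}(n,m)} (-1)^{|\nu(\pi)|}\,\schur_{\mu(\pi)}(\calS)\,\schur_{\nu(\pi)'}(\calT).$$
As $\pi$ ranges over $\mathfrak{P}(n,m)$, the partition $\mu(\pi)$ runs exactly once over all partitions contained in $\langle n^m \rangle$, with $\nu(\pi)$ equal to the $(n,m)$-complement $\widetilde{\mu(\pi)}$; hence $|\nu(\pi)| = mn - |\mu(\pi)|$ and, since the complement commutes with conjugation (Remark~\ref{3_rem_properties_of_complement}), $\nu(\pi)' = \bigl(\widetilde{\mu(\pi)}\bigr)' = \widetilde{\mu(\pi)'}$. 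Writing $\mu = \mu(\pi)$ this becomes $\Delta(\calS;\calT) = \sum_{\mu \subset \langle n^m\rangle} (-1)^{mn-|\mu|}\,\schur_\mu(\calS)\,\schur_{\widetilde{\mu'}}(\calT)$, where $\widetilde{\mu'}$ is the $(m,n)$-complement of $\mu' \subset \langle m^n\rangle$.

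Next I would use Lemma~\ref{3_lem_Schur_indexed_by_complement} applied to $\calT$ (which has $n$ nonzero variables) and the partition $\mu' \subset \langle m^n\rangle$, namely $\schur_{\widetilde{\mu'}}(\calT) = \schur_{\mu'}(\calT^{-1})\,\elementary(\calT)^m$. Substituting and absorbing the sign via homogeneity, $(-1)^{|\mu|}\schur_\mu(\calS) = \schur_\mu(-\calS)$, yields
$$\Delta(\calS;\calT) = (-1)^{mn}\,\elementary(\calT)^m \sum_{\mu}\schur_\mu(-\calS)\,\schur_{\mu'}(\calT^{-1}).$$
Since $\Delta(\calS;\calT) = \prod_{s\in\calS,\,t\in\calT}(s-t)$ and $\elementary(\calT)^m = \prod_{s\in\calS,\,t\in\calT} t$, dividing through gives $\sum_{\mu}\schur_\mu(-\calS)\,\schur_{\mu'}(\calT^{-1}) = \prod_{s\in\calS,\,t\in\calT}\bigl(1 - s/t\bigr)$. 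Substituting back $\calS = -\calX$ and $\calT = \calY^{-1}$ turns the left side into $\sum_\mu \schur_\mu(\calX)\,\schur_{\mu'}(\calY)$ and each factor $1 - s/t$ into $1 + xy$, which is \eqref{3_cor_dual_cauchy_id_eq}. No step is a serious obstacle; the only care needed is the bookkeeping of complements versus conjugates, the parametrization of complementary pairs of partitions by staircase walks, and the reduction to generic variables so that Corollary~\ref{3_cor_second_overlap_id_labeled_walks} and Lemma~\ref{3_lem_Schur_indexed_by_complement} are applicable.
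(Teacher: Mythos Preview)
Your proof is correct and follows essentially the same approach as the paper: specialize Corollary~\ref{3_cor_second_overlap_id_labeled_walks} to $\lambda=\emptyset$, parametrize complementary pairs of partitions by staircase walks, and use Lemma~\ref{3_lem_Schur_indexed_by_complement} to convert the complement into a Schur function in inverted variables. The paper runs the computation in the reverse order (starting from the sum $\sum_\lambda \schur_\lambda(\calX)\schur_{\lambda'}(\calY)$ and ending with the overlap identity) and makes a different choice of auxiliary variables, but the ingredients and the logic are the same.
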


\begin{proof} Suppose that $\calX$ and $\calY$ have length $n$ and $m$, respectively. Observe that only partitions $\lambda$ contained in the rectangle $\langle m^n \rangle$ contribute to the sum. As we may assume without loss of generality that no variable in $\calX$ vanishes, we can reformulate the left-hand side in \eqref{3_cor_dual_cauchy_id_eq} as
\begin{align*}
\sum_\lambda \schur_\lambda(\calX) \schur_{\lambda'}(\calY) ={} & \sum_{\lambda \subset \langle m^n \rangle} \elementary \left(\calX^{-1} \right)^{-m} \schur_{\tilde\lambda} \left(\calX^{-1}\right) \schur_{\lambda'}(\calY)
.
\intertext{Using the visual interpretation for $(m,n)$-complementary partitions, this reads}
\sum_\lambda \schur_\lambda(\calX) \schur_{\lambda'}(\calY) ={} & \elementary (\calX)^m \sum_{\pi \in \mathfrak{P}(m,n)} (-1)^{|\nu(\pi)|} \schur_{\mu(\pi)} \left(\calX^{-1}\right) \schur_{\nu(\pi)'}(-\calY)
\end{align*}
where we have also exploited the homogeneity of Schur functions to obtain a signed sum. This sum is essentially equal to the right-hand side in \eqref{3_cor_second_overlap_id_labeled_walks_eq}, specialized to the case that $\lambda$ is the empty partition. Hence, Corollary~\ref{3_cor_second_overlap_id_labeled_walks} allows us to conclude that the above expression is equal to the right-hand side in \eqref{3_cor_dual_cauchy_id_eq}.
\end{proof}

Therefore, the dual Cauchy identity can be viewed as a special case of the second overlap identity: it corresponds to pairs of partitions whose overlap is empty.


\chapter{A Combinatorial Approach to Mixed Ratios of Characteristic Polynomials from the Unitary Group} \label{4_cha_mixed_ratios}

\section{Introduction}
In this chapter we present a combinatorial method to derive formulas for averages of products of ratios and/or logarithmic derivatives of characteristic polynomials. More concretely, we give combinatorial expressions for integrals of the following type:
\begin{multline*}
\text{mixed ratio}(\calA, \calB, \calC, \calD, \calE, \calF) \\ = \int_{U(N)} \frac{\prod_{\alpha \in \calA} \chi_g(\alpha) \prod_{\beta \in \calB} \chi_{g^{-1}}(\beta)}{\prod_{\delta \in \calD} \chi_g(\delta) \prod_{\gamma \in \calC} \chi_{g^{-1}} (\gamma)} \prod_{\varepsilon \in \calE} \frac{\chi'_g(\varepsilon)}{\chi_g(\varepsilon)} \prod_{\varphi \in \calF} \frac{\chi'_{g^{-1}}(\varphi)}{\chi_{g^{-1}}(\varphi)} dg
\end{multline*}
where $\chi_g$ denotes the characteristic polynomial of $g \in U(N)$. Our method produces results of the form
\begin{align*}
\text{mixed ratio } = \text{ combinatorial main term } + \text{ error term}.
\end{align*}
In fact, we are only able to provide a neat asymptotic bound for the error term of $\text{mixed ratio}(\calA, \calB, \calC, \calD, \calE, \calF)$ as $N \to \infty$ under the assumption that at least one of the sets of variables is empty. Setting some sets of variables equal to the empty set results in the four theorems discussed in Section~\ref{4_sec_results}: 
\begin{itemize}
\item Upon specializing our formula for mixed ratios to $\calE = \emptyset = \calF$, we essentially recover Bump and Gamburd's ratio theorem \cite{bump06}. 
\item If we prescribe $\calF = \emptyset$ or $\calA = \emptyset = \calD$, we obtain new expressions for the corresponding mixed ratios. 
\item Glossing over a few technical details, $\text{mixed ratio}(\emptyset, \emptyset, \emptyset, \emptyset, \calE, \calF)$ provides a compact combinatorial expression for the main term of the average of products of logarithmic derivatives. 
\end{itemize}
In \cite{CS}, Conrey and Snaith derive a different formula for averages of products of logarithmic derivatives, without using any combinatorial tools. By definition the two expressions for the main term are equal; however, ours is a sum of products of monomial symmetric functions, while theirs is a rather complicated ad-hoc expression. 

The following expression for products of logarithmic derivatives of \emph{completed} characteristic polynomials, which holds inside the unit circle, constitutes the principal application of our formula for products of logarithmic derivatives of classic characteristic polynomials. (Completed characteristic polynomials $\Lambda_g$ are introduced on page \pageref{1_defn_completed_char_pol}.) Let $\calE$ and $\calF$ be sets of non-zero variables having absolute value strictly less than $1$, then
\begin{align} \label{4_eq_in_intro} 
\begin{split}
& \int_{U(N)} \prod_{\varepsilon \in \calE} \varepsilon \frac{\Lambda'_g(\varepsilon)}{\Lambda_g(\varepsilon)} \prod_{\varphi \in \calF} \varphi \frac{\Lambda'_{g^{-1}}(\varphi)}{\Lambda_{g^{-1}}(\varphi)} dg = \sum_\lambda \left( -\frac N2 \right)^{l(\calE) + l(\calF) - 2l(\lambda)}
z_\lambda \monomial_\lambda(\calE) \monomial_\lambda(\calF) + \error
.
\end{split}
\end{align}
An asymptotic bound on the error term as $N \to \infty$ is given in Theorem~\ref{4_thm_completed_log_ders}. 

This equality allows us to derive an explicit formula for the eigenvalues of unitary matrices. More precisely, let the functions $h(z)$ and $z \mapsto f(z, z_2, \dots, z_n)$ ``behave well'' in a neighborhood of the unit circle and let $f(z_1, \dots, z_n)$ be symmetric, then Theorem~\ref{4_thm_explicit_formula} provides a combinatorial formula for the following expression: 
\begin{align*}
\int_{U(N)} \sum_{1 \leq j_1, \dots, j_n \leq N} h(\rho_{j_1}) \cdots h(\rho_{j_n}) f(\rho_{j_1}, \dots, \rho_{j_n}) dg
\end{align*}
where for every matrix $g$, $\calR(g) = \{\rho_1, \dots, \rho_N\}$ stands for the multiset of its eigenvalues. Our focus lies on the analogy to explicit formulas for the zeros of $L$-functions. As motivated in Chapter~\ref{1_cha_intro}, we view characteristic polynomials as a model for $L$-functions. In consequence, we regard eigenvalues, which by definition are the zeros of characteristic polynomials, as a model for zeros of $L$-functions. Not only is the question motivated by this analogy between characteristic polynomials and $L$-functions, but our \emph{proof} of Theorem~\ref{4_thm_explicit_formula} also mirrors the \emph{proof} of the explicit formula for $L$-functions reproduced in Theorem~\ref{1_thm_explicit_formula}. The principal difficulty is that the derivation of the explicit formula for $L$-functions is based on the fact that sufficiently far to the right of the critical line $L$-functions can be written as Euler products; however, there is no natural analogue for the Euler product in the characteristic polynomials for unitary matrices. In order to circumvent this obstacle, we need an alternative way to describe characteristic polynomials inside the unit circle. This is exactly what the equality in \eqref{4_eq_in_intro} provides. 

\bigskip
Our method for computing $\text{mixed ratio}(\calA, \calB, \calC, \calD, \calE, \calF)$ is a generalization of Bump and Gamburd's approach to a combinatorial formula for averages of ratios of the form $\text{mixed ratio}(\calA, \calB, \calC, \calD, \emptyset, \emptyset)$ \cite{bump06}. As such it is based on the observation that the integrand is symmetric in both $\calR(g)$ and $\overline{\calR(g)}$, which implies that it can be written as a linear combination of the form
\begin{align*}
\sum_{\mu, \nu} \schur_\mu(\calR(g)) \overline{\schur_\nu(\calR(g))}
.
\end{align*}
What makes this generalization possible is a combination of three identities, namely the first overlap identity (presented in Chapter~\ref{3_cha_overlap_ids}), a new variant of the Murnaghan-Nakayama rule and an equality that is inspired by the vertex operator formalism that we have touched upon in Section~\ref{1_sec_my_results}. Firstly, we only need the simplest case of the first overlap identity, which corresponds to the sorting of the overlap being the identity permutation. Secondly, our variant of the Murnaghan-Nakayama rule (stated in Proposition~\ref{4_prop_MN_negative_r}) provides an explicit expression for the following signed sum, under quite restrictive assumptions:
\begin{align*}  
\sum_{\substack{\lambda: \\ \mu \setminus \lambda \text{ is a $k$-ribbon}}} (-1)^{\height(\mu \setminus \lambda)} \schur_\lambda(\calX).
\end{align*}
Ribbons are defined at the very end of Section~\ref{4_sec_sequences_and_partitions}. Thirdly, the equality that is related to the vertex operator formalism (stated in Lemma~\ref{4_lem_reduction_operators}) describes the interaction between two ``power sum'' operators on the ring of symmetric functions.

\subsection{Structure of this chapter}

In Section~\ref{4_sec_background_and_notation} we collect combinatorial definitions and formulas that the results presented in this chapter are based on. Section~\ref{4_sec_MN_rule} contains some extensions and variations of the Murnaghan-Nakayama rule, which are used to prove formulas for the average of products of ratios and/or logarithmic derivatives of characteristic polynomials over the unitary matrices in Section~\ref{4_sec_ratios_and_log_der}. In Section~\ref{4_sec_explicit_formula} we first recall 
the notion of the completed characteristic polynomial of a unitary matrix, and then present an expression for the average of its logarithmic derivatives, which will allow us to derive an explicit formula for the eigenvalues of unitary matrices.

\section{Background and notation} \label{4_sec_background_and_notation}

The reader who has read Chapters~\ref{1_cha_algebraic_combinatorics} and \ref{3_cha_overlap_ids} may safely skip this section and only go back to specific statements while reading the rest of the chapter, except for the paragraphs on specializations of the ring of symmetric functions and power sum operators. In addition, ribbons are defined at the very end of Section~\ref{4_sec_sequences_and_partitions}.

\subsection{Sequences and partitions} \label{4_sec_sequences_and_partitions}

For us a sequence is a \emph{finite} enumeration of elements, such as $\calX = \left(\calX_1, \dots, \calX_n \right)$. Its length is the number of its elements, denoted by $l(\calX) = n$. A subsequence $\calY$ of $\calX$ is a sequence given by $\calY_k = \calX_{n_k}$ where $1 \leq n_1 < n_2 < \dots \leq n$ is an increasing sequence of indices. If $K$ is a subsequence of $[n] = (1, \dots, n)$, then $\calX_K$ is shorthand for the subsequence of $\calX$ that corresponds to the indices in $K$. In consequence, any sequence of length $n$ contains exactly $2^n$ subsequences regardless of the number of repeated elements. If two sequences $\calX$ and $\calY$ of the same length are equal up to reordering their elements, we write $\calX \sorteq \calY$. The union of two sequences $\calX \cup \calY$ is obtained by appending $\calY$ to $\calX$; we sometimes add subscripts to indicate the lengths of the two sequences in question. The complement of a subsequence $\calY \subset \calX$ is the subsequence $\calX \setminus \calY$ of $\calX$ that satisfies $\calY \cup \left( \calX \setminus \calY \right) \sorteq \calX$. All operations on sequences that have not been mentioned are understood to be element wise. For instance, \label{symbol_abs} $\abs(\calX)$ is the sequence of absolute values of the elements of $\calX$. Moreover, we will write $\abs(\calX) < 1$ to indicate that all elements of the sequence $\calX$ are strictly less than 1 in absolute value. We do not denote the sequence of absolute values by $|\calX|$ (which would be more consistent with our usage of applying operations on sequences element by element) because vertical bars traditionally denote the size of a sequence or partition.

For sequences whose elements can be subtracted and multiplied, we define the following two functions: 
\begin{align*}
\Delta(\calX) = \prod_{1 \leq i < j \leq n} \left(\calX_i - \calX_j\right) \:\text{ and }\: \Delta(\calX; \calY) = \prod_{x \in \calX, y \in \calY} (x - y).
\end{align*}
We implicitly view all sets of variables as sequences but for simplicity of notation we will not fix the order of the variables explicitly. It is important, however, to stick to one order throughout a computation or within a formula.

A partition is a non-increasing sequence $\lambda = (\lambda_1, \dots, \lambda_n)$ of non-negative integers, called parts. If two partitions only differ by a sequence of zeros, we regard them as equal. By an abuse of notation, we say that the length of a partition is the length of the subsequence that consists of its positive parts. The size of a partition $\lambda$ is the sum of its parts, denoted $|\lambda|$. For any positive integer $i$, $m_i(\lambda)$ is the number of parts of $\lambda$ that are equal to $i$. It is sometimes convenient to use a notation for partitions that makes multiplicities explicit:
$$\lambda = \left\langle 1^{m_1(\lambda)} 2^{m_2(\lambda)} \dots i^{m_i(\lambda)} \dots \right\rangle.$$
The following statistic on the multiplicities will appear in some of our results:
$$z_\lambda = \prod_{i \geq 1} i^{m_i(\lambda)} m_i(\lambda)!$$

We will frequently view partitions as Ferrers diagrams. The Ferrers diagram associated to a partition $\lambda$ is defined as the set of points $(i,j) \in \Z \times \Z$ such that $1 \leq i \leq \lambda_j$; it is often convenient to visualize the points as square boxes. For instance, the Ferrers diagrams associated to partitions of the type $\langle m^n \rangle$ are just rectangles. The conjugate partition $\lambda'$ of $\lambda$ is given by the condition that the Ferrers diagram of $\lambda'$ is the transpose of the Ferrers diagram of $\lambda$. We note for later reference that if the union of two partitions $\mu$ and $\nu$ happens to be a partition, then $(\mu \cup \nu)' = \mu' + \nu'$. Given two partitions $\kappa$ and $\lambda$, we say that $\kappa$ is a subset of $\lambda$ if their Ferrers diagrams satisfy that containment relation. Note that $\kappa \subset \lambda$ is our shorthand for both subset and subsequence. It will be clear from the context whether we view $\kappa$ and $\lambda$ as sequences or diagrams. For a partition $\lambda$ that is contained in the rectangle $\langle m^n \rangle$, we call the partition
$$\tilde\lambda = (m - \lambda_n, \dots, m - \lambda_1) \subset \langle m^n \rangle$$
its $(m,n)$-complement.

If $\mu$ is a subset of $\lambda$, then the corresponding skew diagram is the set of boxes $\lambda \setminus \mu$ that are contained in $\lambda$ but not in $\mu$. A ribbon is a skew diagram that is connected and contains no $2 \times 2$ subset of boxes. What we call ribbon is also known as skew or rim hook \cite[p.~180]{Sagan}, and as border strip \cite[p.~5]{mac}. Let us illustrate this definition by some examples. The left-most diagram is a ribbon, while the other two violate one of the conditions. 
\begin{center}
\begin{tikzpicture} 
\draw[step=0.5cm, thin] (0, 0) grid (0.5, 0.5);
\draw[step=0.5cm, thin] (0, 0.5) grid (0.5, 1);
\draw[step=0.5cm, thin] (0.5, 0.5) grid (1, 1);
\draw[step=0.5cm, thin] (0.5, 1) grid (1, 1.5);
\draw[step=0.5cm, thin] (0.999, 0.999) grid (1.5, 1.5);
\draw[step=0.5cm, thin] (0.999, 1.5) grid (1.5, 2);
\draw[step=0.5cm, thin] (1.5, 1.499) grid (2, 2);
\draw[step=0.5cm, thin] (2, 1.499) grid (2.5, 2);
\end{tikzpicture}
\hspace{1cm}
\begin{tikzpicture} 
\draw[step=0.5cm, thin] (0, 0) grid (0.5, 0.5);
\draw[step=0.5cm, thin] (0, 0.5) grid (0.5, 1);
\draw[step=0.5cm, thin] (0.5, 0.5) grid (1, 1);
\draw[step=0.5cm, thin] (0.5, 1) grid (1, 1.5);
\draw[step=0.5cm, thin] (0.999, 1.499) grid (1.5, 2);
\draw[step=0.5cm, thin] (1.5, 1.499) grid (2, 2);
\draw[step=0.5cm, thin] (2, 1.499) grid (2.5, 2);
\end{tikzpicture}
\hspace{1cm}
\begin{tikzpicture} 
\draw[step=0.5cm, thin] (0, 0) grid (0.5, 0.5);
\draw[step=0.5cm, thin] (0, 0.5) grid (0.5, 1);
\draw[step=0.5cm, thin] (0.5, 0.5) grid (1, 1);
\draw[step=0.5cm, thin] (1, 0.5) grid (1.5, 1);
\draw[step=0.5cm, thin] (0.999, 1) grid (1.5, 1.5);
\draw[step=0.5cm, thin] (0.999, 1.5) grid (1.5, 2);
\draw[step=0.5cm, thin] (1.5, 1.499) grid (2, 2);
\draw[step=0.5cm, thin] (2, 1.499) grid (2.5, 2);
\draw[step=0.5cm, thin] (0.5, 1) grid (1, 1.5);
\end{tikzpicture}
\end{center}
The diagram in the middle illustrates that we only consider \emph{edgewise} connected skew diagrams connected.

The size of a ribbon is the number of its boxes. We sometimes call a ribbon of size $k$ a $k$-ribbon. The height ($\height$) of a ribbon is one less than the number of its rows. We use the following shorthand for the property that $\lambda \setminus \mu$ is a $k$-ribbon: \label{symbol_is_k_ribbon} $$\mu \overset{k}{\to} \lambda.$$ We note for later reference that $\lambda \setminus \mu$ is a $k$-ribbon if and only if $\lambda' \setminus \mu'$ is. In that case,
\begin{align} \label{4_eq_height_conjugate_ribbon}
\height \left(\lambda' \setminus \mu'\right) = k - 1 - \height \left( \lambda \setminus \mu \right).
\end{align}
For sequences $\lambda^{(0)} \overset{k_1}{\to} \lambda^{(1)} \overset{k_2}{\to} \dots \overset{k_n}{\to} \lambda^{(n)}$, the symbol $\height \left( \lambda^{(n)} \setminus \lambda^{(0)} \right)$ denotes the sum of the heights of the intermediate ribbons.

\subsection{The ring of symmetric functions}
In this section we introduce the most commonly used symmetric polynomials. In addition, we will briefly discuss the more abstract concept of symmetric functions, which is necessary in order to define specializations and operators.

\begin{defn} [monomial symmetric polynomials] Let $\calX = (x_1, \dots, x_n)$ be a set of variables and let $\lambda$ be a partition. If $l(\lambda) > n$, then the monomial symmetric polynomial $\monomial_\lambda(\calX)$ is identically zero; otherwise, 
$$\monomial_\lambda(\calX) = \sum_{\substack{(\alpha_1, \dots, \alpha_n): \\ (\alpha_1, \dots, \alpha_n) \sorteq \lambda}} x_1^{\alpha_1} \cdots x_n^{\alpha_n}
.$$
We remark that this definition makes use of the convention that any partition of length less than $n$ may be viewed as a sequence of length exactly $n$ by appending zeros. 
\end{defn}

These polynomials are called symmetric because they are invariant under permutations of the elements of $\calX$. The following definition lists three other commonly used families of symmetric polynomials.

\begin{defn} (power sums, elementary and complete symmetric polynomials) Let $k$ be a positive integer and let $\calX$ be a set of variables. 
\begin{enumerate}
\item The $k$-th elementary symmetric polynomial $\elementary_k(\calX)$ is given by $\monomial_{\left\langle1^k\right\rangle} (\calX)$, which is equal to the sum of all products of $k$ variables with distinct indices. We use the convention that $\elementary_0(\calX) = 1$.
\item The $k$-th complete symmetric polynomial $\complete_k(\calX)$ is equal to $\sum_{\lambda: |\lambda| = k} \monomial_\lambda(\calX)$. We use the convention that $\complete_0(\calX) = 1$.
\item The $k$-th power sum $\power_k(\calX)$ is defined by $\monomial_{(k)}(\calX) = \sum_{x \in \calX} x^k$.
\end{enumerate}
\end{defn}
Notice that for any set of variables $\calX$, the $l(\calX)$-th elementary polynomial $\elementary_{l(\calX)}(\calX)$ is simply the product of all variables. This observation motivates the following non-standard notation:
\begin{align*}
\elementary(\calX) = \prod_{x \in \calX} x
.
\end{align*} 

For theoretical considerations, it is often more convenient to work with symmetric functions instead of symmetric polynomials as they are not dependent on a set of variables. The monomial symmetric function corresponding to $\lambda$, which we denote by $\monomial_\lambda$, is determined by the condition that for any set of variables $\calX$, $\monomial_\lambda(\calX)$ is the monomial symmetric polynomial defined above. We will freely use this trick to get rid of the set of variables for all symmetric polynomials.

\begin{defn} [ring of symmetric functions] The ring of symmetric functions ($\Sym$) is the complex vector space spanned by the monomial symmetric functions $\monomial_\lambda$ where $\lambda$ runs over all partitions.
\end{defn}
Owing to the fact that the product of two symmetric polynomials is again symmetric, $\Sym$ is endowed with a natural ring structure. For a rigorous definition of the ring of symmetric functions turn to Section~\ref{1_sec_Schur_functions} or consult \cite[p.~17-19]{mac}. It turns out that the monomial symmetric functions are not the only natural basis for $\Sym$. If we use the convention that for any partition $\lambda$, $$\power_\lambda = \prod_{i \geq 1} \power_i^{m_i(\lambda)},$$ then the $\power_\lambda$ also form a basis of the ring of symmetric functions \cite[p.~24]{mac}. In fact, the same holds for the elementary and complete symmetric functions \cite[p.~20 and 22]{mac}.

\subsubsection{Schur functions}
Arguably the most natural basis for $\Sym$ is given by the Schur functions. We will see that they are orthonormal with respect to the Hall inner product. Moreover, they are the main link between the theory of symmetric functions and representation theory. We follow \cite{mac} in our presentation of Schur functions.

\begin{defn} [Schur functions] Let $\calX$ be a set of $n$ pairwise distinct variables and $\lambda$ a partition. If $l(\lambda) > n$, then $\schur_\lambda(\calX) = 0$; otherwise, 
\begin{align*}
\schur_\lambda(\calX) = \frac{\det \left( x^{\lambda_j + n - j} \right)_{x \in \calX, 1 \leq j \leq n}}{\Delta(\calX)}
.
\end{align*}
The fact that the polynomial $\Delta(\calX)$ is a divisor of the determinant in the numerator allows us to extend this definition to all sets of variables of length $n$.
\end{defn}

Technically, this defines a symmetric polynomial - not a symmetric function. For historical reasons, we call both $\schur_\lambda(\calX)$ and $\schur_\lambda$ the Schur function indexed by the partition $\lambda$. There are various definitions for Schur functions, each emphasizing a different aspect. In fact, the combinatorial definition (\textit{i.e.}\ Definition~\ref{1_defn_cominatorial_schur_function}) will also play a minor role in this chapter.

The Hall inner product on $\Sym$ is given by the condition that $\left\langle \complete_\lambda, \monomial_\mu \right\rangle = \delta_{\lambda \mu}$ for all partitions $\lambda, \mu$ where $\delta_{\lambda \mu}$ is the Kronecker delta. In order to state the main property of this inner product, we need to introduce the vector space $\Sym^k$, which is spanned by $\monomial_\lambda$ where $\lambda$ runs over all partitions $\lambda$ of size $k$. For each $k \geq 0$, let $u_\lambda$, $v_\lambda$ be bases of $\Sym^k$, indexed by partitions of size $k$. Then the following conditions are equivalent:
\begin{enumerate}
\item For all partitions $\lambda, \mu$, $\langle u_\lambda, v_\mu \rangle = \delta_{\lambda \mu}$.
\item For all sets of complex variables $\calX$, $\calY$ so that $|xy| < 1$ for all $x \in \calX$, $y \in \calY$, $\displaystyle \sum_\lambda u_\lambda(\calX) v_\lambda(\calY) = \prod_{\substack{x \in \calX \\ y \in \calY}} (1 - xy)^{-1}$.
\end{enumerate}

\begin{lem} [Cauchy identities] \label{4_lem_cauchy_identity_schur} Let $\calX$ and $\calY$ be two sets of variables with elements in $\C$ so that the product of any element in $\calX$ with any element in $\calY$ is strictly less than 1 in absolute value. The Cauchy identity states that
\begin{align} \label{4_eq_cauchy_id_schur}
\sum_{\lambda} \schur_\lambda(\calX) \schur_\lambda(\calY) ={} & \prod_{\substack{x \in \calX \\ y \in \calY}} (1 - xy)^{-1}
.
\intertext{Furthermore, what we call the power sum version of the Cauchy identity states that}
\label{4_eq_power_sum_version_cauchy_id}
\sum_{\lambda} \schur_\lambda (\calX) \schur_\lambda (\calY) ={} &\sum_\mu z_\mu^{-1} \power_\mu (\calX) \power_\mu (\calY)
\end{align}
where the three sums range over all partitions.
\end{lem}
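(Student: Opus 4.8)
The plan is to prove the two Cauchy identities \eqref{4_eq_cauchy_id_schur} and \eqref{4_eq_power_sum_version_cauchy_id} by reducing both to the already-established bilinear pairing on $\Sym$. Since the excerpt has set up the Hall inner product via the condition $\langle \complete_\lambda, \monomial_\mu\rangle = \delta_{\lambda\mu}$ together with the equivalence of (1) and (2) for dual bases, the natural route is:

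First I would verify that the Schur functions form a self-dual basis, i.e. $\langle \schur_\lambda, \schur_\mu\rangle = \delta_{\lambda\mu}$. Because the $\schur_\lambda$ of size $k$ span $\Sym^k$ (stated in the excerpt as a consequence of the determinantal definition and multilinearity, or one can invoke the transition to the monomial basis), self-duality is equivalent — via the (1)$\Leftrightarrow$(2) equivalence already granted — to the generating-function identity $\sum_\lambda \schur_\lambda(\calX)\schur_\lambda(\calY) = \prod_{x,y}(1-xy)^{-1}$. So the real content is establishing \emph{one} of these; I would establish the generating-function form directly. The standard argument: expand $\prod_{x\in\calX,\,y\in\calY}(1-xy)^{-1} = \prod_{x}\bigl[\sum_{k\geq 0} x^k \complete_k(\calY)\bigr]$, which shows $\prod_{x,y}(1-xy)^{-1} = \sum_\lambda \complete_\lambda(\calX)\monomial_\lambda(\calY)$; this is exactly condition (2) for the dual pair $(\complete_\lambda, \monomial_\lambda)$, confirming consistency, but to get the Schur version I would instead use the combinatorial (tableau) definition of $\schur_\lambda$: the coefficient of a fixed monomial $\calX^\alpha \calY^\beta$ on the left counts pairs of sequences, and the Robinson–Schensted–Knuth correspondence matches these with pairs of semistandard tableaux of the same shape, one with content $\alpha$ weighted by $\calX$ and one with content $\beta$ weighted by $\calY$. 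Summing over the common shape $\lambda$ gives $\sum_\lambda \schur_\lambda(\calX)\schur_\lambda(\calY)$. Alternatively, if I wish to avoid RSK, I would cite that this is Cauchy's classical identity (Macdonald I.4) and is an immediate consequence of self-duality of the Schur basis, which itself follows from $\langle \complete_\lambda,\monomial_\mu\rangle=\delta_{\lambda\mu}$ plus the Jacobi–Trudi expansion of $\schur_\lambda$.

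For the power sum version \eqref{4_eq_power_sum_version_cauchy_id}, I would take the same generating function and expand it a second way: $\prod_{x,y}(1-xy)^{-1} = \exp\bigl(\sum_{k\geq 1}\tfrac{1}{k}\power_k(\calX)\power_k(\calY)\bigr)$, obtained by taking $\log$ and using $-\log(1-t) = \sum_{k\geq1} t^k/k$. Expanding the exponential as a product over $k$ of $\sum_{m_k\geq 0}\tfrac{1}{k^{m_k} m_k!}\power_k(\calX)^{m_k}\power_k(\calY)^{m_k}$ and collecting terms indexed by the partition $\mu = \langle 1^{m_1}2^{m_2}\cdots\rangle$ produces $\sum_\mu z_\mu^{-1}\power_\mu(\calX)\power_\mu(\calY)$, since $z_\mu = \prod_i i^{m_i(\mu)} m_i(\mu)!$ by definition. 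Comparing with the Schur expansion of the same product yields \eqref{4_eq_power_sum_version_cauchy_id}. (This also re-derives the well-known fact $\langle \power_\lambda,\power_\mu\rangle = z_\lambda\delta_{\lambda\mu}$, though that is not needed for the statement.)

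The main obstacle is purely expository rather than mathematical: deciding how self-contained to make the Schur-function Cauchy identity. The cleanest dependency-respecting route is to note that $\{\schur_\lambda : |\lambda|=k\}$ is a basis of $\Sym^k$ — which the excerpt has effectively asserted — and then observe that self-duality of this basis is \emph{defined} to be equivalent to the generating-function identity by the (1)$\Leftrightarrow$(2) statement preceding Lemma~\ref{4_lem_cauchy_identity_schur}; so the only genuine work is to prove self-duality, and the shortest honest proof of that uses either RSK or the Jacobi–Trudi formula together with $\langle\complete_\lambda,\monomial_\mu\rangle=\delta_{\lambda\mu}$. I would present the RSK argument (or simply cite \cite[I.4]{mac}), then derive the power sum version by the two-way expansion of $\prod(1-xy)^{-1}$ as above, which is a short formal computation requiring no further combinatorics.
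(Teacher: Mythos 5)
The paper does not prove this lemma: it is stated in the background material of Section~\ref{4_sec_background_and_notation} without argument, and the corresponding discussion in Chapter~\ref{1_cha_algebraic_combinatorics} (equations \eqref{1_eq_defining_for_Hall_inner_prod}, \eqref{1_eq_cauchy_id_power_sum_version}, \eqref{1_eq_cauchy_id}) is presented as classical, following \cite{mac}. Your two arguments are correct and are precisely the standard proofs in Macdonald's Chapter~I.4: the RSK bijection between $\N$-matrices with prescribed row and column sums and pairs of semistandard tableaux of the same shape gives \eqref{4_eq_cauchy_id_schur}, and the $\log$/$\exp$ computation $\log\prod(1-xy)^{-1}=\sum_{k\geq 1}\tfrac1k\power_k(\calX)\power_k(\calY)$ followed by multiplying out the exponential gives \eqref{4_eq_power_sum_version_cauchy_id}. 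The only loose spot is your alternative to RSK: you attribute self-duality of the Schur basis to ``Jacobi--Trudi together with $\langle\complete_\lambda,\monomial_\mu\rangle=\delta_{\lambda\mu}$,'' but what one actually needs there is the pair of Kostka-matrix expansions $\schur_\lambda=\sum_\mu K_{\lambda\mu}\monomial_\mu$ and $\complete_\mu=\sum_\lambda K_{\lambda\mu}\schur_\lambda$ (which come from the tableau definition and Pieri's rule, not directly from Jacobi--Trudi); once these are in hand the self-duality $\langle\schur_\lambda,\schur_\mu\rangle=\delta_{\lambda\mu}$ follows by a short matrix computation. Since you present that only as an alternative to the RSK route you actually commit to, the overall proposal is sound.
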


In consequence, the Schur functions form an orthonormal basis for the ring of symmetric function, while the power sums satisfy $\left\langle \power_\lambda, \power_\mu \right\rangle = z_\lambda \delta_{\lambda \mu}$ for all partitions $\lambda, \mu$. The following Lemma gives another point of view on the orthonormality of Schur functions.

\begin{lem} [Schur orthogonality, \cite{BumpLieGroups}] \label{4_lem_Schur_ortho}
Let $U(N)$ denote the unitary group of degree $N$. As $U(N)$ is compact it possesses a unique Haar measure normalized so that the volume of the entire group is 1. Whenever we integrate over $U(N)$, we integrate with respect to this measure. If for each matrix $g \in U(N)$ we write $\calR(g)$ for the multiset of its eigenvalues, then
\begin{align*}
\int_{U(N)} \schur_\mu(\calR(g)) \overline{\schur_\nu(\calR(g))}dg = \begin{cases} 1 &\text{if $\mu = \nu$ and $l(\mu) \leq N$,} \\ 0 &\text{otherwise.}\end{cases}
\end{align*}
\end{lem}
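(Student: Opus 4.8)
The plan is to deduce Schur orthogonality over $U(N)$ from the fact that Schur functions (in the eigenvalue variables) are precisely the irreducible characters of $U(N)$ indexed by partitions, together with the general Schur orthogonality theorem for characters of compact groups. This is exactly the chain of reasoning already laid out in the representation-theory paragraph of Section~\ref{1_sec_Schur_functions}, so the proof is essentially a matter of assembling those ingredients in the right order.

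First I would recall that $U(N)$ is a compact connected Lie group, so it carries a unique normalized Haar measure, and its irreducible representations are classified by highest weights, i.e.\ non-increasing integer sequences $(\lambda_1,\dots,\lambda_N)$. For a genuine partition $\lambda$ (equivalently $\lambda_N\ge 0$) with $l(\lambda)\le N$, the Weyl character formula gives the character of the irreducible representation with highest weight $\lambda$ as
\begin{align*}
\chi(\lambda)(g) = \frac{\det\left(e^{\imaginary\theta_i(\lambda_j+N-j)}\right)_{1\le i,j\le N}}{\det\left(e^{\imaginary\theta_i(N-j)}\right)_{1\le i,j\le N}},
\end{align*}
where $e^{\imaginary\theta_1},\dots,e^{\imaginary\theta_N}$ are the eigenvalues of $g$; comparing with the determinantal definition of Schur functions, this is exactly $\schur_\lambda(\calR(g))$. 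Moreover, distinct partitions $\mu\ne\nu$ with $l(\mu),l(\nu)\le N$ are distinct highest weights, hence give non-isomorphic irreducible representations.

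Next I would invoke the Schur orthogonality theorem for characters of a compact group (stated earlier in the excerpt): for irreducible representations $(\pi_1,V_1),(\pi_2,V_2)$ with characters $\chi_1,\chi_2$, one has $\int_G\chi_1(g)\overline{\chi_2(g)}\,dg = 1$ if the representations are isomorphic and $0$ otherwise. Applying this with $G=U(N)$, $\chi_1=\chi(\mu)$, $\chi_2=\chi(\nu)$ immediately yields
\begin{align*}
\int_{U(N)}\schur_\mu(\calR(g))\overline{\schur_\nu(\calR(g))}\,dg = \begin{cases}1 & \text{if }\mu=\nu\text{ and }l(\mu)\le N,\\ 0 & \text{otherwise, provided }l(\mu),l(\nu)\le N.\end{cases}
\end{align*}
The remaining case is when one of $\mu,\nu$ has length exceeding $N$: then the corresponding Schur polynomial in $N$ variables is identically zero by the determinantal definition (two equal rows / length obstruction), so the integrand vanishes identically and the integral is $0$, consistent with the claimed formula since $\mu=\nu$ with $l(\mu)\le N$ is impossible. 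Combining the two cases gives the statement.

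I do not expect a serious obstacle here, since every component is quoted from earlier in the text; the only point requiring a little care is the bookkeeping between ``highest weight'' and ``partition'' and the degenerate length-$>N$ case, which I would handle exactly as above. If one preferred an entirely self-contained route avoiding representation theory, an alternative would be to expand $\schur_\mu\overline{\schur_\nu}$ via the dual Cauchy identity or the Weyl integration formula and compute the resulting constant-term/residue directly; but the character-theoretic argument is shorter and is the one the surrounding exposition is clearly set up to use.
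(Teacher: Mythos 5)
Your argument is correct and is essentially the same route the paper takes: the surrounding exposition (Section~\ref{1_sec_Schur_functions}) builds up exactly this chain — Weyl character formula identifies $\schur_\lambda(\calR(g))$ with the irreducible character of $U(N)$ with highest weight $\lambda$, Schur orthogonality for compact groups then gives the integral, and the length-$>N$ case is disposed of by vanishing of the Schur polynomial — and the lemma is stated there as a corollary of that discussion, with \cite{BumpLieGroups} cited as the reference. No gaps; the bookkeeping between highest weights, partitions, and the degenerate case is handled just as the paper intends.
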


\subsubsection{Specializations of the ring of symmetric functions} 
The definitions given in this paragraph are taken from \cite[p.~259]{Borodin2014}. A specialization $\rho$ of the ring of symmetric functions is an algebra homomorphism from $\Sym$ to $\C$. We denote the application of $\rho$ to a symmetric function $f$ as $f(\rho)$. For two specializations $\rho_1$ and $\rho_2$ we define their union $\rho = \rho_1 \cup \rho_2$ as the specialization defined on power sum symmetric functions via
$$\power_k (\rho_1 \cup \rho_2 ) = \power_k (\rho_1) + \power_k(\rho_2)$$
for all $k \geq 1$, and extended to all symmetric functions by the fact that the power sum symmetric functions form an algebraic basis of Sym. We note for later reference that 
\begin{align} \label{4_eq_union_of_specializations_power_sum}
z_\lambda^{-1} \power_\lambda(\rho_1 \cup \rho_2) & = \sum_{\substack{\mu, \nu: \\ \mu \cup \nu \sorteq \lambda}} z_\mu^{-1} \power_\mu(\rho_1) z_\nu^{-1} \power_\nu(\rho_2)
\intertext{and} \label{4_eq_union_of_specializations_schur}
\schur_\lambda(\rho_1 \cup \rho_2) & = \sum_{\mu, \nu} c_{\mu \nu}^\lambda \schur_\mu(\rho_1) \schur_{\nu}(\rho_2).
\end{align}
where $c^\lambda_{\mu \nu}$ are Littlewood-Richardson coefficients, described in Definition~\ref{1_defn_Littlewood-Richardson_coefficients}.

\begin{defn} Let $\omega$ be the involution on the ring of symmetric functions given by $\omega(\elementary_r) = \complete_r$. Recall that $\omega(\power_n) = (-1)^{n - 1} \power_n$ and $\omega(\schur_\lambda) = \schur_{\lambda'}$ \cite[p.~24 and 42]{mac}. We define the following two specializations:
\begin{align*} 
\rho^\alpha_\calX: \Sym \to \C; f \mapsto f(\calX) \;\text{ and }\; \rho^\beta_\calX: \Sym \to \C; f \mapsto \omega(f)(\calX)
.
\end{align*}
Borodin and Corwin call specializations of type $\rho^\alpha$ and \label{symbol_finite_length_specializations} $\rho^\beta$ finite length specializations and finite length dual specializations, respectively.
\end{defn}

\subsubsection{Power sum operators}
We define two types of power sum operators on the vector space $\Sym$. For $k \geq 1$, the $k$-th product operator, which we denote \label{symbol_k-th_product_operator} $\power_k$ by a slight abuse of notation, maps the symmetric function $f$ to the product $\power_k f$. In order to define the second type of operators, recall that any symmetric function $f$ can be uniquely written as a polynomial in the power sums $\power_1, \power_2, \dots$. For $k \geq 1$, the $k$-th derivation operator maps $f \in \Sym$ to the formal derivative of this polynomial with respect to $\power_k$; we denote it by \label{symbol_k-th_derivation_operator} $\displaystyle \tfrac{\partial}{\partial \power_k}$. In analogy to power sums, we use the convention that the $\lambda$-th product/derivation operator is given by the corresponding compositions of the respective operators: for a partition $\lambda$ of length $n$, \label{symbol_lambda-th_operator} $\power_\lambda = \power_{\lambda_1} \cdots \power_{\lambda_n}$ and $\tfrac{\partial}{\partial \power_\lambda} = \tfrac{\partial}{\partial \power_{\lambda_1}} \cdots \tfrac{\partial}{\partial \power_{\lambda_n}}$.

A definition of the two power sum operators as well as most of the properties described in the following two lemmas are given in \cite[p.~76]{mac}.

\begin{lem} \label{4_lem_properties_operators} Let $f$, $g$ be symmetric functions and $k$, $l$ strictly positive integers.
\begin{enumerate}
\item The two power sum operators satisfy the following commutation relations: 
$$ \frac{\partial}{\partial \power_k} \power_l f = \begin{dcases} \power_l \frac{\partial}{\partial \power_k} f & \text{if $l \neq k$,} \\ \power_k \frac{\partial}{\partial \power_k} f + f & \text{if $l = k$.} \end{dcases}
$$
\item The product and derivation operators are almost adjoint with respect to the Hall inner product; more precisely, $\displaystyle \left\langle k \frac{\partial}{\partial \power_k} f, g \right\rangle = \left\langle f, \power_k g \right\rangle$.
\end{enumerate}
\end{lem}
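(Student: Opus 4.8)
The statement is Lemma~\ref{4_lem_properties_operators}, asserting the two commutation/adjointness properties of the product operators $\power_l$ and the derivation operators $\tfrac{\partial}{\partial\power_k}$. Both facts are essentially formal, so the plan is to reduce everything to the algebra of the polynomial ring $\C[\power_1,\power_2,\dots]$, which is exactly what $\Sym$ is once we use the power sums as an algebraic basis (cited from \cite[p.~24]{mac}).

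For the first part, I would argue purely in this polynomial ring. Writing an arbitrary $f\in\Sym$ as $f=f(\power_1,\power_2,\dots)$, the operator $\power_l$ is multiplication by the variable $\power_l$ and $\tfrac{\partial}{\partial\power_k}$ is the formal partial derivative with respect to the variable $\power_k$. The desired identity is then the standard Leibniz-type relation $\partial_{\power_k}(\power_l\,f)=(\partial_{\power_k}\power_l)f+\power_l\,\partial_{\power_k}f$, where $\partial_{\power_k}\power_l=\delta_{kl}$. This immediately splits into the two cases stated: for $l\neq k$ the first term vanishes and the operators commute, while for $l=k$ the first term contributes the extra summand $f$. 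By linearity in $f$ it suffices to check this on monomials $\power_\lambda$, where it is the elementary product rule.

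For the second part, the cleanest route is to verify the adjointness relation $\bigl\langle k\,\tfrac{\partial}{\partial\power_k}f,\,g\bigr\rangle=\bigl\langle f,\power_k g\bigr\rangle$ on the basis $\{\power_\lambda\}$, using that $\langle\power_\lambda,\power_\mu\rangle=z_\lambda\delta_{\lambda\mu}$ (established in Lemma~\ref{4_lem_cauchy_identity_schur} and the remark following it). Take $f=\power_\lambda$ and $g=\power_\mu$. Then $\power_k g=\power_{\mu\cup(k)}$, and $\langle\power_\lambda,\power_{\mu\cup(k)}\rangle$ is $z_{\mu\cup(k)}$ if $\lambda\sorteq\mu\cup(k)$ and $0$ otherwise; note $z_{\mu\cup(k)}=z_\mu\cdot k\cdot(m_k(\mu)+1)$. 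On the other side, $\tfrac{\partial}{\partial\power_k}\power_\lambda=m_k(\lambda)\,\power_{\lambda\setminus(k)}$ when $m_k(\lambda)\geq1$ and $0$ otherwise, so $\langle k\,\tfrac{\partial}{\partial\power_k}\power_\lambda,\power_\mu\rangle=k\,m_k(\lambda)\,\langle\power_{\lambda\setminus(k)},\power_\mu\rangle$, which is $k\,m_k(\lambda)\,z_\mu$ if $\lambda\setminus(k)\sorteq\mu$ and $0$ otherwise. The nonzero cases on both sides coincide (both require $\lambda\sorteq\mu\cup(k)$, equivalently $m_k(\lambda)=m_k(\mu)+1$), and then $k\,m_k(\lambda)\,z_\mu=k\,(m_k(\mu)+1)\,z_\mu=z_{\mu\cup(k)}$, so the two pairings agree. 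Extending bilinearly over $\lambda,\mu$ gives the claim for all $f,g$.

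The only mild subtlety — and the thing I would be most careful about — is bookkeeping the $z_\lambda$ factors and the combinatorial multiplicities $m_k(\lambda)$ correctly when passing between $\lambda$ and $\lambda\cup(k)$ or $\lambda\setminus(k)$; this is where an off-by-one in the multiplicity would break the adjointness. There is no genuine obstacle here, since (as the excerpt notes) both statements are classical and appear in \cite[p.~76]{mac}; I would simply present the polynomial-ring computation for part~(1) and the basis computation with the $z_\lambda$-pairing for part~(2).
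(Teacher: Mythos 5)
Your proposal is correct and takes essentially the same approach as the paper: part (1) is the formal Leibniz rule in $\C[\power_1,\power_2,\dots]$, and part (2) is verified on the basis $\{\power_\lambda\}$ using $\langle\power_\lambda,\power_\mu\rangle=z_\lambda\delta_{\lambda\mu}$, with both sides vanishing unless $\lambda\sorteq\mu\cup(k)$ and then agreeing because $z_{\mu\cup(k)}=k\,(m_k(\mu)+1)\,z_\mu$. The bookkeeping of multiplicities you flag as the potential pitfall is handled correctly.
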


\begin{proof} The commutation relations are a direct consequence of the product rule for the derivative. To show the second property, it is enough to consider $f = \power_\mu$ and $g = \power_\nu$ since the $\power_\lambda$, where $\lambda$ ranges over all partitions, form a (linear) basis of $\Sym$. The fact that this basis is orthogonal implies that both sides of the equation vanish unless $\mu \sorteq \nu \cup (k)$. In this case,
\begin{align*}
\hspace{49.8pt} \left\langle k \frac{\partial}{\partial \power_k} \power_\mu, \power_\nu \right\rangle = \left\langle k m_k(\mu) \power_\nu, \power_\nu \right\rangle = k m_k(\mu) z_\nu =  z_\mu = \left\langle \power_\mu, \power_k \power_\nu \right\rangle
. \hspace{49.8pt} \qedhere
\end{align*}
\end{proof}

\begin{lem} \label{4_lem_reduction_operators} Let $\mu$ and $\nu$ be partitions. Then
\begin{align} \label{4_lem_reduction_operators_eq}
\frac{\partial}{\partial \power_\mu} \power_\nu = \begin{dcases} \prod_{i \geq 1} \frac{m_i(\nu)!}{m_i(\nu \setminus \mu)!} \power_{\nu \setminus \mu}  & \text{if $\mu \subset \nu$ as sequences} \\
0 & \text{otherwise}\end{dcases}
\end{align}
as elements of the ring of symmetric functions.
\end{lem}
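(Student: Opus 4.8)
The plan is to prove the identity by induction on the length of $\mu$, reducing everything to the single-operator commutation relation from Lemma~\ref{4_lem_properties_operators}. First I would check the base case $l(\mu) = 0$, where $\frac{\partial}{\partial \power_\mu}$ is the identity operator and $\mu \subset \nu$ trivially with $\nu \setminus \mu = \nu$ and $\prod_i m_i(\nu)!/m_i(\nu)! = 1$; so the statement holds. For the inductive step, write $\mu = (\mu_1) \cup \mu^-$ where $\mu^-$ has length $l(\mu) - 1$, so that $\frac{\partial}{\partial \power_\mu} \power_\nu = \frac{\partial}{\partial \power_{\mu_1}} \left( \frac{\partial}{\partial \power_{\mu^-}} \power_\nu \right)$.

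The key computation is to evaluate $\frac{\partial}{\partial \power_{k}} \power_\nu$ for a single positive integer $k$, which follows directly from part~(1) of Lemma~\ref{4_lem_properties_operators} by pulling the $k$-th derivation operator through the product $\power_\nu = \power_{\nu_1} \power_{\nu_2} \cdots$ one factor at a time. Each factor $\power_k$ in $\power_\nu$ (there are $m_k(\nu)$ of them) contributes, via the Leibniz-type rule, a term where that $\power_k$ is deleted; every other factor commutes past $\frac{\partial}{\partial \power_k}$ unchanged. Since $\frac{\partial}{\partial \power_k}$ applied to a constant (a product of $\power_j$ with $j \neq k$, i.e. a power series with no $\power_k$) vanishes, the net result is
\begin{align*}
\frac{\partial}{\partial \power_k} \power_\nu = \begin{dcases} m_k(\nu) \, \power_{\nu \setminus (k)} & \text{if } m_k(\nu) \geq 1, \\ 0 & \text{otherwise.}\end{dcases}
\end{align*}
Note that $m_k(\nu) = m_k(\nu) / m_k(\nu \setminus (k))! \cdot m_k(\nu \setminus (k))!$ with $m_k(\nu \setminus (k)) = m_k(\nu) - 1$, so this matches the claimed formula for $\mu = (k)$, and the case $m_k(\nu) = 0$ is exactly the case $(k) \not\subset \nu$ as sequences.

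With this single-operator fact in hand, the induction closes: apply the hypothesis to $\frac{\partial}{\partial \power_{\mu^-}} \power_\nu$, which is either $0$ (if $\mu^- \not\subset \nu$, and then $\mu \not\subset \nu$ either, matching the right-hand side) or equals $\prod_{i} \frac{m_i(\nu)!}{m_i(\nu \setminus \mu^-)!} \power_{\nu \setminus \mu^-}$; then apply $\frac{\partial}{\partial \power_{\mu_1}}$ to $\power_{\nu \setminus \mu^-}$ using the single-operator formula. One has $(\mu_1) \subset \nu \setminus \mu^-$ as sequences if and only if $\mu \subset \nu$ as sequences, and when this holds the multiplicity bookkeeping combines as $\frac{m_{\mu_1}(\nu \setminus \mu^-)!}{1} \cdot \prod_i \frac{m_i(\nu)!}{m_i(\nu \setminus \mu^-)!} / m_{\mu_1}(\nu \setminus \mu)! \cdot (\text{reindexing})$, which one checks telescopes to $\prod_i \frac{m_i(\nu)!}{m_i(\nu \setminus \mu)!}$ since $\nu \setminus \mu = (\nu \setminus \mu^-) \setminus (\mu_1)$ and multiplicities subtract. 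I expect the only mildly delicate point to be this last multiplicity-counting step, and in particular making sure the ``$\mu \subset \nu$ as sequences'' condition is tracked correctly through the induction (it is crucial that we delete parts as a multiset, not as a subdiagram); but this is routine once the single-operator computation is isolated, so the main conceptual obstacle is essentially just setting up the Leibniz rule cleanly.
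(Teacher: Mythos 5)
Your proof is correct. It takes a different organizational route from the paper's: the paper's one-line argument regroups $\frac{\partial}{\partial \power_\mu}\power_\nu$ by the index $i$, writing it as the product $\prod_{i\geq 1}\frac{\partial}{\partial\power_i^{m_i(\mu)}}\power_i^{m_i(\nu)}$ (justified because operators of different index commute), at which point each factor is simply the $m_i(\mu)$-fold univariate derivative of the monomial $\power_i^{m_i(\nu)}$, and the factorial ratio drops out immediately. You instead peel off one part of $\mu$ at a time, establishing the single-operator case $\frac{\partial}{\partial\power_k}\power_\nu = m_k(\nu)\,\power_{\nu\setminus(k)}$ (or $0$) and then inducting on $l(\mu)$, tracking the multiplicity bookkeeping through the telescoping product. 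Both are built on the same commutation relation and the same underlying ``monomial derivative'' observation; the paper's version avoids the induction and the multiplicity telescoping entirely by doing the regrouping up front, which is slightly cleaner, while your version is more explicit about the Leibniz-rule mechanics and makes the containment condition ``$\mu\subset\nu$ as sequences'' visibly maintained step by step. Either argument would be acceptable.
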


\begin{proof} Given that the $l$-th product and the $k$-th derivation operator commute whenever $l \neq k$, we may write the left-hand side in \eqref{4_lem_reduction_operators_eq} as
\begin{align*}
\frac{\partial}{\partial \power_\mu} \power_\nu = \prod_{i \geq 1} \frac{\partial}{\partial \power_i^{m_i(\mu)}} \power_i^{m_i(\nu)}.
\end{align*}
Handling each factor separately gives the right-hand side in \eqref{4_lem_reduction_operators_eq}.
\end{proof}

Lemma~\ref{4_lem_reduction_operators} can be interpreted as moving the derivation operator to the right by means of the commutation relations in order to obtain a more concrete expression. We have taken this idea from \cite{Steeptilings}, in which it is used to simplify expressions involving another pair of operators that satisfy similar commutation relations.

\subsection{Littlewood-Schur functions}
\begin{defn} [Littlewood-Schur functions] \label{4_defn_comb_LS} Let $\calX$ and $\calY$ be two sets of variables. Define
$$LS_\lambda(\calX; \calY) = \sum_{\mu, \nu} c^\lambda_{\mu \nu} \schur_\mu(\calX) \schur_{\nu'}(\calY)$$
where the Littlewood-Richardson coefficients $c^\lambda_{\mu \nu}$ are given in Definition~\ref{1_defn_Littlewood-Richardson_coefficients}.
\end{defn}

The Littlewood-Schur function $LS_\lambda(\calX; \calY)$ is a polynomial in $\calX \cup \calY$. In contrast to the polynomials defined in the preceding section, Littlewood-Schur functions are not symmetric. However, this definition makes it apparent that $LS_\lambda(\calX; \calY)$ is symmetric in both sets of variables separately. This combinatorial approach can also be used to prove the following formula that generalizes the Cauchy identity (recalled in \eqref{4_eq_cauchy_id_schur}) as well as the dual Cauchy identity (stated in \eqref{1_eq_dual_cauchy_id}).

\begin{prop} [generalized Cauchy identity, \cite{berele_remmel}] \label{4_prop_gen_Cauchy}
Let $\calS$, $\calT$, $\calU$ and $\calV$ be sets of variables with elements in $\C$. Suppose that all numbers of the form $uv$ or $st$ with $s \in \calS$, $t \in \calT$, $u \in \calU$ and $v \in \calV$ are strictly less than 1 in absolute value. If the same holds for all numbers of the form $ut$ or for all numbers of the form $sv$, then
\begin{align*}
\sum_\lambda LS_\lambda(\calS; \calU) LS_{\lambda} (\calT; \calV) ={} & \prod_{\substack{s \in \calS \\ v \in \calV}} (1 + s v) \prod_{\substack{s \in \calS \\ t \in \calT}} (1 - s t)^{-1} \prod_{\substack{ u \in \calU \\ v \in \calV}} (1 - uv)^{-1} \prod_{\substack{u \in \calU \\ t \in \calT}} (1 + ut) .
\end{align*}
In particular, $\sum_\lambda \left| LS_\lambda(\calS; \calU) LS_{\lambda} (\calT; \calV) \right|$ possesses an upper bound that only depends on the absolute values of the elements in the four sets of variables in question.
\end{prop}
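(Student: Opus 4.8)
The plan is to deduce the product formula from the classical Cauchy identity by realizing each Littlewood-Schur function as an ordinary Schur function evaluated at a suitable specialization of $\Sym$. First I would record that $LS_\lambda(\calX;\calY) = \schur_\lambda(\rho^\alpha_\calX \cup \rho^\beta_\calY)$ for any sets of variables $\calX,\calY$: by \eqref{4_eq_union_of_specializations_schur} we have $\schur_\lambda(\rho^\alpha_\calX\cup\rho^\beta_\calY) = \sum_{\mu,\nu}c^\lambda_{\mu\nu}\schur_\mu(\rho^\alpha_\calX)\schur_\nu(\rho^\beta_\calY)$, and by definition of the two specializations $\schur_\mu(\rho^\alpha_\calX) = \schur_\mu(\calX)$ while $\schur_\nu(\rho^\beta_\calY) = \omega(\schur_\nu)(\calY) = \schur_{\nu'}(\calY)$, which is precisely Definition~\ref{4_defn_comb_LS}. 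Writing $\rho_1 = \rho^\alpha_\calS\cup\rho^\beta_\calU$ and $\rho_2 = \rho^\alpha_\calT\cup\rho^\beta_\calV$, the left-hand side of the proposition then becomes $\sum_\lambda \schur_\lambda(\rho_1)\schur_\lambda(\rho_2)$.

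Next I would apply the power-sum version of the Cauchy identity \eqref{4_eq_power_sum_version_cauchy_id} to get $\sum_\lambda \schur_\lambda(\rho_1)\schur_\lambda(\rho_2) = \sum_\mu z_\mu^{-1}\power_\mu(\rho_1)\power_\mu(\rho_2)$, and factor the right-hand side over the distinct part sizes of $\mu$ (using $z_\mu = \prod_i i^{m_i(\mu)}m_i(\mu)!$ and $\power_\mu = \prod_i \power_i^{m_i(\mu)}$), turning it into $\exp\!\left(\sum_{k\ge 1}\tfrac1k\power_k(\rho_1)\power_k(\rho_2)\right)$. By additivity of power sums under unions of specializations (\eqref{4_eq_union_of_specializations_power_sum}) together with $\omega(\power_k) = (-1)^{k-1}\power_k$, one has $\power_k(\rho_1) = \sum_{s\in\calS}s^k + (-1)^{k-1}\sum_{u\in\calU}u^k$ and $\power_k(\rho_2) = \sum_{t\in\calT}t^k + (-1)^{k-1}\sum_{v\in\calV}v^k$. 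Multiplying these out (the $\calU\calV$ cross term carries $(-1)^{2(k-1)} = 1$), dividing by $k$, and summing over $k$ splits the exponent into four series of type $\sum_k x^k/k = -\log(1-x)$ or $\sum_k(-1)^{k-1}x^k/k = \log(1+x)$; exponentiating recovers exactly $\prod_{s,t}(1-st)^{-1}\prod_{s,v}(1+sv)\prod_{u,t}(1+ut)\prod_{u,v}(1-uv)^{-1}$, the asserted right-hand side.

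The main obstacle is that this chain of equalities is, as written, an identity of formal power series and must be promoted to one of complex numbers; the convergence hypotheses are precisely what makes this legitimate. I would argue as follows. By Definition~\ref{4_defn_comb_LS}, nonnegativity of Littlewood--Richardson coefficients, and monomial-positivity of Schur polynomials, each $LS_\lambda(\calX;\calY)$ is a polynomial in $\calX\cup\calY$ with nonnegative coefficients, so $\lvert LS_\lambda(\calX;\calY)\rvert \le LS_\lambda(\abs(\calX);\abs(\calY))$. Hence $\sum_\lambda \lvert LS_\lambda(\calS;\calU)\,LS_\lambda(\calT;\calV)\rvert \le \sum_\lambda LS_\lambda(\abs(\calS);\abs(\calU))\,LS_\lambda(\abs(\calT);\abs(\calV))$, and running the formal computation above with the nonnegative variable sets evaluates this majorant to $\prod(1-\lvert st\rvert)^{-1}\prod(1+\lvert sv\rvert)\prod(1+\lvert ut\rvert)\prod(1-\lvert uv\rvert)^{-1}$, which is finite since $\abs(st)<1$ and $\abs(uv)<1$ (the two $\prod(1+\cdot)$ factors are finite products, imposing no constraint). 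This simultaneously yields the absolute convergence of the left-hand side, legitimizes the rearrangements used above (Tonelli together with absolute convergence of the $\log$-series — here the supplementary ``$\abs(ut)<1$ or $\abs(sv)<1$'' hypothesis ensures the $\log$-expansions are used only within their radii of convergence, the remaining $1+\cdot$ factor being inserted as a genuine polynomial identity of dual-Cauchy type \eqref{1_eq_dual_cauchy_id}), and the final claim that $\sum_\lambda \lvert LS_\lambda(\calS;\calU)\,LS_\lambda(\calT;\calV)\rvert$ is bounded in terms of the absolute values of the variables alone.

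As an alternative to the analytic route worth noting, the formal identity itself is exactly the generalized Cauchy identity of Berele and Remmel, proved combinatorially by an RSK-type bijection between pairs of semistandard hook tableaux of common shape and appropriate biword/matrix data; one could cite that for the formal identity and then graft on the same absolute-value domination argument to obtain the numerical statement and the bound.
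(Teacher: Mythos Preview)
Your argument is correct. In fact you supply considerably more than the paper does: the paper does not prove the product identity at all but simply cites Berele--Remmel, and the only thing it justifies in-text is the final sentence about the absolute-value bound, via exactly your inequality $|LS_\lambda(\calX;\calY)| \le LS_\lambda(\abs(\calX);\abs(\calY))$ (nonnegativity of Littlewood--Richardson coefficients together with the combinatorial definition of Schur functions).

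Your derivation of the identity itself, by realising $LS_\lambda(\calS;\calU)$ and $LS_\lambda(\calT;\calV)$ as $\schur_\lambda(\rho^\alpha_\calS\cup\rho^\beta_\calU)$ and $\schur_\lambda(\rho^\alpha_\calT\cup\rho^\beta_\calV)$ and then running the power-sum Cauchy identity through an $\exp/\log$ factorisation, is a genuinely different route from the RSK-type bijection in \cite{berele_remmel}. It has the virtue of using only machinery already set up in the paper (the specialisation viewpoint is exactly Remark~\ref{4_rem_LS_as_specialization}), so it is self-contained here in a way that a bare citation is not. The price is the analytic bookkeeping you flag: the $\exp/\log$ manipulation of $\sum_k \tfrac{1}{k}\power_k(\rho_1)\power_k(\rho_2)$ naturally produces four logarithmic series, of which only three are guaranteed to lie in their discs of convergence under the stated hypotheses. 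Your workaround --- treat the remaining $\prod(1+\cdot)$ factor as a finite polynomial identity rather than via its logarithm --- is sound, since absolute convergence of the left-hand side (which you establish from $\abs(st),\abs(uv)<1$ alone) already pins down the numerical value, and the formal identity of coefficients does the rest. The Berele--Remmel bijection sidesteps this issue entirely by working at the level of monomials, which is cleaner analytically but requires importing a separate combinatorial construction.
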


The last sentence is a consequence of the fact that Littlewood-Richardson coefficients are non-negative, which entails that 
\begin{align*}
|LS_\lambda(\calX; \calY)| \leq \sum_{\mu, \nu} c^\lambda_{\mu \nu} |\schur_\mu(\calX)| |\schur_{\nu'}(\calY)| \leq LS_\lambda(\abs(\calX); \abs(\calY)).
\end{align*}
We remark that the second inequality is due to the combinatorial definition for Schur functions (\textit{i.e.}\ Definition~\ref{1_defn_cominatorial_schur_function}).

\begin{rem} \label{4_rem_LS_as_specialization}
The theory of specializations provides an alternative expression for $LS_\lambda(\calX; \calY)$. Indeed,
\begin{align*}
LS_\lambda(\calX; \calY) = \sum_{\mu, \nu} c_{\mu \nu}^\lambda \schur_\mu(\calX) \schur_{\nu'}(\calY)
= \sum_{\mu, \nu} c_{\mu \nu}^\lambda \schur_\mu \left(\rho^\alpha_\calX\right) \schur_\nu\left(\rho^\beta_\calY\right)
= \schur_\lambda \left(\rho^\alpha_\calX \cup \rho^\beta_\calY\right).
\end{align*}
The last equality is due to \eqref{4_eq_union_of_specializations_schur}.
\end{rem}

This perspective allows us to consider Littlewood-Schur functions a special type of Schur functions, rendering the following equalities intuitive: $LS_\lambda(\calX; \calY) = LS_{\lambda'}(\calY; \calX)$ or $LS_\lambda(\calX; \emptyset) = \schur_\lambda(\calX)$.
Moens and Van der Jeugt's determinantal formula gives yet another way to view Littlewood-Schur functions. Their expression for $LS_\lambda(-\calX;\calY)$ depends on the index of the partition $\lambda$.

\begin{defn} [index of a partition] The $(m,n)$-index of a partition $\lambda$ is the largest (possibly negative) integer $k$ with the properties that $(m + 1 - k, n + 1 - k) \not\in \lambda$ and $k \leq \min\{m,n\}$.
\end{defn}

If $(m,n) \not\in \lambda$, then $k$ is the side of the largest square with bottom-right corner $(m,n)$ that fits next to the diagram of the partition $\lambda$. If $(m,n) \in \lambda$, then $-k$ is the side of the largest square with top-left corner $(m,n)$ that fits inside the diagram of $\lambda$. Let us illustrate this by a sketch: the hatched area is the diagram of some partition $\lambda$.
\begin{center}
\begin{tikzpicture}
\fill[pattern=north west lines, pattern color=black!40!white] (-0.25, 0) rectangle (1, 2.5);
\fill[pattern=north west lines, pattern color=black!40!white] (1, 0.75) rectangle (1.25, 2.5);
\fill[pattern=north west lines, pattern color=black!40!white] (1.25, 1.5) rectangle (1.5, 2.5);
\fill[pattern=north west lines, pattern color=black!40!white] (1.5, 2) rectangle (2.25, 2.5);
\fill[pattern=north west lines, pattern color=black!40!white] (2.25, 2.25) rectangle (2.5, 2.5);
\draw (-0.25, 0) -- (1, 0);
\draw (1, 0) -- (1, 0.75);
\draw (1, 0.75) -- (1.25, 0.75);
\draw (1.25, 0.75) -- (1.25, 1.5);
\draw (1.25, 1.5) -- (1.5, 1.5);
\draw (1.5, 1.5) -- (1.5, 2);
\draw (1.5, 2) -- (2.25, 2);
\draw (2.25, 2) -- (2.25, 2.25);
\draw (2.25, 2.25) -- (2.5, 2.25);
\draw (2.5, 2.25) -- (2.5, 2.5);
\draw (-0.25, 2.5) -- (2.5, 2.5);
\draw (-0.25, 2.5) -- (-0.25, 0);
\draw (1.25, 0.75) rectangle (1.75, 0.25);
\draw (1.75, 2) rectangle (2.5, 1.25);
\draw (1.25, 1.25) rectangle (0.75, 1.75);
\draw[decoration={brace, raise=5pt, mirror},decorate] (1.75, 0.25) -- node[right=6pt] {$\scriptstyle{k}$} (1.75, 0.75);
\draw[decoration={brace, raise=5pt, mirror},decorate] (2.5, 1.25) -- node[right=6pt] {$\scriptstyle{k}$} (2.5, 2);
\draw[decoration={brace, raise=5pt},decorate] (0.75, 1.25) -- node[left=6pt] {$\scriptstyle{-k}$} (0.75, 1.75);
\node[anchor=north west] at (1.75, 0.25) {$\scriptstyle{(m,n)}$};
\node[anchor=north west] at (2.5, 1.25) {$\scriptstyle{(m,n)}$};
\node[anchor=south east] at (0.75, 1.75) {$\scriptstyle{(m,n)}$};
\node at (1.75, 0.25) {\tiny{\textbullet}};
\node at (2.5, 1.25) {\tiny{\textbullet}};
\node at (0.75, 1.75) {\tiny{\textbullet}};
\node at (0, 2.6) {};
\end{tikzpicture}
\end{center}
We remark that the definition given above is not equivalent to the definition of index used in \cite{vanderjeugt}. Our notion has the advantage of being invariant under conjugation.

\begin{thm} [determinantal formula for Littlewood-Schur functions, adapted from \cite{vanderjeugt}] \label{4_thm_det_formula_for_Littlewood-Schur}
Let $\calX$ and $\calY$ be sets of variables of length $n$ and $m$, respectively, so that the elements of $\calX \cup \calY$ are pairwise distinct. Let $\lambda$ be a partition with $(m,n)$-index $k$. If $k$ is negative, then $LS_\lambda(-\calX; \calY) = 0$; otherwise,
\begin{align*}
LS_\lambda(-\calX; \calY) ={} & \varepsilon(\lambda) \frac{\Delta(\calY; \calX)}{\Delta(\calX) \Delta(\calY)} \det \begin{pmatrix} \left( (x - y)^{-1} \right)_{\substack{x \in \calX \\ y \in \calY}} & \left( x^{\lambda_j + n - m - j} \right)_{\substack{x \in \calX \\ 1 \leq j \leq n - k}} \\ \left( y^{\lambda'_i + m - n - i} \right)_{\substack{1 \leq i \leq m - k \\ y \in \calY}} & 0\end{pmatrix}
\end{align*}
where $\varepsilon(\lambda) = (-1)^{\left|\lambda_{[n - k]} \right|} (-1)^{mk} (-1)^{k(k - 1)/2}$.
\end{thm}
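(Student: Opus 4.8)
The plan is to prove Theorem~\ref{4_thm_det_formula_for_Littlewood-Schur} exactly as in Chapter~\ref{2_cha_det_defn_LS} (Theorem~\ref{2_thm_det_formula_for_Littlewood-Schur}), by verifying that the right-hand side, viewed as a candidate function $LS^{\ast}_\lambda(-\calX;\calY)$, satisfies the five characterizing properties of Littlewood-Schur functions listed in Theorem~\ref{2_thm_5_properties_of_LS} (homogeneity, double-symmetry, restriction, cancellation, factorization). By that theorem, any family of functions satisfying all five must coincide with $LS_\lambda(-\calX;\calY)$, so this suffices. Since the determinantal expression is only literally defined when the elements of $\calX\cup\calY$ are pairwise distinct, I would first establish the identity under that genericity hypothesis and then remove it at the end by observing that, after clearing the denominator $\Delta(\calX;\calY)$, both sides are polynomials in $\calX\cup\calY$ agreeing on a Zariski-dense set.

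The individual verifications proceed as follows. \emph{Homogeneity}: scaling $\calX,\calY$ by $a$ and tracking the degree contributed by each block of the matrix and by the prefactor $\Delta(\calY;\calX)/(\Delta(\calX)\Delta(\calY))$ gives a total exponent of $|\lambda_{[n-k]}| + |\lambda'_{[m-k]}| - (m-k)(n-k)$, which equals $|\lambda|$ because $(m-k,n-k)\in\lambda$ and $(m+1-k,n+1-k)\notin\lambda$ by the definition of index; for negative $k$ the function is the zero polynomial. \emph{Double-symmetry}: $\Delta(\calY;\calX)$ is symmetric in $\calX$ while $\Delta(\calX)$ and the determinant are both skew-symmetric in $\calX$, so the ratio is symmetric in $\calX$; symmetry in $\calY$ follows either by the same argument or via the conjugation identity $LS^{\ast}_{\lambda'}(-\calX;\calY)=LS^{\ast}_\lambda(\calY;-\calX)$ (proved by transposing the matrix and tracking signs, using that the index is conjugation-invariant). \emph{Factorization}: when $k=0$ one has $\lambda=(\langle m^n\rangle+\alpha)\cup\beta'$, the off-diagonal blocks become square, Laplace expansion splits the determinant into a product of two Vandermonde-type determinants which cancel $\Delta(\calX),\Delta(\calY)$, and one checks $\varepsilon(\lambda)(-1)^{mn}=(-1)^{|\alpha|}$ to recover $\Delta(\calY;\calX)\schur_\alpha(-\calX)\schur_\beta(\calY)$. \emph{Cancellation}: setting $x_n=y_m$ (via a limit, legitimate by polynomiality), one rescales a row by $(x_n-y_m)$, specializes so that the $n$-th row has a single nonzero entry, and applies Laplace expansion to drop that row and the corresponding column; the signs $\varepsilon_{m,n}(\lambda)$ and $\varepsilon_{m-1,n-1}(\lambda)$ (noting $k-1$ is the $(m-1,n-1)$-index) cancel against $(-1)^m$.

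The main obstacle is the \emph{restriction} property, i.e.\ showing that setting $x_n=0$ yields $LS^{\ast}_\lambda(-\calX';\calY)$. When the $(m,n-1)$-index drops to $k-1$ one reduces, via the auxiliary Lemma~\ref{2_lem_alternative_matrix_for_det_formula_LS} (which shows the determinant is unchanged if the top-left block $(x_i-y_j)^{-1}$ is replaced by $x_i^{-r}y_j^r(x_i-y_j)^{-1}$ for suitable $r$), to the $k=0$ case. But when the index stays equal to $k>0$, after the specialization one must show that a certain alternating sum over permutations vanishes; this boils down to proving that the homogeneous polynomial
\begin{align*}
\mathrm{P}_k = \sum_{\tau\in S_k}\varepsilon(\tau)\Bigl[\prod_{1\le i\le k}\prod_{\substack{1\le j\le k\\ j\neq\tau(i)}}(x_i-y_j)\Bigr]\Bigl[\Bigl(\sum_{1\le i\le k}x_i\prod_{\substack{1\le j\le k\\ j\neq\tau(i)}}y_j\Bigr)-(k-1)\prod_{1\le j\le k}y_j-\prod_{1\le i\le k}x_i\Bigr]
\end{align*}
is identically zero for $k\ge 2$. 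I would prove this by induction on $k$: the base case $k=2$ is a direct computation, and for the inductive step one shows that specializing any $x_p=y_q$ in $\mathrm{P}_{k+1}$ reduces (up to a sign and a monomial factor) to $\mathrm{P}_k$, hence to $0$; since $\mathrm{P}_{k+1}$ has degree $(k+1)^2$ and vanishes on all $(k+1)^2$ hyperplanes $x_p=y_q$, it must be a constant multiple of $\prod_{p,q}(x_p-y_q)$, and evaluating at $x_i=1,y_i=0$ shows the constant is $\sum_{\tau\in S_{k+1}}(-1)=0$. This is the technically heaviest step; everything else is bookkeeping with signs and block determinants. Once all five properties are in hand, Theorem~\ref{2_thm_5_properties_of_LS} closes the argument, and the polynomiality remark extends it to arbitrary (possibly repeated) variables.
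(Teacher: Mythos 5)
Your proposal is correct and follows essentially the same approach as the paper's own proof (Theorem~\ref{2_thm_det_formula_for_Littlewood-Schur} in Chapter~\ref{2_cha_det_defn_LS}): verifying the five characterizing properties of Theorem~\ref{2_thm_5_properties_of_LS} for the right-hand side, with the polynomial $\mathrm{P}_k$ induction handling the hard sub-case of restriction and the extension to non-generic variables at the end. The key auxiliary inputs you cite (Lemma~\ref{2_lem_alternative_matrix_for_det_formula_LS}, the conjugation identity, and the degree/sign bookkeeping) are exactly the ones the paper uses.
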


This theorem makes it easy to see that the Littlewood-Schur function $LS_\lambda(\calX: \calY)$ is a homogeneous polynomial in $\calX \cup \calY$ of degree $|\lambda|$. In fact, the necessary linear algebra manipulations are given in the proof of Lemma~\ref{2_lem_homogeneity_with_det_formula}. The following is another immediate consequence that will prove useful for the computations in Section~\ref{4_sec_ratios_and_log_der}. Corollary~\ref{4_cor_index=0} is a special case of Berele and Regev's factorization formula \cite{berele_regev}, which was originally derived without the help of the (more recent) determinantal formula. 

\begin{cor} [\cite{berele_regev}] \label{4_cor_index=0} Let $\calX$ and $\calY$ be sets of variables with $n$ and $m$ elements, respectively, and let $\lambda$ be a partition with $(m,n)$-index 0. If $l(\lambda) \leq n$, then
\begin{align} \label{4_cor_index=0_eq}
LS_\lambda(-\calX; \calY) = \Delta(\calY; \calX) \schur_{\lambda - \langle m^n \rangle} (-\calX)
.
\end{align}
\end{cor}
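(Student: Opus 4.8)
The plan is to deduce the identity straight from the determinantal formula, Theorem~\ref{4_thm_det_formula_for_Littlewood-Schur}. Since $LS_\lambda(-\calX;\calY)$ and $\Delta(\calY;\calX)\schur_{\lambda-\langle m^n\rangle}(-\calX)$ are both polynomials in $\calX\cup\calY$, it is enough to prove \eqref{4_cor_index=0_eq} when the elements of $\calX\cup\calY$ are pairwise distinct and then pass to the general case by the usual density argument.

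First I would record what the hypotheses say about the Ferrers diagram. The $(m,n)$-index of $\lambda$ being $0$ means $(m,n)\in\lambda$ and $(m+1,n+1)\notin\lambda$; combined with $l(\lambda)\le n$ this yields $\lambda_j\ge m$ for every $1\le j\le n$ (so that $\lambda-\langle m^n\rangle$ is an honest partition of length at most $n$) and, by conjugation, $\lambda'_i=n$ for every $1\le i\le m$. With $k=0$, Theorem~\ref{4_thm_det_formula_for_Littlewood-Schur} expresses $LS_\lambda(-\calX;\calY)$ as $\varepsilon(\lambda)\,\Delta(\calY;\calX)/(\Delta(\calX)\Delta(\calY))$ times the determinant of an $(m+n)\times(m+n)$ block matrix whose two off-diagonal blocks, the $n\times n$ block $(x^{\lambda_j+n-m-j})_{x\in\calX,\,1\le j\le n}$ and the $m\times m$ block $(y^{\lambda'_i+m-n-i})_{1\le i\le m,\,y\in\calY}$, are square, and whose bottom-right block is zero. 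Expanding this block-antitriangular determinant exactly as in the proof of Corollary~\ref{3_cor_littlewood} produces the factor $(-1)^{mn}$ times the product of the two block determinants.

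Then I would evaluate the two block determinants. Writing $\lambda_j+n-m-j=(\lambda_j-m)+n-j$ and applying the determinantal definition of Schur functions identifies the first with $\Delta(\calX)\,\schur_{\lambda-\langle m^n\rangle}(\calX)$; and since $\lambda'_i=n$ for $i\le m$, the exponents in the second collapse to $m-i$, making it the Vandermonde determinant $\Delta(\calY)$. The factors $\Delta(\calX)$ and $\Delta(\calY)$ cancel, leaving $\varepsilon(\lambda)(-1)^{mn}\Delta(\calY;\calX)\schur_{\lambda-\langle m^n\rangle}(\calX)$. Finally, for $k=0$ one has $\varepsilon(\lambda)=(-1)^{|\lambda_{[n]}|}=(-1)^{|\lambda|}$ (the second equality because $l(\lambda)\le n$), and homogeneity of Schur functions gives $\schur_{\lambda-\langle m^n\rangle}(\calX)=(-1)^{|\lambda|-mn}\schur_{\lambda-\langle m^n\rangle}(-\calX)$; all the signs then cancel and \eqref{4_cor_index=0_eq} follows. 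There is no real obstacle here --- the only work is the sign bookkeeping and the two diagram facts $\lambda_j\ge m$ and $\lambda'_i=n$ --- and in fact this is precisely the $\beta=\emptyset$ case of the factorization property of Littlewood-Schur functions established in Chapter~\ref{2_cha_det_defn_LS} (using $\schur_\emptyset=1$), which one could cite instead.
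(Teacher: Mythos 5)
Your proof is correct and takes essentially the same route as the paper: apply the determinantal formula with $k=0$, note the two off-diagonal blocks are square so the block-antitriangular determinant factors as $(-1)^{mn}$ times their product, observe that $\lambda_j\ge m$ for $j\le n$ and $\lambda'_i=n$ for $i\le m$ turn those blocks into $\Delta(\calX)\schur_{\lambda-\langle m^n\rangle}(\calX)$ and the Vandermonde $\Delta(\calY)$, cancel, and do the sign bookkeeping using $\varepsilon(\lambda)=(-1)^{|\lambda|}$ plus homogeneity. The paper does not explicitly cite Lemma~\ref{2_lem_factorization} as an alternative, but your observation that this is just the $\beta=\emptyset$ case of the factorization property is accurate.
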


\begin{proof} First suppose that the elements in of $\calX \cup \calY$ are pairwise distinct. Then
\begin{align*}
LS_\lambda(-\calX; \calY) ={} & \varepsilon(\lambda) \frac{\Delta(\calY; \calX)}{\Delta(\calX) \Delta(\calY)} \det \begin{pmatrix} \left( (x - y)^{-1} \right)_{\substack{x \in \calX \\ y \in \calY}} & \left( x^{\lambda_j + n - m - j} \right)_{\substack{x \in \calX \\ 1 \leq j \leq n}} \\ \left( y^{\lambda'_i + m - n - i} \right)_{\substack{1 \leq i \leq m \\ y \in \calY}} & 0\end{pmatrix}
.
\intertext{The Schur blocks in the bottom-left and the top-right corner of the matrix are squares. Hence,}
LS_\lambda(-\calX; \calY) ={} & \varepsilon(\lambda) \frac{\Delta(\calY; \calX)}{\Delta(\calX) \Delta(\calY)} (-1)^{mn} \det \left( x^{\lambda_j + n - m - j} \right)_{\substack{x \in \calX \\ 1 \leq j \leq n}} \det \left( y^{\lambda'_i + m - n - i} \right)_{\substack{1 \leq i \leq m \\ y \in \calY}
.}
\intertext{On the one hand, the assumption that the length of $\lambda$ is less than $n$ implies that $\lambda'_i \leq n$. On the other hand, the assumption that the $(m,n)$-index of $\lambda$ is 0 implies that $\lambda'_i \geq n$ for $1 \leq i \leq m$. Therefore, the second determinant is actually a Vandermonde determinant, which cancels with $\Delta(\calY)$. This allows us to conclude that}
LS_\lambda(-\calX; \calY) ={} & \varepsilon(\lambda) \Delta(\calY; \calX) (-1)^{mn} \schur_{\lambda - \langle m^n \rangle} (\calX) = \Delta(\calY; \calX) \schur_{\lambda - \langle m^n \rangle} (-\calX)
\end{align*}
since $\varepsilon(\lambda) = (-1)^{|\lambda|}$ by the assumptions on $\lambda$. If the elements of $\calX \cup \calY$ are not pairwise distinct, the equality in \eqref{4_cor_index=0_eq} follows from the observation that both sides are polynomials in $\calX \cup \calY$, which agree on infinitely many points.
\end{proof}

A less immediate but equally useful consequence of the determinantal formula is the simplest case of the first overlap identity presented in Chapter~\ref{3_cha_overlap_ids}. We quickly recall it for the convenience of non-linear readers.

\begin{lem} \label{4_lem_first_overlap_id} Let $\calX$ and $\calY$ be sets of variables with $n$ and $m$ elements, respectively, so that the elements of $\calX$ are pairwise distinct. Let $\lambda$ be a partition with $(m,n)$-index $k$. If $0 \leq l \leq \min\{n - k, n\}$, then
\begin{align*} 
LS_{\lambda} (-\calX; \calY) ={} & \sum_{\substack{\calS, \calT \subset \calX: \\ \calS \cup_{l, n - l} \calT \sorteq \calX}} \frac{LS_{\lambda_{[l]} + \left\langle (n - l)^l \right\rangle}(-\calS; \calY) LS_{\lambda_{(l + 1, l + 2, \dots)}}(- \calT; \calY)}{\Delta(\calT; \calS)}
.
\end{align*}
\end{lem}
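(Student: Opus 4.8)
The plan is to reduce the statement to the determinantal formula for Littlewood-Schur functions (Theorem~\ref{4_thm_det_formula_for_Littlewood-Schur}) and then recognize the sum over $\calS, \calT$ as a Laplace expansion of the relevant determinant. Since this is exactly the special case of Theorem~\ref{3_thm_laplace_expansion_LS_cut_before_index} in which the sorting permutation of the overlap is the identity (see Remark~\ref{3_rem_vanderjeugt_counterexample}, equation~\eqref{3_vanderjeugt_counterexample}), I would simply invoke that theorem with $\mu = \lambda_{[l]}$ and $\nu = \lambda_{(l+1, l+2, \dots)}$, so that $\mu \star_{l, n-k-l} \nu = \lambda_{[n-k]}$ trivially with sign $+1$ (the sequence $(\mu + \rho_l) \cup (\nu + \rho_{n-k-l})$ is already strictly decreasing because $\lambda$ is a partition), and observe that $\mu + \left\langle k^l \right\rangle = \lambda_{[l]} + \left\langle k^l \right\rangle = \lambda_{[l]} + \left\langle (n-l)^l \right\rangle - \left\langle (n-l-k)^l \right\rangle$; wait — more directly, in \eqref{3_vanderjeugt_counterexample} the first partition appears as $\lambda_{[l]} + \left\langle (n-l)^l \right\rangle$, which is precisely what we want. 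So the cleanest route is: state that Lemma~\ref{4_lem_first_overlap_id} is exactly \eqref{3_vanderjeugt_counterexample}, which in turn is the specialization of Theorem~\ref{3_thm_laplace_expansion_LS_cut_before_index} obtained when $l \leq n-k$ and the overlap sorting is trivial.

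If instead a self-contained argument is wanted, I would argue directly. First I would assume the elements of $\calX \cup \calY$ are pairwise distinct; the general case follows at the end since both sides are polynomials in $\calY$ for fixed $\calX$ (as in the proof of Theorem~\ref{3_thm_laplace_expansion_LS_cut_before_index}). Using the definitions of index and overlap, I would check that the $(m,l)$-index of $\lambda_{[l]} + \left\langle (n-l)^l \right\rangle$ is $0$ (since $(\lambda_{[l]})_l + (n-l) \geq \lambda_{n-k} + (n-l) \geq m - k + n - l \geq m$ using $l \leq n-k$) and that the $(m, n-l)$-index of $\lambda_{(l+1, l+2, \dots)}$ is $k$. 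Then I would apply Theorem~\ref{4_thm_det_formula_for_Littlewood-Schur} to rewrite each of the three Littlewood-Schur functions in \eqref{4_lem_first_overlap_id} as a determinantal expression. For the first factor, since its index is $0$, the top-right and bottom-left blocks are square, so the determinant splits (as in Corollary~\ref{4_cor_index=0}) into a Vandermonde-type determinant in $\calS$ and a Vandermonde determinant in $\calY$ that cancels $\Delta(\calY)$; what remains is $(-1)^{ml}\det(s^{\lambda_j + n - m - j})_{s \in \calS, 1 \leq j \leq l}$ up to the factor $\Delta(\calY)$. For the second factor, again by the determinantal formula, the bottom-left block involves $y^{(\lambda_{(l+1,\dots)})'_i + (m) - (n-l) - i}$, and using $(m-k, n-k) \in \lambda$ one simplifies the exponent of $y$ in row $i$ to $\lambda'_i + m - n - i$ exactly as in the proof of Theorem~\ref{3_thm_laplace_expansion_LS_cut_before_index}.

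Having written the right-hand side of \eqref{4_lem_first_overlap_id} as $\pm \frac{\Delta(\calY;\calX)}{\Delta(\calX)\Delta(\calY)}$ times a sum over subsequence splittings $\calS \cup_{l, n-l} \calT \sorteq \calX$ of products of two determinants (one an $l \times l$ Schur-type determinant in $\calS$, the other an $(n-l) + (m-k)$-sized block determinant in $\calT$ and $\calY$) divided by $\Delta(\calT;\calS)$, I would note that $\Delta(\calT;\calS)\Delta(\calS)\Delta(\calT) = \pm \Delta(\calX)$, the sign counting inversions relative to the order on $\calX$. This is precisely the shape of a Laplace expansion (Lemma~\ref{3_lem_laplace_expansion_matrix}) of the single $(n + m - k) \times (n + m - k)$ determinant
\begin{align*}
\det \begin{pmatrix} \left( (x - y)^{-1} \right) & \left( x^{\lambda_j + n - m - j} \right)_{1 \leq j \leq n - k} \\ \left( y^{\lambda'_i + m - n - i} \right)_{1 \leq i \leq m - k} & 0 \end{pmatrix}
\end{align*}
along the first $l$ of the $x$-rows, reorganized into the $l$ columns indexed by $1 \leq j \leq l$ and the remaining $n - k - l$ columns indexed by $l + 1 \leq j \leq n - k$ (no column permutation is needed here, unlike in the general overlap identity, because the sorting is the identity). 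Reassembling and applying Theorem~\ref{4_thm_det_formula_for_Littlewood-Schur} once more identifies this with $\varepsilon(\lambda) \frac{\Delta(\calX)\Delta(\calY)}{\Delta(\calY;\calX)} LS_\lambda(-\calX;\calY)$, and the collected signs cancel. The main obstacle, as usual in these determinantal proofs, is the careful bookkeeping of the three $\varepsilon(\cdot)$ signs from the determinantal formula together with the inversion sign from $\Delta(\calT;\calS)\Delta(\calS)\Delta(\calT) = \pm\Delta(\calX)$ and the Laplace-expansion sign $\varepsilon(\sigma(K,J))$; since these computations are done in detail in the proof of Theorem~\ref{3_thm_laplace_expansion_LS_cut_before_index}, the honest and efficient write-up is to simply cite that theorem (via \eqref{3_vanderjeugt_counterexample}) rather than repeat the sign chase.
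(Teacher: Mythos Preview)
Your proposal is correct and matches the paper's approach: the paper does not prove Lemma~\ref{4_lem_first_overlap_id} directly but simply recalls it from Chapter~\ref{3_cha_overlap_ids} as the identity-sorting specialization \eqref{3_vanderjeugt_counterexample} of Theorem~\ref{3_thm_laplace_expansion_LS_cut_before_index}, which is exactly your first route. Your self-contained sketch also faithfully reproduces the determinantal/Laplace-expansion argument given for that theorem; one small wording slip is that the Laplace expansion here fixes the $l$ ``$\mu$-columns'' and sums over all $l$-element subsets $\calS$ of the $x$-rows (rather than being ``along the first $l$ of the $x$-rows''), but your surrounding description of the column split shows you have the right picture.
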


Recall that the subscripts in $\calS \cup_{l, n - l} \calT$ indicate that $l(S) = l$ and $l(\calT) = n - l$, respectively.

\section{On the Murnaghan-Nakayama rule} \label{4_sec_MN_rule}

This section is dedicated to the Murnaghan-Nakayama rule. After stating the rule in its original form we present a few generalizations and variations, some of which are new and some are already known.

\begin{thm} [Murnaghan-Nakayama rule, \cite{Murnaghan, Nakayama}] \label{4_thm_MN_rule} Let $\mu$ be a partition. For any strictly positive integer $k$,
\begin{align*}
\power_k \schur_\mu = \sum_{\substack{\lambda: \, \mu \overset{k}{\to} \lambda}} (-1)^{\height(\lambda \setminus \mu)} \schur_\lambda.
\end{align*}
\end{thm}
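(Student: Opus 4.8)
The statement coincides verbatim, up to renaming $r$ as $k$, with Theorem~\ref{2_thm_MN_rule}, so one legitimate route is simply to invoke that result. For completeness I outline the determinantal proof, which is the natural one given the tools already developed. The plan is to verify the identity after applying the projection $\rho^{|\lambda|}_n$ for every $n$; that is, to show
\[
\power_k(\calX)\,\schur_\mu(\calX) = \sum_{\lambda:\,\mu\overset{k}{\to}\lambda}(-1)^{\height(\lambda\setminus\mu)}\schur_\lambda(\calX)
\]
for each set of variables $\calX = (x_1,\dots,x_n)$, with $n$ ranging over the non-negative integers. Two symmetric functions that agree as polynomials in arbitrarily many variables are equal, and when $n<l(\mu)$ both sides vanish, so we may assume $n\ge l(\mu)$.

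First I would multiply the desired identity by the Vandermonde $\Delta(\calX)$ and use the determinantal definition $\schur_\mu(\calX)=\det(x_i^{\mu_j+n-j})_{i,j}/\Delta(\calX)$ together with the Leibniz formula, writing
\[
\Delta(\calX)\,\schur_\mu(\calX)\,\power_k(\calX) = \Bigl(\sum_{\sigma\in S_n}\varepsilon(\sigma)\prod_{i=1}^n x_i^{\mu_{\sigma(i)}+n-\sigma(i)}\Bigr)\Bigl(\sum_{p=1}^n x_p^k\Bigr).
\]
Grouping the terms according to $q=\sigma(p)$ and recalling $\mu[q]=(\mu_1,\dots,\mu_{q-1},\mu_q+k,\mu_{q+1},\dots,\mu_n)$, one recognises the right-hand side as $\sum_{q=1}^n\det\bigl(x_i^{(\mu[q]+\rho_n)_j}\bigr)_{i,j}$. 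This intermediate rearrangement is the same one carried out in the proof of Theorem~\ref{2_thm_MN_rule}, so I would only sketch it.

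Now fix $q$. If $\mu[q]+\rho_n$ has a repeated entry the corresponding determinant is zero; otherwise there is a unique partition $\lambda$ with $\lambda+\rho_n\sorteq\mu[q]+\rho_n$, and by antisymmetry of determinants $\det\bigl(x_i^{(\mu[q]+\rho_n)_j}\bigr)=\varepsilon(\tau)\det\bigl(x_i^{\lambda_j+n-j}\bigr)$, where $\tau$ is the sorting permutation. Here the combinatorial input enters: Lemma~\ref{2_lem_sequence_defn_for_ribbons} identifies the set of partitions $\lambda$ arising, as $q$ runs from $1$ to $n$, with exactly $\{\lambda:\mu\overset{k}{\to}\lambda,\ l(\lambda)\le n\}$, and tells us that $\varepsilon(\tau)=(-1)^{\height(\lambda\setminus\mu)}$. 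Substituting and dividing by $\Delta(\calX)$ yields the claimed formula in the variables $\calX$, since $\schur_\lambda(\calX)=0$ whenever $l(\lambda)>n$. The only genuinely substantive ingredient is Lemma~\ref{2_lem_sequence_defn_for_ribbons} — the translation between the ``sorting a perturbed staircase $\mu[q]+\rho_n$'' description and the ``adding a $k$-ribbon'' description, together with its sign statement — and that has already been established; so the remaining work is just the routine determinant bookkeeping sketched above, which I do not expect to pose any difficulty.
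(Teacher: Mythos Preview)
Your proposal is correct. The paper does not give a separate proof for Theorem~\ref{4_thm_MN_rule}; it is simply stated with a citation to Murnaghan and Nakayama, since the identical result (with $r$ in place of $k$) has already been proved as Theorem~\ref{2_thm_MN_rule} via exactly the determinantal argument you outline.
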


The following Corollary demonstrates what a powerful tool the theory of specializations of the symmetric group can be. It delivers the generalization of the Murnaghan-Nakayama rule to Littlewood-Schur almost for free. Neither the statement nor its proof is new but we did not manage to find an exact reference in the literature.

\begin{cor} [Murnaghan-Nakayama rule for Littlewood-Schur functions] \label{4_cor_MN_for_LS} Let $\mu$ be a partition. For any strictly positive integer $k$,
\begin{align} \label{4_cor_MN_for_LS_eq}
LS_\mu(\calX; \calY) \left[ \power_k(\calX) + (-1)^{k - 1} \power_k(\calY) \right] = \sum_{\substack{\lambda: \, \mu \overset{k}{\to} \lambda}} (-1)^{\height(\lambda \setminus \mu)} LS_\lambda(\calX; \calY).
\end{align} 
\end{cor}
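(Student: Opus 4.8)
The plan is to obtain this identity by applying a single algebra homomorphism to the classical Murnaghan--Nakayama rule, which is precisely the ``almost for free'' derivation alluded to above. First I would record Theorem~\ref{4_thm_MN_rule} as an identity \emph{in the ring $\Sym$}: for every partition $\mu$ and every integer $k \geq 1$,
$$\power_k \schur_\mu = \sum_{\substack{\lambda: \, \mu \overset{k}{\to} \lambda}} (-1)^{\height(\lambda \setminus \mu)} \schur_\lambda,$$
where $\power_k$ is the multiplication operator, so the left-hand side is the honest product of two symmetric functions and the right-hand side is a finite sum (any $\lambda$ with $\mu \overset{k}{\to} \lambda$ satisfies $\mu \subset \lambda$ and $|\lambda| = |\mu| + k$).

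Next I would set $\rho = \rho^\alpha_\calX \cup \rho^\beta_\calY$ and apply $\rho$ to both sides of this identity. Since $\rho$ is an algebra homomorphism $\Sym \to \C$, it turns the product $\power_k \schur_\mu$ into the scalar product $\power_k(\rho)\, \schur_\mu(\rho)$ and acts linearly on the finite right-hand sum. By Remark~\ref{4_rem_LS_as_specialization} we have $\schur_\mu(\rho) = LS_\mu(\calX;\calY)$ and $\schur_\lambda(\rho) = LS_\lambda(\calX;\calY)$, so all that remains is to evaluate the scalar $\power_k(\rho)$. For this I would invoke the definition of the union of specializations on power sums together with the standard identity $\omega(\power_k) = (-1)^{k-1}\power_k$: one gets
$$\power_k(\rho) = \power_k(\rho^\alpha_\calX) + \power_k(\rho^\beta_\calY) = \power_k(\calX) + \omega(\power_k)(\calY) = \power_k(\calX) + (-1)^{k-1}\power_k(\calY).$$
Substituting this back into the specialized Murnaghan--Nakayama identity yields exactly \eqref{4_cor_MN_for_LS_eq}.

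I do not expect a genuine obstacle: the proof is a short assembly of already-established facts. The two places that deserve a line of care are (i) invoking Theorem~\ref{4_thm_MN_rule} at the level of symmetric functions, rather than of polynomials in a fixed alphabet, so that applying the homomorphism $\rho$ is legitimate, and (ii) noting that the index set $\{\lambda : \mu \overset{k}{\to} \lambda\}$ is finite so that linearity of $\rho$ applies termwise. I would close with the remark that, after the substitution $\calX \mapsto -\calX$ (which sends $\power_k(\calX)$ to $(-1)^k \power_k(\calX)$), this recovers Theorem~\ref{2_thm_MN_for_LS}, so the present argument furnishes a second, non-elementary proof of that result.
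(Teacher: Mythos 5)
Your proof is correct and is essentially the paper's own argument: the paper also views $LS_\lambda(\calX;\calY)$ as $\schur_\lambda(\rho^\alpha_\calX \cup \rho^\beta_\calY)$ and applies this specialization to the classical Murnaghan--Nakayama rule. You spell out the evaluation of $\power_k$ under the union of specializations, which the paper leaves implicit, but the underlying reasoning is the same.
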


\begin{proof} View the Littlewood-Schur functions $LS_\lambda(\calX; \calY)$ on the right-hand side in \eqref{4_cor_MN_for_LS_eq} as specializations of $\schur_\lambda$, following Remark~\ref{4_rem_LS_as_specialization}. Then use the Murnaghan-Nakayama rule to write the resulting sum as a specialization of $\power_k \schur_\lambda$, which equals the left-hand side in \eqref{4_cor_MN_for_LS_eq} when written out.
\end{proof}

On the one hand, the left-hand side of the Murnaghan-Nakayama rule can be viewed as a product of a power sum and a Schur function. On the other hand, it can be seen as applying the product operator to a Schur function. The second point of view immediately leads to the question whether there is a similar ``rule'' for the derivation operator.

\begin{cor} [dual Murnaghan-Nakayama rule] \label{4_cor_dual_MN_rule} Let $\lambda$ be a partition. For any strictly positive integer $k$,
\begin{align} \label{4_cor_dual_MN_rule_eq}
k \frac{\partial}{\partial \power_k} \schur_\lambda = \sum_{\substack{\mu: \, \mu \overset{k}{\to} \lambda}} (-1)^{\height(\lambda \setminus \mu)} \schur_\mu.
\end{align}
\end{cor}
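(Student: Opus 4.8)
The plan is to deduce the dual Murnaghan-Nakayama rule (Corollary~\ref{4_cor_dual_MN_rule}) from the classical Murnaghan-Nakayama rule (Theorem~\ref{4_thm_MN_rule}) by an adjointness argument with respect to the Hall inner product. The key input is the second part of Lemma~\ref{4_lem_properties_operators}, which states that the product operator $\power_k$ and the rescaled derivation operator $k \tfrac{\partial}{\partial \power_k}$ are adjoint: $\langle k \tfrac{\partial}{\partial \power_k} f, g \rangle = \langle f, \power_k g \rangle$ for all $f, g \in \Sym$. Since the Schur functions form an orthonormal basis of $\Sym$ (by the Cauchy identity, Lemma~\ref{4_lem_cauchy_identity_schur}), it suffices to compute the inner product of each side of \eqref{4_cor_dual_MN_rule_eq} against an arbitrary Schur function $\schur_\mu$.

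First I would fix a partition $\mu$ and expand the left-hand side:
\begin{align*}
\left\langle k \frac{\partial}{\partial \power_k} \schur_\lambda, \schur_\mu \right\rangle = \left\langle \schur_\lambda, \power_k \schur_\mu \right\rangle
\end{align*}
using the adjointness relation. Then I would apply the classical Murnaghan-Nakayama rule (Theorem~\ref{4_thm_MN_rule}) to rewrite $\power_k \schur_\mu = \sum_{\nu: \mu \overset{k}{\to} \nu} (-1)^{\height(\nu \setminus \mu)} \schur_\nu$, so that
\begin{align*}
\left\langle \schur_\lambda, \power_k \schur_\mu \right\rangle = \sum_{\substack{\nu: \, \mu \overset{k}{\to} \nu}} (-1)^{\height(\nu \setminus \mu)} \left\langle \schur_\lambda, \schur_\nu \right\rangle = \sum_{\substack{\nu: \, \mu \overset{k}{\to} \nu}} (-1)^{\height(\nu \setminus \mu)} \delta_{\lambda \nu}
\end{align*}
by orthonormality of Schur functions. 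This sum equals $(-1)^{\height(\lambda \setminus \mu)}$ if $\mu \overset{k}{\to} \lambda$ and $0$ otherwise. On the other hand, pairing the right-hand side of \eqref{4_cor_dual_MN_rule_eq} against $\schur_\mu$ gives $\sum_{\kappa: \kappa \overset{k}{\to} \lambda} (-1)^{\height(\lambda \setminus \kappa)} \langle \schur_\kappa, \schur_\mu \rangle$, which by orthonormality is again $(-1)^{\height(\lambda \setminus \mu)}$ if $\mu \overset{k}{\to} \lambda$ and $0$ otherwise. Since the two sides have identical inner products against every element of an orthonormal basis, they are equal.

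There is essentially no serious obstacle here; the only point requiring a little care is bookkeeping of the direction of the ribbon relation $\mu \overset{k}{\to} \lambda$ versus $\kappa \overset{k}{\to} \lambda$ and the corresponding height, but since $\delta_{\lambda\nu}$ (respectively $\delta_{\kappa\mu}$) collapses the sum to a single term, the heights match automatically and no sign subtlety arises. One should also note in passing that both sides of \eqref{4_cor_dual_MN_rule_eq} are finite sums — the left-hand side because $\tfrac{\partial}{\partial \power_k}$ lowers degree by $k$ and Schur functions of a fixed degree span a finite-dimensional space, the right-hand side because only finitely many $\mu$ satisfy $\mu \overset{k}{\to} \lambda$ — so the pairing argument is legitimate. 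Alternatively, one could observe that \eqref{4_cor_dual_MN_rule_eq} is nothing but the image of the classical rule under the standard anti-/self-adjointness of these operators and state it as such, but writing out the one-line inner-product computation above is cleaner and self-contained.
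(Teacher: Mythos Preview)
Your proof is correct and follows essentially the same approach as the paper: both use the adjointness from Lemma~\ref{4_lem_properties_operators} to convert $\langle k\tfrac{\partial}{\partial\power_k}\schur_\lambda,\schur_\mu\rangle$ into $\langle\schur_\lambda,\power_k\schur_\mu\rangle$, apply the classical Murnaghan--Nakayama rule, and conclude by orthonormality of the Schur basis. The only cosmetic difference is that the paper phrases it as expanding the left-hand side in the Schur basis, whereas you compare both sides against an arbitrary $\schur_\mu$; the content is identical.
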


The statement we have chosen to call the dual Murnaghan-Nakayama rule is standard but very elusive in the literature.

\begin{proof} Exploit that Schur functions form an orthonormal basis of $\Sym$ to write the left-hand side in \eqref{4_cor_dual_MN_rule_eq} as a linear combination of Schur functions:
\begin{align*}
k \frac{\partial}{\partial \power_k} \schur_\lambda ={} & \sum_{\mu} \left\langle k \frac{\partial}{\partial \power_k} \schur_\lambda, \schur_\mu \right\rangle \schur_\mu
.
\intertext{Using that derivation and product are almost adjoint (\textit{i.e.}\ Lemma~\ref{4_lem_properties_operators}), switch operators, and then apply the Murnaghan-Nakayama rule:}
k \frac{\partial}{\partial \power_k} \schur_\lambda ={} & \sum_{\mu} \left\langle \schur_\lambda, \power_k \schur_\mu \right\rangle \schur_\mu = \sum_{\mu} \sum_{\substack{\nu: \, \mu \overset{k}{\to} \nu}} (-1)^{\height(\nu \setminus \mu)} \left\langle \schur_\lambda, \schur_\nu \right\rangle \schur_\mu
.
\end{align*}  
The equality in \eqref{4_cor_dual_MN_rule_eq} now follows from the orthonormality of Schur functions.
\end{proof}

The derivation operator allows us to give a neat expression for the signed sum of Schur functions associated to $\mu$ where $\mu$ ranges over all partitions so that $\lambda \setminus \mu$ is a $k$-ribbon. However, this expression can be difficult to work with because for a general symmetric polynomial $f(\calX)$ it is hard to give an explicit expression for $\displaystyle \tfrac{\partial}{\partial \power_k} f \left( \rho^\alpha_\calX \right)$. The following proposition solves this problem under very specific assumptions, which will turn out to be sufficient for our purposes.

\begin{defn} Let $\calX$ be a set of non-zero variables. For any partition $\lambda$, we define the $-\lambda$-th power sum of $\calX$ by
$$\power_{-\lambda}(\calX) = \power_\lambda \left( \calX^{-1} \right)
.$$ 
We remark that $\power_{-\lambda}(\calX)$ is \emph{not} a symmetric polynomial in $\calX$, but a symmetric Laurent polynomial. 
\end{defn}

\begin{prop} \label{4_prop_MN_negative_r} Let $\calX$ consist of $n$ non-zero variables and let $\mu$ be a partition of length $n$. For any integer $k$ with $1 \leq k \leq \mu_n$,
\begin{align} \label{4_prop_MN_negative_r_eq}
\schur_\mu (\calX) \power_{-k} (\calX) = \sum_{\substack{\lambda: \, \lambda \overset{k}{\to} \mu}} (-1)^{\height(\mu \setminus \lambda)} \schur_\lambda(\calX).
\end{align}
\end{prop}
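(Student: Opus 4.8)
The plan is to mimic the classical determinantal proof of the Murnaghan--Nakayama rule (the proof of Theorem~\ref{2_thm_MN_rule}), replacing the operation ``add $k$ to a part'' by ``subtract $k$ from a part''. The hypothesis $1\le k\le\mu_n$ is exactly what keeps this substitution inside the world of partitions.

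First I would reduce to the case where the $n$ variables of $\calX$ are pairwise distinct and nonzero: both sides of \eqref{4_prop_MN_negative_r_eq}, multiplied by $\Delta(\calX)$, are Laurent polynomials in $\calX$ and $\Delta(\calX)$ is not identically zero, so it suffices to check the identity generically. Under this assumption, use the determinantal definition of $\schur_\mu$ together with $\power_{-k}(\calX)=\sum_{x\in\calX}x^{-k}$ to write $\Delta(\calX)\schur_\mu(\calX)\power_{-k}(\calX)=\det\bigl(x_i^{\mu_j+n-j}\bigr)_{1\le i,j\le n}\cdot\sum_{p=1}^n x_p^{-k}$. Expanding the determinant by the Leibniz formula and, for each term, absorbing the factor $x_p^{-k}$ into the exponent of $x_p$, I would group terms according to $q=\sigma(p)$ and obtain $\Delta(\calX)\schur_\mu(\calX)\power_{-k}(\calX)=\sum_{q=1}^n\det\bigl(x_i^{(\mu\langle q\rangle+\rho_n)_j}\bigr)_{1\le i,j\le n}$, where $\mu\langle q\rangle=(\mu_1,\dots,\mu_{q-1},\mu_q-k,\mu_{q+1},\dots,\mu_n)$. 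Here $k\le\mu_n\le\mu_q$ guarantees that every entry of $\mu\langle q\rangle+\rho_n$ is a nonnegative integer, so for each $q$ either two of these entries coincide (and the determinant vanishes) or they sort into a strictly decreasing sequence $\lambda+\rho_n$ with $\lambda$ a partition of length $\le n$, in which case the $q$-th summand equals $\varepsilon(\sigma_q)\,\Delta(\calX)\,\schur_\lambda(\calX)$, with $\sigma_q$ the sorting permutation.

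The combinatorial heart of the argument is a ``downward'' counterpart of Lemma~\ref{2_lem_sequence_defn_for_ribbons}: the partitions $\lambda$ arising as $\sort(\mu\langle q\rangle+\rho_n)-\rho_n$, as $q$ ranges over $1,\dots,n$ with the sorted sequence having distinct entries, are precisely the partitions $\lambda$ with $l(\lambda)\le n$ and $\mu\setminus\lambda$ a $k$-ribbon, i.e. $\lambda\overset{k}{\to}\mu$, and moreover $\varepsilon(\sigma_q)=(-1)^{\height(\mu\setminus\lambda)}$. I would deduce this from Lemma~\ref{2_lem_sequence_defn_for_ribbons} applied with its roles reversed (its ``$\mu$'', ``$\lambda$'', ``$r$'' being our $\lambda$, $\mu$, $k$): if $\mu\setminus\lambda$ is a $k$-ribbon then $\mu+\rho_n\sorteq\lambda[p]+\rho_n$ for some $p$, where $\lambda[p]$ adds $k$ to the $p$-th part; letting $q$ be the slot of the bumped entry $\lambda_p+k+n-p$ inside the (already sorted) sequence $\mu+\rho_n$, the sequence $\mu\langle q\rangle+\rho_n$ has $\lambda_p+n-p$ in slot $q$ and is otherwise unchanged, hence is a reordering of $\{\lambda_j+n-j:1\le j\le n\}$; the relevant sorting permutation is the inverse of the one in Lemma~\ref{2_lem_sequence_defn_for_ribbons}, so the signs agree. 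The converse is the same computation read backwards. Substituting this into the sum and cancelling $\Delta(\calX)$ yields \eqref{4_prop_MN_negative_r_eq}.

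The main obstacle is this reverse ribbon lemma: matching the two sorting signs and handling the boundary bookkeeping (that $l(\mu)=n$, that each $\mu\langle q\rangle+\rho_n$ has nonnegative entries so the resulting $\lambda$ genuinely has length $\le n$, and that the case $q=n$ with $\mu_n=k$ is permitted). An alternative route, which avoids the new combinatorics, uses the complement identity of Lemma~\ref{3_lem_Schur_indexed_by_complement}: with $m=\mu_1$ write $\schur_\mu(\calX)=\elementary(\calX)^m\schur_{\tilde\mu}(\calX^{-1})$, apply the classical Murnaghan--Nakayama rule (Theorem~\ref{4_thm_MN_rule}) to $\power_k(\calX^{-1})\schur_{\tilde\mu}(\calX^{-1})$, note that $k\le\mu_n$ forces every $k$-ribbon adjoined to $\tilde\mu$ to stay inside $\langle m^n\rangle$ while $\schur_\tau(\calX^{-1})=0$ once $l(\tau)>n$, and translate back via $\schur_{\tilde\tau}(\calX)=\elementary(\calX)^m\schur_\tau(\calX^{-1})$, using that the $(m,n)$-complement rotates a $k$-ribbon by $180^{\circ}$ and therefore preserves both its number of rows and its height. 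Reindexing $\lambda=\tilde\tau$ then reproduces the signed sum over $\lambda\overset{k}{\to}\mu$.
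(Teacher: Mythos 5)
Both of your routes are correct, but they are different in spirit, and the paper chooses your \emph{alternative}, not your primary plan. The paper's proof is precisely your second argument: fix $m$ with $\mu\subset\langle m^n\rangle$, use Lemma~\ref{3_lem_Schur_indexed_by_complement} to rewrite $\schur_\mu(\calX)\power_{-k}(\calX)=\elementary(\calX^{-1})^{-m}\schur_{\tilde\mu}(\calX^{-1})\power_k(\calX^{-1})$, apply the classical Murnaghan--Nakayama rule (Theorem~\ref{4_thm_MN_rule}) in the variables $\calX^{-1}$, observe that $k\le\mu_n$ forces every contributing partition to stay inside $\langle m^n\rangle$ (since $\nu_1\le\tilde\mu_1+k=(m-\mu_n)+k\le m$), undo the complement, and use that $\tilde\nu\setminus\tilde\mu$ is a $k$-ribbon with the same height as $\mu\setminus\lambda$ after setting $\lambda=\tilde\nu$. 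This is the cleanest route because it outsources all the ribbon combinatorics to the classical rule and to the one fact that complementation rotates a ribbon by $180^{\circ}$ and hence preserves its height.

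Your primary plan --- redoing the determinantal proof of Theorem~\ref{2_thm_MN_rule} with $k$ subtracted rather than added --- also works, and it has the advantage of being self-contained and not requiring the complement identity. But it genuinely costs more: you must prove and sign-match a ``downward'' counterpart of Lemma~\ref{2_lem_sequence_defn_for_ribbons} (your reverse ribbon lemma) rather than borrow the existing lemma, and while you correctly observe that the reverse lemma is a reindexed reading of the original, making the sign of the sorting permutation come out as $(-1)^{\height(\mu\setminus\lambda)}$ still requires the same inversion-counting argument that the paper performs once, inside the proof of Theorem~\ref{2_thm_MN_rule}, rather than twice. You flag this boundary bookkeeping yourself, which is exactly where the extra work lies. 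In short: your alternative route \emph{is} the paper's argument; your primary route is a legitimate but heavier alternative that duplicates combinatorics the paper deliberately reuses.
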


\begin{proof} Choose an integer $m$ such that $\mu$ is contained in the rectangle $\langle m^n \rangle$. Let $\tilde \mu$ denote the $(m,n)$-complement of $\mu$. We reformulate the left-hand side of the equation in \eqref{4_prop_MN_negative_r_eq} as a function in the variables $\calX^{-1}$:
\begin{align*}
\schur_\mu (\calX) \power_{-k} (\calX) ={} & \elementary\left(\calX^{-1}\right)^{-m} \schur_{\tilde \mu} \left(\calX^{-1}\right) \power_k \left(\calX^{-1}\right)
.
\intertext{This trick is an immediate consequence of the determinantal definition for Schur functions. A more detailed justification can be found in Lemma~\ref{3_lem_Schur_indexed_by_complement}. Applying the Murnaghan-Nakayama rule yields}
\schur_\mu (\calX) \power_{-k} (\calX) ={} & \elementary \left(\calX^{-1} \right)^{-m} \sum_{\substack{\nu: \, \tilde\mu \overset{k}{\to} \nu}} (-1)^{\height(\nu \setminus \tilde\mu)} \schur_\nu \left(\calX^{-1}\right)
.
\intertext{Notice that $\nu_1 \leq \tilde\mu_1 + k \leq \tilde\mu_1 + \mu_n = m$. Hence, all partitions $\nu$ that contribute to the sum are contained in $\langle m^n \rangle$. In consequence, the $(m,n)$-complement of $\nu$ is well defined. Replace $\lambda$ by $\tilde\nu$ in the summation index and reuse the trick to obtain}
\schur_\mu (\calX) \power_{-k} (\calX) ={} & \sum_{\substack{\lambda: \, \tilde\mu \overset{k}{\to} \tilde\lambda}} (-1)^{\height\left(\tilde\lambda \setminus \tilde\mu\right)} \schur_\lambda(\calX)
.
\end{align*}
It is easy to see that $\tilde\lambda \setminus \tilde\mu$ is a $k$-ribbon if and only if $\mu \setminus \lambda$ is. Together with the fact that the height remains unaltered this proves the claim.
\end{proof}

\begin{cor} \label{4_cor_MN_negative_lambda} Let $\calX$ consist of $n$ non-zero variables and let $\mu$ be a partition of length at most $n$. For any partition $\lambda$ with $|\lambda| \leq \mu_n$,
\begin{align} \label{4_cor_MN_negative_lambda_eq}
\left[ \prod_{i \geq 1} i^{m_i(\lambda)} \frac{\partial}{\partial \power_\lambda} \schur_\mu \right] \left(\rho^\alpha_\calX\right) = \schur_\mu(\calX) \power_{-\lambda}(\calX)
.
\end{align}
\end{cor}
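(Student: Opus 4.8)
The plan is to reduce Corollary~\ref{4_cor_MN_negative_lambda} to Proposition~\ref{4_prop_MN_negative_r} by induction on the length of the partition $\lambda$, using the iterated structure of both the $\lambda$-th derivation operator and the negative power sum $\power_{-\lambda}$. Write $\lambda = (\lambda_1, \dots, \lambda_\ell)$ and $\lambda^- = (\lambda_2, \dots, \lambda_\ell)$, so that $\tfrac{\partial}{\partial \power_\lambda} = \tfrac{\partial}{\partial \power_{\lambda_1}} \tfrac{\partial}{\partial \power_{\lambda^-}}$ and $\power_{-\lambda}(\calX) = \power_{-\lambda_1}(\calX) \power_{-\lambda^-}(\calX)$. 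The base case $\ell = 0$ is trivial since both sides equal $\schur_\mu(\calX)$. The subtlety that must be handled with care is that $\tfrac{\partial}{\partial \power_{\lambda^-}} \schur_\mu$ is no longer a single Schur function but a linear combination of them, and the hypothesis $|\lambda| \le \mu_n$ only controls the original partition $\mu$ — so one must track that every Schur function $\schur_\kappa$ appearing in the expansion of $\tfrac{\partial}{\partial \power_{\lambda^-}} \schur_\mu$ still satisfies $\kappa_n \ge \lambda_1$ with $l(\kappa)\le n$, which is exactly what is needed to apply Proposition~\ref{4_prop_MN_negative_r} to each term.

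First I would establish the key structural fact: if $|\nu| \le \mu_n$ then every partition $\kappa$ with $\langle k^{?}\rangle$-type... more precisely, I would show by the dual Murnaghan–Nakayama rule (Corollary~\ref{4_cor_dual_MN_rule}) that $k\,\tfrac{\partial}{\partial \power_k}\schur_\mu = \sum_{\kappa:\,\kappa \overset{k}{\to}\mu}(-1)^{\height(\mu\setminus\kappa)}\schur_\kappa$, and that each such $\kappa$ is obtained from $\mu$ by deleting a $k$-ribbon, hence satisfies $l(\kappa)\le l(\mu)$ and $\kappa_i \le \mu_i$ for all $i$, so in particular $l(\kappa)\le n$. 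Iterating, every $\kappa$ in the expansion of $\prod_{i\ge 1} i^{m_i(\lambda^-)}\tfrac{\partial}{\partial \power_{\lambda^-}}\schur_\mu$ satisfies $\kappa \subset \mu$ with $l(\kappa)\le n$, and $\mu_n - |\lambda^-| \le \kappa_n$ would need to be argued — this is the technical heart. Actually the cleanest route avoids this: I would instead induct by peeling off $\lambda_\ell$ (the smallest part) last and applying Proposition~\ref{4_prop_MN_negative_r} directly, but the safest formulation is to prove the auxiliary claim that for any $\kappa$ appearing in $\tfrac{\partial}{\partial\power_{\lambda^-}}\schur_\mu$ one has $\kappa_n \ge \mu_n - |\lambda^-| \ge \lambda_1$, since removing a $j$-ribbon from a partition of length $n$ decreases the $n$-th part by at most $j$.

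Granting that auxiliary claim, the inductive step runs as follows. By the induction hypothesis applied to $\lambda^-$,
\begin{align*}
\left[\prod_{i\ge 1} i^{m_i(\lambda^-)}\frac{\partial}{\partial\power_{\lambda^-}}\schur_\mu\right]\!\left(\rho^\alpha_\calX\right) = \schur_\mu(\calX)\,\power_{-\lambda^-}(\calX).
\end{align*}
However, to push the remaining operator $\lambda_1\tfrac{\partial}{\partial\power_{\lambda_1}}$ through I must work at the level of symmetric functions \emph{before} specializing, writing $\prod_{i\ge1}i^{m_i(\lambda^-)}\tfrac{\partial}{\partial\power_{\lambda^-}}\schur_\mu = \sum_\kappa a_\kappa \schur_\kappa$ in $\Sym$ with the $a_\kappa$ determined by iterated dual Murnaghan–Nakayama, then apply $\lambda_1\tfrac{\partial}{\partial\power_{\lambda_1}}$ termwise (again via Corollary~\ref{4_cor_dual_MN_rule}), and only at the end specialize and invoke Proposition~\ref{4_prop_MN_negative_r} on each $\schur_\kappa(\calX)\power_{-\lambda_1}(\calX)$. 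Matching the signs $(-1)^{\height(\kappa\setminus\sigma)}$ produced by the two applications with the combinatorial identity $\schur_\mu(\calX)\power_{-\lambda_1}(\calX)\power_{-\lambda^-}(\calX) = \schur_\mu(\calX)\power_{-\lambda}(\calX)$ then closes the induction.

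I expect the main obstacle to be precisely the bookkeeping of the hypothesis $|\lambda|\le\mu_n$ across the induction: one must verify that after applying $\tfrac{\partial}{\partial\power_{\lambda^-}}$ the surviving Schur indices $\kappa$ all still have $\kappa_n$ (or rather the relevant last part) at least $\lambda_1$, so that Proposition~\ref{4_prop_MN_negative_r} is legitimately applicable. This follows from the elementary observation that deleting a $j$-ribbon from an $n$-part partition lowers its $n$-th part by at most $j$ (a ribbon has at most one box per row), summed over the parts of $\lambda^-$; once stated as a lemma the rest is routine sign-tracking. An alternative, possibly slicker proof would pair both sides of \eqref{4_cor_MN_negative_lambda_eq} against an arbitrary Schur function and use the near-adjointness of $\power_k$ and $k\tfrac{\partial}{\partial\power_k}$ (Lemma~\ref{4_lem_properties_operators}) together with Proposition~\ref{4_prop_MN_negative_r}, but the iterative structure of $\power_{-\lambda}$ still forces an induction, so I would keep the inductive framework above.
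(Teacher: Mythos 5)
Your proof is correct and uses essentially the same ingredients as the paper's: iterated applications of the dual Murnaghan--Nakayama rule (Corollary~\ref{4_cor_dual_MN_rule}) to unfold the left-hand side into a sum over chains of ribbon removals, the observation that removing a $j$-ribbon lowers the $n$-th part by at most $j$ (which, summed over the parts already removed, guarantees the hypothesis of Proposition~\ref{4_prop_MN_negative_r} at every stage), and iterated applications of Proposition~\ref{4_prop_MN_negative_r} to fold the sum back up. The paper does this in one flat expansion and contraction rather than as a formal induction on $l(\lambda)$, but the chain of inequalities it verifies,
\begin{align*}
\lambda_i \leq |\lambda| - \lambda_{i-1} - \dots - \lambda_1 \leq \mu_n - \lambda_{i-1} - \dots - \lambda_1 \leq \mu^{(i-1)}_n,
\end{align*}
is exactly your auxiliary claim, so the two presentations are the same argument.

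One small correction: the parenthetical ``a ribbon has at most one box per row'' is false --- a ribbon can contain arbitrarily many boxes in a single row (a horizontal strip of $j$ boxes in one row is a $j$-ribbon), and in any case that statement would only give a bound of $1$ per removal, not $j$. The bound you actually need is true for a simpler reason: a $j$-ribbon has $j$ boxes in total, so at most $j$ of them lie in row $n$, hence removing it lowers the $n$-th part by at most $j$.
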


\begin{proof} Set $l(\lambda) = l$. Repeated application of Corollary~\ref{4_cor_dual_MN_rule} to the left-hand side in \eqref{4_cor_MN_negative_lambda_eq} allows us to reformulate it as
\begin{align*}
\sum_{\substack{\mu^{(1)}, \dots, \mu^{(l)}: \\ \mu^{(l)} \overset{\lambda_l}{\to} \dots \overset{\lambda_2}{\to} \mu^{(1)} \overset{\lambda_1}{\to} \mu}} (-1)^{\height \left( \mu \setminus \mu^{(l)} \right)} \schur_{\mu^{(l)}} \left( \rho^\alpha_\calX \right)
.
\end{align*}
By definition, $\schur_{\mu^{(l)}} \left( \rho^\alpha_\calX \right) = \schur_{\mu^{(l)}} \left(\calX \right)$. Given that
\begin{align*}
\lambda_i \leq |\lambda| - \lambda_{i - 1} - \dots - \lambda_1 \leq \mu_n - \lambda_{i - 1} - \dots - \lambda_1 \leq \mu^{(i - 1)}_n
\end{align*}
for all $1 \leq i \leq l$, repeatedly applying Proposition~\ref{4_prop_MN_negative_r} results in the right-hand side of the equation in \eqref{4_cor_MN_negative_lambda_eq}.
\end{proof}

\section{Averages of ratios and logarithmic derivatives \\ of characteristic polynomials} \label{4_sec_ratios_and_log_der}

In this section we present a unified way to derive formulas for averages of products of ratios and/or logarithmic derivatives of characteristic polynomials over the group of unitary matrices $U(N)$. Most of these formulas are not exact but contain an error that decreases exponentially as $N$ goes to infinity.

\subsection{Tricks for bounding the error term}
As the heading suggests, this section is a collection of observations that will allow us to give asymptotic bounds for the various error terms. They are not particularly hard to prove or interesting in their own right.

\begin{lem} \label{4_lem_bound_number_of_ribbons_rectangle} Fix a positive integer $k$ and a partition $\lambda \subset \langle m^n \rangle$. Then there are at most $\min \{m,n\}$ partitions $\mu$ such that $\begin{cases} \text{$\lambda \setminus \mu$ is a $k$-ribbon.} \\ \text{$\mu \setminus \lambda$ is a $k$-ribbon and $\mu \subset \langle m^n \rangle$.} \end{cases}$
\end{lem}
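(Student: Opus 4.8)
The plan is to prove the bound by translating the condition "$\lambda \setminus \mu$ is a $k$-ribbon" into a condition on the boundary paths of the two partitions. Recall that a $k$-ribbon is an edgewise-connected skew diagram with no $2\times 2$ block of boxes. Such a ribbon is determined by its two endpoints (the head and the tail box), and in fact a ribbon of size $k$ inside a diagram corresponds to choosing a pair of positions along the "staircase boundary" at distance $k$ apart. More precisely, encoding $\lambda$ by the sequence $\lambda + \rho$ (or, visually, by its lattice path / staircase walk from the top-right to the bottom-left), the partitions $\mu$ with $\lambda \setminus \mu$ a $k$-ribbon are exactly those obtained by moving one element of $\lambda + \rho_N$ down by $k$ — this is precisely Lemma~\ref{2_lem_sequence_defn_for_ribbons} from the excerpt, which says the set $\{\mu : \mu \setminus \mu_0 \text{ is a $k$-ribbon}, l(\mu) \le N\}$ is parametrized by the index $1 \le q \le N$ of the element being shifted.

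First I would set $N' = \max\{m,n\}$ (so that $\lambda$ has length at most $N'$ and $\lambda' $ has length at most $N'$), and invoke Lemma~\ref{2_lem_sequence_defn_for_ribbons} with the partition $\lambda$ playing the role of $\mu_0$ there: the number of $\mu$ with $\lambda \setminus \mu$ a $k$-ribbon and $l(\mu) \le N'$ is at most $N'$, one for each shift index $q \in \{1, \dots, N'\}$ (and some of these indices may fail to give a valid partition, so the count is $\le N'$). Then I would sharpen this: since $\lambda \subset \langle m^n\rangle$ forces $l(\lambda) \le n$, and since removing a $k$-ribbon can only shorten or preserve the number of rows that stick out, only the shift indices $q \le l(\lambda) + 1 \le n+1$ can possibly produce a nonzero ribbon removal from an actual row of $\lambda$ — more carefully, a $k$-ribbon removed from $\lambda$ must have its top row among rows $1, \dots, l(\lambda)$, and its column span among columns $1, \dots, \lambda_1 \le m$; counting by the column of the rightmost box of the ribbon gives at most $m$ choices, while counting by the row of the topmost box gives at most $n$ choices. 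Hence the count is at most $\min\{m,n\}$. The symmetric case "$\mu \setminus \lambda$ is a $k$-ribbon and $\mu \subset \langle m^n\rangle$" is handled the same way: here $\mu \setminus \lambda$ is a $k$-ribbon added to $\lambda$, staying inside the $m \times n$ rectangle; counting by the column of the rightmost box of the added ribbon gives $\le m$ possibilities and by the row of its topmost box gives $\le n$, so again $\le \min\{m,n\}$.

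Concretely, the cleanest write-up uses the boundary-path picture: a $k$-ribbon contained in (resp. glued onto) $\lambda$ inside $\langle m^n\rangle$ is determined by the single box at one distinguished corner — say its northeast-most box — which must lie in the rectangle, and for each such box the ribbon is then forced by the shapes of $\lambda$ and of the ambient rectangle. Since the ribbon occupies boxes in a single "antidiagonal band", and since it must fit inside an $m \times n$ rectangle, the number of admissible antidiagonal positions is bounded by both $m$ and $n$ separately (the band can start in at most $m$ columns and at most $n$ rows, roughly speaking; a careful bookkeeping with the conventions $\lambda_0 = \infty$ and $(i,0), (0,i) \in \lambda$ from Section~\ref{4_sec_sequences_and_partitions} makes this exact).

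The main obstacle — really the only subtlety — is making the "counting by rows versus counting by columns" argument airtight, i.e. verifying that each valid $\mu$ is uniquely recovered from, say, the column index of the rightmost box of the ribbon $\lambda \setminus \mu$ (or $\mu \setminus \lambda$), so that the map (valid $\mu$) $\mapsto$ (that column index) is injective with image inside $\{1, \dots, m\}$, and symmetrically for rows with image in $\{1, \dots, n\}$. Injectivity follows because a $k$-ribbon, once its rightmost box and the surrounding diagram $\lambda$ are fixed, is uniquely determined (its boxes are forced to hug the boundary of $\lambda$), and then $\mu$ is determined by $\lambda$ together with the ribbon. Taking the minimum of the two bounds gives $\min\{m,n\}$, completing the proof.
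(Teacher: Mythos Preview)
Your overall strategy matches the paper's: count the possible ribbons by the position of a distinguished corner box. Your argument for the bound $n$ is correct and identical to the paper's (the row of the top-right box of $\lambda\setminus\mu$ determines the ribbon, and that row lies in $\{1,\dots,l(\lambda)\}\subset\{1,\dots,n\}$).

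However, your argument for the bound $m$ has a gap. You propose to map each valid $\mu$ to the \emph{column} of the rightmost (i.e.\ top-right) box of the ribbon and claim this is injective. It is not: take $\lambda=(2,2)$ and $k=2$. The two removable $2$-ribbons give $\mu=(2)$ (ribbon occupies row~2) and $\mu=(1,1)$ (ribbon occupies column~2), and in both cases the rightmost box lies in column~2. Your injectivity claim ``once its rightmost box and the surrounding diagram $\lambda$ are fixed, the ribbon is uniquely determined'' is true for the \emph{box} (a point in the plane), but not for its column index alone: several rows of $\lambda$ can end in the same column.

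The fix is exactly what the paper does: for the bound $m$, use the \emph{bottom-left} box of the ribbon instead. That box must sit at the bottom of a column of $\lambda$ (else $\mu$ would fail to be a partition), so its column lies in $\{1,\dots,\lambda_1\}\subset\{1,\dots,m\}$, and given that column the bottom-left box, hence the whole ribbon, is determined. Equivalently, one can just conjugate and apply the row argument to $\lambda'\subset\langle n^m\rangle$.

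For the second case ($\mu\setminus\lambda$ a $k$-ribbon with $\mu\subset\langle m^n\rangle$), your direct argument suffers the same injectivity issue and needs the same repair. The paper handles this case more cleanly by passing to $(m,n)$-complements: $\mu\setminus\lambda$ is a $k$-ribbon inside $\langle m^n\rangle$ iff $\tilde\lambda\setminus\tilde\mu$ is, which reduces immediately to the first case.
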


\begin{proof} Let $\tilde \lambda$ denote the $(m,n)$-complement of the partition $\lambda$. For every partition $\mu \subset \langle m^n\rangle$, $\lambda \setminus \mu$ is a $k$-ribbon if and only if $\tilde \mu \setminus \tilde \lambda$ is a $k$-ribbon, where $\tilde \mu$ denotes the $(m,n)$-complement of $\mu$. Hence, it is sufficient to bound the number of partitions $\mu$ such that $\lambda \setminus \mu$ is a $k$-ribbon.

The condition that $\mu$ be a partition implies that the top-right box of any ribbon $\lambda \setminus \mu$ must not have any box to its right that is contained in $\lambda$. This gives at most $n$ possible positions for the top-right box, which entails that there are at most $n$ partitions $\mu$ such that $\lambda \setminus \mu$ is a $k$-ribbon. An analogous argument based on the bottom-left box of the ribbons bounds their number by $m$, thus concluding the proof.
\end{proof}

Before going on to the next trick we recall the big-$O$ notation -- primarily to fix notation. Given two functions $f$ and $g$ with domain $\calX$, we write \label{symbol_big_o_notation} $f = O_\calP(g)$ if there exists a real constant $c(\calP)$ that may depend on the set of parameters $\calP$ such that $|f(x)| \leq c(\calP)|g(x)|$ for all $x \in \calX$. In this setting, we call $c(\calP)$ the implicit constant. 

The following notation will also appear in the bounds for the error terms: the positive part of a real number $x$ is denoted by \label{symbol_positive_part} $x^+ = \max\{x,0\}$.

\begin{lem} \label{4_lem_bound_det} Fix a natural number $n$. For all square matrices $A$ whose size is less than $n$,
\begin{enumerate}
\item $\displaystyle \det A = O_n \left( \prod_{j = 1}^m \max_{1 \leq i \leq m} |a_{ij}| \right)$
\item $\displaystyle \det A = O_n \left( \prod_{i = 1}^m \max_{1 \leq j \leq m} |a_{ij}| \right)$
\end{enumerate}
where $m$ denotes the size of $A$.
\end{lem}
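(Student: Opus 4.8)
The statement is an elementary bound on determinants of matrices of bounded size, so the proof should be short. The plan is to use the Leibniz expansion of the determinant and bound each of the finitely many terms by a product of column maxima (for part (1)) or row maxima (for part (2)).

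First I would fix $n$ and consider an arbitrary square matrix $A = (a_{ij})_{1 \leq i,j \leq m}$ with $m < n$. Writing out the Leibniz formula, $\det A = \sum_{\sigma \in S_m} \varepsilon(\sigma) \prod_{j=1}^m a_{\sigma(j) j}$, I would bound the absolute value by $\sum_{\sigma \in S_m} \prod_{j=1}^m |a_{\sigma(j) j}| \leq \sum_{\sigma \in S_m} \prod_{j=1}^m \max_{1 \leq i \leq m} |a_{ij}| = m! \prod_{j=1}^m \max_{1 \leq i \leq m} |a_{ij}|$. Since $m \leq n - 1$, the factor $m!$ is at most $(n-1)!$, which is a constant depending only on $n$. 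This gives part (1) with implicit constant $(n-1)!$. For part (2), I would instead group the product over rows: $\prod_{j=1}^m |a_{\sigma(j) j}| = \prod_{i=1}^m |a_{i \sigma^{-1}(i)}| \leq \prod_{i=1}^m \max_{1 \leq j \leq m} |a_{ij}|$, and summing over the $m! \leq (n-1)!$ permutations yields the second bound with the same implicit constant. Alternatively, part (2) follows from part (1) applied to $A^{\mathsf{T}}$, since $\det A = \det A^{\mathsf{T}}$ and transposition swaps the roles of rows and columns.

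There is no real obstacle here; the only mild subtlety is that the bound must be uniform over all matrices whose size is \emph{less than} $n$ (not of a single fixed size), but this is handled by observing that $m!$ is monotonically increasing in $m$ and hence bounded by $(n-1)!$ for every admissible $m$. I would state this explicitly so that the implicit constant $c(n) = (n-1)!$ works simultaneously for all such $A$.

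\begin{proof} Let $A = (a_{ij})$ be a square matrix of size $m$ with $m < n$. By the Leibniz formula for determinants,
\begin{align*}
|\det A| ={} & \left| \sum_{\sigma \in S_m} \varepsilon(\sigma) \prod_{j = 1}^m a_{\sigma(j) j} \right| \leq \sum_{\sigma \in S_m} \prod_{j = 1}^m |a_{\sigma(j) j}| \\
\leq{} & \sum_{\sigma \in S_m} \prod_{j = 1}^m \max_{1 \leq i \leq m} |a_{ij}| = m! \prod_{j = 1}^m \max_{1 \leq i \leq m} |a_{ij}|
.
\end{align*}
As $m \leq n - 1$, we have $m! \leq (n - 1)!$, which proves the first bound with implicit constant $(n - 1)!$. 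The second bound follows by applying the first to the transpose $A^{\mathsf{T}}$, using that $\det A = \det A^{\mathsf{T}}$ and that the columns of $A^{\mathsf{T}}$ are the rows of $A$.
\end{proof}
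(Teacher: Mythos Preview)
Your proof is correct and essentially identical to the paper's: both use the Leibniz expansion and bound the $m!$ terms uniformly by a constant depending only on $n$. The only cosmetic differences are that the paper uses $n!$ rather than $(n-1)!$ as the implicit constant, and handles part (2) by saying it is analogous rather than invoking the transpose.
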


\begin{proof} Both statements follow directly from the Leibniz formula for determinants. We only give a justification for the first statement, as they are exact analogues. We have that
\begin{align*}
\hspace{33.4pt} |\det A| = \left| \sum_{\sigma \in S_m} \varepsilon(\sigma) \prod_{j = 1}^m a_{\sigma(j)j} \right| 
\leq \sum_{\sigma \in S_m} \prod_{j = 1}^m \max_{1 \leq i \leq m} \left| a_{ij} \right| \leq n! \prod_{j = 1}^m \max_{1 \leq i \leq m} \left| a_{ij} \right|
. \hspace{33.4pt}
\qedhere
\end{align*}
\end{proof}

We will solely use this lemma to infer asymptotic bounds for Schur and Littlewood-Schur functions based on their determinantal definitions.

\begin{lem} \label{4_lem_det_bound_Schur} Fix a positive number $r$ and a set $\calX$ of pairwise distinct variables.
\begin{enumerate} 
\item If $\abs(\calX) \leq r$, then $\displaystyle \schur_\lambda(\calX) = O_\calX \left( r^{|\lambda|} \right)$ as a function of $\lambda$.
\item If $\calY$ is the subsequence of $\calX$ that consists of the elements of absolute value greater than 1, $\displaystyle \schur_\lambda(\calX) = O_\calX \left( \elementary (\calY)^{\lambda_1} \right)$ as a function of $\lambda$.
\end{enumerate}
\end{lem}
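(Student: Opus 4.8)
The plan is to read off both bounds directly from the determinantal definition of Schur functions combined with Lemma~\ref{4_lem_bound_det}. Write $n = l(\calX)$; this is a fixed constant depending only on $\calX$. If $l(\lambda) > n$ then $\schur_\lambda(\calX) = 0$ and there is nothing to prove, so I may assume $l(\lambda) \leq n$. Since the elements of $\calX$ are pairwise distinct, the denominator in
\[
\schur_\lambda(\calX) = \frac{\det\left( x^{\lambda_j + n - j} \right)_{x \in \calX, \, 1 \leq j \leq n}}{\Delta(\calX)}
\]
is a fixed nonzero number, so everything reduces to bounding the $n \times n$ determinant in the numerator, whose $(x,j)$-entry is $x^{\lambda_j + n - j}$ with $\lambda_j + n - j$ a non-negative integer.

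For the first estimate I would apply the first inequality of Lemma~\ref{4_lem_bound_det} (product over columns of the maximum over rows), which is legitimate since the matrix has fixed size $n$: the numerator is $O_\calX\!\left( \prod_{j = 1}^{n} \max_{x \in \calX} |x|^{\lambda_j + n - j} \right)$. Because $t \mapsto t^e$ is non-decreasing on $[0,\infty)$ for $e \geq 0$, the hypothesis $\abs(\calX) \leq r$ gives $\max_{x \in \calX} |x|^{\lambda_j + n - j} \leq r^{\lambda_j + n - j}$, and multiplying over $j$ produces $r^{|\lambda| + \binom{n}{2}}$. Pulling the constant factor $r^{\binom{n}{2}}/|\Delta(\calX)|$ (and the $n!$ hidden in the $O$-notation) into the implicit constant yields $\schur_\lambda(\calX) = O_\calX\!\left( r^{|\lambda|} \right)$, the implicit constant depending on $\calX$ and $r$.

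For the second estimate I would instead use the second inequality of Lemma~\ref{4_lem_bound_det} (product over rows of the maximum over columns): the numerator is $O_\calX\!\left( \prod_{x \in \calX} \max_{1 \leq j \leq n} |x|^{\lambda_j + n - j} \right)$. The key observation is that the integer sequence $j \mapsto \lambda_j + n - j$ is \emph{strictly} decreasing (since $\lambda_j \geq \lambda_{j+1}$ and $n - j > n - (j+1)$), so its maximum is $\lambda_1 + n - 1$ at $j = 1$ and its minimum is $\lambda_n \geq 0$ at $j = n$. Hence for $x$ with $|x| > 1$ the inner maximum equals $|x|^{\lambda_1 + n - 1}$, while for $|x| \leq 1$ it equals $|x|^{\lambda_n} \leq 1$. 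Splitting the product over $\calX$ into the contribution of $\calY$ (the elements of absolute value $> 1$) and the rest bounds the numerator by $\prod_{y \in \calY} |y|^{\lambda_1 + n - 1} = |\elementary(\calY)|^{\lambda_1 + n - 1}$; factoring out the constant $|\elementary(\calY)|^{n - 1}/|\Delta(\calX)|$ gives $\schur_\lambda(\calX) = O_\calX\!\left( \elementary(\calY)^{\lambda_1} \right)$.

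The argument is routine and I do not anticipate a genuine obstacle. The only points needing a moment of care are: matching each of the two inequalities in Lemma~\ref{4_lem_bound_det} to the bound it produces; using strict monotonicity of $j \mapsto \lambda_j + n - j$ to locate the column-maximum in the first column for the second part (and the last column for the small-modulus variables); and checking that all exponents $\lambda_j + n - j$ are non-negative so that monotonicity of $t \mapsto t^e$ may be invoked. It is also worth recording explicitly, for use later in the chapter, that the implicit constants are $n!\,r^{\binom{n}{2}}/|\Delta(\calX)|$ and $n!\,|\elementary(\calY)|^{n-1}/|\Delta(\calX)|$ respectively.
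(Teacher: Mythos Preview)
Your proposal is correct and follows essentially the same approach as the paper: both apply the two inequalities of Lemma~\ref{4_lem_bound_det} to the numerator determinant in the determinantal definition of $\schur_\lambda(\calX)$, absorbing $\Delta(\calX)$ and the factors $r^{\binom{n}{2}}$, $|\elementary(\calY)|^{n-1}$ into the implicit constant. You spell out more of the routine details (the case $l(\lambda) > n$, the strict monotonicity of $j \mapsto \lambda_j + n - j$, the explicit constants), but the argument is the same.
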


\begin{proof} Set $l(\calX) = n$. To show the first bound, suppose that $\abs(\calX) \leq r$. By the determinantal definition for Schur functions,
\begin{align*}
\schur_\lambda(\calX) ={} & O_\calX \left( \det\left(x^{\lambda_j + n - j}\right)_{\substack{x \in \calX \\ 1 \leq j  \leq n}}\right) = O_\calX \left( \prod_{j = 1}^n \max_{x \in \calX} \left| x^{\lambda_j + n - j} \right| \right) = O_\calX \left( r^{|\lambda|}\right)
\end{align*}
where the second equality is due to the first statement of Lemma~\ref{4_lem_bound_det}. The second bound in this lemma is a consequence of the second statement of Lemma~\ref{4_lem_bound_det}:
\begin{align*}
\schur_\lambda(\calX) = O_\calX \left( \det\left(x^{\lambda_j + n - j}\right)_{\substack{x \in \calX \\ 1 \leq j  \leq n}}\right) ={} & O_\calX \left( \prod_{x \in \calX} \max_{1 \leq j \leq n} \left|x^{\lambda_j + n - j} \right| \right) \\
={} & O_\calX \left( \prod_{y \in \calY} \max_{1 \leq j \leq n} \left|y^{\lambda_j + n - j} \right| \right) = O_\calX \left( \prod_{y \in \calY} y^{\lambda_1} \right)
. 
\end{align*}
This concludes the proof since $\elementary(\calY)$ without index is our notation for the product.
\end{proof}

The first bound in Lemma \ref{4_lem_det_bound_Schur} can be viewed as a special case of Lemma~\ref{4_lem_det_bound_LS}, which gives an analogous statement for Littlewood-Schur functions.

\begin{lem} \label{4_lem_det_bound_LS} Fix a natural number $l$, a positive number $r$ and two sets of variables $\calX$ and $\calY$ such that $\abs(\calX) \leq r$ and the elements of $\calX \cup \calY$ are pairwise distinct. As a function of partitions $\lambda$ with $l(\lambda) \leq l$, 
\begin{align*}
LS_\lambda(-\calX; \calY) = O_\calP \left( r^{|\lambda|} \right)
\end{align*}
where the implicit constant depends on $\calP = \{\calX, \calY, l\}$.
\end{lem}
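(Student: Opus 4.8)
The plan is to derive the bound from the determinantal formula of Theorem~\ref{4_thm_det_formula_for_Littlewood-Schur} together with a columnwise estimate of the resulting determinant via Lemma~\ref{4_lem_bound_det}. Two harmless reductions come first. If some element of $\calX$ or of $\calY$ equals $0$, then applying the restriction property of Schur functions term by term to $LS_\lambda(-\calX;\calY)=\sum_{\mu,\nu}c^\lambda_{\mu\nu}\schur_\mu(-\calX)\schur_{\nu'}(\calY)$ shows that deleting that zero leaves $LS_\lambda(-\calX;\calY)$ unchanged; since this only shrinks $\calX,\calY$ and preserves both $\abs(\calX)\le r$ and pairwise distinctness (and the parameters $\calP$ can only improve), I may assume $\calX$ and $\calY$ contain no zero. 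Write $n=l(\calX)$, $m=l(\calY)$, and let $k$ be the $(m,n)$-index of $\lambda$. If $k<0$ then $LS_\lambda(-\calX;\calY)=0$ and there is nothing to prove, so assume $k\ge 0$.

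Now Theorem~\ref{4_thm_det_formula_for_Littlewood-Schur} applies and writes $LS_\lambda(-\calX;\calY)$ as $\varepsilon(\lambda)$ times the fixed nonzero constant $\Delta(\calY;\calX)/(\Delta(\calX)\Delta(\calY))$ (which depends only on $\calX,\calY$) times the determinant $\det M$ of the displayed block matrix $M$, whose size is $m+n-k\le m+n$. Since $|\varepsilon(\lambda)|=1$, it suffices to show $\det M=O_\calP(r^{|\lambda|})$. I would apply the first estimate of Lemma~\ref{4_lem_bound_det}, whose implicit constant depends only on $m+n$: $|\det M|$ is bounded, up to a constant depending on $\calP$, by the product over the columns of $M$ of the largest modulus of an entry in that column. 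The $m$ columns indexed by $y\in\calY$ carry entries of the form $(x-y)^{-1}$ (fixed nonzero constants) and $y^{\lambda'_i+m-n-i}$ with $1\le i\le m-k$; because $l(\lambda)\le l$ forces $\lambda'_i\le l$, all these exponents lie in a bounded range, so each such column contributes an $O_\calP(1)$ factor. The columns indexed by $1\le j\le n-k$ carry the entries $x^{\lambda_j+n-m-j}$ for $x\in\calX$ (and zeros); here $|x^{\lambda_j+n-m-j}|=|x|^{\lambda_j}\,|x|^{\,n-m-j}\le r^{\lambda_j}\,|x|^{\,n-m-j}$, and $|x|^{\,n-m-j}$ takes one of finitely many positive values as $x$ and $j$ vary, hence is $O_\calP(1)$; so this block of columns contributes $O_\calP\!\big(r^{\sum_{j=1}^{n-k}\lambda_j}\big)$.

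It then remains to pass from $\sum_{j=1}^{n-k}\lambda_j$ to $|\lambda|$. The point is that the discarded tail is bounded purely in terms of $\calP$: by the definition of the index, $(m+1-k,n+1-k)\notin\lambda$ gives $\lambda_{n-k+1}\le m-k\le m$, so $\lambda_j\le m$ for every $j\ge n-k+1$, and there are at most $l(\lambda)\le l$ such nonzero parts; hence $0\le\sum_{j>n-k}\lambda_j\le lm$. Therefore $r^{\sum_{j=1}^{n-k}\lambda_j}=r^{|\lambda|}\,r^{-\sum_{j>n-k}\lambda_j}\le r^{|\lambda|}\max\{1,r^{-lm}\}$, with $\max\{1,r^{-lm}\}$ depending only on $\calP$. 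Multiplying the $O_\calP(1)$ contribution of the $\calY$-columns with this last bound yields $\det M=O_\calP(r^{|\lambda|})$, and hence the claim.

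The only genuinely delicate step is this last one: the columnwise determinant bound naturally produces $r^{\sum_{j=1}^{n-k}\lambda_j}$, and since the intended applications have $r<1$ one must argue in the correct direction that this is $O_\calP(r^{|\lambda|})$ and not merely $O_\calP(r^{|\lambda|-lm})$. It is exactly the index inequality $\lambda_{n-k+1}\le m-k$ combined with $l(\lambda)\le l$ that makes $|\lambda|-\sum_{j=1}^{n-k}\lambda_j$ a bounded quantity; everything else is routine bookkeeping with the determinantal formula.
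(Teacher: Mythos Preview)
Your proof is correct and follows essentially the same route as the paper: apply the columnwise bound of Lemma~\ref{4_lem_bound_det} to the matrix in Theorem~\ref{4_thm_det_formula_for_Littlewood-Schur}, observe that the $\calY$-columns contribute $O_\calP(1)$ because $\lambda'_i\le l$, extract $r^{\sum_{j\le n-k}\lambda_j}$ from the remaining columns, and then upgrade this to $r^{|\lambda|}$ using that the index inequality forces the tail $\sum_{j>n-k}\lambda_j$ to be bounded by $lm$. Your preliminary reduction to nonzero variables via restriction is an extra step the paper omits; it is harmless and in fact patches a minor sloppiness, since without it the entries $x^{\lambda_j+n-m-j}$ with negative exponent would not be literally finite.
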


\begin{proof} Set $l(\calX) = n$, $l(\calY) = m$ and denote the $(m,n)$-index of $\lambda$ by $k$. We remark that $k$ depends on $\lambda$, while $m$ and $n$ are constants. The determinantal formula for Littlewood-Schur functions (\textit{i.e.}\ Theorem~\ref{4_thm_det_formula_for_Littlewood-Schur}) entails that if $k$ is non-negative 
\begin{align*}
LS_\lambda(-\calX, \calY) ={} & O_{\calX, \calY} \left( \det \begin{pmatrix} \left( (x - y)^{-1} \right)_{\substack{x \in \calX \\ y \in \calY}} & \left( x^{\lambda_j + n - m - j} \right)_{\substack{x \in \calX \\ 1 \leq j \leq n - k}} \\ \left( y^{\lambda'_i + m - n - i} \right)_{\substack{1 \leq i \leq m - k \\ y \in \calY}} & 0\end{pmatrix} \right)
;
\intertext{otherwise, the Littlewood-Schur function $LS_\lambda(-\calX; \calY)$ vanishes, allowing us to ignore the case $k < 0$. Let us call this matrix $A$. As the size of $A$ is $m + n - k \leq m + n$ for all partitions $\lambda$, Lemma~\ref{4_lem_bound_det} states that}
LS_\lambda (-\calX; \calY) ={} & O_{\calX, \calY} \left( \prod_{j = 1}^{m + n - k} \max_{1 \leq i \leq m + n - k} |a_{ij}| \right)
.
\intertext{The condition that $l(\lambda) \leq l$ is equivalent to $\lambda'_1 \leq l$, and thus implies that $\lambda'_i \leq l$ for all $i$. Hence, the $m$ first columns of $A$ make no asymptotically relevant contribution to the bound. Therefore,} 
LS_\lambda (-\calX; \calY) ={} & O_{\calX, \calY, l} \left( \prod_{j = 1}^{n - k} \max_{x \in \calX} \left| x^{\lambda_j + n - m - j} \right| \right) = O_{\calX, \calY, l} \left( r^{\left| \lambda_{[n - k]} \right|} \right)
.
\end{align*}
By the definition of index, $\lambda_i \leq m - k$ for all indices $i > n - k$. Combined with the condition that $l(\lambda) \leq l$, we infer that
$$\left| \lambda_{[n - k]} \right| \leq \left| \lambda \right| \leq \left|\lambda_{[n - k]}\right| + (l - (n - k))(m - k) \leq \left|\lambda_{[n - k]}\right| + lm
.$$
Therefore, if $\displaystyle \begin{cases} r \geq 1 \\ r \leq 1\end{cases}\!\!\!\!$, then $\displaystyle r^{\left|\lambda_{[n - k]}\right|} \leq \begin{dcases} r^{|\lambda|} \\ r^{|\lambda|  - lm}\end{dcases} = O_{l,m, r} \left( r^{|\lambda|} \right)$.
\end{proof}

If we drop the condition that the variables in $\calX \cup \calY$ be pairwise distinct, we can no longer use Lemma~\ref{4_lem_det_bound_LS} to obtain an asymptotic bound on $LS_\lambda(\calX; \calY)$, given that the implicit constant might grow arbitrarily large whenever elements of $\calX \cup \calY$ converge towards each other. The following lemmas, which are based on the combinatorial definitions for Schur and Littlewood-Schur functions, provide bounds that do not depend on $\calX$ and $\calY$. In particular, the variables need not to be pairwise distinct. However, the bounds stated in the combinatorial lemmas are not as good as the bounds based on the determinantal definition.

\begin{lem} \label{4_lem_comb_bound_schur} Fix a positive number $r$ and a set of variables $\calX$ such that $\abs(\calX) \leq r$. As a function of $\lambda$,
\begin{align*}
\schur_\lambda(\calX) = O \left( |\lambda|^{l(\calX)^2} r^{|\lambda|} \right)
.
\end{align*}
\end{lem}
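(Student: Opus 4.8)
The plan is to bound the Schur polynomial $\schur_\lambda(\calX)$ via its combinatorial definition (Definition~\ref{1_defn_cominatorial_schur_function}), which writes $\schur_\lambda(\calX) = \sum_T \calX^T$ as a sum over semistandard Young tableaux $T$ of shape $\lambda$ filled with entries from $\calX$. Since every monomial $\calX^T$ has absolute value at most $r^{|\lambda|}$ (each of the $|\lambda|$ boxes contributes a variable of modulus at most $r$), the whole estimate reduces to counting the number of semistandard tableaux of shape $\lambda$ with entries in $\{1, \dots, l(\calX)\}$, and showing this count is $O\bigl(|\lambda|^{l(\calX)^2}\bigr)$.

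First I would set $n = l(\calX)$ and recall that $\schur_\lambda(\calX) = 0$ unless $l(\lambda) \leq n$, so we may assume $\lambda$ has at most $n$ parts; in that case the relevant tableaux are exactly those filled with numbers from $1$ to $n$. Next I would count these tableaux. A semistandard tableau of shape $\lambda$ with entries in $[n]$ is determined by its content together with further data, but a cleaner route is: such a tableau corresponds to a chain of partitions $\emptyset = \mu^{(0)} \subset \mu^{(1)} \subset \dots \subset \mu^{(n)} = \lambda$ where each $\mu^{(i)} \setminus \mu^{(i-1)}$ is a horizontal strip (the boxes labelled $i$), as discussed on page~\pageref{1_page_link_horizontal_strip_semistandard_tableau}. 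Each partition $\mu^{(i)}$ is contained in $\lambda$, hence has at most $n$ parts, each of size at most $\lambda_1 \leq |\lambda|$; so there are at most $(|\lambda| + 1)^n$ choices for each $\mu^{(i)}$, and at most $(|\lambda|+1)^{n(n-1)}$ choices for the whole chain (the first and last are fixed). This already gives a bound of the form $O\bigl(|\lambda|^{n(n-1)}\bigr)$, which is even slightly stronger than $|\lambda|^{n^2}$; combining with $|\calX^T| \leq r^{|\lambda|}$ yields
\begin{align*}
|\schur_\lambda(\calX)| \leq (\#\text{tableaux}) \cdot r^{|\lambda|} = O\bigl(|\lambda|^{n^2} r^{|\lambda|}\bigr)
\end{align*}
as a function of $\lambda$, with an absolute implicit constant (it depends only on $n$ and $r$ through the crude counting, and one can absorb these into the $O$ since the statement only claims $O$ without specifying the parameter dependence — though to be safe one should write $O_{l(\calX), r}$; re-reading the statement, it writes a bare $O$, so I would note that the implicit constant depends on $l(\calX)$ and is uniform once $\abs(\calX) \leq r$).

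The main obstacle is essentially bookkeeping rather than a genuine difficulty: making sure the polynomial count of tableaux is stated with the right exponent and that the degenerate cases ($\lambda = \emptyset$, or $l(\lambda) > n$) are handled, and being careful that the implicit constant is genuinely independent of the particular points of $\calX$ (which is exactly why the combinatorial definition is used instead of the determinantal one — the Vandermonde denominator could blow up). I would also double-check that the bound $(|\lambda|+1)^{n^2}$ is what is wanted and not some sharper polynomial; since the claim is merely an upper bound of this order, the crude chain-counting argument suffices and there is no need to optimize. One minor point to verify is that when $|\lambda| = 0$ the bound $|\lambda|^{n^2}$ should be interpreted as giving the constant $1$ (or one restricts to $|\lambda| \geq 1$ and notes the case $\lambda = \emptyset$ separately, where $\schur_\emptyset = 1$).
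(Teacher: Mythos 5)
Your argument is essentially the paper's own proof: both use the combinatorial definition to reduce the estimate to $r^{|\lambda|}$ times the number of semistandard $\lambda$-tableaux with entries in $[l(\calX)]$, and both then bound this count polynomially in $|\lambda|$ via the chain of subpartitions with horizontal-strip differences (the paper bounds the number of choices for the entry-$i$ strip by $\lambda_1\cdots\lambda_i$, you bound the number of intermediate partitions by $(|\lambda|+1)^n$ each; both give an exponent at most $l(\calX)^2$). Your remarks about the implicit constant depending on $l(\calX)$, and about using the combinatorial rather than determinantal definition to avoid the Vandermonde denominator, are both apt and consistent with how the paper uses the lemma (it later invokes it as $O_\calP$ with $l(\calX) \in \calP$).
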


\begin{proof} Owing to the combinatorial definition for Schur functions (\textit{i.e.}\ Definition~\ref{1_defn_cominatorial_schur_function}),  
\begin{align*}
\left| \schur_\lambda(\calX) \right| \leq \sum_T r^{|\lambda|}
\end{align*}
where the sum runs over all semistandard $\lambda$-tableaux $T$ whose entries do not exceed $l(\calX)$. Hence, it suffices to bound the number of tableaux that contribute to the sum. Given that the rows/columns of $T$ are weakly/strongly increasing, there are at most $\lambda_1 \cdots \lambda_i \leq |\lambda|^i$ possible choices for the boxes of $T$ that contain the positive integer $i$. Multiplying over all $1 \leq i \leq l(\calX)$ gives the desired bound.
\end{proof}

\begin{lem} \label{4_lem_comb_bound_LS} Fix a natural number $l$, a positive number $r$ and two sets of variables $\calX$ and $\calY$ such that $\abs(\calX) \leq r$. As a function of partitions $\lambda$ with $l(\lambda) \leq l$, 
\begin{align*}
LS_\lambda(\calX; \calY) = O_\calP \left( |\lambda|^{l(\calX)^2 + l^2}  r^{|\lambda|} \right)
\end{align*}
where the implicit constant depends on $\calP = \{l, r, l(\calY), \max(\abs(\calY))\}$.
\end{lem}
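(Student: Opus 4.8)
The plan is to pull everything back to the Littlewood--Richardson expansion of $LS_\lambda(\calX;\calY)$ and estimate term by term, exploiting that the relevant partitions are confined to a bounded region in the $\calY$-direction and have boundedly many parts in both directions.

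\textbf{Step 1 (reduction to nonnegative coefficients).} As recorded just after Proposition~\ref{4_prop_gen_Cauchy}, the nonnegativity of Littlewood--Richardson coefficients together with the combinatorial definition of Schur functions gives
\[
\left| LS_\lambda(\calX;\calY)\right|\;\le\;\sum_{\mu,\nu} c^\lambda_{\mu\nu}\,|\schur_\mu(\calX)|\,|\schur_{\nu'}(\calY)|\;\le\; LS_\lambda(\abs(\calX);\abs(\calY))\;=\;\sum_{\mu,\nu} c^\lambda_{\mu\nu}\,\schur_\mu(\abs(\calX))\,\schur_{\nu'}(\abs(\calY)),
\]
and every summand on the right is nonnegative, so it suffices to bound this last sum.

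\textbf{Step 2 (restricting the range of $\mu$ and $\nu$).} Since $c^\lambda_{\mu\nu}$ vanishes unless $\mu\subset\lambda$, and by symmetry $c^\lambda_{\mu\nu}=c^\lambda_{\nu\mu}$ also unless $\nu\subset\lambda$, every contributing pair satisfies $l(\mu),l(\nu)\le l(\lambda)\le l$. Moreover $\schur_{\nu'}(\abs(\calY))=0$ unless $l(\nu')=\nu_1\le l(\calY)$; combined with $l(\nu)\le l$ this forces $|\nu|\le l\cdot l(\calY)=:N_0$, a constant depending only on $l$ and $l(\calY)$. Hence at most $(l(\calY)+1)^l$ partitions $\nu$ contribute, and for each of them $\schur_{\nu'}(\abs(\calY))$ is bounded by a constant $C_1$ depending only on $l$, $l(\calY)$ and $\max(\abs(\calY))$. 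Also, for contributing $(\mu,\nu)$ one has $|\mu|=|\lambda|-|\nu|\in\{|\lambda|-N_0,\dots,|\lambda|\}$, so $r^{|\mu|}=O_{r,N_0}(r^{|\lambda|})$.

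\textbf{Step 3 (bounding the inner $\mu$-sum).} Fix a contributing $\nu$. By the Littlewood--Richardson rule (Theorem~\ref{1_thm_Littlewood_Richardson rule}), each $c^\lambda_{\mu\nu}$ is at most the number of semistandard tableaux of skew shape $\lambda\setminus\nu$ with entries in $\{1,\dots,l\}$, and since distinct $\mu$ correspond to distinct contents, $\sum_\mu c^\lambda_{\mu\nu}$ is itself at most that number. Each such tableau is encoded by a chain of partitions $\nu=\theta^{(0)}\subseteq\theta^{(1)}\subseteq\cdots\subseteq\theta^{(l)}=\lambda$ in which every $\theta^{(i)}\setminus\theta^{(i-1)}$ is a horizontal strip; each intermediate $\theta^{(i)}$ has at most $l(\lambda)\le l$ parts, each part at most $\lambda_1\le|\lambda|$, so there are at most $(|\lambda|+1)^l$ choices for each of $\theta^{(1)},\dots,\theta^{(l-1)}$. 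Therefore $\sum_\mu c^\lambda_{\mu\nu}\le(|\lambda|+1)^{l(l-1)}=O(|\lambda|^{l^2})$. Applying Lemma~\ref{4_lem_comb_bound_schur} to $\schur_\mu(\abs(\calX))$ (valid because every element of $\abs(\calX)$ is $\le r$ in absolute value) and using $|\mu|\le|\lambda|$ together with Step 2 yields $\schur_\mu(\abs(\calX))=O_\calP(|\lambda|^{l(\calX)^2}r^{|\lambda|})$ uniformly over the contributing $\mu$. Hence
\[
\sum_\mu c^\lambda_{\mu\nu}\,\schur_\mu(\abs(\calX))\;\le\;\Bigl(\sum_\mu c^\lambda_{\mu\nu}\Bigr)\max_\mu \schur_\mu(\abs(\calX))\;=\;O_\calP\!\left(|\lambda|^{\,l^2+l(\calX)^2}\,r^{|\lambda|}\right).
\]
Multiplying by $\schur_{\nu'}(\abs(\calY))\le C_1$ and summing over the at most $(l(\calY)+1)^l$ contributing $\nu$ only affects the implicit constant, so $LS_\lambda(\abs(\calX);\abs(\calY))=O_\calP(|\lambda|^{l^2+l(\calX)^2}r^{|\lambda|})$, which with Step 1 is the claim, the implicit constant depending only on $\calP=\{l,r,l(\calY),\max(\abs(\calY))\}$.

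I expect the only non-routine point to be Step 3: one needs the count $\sum_\mu c^\lambda_{\mu\nu}$ to be polynomial in $|\lambda|$ with an exponent governed solely by $l$, and the mechanism making this work is that every intermediate shape in the horizontal-strip chain is a partition with at most $l$ parts, confining it to a set of polynomial size. This same ``boundedly many parts'' phenomenon is also what keeps the $\calY$-contribution constant in Step 2. Everything else is bookkeeping with the $O_\calP$-constants, making sure no dependence on $\calX$ (beyond the bound $r$) or on $\calY$ (beyond $l(\calY)$ and $\max(\abs(\calY))$) sneaks in.
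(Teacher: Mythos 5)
Your proposal is correct and follows essentially the same route as the paper's proof: reduce via the LR expansion and nonnegativity, treat the $\calY$-contribution as a constant (since $\nu\subset\langle l(\calY)^l\rangle$), invoke Lemma~\ref{4_lem_comb_bound_schur} for the $\calX$-factor, and bound the LR contribution by $|\lambda|^{l^2}$ using the constraint $l(\lambda)\le l$. The only cosmetic difference is that you bound the aggregate $\sum_\mu c^\lambda_{\mu\nu}$ directly by counting horizontal-strip chains, whereas the paper bounds each $c^\lambda_{\mu\nu}$ individually by $|\lambda|^{l^2}$ and separately shows the number of contributing $(\mu,\nu)$ pairs is a constant; both bookkeeping choices yield the same exponent.
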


\begin{proof} Applying Lemma~\ref{4_lem_comb_bound_schur} to the combinatorial definition for Littlewood-Schur functions (\textit{i.e.}\ Definition~\ref{4_defn_comb_LS}) gives us
\begin{align*}
LS_\lambda(\calX; \calY) ={} & \sum_{\substack{\mu, \nu: \\ \nu_1 \leq l(\calY)}} c^\lambda_{\mu \nu} O \left( |\mu|^{l(\calX)^2} r^{|\mu|} |\nu|^{l(\calY)^2} R^{|\nu|} \right)
\intertext{where $R = \max(\abs(\calY))$. Since the Littlewood-Richardson coefficient $c^\lambda_{\mu \nu}$ vanishes unless $\nu$ is a subset of $\lambda$, only partitions $\nu$ contained in the rectangle $\left\langle l(\calY)^l \right\rangle$ appear in the sum. Hence, the fact that $|\nu| + |\mu| = |\lambda|$ for all partitions that contribute to the sum entails that $|\mu| \leq |\lambda| \leq |\mu| + l(\calY)l$, which allows us to replace $|\mu|$ by $|\lambda|$. Keeping track of the fact that $\mu \subset \lambda$ whenever $c^\lambda_{\mu \nu} \neq 0$, we thus have that}
LS_\lambda(\calX; \calY) ={} & O_{l, r, l(\calY), R} \left( |\lambda|^{l(\calX)^2} r^{|\lambda|} \right) \sum_{\substack{\mu, \nu: \\ \nu \subset \left\langle l(\calY)^l \right\rangle \\ \mu \subset \lambda \\ |\mu| + |\nu| = |\lambda| }} c^\lambda_{\mu \nu} 
.
\end{align*}
According to the Littlewood-Richardson rule (\textit{i.e.}\ Theorem~\ref{1_thm_Littlewood_Richardson rule}), $c^\lambda_{\mu \nu}$ can be bounded by the number of skew semistandard $\lambda \setminus \nu$-tableaux $T$ with weight $\mu$. For each positive integer $i$, there are at most $\lambda_1 \cdots \lambda_l \leq |\lambda|^l$ ways to choose the boxes of $T$ that contain $i$ (given that $l(\lambda) \leq l$). The condition that $l(\mu) \leq l$ thus implies that $c^\lambda_{\mu \nu} \leq |\lambda|^{l^2}$.

The bound stated above now follows from the observation that the number of pairs $\mu, \nu$ to sum over is less than $l(\calY)^l \times (l(\calY) l)^l$. Indeed, there are less than $l(\calY)^l$ partitions $\nu$ that are contained in the rectangle $\left\langle l(\calY)^l \right\rangle$. Fixing a partition $\nu$, the conditions that $\mu \subset \lambda$ and $|\mu| = |\lambda| - |\nu|$ allow us to infer that there are at most $|\nu| \leq l(\calY) l$ ways to choose a part $\mu_i$ for $1 \leq i \leq l$.
\end{proof}

\subsection{The recipe}
Before stating the recipe we quickly recall our notation for the characteristic polynomial.
\begin{defn} [characteristic polynomial] The characteristic polynomial of a unitary matrix $g \in U(N)$ is given by $\chi_g(z) = \det \left( I - zg^{-1}\right)$ where $I$ is the identity matrix.
\end{defn}

\begin{recipe*} [ratios and logarithmic derivatives] Let $\calA$, $\calB$, $\calC$, $\calD$, $\calE$ and $\calF$ be sets of non-zero variables so that the four latter only contain elements that are strictly less than 1 in absolute value. If $l(\calD) \leq l(\calA)$ and the elements of $\calA \cup \calB^{-1}$ are pairwise distinct, then 
\begin{align} \label{4_recipe_eq}
\begin{split}
& \hspace{-15pt} \int_{U(N)} \frac{\prod_{\alpha \in \calA} \chi_g(\alpha) \prod_{\beta \in \calB} \chi_{g^{-1}}(\beta)}{\prod_{\delta \in \calD} \chi_g(\delta) \prod_{\gamma \in \calC} \chi_{g^{-1}} (\gamma)} \prod_{\varepsilon \in \calE} \frac{\chi'_g(\varepsilon)}{\chi_g(\varepsilon)} \prod_{\varphi \in \calF} \frac{\chi'_{g^{-1}}(\varphi)}{\chi_{g^{-1}}(\varphi)} dg \\
={} & \elementary (-\calB)^N \sum_{\substack{\calS, \calT \subset \calA \cup \calB^{-1}: \\ \calS \cup_{l(\calB), l(\calA)} \calT \sorteq \calA \cup \calB^{-1}}} \elementary (-\calS)^{N + l(\calA) - l(\calD)} \frac{\Delta(\calD; \calS)}{\Delta(\calT; \calS)} \\
& \times (-1)^{l(\calE) + l(\calF)} \sum_{\substack{\calE', \calE'' \subset \calE: \\ \calE' \cup \calE'' \sorteq \calE}} \sum_{\substack{q,n \geq 0: \\ q + n \leq N - l(\calC)}} \left( \sum_{\substack{\chi: \\ l(\chi) = l\left(\calE''\right) \\ |\chi| = q}} \monomial_{\chi - \left\langle 1^{l\left(\calE''\right)} \right\rangle} \left(-\calE''\right) \power_{-\chi}(-\calS) \right) \\
& \times \sum_{\substack{\psi: \\ l(\psi) = l\left(\calE'\right)}} \monomial_{\psi - \left\langle 1^{l\left(\calE'\right)} \right\rangle} \left(\calE'\right) \sum_{\substack{\omega: \\ l(\omega) = l(\calF) \\ |\omega| = n}} \monomial_{\omega - \left\langle 1^{l(\calF)} \right\rangle} (\calF) \\
& \times \sum_{\substack{\lambda, \xi: \\ \omega \cup \xi \sorteq \psi \cup \lambda}} z_\lambda^{-1} \power_\lambda \left( \rho^\beta_{-\calT} \cup \rho^\alpha_{\calD} \right) \prod_{i \geq 1} \frac{i^{m_i(\omega)} m_i(\psi \cup \lambda)!}{m_i(\xi)!} \power_\xi (\calC)
\\
& + \error
.
\end{split}
\end{align}
An asymptotic bound for the error is given in \eqref{4_eq_error_bound_for_recipe} on page \pageref{4_eq_error_bound_for_recipe}.
\end{recipe*}

We call this statement a recipe rather than a theorem because we are not able to give a neat bound for the error term. In particular, the error term might be larger than the main term. However, when some of the sets of variables are empty the error term becomes more tractable, which will allow us to prove the results presented in Section~\ref{4_sec_results}. In this sense the recipe provides a unified way of showing formulas for products of ratios and/or logarithmic derivatives. 

On a more technical note, observe that for $g \in U(N)$ and $z \in \C \setminus \{0\}$,
\begin{align*}
\chi_g(z) = \det \left(I - z g^{-1}\right) = \det\left(-zg^{-1}\right) \det\left(-z^{-1}g + I\right) = (-z)^N \overline{\elementary(\calR(g))} \chi_{g^{-1}} \left(z^{-1}\right)
\end{align*}
where $\calR(g)$ is the multiset of eigenvalues of $g$. Considering the integrand on the left-hand side of \eqref{4_recipe_eq}, we see that this observation allows us to replace $\chi_g(\delta)$ by $\chi_{g^{-1}}(\gamma)$ with $\gamma = \delta^{-1}$ at the cost of a factor which is easy to handle. Hence, for any $r \in \R \setminus \{0\}$, the condition that $\abs(\calC)$, $\abs(\calD) \leq r$ is essentially equivalent to the condition that all elements of $\calC \cup \calD$ are less than $r$ or greater than $r^{-1}$ in absolute value. Moreover, the same holds for the sets of variables $\calA$ and $\calB$. In particular, prerequisites of the type $\abs(\calA)$, $\abs(\calB) \leq 1$ are essentially empty conditions. Of course, one has to be careful not to violate other conditions, such as $l(\calD) \leq l(\calA)$, when using this trick. 

\begin{proof} 
This proof is based on the observation that the integrand on the left-hand side is symmetric in the eigenvalues of the unitary matrix $g$, say $\calR(g)$, as well as in their complex conjugates $\overline{\calR(g)}$. It is thus (at least theoretically) possible to express the integrand as an infinite linear combination of products of Schur functions of the form 
$$\overline{\schur_\lambda(\calR(g))} \schur_\kappa(\calR(g)).$$
Once the coefficients of this linear combination are known, Schur orthogonality immediately gives an expression for the integral on the left-hand side. At the cost of an error (which is ultimately due to the fact that Schur functions are only \emph{essentially} orthonormal), we then simplify this expression by applying results presented in the preceding sections.

Given that $\abs(\calC)$, $\abs(\calD) < 1$ elementary linear algebra manipulations together with the generalized Cauchy identity (\textit{i.e.}\ Proposition~\ref{4_prop_gen_Cauchy}) give the following expression for the ratios on the left-hand side in \eqref{4_recipe_eq}:
\begin{align*}
& \frac{\prod_{\alpha \in \calA} \chi_g(\alpha) \prod_{\beta \in \calB} \chi_{g^{-1}}(\beta)}{\prod_{\delta \in \calD} \chi_g(\delta) \prod_{\gamma \in \calC} \chi_{g^{-1}} (\gamma)} \displaybreak[2]\\ 
={} & \prod_{\alpha \in \calA} \det \left( I - \alpha g^{-1} \right) \prod_{\beta \in \calB} \left[ \det(g) \det(-\beta I) \det \left( -\beta^{-1} g^{-1} + I \right) \right] \\
& \times \prod_{\delta \in \calD} \det \left( I - \delta g^{-1} \right)^{-1} \prod_{\gamma \in \calC} \det \left( I - \gamma g \right)^{-1} \displaybreak[2] \\
={} & \elementary (-\calB)^N \det(g)^{l(\calB)} \prod_{\substack{x \in \calA \cup \calB^{-1} \\ \rho \in \calR(g)}} (1 - x\overline{\rho}) \prod_{\substack{\delta \in \calD \\ \rho \in \calR(g)}} (1 - \delta \overline{\rho})^{-1} \prod_{\substack{\gamma \in \calC \\ \rho \in \calR(g)}} (1 - \gamma \rho)^{-1} \displaybreak[2]\\
={} & \elementary (-\calB)^N \elementary(\calR(g))^{l(\calB)} \left( \sum_{\lambda} LS_{\lambda'}\left(- \left(\calA \cup \calB^{-1}\right); \calD \right) \overline{\schur_\lambda(\calR(g))} \right) \left( \sum_{\kappa} \schur_\kappa (\calC) \schur_\kappa(\calR(g)) \right)
.
\end{align*}
Bump and Gamburd use the same algebraic manipulations and similar Cauchy identities to write ratios of characteristic polynomials in terms of Schur functions \cite[p.~245-246]{bump06}.
Furthermore, Dehaye remarks that for $\varepsilon \in \C$ with $|\varepsilon| < 1$ \cite{POD08}, 
\begin{align*}
\frac{\chi'_{g}(\varepsilon)}{\chi_{g}(\varepsilon)} ={} & \sum_{\rho \in \calR(g)} \frac{-\overline{\rho}}{1 - \varepsilon \overline{\rho}} = -\sum_{m = 1}^\infty \varepsilon^{m - 1} \overline{\power_m(\calR(g))}
.
\end{align*}
Setting $e = l(\calE)$, $f = l(\calF)$ and $\calE = (\varepsilon_1, \dots, \varepsilon_e)$, $\calF = (\varphi_1, \dots, \varphi_f)$, we may thus reformulate the integral on the left-hand side in \eqref{4_recipe_eq} as
\begin{align}
\begin{split}
\LHS ={} & \elementary (-\calB)^N \sum_{\lambda} LS_{\lambda'}\left(- \left(\calA \cup \calB^{-1}\right); \calD\right) \sum_{\kappa} \schur_\kappa (\calC) \\
& \times (-1)^{e + f} \sum_{m_1, \dots, m_e \geq 1} \left( \prod_{i = 1}^e \varepsilon_i^{m_i - 1} \right) \sum_{n_1, \dots, n_f \geq 1} \left( \prod_{j = 1}^f \varphi_j^{n_j - 1} \right) \\
& \times \int_{U(N)} \elementary(\calR(g))^{l(\calB)} \overline{\schur_\lambda(\calR(g))} \schur_\kappa(\calR(g)) \prod_{i = 1}^e \overline{\power_{m_i}(\calR(g))} \prod_{j = 1}^f \power_{n_j}(\calR(g)) dg
.
\notag \end{split}
\intertext{In order to write the integrand as a linear combination of products of Schur functions, we repeatedly apply the Murnaghan-Nakayama rule (\textit{i.e.}\ Theorem~\ref{4_thm_MN_rule}):}
\begin{split}
\LHS ={} & \elementary (-\calB)^N \sum_{\lambda} LS_{\lambda'}\left(- \left(\calA \cup \calB^{-1}\right); \calD\right) \sum_{\kappa} \schur_\kappa (\calC) \\
& \times (-1)^{e + f} \sum_{m_1, \dots, m_e \geq 1} \left( \prod_{i = 1}^e \varepsilon_i^{m_i - 1} \right) \sum_{n_1, \dots, n_f \geq 1} \left( \prod_{j = 1}^f \varphi_j^{n_j - 1} \right) \\
& \times \sum_{\substack{\lambda^{(1)}, \dots, \lambda^{(e)}: \\ \lambda \overset{m_1}{\to} \lambda^{(1)} \overset{m_2}{\to} \dots \overset{m_e}{\to} \lambda^{(e)}}} (-1)^{\height \left( \lambda^{(e)} \setminus \lambda \right)}  \sum_{\substack{\kappa^{(1)}, \dots, \kappa^{(f)}: \\ \kappa \overset{n_1}{\to} \kappa^{(1)} \overset{n_2}{\to} \dots \overset{n_f}{\to} \kappa^{(f)}}} (-1)^{\height \left( \kappa^{(f)} \setminus \kappa \right)} \\
& \times \int_{U(N)} \elementary(\calR(g))^{l(\calB)} \overline{\schur_{\lambda^{(e)}}(\calR(g))} \schur_{\kappa^{(f)}}(\calR(g)) dg
.
\notag \end{split}
\intertext{It is a straightforward linear algebra exercise to show that for sequences $\calX$ of length $N$, $\elementary(\calX)^M \schur_\kappa(\calX) = \schur_{\kappa + \left\langle M^N \right\rangle} (\calX)$. Hence, Schur orthogonality (\textit{i.e.}\ Lemma~\ref{4_lem_Schur_ortho}) allows us to compute the integral. In practice, we just introduce the dummy variable $\pi$ to ensure that $\kappa^{(f)} + \left\langle l(\calB)^N \right\rangle = \lambda^{(e)}$, and that the length of the partition does not exceed $N$:}
\begin{split} \label{4_in_proof_recipe_lhs_before_main+error}
\LHS ={} & \elementary (-\calB)^N \sum_{\lambda} LS_{\lambda'}\left(- \left(\calA \cup \calB^{-1}\right); \calD\right) \sum_{\kappa} \schur_\kappa (\calC) \\
& \times (-1)^{e + f} \sum_{m_1, \dots, m_e \geq 1} \left( \prod_{i = 1}^e \varepsilon_i^{m_i - 1} \right) \sum_{n_1, \dots, n_f \geq 1} \left( \prod_{j = 1}^f \varphi_j^{n_j - 1} \right) \sum_{\substack{\pi: \\ l(\pi) \leq N}} \\
& \times \left( \sum_{\substack{\lambda^{(1)}, \dots, \lambda^{(e)}: \\ \lambda \overset{m_1}{\to} \lambda^{(1)} \overset{m_2}{\to} \dots \overset{m_e}{\to} \lambda^{(e)} \\ \lambda^{(e)} = \pi + \left\langle l(\calB)^N \right\rangle}} (-1)^{\height \left( \lambda^{(e)} \setminus \lambda \right)} \right) \left(  \sum_{\substack{\kappa^{(1)}, \dots, \kappa^{(f)}: \\ \kappa \overset{n_1}{\to} \kappa^{(1)} \overset{n_2}{\to} \dots \overset{n_f}{\to} \kappa^{(f)} \\ \kappa^{(f)} = \pi}} (-1)^{\height \left( \kappa^{(f)} \setminus \kappa \right)} \right)
.
\end{split}
\end{align}
The remainder of the proof is dedicated to simplifying the expression above, which seems to come at the cost of introducing an error term. We will replace $\lambda^{(i)}$ by the following sum of partitions: $\lambda^{(i)} = \nu^{(i)} + \mu^{(i)}$ where $\nu^{(i)}$ is the intersection of $\left\langle l(\calB)^N \right\rangle$ and $\lambda^{(i)}$. Notice that every $m_i$-ribbon $\lambda^{(i)} \setminus \lambda^{(i - 1)}$ that appears in the expression above can be cut into two ribbons: a $q_i$-ribbon $\nu^{(i)} \setminus \nu^{(i - 1)}$ that is a subset of the rectangle $\left\langle l(\calB)^N \right\rangle$, and a $p_i$-ribbon $\mu^{(i)} \setminus \mu^{(i - 1)}$ whose boxes lie strictly to the right of the vertical line given by $x = l(\calB)$. 

For the main term, we restrict ourselves to ribbon sizes that satisfy
\begin{align} \label{4_in_proof_recipe_error_condition}
q_1 + \dots + q_e + n_1 + \dots + n_f \leq N - l(\calC)
.
\end{align}
This restriction leads to a number of simplifications: Given that only partitions $\kappa$ of length less than $l(\calC)$ contribute to the sum (since otherwise $\schur_\kappa(\calC)$ vanishes), the fact that $n_1 + \dots + n_f + l(\calC) \leq N$ \emph{entails} that $l(\pi) \leq N$. Moreover, the restriction implies that for every $m_i$-ribbon that appears in the main term, $p_i = 0$ or $q_i = 0$. This last simplification is probably best explained by means of a sketch. The following drawing depicts possible Ferrers diagrams of the partition $\left\langle l(\calB)^N \right\rangle + \pi$ (white) and its subset $\lambda$ (hatched). 
\begin{center}
\begin{tikzpicture}
\draw (0, 0) rectangle (1.5, 4.5); 
\draw[white] (1.5, 4.5) -- (1.5, 4); 
\draw (1.5, 4.5) -- (4, 4.5); 
\draw (4, 4.5) -- (4, 3.75);
\draw (4, 3.75) -- (3, 3.75);
\draw (3, 3.75) -- (3, 3.5);
\draw (3, 3.5) -- (2.75, 3.5);
\draw (2.75, 3.5) -- (2.75, 3);
\draw (2.75, 3) -- (2, 3);
\draw (2, 3) -- (2, 2.75);
\draw (2, 2.75) -- (1.5, 2.75);
\fill[pattern=north west lines, pattern color=black!40!white] (0, 0.5) rectangle (1, 4.5);
\fill[pattern=north west lines, pattern color=black!40!white] (1, 0.75) rectangle (1.25, 4.5);
\fill[pattern=north west lines, pattern color=black!40!white] (1.25, 1.5) rectangle (1.5, 4.5);
\fill[pattern=north west lines, pattern color=black!40!white] (1.5, 4) rectangle (3.25, 4.5);
\fill[pattern=north west lines, pattern color=black!40!white] (3.25, 4.25) rectangle (3.5, 4.5);
\fill[pattern=north west lines] (1.25, 2.75) rectangle (1.5, 3);
\draw (0, 0.5) -- (1, 0.5);
\draw (1, 0.5) -- (1, 0.75);
\draw (1, 0.75) -- (1.25, 0.75);
\draw (1.25, 0.75) -- (1.25, 1.5);
\draw (1.25, 1.5) -- (1.5, 1.5);
\draw (1.5, 4) -- (3.25, 4);
\draw (3.25, 4) -- (3.25, 4.25);
\draw (3.25, 4.25) -- (3.5, 4.25);
\draw (3.5, 4.25) -- (3.5, 4.5);
\draw[decoration={brace, raise=5pt},decorate] (0, 0) -- node[left=6pt] {$N$} (0, 4.5);
\draw[decoration={brace, raise=5pt, mirror},decorate] (0, 0) -- node[below=6pt] {$l(\calB)$} (1.5, 0);
\draw[decoration={brace, raise=5pt},decorate] (4, 4.5) -- node[right=6pt] {$\leq l(\calC) + n_1 + \dots + n_f$} (4, 2.75);
\draw[decoration={brace, raise=5pt, mirror},decorate] (1.5, 0) -- node[right=6pt] {$\leq q_1 + \dots + q_e$} (1.5, 1.5);
\end{tikzpicture}
\end{center}
By definition of the $q_i$, $\left| \lambda \cap \left\langle l(\calB)^N \right\rangle \right| = N l(\calB) - q_1 - \dots - q_e$, which implies that $N - \lambda'_{l(\calB)} \leq q_1 + \dots + q_e$, as indicated on the sketch. In addition, we have already seen that $l(\pi) \leq l(\calC) + n_1 + \dots + n_f$. Therefore, the condition given in \eqref{4_in_proof_recipe_error_condition} implies that the box with coordinates $(l(\calB), l(\pi))$ must be contained in $\lambda$. The box in question is marked by a slightly darker pattern. We thus conclude that every $m_i$-ribbon $\lambda^{(i)} \setminus \lambda^{(i - 1)}$ lies either to the left or strictly to the right of this box, which is the graphical way of saying that either $m_i = q_i$ or $m_i = p_i$. 

In sum, the main term is equal to
\begin{align*}
\main ={} & \elementary (-\calB)^N \sum_{\substack{\mu, \nu: \\ \nu' \cup \mu' \text{ is a partition}}} LS_{\nu' \cup \mu'}\left(- \left(\calA \cup \calB^{-1}\right); \calD\right) \sum_{\kappa} \schur_\kappa (\calC) \\
& \times (-1)^{e + f} \sum_{\substack{g,h \geq 0: \\ g + h = e}} \sum_{\substack{G,H \subset [e]: \\ G \cup_{g,h} H = [e]}} \sum_{p_1, \dots, p_g \geq 1} \left( \prod_{i = 1}^g \varepsilon_{G_i}^{p_i - 1} \right) \\
& \times \sum_{\substack{q,n \geq 0: \\ q + n \leq N - l(\calC)}} \sum_{\substack{q_1, \dots, q_h \geq 1: \\ q_1 + \dots + q_h = q}} \left( \prod_{i = 1}^h \varepsilon_{H_i}^{q_i - 1} \right) \sum_{\substack{n_1, \dots, n_f \geq 1: \\ n_1 + \dots + n_f = n}} \left( \prod_{j = 1}^f \varphi_j^{n_j - 1} \right) \sum_\pi \\
& \times \left( \sum_{\substack{\nu^{(1)}, \dots, \nu^{(h)}: \\ \nu \overset{q_1}{\to} \nu^{(1)} \overset{q_2}{\to} \dots \overset{q_h}{\to} \nu^{(h)} \\ \nu^{(h)} = \left\langle l(\calB)^N \right\rangle}} (-1)^{\height \left( \nu^{(h)} \setminus \nu \right)} \right)
\left( \sum_{\substack{\mu^{(1)}, \dots, \mu^{(g)}: \\ \mu \overset{p_1}{\to} \mu^{(1)} \overset{p_2}{\to} \dots \overset{p_g}{\to} \mu^{(g)} \\ \mu^{(g)} = \pi}} (-1)^{\height \left( \mu^{(g)} \setminus \mu \right)} \right) \\
& \times \left( \sum_{\substack{\kappa^{(1)}, \dots, \kappa^{(f)}: \\ \kappa \overset{n_1}{\to} \kappa^{(1)} \overset{n_2}{\to} \dots \overset{n_f}{\to} \kappa^{(f)} \\ \kappa^{(f)} = \pi}} (-1)^{\height \left( \kappa^{(f)} \setminus \kappa \right)} \right)
.
\end{align*}
First notice that under the assumption that the condition given in \eqref{4_in_proof_recipe_error_condition} is satisfied, the restriction to pairs of partitions $\mu$, $\nu$ so that $\nu' \cup \mu'$ is a partition is actually superfluous. Indeed,
$$\nu'_{l(\calB)} \geq N - q_1 - \dots - q_h \geq l(\calC) + n_1 + \dots + n_f \geq l(\pi) \geq l(\mu) = \mu'_1.$$
We use Lemma~\ref{4_lem_first_overlap_id} to write $LS_{\nu' \cup \mu'}\left(- \left( \calA \cup \calB^{-1}\right); \calD \right)$ as a sum of products of Littlewood-Schur functions that depend on $\nu$ or $\mu$ but not on both. This is permissible given that the elements of $\calA \cup \calB^{-1}$ are pairwise distinct and that $l(\calD) \leq l(\calA)$, which implies that the $(l(\calD), l(\calA) + l(\calB))$-index of any partition is less than $l(\calA)$. More concretely, we obtain
$$LS_{\nu' \cup \mu'}\left(- \left(\calA \cup \calB^{-1}\right); \calD\right) = \sum_{\substack{\calS, \calT \subset \calA \cup \calB^{-1}: \\ \calS \cup_{l(\calB), l(\calA)} \calT \sorteq \calA \cup \calB^{-1}}}  \frac{LS_{\nu' + \left\langle l(\calA)^{l(\calB)} \right\rangle}(- \calS; \calD) LS_{\mu'}(- \calT; \calD)}{\Delta(\calT; \calS)}.$$
Again due to the fact that $l(\calD) \leq l(\calA)$, Corollary~\ref{4_cor_index=0} states that
\begin{multline*}
LS_{\nu' + \left\langle l(\calA)^{l(\calB)} \right\rangle}(- \calS; \calD) \\ = \Delta(\calD; \calS)\schur_{\nu' + \left\langle (l(\calA) - l(\calD))^{l(\calB)} \right\rangle} (-\calS) = \Delta(\calD; \calS) \elementary (-\calS)^{l(\calA) - l(\calD)} \schur_{\nu'}(-\calS).
\end{multline*}
Hence, rearranging the various sums in the main term yields
\begin{align} \label{4_in_proof_recipe_main_after_overlap_id}
\begin{split}
\main ={} & \elementary (-\calB)^N \sum_{\substack{\calS, \calT \subset \calA \cup \calB^{-1}: \\ \calS \cup_{l(\calB), l(\calA)} \calT \sorteq \calA \cup \calB^{-1}}} \elementary (-\calS)^{l(\calA) - l(\calD)} \frac{\Delta(\calD; \calS)}{\Delta(\calT; \calS)} \\
& \times (-1)^{e + f} \sum_{\substack{g,h \geq 0: \\ g + h = e}} \sum_{\substack{G,H \subset [e]: \\ G \cup_{g,h} H = [e]}} \sum_{p_1, \dots, p_g \geq 1} \left( \prod_{i = 1}^g \varepsilon_{G_i}^{p_i - 1} \right) \\
& \times \sum_{\substack{q,n \geq 0: \\ q + n \leq N - l(\calC)}} \sum_{\substack{q_1, \dots, q_h \geq 1: \\ q_1 + \dots + q_h = q}} \left( \prod_{i = 1}^h \varepsilon_{H_i}^{q_i - 1} \right) \sum_{\substack{n_1, \dots, n_f \geq 1: \\ n_1 + \dots + n_f = n}} \left( \prod_{j = 1}^f \varphi_j^{n_j - 1} \right) \\
& \times \sum_\nu \schur_{\nu'}(-\calS) \sum_{\substack{\nu^{(1)}, \dots, \nu^{(h)}: \\ \nu \overset{q_1}{\to} \nu^{(1)} \overset{q_2}{\to} \dots \overset{q_h}{\to} \nu^{(h)} \\ \nu^{(h)} = \left\langle l(\calB)^N \right\rangle}} (-1)^{\height \left( \nu^{(h)} \setminus \nu \right)} \\
& \times \sum_\mu LS_{\mu'}(- \calT; \calD) \sum_\pi \sum_{\substack{\mu^{(1)}, \dots, \mu^{(g)}: \\ \mu \overset{p_1}{\to} \mu^{(1)} \overset{p_2}{\to} \dots \overset{p_g}{\to} \mu^{(g)} \\ \mu^{(g)} = \pi}} (-1)^{\height \left( \mu^{(g)} \setminus \mu \right)} \\
& \times \sum_\kappa \schur_\kappa(\calC) \sum_{\substack{\kappa^{(1)}, \dots, \kappa^{(f)}: \\ \kappa \overset{n_1}{\to} \kappa^{(1)} \overset{n_2}{\to} \dots \overset{n_f}{\to} \kappa^{(f)} \\ \kappa^{(f)} = \pi}} (-1)^{\height \left( \kappa^{(f)} \setminus \kappa \right)}
.
\end{split}
\end{align}

Let us now focus on the three sums over ribbons:
\begin{align*}
\ribbon(q_1, \dots, q_h) \defeq{} & \sum_\nu \schur_{\nu'}(-\calS) \sum_{\substack{\nu^{(1)}, \dots, \nu^{(h)}: \\ \nu \overset{q_1}{\to} \nu^{(1)} \overset{q_2}{\to} \dots \overset{q_h}{\to} \nu^{(h)} \\ \nu^{(h)} = \left\langle l(\calB)^N \right\rangle}} (-1)^{\height \left( \nu^{(h)} \setminus \nu \right)} 
\displaybreak[2] \\
={} & \sum_{\substack{\nu^{(0)}, \nu^{(1)}, \dots, \nu^{(h - 1)}: \\ \nu^{(0)} \overset{q_1}{\to} \dots \overset{q_{h - 1}}{\to} \nu^{(h - 1)} \overset{q_h}{\to} \left\langle l(\calB)^N \right\rangle}} (-1)^{\height \left(\left\langle l(\calB)^N \right\rangle \setminus \nu^{(0)} \right)} \schur_{{\nu^{(0)}}'}(-\calS)
.
\intertext{The equality in \eqref{4_eq_height_conjugate_ribbon} allows us to get rid of the conjugation in the index of the Schur function. Since $\left\langle l(\calB)^N \right\rangle' = \left\langle N^{l(\calB)} \right\rangle$,}
\ribbon(q_1, \dots, q_h) ={} & (-1)^{q - h} \sum_{\substack{\nu^{(0)}, \nu^{(1)}, \dots, \nu^{(h - 1)}: \\ \nu^{(0)} \overset{q_1}{\to} \dots \overset{q_{h - 1}}{\to} \nu^{(h - 1)} \overset{q_h}{\to} \left\langle N^{l(\calB)} \right\rangle}} (-1)^{\height \left(\left\langle N^{l(\calB)} \right\rangle \setminus \nu^{(0)} \right)} \schur_{\nu^{(0)}}(\rho^\alpha_{-\calS})
.
\intertext{Repeatedly applying Corollary \ref{4_cor_MN_negative_lambda} results in}
\ribbon(q_1, \dots, q_h) ={} & (-1)^{q - h} \left[ q_1 \frac{\partial}{\partial \power_{q_1}} \cdots q_h \frac{\partial}{\partial \power_{q_h}} \schur_{\left\langle N^{l(\calB)} \right\rangle} \right] \left( \rho^\alpha_{-\calS} \right)
.
\intertext{The theory of operators makes it apparent that $\ribbon(q_1, \dots, q_h)$ is independent of the order of the $q_i$. Without loss of generality, we may thus assume that $(q_1, \dots, q_h)$ is a partition of length $h$, say $\chi$. In this notation the preceding equality reads}
\ribbon(\chi) ={} & (-1)^{|\chi| - h} \left[ \prod_{i \geq 1} i^{m_i(\chi)} \frac{\partial}{\partial \power_\chi} \schur_{\left\langle N^{l(\calB)} \right\rangle} \right] \left( \rho^\alpha_{-\calS} \right).
\intertext{Given that $|\chi| = q \leq N$ and $l(\calS) = l(\calB)$, Corollary~\ref{4_cor_MN_negative_lambda} states that}
\ribbon(\chi) ={} & (-1)^{|\chi| - h} \schur_{\left\langle N^{l(\calB)} \right\rangle}(-\calS) \power_{-\chi} (-\calS) = (-1)^{|\chi| - h} \elementary(-\calS)^N \power_{-\chi} (-\calS)
.
\end{align*}
The remaining two sums over ribbons that appear in \eqref{4_in_proof_recipe_main_after_overlap_id} can in fact be viewed as one sum:
\begin{align*}
\ribbon(p_1, \dots, p_g; n_1, \dots, n_f) \defeq{} & \sum_\mu LS_{\mu'}(- \calT; \calD) \sum_\pi \sum_{\substack{\mu^{(1)}, \dots, \mu^{(g)}: \\ \mu \overset{p_1}{\to} \mu^{(1)} \overset{p_2}{\to} \dots \overset{p_g}{\to} \mu^{(g)} \\ \mu^{(g)} = \pi}} (-1)^{\height \left( \mu^{(g)} \setminus \mu \right)} \\
& \times \sum_\kappa \schur_\kappa(\calC) \sum_{\substack{\kappa^{(1)}, \dots, \kappa^{(f)}: \\ \kappa \overset{n_1}{\to} \kappa^{(1)} \overset{n_2}{\to} \dots \overset{n_f}{\to} \kappa^{(f)} \\ \kappa^{(f)} = \pi}} (-1)^{\height \left( \kappa^{(f)} \setminus \kappa \right)}
\\
={} & \sum_\mu LS_{\mu'}(- \calT; \calD) \\
& \times \sum_{\substack{\mu^{(1)}, \dots, \mu^{(g)}, \kappa^{(1)}, \dots, \kappa^{(f)}, \kappa: \\ \mu \overset{p_1}{\to} \mu^{(1)} \overset{p_2}{\to} \dots \overset{p_g}{\to} \mu^{(g)} = \kappa^{(f)} \overset{n_f}{\leftarrow} \dots \overset{n_2}{\leftarrow} \kappa^{(1)} \overset{n_1}{\leftarrow} \kappa \\ l\left(\mu^{(g)} \right) \leq N}} \\
& \times (-1)^{\height \left( \mu^{(g)} \setminus \mu \right)} (-1)^{\height \left( \kappa^{(f)} \setminus \kappa \right)} \schur_\kappa(\rho^\alpha_\calC)
.
\intertext{Repeatedly applying the Murnaghan-Nakayama rule and its dual (\textit{i.e.}\ Theorem~\ref{4_thm_MN_rule} and Corollary~\ref{4_cor_dual_MN_rule}) gives}
\ribbon(p_1, \dots, p_g; n_1, \dots, n_f) ={} & \sum_{\mu} \schur_\mu \left( \rho^\beta_{-\calT} \cup \rho^\alpha_\calD \right) \\
& \times \left[ n_1 \frac{\partial}{\partial \power_{n_1}} \cdots n_f \frac{\partial}{\partial \power_{n_f}} \power_{p_g} \cdots \power_{p_1} \schur_\mu \right] \left(\rho^\alpha_\calC\right)
\intertext{where we view the Littlewood-Schur function as a specialization of a Schur function, following Remark~\ref{4_rem_LS_as_specialization}. As above the theory of operators makes it obvious that $\ribbon(p_1, \dots, p_g; n_1, \dots, n_f)$ is symmetric in both $(p_1, \dots, p_g)$ and $(n_1, \dots, n_f)$, which we thus replace by the partitions $\psi$ and $\omega$ of lengths $g$ and $f$, respectively. This substitution yields}
\ribbon(\psi; \omega) ={} & \left[ \prod_{i \geq 1} i^{m_i(\omega)} \frac{\partial}{\partial \power_\omega} \power_\psi \sum_{\mu} \schur_\mu \left( \rho^\beta_{-\calT} \cup \rho^\alpha_\calD \right) \schur_\mu \right] \left(\rho^\alpha_\calC\right)
.
\intertext{Due to the power sum version of the Cauchy identity given in \eqref{4_eq_power_sum_version_cauchy_id}, this is equal to an expression that only involves power sums:}
\ribbon(\psi; \omega) ={} & \prod_{i \geq 1} i^{m_i(\omega)} \sum_\lambda z_\lambda^{-1} \power_\lambda\left( \rho^\beta_{-\calT} \cup \rho^\alpha_\calD \right) \left[ \frac{\partial}{\partial \power_\omega} \power_\psi \power_\lambda \right] \left(\rho^\alpha_\calC\right)
.
\intertext{According to Lemma~\ref{4_lem_reduction_operators}, this is equal to}
\ribbon(\psi; \omega) ={} & \prod_{i \geq 1} i^{m_i(\omega)} \\
& \times \sum_{\substack{\lambda, \xi: \\ \omega \cup \xi \sorteq \psi \cup \lambda}} z_\lambda^{-1} \power_\lambda\left( \rho^\beta_{-\calT} \cup \rho^\alpha_\calD \right) \prod_{i \geq 1} \frac{m_i(\psi \cup \lambda)!}{m_i(\xi)!} \power_\xi(\calC).
\end{align*}

Incorporating these simplifications into the expression for the main term given in \eqref{4_in_proof_recipe_main_after_overlap_id} on page \pageref{4_in_proof_recipe_main_after_overlap_id} results in
\begin{align*}
\main ={} & \elementary (-\calB)^N \sum_{\substack{\calS, \calT \subset \calA \cup \calB^{-1}: \\ \calS \cup_{l(\calB), l(\calA)} \calT \sorteq \calA \cup \calB^{-1}}} \elementary (-\calS)^{N + l(\calA) - l(\calD)} \frac{\Delta(\calD; \calS)}{\Delta(\calT; \calS)} \\
& \times (-1)^{e + f} \sum_{\substack{g,h \geq 0: \\ g + h = e}} \sum_{\substack{G,H \subset [e]: \\ G \cup_{g,h} H = [e]}} \sum_{\substack{q,n \geq 0: \\ q + n \leq N - l(\calC)}} \left( \sum_{\substack{\chi: \\ l(\chi) = h \\ |\chi| = q}} \monomial_{\chi - \left\langle 1^h \right\rangle}\left(-\calE_H\right) \power_{-\chi}(-\calS) \right) \\
& \times \sum_{\substack{\psi: \\ l(\psi) = g}} \monomial_{\psi - \left\langle 1^g \right\rangle} \left(\calE_G\right) \sum_{\substack{\omega: \\ l(\omega) = f \\ |\omega| = n}} \left( \prod_{i \geq 1} i^{m_i(\omega)} \right) \monomial_{\omega - \left\langle 1^f \right\rangle} (\calF) \\
& \times \sum_{\substack{\lambda, \xi: \\ \omega \cup \xi \sorteq \psi \cup \lambda}} z_\lambda^{-1} \power_\lambda \left( \rho^\beta_{-\calT} \cup \rho^\alpha_{\calD} \right) \prod_{i \geq 1} \frac{m_i(\psi \cup \lambda)!}{m_i(\xi)!} \power_\xi (\calC)
.
\end{align*}
This is the main term stated in the Recipe up to elementary algebraic manipulations. Going back to our expression in \eqref{4_in_proof_recipe_lhs_before_main+error} for the integral on the left-hand side, we obtain the error term by considering all ribbons that do not satisfy the condition given in \eqref{4_in_proof_recipe_error_condition}. Taking absolute values inside the sums results in
\begin{align}
\begin{split}
|\error| \leq & \left| \elementary (\calB)^N \right| \sum_{\lambda} \left| LS_{\lambda'}\left(- \left(\calA \cup \calB^{-1}\right); \calD\right) \right| \sum_{\kappa} \left| \schur_\kappa (\calC) \right| \notag \end{split} \displaybreak[2] \\
\begin{split}
& \times \sum_{\substack{q,n \geq 0: \\ q + n > N - l(\calC)}} \sum_{\substack{m_1, \dots, m_e \geq 1: \\ q_1 + \dots + q_e = q}} \left( \prod_{i = 1}^e \left| \varepsilon_i^{m_i - 1} \right| \right) \sum_{\substack{n_1, \dots, n_f \geq 1: \\ n_1 + \dots + n_f = n}} \left( \prod_{j = 1}^f \left| \varphi_j^{n_j - 1} \right| \right) \\
& \times \sum_{\substack{\pi: \\ l(\pi) \leq N}} \left( \sum_{\substack{\lambda^{(1)}, \dots, \lambda^{(e)}: \\ \lambda \overset{m_1}{\to} \lambda^{(1)} \overset{m_2}{\to} \dots \overset{m_e}{\to} \lambda^{(e)} \\ \lambda^{(e)} = \pi + \left\langle l(\calB)^N \right\rangle}} 1 \right)
\left( \sum_{\substack{\kappa^{(1)}, \dots, \kappa^{(f)}: \\ \kappa \overset{n_1}{\to} \kappa^{(1)} \overset{n_2}{\to} \dots \overset{n_f}{\to} \kappa^{(f)} \\ \kappa^{(f)} = \pi}} 1 \right)
.
\notag \end{split}
\intertext{where $q_i$ stands for the number of boxes of the $m_i$-ribbon that are contained in the rectangle $\left\langle l(\calB)^N \right\rangle$. As before, we view each $m_i$-ribbon as a pair of ribbons, namely a $q_i$- and a $p_i$-ribbon that are contained in $\left\langle l(\calB)^N \right\rangle$ and $\pi$, respectively. At the cost of counting too many ribbons, we forget that each pair of ribbons can be combined to form one ribbon:}
\begin{split} \label{4_in_proof_recipe_error_before_O}
|\error| \leq & \left| \elementary (\calB)^N \right| \sum_{\substack{\mu, \nu: \\ \nu' \cup \mu' \text{ is a partition}}} \left| LS_{\nu' \cup \mu'}\left(- \left(\calA \cup \calB^{-1}\right); \calD \right) \right| \sum_{\kappa} \left| \schur_\kappa (\calC) \right| \\
& \times \sum_{\substack{q,n \geq 0: \\ q + n > N - l(\calC)}} \sum_{\substack{q_1, \dots, q_e \geq 0: \\ q_1 + \dots + q_e = q}} \sum_{p_1, \dots, p_e \geq 0} \left( \prod_{i = 1}^e \left| \varepsilon_i^{q_i + p_i - 1} \right| \right) \sum_{\substack{n_1, \dots, n_f \geq 1: \\ n_1 + \dots + n_f = n}} \left( \prod_{j = 1}^f \left| \varphi_j^{n_j - 1} \right| \right) \\
& \times \sum_{\substack{\pi: \\ l(\pi) \leq N}} \left( \sum_{\substack{\nu^{(1)}, \dots, \nu^{(e)}: \\ \nu \overset{q_1}{\to} \nu^{(1)} \overset{q_2}{\to} \dots \overset{q_e}{\to} \nu^{(e)} \\ \nu^{(e)} = \left\langle l(\calB)^N \right\rangle}} 1 \right)
\left( \sum_{\substack{\mu^{(1)}, \dots, \mu^{(e)}: \\ \mu \overset{p_1}{\to} \mu^{(1)} \overset{p_2}{\to} \dots \overset{p_e}{\to} \mu^{(e)} \\ \mu^{(e)} = \pi}} 1 \right)
\left( \sum_{\substack{\kappa^{(1)}, \dots, \kappa^{(f)}: \\ \kappa \overset{n_1}{\to} \kappa^{(1)} \overset{n_2}{\to} \dots \overset{n_f}{\to} \kappa^{(f)} \\ \kappa^{(f)} = \pi}} 1 \right)
.
\end{split}
\intertext{The next step mirrors our derivation of the main term: We separate the partitions $\mu$ and $\nu$ in $LS_{\nu' \cup \mu'}\left(- \left(\calA \cup \calB^{-1}\right); \calD\right)$ by an application of Lemma~\ref{4_lem_first_overlap_id}, and then forget the condition that the union $\nu' \cup \mu'$ must still be a partition. In addition, we eliminate the dummy variable $\pi$ to combine the sequences of sums over the $p_i$- and $n_j$-ribbons:}
\begin{split} 
|\error| \leq & \left| \elementary (\calB)^N \right| \sum_{\substack{\calS, \calT \subset \calA \cup \calB^{-1}: \\ \calS \cup_{l(\calB), l(\calA)} \calT \sorteq \calA \cup \calB^{-1}}} \left| \elementary(\calS)^{l(\calA) - l(\calD)} \right| \frac{\left|\Delta(\calD; \calS)\right|}{\left|\Delta(\calT; \calS) \right|} \\
& \times \sum_{\kappa, \mu, \nu} \left| \schur_{\nu'} (\calS) \right| \left| LS_{\mu'}(- \calT; \calD) \right| \left| \schur_\kappa (\calC) \right|
\notag \end{split} \displaybreak[2] \\
\begin{split}
& \times \sum_{\substack{q,n \geq 0: \\ q + n > N - l(\calC)}} \sum_{\substack{q_1, \dots, q_e \geq 0: \\ q_1 + \dots + q_e = q}} \sum_{p_1, \dots, p_e \geq 0} \left( \prod_{i = 1}^e \left| \varepsilon_i^{q_i + p_i - 1} \right| \right) \sum_{\substack{n_1, \dots, n_f \geq 1: \\ n_1 + \dots + n_f = n}} \left( \prod_{j = 1}^f \left| \varphi_j^{n_j - 1} \right| \right) 
\notag \end{split} \displaybreak[2] \\
\begin{split}
& \times \left( \sum_{\substack{\nu^{(1)}, \dots, \nu^{(e)}: \\ \nu \overset{q_1}{\to} \nu^{(1)} \overset{q_2}{\to} \dots \overset{q_e}{\to} \nu^{(e)} \\ \nu^{(e)} = \left\langle l(\calB)^N \right\rangle}} 1 \right) \left( \sum_{\substack{\mu^{(1)}, \dots, \mu^{(e)}, \kappa^{(1)}, \dots, \kappa^{(f)}, \kappa: \\ \mu \overset{p_1}{\to} \mu^{(1)} \overset{p_2}{\to} \dots \overset{p_e}{\to} \mu^{(e)} = \kappa^{(f)} \overset{n_f}{\leftarrow} \dots \overset{n_2}{\leftarrow} \kappa^{(1)} \overset{n_1}{\leftarrow} \kappa \\ l \left( \mu^{(e)} \right) \leq N}} 1 \right)
.
\notag \end{split}
\end{align}
Lemma~\ref{4_lem_bound_number_of_ribbons_rectangle} provides an upper bound for the two ribbon-counting sequences of sums. Indeed, all partitions $\nu^{(\cdot)}$ are contained in a rectangle of width $l(\calB)$, while all $\mu^{(\cdot)}$ and $\kappa^{(\cdot)}$ are contained in a rectangle of height $N$. We conclude that
\begin{align} 
\begin{split} \label{4_eq_error_bound_for_recipe}
\error ={} & O_{l(\calA), l(\calB), l(\calD), l(\calE), \calA \cup \calB^{-1}} \left(N^{(l(\calE) + l(\calF) - 1)^+} \right) \left| \elementary (\calB)^N \right| \\
& \times \sum_{\substack{q,n \geq 0: \\ q + n > N - l(\calC)}} \left( \sum_{\substack{q_1, \dots, q_{l(\calE)} \geq 0: \\ q_1 + \dots + q_{l(\calE)} = q}} 1 \right) \left( \sum_{\substack{n_1, \dots, n_{l(\calF)} \geq 1: \\ n_1 + \dots + n_{l(\calF)} = n}} \prod_{j = 1}^{l(\calF)} \left| \calF_j^{n_j - 1} \right| \right) \\
& \times \left( \sum_{p_1, \dots, p_{l(\calE)} \geq 0} \prod_{i = 1}^{l(\calE)} \left| \calE_i^{q_i + p_i - 1} \right| \right) \\
& \times \sum_{\substack{\calS, \calT \subset \calA \cup \calB^{-1}: \\ \calS \cup_{l(\calB), l(\calA)} \calT \sorteq \calA \cup \calB^{-1}}} \left( \sum_{\substack{\nu: \\ \nu \subset \left\langle l(\calB)^N \right\rangle \\ |\nu| = l(\calB) N - q}} \left| \schur_{\nu'}(\calS) \right| \right) \left( \sum_{\kappa, \mu} \left| LS_\mu(\calD; -\calT) \right| \left| \schur_\kappa (\calC) \right| \right)
\end{split}
\end{align}
where the last sum is over pairs of partitions $\kappa, \mu$ so that there exists a partition which can be obtained by adding $n$ boxes to $\kappa$ or by adding $p = p_1 + \dots + p_e$ boxes to $\mu$.
\end{proof}

\subsection{The results} \label{4_sec_results}
In this section we present four theorems that can be viewed as special cases of the Recipe. In these instances we are able to give reasonable bounds for the error terms, unlike in the full generality of the Recipe. While the two formulas for averages of products of ratios \emph{and} logarithmic derivatives seem to be new, formulas for pure ratios and pure products of logarithmic derivatives can be found in the literature. In fact, our expression for the ratios is just a reformulation of Bump and Gamburd's ratio theorem \cite{bump06}. The logarithmic derivative theorem presented here gives a neater and more combinatorial expression for the leading term of Conrey and Snaith's expression for averages of logarithmic derivatives \cite{CS}. 

\begin{thm}[ratios] \label{4_thm_ratios} Let $\calA$, $\calB$, $\calC$ and $\calD$ be sets of non-zero variables so that the latter two only contain elements that are strictly less than 1 in absolute value. Let the elements of $\calA \cup \calB^{-1}$ be pairwise distinct. If $l(\calD) \leq N + l(\calA)$ and $l(\calC) \leq N$, then
\begin{align}
\begin{split} \label{4_thm_ratios_eq}
& \hspace{-15pt} \int_{U(N)} \frac{\prod_{\alpha \in \calA} \chi_g(\alpha) \prod_{\beta \in \calB} \chi_{g^{-1}}(\beta)}{\prod_{\delta \in \calD} \chi_g(\delta) \prod_{\gamma \in \calC} \chi_{g^{-1}} (\gamma)} dg \\
={} & \elementary (-\calB)^N \sum_{\substack{\calS, \calT \subset \calA \cup \calB^{-1}: \\ \calS \cup_{l(\calB), l(\calA)} \calT \sorteq \calA \cup \calB^{-1}}} \elementary (-\calS)^{N + l(\calA) - l(\calD)} \frac{\Delta(\calD; \calS)}{\Delta(\calT; \calS)} \prod_{\substack{\gamma \in \calC \\ \delta \in \calD}} (1 - \gamma \delta)^{-1} \prod_{\substack{t \in \calT \\ \gamma \in \calC}} (1 - t \gamma)
.
\end{split}
\end{align}
\end{thm}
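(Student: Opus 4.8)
The argument runs the opening of the proof of the Recipe with $\calE=\calF=\emptyset$; in that situation the Murnaghan--Nakayama machinery and the error term both evaporate, and — crucially — the weaker hypothesis $l(\calD)\le N+l(\calA)$ (rather than $l(\calD)\le l(\calA)$) is exactly what is needed to invoke the first overlap identity, because the only partition that occurs sits on top of an $l(\calB)\times N$ rectangle. As noted after the statement, the result is merely a reformulation of Bump and Gamburd's ratio theorem \cite{bump06}.

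First I would rewrite the integrand. Elementary linear algebra gives $\chi_{g^{-1}}(\beta)=\elementary(-\beta)^{N}\elementary(\calR(g))\chi_{g}(\beta^{-1})$ and $\det(I-xg^{-1})=\prod_{\rho\in\calR(g)}(1-x\overline{\rho})$ since $|\rho|=1$, so the integrand equals
\[
\elementary(-\calB)^{N}\,\elementary(\calR(g))^{l(\calB)}\,\frac{\prod_{x\in\calA\cup\calB^{-1}}\prod_{\rho}(1-x\overline{\rho})}{\prod_{\delta\in\calD}\prod_{\rho}(1-\delta\overline{\rho})\,\prod_{\gamma\in\calC}\prod_{\rho}(1-\gamma\rho)}.
\]
Because $\abs(\calC),\abs(\calD)<1$ and $|\rho|=1$, the relevant series converge, and the generalized Cauchy identity (Proposition~\ref{4_prop_gen_Cauchy}, applied with one set of variables empty) together with the ordinary Cauchy identity (Lemma~\ref{4_lem_cauchy_identity_schur}) turns this into
\[
\elementary(-\calB)^{N}\elementary(\calR(g))^{l(\calB)}\Bigl(\sum_{\lambda}LS_{\lambda'}\bigl(-(\calA\cup\calB^{-1});\calD\bigr)\overline{\schur_{\lambda}(\calR(g))}\Bigr)\Bigl(\sum_{\kappa}\schur_{\kappa}(\calC)\schur_{\kappa}(\calR(g))\Bigr).
\]
Using $\elementary(\calR(g))^{l(\calB)}\schur_{\kappa}(\calR(g))=\schur_{\kappa+\langle l(\calB)^{N}\rangle}(\calR(g))$ and Schur orthogonality (Lemma~\ref{4_lem_Schur_ortho}), only the terms with $\lambda=\kappa+\langle l(\calB)^{N}\rangle$ contribute, the length restriction being automatic since $\schur_{\kappa}(\calC)=0$ unless $l(\kappa)\le l(\calC)\le N$. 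Thus the integral equals $\elementary(-\calB)^{N}\sum_{\kappa}LS_{(\kappa+\langle l(\calB)^{N}\rangle)'}\bigl(-(\calA\cup\calB^{-1});\calD\bigr)\schur_{\kappa}(\calC)$.

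Now I would write $(\kappa+\langle l(\calB)^{N}\rangle)'=\langle N^{l(\calB)}\rangle\cup\kappa'$ and apply the first overlap identity (Lemma~\ref{4_lem_first_overlap_id}) to this partition, over the set $\calA\cup\calB^{-1}$ (of size $l(\calA)+l(\calB)$) and $\calD$, with the cut $l=l(\calB)$. Its prefix of length $l(\calB)$ is $\langle N^{l(\calB)}\rangle$, so the split produces $LS_{\langle(N+l(\calA))^{l(\calB)}\rangle}(-\calS;\calD)$ and $LS_{\kappa'}(-\calT;\calD)$ with $\calS\cup_{l(\calB),l(\calA)}\calT\sorteq\calA\cup\calB^{-1}$. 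Littlewood's rectangle formula (Corollary~\ref{3_cor_littlewood}, equivalently Corollary~\ref{4_cor_index=0}) evaluates the first factor as $\elementary(-\calS)^{N+l(\calA)-l(\calD)}\Delta(\calD;\calS)$, which is well defined precisely because $l(\calD)\le N+l(\calA)$. Finally $\sum_{\kappa}\schur_{\kappa}(\calC)LS_{\kappa'}(-\calT;\calD)=\sum_{\kappa}LS_{\kappa}(\calC;\emptyset)LS_{\kappa}(\calD;-\calT)$ collapses, again by Proposition~\ref{4_prop_gen_Cauchy}, to $\prod_{\gamma\in\calC,\delta\in\calD}(1-\gamma\delta)^{-1}\prod_{t\in\calT,\gamma\in\calC}(1-t\gamma)$. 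Collecting the factors gives exactly \eqref{4_thm_ratios_eq}.

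The one genuinely delicate point is the applicability of Lemma~\ref{4_lem_first_overlap_id}: one must check $l(\calB)\le\min\{\,(l(\calA)+l(\calB))-k,\ l(\calA)+l(\calB)\,\}$, where $k$ is the $(l(\calD),l(\calA)+l(\calB))$-index of $\langle N^{l(\calB)}\rangle\cup\kappa'$. Equivalently $k\le l(\calA)$, which (for $l(\calB)\ge1$) holds iff the box $(l(\calD)-l(\calA),l(\calB))$ lies in the partition, i.e.\ iff $N\ge l(\calD)-l(\calA)$ — again the hypothesis. The degenerate case $l(\calB)=0$ makes the overlap identity trivial (with $\calS=\emptyset$, $\calT=\calA\cup\calB^{-1}$) and should be dispatched separately. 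Everything else is routine juggling of Cauchy-type identities, mirroring the first part of the proof of the Recipe and Bump and Gamburd's treatment in \cite{bump06}.
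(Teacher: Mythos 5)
Your proposal is correct and follows the same path as the paper. The paper's proof simply specializes the Recipe to $\calE=\calF=\emptyset$ (where the error term vanishes because $l(\calC)\le N$) and then observes, exactly as you do, that the only point where $l(\calD)\le l(\calA)$ was used in the Recipe was to apply Lemma~\ref{4_lem_first_overlap_id}, and that the presence of the rectangle $\langle N^{l(\calB)}\rangle$ in the index partition relaxes this to $l(\calD)\le N+l(\calA)$; you have re-derived that specialization from scratch, which is more self-contained but uses the identical key lemmas (Cauchy and dual Cauchy, Schur orthogonality, the first overlap identity, and the rectangle factorization) in the identical roles.
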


\begin{rem*} Theorem~\ref{4_thm_ratios} is basically the ratio theorem presented in \cite{bump06}, except for the assumptions on the lengths of the sets of variables. This does not come as a surprise given that the proof of the Recipe is based on Bump and Gamburd's approach. Their theorem holds under the assumption that $l(\calC) + l(\calD) \leq N$. In fact, they only state $l(\calC)$, $l(\calD) \leq N$ as a requirement but their proof implicitly makes us of the stronger assumption: on page 246 of \cite{bump06} they apply Proposition 8 (a weaker version of Lemma~\ref{4_lem_first_overlap_id}), which is only permissible if $l(\calC) + l(\calD) \leq N$.
\end{rem*}

\begin{proof} In a first step, let us suppose that $l(\calD) \leq l(\calA)$, in which case the equality in \eqref{4_thm_ratios_eq} follows from the Recipe. We set $\calE = \emptyset = \calF$ in \eqref{4_rem_LS_as_specialization}. Under the assumption that $l(\calC) \leq N$, the error term vanishes. The main term simplifies to
\begin{align*}
\elementary (-\calB)^N \sum_{\substack{\calS, \calT \subset \calA \cup \calB^{-1}: \\ \calS \cup_{l(\calB), l(\calA)} \calT \sorteq \calA \cup \calB^{-1}}} \elementary (-\calS)^{N + l(\calA) - l(\calD)} \frac{\Delta(\calD; \calS)}{\Delta(\calT; \calS)} \sum_\lambda z_\lambda^{-1} \power_\lambda \left( \rho^\beta_{-\calT} \cup \rho^\alpha_{\calD} \right) \power_\lambda(\calC)
.
\end{align*}
We write the sum over $\lambda$ as a product: According to Lemma~\ref{4_lem_cauchy_identity_schur} and Remark~\ref{4_rem_LS_as_specialization},
\begin{align*}
\sum_\lambda z_\lambda^{-1} \power_\lambda \left( \rho^\beta_{-\calT} \cup \rho^\alpha_{\calD} \right) \power_\lambda(\calC) ={} & \sum_\mu \schur_\mu \left( \rho^\beta_{-\calT} \cup \rho^\alpha_{\calD} \right) \schur_\mu(\calC) \\
={} & \sum_\mu LS_\mu (\calD; -\calT) \schur_\mu(\calC) \\
={} & \prod_{\substack{\gamma \in \calC \\ \delta \in \calD}} (1 - \gamma \delta)^{-1} \prod_{\substack{t \in \calT \\ \gamma \in \calC}} (1 - t \gamma)
\end{align*}
where the last equality is a consequence of the generalized Cauchy identity (\textit{i.e.}\ Proposition~\ref{4_prop_gen_Cauchy}).

In order to justify the equality in \eqref{4_thm_ratios_eq} in case $l(\calD) \leq N + l(\calA)$, it suffices to note that in the proof of the Recipe the role of the assumption that $l(\calD) \leq l(\calA)$ is to ensure that the $(l(\calD), l(\calA) + l(\calB))$-index of of $\nu' \cup \mu'$ is less than $l(\calA)$, which makes it permissible to apply Lemma~\ref{4_lem_first_overlap_id} to $LS_{\nu' \cup \mu'}\left(- \left(\calA \cup \calB^{-1}\right); \calD\right)$. Given that $\calE = \emptyset$, the partition $\nu'$ is equal to the rectangle $\left\langle N^{l(\calB)} \right\rangle$. Therefore, the $(l(\calD), l(\calA) + l(\calB))$-index of $\nu' \cup \mu'$ is less than $l(\calA)$ whenever $l(\calD) \leq N + l(\calA)$.
\end{proof}

\begin{thm} Let $\calA$, $\calB$, $\calC$, $\calD$ and $\calE$ be sets of non-zero variables so that $l(\calD) \leq l(\calA)$ and the elements of $\calA \cup \calB^{-1}$ are pairwise distinct. If $\abs(\calA)$, $\abs(\calB) \leq 1$, $\abs(\calC)$, $\abs(\calD) < 1$ and there exists $r \in \R$ so that $\abs(\calE) \leq r < 1$, then
\begin{align*}
& \hspace{-15pt} \int_{U(N)} \frac{\prod_{\alpha \in \calA} \chi_g(\alpha) \prod_{\beta \in \calB} \chi_{g^{-1}}(\beta)}{\prod_{\delta \in \calD} \chi_g(\delta) \prod_{\gamma \in \calC} \chi_{g^{-1}} (\gamma)} \prod_{\varepsilon \in \calE} \frac{\chi'_g(\varepsilon)}{\chi_g(\varepsilon)} dg \\
={} & \elementary (-\calB)^N \sum_{\substack{\calS, \calT \subset \calA \cup \calB^{-1}: \\ \calS \cup_{l(\calB), l(\calA)} \calT \sorteq \calA \cup \calB^{-1}}} \elementary (-\calS)^{N + l(\calA) - l(\calD)} \frac{\Delta(\calD; \calS)}{\Delta(\calT; \calS)} \prod_{\substack{\gamma \in \calC \\ \delta \in \calD}} (1 - \gamma \delta)^{-1} \prod_{\substack{t \in \calT \\ \gamma \in \calC}} (1 - t \gamma) \\
& \times (-1)^{l(\calE)} \sum_{\substack{\calE', \calE'' \subset \calE: \\ \calE' \cup \calE'' \sorteq \calE}} \\
& \times \left( \sum_{\substack{\chi: \\ l(\chi) = l\left(\calE''\right) \\ |\chi| \leq N - l(\calC)}} \monomial_{\chi - \left\langle 1^{l\left(\calE''\right)} \right\rangle}\left(-\calE''\right) \power_{-\chi}(-\calS) \right) \left( \sum_{\substack{\psi: \\ l(\psi) = l\left(\calE'\right)}} \monomial_{\psi - \left\langle 1^{l\left(\calE'\right)} \right\rangle} \left(\calE'\right) \power_\psi(\calC) \right)
\\
& + O_{r, \calA, \calB, l(\calC), l(\calD), l(\calE), \max\{\abs(\calC), \abs(\calD)\}} \left( r^N N^{(l(\calB) - 1)^+ + 2(l(\calE) - 1)^+}\right)
.
\end{align*}
If, in addition to the conditions stated above, there exists $r_1 \in \R$ with $\abs(\calB) = r_1 \leq r$, then the bound on the error term can be improved by a factor of $r^N$.
\end{thm}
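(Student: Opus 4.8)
\medskip

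The plan is to obtain this theorem as the specialization of the Recipe to the case $\calF = \emptyset$, which requires two tasks: simplifying the Recipe's main term into the displayed closed form, and massaging the Recipe's error bound \eqref{4_eq_error_bound_for_recipe} into the asserted estimate. For the main term, setting $\calF = \emptyset$ forces $l(\calF) = 0$, so in the Recipe the partition $\omega$ must be empty; consequently $n = |\omega| = 0$, the factors $\monomial_{\omega - \left\langle 1^{l(\calF)} \right\rangle}(\calF)$ and $\prod_{i \geq 1} i^{m_i(\omega)}$ are both $1$, the condition $\omega \cup \xi \sorteq \psi \cup \lambda$ reduces to $\xi \sorteq \psi \cup \lambda$, the product $\prod_{i \geq 1} m_i(\psi \cup \lambda)!/m_i(\xi)!$ is $1$, and $\power_\xi(\calC) = \power_\psi(\calC)\power_\lambda(\calC)$. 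Hence the two innermost sums of the Recipe collapse to
\[
\left( \sum_{\psi:\, l(\psi) = l(\calE')} \monomial_{\psi - \left\langle 1^{l(\calE')} \right\rangle}(\calE')\, \power_\psi(\calC) \right) \left( \sum_\lambda z_\lambda^{-1} \power_\lambda\!\left( \rho^\beta_{-\calT} \cup \rho^\alpha_\calD \right) \power_\lambda(\calC) \right).
\]
By the power sum version of the Cauchy identity (Lemma~\ref{4_lem_cauchy_identity_schur}) together with Remark~\ref{4_rem_LS_as_specialization} the second factor equals $\sum_\mu \schur_\mu\!\left( \rho^\beta_{-\calT} \cup \rho^\alpha_\calD \right)\schur_\mu(\calC) = \sum_\mu LS_\mu(\calD; -\calT)\schur_\mu(\calC)$; since $\abs(\calC), \abs(\calD) < 1$ the hypothesis of Proposition~\ref{4_prop_gen_Cauchy} holds (its extra condition being satisfied vacuously, as one of the four sets is empty), so the generalized Cauchy identity turns this into $\prod_{\gamma \in \calC, \delta \in \calD}(1 - \gamma\delta)^{-1}\prod_{t \in \calT, \gamma \in \calC}(1 - t\gamma)$, which is independent of $\calE'$, $\calE''$, $\chi$, $\psi$ and may be pulled out. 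Substituting this back, and using that $q + n \leq N - l(\calC)$ becomes $|\chi| \leq N - l(\calC)$, reproduces the displayed main term verbatim.

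For the error term in the basic case I would start from \eqref{4_eq_error_bound_for_recipe} with $\calF = \emptyset$, where the constraint becomes $q > N - l(\calC)$ and the prefactor is $N^{(l(\calE) - 1)^+}$. The first point is to pair $\left| \elementary(\calB)^N \right|$ with the inner sum over $\nu \subset \left\langle l(\calB)^N \right\rangle$ with $|\nu| = l(\calB)N - q$: writing $\schur_{\nu'}(\calS)$ via the complement identity (Lemma~\ref{3_lem_Schur_indexed_by_complement}) and splitting $\calS$ into its elements coming from $\calA$ and from $\calB^{-1}$, the hypotheses $\abs(\calA), \abs(\calB) \leq 1$ give $\left| \elementary(\calB)^N \schur_{\nu'}(\calS) \right| = O_{\calA, \calB}(1)$ uniformly in $N$ and $\nu$ (alternatively one applies Lemma~\ref{4_lem_det_bound_Schur}(ii)), and since there are $O(N^{l(\calB) - 1})$ such $\nu$ this sum is $O(N^{l(\calB) - 1})$. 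The second point is the inner double sum over $\kappa$, $\mu$ with $\mu \subset \kappa$ and $|\kappa| - |\mu| = p \defeq p_1 + \dots + p_{l(\calE)}$: here $\schur_\kappa(\calC) \neq 0$ forces $l(\kappa) \leq l(\calC)$, hence $l(\mu) \leq l(\calC)$, so Lemmas~\ref{4_lem_comb_bound_schur} and~\ref{4_lem_comb_bound_LS} (with $r_0 \defeq \max\{\abs(\calC), \abs(\calD)\} < 1$) bound $\schur_\kappa(\calC)$ and $LS_\mu(\calD; -\calT)$ by polynomials times $r_0^{|\kappa|}$ and $r_0^{|\mu|}$ respectively, and summing the geometric series leaves a bound $O(\mathrm{poly}(p))$. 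Since every ribbon has size $q_i + p_i \geq 1$ we get $\prod_i |\calE_i^{q_i + p_i - 1}| \leq r^{q + p - l(\calE)}$; summing over $p_1, \dots, p_{l(\calE)} \geq 0$ against the polynomial factors is a geometric series contributing $O_r(r^q)$, while $\sum_{q_1 + \dots + q_{l(\calE)} = q} 1 = O(q^{l(\calE) - 1})$ and $\sum_{q > N - l(\calC)} q^{l(\calE) - 1} r^q = O(N^{l(\calE) - 1} r^N)$. Multiplying $N^{(l(\calE) - 1)^+} \cdot N^{l(\calB) - 1} \cdot N^{l(\calE) - 1} r^N$, and noting the degenerate cases $l(\calB) = 0$ or $l(\calE) = 0$ where the relevant sum is empty for large $N$, gives $O\!\left( r^N N^{(l(\calB) - 1)^+ + 2(l(\calE) - 1)^+} \right)$.

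For the improved case $\abs(\calB) = r_1 \leq r$ I would keep $\left| \elementary(\calB)^N \right| = r_1^{l(\calB)N}$ explicit. If $\calS$ does not exhaust $\calB^{-1}$, containing $j < l(\calB)$ of its elements, Lemma~\ref{4_lem_det_bound_Schur}(ii) gives $\left| \schur_{\nu'}(\calS) \right| = O_\calS\!\left( (1/r_1)^{jN} \right)$, so $\left| \elementary(\calB)^N \schur_{\nu'}(\calS) \right| = O\!\left( r_1^{(l(\calB) - j)N} \right) = O(r^N)$; if $\calS = \calB^{-1}$, the same computation leaves a factor $r_1^q$, which $q > N - l(\calC)$ makes $O(r^N)$ too (for that term one keeps $r_1^q$ to merge with the geometric tail in $q$). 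Either way the basic-case estimates acquire an extra factor $r^N$. The step I expect to be the main obstacle is precisely this control of $\schur_{\nu'}(\calS)$ in the error: $\calS$ may consist of the (in modulus possibly large) elements of $\calB^{-1}$ while $\nu'$ may have of order $(l(\calB) - 1)N$ boxes, so a crude bound grows exponentially; the resolution is that this growth is exactly absorbed by $\left| \elementary(\calB)^N \right|$ via the complement identity, after which the decay $r^N$ must be squeezed out of the constraint $q > N - l(\calC)$ through the $\calE$-variables. Getting the exponent of $N$ to come out as $(l(\calB) - 1)^+ + 2(l(\calE) - 1)^+$, rather than something larger, is the other bookkeeping-intensive point, requiring careful counting of the partitions $\nu$, of the compositions of $q$ into $l(\calE)$ parts, and of the two geometric tails.
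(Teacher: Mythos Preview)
Your proposal is correct and follows essentially the same route as the paper: set $\calF=\emptyset$ in the Recipe, collapse the $\omega$-sum so that $\xi=\psi\cup\lambda$, peel off the $\lambda$-sum and evaluate it via the power-sum Cauchy identity and Proposition~\ref{4_prop_gen_Cauchy}, then bound the error \eqref{4_eq_error_bound_for_recipe} using Lemmas~\ref{4_lem_comb_bound_schur}, \ref{4_lem_comb_bound_LS} for the $(\kappa,\mu)$-sum and Lemma~\ref{4_lem_det_bound_Schur} for the $\nu$-sum paired with $|\elementary(\calB)^N|$. Two small points where the paper is tidier than your sketch: in the basic case the paper goes straight to Lemma~\ref{4_lem_det_bound_Schur}(ii) (your parenthetical alternative) rather than the complement identity, which avoids the issue that $\calS^{-1}$ may contain large elements of $\calA^{-1}$; and in the improved case there is no need to split on how many elements of $\calB^{-1}$ lie in $\calS$, since $\abs(\calA)\le 1\le r_1^{-1}$ means \emph{every} element of $\calS$ has modulus at most $r_1^{-1}$, so Lemma~\ref{4_lem_det_bound_Schur}(i) applies uniformly to give $|\schur_{\nu'}(\calS)|=O_{\calA\cup\calB^{-1}}(r_1^{-|\nu|})$ and hence $|\elementary(\calB)^N|\sum_\nu|\schur_{\nu'}(\calS)|=O(r^q N^{(l(\calB)-1)^+})$ in one stroke.
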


\begin{proof} We set $\calF = \emptyset$ in the statement of the Recipe. The main term of the expression on the right-hand side simplifies to
\begin{align*}
& \elementary (-\calB)^N \sum_{\substack{\calS, \calT \subset \calA \cup \calB^{-1}: \\ \calS \cup_{l(\calB), l(\calA)} \calT \sorteq \calA \cup \calB^{-1}}} \elementary (-\calS)^{N + l(\calA) - l(\calD)} \frac{\Delta(\calD; \calS)}{\Delta(\calT; \calS)} \\
& \times (-1)^{l(\calE)} \sum_{\substack{\calE', \calE'' \subset \calE: \\ \calE' \cup \calE'' \sorteq \calE}} \left( \sum_{\substack{\chi: \\ l(\chi) = l\left(\calE''\right) \\ |\chi| \leq N - l(\calC)}} \monomial_{\chi - \left\langle 1^{l\left(\calE''\right)} \right\rangle}\left(-\calE''\right) \power_{-\chi}(-\calS) \right) \\
& \times \left( \sum_{\substack{\psi: \\ l(\psi) = l\left(\calE'\right)}} \monomial_{\psi - \left\langle 1^{l\left(\calE'\right)} \right\rangle} \left(\calE'\right) \power_\psi(\calC) \right) 
\left( \sum_\lambda z_\lambda^{-1} \power_\lambda \left( \rho^\beta_{-\calT} \cup \rho^\alpha_{\calD} \right) \power_\lambda (\calC) \right)
.
\end{align*}
As in the proof of Theorem~\ref{4_thm_ratios}, the generalized Cauchy identity allows us to replace the sum over $\lambda$ by a product. This yields the main term of this theorem.

It remains to bound the error given in equation \eqref{4_eq_error_bound_for_recipe} on page \pageref{4_eq_error_bound_for_recipe}. We exploit that $\abs(\calE) \leq r$ and $n = 0$ (since $\calF = \emptyset$) to infer the following bound:
\begin{align*}
\error ={} & O_{r, l(\calA), l(\calB), l(\calD), l(\calE), \calA \cup \calB^{-1}} \left( N^{(l(\calE) - 1)^+} \right) \left| \elementary (\calB)^N \right| \\
& \times \sum_{\substack{q > N - l(\calC)}} r^q \left( \sum_{\substack{q_1, \dots, q_{l(\calE)} \geq 0: \\ q_1 + \dots + q_{l(\calE)} = q}} 1 \right) \sum_{p \geq 0} r^p \left( \sum_{\substack{p_1, \dots, p_{l(\calE)} \geq 0: \\ p_1 + \dots + p_{l(\calE)} = p}} 1 \right) \\
& \times \sum_{\substack{\calS, \calT \subset \calA \cup \calB^{-1}: \\ \calS \cup_{l(\calB), l(\calA)} \calT \sorteq \calA \cup \calB^{-1}}} \left( \sum_{\substack{\nu: \\ \nu \subset \left\langle l(\calB)^N \right\rangle \\ |\nu| = l(\calB) N - q}} \left| \schur_{\nu'}(\calS) \right| \right) \left( \sum_{\substack{\kappa, \mu: \\ \mu \subset \kappa \\ |\mu| + p = |\kappa|}} \left| LS_\mu(\calD; -\calT) \right| \left| \schur_\kappa (\calC) \right| \right)
.
\end{align*}
As $\schur_\kappa(\calC)$ vanishes whenever $l(\kappa) > l(\calC)$, we have that $l(\mu) \leq l(\kappa) \leq l(\calC)$ for all partitions that appear in the sum over $\kappa$ and $\mu$. Setting $R = \max\{\abs(\calC), \abs(\calD)\} < 1$, Lemmas~\ref{4_lem_comb_bound_schur} and \ref{4_lem_comb_bound_LS} thus entail that
\begin{align*}
\sum_{\substack{\kappa, \mu: \\ \mu \subset \kappa \\ |\mu| + p = |\kappa|}} \left| LS_\mu(\calD; -\calT) \right| \left| \schur_\kappa (\calC) \right| 
={} & \sum_{\makebox[34.6pt]{$\substack{\kappa, \mu: \\ l(\mu), l(\kappa) \leq l(\calC) \\ |\mu| + p = |\kappa|}$}} O_{l(\calC), R, l(\calA), \calA \cup \calB^{-1}} \left( |\mu|^{l(\calD)^2 + l(\calC)^2} R^{|\mu|} |\kappa|^{l(\calC)^2} R^{|\kappa|} \right) \\
={} & O_{l(\calC), l(\calD), R, l(\calA), \calA \cup \calB^{-1}} \left( p^{l(\calC)^2 + (l(\calC) - 1)^+} R^p\right) 
\end{align*}
where we have crudely bounded the number of partitions of length $n$ and size $m$ by $(m + 1)^{(n - 1)^+}$. Bounding the number of non-negative integers $p_1, \dots, p_{l(\calE)}$ whose sum equals $p$ by $(p + 1)^{(l(\calE) -1)^+}$, another argument based on geometric series thus allows us to conclude that the sum over $p$ is $O_{l(\calC), l(\calD), l(\calE), \max\{\abs(\calC), \abs(\calD)\}, l(\calA), \calA \cup \calB^{-1}} (1)$.

Two applications of Lemma~\ref{4_lem_det_bound_Schur} will allow us to bound 
\begin{align*}
S(N) \defeq{} &
\left| \elementary (\calB)^N \right| \sum_{\substack{\calS, \calT \subset \calA \cup \calB^{-1}: \\ \calS \cup_{l(\calB), l(\calA)} \calT \sorteq \calA \cup \calB^{-1}}} \sum_{\substack{\nu: \\ \nu \subset \left\langle l(\calB)^N \right\rangle \\ |\nu| = l(\calB) N - q}} \left| \schur_{\nu'}(\calS) \right|
.
\end{align*}
Suppose that $\abs(\calB) = r_1$ for some $r_1 \leq r$. As $\abs(\calA) \leq 1 < r_1^{-1}$, the first statement in Lemma~\ref{4_lem_det_bound_Schur} implies that
\begin{align*}
S(N) ={} & O_{\calA \cup \calB^{-1}} \left( r_1^{Nl(\calB)} \sum_{\substack{\nu: \\ \nu \subset \left\langle l(\calB)^N \right\rangle \\ |\nu| = l(\calB) N - q}} r_1^{-|\nu|} \right) = O_{\calA \cup \calB^{-1}} \left( r^q N^{(l(\calB) - 1)^+} \right)
.
\end{align*}
This last bound is due to the fact that for $l(\calB) \geq 1$, the number of partitions $\nu$ of some fixed size $Q$ that are contained in the rectangle $\left\langle l(\calB)^N \right\rangle$ is at most $(N + 1)^{l(\calB) - 1}$. Indeed, $\nu'_1$ to $\nu'_{l(\calB) - 1}$ are some integers between 0 and $N$, while $\nu'_{l(\calB)}$ is determined by the condition that $\nu'_1 + \dots + \nu'_{l(\calB)} = Q$.
If we only assume that $\abs(\calA)$, $\abs(\calB) \leq 1$, the second bound in Lemma~\ref{4_lem_det_bound_Schur} allows us to infer that
\begin{align*}
S(N) ={} & O_{\calA \cup \calB^{-1}} \left( \elementary (\calB)^N \sum_{\substack{\nu: \\ \nu \subset \left\langle l(\calB)^N \right\rangle \\ |\nu| = l(\calB) N - q}} \elementary \left( \calB^{-1} \right)^{\nu'_1} \right)
= O_{\calA \cup \calB^{-1}} \left( N^{(l(\calB) - 1)^+}\right)
,
\end{align*}
since $\nu'_1 \leq N$ for all partitions $\nu$ that appear in the sum. In conclusion, 
\begin{align*}
\error = O_\calP \left( N^{(l(\calE) - 1)^+ + (l(\calB) - 1)^+} \right) \sum_{q > N - l(\calC)} r^{q(1 + \delta(\abs(\calB) = r_1))} (q + 1)^{(l(\calE) - 1)^+}
\end{align*}
where $\delta(\abs(\calB) = r_1)$ indicates whether the additional condition on $\calB$ is satisfied. Here, the implicit constant depends on $\calP = \{r, l(\calC), l(\calD), l(\calE), \max\{\abs(\calC), \abs(\calD)\}, \calA, \calB \}$. The bound stated in the theorem follows from yet another argument based on geometric series.
\end{proof}

\begin{thm} \label{4_thm_log_ders_and_ratio} Let $r \in \R$ with $r < 1$. Let $\calB$, $\calC$, $\calE$ and $\calF$ be sets of non-zero variables so that $\abs(\calB) \leq 1$, $\abs(\calC) < 1$ and $\abs(\calE)$, $\abs(\calF) \leq r$. Then
\begin{align} \label{4_thm_log_ders_and_ratio_eq}
\begin{split}
& \hspace{-15pt} \int_{U(N)} \frac{\prod_{\beta \in \calB} \chi_{g^{-1}}(\beta)}{\prod_{\gamma \in \calC} \chi_{g^{-1}} (\gamma)} \prod_{\varepsilon \in \calE} \frac{\chi'_g(\varepsilon)}{\chi_g(\varepsilon)} \prod_{\varphi \in \calF} \frac{\chi'_{g^{-1}}(\varphi)}{\chi_{g^{-1}}(\varphi)}dg \\
={} & (-1)^{l(\calE) + l(\calF)} \sum_{\substack{\calE', \calE'' \subset \calE: \\ \calE' \cup \calE'' \sorteq \calE}} \Bigg[ \left( \sum_{\substack{\chi: \\ l(\chi) = l\left(\calE''\right)}} \monomial_{\chi - \left\langle 1^{l\left(\calE''\right)} \right\rangle}\left(-\calE''\right) \power_\chi(-\calB) \right) \\
& \times \sum_{\substack{\psi: \\ l(\psi) = l\left(\calE'\right)}} \monomial_{\psi - \left\langle 1^{l\left(\calE'\right)} \right\rangle} \left(\calE'\right) \sum_{\substack{\omega: \\ \omega \subset \psi \\ l(\omega) = l(\calF)}} \monomial_{\omega - \left\langle 1^{l(\calF)} \right\rangle} (\calF) \prod_{i \geq 1} \frac{i^{m_i(\omega)} m_i(\psi)!}{m_i(\psi \setminus \omega)!} \power_{\psi \setminus \omega}(\calC) \Bigg] \\
\\
& + \error
.
\end{split}
\end{align}
In particular, the main term vanishes unless $l(\calF) \leq l(\calE)$. To provide a bound for the error term we require one of the following additional conditions on the set of variables $\calB$. If there exists a real number $r_1 \leq r$ with $\abs(\calB) = r_1$, then 
\begin{align*}
\error ={} & O_\calP \left( r^{2N} N^{l(\calB)^2 + (l(\calB) - 1)^+ + (l(\calE) - 1)^+ + (l(\calF) - 1)^+ + (l(\calE) + l(\calF) - 1)^+ + 2} \right).
\intertext{where the implicit constant depends on $\calP = \{ r, l(\calB), l(\calC), l(\calE), l(\calF), \max(\abs(\calC)) \}$. If the elements of $\calB$ are pairwise distinct, then}
\error ={} & O_{r, \calB, l(\calC), l(\calE), l(\calF), \max(\abs(\calC))} \left( r^N N^{(l(\calB) - 1)^+ + (l(\calE) - 1)^+ + (l(\calF) - 1)^+ + (l(\calE) + l(\calF) - 1)^+ + 2} \right).
\end{align*}
\end{thm}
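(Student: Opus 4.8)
The plan is to obtain this theorem by specializing the Recipe to $\calA = \emptyset$ and $\calD = \emptyset$. With this choice the Recipe's hypothesis $l(\calD) \le l(\calA)$ holds trivially, and its requirement that $\calC, \calD, \calE, \calF$ lie inside the unit disc is implied by $\abs(\calC) < 1$ and $\abs(\calE), \abs(\calF) \le r < 1$. The one point needing care is the Recipe's hypothesis that $\calA \cup \calB^{-1} = \calB^{-1}$ consist of pairwise distinct numbers: when $\calB$ is pairwise distinct (the second error-bound case) this holds directly, while when $\calB$ merely satisfies $\abs(\calB) = r_1 \le r$ (the first case) I would first perturb the entries of $\calB$ to make them distinct, apply what follows, and then let the perturbation tend to zero, using that both sides of \eqref{4_thm_log_ders_and_ratio_eq} are continuous in $\calB$ on the relevant domain and that the error bound I establish depends on $\calB$ only through $l(\calB)$ and $\max(\abs(\calB))$.

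First I would carry out the algebraic simplification of the Recipe's main term when $\calA = \calD = \emptyset$. The summation condition $\calS \cup_{l(\calB), l(\calA)} \calT \sorteq \calA \cup \calB^{-1}$ forces $l(\calT) = 0$ and $l(\calS) = l(\calB)$, hence $\calT = \emptyset$ and $\calS = \calB^{-1}$; thus $\Delta(\calD; \calS) = \Delta(\calT; \calS) = 1$ and $\elementary(-\calB)^N \elementary(-\calS)^{N + l(\calA) - l(\calD)} = \elementary(-\calB)^N \elementary(-\calB^{-1})^N = 1$. In the innermost sum of the Recipe the factor $\power_\lambda\bigl(\rho^\beta_{-\calT} \cup \rho^\alpha_\calD\bigr) = \power_\lambda\bigl(\rho^\beta_\emptyset \cup \rho^\alpha_\emptyset\bigr)$ is the image of $\power_\lambda$ under the trivial specialization, so it vanishes unless $\lambda = \emptyset$; the term $\lambda = \emptyset$ forces $\xi = \psi \setminus \omega$ (in particular $\omega \subset \psi$) and contributes $\prod_{i \ge 1} \tfrac{m_i(\psi)!}{m_i(\psi \setminus \omega)!}\,\power_{\psi \setminus \omega}(\calC)$, which together with the factor $\prod_{i \ge 1} i^{m_i(\omega)}$ already present gives $\prod_{i \ge 1} \tfrac{i^{m_i(\omega)} m_i(\psi)!}{m_i(\psi \setminus \omega)!}$. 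Finally, by definition of $\power_{-\chi}$ one has $\power_{-\chi}(-\calS) = \power_{-\chi}(-\calB^{-1}) = \power_\chi\bigl((-\calB^{-1})^{-1}\bigr) = \power_\chi(-\calB)$. After these substitutions the Recipe's main term becomes precisely the bracketed expression in \eqref{4_thm_log_ders_and_ratio_eq}, except that the Recipe restricts $q = |\chi|$ and $n = |\omega|$ by $q + n \le N - l(\calC)$ whereas the theorem sums over all $\chi$ of length $l(\calE'')$ and all $\omega \subset \psi$ of length $l(\calF)$ with no bound on the sizes. The parenthetical assertion that the main term vanishes unless $l(\calF) \le l(\calE)$ also drops out here: $\omega \subset \psi$ as sequences forces $l(\calF) = l(\omega) \le l(\psi) = l(\calE') \le l(\calE)$.

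It remains to remove the constraint $q + n \le N - l(\calC)$. Let $\main_{\mathrm{thm}}$ be the claimed main term of \eqref{4_thm_log_ders_and_ratio_eq} and let $T_N$ be the corresponding sum over $q + n > N - l(\calC)$, so the previous paragraph gives $\main_{\mathrm{thm}} = \main + T_N$ with $\main$ the Recipe's main term; since $\LHS = \main + \error$ exactly, the theorem's error equals $\error - T_N$, and we must bound $|\error| + |T_N|$. For $T_N$ I would estimate the summand factorwise: $\monomial_{\chi - \langle 1^{l(\calE'')}\rangle}(-\calE'')$, $\monomial_{\psi - \langle 1^{l(\calE')}\rangle}(\calE')$ and $\monomial_{\omega - \langle 1^{l(\calF)}\rangle}(\calF)$ are monomial symmetric polynomials in variables of absolute value $\le r$, hence $O(r^{|\chi|})$, $O(r^{|\psi| - l(\calE')})$, $O(r^{|\omega|})$ up to polynomial factors in the sizes; $\power_{\psi \setminus \omega}(\calC)$ and $\power_\chi(-\calB)$ are $O(1)$ (using $\abs(\calC) < 1$ and $\abs(\calB) \le 1$), and when $\abs(\calB) = r_1$ the latter is even $O(r_1^{|\chi|})$; summing the resulting geometric series over $q + n > N - l(\calC)$, together with the fact that the number of partitions of given length and size grows only polynomially, produces the stated $r^N$, respectively $r^{2N}$, decay times a polynomial in $N$. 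For $\error$ I would substitute $\calA = \calD = \emptyset$ into the bound \eqref{4_eq_error_bound_for_recipe}: there $\calT = \emptyset$, $\calS = \calB^{-1}$, the sums over the $p_i$ and over the pairs $\kappa, \mu$ are controlled by a geometric-series argument using $\abs(\calC) < 1$ as in the proof of Theorem~\ref{4_thm_ratios} (invoking Lemmas~\ref{4_lem_comb_bound_schur} and \ref{4_lem_comb_bound_LS}), and the genuinely $\calB$-dependent factor $\bigl|\elementary(\calB)^N\bigr| \sum_{\nu \subset \langle l(\calB)^N \rangle,\ |\nu| = l(\calB)N - q} \bigl|\schur_{\nu'}(\calB^{-1})\bigr|$ is estimated by Lemma~\ref{4_lem_det_bound_Schur} in the pairwise-distinct case, where the ``large'' part of $\calB^{-1}$ cancels against $|\elementary(\calB)^N|$ and only the $N^{(l(\calB)-1)^+}$ from counting partitions in the rectangle survives, and by Lemma~\ref{4_lem_comb_bound_schur} in the case $\abs(\calB) = r_1 \le r$, which produces the extra factor $N^{l(\calB)^2}$ and the extra power $r^N$. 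Geometric summation over $q$ then finishes the bound.

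The step I expect to be the main obstacle is this last error estimate, and in particular making the implicit constant in the first case depend only on $l(\calB)$ and $\max(\abs(\calB))$: the bound \eqref{4_eq_error_bound_for_recipe} as stated carries a constant depending on the actual set $\calA \cup \calB^{-1}$, so one must re-run the relevant lines of the proof of the Recipe with the combinatorial Schur and Littlewood-Schur bounds of Lemmas~\ref{4_lem_comb_bound_schur} and \ref{4_lem_comb_bound_LS} in place of the determinantal ones. This is precisely what forces the polynomial loss $N^{l(\calB)^2}$ in the first error bound and, simultaneously, what makes the continuity/perturbation argument for $\calB$ with repeated entries legitimate. Everything else --- the algebraic collapse of the main term and the bookkeeping of geometric series and of the number of partitions of bounded length inside rectangles --- is routine and parallels the proofs of Theorem~\ref{4_thm_ratios} and of the preceding theorem.
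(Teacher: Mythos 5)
Your proposal is correct and follows essentially the same route as the paper's own proof: specialize the Recipe to $\calA = \emptyset = \calD$, handle the lack of pairwise distinctness of $\calB$ by a perturbation argument (which is legitimate precisely because the error bound can be made to depend on $\calB$ only through $l(\calB)$), observe that the power sum specialization $\power_\lambda(\rho^\beta_\emptyset \cup \rho^\alpha_\emptyset)$ kills all terms with $\lambda \ne \emptyset$, and then remove the constraint $q + n \le N - l(\calC)$ at the cost of an exponentially small error, combining it with the error inherited from the Recipe via Lemmas~\ref{4_lem_comb_bound_schur} and \ref{4_lem_det_bound_Schur}. Your algebraic simplification of the main term (collapsing to $\calT = \emptyset$, $\calS = \calB^{-1}$, cancelling the elementary-polynomial prefactor, reducing $\power_{-\chi}(-\calS)$ to $\power_\chi(-\calB)$, and forcing $\xi = \psi \setminus \omega$) matches the paper exactly; the only slip is cosmetic --- you attribute the $\abs(\calC) < 1$ geometric-series bookkeeping to the proof of Theorem~\ref{4_thm_ratios}, where in fact the error vanishes identically; it belongs to the proof of the preceding mixed-ratio theorem.
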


\begin{proof} The idea of the proof is to view this statement as a special case of the Recipe by setting $\calA = \emptyset = \calD$. Technically, this is not permissible since the elements of $\calB$ are not assumed to be pairwise distinct. However, for the main term it is enough to slightly perturb the elements of $\calB$ before applying the Recipe, and then make the perturbations vanish. For the error term, one quickly checks the proof of the Recipe to see that the implicit constant does in fact not depend on $\calB$ itself - but only on its length - in case $l(\calA) = 0 = l(\calD)$: the terms on the right-hand side in \eqref{4_in_proof_recipe_error_before_O} that depend on $\calB$ are
\begin{align*} 
\text{terms}(\calB) \defeq{} & \left| \elementary (\calB)^N \right| \sum_{\substack{\mu, \nu: \\ \nu' \cup \mu' \text{ is a partition} \\ \nu \subset \left\langle l(\calB)^N \right\rangle}} \left| LS_{\nu' \cup \mu'}\left(- \left(\calA \cup \calB^{-1}\right); \calD \right) \right|
.
\intertext{Under the assumption that $\calA = \emptyset = \calD$,}
\text{terms}(\calB) ={} & \left| \elementary (\calB)^N \right| \sum_{\substack{\mu, \nu: \\ \nu' \cup \mu' \text{ is a partition} \\ \nu' \subset \left\langle N^{l(\calB)} \right\rangle}} \left| \schur_{\nu' \cup \mu'} \left( -\calB^{-1} \right) \right|
.
\intertext{Moreover, the fact that any Schur function $\schur_\lambda(\calX)$ vanishes whenever $l(\lambda) > l(\calX)$ entails that only terms with $\mu = \emptyset$ contribute to the sum. Hence, we are left with} 
\text{terms}(\calB) ={} & \left| \elementary (\calB)^N \right| \sum_{\substack{\nu: \\ \nu' \subset \left\langle N^{l(\calB)} \right\rangle}} \left| \schur_{\nu'} (-\calB^{-1}) \right|
,
\end{align*}
which also appears in \eqref{4_eq_error_bound_for_recipe}, restricted to $\calA = \emptyset = \calD$.

Having resolved this technicality, we now set $\calA = \emptyset = \calD$ in the Recipe. It easily follows from the explicit expression for $z_\lambda^{-1} \power_\lambda\left(\rho^\beta_\emptyset \cup \rho^\alpha_\emptyset\right)$ given in \eqref{4_eq_union_of_specializations_power_sum} that this power sum vanishes unless $\lambda = \emptyset$. Thus, the main term on the right-hand side of the equality in \eqref{4_recipe_eq} simplifies to
\begin{align*}
\main ={} & (-1)^{l(\calE) + l(\calF)} \sum_{\substack{\calE', \calE'' \subset \calE: \\ \calE' \cup \calE'' \sorteq \calE}} \sum_{\substack{q,n \geq 0: \\ q + n \leq N - l(\calC)}} \left( \sum_{\substack{\chi: \\ l(\chi) = l\left(\calE''\right) \\ |\chi| = q}} \monomial_{\chi - \left\langle 1^{l\left(\calE''\right)} \right\rangle} \left(-\calE''\right) \power_\chi(-\calB) \right) \\
& \times \sum_{\substack{\psi: \\ l(\psi) = l\left(\calE'\right)}} \monomial_{\psi - \left\langle 1^{l\left(\calE'\right)} \right\rangle} \left(\calE'\right) \sum_{\substack{\omega: \\ \omega \subset \psi \\ l(\omega) = l(\calF) \\ |\omega| = n}} \monomial_{\omega - \left\langle 1^{l(\calF)} \right\rangle} (\calF) \prod_{i \geq 1} \frac{i^{m_i(\omega)} m_i(\psi)!}{m_i(\psi \setminus \omega)!} \power_{\psi \setminus \omega}(\calC)
.
\end{align*}
Owing to the condition that $\omega$ be a subsequence of $\psi$, this expression vanishes unless $l(\calF) \leq l(\calE)$. In addition, this condition allows us to eliminate the dependence on $N$ at the cost of incurring an error that is $O_{r, l(\calB), l(\calC), l(\calE), l(\calF)} \left( r^N N^{(l(\calE) - 1)^+ + (l(\calF) - 1)^+ + l(\calF) + 1}\right)$. Under the assumption that $\abs(\calB) \leq r$, the bound may even be multiplied by $r^N$. Indeed, the sum over $q,n \geq 0$ so that $q + n > N - l(\calC)$ is
\begin{align*}
& O \left( \sum_{\substack{q,n \geq 0: \\ q + n > N - l(\calC)}} r^{n + q(1 + \delta(\abs(\calB) \leq r))} n^{l(\calF)} \sum_{p \geq n} r^p \sum_{\substack{\calE', \calE'' \subset \calE: \\ \calE' \cup \calE'' \sorteq \calE}} \sum_{\substack{\chi: \\ l(\chi) = l\left(\calE''\right) \\ |\chi| = q}} \sum_{\substack{\psi: \\ l(\psi) = l\left(\calE'\right) \\ |\psi| = p}} \sum_{\substack{\omega: \\ l(\omega) = l(\calF) \\ |\omega| = n}} 1 \right)
\end{align*}
where $\delta(\abs(\calB) \leq r)$ indicates whether $\abs(\calB) \leq r$. Handling the sums counting partitions as in the preceding proof, and then employing an argument based on geometric series gives the desired bound.

It remains to show the bound on the error inherited from the Recipe. Given that $\calA = \emptyset = \calD$, the formula in \eqref{4_eq_error_bound_for_recipe} simplifies to
\begin{align*}
\error ={} & O_{r, l(\calB), l(\calE), l(\calF)} \left(N^{(l(\calE) + l(\calF) - 1)^+} \right) \left| \elementary (\calB)^N \right| \\
& \times \sum_{\substack{q,n \geq 0: \\ q + n > N - l(\calC)}} r^{q + n} \left(\sum_{\substack{q_1, \dots, q_{l(\calE)} \geq 0: \\ q_1 + \dots + q_{l(\calE)} = q}} 1 \right) \sum_{p \geq 0} r^p \left( \sum_{\substack{p_1, \dots, p_{l(\calE)} \geq 0: \\ p_1 + \dots + p_{l(\calE)} = p}} 1 \right) \! \left( \sum_{\substack{n_1, \dots, n_{l(\calF)} \geq 1: \\ n_1 + \dots + n_{l(\calF)} = n}} 1 \right) \\
& \times \left( \sum_{\substack{\nu: \\ \nu \subset \left\langle l(\calB)^N \right\rangle \\ |\nu| = l(\calB) N - q}} \left| \schur_{\nu'} \left(\calB^{-1}\right) \right| \right) \left( \sum_{\substack{\kappa: \\ n + |\kappa| = p}} \left| \schur_\kappa (\calC) \right| \right)
\end{align*}
where we have also used that $\abs(\calE)$, $\abs(\calF) \leq r$. First, consider the following function of $N$, which also depends on $\calB$:
\begin{align*}
S_q(N) \defeq{} & \left| \elementary (\calB)^N \right| \sum_{\substack{\nu: \\ \nu \subset \left\langle l(\calB)^N \right\rangle \\ |\nu| = l(\calB) N - q}} \left| \schur_{\nu'}(\calB^{-1}) \right|
.
\end{align*}
Under the assumption that $\abs(\calB) = r_1 \leq r$, Lemma~\ref{4_lem_comb_bound_schur} allows us to give an asymptotic bound for $S_q(N)$. More concretely,
\begin{align*}
S_q(N) ={} & O\left( r_1^{l(\calB) N} (l(\calB) N - q)^{l(\calB)^2} r_1^{-l(\calB) N + q} \right) \sum_{\makebox[45pt]{$\substack{\nu: \\ \nu \subset \left\langle l(\calB)^N \right\rangle \\ |\nu| = l(\calB) N - q}$}} 1 = O_{l(\calB)} \left( r^q N^{l(\calB)^2 + (l(\calB) - 1)^+} \right)
\end{align*}
since the number of partitions $\nu$ that appear in the sum is $O_{l(\calB)} \left( N^{(l(\calB) - 1)^+} \right)$.
On the other hand, if we suppose that the elements of $\calB$ are pairwise distinct, then the second statement of Lemma~\ref{4_lem_det_bound_Schur} provides the following bound for $S_q(N)$:
\begin{align*}
S_q(N) ={} & O_{\calB} \left( \elementary (\calB)^N \sum_{\substack{\nu: \\ \nu \subset \left\langle l(\calB)^N \right\rangle \\ |\nu| = l(\calB) N - q}} \elementary \left( \calB^{-1} \right)^{\nu'_1} \right) = O_\calB \left( N^{(l(\calB) - 1)^+} \right)
.
\end{align*}
Keeping these two bounds for $S_q(N)$ in mind, we proceed to bound the part of the error that is independent of $\calB$. As $\abs(\calC) < 1$, Lemma~\ref{4_lem_comb_bound_schur} entails that
$$ \sum_\kappa \left| \schur_\kappa(\calC) \right| = O_{l(\calC), \max(\abs(\calC))}(1)
.
$$
Hence,
\begin{align*}
\error ={} & O_{r, l(\calB), l(\calC), l(\calE), l(\calF), \max(\abs(\calC))} \left( N^{(l(\calE) + l(\calF) - 1)^+} \right) \\
& \times \sum_{\substack{q,n \geq 0: \\ q + n > N - l(\calC)}} S_q(N) q^{(l(\calE) - 1)^+} r^{q + n} n^{(l(\calF) - 1)^+} \sum_{p \geq n} r^p p^{(l(\calE) - 1 )^+}
\intertext{where we have used that the condition on $|\kappa|$ implies that $p \geq n$. An argument based on geometric series gives}
\error ={} & O_{r, l(\calB), l(\calC), l(\calE), l(\calF), \max(\abs(\calC))} \left( N^{(l(\calE) + l(\calF) - 1)^+} \right) \\
& \times \sum_{\substack{q,n \geq 0: \\ q + n > N - l(\calC)}} S_q(N) r^{q + 2n} (q + n)^{(l(\calE) - 1)^+ + (l(\calF) - 1)^+}
.
\end{align*}
Replacing $S_q(N)$ by the appropriate bound concludes the proof.
\end{proof}

\begin{thm} [logarithmic derivatives] \label{4_thm_log_ders} Let $r \in \R$, let $\calE$ and $\calF$ be sets of variables so that $0 < \abs(\calE), \abs(\calF) \leq r < 1$. Then
\begin{align*}
& \hspace{-15pt} \int_{U(N)} \prod_{\varepsilon \in \calE} \frac{\chi'_g(\varepsilon)}{\chi_g(\varepsilon)} \prod_{\varphi \in \calF} \frac{\chi'_{g^{-1}}(\varphi)}{\chi_{g^{-1}}(\varphi)} dg \\
={} & \begin{dcases} \sum_{\substack{\lambda: \\ l(\lambda) = l(\calE)}} z_\lambda \monomial_{\lambda - \left\langle 1^{l(\calE)} \right\rangle}(\calE) \monomial_{\lambda - \left\langle 1^{l(\calE)} \right\rangle}(\calF) &\text{if } l(\calE) = l(\calF) \\ 0 &\text{otherwise}\end{dcases} \\
& + O_{r, l(\calE), l(\calF)} \left( r^{2N} N^{(l(\calE) - 1)^+ + (l(\calF) - 1)^+ + (l(\calE) + l(\calF) - 1)^+ + 2} \right).
\end{align*}
\end{thm}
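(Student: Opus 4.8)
The plan is to obtain Theorem~\ref{4_thm_log_ders} as the specialization of Theorem~\ref{4_thm_log_ders_and_ratio} to $\calB = \emptyset$ and $\calC = \emptyset$. With these choices the hypotheses of Theorem~\ref{4_thm_log_ders_and_ratio} hold vacuously for $\calB$ and $\calC$ (the conditions $\abs(\calB) \leq 1$ and $\abs(\calC) < 1$ are trivial, and the elements of $\calB = \emptyset$ are vacuously pairwise distinct), while the remaining hypothesis $0 < \abs(\calE), \abs(\calF) \leq r < 1$ is exactly what Theorem~\ref{4_thm_log_ders} assumes. So it remains only to simplify the main term and the error bound of \eqref{4_thm_log_ders_and_ratio_eq}.

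For the main term the key observation is that $\power_k(\emptyset) = 0$ for every $k \geq 1$, so that $\power_\chi(\emptyset) = 0$ for any partition $\chi$ of positive length while $\power_\emptyset(\emptyset) = 1$. Applied to the factor $\power_\chi(-\calB) = \power_\chi(\emptyset)$ this forces $\chi = \emptyset$, hence $l(\calE'') = 0$, i.e. $\calE'' = \emptyset$ and $\calE' = \calE$, and the corresponding bracket collapses to $1$. Applied to $\power_{\psi \setminus \omega}(\calC) = \power_{\psi \setminus \omega}(\emptyset)$ it forces $\psi \setminus \omega = \emptyset$ as a multiset, so $\omega = \psi$; combined with $\omega \subset \psi$ this is possible only when $l(\calF) = l(\calE)$, and the main term vanishes otherwise. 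When $\omega = \psi$ one has $\prod_{i \geq 1} i^{m_i(\omega)} m_i(\psi)!/m_i(\psi \setminus \omega)! = z_\psi$, the sign $(-1)^{l(\calE) + l(\calF)}$ is $(-1)^{2 l(\calE)} = 1$, and renaming $\psi = \lambda$ leaves precisely $\sum_{\lambda \colon l(\lambda) = l(\calE)} z_\lambda \monomial_{\lambda - \langle 1^{l(\calE)}\rangle}(\calE) \monomial_{\lambda - \langle 1^{l(\calE)}\rangle}(\calF)$ when $l(\calE) = l(\calF)$, and $0$ otherwise, as stated.

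For the error I would invoke the bound of Theorem~\ref{4_thm_log_ders_and_ratio} corresponding to the case ``there exists $r_1 \leq r$ with $\abs(\calB) = r_1$'': this branch is vacuously available for $\calB = \emptyset$, since the empty sequence $\abs(\calB)$ has all of its (zero) entries equal to any chosen $r_1 \leq r$. That bound reads $\error = O_\calP( r^{2N} N^{l(\calB)^2 + (l(\calB) - 1)^+ + (l(\calE) - 1)^+ + (l(\calF) - 1)^+ + (l(\calE) + l(\calF) - 1)^+ + 2})$ with $\calP = \{ r, l(\calB), l(\calC), l(\calE), l(\calF), \max(\abs(\calC)) \}$; substituting $l(\calB) = 0$ and $l(\calC) = 0$ makes the $l(\calB)^2$ and $(l(\calB) - 1)^+$ exponents vanish and removes the dependence on $l(\calC)$ and $\max(\abs(\calC))$, yielding exactly the error term in Theorem~\ref{4_thm_log_ders}.

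I do not expect a genuine obstacle: the argument is bookkeeping once Theorem~\ref{4_thm_log_ders_and_ratio} is available. The only points that require care are the vanishing arguments for power sums evaluated at the empty set of variables (together with the attendant conventions: empty products equal $1$, $\power_\emptyset = 1$, and $\max$ over an empty sequence is immaterial because the implicit constant's dependence on it disappears) and checking that we are entitled to the sharper $r^{2N}$ branch of Theorem~\ref{4_thm_log_ders_and_ratio} in the degenerate case $\calB = \emptyset$. As a sanity check one could instead re-derive the statement directly from the Recipe with $\calA = \calB = \calC = \calD = \emptyset$, where $z_\lambda^{-1}\power_\lambda(\rho^\beta_\emptyset \cup \rho^\alpha_\emptyset)$ already annihilates every $\lambda \neq \emptyset$ in \eqref{4_recipe_eq} and the same simplifications occur, with \eqref{4_eq_error_bound_for_recipe} specialized to empty parameter sets giving the same error.
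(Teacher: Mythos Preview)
Your proposal is correct and follows essentially the same approach as the paper: specialize Theorem~\ref{4_thm_log_ders_and_ratio} to $\calB = \calC = \emptyset$, use $\power_\chi(\emptyset) = 0$ for $\chi \neq \emptyset$ to collapse the main term, and read off the error from the $\abs(\calB) = r_1$ branch. The paper's proof is terser but identical in substance; your additional remarks about vacuous hypotheses, the sign $(-1)^{2l(\calE)} = 1$, and the disappearance of the $\max(\abs(\calC))$ dependence are all sound bookkeeping.
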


We have made no effort to optimize the exponent of $N$ in the bound for the error term.

\begin{proof} We set $\calB = \emptyset = \calC$ in Theorem~\ref{4_thm_log_ders_and_ratio}. As $\power_\lambda(\emptyset) = 0$ unless $\lambda = \emptyset$, the right-hand side of the equality in \eqref{4_thm_log_ders_and_ratio_eq} simplifies to
\begin{align*}
& (-1)^{l(\calE) + l(\calF)} \sum_{\substack{\psi: \\ l(\psi) = l(\calE)}} \monomial_{\psi - \left\langle 1^{l(\calE)} \right\rangle} (\calE) \sum_{\substack{\omega: \\ \omega = \psi \\ l(\omega) = l(\calF)}} \prod_{i \geq 1} i^{m_i(\omega)} m_i(\psi)! \monomial_{\omega - \left\langle 1^{l(\calF)} \right\rangle} (\calF) \\
& + O_{r, l(\calE), l(\calF)} \left( r^{2N} N^{(l(\calE) - 1)^+ + (l(\calF) - 1)^+ + (l(\calE) + l(\calF) - 1)^+ + 2} \right)
,
\end{align*}
which entails that the main term vanishes unless $l(\calE) = l(\calF)$. The expression stated in the theorem is obtained by substituting $\lambda$ for both $\psi$ and $\omega$.
\end{proof}

\begin{rem*} In \cite[p.~486]{CS}, Conrey and Snaith derive a formula for 
\begin{align*} 
\int_{U(N)} \prod_{\alpha \in \calA} \left(-e^{-\alpha} \right) \frac{\chi_g'\left(e^{-\alpha}\right)}{\chi_g\left(e^{-\alpha}\right)} \prod_{\beta \in \calB} \left(-e^{-\beta}\right) \frac{\chi_{g^{-1}}'\left(e^{-\beta}\right)}{\chi_{g^{-1}}\left(e^{-\beta}\right)} dg
\end{align*}
without employing any combinatorial methods. Compared to the logarithmic derivative theorem presented in this paper, their formula has the distinct advantage of providing an exact expression for the integral. Its principal disadvantage is that this expression is rather complicated, which makes it cumbersome to use. In Conrey and Snaith's theorem, it is not immediately obvious, for instance, that the leading term vanishes unless $l(\calA) = l(\calB)$. Hence, Theorem~\ref{4_thm_log_ders} is an improvement because it provides a simple expression in terms of one of the standard bases for the ring of symmetric functions.
\end{rem*}

\section{From logarithmic derivatives to an explicit formula} \label{4_sec_explicit_formula}
This section is dedicated to an application of the logarithmic derivative theorem, which is motivated by the analogy between $L$-functions and characteristic polynomials. We present an explicit formula for eigenvalues whose derivation mirrors the proof of the explicit formula for zeros of $L$-functions given in \cite{rudnick1996}. As Rudnick and Sarnak's proof is based on completed $L$-functions, which are more natural to work with than classic $L$-functions, we introduce the analogous notion of completed characteristic polynomials. In addition, we give a formula for products of logarithmic derivatives of completed characteristic polynomials.

\begin{defn} [completed characteristic polynomial] 
For unitary matrices $g$ that satisfy $\det(-g) \neq -1$, we define the completed characteristic polynomial as
\begin{align*}
\Lambda_g(z) ={} & \det(-g)^{1/2} z^{-N/2} \chi_g(z).
\end{align*}
Notice that while the characteristic polynomial $\chi_g$ is an entire function, $\Lambda_g$ might only be defined on $\C \setminus \R_-$.
\end{defn}

Why this is a natural definition for the random matrix analogue of completed $L$-functions is explored in Section~\ref{1_sec_NT_and_RMT}. Here, we just recall that the primary reason for considering the completed characteristic polynomial is the following symmetry with respect to the transformation given by $z \mapsto z^{-1}$. Its proof, which is a basic linear algebra exercise, can be found on page \pageref{1_proof_of_funct_eq_for_char_pol}.

\begin{lem} [functional equation] \label{4_lem_funct_eq_char_pol}
For $g \in U(N)$ with $\det(-g) \neq -1$ the following equalities hold.
\begin{enumerate}
\item For all $z \in \C \setminus \R_-$, $\Lambda_g(z) = \Lambda_{g^{-1}} \left( z^{-1}\right)$.
\item For all $z \in \C$ that are not eigenvalues of $g$, $\displaystyle z \frac{\Lambda_g'(z)}{\Lambda_g(z)} = - w \frac{\Lambda_{g^{-1}}' \left( w \right)}{\Lambda_{g^{-1}} \left( w \right)}$ where $w$ is equal to $z^{-1}$.
\end{enumerate}
\end{lem}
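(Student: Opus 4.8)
The functional equation for the completed characteristic polynomial is a direct consequence of the symmetry relation for $\Lambda_g$ already worked out on page \pageref{1_proof_of_funct_eq_for_char_pol}, so the plan is essentially to quote that computation for part (1) and to differentiate it logarithmically for part (2). For part (1), I would recall that for $g \in U(N)$ one has
\begin{align*}
\Lambda_g(z) = \det(-g)^{1/2} z^{-N/2} \det\left( I - z g^{-1} \right),
\end{align*}
and then factor out $\det\left(-z^{-1} g\right)$ from the determinant, exactly as in the displayed chain of equalities in Section~\ref{1_sec_prop_char_pol}: $\det\left( I - z g^{-1}\right) = \det\left(-z^{-1} g\right) \det\left( -z^{-1} g + I\right)^{?}$ — more precisely, $z^{-N} \det(-g) \det\left( I - z g^{-1}\right) = \det\left( I - z^{-1} g\right)$, which rearranges to $\Lambda_g(z) = \det\left(-g^{-1}\right)^{1/2} z^{N/2} \det\left( I - z^{-1} g\right) = \Lambda_{g^{-1}}\left(z^{-1}\right)$. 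The only point to check is that the square-root branches match, i.e. that $\det(-g)^{1/2} \det(-g^{-1})^{1/2} = 1$; this holds on $\C \setminus \R_-$ for the chosen branch because $\det(-g^{-1}) = \overline{\det(-g)} = \det(-g)^{-1}$ for $g$ unitary, and this is exactly the content of the last equality in the page \pageref{1_proof_of_funct_eq_for_char_pol} computation. So part (1) is just a citation of that earlier display.

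For part (2), I would take logarithmic derivatives of the identity in part (1). Write $F(z) = \log \Lambda_g(z)$ and $G(w) = \log \Lambda_{g^{-1}}(w)$, valid locally away from the zeros (the eigenvalues of $g$, resp. $g^{-1}$). From $\Lambda_g(z) = \Lambda_{g^{-1}}\left(z^{-1}\right)$, differentiating with respect to $z$ gives
\begin{align*}
\Lambda_g'(z) = -z^{-2} \Lambda_{g^{-1}}'\left(z^{-1}\right),
\end{align*}
and dividing by $\Lambda_g(z) = \Lambda_{g^{-1}}\left(z^{-1}\right)$ yields
\begin{align*}
\frac{\Lambda_g'(z)}{\Lambda_g(z)} = -z^{-2} \frac{\Lambda_{g^{-1}}'\left(z^{-1}\right)}{\Lambda_{g^{-1}}\left(z^{-1}\right)}.
\end{align*}
Multiplying both sides by $z$ and writing $w = z^{-1}$ gives $z \frac{\Lambda_g'(z)}{\Lambda_g(z)} = -z^{-1}\frac{\Lambda_{g^{-1}}'\left(z^{-1}\right)}{\Lambda_{g^{-1}}\left(z^{-1}\right)} = -w \frac{\Lambda_{g^{-1}}'(w)}{\Lambda_{g^{-1}}(w)}$, which is exactly the claimed relation. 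One should note that although $\Lambda_g$ itself is only defined on $\C \setminus \R_-$, the quantity $z \Lambda_g'(z)/\Lambda_g(z)$ extends to a meromorphic function on all of $\C \setminus \calR(g)$ (it equals $\sum_{\rho \in \calR(g)} \frac{-z\overline{\rho}}{1 - z\overline{\rho}} - N/2$, using the product formula $\chi_g(z) = \prod_{\rho \in \calR(g)}(1 - \rho^{-1} z)$ and $\overline{\rho} = \rho^{-1}$), so the identity in (2) makes sense and holds on the stated larger domain by analytic continuation from $\C \setminus \R_-$.

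The proof has no real obstacle; the only subtlety worth stating carefully is the branch-of-square-root bookkeeping in part (1) and the remark that in part (2) the logarithmic derivative, unlike $\Lambda_g$ itself, is globally meromorphic so the equality is not confined to the slit plane. I would keep the write-up to a few lines, referencing the page \pageref{1_proof_of_funct_eq_for_char_pol} display for (1) and presenting the two-line differentiation for (2).
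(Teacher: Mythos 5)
Your proposal is correct and takes essentially the same route as the paper: the paper itself simply points to the display on page~\pageref{1_proof_of_funct_eq_for_char_pol} for part (1), and part (2) is exactly the logarithmic differentiation you carry out. Your remarks about the branch of $\det(-g)^{1/2}$ and the meromorphic extension of $z\Lambda_g'(z)/\Lambda_g(z)$ to $\C\setminus\calR(g)$ are the right things to flag, and both are handled correctly.
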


A formula for products of logarithmic derivatives of completed characteristic polynomials is easily deduced from the logarithmic derivative theorem for classic characteristic polynomials. 

\begin{thm} [completed logarithmic derivatives] \label{4_thm_completed_log_ders} Let $r \in \R$, let $\calE$ and $\calF$ be sets of non-zero variables so that $\abs(\calE), \abs(\calF) \leq r < 1$. Then
\begin{align} \label{4_thm_completed_log_ders_eq}
\begin{split}
& \hspace{-15pt} \int_{U(N)} \prod_{\varepsilon \in \calE} \varepsilon \frac{\Lambda'_g(\varepsilon)}{\Lambda_g(\varepsilon)} \prod_{\varphi \in \calF} \varphi \frac{\Lambda'_{g^{-1}}(\varphi)}{\Lambda_{g^{-1}}(\varphi)} dg \\
={} & \sum_\lambda \left( -\frac N2 \right)^{l(\calE) + l(\calF) - 2l(\lambda)}
z_\lambda \monomial_\lambda(\calE) \monomial_\lambda(\calF)
\\
& + O_{r, l(\calE), l(\calF)} \left(r^{2N} N^{l(\calE) + l(\calF) + (l(\calE) - 1)^+ + (l(\calF) - 1)^+ + (l(\calE) + l(\calF) - 1)^+ + 2} \right)
.
\end{split}
\end{align}
\end{thm}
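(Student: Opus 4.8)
The plan is to reduce the statement to the logarithmic derivative theorem for \emph{classic} characteristic polynomials, Theorem~\ref{4_thm_log_ders}, using the elementary relation obtained by logarithmically differentiating the defining identity $\Lambda_g(z) = \det(-g)^{1/2} z^{-N/2} \chi_g(z)$, namely $z \tfrac{\Lambda_g'(z)}{\Lambda_g(z)} = -\tfrac N2 + z \tfrac{\chi_g'(z)}{\chi_g(z)}$ for every $z$ with $0<|z|<1$. (Although $\Lambda_g$ itself is only defined for $\det(-g)\neq -1$ and $z\notin\R_-$, the right-hand side of this relation is defined and continuous in $g$ for all $g\in U(N)$, so the integrand in \eqref{4_thm_completed_log_ders_eq} is unambiguous, the exceptional set $\{\det(-g)=-1\}$ having Haar measure zero.) First I would substitute this relation into the integrand and expand the two products:
\[
\prod_{\varepsilon \in \calE} \varepsilon \frac{\Lambda_g'(\varepsilon)}{\Lambda_g(\varepsilon)} \prod_{\varphi \in \calF} \varphi \frac{\Lambda_{g^{-1}}'(\varphi)}{\Lambda_{g^{-1}}(\varphi)} = \sum_{\substack{\calE' \subset \calE \\ \calF' \subset \calF}} \left( -\frac N2 \right)^{l(\calE) - l(\calE') + l(\calF) - l(\calF')} \prod_{\varepsilon \in \calE'} \varepsilon \frac{\chi_g'(\varepsilon)}{\chi_g(\varepsilon)} \prod_{\varphi \in \calF'} \varphi \frac{\chi_{g^{-1}}'(\varphi)}{\chi_{g^{-1}}(\varphi)} .
\]

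Next I would integrate term by term over $U(N)$. Since $\prod_{\varepsilon \in \calE'} \varepsilon$ and $\prod_{\varphi \in \calF'} \varphi$ are constants, each summand's integral equals $\prod_{\varepsilon \in \calE'} \varepsilon \prod_{\varphi \in \calF'} \varphi$ times the classic logarithmic derivative average handled by Theorem~\ref{4_thm_log_ders}, whose hypotheses $0 < \abs(\calE'), \abs(\calF') \le r < 1$ are inherited from those on $\calE$ and $\calF$. That theorem gives a main term which vanishes unless $l(\calE') = l(\calF')$, and for $l(\calE') = l(\calF') = k$ equals $\sum_{\lambda : l(\lambda) = k} z_\lambda \monomial_{\lambda - \langle 1^k \rangle}(\calE') \monomial_{\lambda - \langle 1^k \rangle}(\calF')$, up to an error $O_{r,k}\big(r^{2N} N^{2(k-1)^+ + (2k-1)^+ + 2}\big)$. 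The only further combinatorial input needed is the identity $\prod_{x \in \calX} x \cdot \monomial_\mu(\calX) = \monomial_{\mu + \langle 1^n \rangle}(\calX)$, valid whenever $l(\mu) \le n = l(\calX)$ and immediate from the definition of monomial symmetric polynomials; it turns $\prod_{\varepsilon \in \calE'} \varepsilon \cdot \monomial_{\lambda - \langle 1^k \rangle}(\calE')$ into $\monomial_\lambda(\calE')$, since the partitions $\lambda$ occurring have length exactly $k = l(\calE')$, so $\lambda - \langle 1^k \rangle$ is a genuine partition of length at most $k$.

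It then remains to reassemble the subset sums. Because a partition $\lambda$ of length exactly $k$ has $k$ positive parts, every monomial occurring in $\monomial_\lambda(\calE)$ uses exactly $k$ of the variables, whence $\monomial_\lambda(\calE) = \sum_{\calE' \subset \calE,\, l(\calE') = k} \monomial_\lambda(\calE')$, and likewise for $\calF$. Substituting this into the term-by-term integration, the main part of the integral becomes $\sum_{k \ge 0} \sum_{\calE' \subset \calE,\, l(\calE') = k} \sum_{\calF' \subset \calF,\, l(\calF') = k} (-N/2)^{l(\calE) + l(\calF) - 2k} \sum_{\lambda : l(\lambda) = k} z_\lambda \monomial_\lambda(\calE') \monomial_\lambda(\calF')$, which collapses to $\sum_\lambda (-N/2)^{l(\calE) + l(\calF) - 2 l(\lambda)} z_\lambda \monomial_\lambda(\calE) \monomial_\lambda(\calF)$, exactly the claimed main term. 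The step requiring the most care — though it is bookkeeping rather than genuine difficulty — is controlling the error: there are only $2^{l(\calE)} 2^{l(\calF)}$ summands, and each carries its Theorem~\ref{4_thm_log_ders} error multiplied by $(N/2)^{l(\calE) - l(\calE') + l(\calF) - l(\calF')}$ and by the bounded constants $\prod \varepsilon$, $\prod \varphi$; using $l(\calE') \le l(\calE)$, $l(\calF') \le l(\calF)$ and the monotonicity of $x \mapsto (x-1)^+$, one checks that the exponent of $N$ is at most $l(\calE) + l(\calF) + (l(\calE)-1)^+ + (l(\calF)-1)^+ + (l(\calE)+l(\calF)-1)^+ + 2$, that the factor $r^{2N}$ is unaffected, and that taking the maximum of the finitely many implicit constants leaves a constant depending only on $r$, $l(\calE)$, $l(\calF)$, as stated.
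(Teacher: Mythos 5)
Your proposal is correct and follows the same route as the paper's own proof: the identity $z\,\Lambda'_g(z)/\Lambda_g(z)=-N/2+z\,\chi'_g(z)/\chi_g(z)$, the binomial expansion over subsets $\calE'\subset\calE$, $\calF'\subset\calF$, application of Theorem~\ref{4_thm_log_ders}, the absorption of the prefactors $\prod\varepsilon$ and $\prod\varphi$ into the monomial symmetric polynomials, and the reassembly $\sum_{\calE'}\monomial_\lambda(\calE')=\monomial_\lambda(\calE)$. You have also supplied, correctly, the error bookkeeping that the paper compresses into a single step.
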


The two logarithmic derivative theorems presented in this chapter are another reason why we consider it more natural to work with completed characteristic polynomials in the context of viewing random matrix theory as a model for number theory: the main term in Theorem~\ref{4_thm_completed_log_ders} is a sum that ranges over all partitions, while the main term in Theorem~\ref{4_thm_log_ders} is a sum that ranges over all partitions of a fixed length, which we consider an ``unnatural'' restriction.

\begin{proof} Notice that $\{g \in U(N): \det(-g) = -1\}$ is a null set with respect to Haar measure on $U(N)$. Hence, the fact that $\Lambda_g$ is not defined on this set is of no concern.

We reformulate the left-hand side in \eqref{4_thm_completed_log_ders_eq} such that we can apply Theorem~\ref{4_thm_log_ders}:
\begin{align*}
\LHS ={} & \int_{U(N)} \prod_{\varepsilon \in \calE} \left( -\frac{N}{2} + \varepsilon \frac{\chi'_g(\varepsilon)}{\chi_g(\varepsilon)} \right) \prod_{\varphi \in \calF} \left( -\frac{N}{2} + \varphi \frac{\chi'_{g^{-1}}(\varphi)}{\chi_{g^{-1}}(\varphi)} \right) dg \displaybreak[2] \\ 
={} & \sum_{\substack{\calE' \subset \calE \\ \calF' \subset \calF}} \left( -\frac N2 \right)^{l(\calE) - l\left(\calE'\right) + l(\calF) - l\left(\calF'\right)} \left( \prod_{\varepsilon \in \calE'} \varepsilon \right) \left( \prod_{\varphi \in \calF'} \varphi \right) \\
& \times \int_{U(N)} \prod_{\varepsilon \in \calE'} \frac{\chi_g'(\varepsilon)}{\chi_g(\varepsilon)} \prod_{\varphi \in \calF'} \frac{\chi_{g^{-1}}'(\varphi)}{\chi_{g^{-1}}(\varphi)} dg
.
\intertext{We remark that this equality holds thanks to the convention fixed in Section~\ref{4_sec_sequences_and_partitions} which ensures that every sequence of length $n$ has exactly $2^n$ subsequences. Theorem~\ref{4_thm_log_ders} allows us to compute the integral:}
\LHS ={} & \sum_{\substack{\calE' \subset \calE \\ \calF' \subset \calF \\ l\left(\calE'\right) = l\left(\calF'\right)}} \left( -\frac N2 \right)^{l(\calE) + l(\calF) - 2l\left(\calE'\right)} \left( \prod_{\varepsilon \in \calE'} \varepsilon \right) \left( \prod_{\varphi \in \calF'} \varphi \right) \\
& \times \sum_{\substack{\lambda: \\ l(\lambda) = l\left(\calE'\right)}} z_\lambda \monomial_{\lambda - \left\langle 1^{l\left(\calE'\right)} \right\rangle}\left(\calE'\right) \monomial_{\lambda - \left\langle 1^{l\left(\calE'\right)} \right\rangle}\left(\calF'\right)
\\
& + O_{r, l(\calE), l(\calF)} \left(r^{2N} N^{l(\calE) + l(\calF) + (l(\calE) - 1)^+ + (l(\calF) - 1)^+ + (l(\calE) + l(\calF) - 1)^+ + 2} \right) \displaybreak[2]\\
={} & \sum_\lambda \left( -\frac N2 \right)^{l(\calE) + l(\calF) - 2l(\lambda)} \sum_{\substack{\calE' \subset \calE \\ \calF' \subset \calF \\ l\left(\calE'\right) = l(\lambda) = l\left(\calF'\right)}} z_\lambda \monomial_\lambda\left(\calE'\right) \monomial_\lambda\left(\calF'\right)
\\
& + O_{r, l(\calE), l(\calF)} \left(r^{2N} N^{l(\calE) + l(\calF) + (l(\calE) - 1)^+ + (l(\calF) - 1)^+ + (l(\calE) + l(\calF) - 1)^+ + 2} \right)
.
\end{align*}
By the definition of the monomial symmetric polynomials, the main term simplifies to the desired expression.
\end{proof}

A formula for the average of products of logarithmic derivatives of completed characteristic polynomials over the unitary group \emph{which holds inside the unit circle} is the only tool we need to derive an explicit formula for eigenvalues of unitary matrices.

\begin{thm} [explicit formula] \label{4_thm_explicit_formula} Fix $r \in \R$ with $0 < r < 1$. Let $A(r)$ denote the closed annulus (about the origin) with inner radius $r$ and outer radius $r^{-1}$, and $D \left(r^{-1}\right)$ the closed disc (about the origin) of radius $r^{-1}$. Let $h$ be a meromorphic function on $D\left(r^{-1}\right)$ which is holomorphic on $A(r)$. Let $f$ be a symmetric function in $n$ variables such that $z \mapsto f(z, z_2, \dots, z_n)$ is meromorphic on $D\left(r^{-1}\right)$ and holomorphic on $A(r)$. If $\{\rho_1, \dots, \rho_N\}$ are the eigenvalues of $g \in U(N)$, then
\begin{align} \label{4_thm_explicit_formula_eq}
\begin{split}
& \hspace{-10pt} \int_{U(N)} \sum_{1 \leq j_1, \dots, j_n \leq N} h(\rho_{j_1}) \cdots h(\rho_{j_n}) f(\rho_{j_1}, \dots, \rho_{j_n}) dg \notag \end{split} \displaybreak[2] \\ \begin{split}
={} & \sum_\lambda \left( \frac N2 \right)^{n - 2l(\lambda)} \frac{z_\lambda}{(2\pi)^n} \sum_{k = 0}^n \binom{n}{k} \\
& \times \int_{[0,2\pi]^n} \Bigg( \left[ \prod_{j = 1}^k h \left(re^{-it_j}\right) \right] \! \left[ \prod_{j = k + 1}^n h\left(\frac{e^{it_j}}{r} \right) \right] \! f \! \left( \! re^{-it_1}, \dots, re^{-it_k}, \frac{e^{it_{k + 1}}}{r} , \dots, \frac{e^{it_n}}{r} \!\right) \\
& \hspace{10pt} \times \monomial_\lambda\left(re^{-it_1}, \dots, re^{-it_k}\right) \monomial_\lambda\left(re^{-it_{k + 1}}, \dots, re^{-it_n}\right) dt_1 \dots dt_n \Bigg)
\\
& + O_{r, n, h, f} \left(r^{2N} N^{3n + 2} \right)
.
\end{split}
\end{align}
In the context of this theorem we call a function $f$ symmetric if it is invariant under the permutation of its variables, which means that $f$ need not be an element of the ring of symmetric functions.
\end{thm}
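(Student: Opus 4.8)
The plan is to reduce the statement to the completed logarithmic derivative theorem (Theorem~\ref{4_thm_completed_log_ders}) by two manipulations: first, rewriting the sum over eigenvalues as a contour integral that picks up the completed characteristic polynomial's logarithmic derivative, in the spirit of Rudnick and Sarnak's proof of Theorem~\ref{1_thm_explicit_formula}; and second, applying the functional equation (Lemma~\ref{4_lem_funct_eq_char_pol}) to split each contour integral into inner-circle and outer-circle contributions.

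First I would handle a single factor. For a function $\phi$ that is holomorphic on the annulus $A(r)$, the residue theorem gives
\begin{align*}
\sum_{1 \leq j \leq N} \phi(\rho_j) = \frac{1}{2\pi\imaginary} \oint_{|z| = r^{-1}} \phi(z) \frac{\chi_g'(z)}{\chi_g(z)} dz - \frac{1}{2\pi\imaginary} \oint_{|z| = r} \phi(z) \frac{\chi_g'(z)}{\chi_g(z)} dz,
\end{align*}
since $\chi_g'/\chi_g$ has simple poles at the eigenvalues $\rho_j$ with residue $1$ and no other poles, and all $N$ eigenvalues lie on the unit circle, hence inside the annulus. Writing $z = e^{it}/r$ on the outer circle and $z = re^{-it}$ on the inner circle, and using that $\chi_g'(z)/\chi_g(z) = (z^{-1})(\Lambda_g'(z)/\Lambda_g(z) \cdot z + N/2) \cdot z^{-1}$ — more precisely $z\,\chi_g'(z)/\chi_g(z) = N/2 + z\,\Lambda_g'(z)/\Lambda_g(z)$ — I would recast the outer integral in terms of $\chi_{g^{-1}}$ via the functional equation so that both pieces involve evaluations strictly \emph{inside} the unit circle (at radius $r < 1$). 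This is the analogue of Rudnick and Sarnak applying the functional equation to the integral along $\Re s = -1$.

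Next, I would expand the $n$-fold product $\prod_{i=1}^n \sum_{j_i} h(\rho_{j_i}) \cdots$ as an $n$-fold contour integral and, using the binomial expansion over which of the $n$ contours are taken on the inner versus the outer circle (this is where the $\sum_{k=0}^n \binom{n}{k}$ arises, with symmetry of $f$ ensuring all $\binom{n}{k}$ choices of a fixed size $k$ contribute equally), reduce the integrand to a product of factors $\varepsilon_i \Lambda_g'(\varepsilon_i)/\Lambda_g(\varepsilon_i)$ and $\varphi_j \Lambda_{g^{-1}}'(\varphi_j)/\Lambda_{g^{-1}}(\varphi_j)$ with $|\varepsilon_i| = |\varphi_j| = r < 1$, times the fixed weight coming from $h$ and $f$ evaluated on the circles. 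Interchanging the $g$-integral with the $t$-integrals (justified by compactness of $U(N)$ and continuity of the integrand on the circles of radius $r$, away from the Haar-null set where $\det(-g) = -1$), I apply Theorem~\ref{4_thm_completed_log_ders} with $\calE = (re^{-it_1}, \dots, re^{-it_k})$ and $\calF = (re^{-it_{k+1}}, \dots, re^{-it_n})$. The main term of Theorem~\ref{4_thm_completed_log_ders} is exactly $\sum_\lambda (-N/2)^{k + (n-k) - 2l(\lambda)} z_\lambda \monomial_\lambda(\calE)\monomial_\lambda(\calF)$; collecting the signs (the $(-1)$'s absorb into the definitions and the orientation of the inner contour) produces the stated main term, and the error term $O(r^{2N} N^{l(\calE)+l(\calF)+\cdots})$ integrates against the bounded $t$-dependent weight to give $O_{r,n,h,f}(r^{2N} N^{3n+2})$ after bounding each exponent crudely by $n$.

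\textbf{Expected main obstacle.} The routine parts are the residue computation and the bookkeeping of signs and binomial coefficients. The genuinely delicate point is controlling the error term uniformly: Theorem~\ref{4_thm_completed_log_ders}'s implicit constant is allowed to depend on $l(\calE)$, $l(\calF)$ and $r$, but the sets $\calE$, $\calF$ here vary continuously with $t \in [0,2\pi]^n$ (their lengths are fixed, but their elements range over the circle $|z| = r$). I would need to check that the implicit constant in \eqref{4_thm_completed_log_ders_eq} depends on $\calE, \calF$ only through $l(\calE)$, $l(\calF)$ and the bound $r$ on absolute values — which is precisely what the statement of Theorem~\ref{4_thm_completed_log_ders} asserts — and then that $h$ and $f$, being holomorphic on $A(r)$ hence bounded on the circles of radius $r$ and $r^{-1}$, contribute only a finite multiplicative constant $O_{h,f}(1)$ after integrating over the compact torus. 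A secondary technical nuisance is that $h$ and $f$ are merely meromorphic on the disc $D(r^{-1})$: their poles inside the unit disc would, in a naive residue shift, contribute extra terms, but since I only ever evaluate on the two \emph{circles} $|z| = r$ and $|z| = r^{-1}$ (never shifting contours across the annulus where they are holomorphic), no such terms appear — I should state this carefully to justify that the two circles can be chosen within $A(r)$.
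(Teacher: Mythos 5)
Your proposal is correct and follows essentially the same route as the paper's own proof: express the eigenvalue sum as an $n$-fold contour integral of logarithmic derivatives over the boundary $\delta = \delta(r) + \delta(r^{-1})$ of the annulus, apply the functional equation (Lemma~\ref{4_lem_funct_eq_char_pol}) to the outer-circle pieces so that all evaluations land at radius $r < 1$, split according to which of the $n$ contours lie on the inner versus outer circle (giving the $\sum_{k=0}^n \binom{n}{k}$ by symmetry of $f$), and invoke Theorem~\ref{4_thm_completed_log_ders}, whose error bound is uniform in exactly the sense you need. The one cosmetic difference is that you start from $\chi_g'/\chi_g$ (pole-free at $0$) and convert to $\Lambda_g'/\Lambda_g$, which requires noting that the extra $N/(2z_i)$ terms integrate to zero around $\delta$ by Cauchy's theorem, whereas the paper works directly with $\Lambda_g'/\Lambda_g$ from the start and simply observes that the origin lies outside the interior of $\delta$.
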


\begin{proof}
Recall that the function $\Lambda_g'(z)/\Lambda_g(z)$ is meromorphic on the entire complex plane with simple poles at $\{0, \rho_1, \dots, \rho_N\}$; its residue at $\rho_i$ is the multiplicity of $\rho_i$. We consider the following path integral along the border of $A(r)$, \textit{i.e.}\ along $\delta = \delta(r) + \delta\left(r^{-1}\right)$ where $\delta(r): [0, 2\pi] \to \C; t \mapsto re^{-it}$ and $\delta\left(r^{-1}\right): [0, 2\pi] \to \C; t \mapsto r^{-1}e^{it}$:
\begin{align*} 
\text{Eig}(g) \defeq \frac{1}{(2\pi i)^n} \int_{\delta} \dots \int_{\delta} \prod_{i = 1}^n \frac{\Lambda_g'(z_i)}{\Lambda_g(z_i)} h(z_i) f(z_1, \dots, z_n)dz_1 \dots dz_n 
.
\end{align*}
Given that the interior of $\delta$ does not contain the origin, repeated application of the residue theorem allows us to infer that the above expression is equal to the integrand on the left-hand side in \eqref{4_thm_explicit_formula_eq}.

In a next step, we show that the integral of $\text{Eig}(g)$ over the unitary group is also equal to the right-hand side in \eqref{4_thm_explicit_formula_eq}. Recalling that each integral along the path $\delta$ is the sum of the integrals along $\delta(r)$ and $\delta \left( r^{-1}\right)$, we multiply out (exploiting the fact that $f$ is symmetric), and then apply the functional equation for the completed characteristic polynomial (\textit{i.e.}\ Lemma~\ref{4_lem_funct_eq_char_pol}) to the logarithmic derivatives that are integrated along $\delta\left(r^{-1}\right)$:
\begin{align*}
\text{Eig}(g) ={} & \sum_{\substack{\calE, \calF \subset [n]: \\ \calE \cup \calF \sorteq [n]}} \frac{1}{(2\pi i)^n} \int_{\delta(r)}^{(\calE)} \int_{\delta\left(r^{-1}\right)}^{(\calF)} \prod_{\varepsilon \in \calE} z_\varepsilon \frac{\Lambda_g'(z_\varepsilon)}{\Lambda_g(z_\varepsilon)} \frac{h(z_\varepsilon)}{z_\varepsilon} \prod_{\varphi \in \calF} \left( - z_\varphi^{-1} \right) \frac{\Lambda_{g^{-1}}' \left( z_\varphi^{-1} \right)}{\Lambda_{g^{-1}} \left( z_\varphi^{-1} \right)} \frac{h(z_\varphi)}{z_\varphi} \\
& \times f\left(z_\calE \cup z_\calF\right)dz_\calE dz_\calF
.
\end{align*}
Here the superscripts of the integrals indicate which variables are integrated along $\delta(r)$, and which along $\delta\left(r^{-1}\right)$.
Using Theorem~\ref{4_thm_completed_log_ders} to integrate this expression over $U(N)$ gives
\begin{align*}
\int_{U(N)} \text{Eig}(g) dg ={} & \sum_{\substack{\calE, \calF \subset [n]: \\ \calE \cup \calF \sorteq [n]}} \frac{(-1)^{l(\calF)}}{(2\pi i)^n} \int_{\delta(r)}^{(\calE)} \int_{\delta\left(r^{-1}\right)}^{(\calF)} \prod_{\varepsilon \in \calE} \frac{h(z_\varepsilon)}{z_\varepsilon} \prod_{\varphi \in \calF} \frac{h(z_\varphi)}{z_\varphi} f\left(z_\calE \cup z_\calF \right) \\
& \times \sum_\lambda \left( -\frac N2 \right)^{l(\calE) + l(\calF) - 2l(\lambda)}
z_\lambda \monomial_\lambda\left(z_\calE\right) \monomial_\lambda\left(z_\calF^{-1}\right) dz_\calE dz_\calF
\\
& + O_{r, n, h, f} \left(r^{2N} N^{3n + 2} \right) 
.
\intertext{Notice that we have exchanged the order of integration, which is permissible since we are only integrating continuous functions over compact spaces with respect to finite measures. Further notice that the terms only depend on $l(\calE)$, and not on the subsequence itself. Hence,}
\int_{U(N)} \text{Eig}(g) dg ={} & \sum_\lambda \left( -\frac N2 \right)^{n - 2l(\lambda)} \frac{z_\lambda}{(2\pi i)^n} \sum_{k = 0}^n (-1)^{n - k} \binom{n}{k} \\
& \times \int_{\delta(r)}^{(1, \dots, k)} \int_{\delta\left(r^{-1}\right)}^{(k + 1, \dots, n)} \left[ \prod_{j = 1}^n \frac{h(z_j)}{z_j} \right] f(z_1, \dots, z_n) \\
& \times \monomial_\lambda(z_1, \dots, z_k) \monomial_\lambda \left(z_{k + 1}^{-1}, \dots, z_n^{-1}\right) dz_1 \dots dz_n
\\
& + O_{r, n, h, f} \left(r^{2N} N^{3n + 2} \right)
.
\end{align*}
Writing out the path integrals gives the desired formula.
\end{proof}

\chapter{Ideas for Number Theoretic Applications} \label{5_cha_conclusions}

\section[Explicit formulae for zeros of $L$-functions and eigenvalues in comparison]{Explicit formulae for zeros of $\boldsymbol{L}$-functions and \\eigenvalues in comparison} \label{5_sec_explicit_formulae}
In our opinion, the main interest of our explicit formula for eigenvalues of a random unitary matrix lies in the fact that its derivation has the same basic structure as the derivation of the explicit formula for zeros of $L$-functions in \cite{rudnick1996}. This similarity in structure might give a deeper insight into the conjectured connection between $L$-functions and characteristic polynomials from the unitary group.

As shown in Section~\ref{1_sec_on_correlations}, Rudnick and Sarnak's explicit formula for zeros of $L$-functions (stated in Theorem~\ref{1_thm_explicit_formula}) is an application of the functional equation and the Euler product. Hence, our proof of the explicit formula for eigenvalues (stated in Theorem~\ref{4_thm_explicit_formula}) is based on two analogous properties of characteristic polynomials.
\begin{itemize}
\item The functional equation for $L$-functions used in \cite{rudnick1996} encodes a symmetry between the value of the completed $L$-function attached to some irreducible cuspidal automorphic representation $\pi$ of $GL_m$ over $\Q$ at the point $s$ and the value of the completed $L$-function associated to the contragredient of $\pi$ at the point $1 - s$. (For a formal statement of the functional equation in question turn to page \pageref{1_page_functional_equation_RS}.) As discussed in Section~\ref{1_sec_on_moments}, the transformation $s \mapsto 1 - s$ corresponds to the transformation $z \mapsto z^{-1}$. Hence, it is reasonable that the equality $$\Lambda_g(z) = \Lambda_{g^{-1}} \left(z^{-1} \right)$$ plays the role of the functional equation in our derivation of the explicit formula for eigenvalues, where $g \in U(N)$ and the inverse $g^{-1}$ is analogous to the contragredient $\tilde{\pi}$.
\item If we view the Euler product as a connector between $L$-functions and prime numbers, there is no hope of finding a random matrix theory analogue. However, if we view the Euler product as an explicit expression for the logarithmic derivative $\Lambda'(s, \pi)/\Lambda(s, \pi)$ that holds sufficiently far to the right of the critical line, then Theorem~\ref{4_thm_completed_log_ders} is a possible analogue. Indeed, it provides an explicit expression for (the main term of) the average of logarithmic derivatives of completed characteristic polynomials that holds inside the unit circle. 

As the unit circle is the ``critical line'' for the completed characteristic polynomial $\Lambda_g(z)$ \cite[p.~39]{CFKRS05}, the unit disc (\textit{i.e.}\ the inside of the unit circle) should correspond to either the half-plane to the left or the half-plane to the right of the critical line for completed $L$-functions. The substitute for the Euler product proposed above suggests that the unit disc is associated to the half-plane on the right-hand side. Another argument in support of this correspondence (which is also mentioned in \cite{CFKRS05}) is that under the assumption of the Riemann hypothesis, the zeros of $\zeta'(s)$ all lie to the right of the critical line (according to \cite{MontgomeryLevinson}), while the zeros of the derivative of any characteristic polynomial $\chi_g(z)$ with $g \in U(N)$ lie inside the unit circle (according to the Gauss-Lucas Theorem). 
\end{itemize}
The proofs of the explicit formulae (for zeros of $L$-functions and for eigenvalues) are both structured as follows: Consider the sum on the left-hand side of the equality to be proved (\textit{i.e.}\ the equality given in \eqref{1_thm_explicit_formula_eq} or \eqref{4_thm_explicit_formula_eq}), which we recall for the convenience of the reader: 
\begin{align*}
\sum_{\rho_\pi} h(\rho_\pi) - \delta(\pi) \left[ h(0) + h(1) \right]
\end{align*}
where $\rho_\pi$ is over the nontrivial zeros of $L(s, \pi)$ (and the second term vanishes unless $\pi$ corresponds to the $\zeta$-function), or
\begin{align*}
\int_{U(N)} \sum_{1 \leq j_1, \dots, j_n \leq N} h(\rho_{j_1}) \cdots h(\rho_{j_n}) f(\rho_{j_1}, \dots, \rho_{j_n}) dg
\end{align*}
where $\{\rho_1, \dots, \rho_N\}$ is the multiset of eigenvalues of $g \in U(N)$.
Use Cauchy's argument principle to express this sum over zeros as a contour integral. This results in an integral of the following abstract form:
\begin{align*}
\frac{1}{2\pi \imaginary} \int_{\gamma_1} \frac{\Lambda'(s)}{\Lambda(s)} h(s) ds - \frac{1}{2\pi \imaginary} \int_{\gamma_2} \frac{\Lambda'(s)}{\Lambda(s)} h(s) ds
\end{align*}
where $\Lambda$ stands for a completed $L$-function \emph{or} a random completed characteristic polynomial, and the contours $\gamma_1$ and $\gamma_2$ are vertical lines that are located to the right and to the left the critical line, respectively, \emph{or} the contours $\gamma_1$ and $\gamma_2$ are circles about the origin that are located inside and outside the unit circle, respectively. In a next step, apply the functional equation to the integrand corresponding to the contour $\gamma_2$, which allows us to situate both contours to the right of the critical line/inside the unit circle. Now, the explicit formula is a consequence of the Euler product/Theorem~\ref{4_thm_completed_log_ders}.

The underlying structure of these derivations of explicit formulae might be the same, but the resulting formulas look quite different. The principal reason for this difference is that we have substituted the Euler product by an equality that does not carry any arithmetic information. Another obvious difference is that our explicit formula for eigenvalues provides an asymptotic expression for the sums over all $n$-tuples of eigenvalues (for $n \geq 1$), whereas Rudnick and Sarnak's explicit formula for zeros of $L$-functions provides an exact expression for the sum over all $1$-tuples of zeros. It would be very interesting to investigate explicit formulae for sums of $n$-tuples of zeros of $L$-functions, whose proof follows the same structure. Such a proof would be based on an arithmetic expression for
\begin{align*}
\prod_{\varepsilon \in \calE} \varepsilon \frac{\Lambda'(\varepsilon, \pi)}{\Lambda (\varepsilon, \pi)} \prod_{\varphi \in \calF} \varphi \frac{\Lambda'(\varphi, \tilde{\pi})}{\Lambda(\varphi, \tilde{\pi})}
\end{align*}
where $\calE$ and $\calF$ are sets of complex numbers that lie sufficiently far to the right of the critical line. This arithmetic expression might even display the same combinatorial structure as our combinatorial formula for the average of products of logarithmic derivatives of completed characteristic polynomials (stated in Theorem~\ref{4_thm_completed_log_ders}).

\section{A comment for specialists of RMT predictions about number theory}
In the preceding section, we have argued that our proof of the explicit formula for eigenvalues mirrors Rudnick and Sarnak's proof of the explicit formula for zeros of $L$-functions. In doing so, we somewhat follow a combinatorial approach developed in \cite{dehaye12}, where he uncovers the combinatorics behind the recipe for conjecturing the moments of $L$-functions proposed in \cite{CFKRS05}. As discussed in Section~\ref{1_sec_POD12}, Dehaye shows that translating the CFKRS recipe into the language of symmetric functions results in the following conjecture for the $2k$-th shifted moment of the Riemann $\zeta$-function: for every pair of partitions $\mu$, $\nu$ there exists a coefficient $c_{\mu \nu}(t)$, which is a function in the variable $t$, so that
\begin{align*}
\frac{1}{T} \int_0^T Z \left( \frac 12 + \imaginary t, \calD \right) dt \sim \frac{1}{T} \int_0^T \left( \frac{t}{2 \pi} \right)^{\frac 12 \sum_{\delta \in \calD} \delta} \sum_{\mu, \nu}  c_{\mu \nu}(t) \sum_{A, B} \frac{\schur_\mu(\calA) \schur_\nu(\calB)}{\Delta(\calA; \calB)} dt
\end{align*}  
where the second sum is over subsets $\calA$, $\calB$ of the shifts $\calD$ such that $|\calA| = k = |\calB|$ and
$\calA \cup \calB = \calD$. Here $Z(1/2 + \imaginary t; \calD) = Z(1/2 + \imaginary t + \delta_1) \cdots Z(1/2 + \imaginary t + \delta_{2k})$ is a product of $2k$ shifted completed $\zeta$-functions. In a next step, Dehaye applies a generalized version of Lemma~\ref{1_lem_BG_laplace_Schur} to express the sum over $\calA, \calB$ as a Schur function $\schur_\lambda(\calA \cup \calB) = \schur_\lambda(\calD)$ indexed by a partition $\lambda$ that depends on $\mu$ and $\nu$. In a final step, he lets the shifts in $\calD$ go to 0 and thus obtains a conjectural expression for $I_k(\zeta, T)$, the $2k$-th moment of the Riemann $\zeta$-function.

On the random matrix theory side, one can easily reorganize Bump and Gamburd's combinatorial proof of Keating and Snaith's formula for $I_k(U(N))$ (\textit{i.e.}\ the $2k$-th moment of a random characteristic polynomial from the unitary group $U(N)$) so that it displays a similar structure: for any matrix $g \in U(N)$ (with $\det(-g) \neq -1$), let us define $Z_g(z) = \det(-g)^{1/2} z^{-N/2} \chi_g(z)$ in analogy to the $Z$-function given in \cite{CFKRS05}. Consider a sequence of pairwise distinct shifts $\calD = (\delta_1, \dots, \delta_{2k})$ with $|\delta_i| = 1$ and set
\begin{align*}
Z_g \left(1, \calD \right) = Z_g(\delta_1) \cdots Z_g(\delta_{2k})
.
\end{align*}
The second equality in Bump and Gamburd's product theorem (\textit{i.e.}\ Theorem~\ref{1_thm_products_BG}) allows us to infer the following expression for the $2k$-th shifted moment of a random characteristic polynomial from $U(N)$:
\begin{align*}
\int_{U(N)} Z_g(1, \calD)dg ={} & \int_{U(N)} \prod_{i = 1}^k Z_g(\delta_1) \prod_{i = k + 1}^{2k} \overline{Z(\delta_i)} dg
\\
={} & \prod_{i = 1}^k \delta_i^{-N/2} \prod_{i = k + 1}^{2k} \delta_i^{N/2} \int_{U(N)} \prod_{i = 1}^k \chi_g(\delta_i) \prod_{i = k + 1}^{2k} \chi_{g^{-1}}\left(\delta_i^{-1}\right) dg
\\
={} & \prod_{\delta \in \calD} \delta^{-N/2} \sum_{\calA, \calB} \frac{\prod_{\alpha \in \calA} \alpha^{k + N}}{\Delta(\calA; \calB)}
\end{align*}
where the second sum is over subsets $\calA$, $\calB$ of the shifts $\calD$ such that $|\calA| = k = |\calB|$ and
$\calA \cup \calB = \calD$. Following the method used in \cite{dehaye12}, we apply Lemma~\ref{1_lem_BG_laplace_Schur}
to express the sum over $\calA, \calB$ as a Schur function $\schur_\lambda(\calA \cup \calB) = \schur_\lambda(\calD)$, where the partition $\lambda$ is equal to $\left\langle N^k \right\rangle$. Finally, letting the shifts in $\calD$ go to 1 results in a combinatorial expression for $I_k(U(N))$, the $2k$-th moment of a random characteristic polynomial from $U(N)$.

Dehaye thus discovered combinatorial commonalities between a number theoretic conjectural derivation and a random matrix theory proof (on moments), while we do the same for a number theoretic proof and a combinatorial proof (on explicit formulae). \textbf{In short, Dehaye finds combinatorial commonalities between the structure of a proof and of a conjectural derivation, while we find
commonalities between two actual proofs.} Our discovery suggests that a better understanding of the combinatorial structure behind random matrix theory proofs might lead to rigorous number theoretic proofs -- as opposed to number theoretic conjectures. Moreover, the completed framework presented in Chapter~\ref{4_cha_mixed_ratios} for expressing these combinatorial commonalities covers both the product formula in \cite{bump06} and a formula for products of logarithmic derivatives of completed characteristic polynomials, which are related to moments and to explicit formulae (and thus correlations), respectively. 

On a side note, both the CFKRS recipe for conjecturing moments of $L$-functions and the combinatorial proof for moments of characteristic polynomials sketched above introduce pairwise distinct shifts $\calD$, without which the respective methods would not be applicable. From a purely combinatorial point of view this condition that the variables be pairwise distinct, which pervades the completed framework presented in Chapter~\ref{4_cha_mixed_ratios}, looks like an artefact of working with determinantal formulas for Schur and Littlewood-Schur functions -- rather than with combinatorial formulas, which describe the two types of functions as polynomials. However,  the fact that pairwise distinct shifts also play an important role in its number theoretical counterpart allows us to interpret this ``pairwise distinct'' condition as another indicator that the combinatorial structures underlying the two approaches are identical.

\section{On (to) other classic matrix groups}
In \cite{FamiliesLFunctions}, Katz and Sarnak introduce the theory of symmetry types associated to so-called families of $L$-functions. Examples of families of $L$-functions include (but are not limited to):
\begin{enumerate}
\item the set $\{L(s + it): t \geq 0\}$ where $L$ is a fixed $L$-function, 
\item the set of Dirichlet $L$-functions associated to quadratic Dirichlet characters. 
\end{enumerate}
Katz and Sarnak conjecture that the zero distribution within any family of $L$-functions coincides with the eigenvalue distribution of a classical compact matrix group (which is determined by its symmetry type). In this context, the compact groups of interest are the unitary group, the (special) orthogonal groups and the symplectic group. It is conjectured that our first example has a unitary symmetry type, while our second example has a symplectic symmetry type. 

In \cite{KS00L}, Keating and Snaith derive exact expressions for the moments of a random characteristic polynomial from both the special orthogonal and the symplectic groups, using techniques they have developed in \cite{KS00zeta} for computing moments of a random characteristic polynomial from the unitary group. They then go on to study the connection to the \emph{value} distribution of the $L$-functions within orthogonal and symplectic families at the so-called central point $s = 1/2$. More precisely, they compare the few known results for moments of families of $L$-functions at the central point to the moments of characteristic polynomials from the matrix groups corresponding to their symmetry type (at the point $z = 1$). 
In \cite{CF}, Conrey and Farmer conjecture that the moments of families of $L$-functions at the central point are asymptotically equivalent to products of two factors, namely a factor that depends on the specific family in question and a universal factor that only depends on its symmetry type. Building on this conjecture, their comparison leads Keating and Snaith to the discovery that the universal factor seems to be determined by random matrix theory. To sum up, the relationship between moments of families of $L$-functions and the matrix group corresponding to their symmetry type is analogous to the the relationship between moments of a single $L$-function and the unitary group. This comes as no surprise given that the moments of a single $L$-function can be viewed as moments of a particular family of $L$-functions at the central point $s = 1/2$. In fact, the moments of a single $L$-function are equal to the moments of our first example of a family of $L$-functions at $s = 1/2$.

In Chapter~\ref{1_cha_intro}, we have only presented Bump and Gamburd's combinatorial method for computing averages of characteristic polynomials over the unitary group. However, they also apply their method to other matrix groups of relevance to number theory in \cite{bump06}. This remark applies equally to our discussion of the CFKRS recipe for conjecturing the lower order terms in the moments of $L$-functions: their recipe is also applicable in the more general setting of moments of families of $L$-functions at the central point. Moreover, Dehaye informs me that he has extended his formalism for translating the CFKRS recipe into the language of symmetric functions to the orthogonal and the symplectic group (unpublished). Therefore, it should be possible to further expand the combinatorial framework presented in this thesis, unifying not only moments and explicit formulae for single $L$-functions, but also for families of $L$-functions.

\bibliographystyle{alpha}
\bibliography{bib_thesis}

\newcommand{\etalchar}[1]{$^{#1}$}
\begin{thebibliography}{CFK{\etalchar{+}}08}

\bibitem[BC14]{Borodin2014}
A.~Borodin and I.~Corwin.
\newblock Macdonald processes.
\newblock {\em Probability Theory and Related Fields}, 158(1-2):225--400, 2014.

\bibitem[BCC17]{Steeptilings}
J.~Bouttier, G.~Chapuy, and S.~Corteel.
\newblock From {A}ztec diamonds to pyramids: steep tilings.
\newblock {\em Transactions of the American Mathematical Society},
  369(8):5921--5959, 2017.

\bibitem[BG06]{bump06}
D.~Bump and A.~Gamburd.
\newblock On the averages of characteristic polynomials from classical groups.
\newblock {\em Commun. Math. Phys.}, 265(1):227--274, 2006.

\bibitem[BK72]{BenderKnuth}
E.~A. Bender and D.~E. Knuth.
\newblock Enumeration of plane partitions.
\newblock {\em Journal of Combinatorial Theory, Series A}, 13(1):40--54, 1972.

\bibitem[BR85]{berele_remmel}
A.~Berele and J.~B. Remmel.
\newblock Hook flag characters and their combinatorics.
\newblock {\em Journal of Pure Applied Algebra}, 35:225--245, 1985.

\bibitem[BR87]{berele_regev}
A.~Berele and A.~Regev.
\newblock Hook {Y}oung diagrams with applications to combinatorics and
  representations of {L}ie superalgebras.
\newblock {\em Advances in Mathematics}, 64(2):118--175, 1987.

\bibitem[Bre93]{brenti}
F.~Brenti.
\newblock Determinants of super-{S}chur functions, lattice paths, and dotted
  plane partitions.
\newblock {\em Advances in Mathematics}, 98:27--64, 1993.

\bibitem[Bum13]{BumpLieGroups}
D.~Bump.
\newblock {\em Lie Groups}, volume 225 of {\em Graduate Texts in Mathematics}.
\newblock Springer-Verlag, New York, second edition, 2013.

\bibitem[CF00]{CF}
J.~B. Conrey and D.~W. Farmer.
\newblock Mean values of {$L$}-functions and symmetry.
\newblock {\em International Mathematics Research Notices}, 2000(17):883--908,
  2000.

\bibitem[CFK{\etalchar{+}}03]{CFKRS03}
J.~B. Conrey, D.~W. Farmer, J.~P. Keating, M.~O. Rubinstein, and N.~C. Snaith.
\newblock Autocorrelation of random matrix polynomials.
\newblock {\em Commun. Math. Phys.}, 237(3):365--395, 2003.

\bibitem[CFK{\etalchar{+}}05]{CFKRS05}
J.~B. Conrey, D.~W. Farmer, J.~P. Keating, M.~O. Rubinstein, and N.~C. Snaith.
\newblock Integral moments of {$L$}-functions.
\newblock {\em Proc. London Math. Soc.}, 91(1):33--104, 2005.

\bibitem[CFK{\etalchar{+}}08]{CFKRS08}
J.~B. Conrey, D.~W. Farmer, J.~P. Keating, M.~O. Rubinstein, and N.~C. Snaith.
\newblock Lower order terms in the full moment conjecture for the {R}iemann
  zeta function.
\newblock {\em Journal of Number Theory}, 128(6):1516--1554, 2008.

\bibitem[CFS05]{original_ratios}
J.~B. Conrey, P.~J. Forrester, and N.~C. Snaith.
\newblock Averages of ratios of characteristic polynomials for the compact
  classical groups.
\newblock {\em International Mathematics Research Notices}, 2005(7):397--431,
  2005.

\bibitem[Con05]{ConreyNotes}
B.~Conrey.
\newblock Notes on eigenvalue distributions for the classical compact groups.
\newblock In F.~Mezzadri and N.~C. Snaith, editors, {\em Recent Perspectives in
  Random Matrix Theory and Number Theory}, pages 111--146. Cambridge University
  Press, Cambridge, 2005.

\bibitem[CS08]{CS}
J.~B. Conrey and N.~C. Snaith.
\newblock Correlations of eigenvalues and {R}iemann zeros.
\newblock {\em Communications in Number Theory and Physics}, 2(3):477--536,
  2008.

\bibitem[Deh08]{POD08}
P.-O. Dehaye.
\newblock Joint moments of derivatives of characteristic polynomials.
\newblock {\em Algebra and Number Theory}, 2(1):31--68, 2008.

\bibitem[Deh12]{dehaye12}
P.-O. Dehaye.
\newblock Combinatorics of lower order terms in the moment conjectures for the
  {R}iemann zeta function.
\newblock arXiv:1201.4478, 2012.

\bibitem[Jac41]{jacobi}
C.~Jacobi.
\newblock De functionibus alternantibus earumque divisione per productum e
  differentiis elementorum conflatum.
\newblock {\em J. reine angew. Math.}, 22:360--371, 1841.

\bibitem[Kon12]{MNrule}
M.~Konvalinka.
\newblock Skew quantum {M}urnaghan-{N}akayama rule.
\newblock {\em Journal of Algebraic Combinatorics}, 35(4):519--545, 2012.

\bibitem[KS99]{FamiliesLFunctions}
N.~M. Katz and P.~Sarnak.
\newblock {\em Random Matrices, {F}robenius Eigenvalues and Monodromy},
  volume~45 of {\em American Mathematical Society Colloquium Publications}.
\newblock American Mathematical Society, Providence, Rhode Island, 1999.

\bibitem[KS00a]{KS00L}
J.~P. Keating and N.~C. Snaith.
\newblock Random matrix theory and {$L$}-functions at $s = 1/2$.
\newblock {\em Commun. Math. Phys.}, 214(1):91--110, 2000.

\bibitem[KS00b]{KS00zeta}
J.~P. Keating and N.~C. Snaith.
\newblock Random matrix theory and $\zeta(1/2+i t)$.
\newblock {\em Commun. Math. Phys.}, 214(1):57--89, 2000.

\bibitem[Lit36]{littlewood}
D.~E. Littlewood.
\newblock Some properties of {$S$}-functions.
\newblock {\em Proc. London Math. Soc.}, 40(1):49--71, 1936.

\bibitem[LM74]{MontgomeryLevinson}
N.~Levinson and H.~L. Montgomery.
\newblock Zeros of the derivatives of the {R}iemann zeta-function.
\newblock {\em Acta Math.}, 133:49--65, 1974.

\bibitem[LR34]{LRcoeffs}
D.~E. Littlewood and A.~R. Richardson.
\newblock Group characters and algebra.
\newblock {\em Philosophical Transactions of the Royal Society of London.
  Series A, Containing Papers of a Mathematical or Physical Character},
  233:99--141, 1934.

\bibitem[Mac95]{mac}
I.~G. Macdonald.
\newblock {\em Symmetric Functions and Hall Polynomials}.
\newblock Oxford Mathematical Monographs. Oxford University Press, New York,
  second edition, 1995.

\bibitem[MdJ03]{vanderjeugt}
E.~M. Moens and J.~Van der Jeugt.
\newblock A determinantal formula for supersymmetric {S}chur polynomials.
\newblock {\em Journal of Algebraic Combinatorics}, 17(3):283--307, 2003.

\bibitem[Meh91]{Mehta}
M.~L. Mehta.
\newblock {\em Random Matrices}.
\newblock Academic Press, San Diego, revised and enlarged second edition, 1991.

\bibitem[MNR81]{metropolis}
N.~Metropolis, G.~Nicoletti, and G.-C. Rota.
\newblock A new class of symmetric functions.
\newblock {\em Math. Anal. Appl. Part B, Adv. in Math. Suppl. Stud.},
  7:563--575, 1981.

\bibitem[Mon73]{montgomery73}
H.~L. Montgomery.
\newblock The pair correlation of zeros of the zeta function.
\newblock {\em Proc. Sym. Pure Math., AMS}, 24:181--193, 1973.

\bibitem[Mur37]{Murnaghan}
F.~D. Murnaghan.
\newblock The characters of the symmetric group.
\newblock {\em American Journal of Mathematics}, 59(4):739--753, 1937.

\bibitem[Nak40]{Nakayama}
T.~Nakayama.
\newblock On some modular properties of irreducible representations of the
  symmetric group {I} and {II}.
\newblock {\em Japan.J.Math.}, 17:165--184, 411--423, 1940.

\bibitem[Odl87]{odlyzko87}
A.~M. Odlyzko.
\newblock On the distribution of spacings between zeros of the zeta function.
\newblock {\em Math. Comp.}, 48:273--308, 1987.

\bibitem[Odl89]{odlyzko89}
A.~M. Odlyzko.
\newblock The $10^{20th}$ zero of the riemann zeta function and 70 million of
  its neighbors.
\newblock Preprint, 1989.

\bibitem[Rie76]{riemann}
B.~Riemann.
\newblock Ueber die {A}nzahl der {P}rimzahlen unter einer gegebenen {G}r\"osse
  (1859).
\newblock In H.~Weber, editor, {\em {B}ernhard {R}iemann's gesammelte
  mathematische {W}erke und wissenschaftlicher {N}achlass}, pages 136--144.
  Teubner, Leipzig, 1876.

\bibitem[RS96]{rudnick1996}
Z.~Rudnick and P.~Sarnak.
\newblock Zeros of principal {$L$}-functions and random matrix theory.
\newblock {\em Duke Math. J.}, 81(2):269--322, 1996.

\bibitem[Sag01]{Sagan}
B.~E. Sagan.
\newblock {\em The Symmetric Group: Representations, Combinatorial Algorithms,
  and Symmetric Functions}, volume 203 of {\em Graduate Texts in Mathematics}.
\newblock Springer-Verlag, New York, second edition, 2001.

\bibitem[Sch01]{schur}
I.~Schur.
\newblock {\em {U}eber eine {K}lasse von {M}atrizen die sich einer gegebenen
  {M}atrix zuordnen lassen}.
\newblock Inaugural-{D}issertation, Berlin, 1901.

\bibitem[Sel91]{selberg}
A.~Selberg.
\newblock Old and new conjectures and results about a class of {D}irichlet
  series (1989).
\newblock In {\em Collected {P}apers}, volume~2, pages 47--63. Springer-Verlag,
  Berlin, 1991.

\bibitem[Tat50]{tate}
J.~Tate.
\newblock {\em Fourier Analysis in Number Fields and {H}ecke's Zeta-Functions}.
\newblock Ph.{D}. thesis, Princeton {U}niversity, 1950.

\bibitem[Tho74]{thomas}
G.~P. Thomas.
\newblock {\em Baxter {A}lgebras and {S}chur {F}unctions}.
\newblock Ph.{D}. thesis, University {C}ollege of {W}ales, 1974.

\end{thebibliography}

\end{document}